\documentclass[a4paper]{amsart}
\usepackage{amscd,amsmath,amssymb,amsfonts,verbatim}
\usepackage[cmtip, all]{xy}
\usepackage{pgf,tikz}
\usepackage{mathrsfs}
\usetikzlibrary{arrows}



\newtheorem{thm}{Theorem}[subsection]
\newtheorem{prop}[thm]{Proposition}
\newtheorem{lem}[thm]{Lemma}
\newtheorem{cor}[thm]{Corollary}
\newtheorem{conj}[thm]{Guess}

\newtheorem{ques}[thm]{Question}
\theoremstyle{definition}

\newtheorem{defn}[thm]{Definition}
\theoremstyle{remark}
\newtheorem{remk}[thm]{Remark}
\newtheorem{remks}[thm]{Remarks}

\newtheorem{exm}[thm]{Example}
\newtheorem{exms}[thm]{Examples}
\newtheorem{notat}[thm]{Notation}
\numberwithin{equation}{subsection}

{\hfill$\square$\end{defn}}
{\hfill$\square$\end{remk}}
{\hfill$\square$\end{remks}}
{\hfill$\square$\end{exm}}
{\hfill$\square$\end{exms}}
{\hfill$\square$\end{notat}}

\newcommand{\CH}{{\rm CH}}
\newcommand{\BGH}{\mathbf{C}\mathbf{H}}

\newcommand{\BGHz}{\mathbf{z}}

\newcommand{\red}{{\rm red}}
\newcommand{\codim}{{\rm codim}}
\newcommand{\Pic}{{\rm Pic}}

\newcommand{\Hom}{{\rm Hom}}

\newcommand{\Spec}{{\rm Spec \,}}
\newcommand{\Spf}{{\rm Spf \,}}

\newcommand{\Sch}{{\operatorname{\mathbf{Sch}}}} 
\newcommand{\QProj}{{\operatorname{\mathbf{QProj}}}}
\newcommand{\Aff}{{\operatorname{\mathbf{Aff}}}}
\newcommand{\QAff}{{\operatorname{\mathbf{QAff}}}}
\newcommand{\Sep}{{\operatorname{\mathbf{Sep}}}}

\newcommand{\hocolim}{\mathop{{\rm hocolim}}}

\newcommand{\op}{{\text{\rm op}}}

\renewcommand{\>}{\rangle}

\newcommand{\Sm}{{\mathbf{Sm}}}

\newcommand{\ds}{{/\kern-3pt/}}

\newcommand{\Supp}{{\operatorname{Supp}}}

\newcommand{\Tor}{{\operatorname{Tor}}}

\newcommand{\colim}{\mathop{\text{\rm colim}}}

\newcommand{\hofib}{{\operatorname{\rm hofib}}}

\newcommand{\sgn}{{\operatorname{\rm sgn}}}

\newcommand{\sAlg}{{\operatorname{\textbf{sAlg}}}}
\newcommand{\un}{\underline}

\renewcommand{\dim}{\text{\rm dim}}
\newcommand{\tuborg}{\left\{\begin{array}{ll}}
\newcommand{\sluttuborg}{\end{array}\right.}

\newcommand{\perf}{{\rm perf}}

\renewcommand{\mod}{ {\rm \ mod \ } }

\begin{document}
\title[Extension of the motivic cohomology]{On extension of the motivic cohomology beyond smooth schemes}
\author{Jinhyun Park}
\address{Department of Mathematical Sciences, KAIST, 291 Daehak-ro Yuseong-gu, Daejeon, 34141, Republic of Korea (South)}
\email{jinhyun@mathsci.kaist.ac.kr; jinhyun@kaist.edu}


\keywords{algebraic cycle, Chow group, singular scheme, formal scheme, motivic cohomology, $K$-theory, algebraic de Rham cohomology, perfect complex, Milnor patching, derived algebraic geometry, derived ring, Zariski descent, deformation theory, stack}

\subjclass[2020]{Primary 14C25; Secondary 14B20, 14F42, 14A30, 19E15, 16W60, 18F20}

\begin{abstract}
We construct an algebraic-cycle based model for the motivic cohomology on the category of schemes of finite type over a field, where schemes may admit arbitrary singularities and may be non-reduced.

We show that our theory is functorial on the category, that it detects nilpotence, and that its restriction to the subcategory of smooth schemes agrees with the pre-existing motivic cohomology theory, which is the higher Chow theory of S. Bloch (Adv. Math., 1986). A few structures and applications are discussed.
\end{abstract}

\maketitle

\setcounter{tocdepth}{1}

\tableofcontents

\section{Introduction}

\subsection{Goal and the summary of central results}
The central goal of this article is to present an algebraic-cycle based functorial model for the motivic cohomology of arbitrary schemes of finite type over an arbitrary field $k$, where the schemes may be arbitrarily singular, non-reduced, or non-separated.

\medskip

In topology, we have the theory of singular cohomology based on topological cycles, which has become an indispensable item of a mathematician's tool box for studies of topological spaces. On the contrary, its analogue in algebraic geometry, the motivic cohomology on $k$-schemes based on algebraic cycles, has been fully developed so far on the subcategory of smooth $k$-schemes only.

\medskip

The main results of this article, summarized below, aim to fill this void:

\begin{thm}\label{thm:main intro}
Let $k$ be an arbitrary field. Let $\Sch_k$ be the category of all $k$-schemes of finite type, that may be arbitrarily singular, non-reduced, or non-separated.

Then for all $Y \in \Sch_k$ and integers $q, n \geq 0$, there exist abelian groups $\BGH^q (Y, n)$ written in boldface, distinct from the higher Chow groups $\CH^q (Y, n)$ of S. Bloch \cite{Bloch HC} in general, that satisfy the following properties:

\begin{enumerate}
\item The association $Y \mapsto \BGH^q (Y, n)$ defines a functor 
$$
\BGH^q (-, n): \Sch_k^{\rm op} \to ({\rm Ab}).
$$

\item The group $\bigoplus_{q, n} \BGH^q (Y, n)$ has a natural bi-graded ring structure, which is also functorial.

\item If $Y$ is smooth and equidimensional, then we have an isomorphism
 $$
 \BGH^q (Y, n) \simeq \CH^q (Y, n),
 $$
 where the group on the right is the higher Chow group of S. Bloch.
 \item For non-reduced schemes, the groups $\BGH^q (Y, n)$ and $\BGH^q (Y_{\red}, n)$ are not identified, in general.
 
 \item The groups $\BGH^q (Y, n)$ satisfy the Zariski descent: for each $Y \in \Sch_k$, there exists an object $\BGHz^q (Y, \bullet)$ in the derived category $\mathcal{D}({\rm Ab} (Y))$ of complexes of abelian sheaves on $Y_{\rm Zar}$ such that
 $$
 \BGH^q (Y, n) \simeq \mathbb{H}_{\rm Zar} ^{-n} (Y, \BGHz^q (Y, \bullet) ),
 $$
 and for each pair of Zariski open subsets $U, V \subset Y$, we have the Mayer-Vietoris long exact sequence
 $$
 \cdots \to \BGH^q (U \cup V, n) \to \BGH^q (U , n) \oplus \BGH^q (V, n) \to \BGH^q (U \cap V, n)  
 $$
 $$
 \hskip5cm \to \BGH^q (U \cup V, n-1) \to \cdots.
$$
\item For each morphism $g: Y_1 \to Y_2$ in $\Sch_k$, there exist the relative groups $\BGH^q (g, n)$ such that we have the long exact sequence
$$
\cdots \to \BGH^q (g, n) \to \BGH^q (Y_1, n) \to \BGH^q (Y_2, n) \to \BGH^q (g, n-1) \to \cdots.
$$
\item For each $Y \in \Sep_k$ in the subcategory of separated $k$-schemes of finite type, there exists the first Chern class homomorphism
$$
c_1: {\rm Pic} (Y) \to \BGH^1 (Y, 0),
$$
which is functorial in $Y$.

\item For each semi-local $k$-scheme $Y$ essentially of finite type, for the group $\BGH^q (Y, n)$ defined as the colimit of the groups over the affine open neighborhoods of the set of all closed points of $Y$, we have the graph homomorphism from the Milnor $K$-theory
$$ 
gr: K_n ^M (Y) \to \BGH^n (Y, n).
$$
When $Y$ is smooth or $Y= \Spec (k[t]/(t^{m+1}))$ for $m \geq 1$ and $|k|\gg 0$, the graph homomorphisms give isomorphisms, by \cite{EVMS}, \cite{Kerz finite}, \cite{Park Tate}. 

 \end{enumerate}
 The groups $\BGH^q (Y, n)$ will be called the \emph{yeni\footnote{\emph{Yeni} means `new' or `renewed' in Turkish. With this word, the author would like to mention that Sinan \"Unver in \.{I}stanbul was once part of this project at an early stage, though later he wanted to leave it to follow his ways as he might have felt this project grew unreasonably ambitious due to the author's insistence. However, the author thanks him for being a good listener even after he left the project.} higher Chow groups}, to distinguish them from the classical higher Chow groups $\CH^q (Y, n)$ of S. Bloch.
\end{thm}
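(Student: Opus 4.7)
The plan is to build the theory in three stages: first define cycle complexes locally for affine pieces using a smooth formal (or derived) model, then globalize by Zariski/Milnor descent so that functoriality~(1) and Zariski descent~(5) are built into the definition, and finally verify the remaining structures one by one.

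At the local level, for a finitely generated $k$-algebra $R$, the idea is to choose a closed immersion into a smooth $k$-algebra, $R = P/I$, and consider admissible algebraic cycles on the formal completion $\Spf(\widehat{P}_I)$ of $\Spec P$ along $\Spec R$. On such a smooth-formal ambient space the cubical/simplicial face structure is still well-defined, so one can impose Bloch's proper-intersection condition there; crucially, because the defining ideal $I$ of $\Spec R$ persists inside the formal completion, cycles are able to see infinitesimal thickenings, which is what will force~(4). Define $\BGHz^q(R,\bullet)$ to be this cycle complex, then make it intrinsic by passing to the homotopy limit over the category of all smooth presentations of $R$ (or equivalently a cotangent-complex based derived resolution), so that it depends functorially only on $R$. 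When $R$ is smooth, the formal completion along the diagonal admits an Artin-approximation retraction back to $\Spec R$, and the complex collapses to Bloch's $z^q(\Spec R,\bullet)$, which is the mechanism producing~(3).

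The globalization step proceeds by Milnor patching on the Zariski site of $Y \in \Sch_k$: set $\BGHz^q(Y,\bullet)$ to be the homotopy-limit sheafification of $U \mapsto \BGHz^q(\mathcal{O}_Y(U),\bullet)$ over affine opens, and define $\BGH^q(Y,n) := \mathbb{H}^{-n}_{\Zar}(Y,\BGHz^q(Y,\bullet))$. Functoriality~(1) then follows from functoriality of the local construction under \emph{arbitrary} ring maps, which is available because pullback of cycles is taken on the \emph{ambient} smooth formal model rather than on $\Spec R$ itself, so no flatness of $R \to R'$ is required. The Mayer--Vietoris sequence in~(5) is the standard consequence of Zariski descent. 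The bigraded ring structure~(2) comes from the external product of cycles followed by restriction along the diagonal, the diagonal of a singular $Y$ being handled by exactly the same formal-completion mechanism used in the definition. The cofibre long exact sequence~(6) is the one associated to the map of complexes $\BGHz^q(Y_2,\bullet) \to \BGHz^q(Y_1,\bullet)$ in $\mathcal{D}(\mathrm{Ab})$.

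For property~(4), the detection of nilpotents is arranged by exhibiting classes coming from the defining ideal of $\Spec R$ in $\Spf(\widehat{P}_I)$; the case $Y = \Spec(k[t]/(t^{m+1}))$ gives an explicit non-vanishing computation via the cited comparison with the Milnor $K$-theory of truncated polynomial rings, which also confirms property~(8) in that special case. The first Chern class~(7) is obtained from the Cartier divisor cycle of a line bundle, pulled into an appropriate smooth formal model; and the graph homomorphism in~(8) sends a symbol $\{f_1,\ldots,f_n\}$ to the graph cycle $\Gamma_{f_1,\ldots,f_n}$, which is admissible by construction. The hardest step, which I expect to be the crux of the whole construction, is establishing \emph{independence of the smooth presentation} in the first stage: two choices $R = P/I = P'/I'$ must yield quasi-isomorphic cycle complexes, and this requires an explicit deformation-theoretic comparison along a common smooth cover, organised via the homotopy-limit formulation. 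This is the point at which the derived algebraic geometry and the cotangent-complex apparatus advertised in the keywords really enter; everything afterwards---including verifying (3) by collapsing the derived data in the smooth case---reduces to classical cycle-theoretic manipulations lifted off a smooth ambient formal scheme.
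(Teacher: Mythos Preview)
Your outline captures the broad architecture---work with cycles on a smooth formal neighbourhood $\widehat{X}$ of $Y$, then globalize---but it is missing the single mechanism that makes the whole construction work, and without it property~(4) is actually \emph{false}. The formal completion $\widehat{X} = \Spf(\widehat{P}_I)$ depends only on the radical $\sqrt{I}$: the schemes $Y$, $Y_{\red}$, and every infinitesimal thickening $Y^{[n]}$ of $Y_{\red}$ inside $X$ all have \emph{the same} $\widehat{X}$. So a cycle complex built purely from admissible cycles on $\widehat{X}\times\square^n$ cannot distinguish $Y$ from $Y_{\red}$. Your sentence ``the defining ideal $I$ of $\Spec R$ persists inside the formal completion'' is precisely the point that fails: $I$ is an ideal of definition of $\widehat{X}$, but so is $\sqrt{I}$, and the cycle group $z^q(\widehat{X},n)$ as you describe it sees neither. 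The paper fixes this by imposing an additional \emph{mod $Y$-equivalence} on $z^q(\widehat{X},n)$: one quotients by the subgroup generated by differences $[\mathcal{A}_1]-[\mathcal{A}_2]$ of admissible coherent $\mathcal{O}_{\square^n_{\widehat{X}}}$-algebras whose derived restrictions to $\square^n_Y$ agree as objects of $\mathbf{sAlg}(\mathcal{O}_{\square^n_Y})$. This is where the derived-ring language actually enters, not in a cotangent-complex comparison of presentations.

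A second divergence: you propose a homotopy \emph{limit} over smooth presentations and expect independence of presentation to come from a deformation-theoretic quasi-isomorphism between the resulting complexes. The paper does neither. It proves a concrete moving lemma on formal schemes (for $Y\hookrightarrow X_1\hookrightarrow X_2$ with $X_1\hookrightarrow X_2$ a closed immersion of smooth schemes, the inclusion $\BGHz^q_{\{\widehat{X}_1\}}(\widehat{X}_2\mod Y,\bullet)\hookrightarrow\BGHz^q(\widehat{X}_2\mod Y,\bullet)$ is a quasi-isomorphism), which yields only a \emph{zigzag} in the derived category, and then takes a homotopy \emph{colimit} over a \v{C}ech-type diagram built from systems of local embeddings $\{(U_i,X_i)\}$ in the style of Hartshorne's algebraic de Rham cohomology. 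The comparison with Bloch in the smooth case~(3) is then immediate: one uses the trivial system $\{(U_i,U_i)\}$, for which $\widehat{X}_I=U_I$ and the \v{C}ech complex collapses to the ordinary one for $z^q(\bullet)_Y$; no Artin approximation is involved.
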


Since the higher Chow groups on the smooth $k$-schemes are the motivic cohomology groups (V. Voevodsky \cite{Voevodsky}), the yeni higher Chow groups $\BGH^q (Y, n)$ for $Y \in \Sch_k$ constructed in this article indeed give a functorial extension of the motivic cohomology theory beyond the smooth schemes that detect the nilpotence of schemes, as the title of this article suggests.

\medskip

The objects $\BGHz^q (Y, \bullet)$ appearing in Theorem \ref{thm:main intro} are somewhat more complicated than one may hope. They are well-defined as an isomorphism class of the derived category $\mathcal{D} ({\rm Ab} (Y))$ of abelian sheaves on $Y_{\rm Zar}$ only, and the association $Y \mapsto \BGHz^q (Y, \bullet)$ is not a sheaf in the strict sense, though it is almost like a sheaf. A precise and appropriate language is that it is a stack. In this terminology, a relevant part of what we prove in this article can be summarized as follows:

\begin{thm}\label{thm:main intro stack}
Let $Y \in \Sch_k$. Let $Op (Y)$ be the category of Zariski open subschemes of $Y$.

Consider the pseudo-functor $\BGHz^q (-,\bullet)$ on $Op(Y)^{\op}$ defined by
$$
U \in Op (Y)^{\op} \mapsto \{ \mbox{the isomorphism class of } \BGHz^q (U, \bullet) \} \mbox{ in } \mathcal{D} ({\rm Ab} (U)),$$
 where the class is considered as a category, all of whose morphisms are isomorphisms. 
 
 Then it satisfies the gluing conditions of Zariski descent data, i.e. $\BGHz^q (-,\bullet)$ is a stack.
\end{thm}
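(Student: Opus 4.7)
The plan is to verify the two defining axioms for a stack on $Op(Y)^{\op}$: (i) descent for isomorphisms (the sheaf condition on morphisms between objects over the same open), and (ii) effectivity of descent data. Both will follow from Theorem~\ref{thm:main intro}(5), specifically from the existence of $\BGHz^q(U, \bullet)$ as an object of $\mathcal{D}({\rm Ab}(U))$ for each Zariski open $U \subset Y$ and from the Mayer-Vietoris long exact sequence.

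To set up notation, fix a cover $\mathfrak{U} = \{U_i\}_{i \in I}$ of an open $U \subset Y$. A descent datum is a collection of objects $\mathcal{A}_i \in \mathcal{D}({\rm Ab}(U_i))$ lying in the isomorphism class $\BGHz^q(U_i, \bullet)$, together with isomorphisms $\phi_{ij}: \mathcal{A}_i|_{U_{ij}} \xrightarrow{\sim} \mathcal{A}_j|_{U_{ij}}$ in $\mathcal{D}({\rm Ab}(U_{ij}))$ satisfying the usual cocycle identity on triple overlaps $U_{ijk}$. By refining to finite covers (valid Zariski-locally) and proceeding by induction on their cardinality, it suffices to treat the case $U = U_1 \cup U_2$, where the cocycle condition is vacuous.

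For axiom (i), given $\mathcal{A}, \mathcal{B} \in \BGHz^q(U, \bullet)$ and isomorphisms $\psi_i: \mathcal{A}|_{U_i} \xrightarrow{\sim} \mathcal{B}|_{U_i}$ agreeing on $U_1 \cap U_2$, I would produce a unique $\psi: \mathcal{A} \to \mathcal{B}$ restricting to the $\psi_i$ by applying the Mayer-Vietoris sequence to the hypercohomology of $R\sHom(\mathcal{A}, \mathcal{B})$, which is controlled by the same descent pattern for $\BGH$ from Theorem~\ref{thm:main intro}(5). For axiom (ii), the natural candidate glued object is $\BGHz^q(U, \bullet)$ itself, which exists by Theorem~\ref{thm:main intro}(5). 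Its restriction to each $U_i$ lies in $\BGHz^q(U_i, \bullet)$ by the same clause, so one can pick isomorphisms $\alpha_i: \BGHz^q(U, \bullet)|_{U_i} \xrightarrow{\sim} \mathcal{A}_i$; the discrepancy between $\phi_{12} \circ \alpha_1|_{U_{12}}$ and $\alpha_2|_{U_{12}}$ is an automorphism of $\mathcal{A}_2|_{U_{12}}$, which one absorbs by modifying $\alpha_1$ or $\alpha_2$ via the sheaf property established in axiom (i).

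The main obstacle is the well-known pathology that descent data in the bare triangulated category $\mathcal{D}({\rm Ab}(U))$ is not automatically effective; the standard remedy is to pass to a \dga- or $\infty$-categorical enhancement. What rescues the argument here is that our pseudo-functor takes values in \emph{groupoids} (isomorphism classes), not in the entire derived category: we never glue a priori unrelated objects, only realizations of an object that Theorem~\ref{thm:main intro}(5) has already produced globally on $U$. Consequently, the higher coherence obstructions that usually plague derived descent collapse to a single 1-cocycle condition, which is killed by axiom (i), while the existence of the glued object is free from Theorem~\ref{thm:main intro}(5). The extension from two-fold covers to arbitrary finite Zariski covers, and then to general ones by a standard colimit argument on the indexing category, is then routine.
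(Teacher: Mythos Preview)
Your proposal has a circularity problem relative to the paper's logical order, and it is missing the actual key ingredient.

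In the paper, the Mayer--Vietoris part of Theorem~\ref{thm:main intro}(5) is explicitly a \emph{consequence} of Theorem~\ref{thm:main intro stack}, not an input to it (see the sentence immediately following the statement of Theorem~\ref{thm:main intro stack}, and Theorem~\ref{thm:Zariski descent full}). The real engine is the restriction compatibility, Lemma~\ref{lem:restriction coincidence}: for any open immersion $\iota_U:U\hookrightarrow Y$ one has $\BGHz^q(Y,\bullet)|_U\simeq\BGHz^q(U,\bullet)$ in $\mathcal{D}(\mathrm{Ab}(U))$. This is proved directly from the \v{C}ech construction by comparing systems of local embeddings for $Y$ with those for $U$ via the functor $\mathcal{U}\mapsto\mathcal{U}|_U$ and checking the induced map on stalks. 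From this lemma, both Corollary~\ref{cor:final stack} (the stack statement) and Mayer--Vietoris follow in a couple of lines; the paper's proof of the stack property does not invoke Mayer--Vietoris at all.

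Your argument actually \emph{uses} this lemma without naming it: the step in axiom~(ii) where you write ``its restriction to each $U_i$ lies in $\BGHz^q(U_i,\bullet)$ by the same clause'' is exactly Lemma~\ref{lem:restriction coincidence}, and it is not part of the statement of Theorem~\ref{thm:main intro}(5), which only asserts the existence of each $\BGHz^q(Y,\bullet)$ and a long exact sequence for the hypercohomology \emph{groups}. Separately, your argument for axiom~(i) via Mayer--Vietoris for $R\sHom(\mathcal{A},\mathcal{B})$ does not give a sheaf condition for morphisms: the long exact sequence has a tail $\mathbb{H}^{-1}(U_{12},R\sHom(\mathcal{A},\mathcal{B}))\to\mathbb{H}^{0}(U,R\sHom(\mathcal{A},\mathcal{B}))$, so neither uniqueness nor exactness at the equalizer stage follows. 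The fix is not to patch this, but to bypass it: once Lemma~\ref{lem:restriction coincidence} is established, the effectivity of any descent datum is witnessed by the already-constructed global object $\BGHz^q(U_1\cup U_2,\bullet)$, which is how the paper proceeds.
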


The Zariski descent (Theorem \ref{thm:main intro}-(5)) is a consequence of Theorem \ref{thm:main intro stack}. For this paper, a basic understanding of stacks is enough, for which one can see, e.g. A. Vistoli \cite{Vistoli}.

\medskip

From the statement of Theorem \ref{thm:main intro stack}, those who are familiar with descent theory and stacks may wonder whether we locally define some complexes of sheaves in suitable derived categories, and try to glue them over the intersections of open sets, as done in e.g. \cite[\S 0D65]{stacks} or Beilinson-Bernstein-Deligne \cite[Th\'eor\`eme 3.2.4, p.82]{BBD}. However, we remark that we do \emph{not} follow this way; one important reason is that our locally defined complexes of sheaves are \emph{not} even complexes of $\mathcal{O}_Y$-modules in general. 

The alternative route we choose to follow is a variation of the \v{C}ech machine used in R. Hartshorne \cite[Remark, p.28]{Hartshorne DR}, where he hints how one can construct the algebraic de Rham cohomology for ``non-embeddable" varieties. The machine in \emph{ibid.}~is not immediately applicable to our objects, but we can still modify it suitably to use for our problem. Carrying it out, however, is \emph{not} \emph{mutatis mutandis}: it requires a new version of moving lemmas on complexes of sheaves of cycles on formal schemes, which we carefully develop in \S \ref{sec:first indep}.

\subsection{Some history} One has a few ordinary cohomology theories applicable to singular schemes. For instance, the singular cohomology (when $k= \mathbb{C}$) with the mixed Hodge structures of P. Deligne \cite{Deligne}, the algebraic de Rham cohomology (when ${\rm char} (k) = 0$) of R. Hartshorne \cite{Hartshorne DR}, the intersection cohomology (Goresky-MacPherson \cite{GM}, Beilinson-Bernstein-Deligne \cite{BBD}), and the \emph{cdh}-motivic cohomology of Friedlander-Voevodsky \cite{FV} are functorial on singular schemes. Though they do not distinguish non-reduced schemes from their associated reduced ones. The Chow cohomology of W. Fulton \cite{Fulton Chow} is only partly defined in the homological level $0$ with respect to the Mayer-Vietoris long exact sequence. So, unlike in topology, on the side of algebraic geometry, the situation was yet far from being clear, and one may guess it is probably harder to build a theory that also discerns such nilpotent singularities, without ignoring them.

\medskip

Even though one is interested in studying only smooth $k$-schemes, those smooth ones may still spawn various associated singular, often non-reduced, schemes. For instance, for a morphism $f: X \to Y$ of smooth varieties, the fibers $X_y$ could be singular or non-reduced (see e.g. A. Dubouloz \cite[Example 5]{Dubouloz}). Another class of examples comes from Hilbert schemes of smooth schemes. For example, the Hilbert scheme of $\mathbb{P}^3$ is non-reduced (see e.g. D. Mumford \cite[\S II, p.643]{Mumford}). For many Hilbert schemes of affine or projective spaces, we do not yet know whether they are reduced or not (see Cartwright-Erman-Velasco-Viray \cite[\S 7]{Cartwright}).

So, a construction of a fully functorial cohomology theory that can detect singularities and nilpotence, could potentially help or boost our efforts to understand them, and the author believes that it is a meaningful step forward if it is indeed realized.

\medskip

While singularities and nilpotence could be obstacles, once we know how to embrace them and even live with them (as the ancient stoicism philosophy might teach us), then we may be able to use them to extract new information even from smooth schemes. For instance, the crystalline cohomology of P. Berthelot \cite{Berthelot} can be computed by considering Quillen's higher algebraic $K$-theory \cite{Quillen} of nilpotent thickenings of a smooth scheme, as shown by S. Bloch \cite{Bloch crys}, and more or less equivalently through the $p$-typical de Rham-Witt forms by L. Illusie \cite{Illusie}. Such nilpotent thickenings are also related to deformation theory in general. Thus, if we have a functorial theory that works also for singular schemes, including non-reduced ones, then one may have a platform to deduce a few arithmetic or deformation-theoretic applications.

\medskip

The question is then whether we can construct one such a theory. Since we do have the motivic cohomology on the smooth $k$-schemes, a more specific question should be to find a way to extend this one on the smooth schemes to one such theory on all $k$-schemes of finite type.

\medskip

On smooth schemes, some of the earliest candidates for the motivic cohomology were provided by the Chow groups of W.-L. Chow \cite{Chow} in the 1950s, and their generalizations, the higher Chow groups of S. Bloch \cite{Bloch HC} in the 1980s. More than a decade later, on the category of smooth $k$-schemes, V. Voevodsky \cite{Voevodsky} (see also \cite[Theorem 19.1]{MVW}) proved that they indeed give the right motivic cohomology compatible with his theory of the motivic cohomology. Motivic analogues of the spectral sequences of Atiyah-Hirzebruch (\cite{AH}) converging to the higher algebraic $K$-theory of D. Quillen \cite{Quillen} were also constructed (see \cite{BL}, \cite{FS}, \cite[\S 8]{Levine moving}, \cite{Levine coniveau}) on the smooth $k$-schemes. 

\medskip

When the schemes admit singularities, it is known from a few perspectives that the higher Chow complex of \cite{Bloch HC}, as is, does not provide a good model for the motivic cohomology on singular schemes. For instance, the Chow groups on singular schemes do not form a ring. This problem comes more fundamentally from that pulling back of cycle classes does not work well for general morphisms between singular $k$-schemes. There were attempts (e.g. M. Hanamura \cite{Hanamura}, if the given base field permits) to improve the situation using Hironaka's \cite{Hironaka} resolution of singularities, but this does not still resolve problems for non-reduced schemes. The non-triviality of certain relative algebraic $K$-groups of some non-reduced rings (see S. Bloch \cite{Bloch crys} and W. van der Kallen \cite{vdK}) shows that the higher Chow complex of schemes, which ignores nilpotent thickenings, cannot fit into a conjectural Atiyah-Hirzebruch type spectral sequence for such non-reduced $k$-schemes.

\medskip

Recent years, a few new ideas such as additive higher Chow groups (\cite{BE1}, \cite{BE2}, \cite{KP crys}, \cite{P2}, \cite{R}), and their generalizations via ``cycles with modulus" (see \cite{BS}, \cite{MM1}, \cite{MM2}, \cite{MM3}) emerged and helped in capturing various non-$\mathbb{A}^1$-invariant phenomena on certain singular schemes, especially non-reduced ones. Nevertheless, due to growing technical and philosophical challenges encountered in handling those cycles with modulus over the last decade, at least the author had growing feelings that somewhat genuinely different approaches are probably desirable. 

\medskip

A foray made in \cite{PU Milnor} by the author with S. \"Unver was one such attempt tried from a quite different perspective compared to the previous ones through ``cycles with modulus" conditions. To deal with the problem of constructing motivic cohomology over the scheme $\Spec (k[t]/(t^m))$, that paper considered cycles over the henselian scheme $\Spec (k[[t]])$, put a suitable ``mod $t^m$-equivalence relation" on integral cycles, and compared the cycle class groups with the Milnor $K$-theory of $k[t]/(t^m)$. While the approach there still had various issues to improve, the attempt at least opened up new avenues of possibilities. On this approach, at a social event in Tokyo, Japan, in 2019, Fumiharu Kato suggested the author that it may be better to use the formal scheme $\Spf (k[[t]])$ instead of the scheme $\Spec (k[[t]])$. This idea was tested in the Milnor range again in \cite{Park Tate} by the author, and it indeed gave better results and perspectives (see \S \ref{sec:nilpotence} and Guess \ref{conj:00}).

\medskip

The present article is an outcome of several years' attempts of the author to build a new foundation on motivic cohomology of singular schemes. In short, using algebraic cycles on formal schemes as some of the raw construction materials, in this paper we build groups originating from algebraic cycles, that are contravariant functorial on the category $\Sch_k$ of schemes of finite type over a field $k$, that do not ignore nilpotent singularities, and that coincide with the higher Chow groups of S. Bloch \cite{Bloch HC} on the subcategory $\Sm_k$ of smooth $k$-schemes.

\subsection{Sketch of the construction, part I}\label{sec:preview intro}
We sketch the construction of the model of the article. The construction is, unfortunately and unavoidably, protracted and it requires a few nontrivial steps. After numerous definitions along the way, the main object $\BGH^q (Y, n)$ of this article is finally defined in Definition \ref{defn:Cech final} on p.\pageref{defn:Cech final}. The following diagram summarizes some important objects constructed along the way. The reader may not yet understand what is going on, but the author hopes that after reading the lengthy introduction, this diagram would make a lot more sense. Here, $\Sch_k$ is the category of $k$-schemes of finite type, and $\QAff_k$ is the subcategory of quasi-affine $k$-schemes:
$$
\xymatrix{
 \underset{(3)}{(Y \hookrightarrow {X})} \ar@{..>}[r]  & \underset{(3')}{ \mathcal{M}^q (\widehat{X} \mod Y, \bullet) } \ar@{^{(}->}[d]  \ar@{..>}[dr]  & & \\
\underset{(1)}{z_* (\mathfrak{X}) } \ar@{..>}[r] & \underset{ (2)} {z^q (\widehat{X}, \bullet) } \ar[r]   & \underset{(4)}{ z^q (\widehat{X} \mod Y, \bullet)}    \ar@{..>}[d]    \ar@{..>}[dl]|-{{\rm H}_n}  \ar@{..>}[dr] & \underset{(6)}{ G^q (W, \bullet) } \ar@{..>}[d]  \ar@{_{(}->}[l]  \\
\ar@{}[dr]|-{\begin{matrix} Y \in \QAff_k \uparrow \\ ------------\\ Y \in \Sch_k \downarrow \end{matrix}} &  \underset{(4')}{\CH^q (\widehat{X} \mod Y, n) }  & \underset{(5)}{ \BGHz^q (\widehat{X} \mod Y, \bullet) }  
 \ar@{..>}[dl]|-{{\rm \check{C}ech}} &\underset{ (7)}{ \mathcal{S}^q _{\bullet}} \ar[l] \\
\underset{(8)}{\begin{matrix}\mathcal{U}= \\  \{ (U_i, X_i ) \}_{i \in \Lambda} \end{matrix}}   \ar@{..>}[r]  & \underset{(9)}{ \BGHz^q (\mathcal{U}, \bullet) }   \ar@{..>}[r] |-{\underset{\mathcal{U}}{\hocolim}} \ar@{..>}[d]|-{\mathbb{H}^{-n}_{\rm Zar}} &  \underset{(10)}{ \BGHz^q (Y, \bullet )} \ar@{..>}[r]|-{\mathbb{H}^{-n}_{\rm Zar} } & \underset{ (10')}{\boxed{ \BGH^q (Y, n)}} \\
& \underset{ (9')}{\BGH^q (\mathcal{U}, n) }\ar@{..>}[rru]|-{\underset{\mathcal{U}}{\colim}} & &
}
$$

Here, a broken arrow from a point (A) to a point (B) means, (A) is used to define (B). Hooked arrows are injections. Solid arrows are actual morphisms of complexes. The numbers below indicate the order the objects are defined in the article. From (1) to (7) we have $Y\in \QAff_k$, while from (8) to (10') we have $Y \in \Sch_k$.

As the reader can see, this is complicated. However, an inspiration behind it is simpler and it stems out of the following classical machine.

\medskip

Let $k$ be a field. In extending A. Grothendieck's \cite{Grothendieck DR} algebraic de Rham cohomology of smooth $k$-schemes to singular ones, R. Hartshorne \cite{Hartshorne DR} embedded a given possibly singular $k$-scheme $Y$ (suppose ``embeddable", e.g. quasi-projective) into a smooth $k$-scheme $X$ as a closed subscheme, and took  the completion $\widehat{X}$ along $Y$, i.e. the formal neighborhood of $Y$ in $X$.

The algebraic de Rham cohomology is (\cite[II-\S 1, p.24]{Hartshorne DR}, supposing ${\rm char} (k) = 0$) defined to be the hypercohomology
\begin{equation}\label{eqn:HDR intro}
{\rm H}_{\rm DR} ^n (Y):= \mathbb{H}_{\rm Zar} ^{-n}  (Y, \Omega_{\widehat{X}/k} ^{\bullet})
\end{equation}
of the completion of the de Rham complex $\Omega_{X/k}^{\bullet}$. This cohomology is independent of the choice of $Y \hookrightarrow X$ (see \cite[Theorem (1.4), p.27]{Hartshorne DR}). From this embedding, when $d:= \dim  \ X$, Hartshorne also defined the algebraic de Rham homology (see \cite[II-\S 3, p.37]{Hartshorne DR}) to be the \emph{local} hypercohomology (see SGA II \cite[Expos\'e I]{SGA2})
\begin{equation}\label{eqn:Hart DR homo}
{\rm H}_n ^{\rm DR} (Y):= \mathbb{H}_{Y} ^{2d-n} (X, \Omega_{X/k} ^{\bullet})
\end{equation}
of the de Rham complex $\Omega_{X/k} ^{\bullet}$ with the support in $Y$. When $Y$ is smooth, the algebraic de Rham cohomology and homology coincide (see \cite[Proposition (3.4), p.41]{Hartshorne DR}). 
By construction, Hartshorne's algebraic de Rham cohomology does not distinguish $Y$ from $Y_{\red}$, but on the category of \emph{reduced} $k$-schemes of finite type (also called varieties over $k$), this theory had good properties.

We remark that the idea of using formal neighborhoods $\widehat{X}$ also appeared in Grothendieck's studies of \'etale covers SGA II \cite[Expos\'e X, p.111]{SGA2} as well as his studies of the Lefschetz problem \cite[Expos\'e XII, p.136]{SGA2}, for instance.

\medskip

Going back to the construction of the motivic cohomology on singular schemes, we thus ask: \emph{can we try something similar with algebraic cycles?} 

As a starter, we note interestingly that it has been known for decades that the higher Chow groups of S. Bloch \cite{Bloch HC} admit a description in terms of the local hypercohomology
\begin{equation}\label{eqn:HC local hyper}
\CH^* (Y, n) = \mathbb{H}_Y ^{-n} (X, z^* (\bullet)_X),
\end{equation}
where $z^* (\bullet)_X$ is the Zariski sheafification of the higher Chow complex of $X$ (see part of the proof of S. Bloch \cite[Theorem (9.1)]{Bloch HC}.

A comparison of the local hypercohomology descriptions \eqref{eqn:Hart DR homo} and \eqref{eqn:HC local hyper} suggests that the higher Chow groups should be seen indeed as the motivic \emph{homology}. (N.B. The higher Chow groups as motivic \emph{homology} can also be seen from other perspectives, too. See e.g. B. Totaro \cite[\S 2, p.84]{Totaro Sigma}.) At least their restrictions to the subcategory of smooth $k$-schemes give the motivic cohomology on the smooth schemes.
 
This observation incites an imagination that maybe a hybrid of the machine of Grothendieck and Hartshorne with the world of algebraic cycles of Bloch may conjure up a good candidate for the motivic cohomology of $k$-schemes with singularities. 

\medskip

In this paper, we try to articulate this point and scrutinize this fantasy in mathematically rigorous terms. We do the following first. 
Let $\square_k:= \mathbb{P}^1 \setminus \{ 1 \}$ and $\square^n_k$ be its $n$-fold product over $k$. Let $\square^0_k:= \Spec (k)$. For a while, let $Y$ be a quasi-affine $k$-scheme of finite type. Choose a closed immersion $Y \hookrightarrow X$ into an equidimensional smooth $k$-scheme. Let $\widehat{X}$ be the completion of $X$ along $Y$. We consider cycles on the formal scheme $\widehat{X} \times_k \square^n_k$ that intersect properly with the faces, and as well as $(\widehat{X})_{\red} \times F$ for faces $F$, with respect to the largest ideals of definition of $\widehat{X} \times_k \square_k ^n$ (see Definition \ref{defn:HCG} for details), to form the group $z^q (\widehat{X},n)$. We regard the cycles on the formal scheme $\widehat{X} \times \square^n$ as sorts of ``perturbations" of those on $Y \times \square^n$.

As an attempt, for the complex $z^q (\widehat{X}, \bullet)$ of cycles on the formal scheme $\widehat{X}$, define its higher Chow group $\CH^q (\widehat{X}, n)$. However, an observant reader will note instantly that something is not right; $Y$, $Y_{\red}$, or any ``thickening" $Y^{[n]}$ of $Y$ in $X$, all have the same formal neighborhood $\widehat{X}$, but this group ignores their differences, so this approach can't be the right one we are looking for.

\medskip

An additional idea inspired by \cite{PU Milnor} is to put a suitable equivalence relation on $z^q (\widehat{X}, n)$. Here, we follow the philosophy, but choose somewhat different relations. We define the \emph{mod $Y$-equivalence} relation given by a subgroup $\mathcal{M}^q (\widehat{X}, Y, n)$ and form the quotient group 
\begin{equation}\label{eqn:intro defn cycle}
z^q (\widehat{X} \mod Y, n): = \frac{ z^q (\widehat{X} ,n)}{ \mathcal{M}^q (\widehat{X}, Y, n)}.
\end{equation}

The group $\mathcal{M}^q (\widehat{X}, Y, n)$ is (see Definition \ref{defn:mod Y equiv}) generated by the differences of the associated cycles $[\mathcal{A}_1] - [\mathcal{A}_2]$ for certain pairs $(\mathcal{A}_1, \mathcal{A}_2)$ of admissible coherent sheaves of $\mathcal{O}_{\widehat{X} \times \square^n}$-algebras subject to a constraint that their left derived restrictions to $Y \times \square^n$ are isomorphic to each other as ``rings" in a suitable sense. The embedding of $Y$ into $\widehat{X}$ could be fairly arbitrary in general, and the above notions of derived restrictions and isomorphisms as ``rings" require some careful statements. 

These are done borrowing languages and ideas from the derived algebraic geometry (e.g. J. Lurie \cite{Lurie SAG}, B. To\"en \cite{Toen}).  The above ``rings" are in fact derived rings in this language. See \S \ref{sec:salg}. 
There were a few recent successful applications of the derived algebraic geometry to $K$-theory and cycles (see e.g. Kerz-Strunk-Tamme \cite{KST IM} and Lowry-Sch\"urg \cite{LS}), so it shouldn't surprise the reader that the derived algebraic geometry offers useful languages for this article as well.

\medskip

 The groups in \eqref{eqn:intro defn cycle} over $n \geq 0$ form a complex and its homology groups are denoted by $\CH^q (\widehat{X} \mod Y, n)$ (see Definition \ref{defn:complex}). 
 Here, the superscript $q$ can be seen as a kind of virtual codimension for $Y$.
This group turns out to be much better than $\CH^q (\widehat{X}, n)$, and it is a main \emph{intermediate} object of this article.

\medskip

An interesting feature of the approach in this article is that, unlike a few approaches to singular schemes $Y$ in the literature, we do not attempt to resolve the singularities of $Y$ as in H. Hironaka \cite{Hironaka}, nor to alter them as in A. de Jong \cite{de Jong}: rather we just embrace them as they are. Instead, we consider a \emph{regular} formal neighborhood $\widehat{X}$ around $Y$, which we regard as a kind of an ``{exoskin}" for $Y$, and work with cycles over this regular formal scheme, modulo the mod $Y$-equivalence. 

\medskip

We remark that since $\widehat{X} \times \square_k ^n$ is a regular formal scheme, the coherent sheaves $(\mathcal{A}_1, \mathcal{A}_2)$ considered in the above are also perfect complexes (see Corollary \ref{cor:coh=perfect}). The mod $Y$-equivalence can then be partially interpreted from the perspective of the derived Milnor patching of perfect complexes studied by S. Landsburg \cite{Landsburg Duke} as well. See J. Milnor \cite[\S 2]{Milnor K} for the original Milnor patching for projective modules over rings, i.e. vector bundles over affine schemes.

\subsection{Sketch of the construction, part II}\label{sec:intro indep}
The group $\CH^q (\widehat{X} \mod Y, n)$ in \S \ref{sec:preview intro} is not yet the right final object we seek, though we can process it a bit to deduce the right candidate. 
We solicit some patience of the reader, and suggest to buckle up and be ready for the rocky roads ahead.

\subsubsection{A sheafification}

The story goes as follows: let $Y$ be a quasi-affine $k$-scheme of finite type and let $X, \widehat{X}$ be as before. For each Zariski open subset $U \subset Y$, we have the association $U \mapsto z^q (\widehat{X}|_U \mod U, \bullet)$, where $\widehat{X}|_U$ is the open formal subscheme $(U, \mathcal{O}_{\widehat{X}}|_U)$ of $\widehat{X}$ and the ``$U$" after ``mod" is the open subscheme $(U, \mathcal{O}_Y|_U)$. Its Zariski sheafification on $Y_{\rm Zar}$ is denoted by $\BGHz^q (\widehat{X} \mod Y, \bullet)$ (see Definition \ref{defn:sh_complex}). 
We define the Zariski hypercohomology group
$$
\BGH^q (\widehat{X} \mod Y, n):= \mathbb{H}_{\rm Zar} ^{-n} (Y, \BGHz^q (\widehat{X} \mod Y, \bullet)).
$$

On the other hand, we consider the subgroup $G^q (Y \setminus U, n) \subset z^q (\widehat{X} \mod Y, n)$ of cycles  topologically supported in $(Y \setminus U)\times \square^n$, 
 and define (see Lemma \ref{lem:flasque psh})
$$
\mathcal{S}_n ^q (U):= \frac{z^q (\widehat{X} \mod Y, n)}{G^q (Y \setminus U, n)}.
$$

We show (Proposition \ref{prop:flasque sh}) that this $\mathcal{S}_n ^q$ is a flasque Zariski sheaf on $Y$. This is inspired from S. Bloch \cite[Theorem (3.4), p.278]{Bloch HC}. From this, we deduce that (Theorem \ref{thm:two BGH -1})
$$
\CH^q (\widehat{X} \mod Y, n) = \mathbb{H}_{\rm Zar} ^{-n} (Y, \mathcal{S}_{\bullet} ^q).
$$

We show there is a natural morphism of complexes of Zariski sheaves on $Y$
\begin{equation}\label{eqn:blowup moving intro}
\mathcal{S}_{\bullet} ^q \to \BGHz^q (\widehat{X} \mod Y, \bullet).
\end{equation}
By \eqref{eqn:blowup moving intro}, we deduce a natural homomorphism
\begin{equation}\label{eqn:blowup moving intro 1}
\CH^q (\widehat{X} \mod Y, n) \to \BGH^q (\widehat{X} \mod Y, n).
\end{equation}
We guess \eqref{eqn:blowup moving intro} is a quasi-isomorphism, thus \eqref{eqn:blowup moving intro 1} is an isomorphism: see Appendix, \S \ref{sec:appendix localization}, especially Proposition \ref{prop:flasque rep}, conjecturally based on Guess \ref{conj:localization}. Guess \ref{conj:localization} essentially goes back to the nontrivial ``blowing-up faces of a cube" technique invented by S. Bloch \cite{Bloch moving} for his proof of the localization theorem on higher Chow groups. Its reorganization and generalization are in M. Levine \cite{Levine moving}. Guess \ref{conj:localization} is a version for our cycles on formal schemes. Our construction in the article primarily uses the latter group of \eqref{eqn:blowup moving intro 1}.

\medskip

\subsubsection{The \v{C}ech machine}

The group $\BGH^q (\widehat{X} \mod Y, n)$ defined in the above has a few limitations. For instance, what if $Y$ does not have a closed immersion into a smooth $X$? Is the group independent of the choice of the embedding? 
Do we have some functoriality? 

For instance, at first sight, one might guess that a technique of  ``to define locally and glue" could work. Namely, for two open subsets $U, V \subset Y$ suppose we have some objects $\BGHz (U)$ and $\BGHz (V)$ such that $\BGHz (U)|_{U \cap V} \simeq \BGHz (U \cap V) \simeq \BGHz (V)|_{U \cap V}$ in some derived category. Then can we glue them to obtain a new object, that we might call $\BGHz (U \cup V)$? Unfortunately, some known results on gluing objects in derived categories, e.g. \cite[\S 0D65]{stacks} or \cite[Th\'eo\`eme 3.2.4, p.82]{BBD}, can't be applied here, because our complexes of sheaves of cycles are not even complexes $\mathcal{O}_Y$-modules in general.

\medskip

To answer all these issues, we use a variant of the \v{C}ech machine invented by R. Hartshorne \cite[Remark, p.28]{Hartshorne DR}. Instead of using just open covers of $Y$, he uses systems $\mathcal{U}=\{ (U_i, X_i) \}_{i \in \Lambda}$ of local embeddings, where $\{ U_i \}$ are covers of $Y$ and each $U_i \hookrightarrow X_i$ is a closed immersion into a smooth scheme. 

Aided by our new moving lemma of \S \ref{sec:first indep}, in \S \ref{sec:finite type} we do something similar for complexes of sheaves of cycles of the form $\BGHz^q (\widehat{X}_I \mod U_I, \bullet)$, to define an isomorphism class $\BGHz^q (\mathcal{U}, \bullet)$ of complexes of sheaves in $\mathcal{D} ({\rm Ab} (Y))$. More precisely, for each $I= (i_0, \cdots, i_p) \in \Lambda^{p+1}$, we let $U_I= U_{i_0} \cap \cdots \cap U_{i_p}$ and $X_I = X_{i_0} \times \cdots \times X_{i_p}$ with the diagonal embedding $U_I \hookrightarrow X_I$. Let $\widehat{X}_I$ be the completion of $X_I$ along $U_I$. As a consequence of the moving lemma we obtain the \v{C}ech type complex 
$$
\check{\mathfrak{C}}^q : \check{\mathfrak{C}} ^q (\bullet)^0 \overset{\delta}{\to}  \check{\mathfrak{C}} ^q (\bullet)^1 \overset{\delta}{\to} \check{\mathfrak{C}} ^q (\bullet)^2 \overset{\delta}{ \to}  \cdots ,
$$
where $\check{\mathfrak{C}}^q (\bullet)^p = \prod_{ I \in \Lambda^{p+1}} \BGHz^q (\widehat{X}_I \mod U_I, \bullet)$, and define $\BGHz^q (\mathcal{U}, \bullet) ={\rm Tot} \  \check{\mathfrak{C}} ^q (\bullet)$. With a suitable notion of refinement, which is a bit more general than Hartshorne's, we construct for each $Y \in \Sch_k$,
$$
\BGHz^q (Y, \bullet):= \underset{\mathcal{U}}{\hocolim} \  \BGHz^q (\mathcal{U}, \bullet),
$$
which is well-defined as an isomorphism class of $\mathcal{D} ({\rm Ab} (Y))$. All these require the moving lemma of \S \ref{sec:first indep}.

We define the yeni higher Chow group, our final object, to be the $(-n)$-th Zariski hypercohomology group
$$
\BGH^q (Y, n):= \mathbb{H}_{\rm Zar} ^{-n} (Y, \BGHz^q (Y, \bullet)).
$$
We check some descent date for $\BGHz^q (Y, \bullet)$ so that they form a stack. 

\subsubsection{Consistency and nilpotence}\label{sec:nilpotence}
The above construction of $\BGH^q (Y, n)$ for $Y \in \Sch_k$ has to be consistent with certain objects studied previously. For the first class, when $Y$ is equidimensional and smooth over $k$, we show that our yeni higher Chow group $\BGH^q (Y, n)$ is isomorphic to the classical higher Chow group $\CH^q (Y, n)$ of S. Bloch \cite{Bloch HC} (see Theorem \ref{thm:sm formal}). Thus, indeed our theory is an extension of the classical motivic cohomology on smooth $k$-schemes to $\Sch_k$.

\medskip

On the other hand, for the fat points $Y_m:= \Spec (k[t]/ (t^m))$ over the integers $m \geq 2$, in \cite{Park Tate} we defined the groups $\BGH^q (Y_m, n)$ to be $\CH^q (\Spf (k[[t]]) \mod Y_m, n)$, without taking any colimits over systems of local embeddings. We check that our yeni higher Chow groups here coincide with the definition in \emph{ibid.} See Theorem \ref{thm:consistency k_m}. 

\medskip

One more test is to see whether our theory does detect the nilpotence. This comes from the main theorem of \emph{ibid.} Since our yeni higher Chow group is isomorphic to the group of \cite{Park Tate}, when $|k|\gg 0$, the graph homomorphism
$$K_n ^M (k[t]/(t^m)) \to \BGH^n (Y_m, n)
$$
in the Milnor range is an isomorphism of groups.
Since the Milnor $K$-groups of the non-reduced schemes $Y_m$ differ over different values of $m \geq 2$, so do the groups $\BGH^n (Y_m, n)$.

\subsection{Functoriality and consequences}\label{sec:1.3}
 In \S \ref{sec:Cech3}, we show that the yeni higher Chow groups $\BGH^q (Y, n)$ are contravariant functorial in $Y \in \Sch_k$. The functoriality implies the existence of a cup product structure, as one has the concatenations of cycles and the pull-back along the diagonal morphism $\Delta_Y: Y \to Y \times Y$.

The existence of such a general functorial extension of the higher Chow groups beyond the smooth case has been unknown so far.

For instance, for a local complete intersection (l.c.i.)~morphism between $k$-schemes, we have the associated pull-back on the usual Chow groups (e.g. W. Fulton \cite[\S 6.6, p.113]{Fulton}, or more generally on the algebraic cobordism, M. Levine \cite[\S 6.5.4, p.198]{Levine cobordism}). Even when the given morphism is not l.c.i., if the codomain of a morphism is smooth, then alternatively a version of moving lemma for Chow groups may allow us to define the pull-back. Since every morphism between smooth $k$-schemes is automatically l.c.i., and also a moving lemma holds in this case, this gives a functorial theory on $\Sm_k$. However, a general morphism between singular $k$-schemes may not be l.c.i., nor is there yet a moving lemma that works for higher Chow groups of singular schemes. These circumstances have been technical obstacles so far in studies of algebraic cycles on schemes with singularities. Sometimes Hironaka's resolution of singularities provided a way out for reduced schemes over a good base field, but yet this method was insufficient for non-reduced schemes.

This is one of reasons why the cycle model via formal schemes of this article, which is no longer hindered by technical barriers for functoriality, is a good candidate.  

\medskip

The functoriality engenders some further thoughts. For instance, when $g: Y_1 \to Y_2$ is a morphism, the pull-back $g^*$ is a morphism in a derived category. We define the relative yeni higher Chow complex of $g$ to be the homotopy fiber of $g^*$. In the special case when we have an effective divisor $g: D \hookrightarrow Y$, one may ask whether this relative complex is related to the higher Chow complex of $Y$ with modulus $D$ of Binda-Saito \cite{BS}. 

By restricting our theory onto the subcategory of Artin local $k$-schemes, we naturally deduce various local deformation functors as well, also known as functors of Artin rings by M. Schlessinger \cite{Schlessinger}. Thus the theory of this article offers a ground to study a deformation theory of cycle classes. The author would like to see its potential connection with Bloch-Esnault-Kerz \cite{BEK} in the future.

\subsection{Miscellaneous structures}

The model presented in this paper is a cycle-based functorial extension of the motivic cohomology on $\Sm_k$ to $\Sch_k$ that detects nilpotence, satisfying a few structures. One may still insist to ask why this model is a ``right" one. 

This seems to be a difficult meta-mathematical question to answer fully at this moment. One of ways to justify it is to construct a version of the Atiyah-Hirzebruch type spectral sequence from $\BGH^q (-, n)$ converging to the algebraic $K$-theory, 
\begin{equation}\label{eqn:AH ss}
E^2_{p,q}=\BGH^{q} (Y, p+q) \Rightarrow K_{p+q} (Y),
\end{equation}
generalizing Bloch-Lichtenbaum \cite{BL} and Friedlander-Suslin \cite{FS} to $\Sch_k$. It is far from being completed at this time. Nevertheless, we provide a few properties as evidences. 

Firstly in \S \ref{sec:Chern}, we prove that there are functorial first Chern class maps
$$
c_1: \Pic (Y) \to \BGH^1 (Y, 0)
$$
for $Y $ in the subcategory $\Sep_k$ of separated $k$-schemes of finite type. The reader will see that the Milnor patching description of ${\rm Pic} (Y)$ in terms of ${\rm Pic} (\widehat{X})$ and the mod $Y$-equivalence on cycles are compatible in a sense, and  this compatibility plays its roles in the construction of $c_1$.

Secondly, in \S \ref{sec:preview}, we discuss the connections with the Milnor $K$-theory.

In \S \ref{sec:Milnor2}, we show that there are graph homomorphisms from the Milnor $K$-theory to the yeni higher Chow groups in the Milnor range. We leave one conjectural guess (see Guess \ref{conj:00}) that for semi-local $k$-schemes essentially of finite type, the graph maps should be isomorphisms. 

This conjecture is already known to hold in the smooth case (see Elbaz-Vincent--M\"uller-Stach \cite{EVMS} and M. Kerz \cite{Kerz Gersten} as well as Nesterenko-Suslin \cite{NS} and B. Totaro \cite{Totaro}). As the first non-smooth case, the conjecture is also proven for the scheme $\Spec ( k[t]/(t^m))$ in \cite{Park Tate}, as mentioned earlier in \S \ref{sec:nilpotence}. 

We remark that the computations in \cite{Park Tate} have an exotic (at least for some cycles theorists like the author) flavor of non-archimedean analysis in rigid analytic geometry involving the Tate algebras of J. Tate \cite{Tate}, in addition to the usual dry flavors of algebraic cycles:

\medskip

\begin{center}
\textbf{singular schemes $\longrightarrow$ formal schemes $\longleftarrow$ rigid analytic spaces}
\end{center}
$$
(k[t]/(t^m)) [y_1, \cdots, y_n] \   \longleftarrow \   k[[t]]\{ y_1, \cdots, y_n \} \   \longrightarrow \ \  k(\! ( t ) \!) \{y_1, \cdots, y_n \}.
$$

This calculation may serve as a potential clue on how some computational studies of the yeni higher Chow groups might be approached. A glimpse of such ``adic analytic arguments" can be found in Lemma \ref{lem:rest poly 1}, as well.





\medskip

There are yet lots of questions to ask, attempt, and answer besides those mentioned so far. Here are a few additional ones.

\medskip

For each $Y \in \Sch_k$, in the virtual codimension $1$, we guess that the complex $\BGHz^1 (Y, \bullet)$ is quasi-isomorphic to $\mathcal{O}_Y ^{\times}[-1]$. Its first cohomology is related to the Guess \ref{conj:00} when $n=1$. If this holds, then we can regard $\BGHz^1 (Y, \bullet)$ as the ``motivic sheaf of weight $1$" on $\Sch_k$ denoted either by $\mathbb{Z}(1)$ or $\Gamma (1)$. It was predicted by A. Beilinson \cite{Beilinson Soule}, and on smooth $k$-schemes proven by S. Bloch \cite[Corollary (6.4), p.289]{Bloch HC} using the higher Chow complex.

Another one is on the Grothendieck-Riemann-Roch theorem for singular schemes. In \cite{PP} by the author and P. Pelaez, a new functorial filtration $F ^{\bullet} K_n (Y)$ on the algebraic $K$-group of a singular $Y$ is defined. This also uses various $\widehat{X}$ arising from closed immersions $Y \hookrightarrow X$ into smooth $k$-schemes and a \v{C}ech machine of a cosimplicial space in the homotopy category, analogous to ours here. We can ask whether there is a cycle class map for $Y \in \Sch_k$
$$
 \BGH^q (Y, n) \to gr ^q K_n (Y).
 $$
If this map induces an isomorphism after tensoring with $\mathbb{Q}$, it would be a generalization to singular schemes of the Grothendieck-Riemann-Roch theorem of SGA VI \cite{SGA6} for $n=0$ and of S. Bloch \cite{Bloch HC} for $n>0$, known so far in the smooth case. In \cite{PP}, it is proven that when $Y$ is smooth, the filtration $F^{\bullet}$ coincides with the homotopy coniveau filtration of M. Levine \cite{Levine coniveau}, so that the spectral sequence associated is the desired motivic Atiyah-Hirzebruch spectral sequence in \eqref{eqn:AH ss}. This gives a hope that the filtration and the spectral sequence considered in \cite{PP} may be a good candidate for $Y \in \Sch_k$ in general.

Finally, one may ask whether the yeni higher Chow groups satisfy the pro-\emph{cdh} descent of M. Morrow \cite{Morrow}. The algebraic $K$-theory of noetherian schemes satisfies it (Kerz-Strunk-Tamme \cite{KST IM}), so this might be the case for the groups $\BGH^q (Y, n)$. At least we have the Zariski descent in this article.

\medskip

\textbf{Conventions.}
In this paper $k$ is an arbitrary field, unless said otherwise. 
Let $\Sch_k$ be the category of all $k$-schemes of finite type, not necessarily separated. The subcategory of separated $k$-schemes is denoted by $\Sep_k$. Let $\QProj_k$, $\QAff_k$, $\Aff_k$ be its subcategories of quasi-projective $k$-schemes, quasi-affine $k$-schemes of finite type, and affine $k$-schemes of finite type, respectively. 

Let $\Sm_k$ be the subcategory of $\Sch_k$ of smooth $k$-schemes. 

All noetherian (formal) schemes we consider in this article are assumed to be finite dimensional.

\section{Higher Chow cycles on formal schemes}\label{sec:cycles}

In \S \ref{sec:cycles}, we recall some elementary facts on noetherian formal schemes, and discuss the notion of algebraic cycles on them needed in this article.

\subsection{Some facts on noetherian formal schemes}

Basic references on noetherian formal schemes are EGA I \cite[Ch 0, \S 7, p.60]{EGA1} and \cite[\S 10, p.180]{EGA1}, and  Fujiwara-Kato \cite{FK}. 
We recall some definitions needed in this paper:

\begin{defn} Let $\mathfrak{X}$ be a noetherian formal scheme. 

Recall that the \emph{dimension} of $\mathfrak{X}$ is the supremum of the Krull dimensions of the local rings $\mathcal{O}_{\mathfrak{X}, x}$ over all $x \in |\mathfrak{X}|$. Throughout the article, \emph{all} noetherian formal schemes we consider will be assumed to be finite dimensional. In general, we have $\dim \ \mathfrak{X} \geq \dim \ |\mathfrak{X}|$, where the latter is the dimension of the underlying noetherian topological space $|\mathfrak{X}|$. 

Each open subset $U \subset | \mathfrak{X}|$ gives the open formal subscheme $(U, \mathcal{O}_{\mathfrak{X}} | _U)$, often denoted by $\mathfrak{X}|_U$. 

We say a noetherian affine formal scheme $ \Spf (A)$ is equidimensional if $A$ is equidimensional. We say that $\mathfrak{X}$ is equidimensional if the affine open formal subschemes $\mathfrak{X}|_U$ are equidimensional over all nonempty affine open subsets $U \subset | \mathfrak{X}|$. 

We say $\mathfrak{X}$ is \emph{integral}, if every nonempty affine open formal subscheme $\mathfrak{X}|_U \subset \mathfrak{X}$ is given by $\mathfrak{X}|_U= \Spf (A)$ for some integral domain $A$. 

For a coherent $\mathcal{O}_{\mathfrak{X}}$-module $\mathcal{F}$ on a noetherian formal scheme $\mathfrak{X}$, the \emph{topological support} of $\mathcal{F}$ is defined by
$$
\Supp (\mathcal{F}) = \{ y \in | \mathfrak{X}| \ | \ \mathcal{F}_y \not = 0 \}.
$$

A \emph{closed formal subscheme} $\mathfrak{Y}$ of $\mathfrak{X}$ is a ringed space $ (|\mathfrak{Y}|, \mathcal{O}_{\mathfrak{Y}})$ given by an ideal sheaf $\mathcal{I} \subset \mathcal{O}_{\mathfrak{X}}$, namely, $\mathcal{O}_{\mathfrak{Y}} = \mathcal{O}_{\mathfrak{X}}/ \mathcal{I}$ and $|\mathfrak{Y}|={\rm Supp} (\mathcal{O}_{\mathfrak{Y}})$. We often say that ``$\mathfrak{Y} \subset \mathfrak{X}$ is a closed formal subscheme" (see \cite[Definition 4.3.3, p.327]{FK} or \cite[D\'efinition I-(10.14.2), p.210]{EGA1}).

When $\mathfrak{X}$ is equidimensional and $\mathfrak{Z} \subset \mathfrak{X}$ is an integral closed formal subscheme, we define the codimension to be
$$
{\rm codim}_{\mathfrak{X}} \mathfrak{Z} := \dim \ \mathfrak{X} - \dim \ \mathfrak{Z}.
$$
We can extend this notion of codimension to all closed formal subschemes in the obvious way.

We say $\mathfrak{X}$ is \emph{regular}, if for each point $x \in |\mathfrak{X}|$, the local ring $\mathcal{O}_{\mathfrak{X}, x}$ is a regular local ring. 
\qed
\end{defn}

Some important examples of regular noetherian formal schemes come from the following situation of formal neighborhoods:

\begin{lem}\label{lem:exoskin}
Let $Y$ be a quasi-projective $k$-scheme. 
\begin{enumerate}
\item Then there exists a closed immersion $Y \hookrightarrow X$ into an equidimensional smooth $k$-scheme. 

\item Let $\widehat{X}$ be the completion of $X$ along $Y$. Then $\widehat{X}$ is a regular equidimensional noetherian formal $k$-scheme. 
\end{enumerate}
\end{lem}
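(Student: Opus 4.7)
The plan is as follows. For part (1), I would exploit the quasi-projectivity of $Y$ to fix a locally closed immersion $Y \hookrightarrow \mathbb{P}^N_k$; taking its schematic closure $\overline{Y}$ and setting $Z := \overline{Y}\setminus Y$ (closed in $\overline{Y}$ and hence in $\mathbb{P}^N_k$), the scheme $X := \mathbb{P}^N_k \setminus Z$ is open in $\mathbb{P}^N_k$, so smooth and equidimensional of dimension $N$, and $Y = \overline{Y} \cap X$ becomes a closed subscheme of $X$.

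For part (2), that $\widehat{X}$ is a noetherian formal scheme is standard (EGA~I \S10.8), so the work lies in regularity and equidimensionality. Both will be checked affine-locally: writing $U = \Spec A \subset X$ with $Y \cap U = V(I)$, we have $\widehat{X}|_{Y \cap U} = \Spf \widehat{A}$ for the $I$-adic completion $\widehat{A}$. A point $x$ of $Y \cap U$ corresponds to a prime $\mathfrak{p} \subset A$ with $\mathfrak{p} \supset I$, and $\mathcal{O}_{\widehat{X},x} = \widehat{A}_{\mathfrak{p}\widehat{A}}$. The structural map $A_{\mathfrak{p}} \to \widehat{A}_{\mathfrak{p}\widehat{A}}$ is flat and local, and using $\widehat{A}/I\widehat{A} = A/I$ together with $I \subset \mathfrak{p}$ I would identify $\widehat{A}/\mathfrak{p}\widehat{A} = A/\mathfrak{p}$, so that the fiber $\widehat{A}_{\mathfrak{p}\widehat{A}}/\mathfrak{p}\widehat{A}_{\mathfrak{p}\widehat{A}}$ becomes $\kappa(\mathfrak{p})$, trivially regular. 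The flat-local regularity criterion (e.g.\ EGA~IV 17.3.3 or Matsumura, Thm.~23.7), together with the regularity of $A_{\mathfrak{p}}$ coming from the smoothness of $X$, then forces regularity of $\widehat{A}_{\mathfrak{p}\widehat{A}}$.

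For the equidimensionality of $\widehat{A}$, I will use that $I\widehat{A}$ lies in the Jacobson radical of $\widehat{A}$ (invert $1 - ax$ by the geometric series for $x \in I\widehat{A}$), which lifts idempotents from $\widehat{A}/I\widehat{A} = A/I$ and produces a product decomposition $\widehat{A} = \prod_j \widehat{A}_j$ whose factors biject with the connected components of $|Y \cap U|$. Each $\widehat{A}_j$ is a regular noetherian ring with connected spectrum; since for regular noetherian schemes the irreducible components coincide with the connected components, each $\widehat{A}_j$ is in fact a regular noetherian domain. Every maximal ideal of $\widehat{A}_j$ is open (it contains $I\widehat{A}_j \subset \operatorname{Jac}(\widehat{A}_j)$) and hence of the form $\mathfrak{m}_y \widehat{A}_j$ for some closed point $y \in Y_j$; the flat-local computation again shows its height equals $\dim A_{\mathfrak{m}_y}$, which is $N$ because $A$ is equidimensional and catenary (finite type over a field). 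Taking the supremum of heights over maximal ideals yields $\dim \widehat{A}_j = N$, and consequently $\widehat{A}$ is equidimensional of dimension $N$ as required.

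The main obstacle is the equidimensionality step: it requires assembling the idempotent lifting to obtain the product decomposition of $\widehat{A}$, the passage from regularity with connected spectrum to integrality of each factor, and the uniform identification $\dim \widehat{A}_j = N$ from catenarity and equidimensionality of $X$. The regularity part is comparatively routine once the fiber of $A_{\mathfrak{p}} \to \widehat{A}_{\mathfrak{p}\widehat{A}}$ has been recognised as the residue field $\kappa(\mathfrak{p})$.
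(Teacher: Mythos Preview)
Your argument is essentially sound and considerably more explicit than the paper's own proof, which simply cites Park--Pelaez \cite[Lemma~2.3.1]{PP} for regularity and EGA~${\rm IV}_2$ \cite[Corollaire~(7.1.5)]{EGA4-2} for equidimensionality without further comment.

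One correction is needed in the regularity step: the stalk $\mathcal{O}_{\widehat{X},x}$ at the point corresponding to $\mathfrak{p}\supset I$ is not the ordinary localization $\widehat{A}_{\mathfrak{p}\widehat{A}}$ but the \emph{completed} localization $\widehat{A}_{\{\mathfrak{p}\widehat{A}\}}$ in the sense of EGA~I, Ch.~0, (7.6.15)--(7.6.17); the paper says this explicitly just after the lemma. Unwinding that definition using $\widehat{A}/I^n\widehat{A}=A/I^n$, one finds $\mathcal{O}_{\widehat{X},x}=\varprojlim_n (A/I^n)_{\mathfrak{p}}=\widehat{A_{\mathfrak{p}}}$, the $IA_{\mathfrak{p}}$-adic completion of $A_{\mathfrak{p}}$. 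Your flat-local regularity argument then applies verbatim to the map $A_{\mathfrak{p}}\to\widehat{A_{\mathfrak{p}}}$, with the same identification of the closed fiber as $\kappa(\mathfrak{p})$, so nothing is lost beyond renaming the target ring.

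As for the comparison: the paper's route is brief but opaque, deferring both claims to external references. Your approach makes the mechanism visible --- the flat-local transfer of regularity and of dimension through the zero-dimensional fiber $\kappa(\mathfrak{p})$, combined with idempotent lifting along $\widehat{A}\twoheadrightarrow A/I$ (kernel in the Jacobson radical) to reduce equidimensionality of $\widehat{A}$ to that of the finite-type $k$-algebra $A$. The EGA corollary the paper invokes packages essentially the same ingredients, so the two arguments agree in substance; yours simply unpacks the citation. The benefit of your version is that it is self-contained and reusable; the paper's buys brevity.
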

\begin{proof}
(1) is apparent. The regularity part of (2) is proved in Park-Pelaez \cite[Lemma 2.3.1]{PP}. That $\widehat{X}$ is equidimensional follows from EGA ${\rm IV}_2$ \cite[Corollaire (7.1.5), p.184]{EGA4-2}. 
\end{proof}

When $Y, X, \widehat{X}$ are as in Lemma \ref{lem:exoskin}, the noetherian formal scheme $\widehat{X}$ is a locally ringed space, where the stalks are local rings defined via the completed rings of fractions (EGA I \cite[Ch 0, (7.6.15), p.74]{EGA1}). A point is that the stalks are regular local rings.

\begin{remk}
Note that when $Y$ is affine in the above, if desired we may choose $X$ to be affine as well. 

However, it doesn't mean that for a given closed immersion $Y \hookrightarrow X$, we can find an affine open $X^0\subset X$ that contains $Y$. There is a counterexample, which the author learned from an answer by Damiano Testa to a question of Timo Sch\"urg on MathOverflow: take $k= \bar{k}$, and take an elliptic curve $E$. Regard $E$ as an abelian group. Take a closed point $p$ of infinite order in the group. Embed $E \subset \mathbb{P}^2$ via the very ample divisor $3 \mathbf{O}$, where $\mathbf{O}\in E$ is the zero of the group. Then $E\setminus \{p \} \subset \mathbb{P}^2 \setminus \{ p \}$ is a closed immersion of an affine curve. This is a counterexample: if there is any affine open $U \subset \mathbb{P}^2 \setminus \{ p \}$ that contains $E\setminus \{p \}$, then the complement $C:=\mathbb{P}^2\setminus U$ is a plane curve that intersects $E$ only at $p$. Since $p$ is of infinite order, this is not possible.

However, when $Y$ is affine (resp. quasi-affine), for any closed immersion $Y \hookrightarrow X$, the formal scheme $\widehat{X}$ is an affine (resp. quasi-affine) formal scheme.
\qed
 \end{remk}

\begin{defn}\label{defn:exo0}
We call the above $\widehat{X}$ an \emph{exoskin of $Y$}. 
\qed
\end{defn}

We know that fiber products exist in the category of formal schemes (see Fujiwara-Kato \cite[Corollary I-1.3.5, p.271]{FK} or EGA I \cite[Proposition (10.7.3), p.193]{EGA1}). The following shows how exoskins behave under the fiber products of formal schemes. 

\begin{lem}\label{lem:prod completion}
For $i=1,2$, let $Y_i$ be quasi-projective $k$-schemes. Choose closed immersions $Y_i \hookrightarrow X_i$ into smooth $k$-schemes. Let $\widehat{X}_i$ be the completion of $X_i$ along $Y_i$. Regard $Y_1 \times Y_2$ as a closed subscheme of $X_1 \times X_2$, and let $\widehat{X_1 \times X_2}$ be the completion of $X_1 \times X_2$ along $Y_1 \times Y_2$. 

Then we have an isomorphism of formal schemes
\begin{equation}\label{eqn:prod completion 0}
 \widehat{X}_1 \times \widehat{X}_2 \simeq \widehat{X_1 \times X_2}.
 \end{equation}
\end{lem}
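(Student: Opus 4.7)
The plan is to reduce the statement to an affine-local computation and then patch, using the standard description of the fiber product in the category of noetherian formal schemes.

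First I would choose affine open covers $\{U_i^{(1)} = \Spec A_i\}$ of $X_1$ and $\{V_j^{(2)} = \Spec B_j\}$ of $X_2$ with $Y_1 \cap U_i^{(1)}$ cut out by an ideal $I_i \subset A_i$ and $Y_2 \cap V_j^{(2)}$ cut out by $J_j \subset B_j$. Both sides of \eqref{eqn:prod completion 0} have the same underlying topological space, namely $|Y_1| \times |Y_2|$, since the completion functor does not change the underlying space and since the fiber product of formal schemes has the fiber product of the underlying topological spaces as its underlying space (EGA I \cite[(10.7)]{EGA1}). On the open affine $U_i^{(1)} \times V_j^{(2)} = \Spec(A_i \otimes_k B_j)$ the subscheme $Y_1 \times Y_2$ is cut out by the ideal $K_{ij} := I_i \otimes B_j + A_i \otimes J_j$.

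The core step is then the algebraic assertion that the $K_{ij}$-adic completion of $A_i \otimes_k B_j$ is canonically isomorphic to the completed tensor product $\widehat{A_i}\,\widehat{\otimes}_k\,\widehat{B_j}$ of the $I_i$-adic completion and the $J_j$-adic completion. This follows because $K_{ij}^n$ is cofinal with the product of ideals $I_i^n \otimes B_j + A_i \otimes J_j^n$, so the two inverse systems defining the completions are equivalent. Under the description of affine formal schemes via adic rings, this precisely says that $\Spf\widehat{A_i} \times \Spf\widehat{B_j}$ in the category of noetherian formal $k$-schemes, which by \cite[Proposition (10.7.3)]{EGA1} is $\Spf(\widehat{A_i}\,\widehat{\otimes}_k\,\widehat{B_j})$, is canonically isomorphic to the formal scheme $\Spf((A_i \otimes_k B_j)^{\wedge}_{K_{ij}})$, i.e. to the completion of $U_i^{(1)} \times V_j^{(2)}$ along $Y_1 \times Y_2$.

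Finally I would glue: the constructed isomorphisms on each piece $U_i^{(1)} \times V_j^{(2)}$ are compatible on overlaps because both are obtained from the same functorial operation of completion applied to an open affine subscheme of $X_1 \times X_2$ containing $Y_1 \times Y_2$. Equivalently, one can argue more cleanly by invoking the universal property: both sides of \eqref{eqn:prod completion 0} represent the functor on noetherian adic $k$-formal schemes $\mathfrak{T}$ sending $\mathfrak{T}$ to pairs of morphisms $\mathfrak{T} \to \widehat{X}_i$, which by the universal property of completion \cite[(10.9.4)]{EGA1} is the same as pairs of morphisms $\mathfrak{T} \to X_i$ whose set-theoretic images lie in $Y_i$. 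The main (mild) obstacle is the verification that the cofinality of $\{K_{ij}^n\}$ with $\{I_i^n \otimes B_j + A_i \otimes J_j^n\}$ is preserved under the noetherian hypothesis so that the completed tensor product really computes the $K_{ij}$-adic completion; this is standard, and then the patching is straightforward.
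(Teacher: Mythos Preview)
Your proposal is correct and follows essentially the same approach as the paper: both reduce to the affine case and then verify that the two filtrations $\{K_{ij}^n\}$ and $\{I_i^n \otimes B_j + A_i \otimes J_j^n\}$ are cofinal (the paper does this explicitly via the binomial theorem), so that the $K_{ij}$-adic completion agrees with the completed tensor product. Your alternative remark via the universal property of completion is a nice gloss, but the substance of the argument is the same.
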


\begin{proof}

For $i=1,2$ and for scheme points $y_i \in Y_i \subset X_i$, choose affine open neighborhoods $U_i \subset X_i$ containing $y_i$. Since $U_i$ is affine, the closed subscheme $Y_i \cap U_i$ is also affine, and $\widehat{X}_i |_{Y_i \cap U_i} = \widehat{U}_i$. Since the fiber product is defined by patching the local affine formal schemes, the question of establishing the isomorphism \eqref{eqn:prod completion 0} is local. Hence we may assume $Y_i$ and $X_i$ are affine.

For $i=1,2$, let $Y_i = \Spec (B_i)$, $X_i = \Spec (A_i)$ so that we have natural surjections $A_i \to B_i$. Let $I_i \subset A_i$ be the kernels. Let $A_3:= A_1 \otimes_k A_2$, $B_3:= B_1 \otimes_k B_2$. Then $X_1 \times_k X_2 = \Spec (A_3)$ and $Y_1 \times_k Y_2 = \Spec (B_3)$. 

Note that 
\begin{equation}\label{eqn:prod completion 0-}
B_3  = \frac{A_1}{I_1} \otimes_k \frac{A_2}{I_2} \simeq \frac{ A_1 \otimes_k A_2}{ I_1 \otimes_k A_2 + A_1 \otimes_k I_2},
\end{equation}
and let $I_3:= I_1 \otimes_k A_2 + A_1 \otimes_k I_2$, the denominator of the right hand side of \eqref{eqn:prod completion 0-}. Here, for $1 \leq i \leq 3$, we have $\widehat{X}_i = \Spf (\widehat{A}_i)$, where 
$$
\widehat{A}_i:= \varprojlim_n \frac{A_i}{ I_i ^n}.
$$

The product $\widehat{X}_1 \times \widehat{X}_2$ is given by the formal spectrum of the completed tensor product $\widehat{A}_1 \widehat{\otimes}_k \widehat{A}_2$ computed by
\begin{equation}\label{eqn:prod completion 1}
\varprojlim_{m,n} \frac{A_1}{ I_1 ^m} \otimes_k \frac{ A_2}{I_2 ^n} \simeq \varprojlim_{m,n} \frac{ A_1 \otimes_k A_2}{ I_1 ^m \otimes_k A_2 + A_1 \otimes_k I_2 ^n}.
\end{equation}

For any given pair $m, n \geq 1$ of integers, if $N \geq m+n$, by the binomial theorem we have
\begin{equation}\label{eqn:prod completion 2-1}
I_3 ^N = (I_1 \otimes_k A_2 + A_1 \otimes_k I_2)^N \subset I_1 ^m \otimes_k A_2 + A_1 \otimes_k I_2 ^n .
\end{equation}
On the other hand, for any integer $N \geq 1$, we have
\begin{equation}\label{eqn:prod completion 2-2}
I_1 ^N \otimes_k A_2 + A_1 \otimes_k I_2 ^N \subset (I_1 \otimes_k A_2 + A_1 \otimes_k I_2)^N = I_3 ^N.
\end{equation}

By \eqref{eqn:prod completion 2-1} and \eqref{eqn:prod completion 2-2}, we see that the two systems $\{ I_1 ^m \otimes_k A_2 + A_1 \otimes_k I_2 ^n \}_{m, n \geq 1}$ and $\{ I_3 ^N \}_{N \geq 1}$ define the same topology on $A_3$. Hence the ring \eqref{eqn:prod completion 1} is equal to the ring $\varprojlim_N A_3/ I_3 ^N$. This proves the lemma.
\end{proof}

There is a natural notion of the image of a morphism of formal schemes:
\begin{defn}\label{defn:image}
Let $f: \mathfrak{X} \to \mathfrak{Y}$ be a morphism of noetherian formal schemes. For the canonical morphism $f^{\sharp}: \mathcal{O}_{\mathfrak{Y}} \to f_* \mathcal{O}_{\mathfrak{X}}$ of the sheaves of rings on $|\mathfrak{Y}|$, consider the ideal sheaf $\mathcal{I}:= \ker (f^{\sharp}) \subset \mathcal{O}_{\mathfrak{Y}}$. Let $\mathfrak{Z} \subset \mathfrak{Y}$ be the closed formal subscheme defined by $\mathcal{I}$. This $\mathfrak{Z}$ is called the (formal) scheme theoretic image of $f$.

For a nonempty open subset $U \subset |\mathfrak{X}|$, let $g: \mathfrak{X}|_U \hookrightarrow \mathfrak{X}$ be the open immersion of formal schemes. Let $\iota: \mathfrak{Z} \subset \mathfrak{X}|_U$ be a closed formal subscheme. Then the image of the composite $g \circ \iota: \mathfrak{Z} \hookrightarrow \mathfrak{X}|_U \hookrightarrow \mathfrak{X}$ in the above sense is called the Zariski closure of the formal subscheme $\mathfrak{Z}$ in $\mathfrak{X}$.
\qed
\end{defn}

\medskip

Let $\mathfrak{X}$ be a noetherian formal scheme. Recall (see \cite[\S 2.4, 2.5]{Leo AT} or SGA VI \cite[Expos\'e I, \S 4, p.119]{SGA6}) that a complex $\mathcal{E}$ in the derived category $ \mathcal{D} (\mathfrak{X}):= \mathcal{D} (\mathcal{O}_{\mathfrak{X}})$ of $\mathcal{O}_{\mathfrak{X}}$-modules is called a \emph{perfect complex} on $\mathfrak{X}$ if for every point $x \in  | \mathfrak{X} | $, there exist an open neighborhood $U \subset |\mathfrak{X}|$ of $x$ and a bounded complex $\mathcal{F}$ of locally free $\mathcal{O}_{\mathfrak{X}|_U}$-modules of finite type  together with an isomorphism $\mathcal{F} \overset{\sim}{\to} \mathcal{E}|_{U}$ in $\mathcal{D}(\mathfrak{X}|_U)$. We let $\mathcal{D}_{\perf} (\mathfrak{X})$ be the triangulated subcategory of $\mathcal{D}(\mathfrak{X})$ of perfect complexes on $\mathfrak{X}$. We recall that a perfect complex $\mathcal{F}$ is a \emph{strictly perfect complex} if it is a bounded complex of locally free $\mathcal{O}_{\mathfrak{X}}$-modules of finite type.

A complex that is locally quasi-isomorphic to a bounded above complex of locally free sheaves of finite type is called a \emph{pseudo-coherent complex}. Thus a perfect complex is a pseudo-coherent complex. Under the noetherian assumptions, they have coherent cohomology sheaves. They form the triangulated subcategory $\mathcal{D}_{\rm coh} (\mathfrak{X})$ of $\mathcal{D} (\mathfrak{X})$ as well.

\medskip

We recall the following (originally from SGA VI \cite[Expos\'e IV, \S 2.5, pp.280-281]{SGA6} and \cite[Expos\'e I, Corollaire 5.10, p.138]{SGA6}; see also \cite[Lemma 2.2.1]{PP}):

\begin{cor}\label{cor:coh=perfect}
Let $\mathfrak{X}$ be a regular noetherian formal scheme. 

Then the natural inclusion $\mathcal{D}_{\rm perf} (\mathfrak{X}) \hookrightarrow \mathcal{D}_{\rm coh} (\mathfrak{X})$ from the perfect complexes on $\mathfrak{X}$ to the pseudo-coherent complexes with bounded cohomologies, is an equivalence.

In particular, each pseudo-coherent complex on a regular noetherian formal scheme $\mathfrak{X}$ with bounded cohomologies is a perfect complex. 
\end{cor}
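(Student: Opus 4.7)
The plan is to reduce the statement to a local algebraic fact about regular local rings, and then globalize by truncating a resolution. Both the property of being perfect and that of being pseudo-coherent with bounded cohomology are local on $|\mathfrak{X}|$, so it suffices to show that near each point $x_0 \in |\mathfrak{X}|$, any object $\mathcal{E} \in \mathcal{D}_{\rm coh}(\mathfrak{X})$ admits a representative by a bounded complex of locally free $\mathcal{O}_{\mathfrak{X}}$-modules of finite type on some open neighborhood of $x_0$. By the finite-dimensionality assumption on $\mathfrak{X}$, one has a uniform bound $d := \dim \mathfrak{X}$ on the Krull dimensions of the stalks $\mathcal{O}_{\mathfrak{X}, x}$, each of which is a regular local ring by hypothesis.

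Given such an $\mathcal{E}$, pseudo-coherence provides, after shrinking to a sufficiently small open formal subscheme $\mathfrak{U} \ni x_0$, a quasi-isomorphism $\mathcal{F}^{\bullet} \to \mathcal{E}|_{\mathfrak{U}}$ from a bounded-above complex $\mathcal{F}^{\bullet}$ of locally free $\mathcal{O}_{\mathfrak{U}}$-modules of finite type. The strategy is then to form a stupid/smart truncation $\tau^{\geq N} \mathcal{F}^{\bullet}$ at some sufficiently negative integer $N$, whose leftmost term is the cokernel $\mathcal{F}^{N}/\operatorname{im}(\mathcal{F}^{N-1})$, and to verify that for $N \ll 0$, this leftmost term is again locally free of finite type in a smaller neighborhood of $x_0$. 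If this can be established, the resulting truncation is a bounded complex of locally free sheaves of finite type representing $\mathcal{E}|_{\mathfrak{U}}$, and $\mathcal{E}$ is perfect near $x_0$.

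The heart of the matter is the truncation step, which is precisely where regularity enters. By the Auslander–Buchsbaum–Serre theorem, every finitely generated module over the regular local ring $\mathcal{O}_{\mathfrak{X}, x}$ has projective dimension at most $d$. Combining this with the bounded-cohomology hypothesis on $\mathcal{E}$, one chooses $N$ smaller than the lowest non-vanishing cohomological degree of $\mathcal{E}$ by more than $d$. A routine homological computation then shows that the stalk at $x_0$ of $\mathcal{F}^N / \operatorname{im}(\mathcal{F}^{N-1})$ is a projective, hence free, $\mathcal{O}_{\mathfrak{X}, x_0}$-module of finite type; Nakayama together with coherence of $\mathcal{O}_{\mathfrak{X}}$ promotes this freeness to local freeness of finite type on an open neighborhood of $x_0$, completing the argument.

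The main obstacle is of a bookkeeping nature rather than a conceptual one: one needs the pseudo-coherence formalism, the theory of locally free sheaves of finite type, and the identification of stalks with the completed local rings of EGA I, Ch 0, \S 7, to behave in the setting of noetherian formal schemes exactly as in the case of ordinary noetherian schemes. This is available from the references SGA VI \cite[Expos\'e IV, \S 2.5]{SGA6} and \cite[Expos\'e I, Corollaire 5.10]{SGA6} cited above, and was already recorded in Park-Pelaez \cite[Lemma 2.2.1]{PP}. Granting these foundations, the proof reduces to the two inputs isolated above, namely pseudo-coherent resolvability and the Auslander–Buchsbaum–Serre bound on projective dimension, and the corollary follows.
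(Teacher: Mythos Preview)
Your argument is correct and follows the standard route: localize, resolve by locally free sheaves of finite type using pseudo-coherence, then truncate far enough to the left that the Auslander--Buchsbaum--Serre bound forces the leftmost term to be locally free. The paper itself does not give a proof here; it simply records the statement and cites SGA~VI \cite[Expos\'e IV, \S 2.5; Expos\'e I, Corollaire 5.10]{SGA6} and \cite[Lemma 2.2.1]{PP} as sources. Your sketch is essentially the proof one finds in those references, adapted to the formal-scheme setting, so there is no discrepancy in approach.
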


Recall (Fujiwara-Kato \cite[Definition 3.1.3, p.300]{FK}):

\begin{defn}\label{defn:aqc}
Let $\mathfrak{X}$ be a noetherian formal scheme. Let $\mathcal{F}$ be an $\mathcal{O}_{\mathfrak{X}}$-module. We say $\mathcal{F}$ is \emph{adically quasi-coherent} (abbreviated a.q.c.), if it is complete with respect to an ideal $\mathcal{I}$ of definition (see EGA I \cite[Proposition (10.5.4), p.187]{EGA1}), and for any open $U \subset | \mathfrak{X}|$, the quotient $(\mathcal{F}|_U) / ( \mathcal{I} \cdot (\mathcal{F}|_U))$ is a quasi-coherent sheaf on the scheme $(U, \mathcal{O}_U/ \mathcal{I})$. \qed
\end{defn}

We have compiled some needed facts on a.q.c sheaves:

\begin{lem}\label{lem:aqc sum}
Let $\mathfrak{X}$ be a noetherian formal scheme. Then: 
\begin{enumerate}
\item The structure sheaf $\mathcal{O}_{\mathfrak{X}}$ is a.q.c.
\item Any ideal of definition is a.q.c.
\item A sheaf $\mathcal{F}$ being a.q.c. is a local condition.
\item A direct sum of two a.q.c.~sheaves is a.q.c.
\item An $\mathcal{O}_{\mathfrak{X}}$-module $\mathcal{F}$ is coherent if and only if it is a.q.c.~of finite type.
\item A perfect complex $\mathcal{F}$ on $\mathfrak{X}$ that is quasi-isomorphic to a strictly perfect complex is quasi-isomorphic to a bounded complex of a.q.c.~sheaves.
\item Let $f: \mathfrak{X} \to \mathfrak{Y}$ be an affine morphism of noetherian formal schemes, and let $\mathcal{F}$ be an a.q.c.~sheaf on $\mathfrak{X}$. Then $R^q f_* \mathcal{F} = 0$ for all $q \geq 1$.
\end{enumerate}
\end{lem}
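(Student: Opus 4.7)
The plan is to verify the seven items one by one, noting that most of them are either immediate from the definition of adically quasi-coherent sheaves or are standard results in Fujiwara--Kato \cite{FK} that only need to be located and recorded. I would work item by item, since they are independent claims compiled for later reference.

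For parts (1)--(4), I would proceed by direct verification from Definition \ref{defn:aqc}. For (1), an ideal of definition $\mathcal{I} \subset \mathcal{O}_{\mathfrak{X}}$ exists by the definition of a noetherian formal scheme, and $\mathcal{O}_{\mathfrak{X}}$ is $\mathcal{I}$-adically complete; the quotient $\mathcal{O}_{\mathfrak{X}}/\mathcal{I}$ is the structure sheaf of the reduction scheme $(|\mathfrak{X}|, \mathcal{O}_{\mathfrak{X}}/\mathcal{I})$ and is tautologically quasi-coherent there. For (2), if $\mathcal{I}$ is an ideal of definition, then $\mathcal{I}$ is itself $\mathcal{I}$-adically complete (as a submodule of a complete module, using that on a noetherian formal scheme $\mathcal{I}^n$ form a fundamental system), and $\mathcal{I}/\mathcal{I}^2$ is a quasi-coherent $\mathcal{O}_{\mathfrak{X}}/\mathcal{I}$-module on the reduction scheme. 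Part (3) is manifest since the conditions ``complete with respect to $\mathcal{I}$'' and ``$\mathcal{F}|_U/\mathcal{I}(\mathcal{F}|_U)$ quasi-coherent on $U$'' are phrased locally and stable under further restriction. For (4), completeness is preserved under finite direct sums and the quotient of a direct sum is the direct sum of quotients, which remains quasi-coherent.

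For (5), this is the key structural equivalence for formal schemes, and I would quote it from Fujiwara--Kato \cite{FK} (their Chapter I, \S 3 develops exactly the equivalence between coherent $\mathcal{O}_{\mathfrak{X}}$-modules and a.q.c.~sheaves of finite type, generalizing EGA I \cite[(10.10)]{EGA1}). Once (5) is in hand, part (6) is then easy: a strictly perfect complex is, by definition, a bounded complex of locally free $\mathcal{O}_{\mathfrak{X}}$-modules of finite type. Each such term is coherent, hence a.q.c.~by (5). Thus a strictly perfect representative of $\mathcal{F}$ is already a bounded complex of a.q.c.~sheaves, and if $\mathcal{F}$ is only quasi-isomorphic to such a representative, replacing $\mathcal{F}$ by that representative gives the claim.

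Finally, for (7), I would again refer to Fujiwara--Kato \cite{FK}, which proves the vanishing of higher direct images for a.q.c.~sheaves under affine morphisms of noetherian formal schemes (the analogue of Serre's theorem for affine schemes, and of the corresponding result for quasi-coherent sheaves on ordinary schemes). Concretely, one reduces to the case when $\mathfrak{Y} = \Spf(B)$ is affine, writes $\mathfrak{X}$ as an affine formal spectrum over $\mathfrak{Y}$, and expresses $R^q f_* \mathcal{F}$ via a Cech complex that is acyclic in positive degrees because the a.q.c.~condition is compatible with inverse limits along the powers of an ideal of definition.

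\textbf{Main obstacle.} The only non-formal input is (5) and the vanishing in (7); both are true but nontrivial, and the main work is really pointing to the right statements in \cite{FK} (or \cite{EGA1} for the quasi-coherent analogue) and checking that the noetherian and finite-dimensionality conventions fixed in this section are enough to invoke them. Everything else reduces to manipulations with completions and quotients.
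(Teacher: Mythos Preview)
Your proposal is correct and essentially mirrors the paper's proof: both treat (3), (4) as immediate from the definition, cite Fujiwara--Kato for the substantive items (1), (2), (5), (7), and deduce (6) from the observation that locally free sheaves of finite type are a.q.c. The only cosmetic difference is that for (6) you invoke (5) (locally free of finite type $\Rightarrow$ coherent $\Rightarrow$ a.q.c.), whereas the paper uses (1), (3), (4) directly (locally a finite direct sum of copies of $\mathcal{O}_{\mathfrak{X}}$, hence a.q.c.); both routes are one-liners.
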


\begin{proof}
(1), (2) : see Fujiwara-Kato \cite[Proposition 3.1.4, p.300]{FK}. (3) and (4) follow from definition. (5) : see \cite[Exercise I.3.7, p.320]{FK}. 

Note that by (1), (3), (4), all locally free $\mathcal{O}_{\mathfrak{X}}$-modules of finite type are a.q.c. For (6), since a strictly perfect complex is a bounded complex of locally free $\mathcal{O}_{\mathfrak{X}}$-modules of finite type, and each sheaf that appears in the latter complex is a.q.c., we have proven (6).

(7) : see \cite[Theorem 7.1.1-(2), p.397]{FK}. 
\end{proof}

The last property of Lemma \ref{lem:aqc sum} is an analogue of the corresponding well-known result for affine morphisms between noetherian schemes and quasi-coherent sheaves on them, e.g. in EGA II \cite[Corollaire (5.2.2), p.98]{EGA2}.

\subsection{Cycles on formal schemes}\label{sec:cycles fs}

In \S \ref{sec:cycles fs}, we recall and extend the notion of algebraic cycles on noetherian formal schemes from \cite{Park Tate}. While a good part of the notion of cycles on formal schemes is similar to the case of schemes, we need to consider some additional conditions with respect to ideals of definition. 

\medskip

\begin{defn}\label{defn:int cycle}
Let $\mathfrak{X}$ be a noetherian formal scheme. Let $\mathfrak{Y}$ and $ \mathfrak{Z}$ be two closed formal subschemes of $\mathfrak{X}$ given by their respective ideals $\mathcal{I}$ and $\mathcal{J} \subset \mathcal{O}_{\mathfrak{X}}$. 

The \emph{intersection} $\mathfrak{Y} \cap \mathfrak{Z}$ is defined to be the closed formal subscheme of $\mathfrak{X}$ given by the sum  $\mathcal{I} + \mathcal{J}  \subset \mathcal{O}_{\mathfrak{X}}$ of ideals. 

We say the intersection $\mathfrak{Y} \cap \mathfrak{Z}$ on $\mathfrak{X}$ is \emph{proper}, if either $\mathfrak{Y} \cap \mathfrak{Z}= \emptyset$, or $\mathfrak{Y} \cap \mathfrak{Z} \not = \emptyset$ with the right dimension. In case $\mathfrak{X}$ is equidimensional, the latter means 
$$
{\rm codim}_{\mathfrak{X}}  ( \mathfrak{Y} \cap \mathfrak{Z} ) = {\rm codim}_{\mathfrak{X}} \mathfrak{Y} + {\rm codim}_{\mathfrak{X}} \mathfrak{Z}
$$
in terms of the codimensions.\qed
\end{defn}

We don't work with all closed formal subschemes to construct cycles; we will require a few conditions later (see Definitions \ref{defn:formal cycle} and \ref{defn:HCG}). 

Recall that for a noetherian formal scheme $\mathfrak{X}$, there are two associated topological spaces. The first one is the underlying space $|\mathfrak{X}|$ of the ringed space $\mathfrak{X}$, with the Zariski topology. The second one is the sheaf of rings $\mathcal{O}_{\mathfrak{X}}$; since $\mathfrak{X}$ is noetherian, there exists an ideal of definition (see EGA I \cite[Proposition (10.5.4), p.187]{EGA1}), and the collection of all ideals of definition gives the notion of \emph{open ideals}. When $\mathcal{I}$ is a fixed ideal of definition, the collection $\{ \mathcal{I}^n \}_{n \geq 1}$ also generates the same topology, so that an ideal $\mathcal{J} \subset \mathcal{O}_{\mathfrak{X}}$ is open if it contains $\mathcal{I}^n$ for some $n \geq 1$. For any choice of an ideal of definition $\mathcal{I}$, the underlying topological space $|\mathfrak{X}|$ of the formal scheme $\mathfrak{X}$ is equal to the underlying topological space of the \emph{scheme} $\mathfrak{X}_{\mathcal{I}}=(|\mathfrak{X}|, \mathcal{O}_{\mathfrak{X}}/ \mathcal{I})$ defined by $\mathcal{I}$. 

Both of these topologies influence way we deal with cycles on formal schemes. Normally they do not cause confusions because they are defined on different sets.

\medskip

\begin{defn}\label{defn:formal cycle}
Let $\mathfrak{X}$ be a noetherian formal scheme and let $d \geq 0$ be an integer. Let $\un{z}_d (\mathfrak{X})$ be the free abelian group on the set of integral closed formal subschemes $\mathfrak{Z}$ of $\mathfrak{X}$ of dimension $d$. This may contain some undesirable cycles for our purposes, so we call $\un{z}_d (\mathfrak{X})$ the \emph{naive group} of $d$-cycles on $\mathfrak{X}$. 

The group $z_d (\mathfrak{X})$ of $d$-cycles on $\mathfrak{X}$ is the free abelian group on the set of integral closed formal subschemes $\mathfrak{Z}$ of $\mathfrak{X}$ of dimension $d$, that intersect properly with the subscheme $\mathfrak{X}_{\red}$ given by the largest ideal of definition of $\mathfrak{X}$ (EGA I \cite[Proposition (10.5.4), p.187]{EGA1}). Note that $z_d (\mathfrak{X}) \subset \un{z}_d (\mathfrak{X})$.

When $\mathfrak{X}$ is equidimensional of dimension $d_{\mathfrak{X}}$ and $0 \leq q \leq d _{\mathfrak{X}}$ is an integer, we define the group of cycles of codimension $q$ by $z^q (\mathfrak{X}) := z_{d_{\mathfrak{X}} - q} (\mathfrak{X}).$\qed
\end{defn}

While we defined them for general noetherian formal schemes, we mostly work with the affine and quasi-affine ones because then we can define the associated cycles of coherent sheaves on them. We discuss this point in detail in \S \ref{sec:ass cycle}. Recall that a noetherian quasi-affine formal scheme is just an open formal subscheme of a noetherian affine formal scheme.

\begin{exm}\label{exm:223}
Let $A$ be a noetherian $I$-adically complete ring for an ideal $I \subset A$. By EGA ${\rm III}_1$ \cite[Corollaire (5.1.8), p.495]{EGA3-1}, we have a bijection between (integral) closed subschemes of $\Spec (A)$ and (integral) closed formal subschemes of $\Spf (A)$. 
Hence, we can identify $\un{z}_d (\Spf (A)) = z_d (\Spec (A))$, which is the usual group of $d$-cycles on $\Spec (A)$, while $z_d (\Spf (A))$ is its subgroup consisting of the integral closed subschemes of $\Spec (A)$ that intersect properly with $\Spec (A/ I_0)$ for the largest ideal of definition $I_0$.

Suppose that $\mathfrak{X}$ is a noetherian formal scheme, but 
not a scheme. In this case, closed formal subschemes of $\mathfrak{X}$ given by open ideals do not intersect properly with $\mathfrak{X}_{\red}$. Hence the ideal sheaves that give the integral cycles in $z_d (\mathfrak{X})$ are at least non-open ideals of $\mathcal{O}_{\mathfrak{X}}$.
\qed
\end{exm}

\begin{exm}
Suppose the given formal scheme $\mathfrak{X}$ is actually a scheme, so that the zero ideal as well as nilpotent ideals are ideals of definition. Then every integral closed subscheme $\mathfrak{Z} \subset \mathfrak{X}$ intersects properly with the subschemes associated to ideals of definition, including $\mathfrak{X}$, so that $z_d (\mathfrak{X})$ is equal to the usual group of $d$-dimensional cycles on a scheme.\qed
\end{exm}

 On $z^* (\mathfrak{X})$ we have the following Gysin pull-back property:

\begin{lem}\label{lem:basic Gysin 1}
Let $\mathfrak{X}$ be an equidimensional noetherian formal scheme. Let $\mathcal{I}$ be an ideal of definition of $\mathcal{O}_{\mathfrak{X}}$. Let $\iota: \mathfrak{X}_{\mathcal{I}} \hookrightarrow \mathfrak{X}$ be the closed immersion defined by $\mathcal{I}$. 

Then there is the Gysin pull-back $\iota^*: z^* (\mathfrak{X}) \to z^* (\mathfrak{X}_{\mathcal{I}})$, given by $\mathfrak{Z} \mapsto [ \mathfrak{Z} \cap \mathfrak{X}_{\mathcal{I}}]$, where $[ \mathfrak{Z} \cap \mathfrak{X}_{\mathcal{I}}]$ is the cycle associated to the closed subscheme $\mathfrak{Z} \cap \mathfrak{X}_{\mathcal{I}}$ of the scheme $\mathfrak{X}_{\mathcal{I}}$.
\end{lem}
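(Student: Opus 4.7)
The plan is to verify that the proposed formula yields a well-defined cycle on the noetherian scheme $\mathfrak{X}_{\mathcal{I}}$, and then extend $\mathbb{Z}$-linearly. The question being local on $\mathfrak{X}$, I would first reduce to the affine case $\mathfrak{X} = \Spf(A)$ with $A$ a noetherian equidimensional $I$-adically complete ring, and let $I_0 \supset I$ denote (the global sections of) the largest ideal of definition. Then, invoking Example \ref{exm:223} and the equivalence of EGA ${\rm III}_1$ between closed formal subschemes of $\Spf A$ and closed subschemes of $\Spec A$, an integral generator $\mathfrak{Z}$ of $z^*(\mathfrak{X})$ corresponds to a prime $\mathfrak{p} \subset A$, and the scheme-theoretic intersection $\mathfrak{Z} \cap \mathfrak{X}_{\mathcal{I}}$ of Definition \ref{defn:int cycle} corresponds to the closed subscheme $\Spec(A/(\mathfrak{p} + I))$ of the \emph{scheme} $\mathfrak{X}_{\mathcal{I}} = \Spec(A/I)$.

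Next, I would apply the standard associated cycle construction on noetherian schemes: writing the irreducible components of $|\mathfrak{Z} \cap \mathfrak{X}_{\mathcal{I}}|$ as $W_1, \dots, W_r$ with generic points $\eta_i$, set
$$
[\mathfrak{Z} \cap \mathfrak{X}_{\mathcal{I}}] := \sum_{i=1}^{r} \ell_{\mathcal{O}_{\mathfrak{X}_{\mathcal{I}}, \eta_i}}\!\bigl(\mathcal{O}_{\mathfrak{Z} \cap \mathfrak{X}_{\mathcal{I}}, \eta_i}\bigr)\, [W_i],
$$
the lengths being finite since the localization at a generic point of a component is artinian. To verify that this cycle lies in $z^*(\mathfrak{X}_{\mathcal{I}})$, I would use that $\mathfrak{X}_{\mathcal{I}}$ is a noetherian \emph{scheme} whose reduction $(\mathfrak{X}_{\mathcal{I}})_{\red} = \mathfrak{X}_{\red}$ shares its underlying topological space with $\mathfrak{X}_{\mathcal{I}}$. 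Consequently $\codim_{\mathfrak{X}_{\mathcal{I}}}\!\mathfrak{X}_{\red} = 0$, and for every integral closed subscheme $W_i$ of $\mathfrak{X}_{\mathcal{I}}$ the intersection $W_i \cap \mathfrak{X}_{\red}$ has the same codimension as $W_i$ itself, so the proper intersection condition of Definition \ref{defn:int cycle} is automatic in this scheme-theoretic setting. Hence each $[W_i] \in z^*(\mathfrak{X}_{\mathcal{I}})$, and linear extension yields the desired group homomorphism.

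The main subtlety to handle is the compatibility with the codimension grading: one should check that when $\mathfrak{Z} \in z^q(\mathfrak{X})$, the components $W_i$ really do lie in $z^q(\mathfrak{X}_{\mathcal{I}})$. For this I would exploit the identity of topological spaces $V(\mathfrak{p} + I) = V(\mathfrak{p} + I_0)$, which holds because $I_0 = \sqrt{I}$ in $A$. Combined with $\dim(A/I) = \dim A_{\red}$ and the proper intersection hypothesis $\dim A/(\mathfrak{p}+I_0) = \dim A_{\red} - \text{ht}(\mathfrak{p})$, this yields
$$
\codim_{\mathfrak{X}_{\mathcal{I}}}\!\bigl(\mathfrak{Z} \cap \mathfrak{X}_{\mathcal{I}}\bigr) \;=\; \dim A_{\red} - \bigl(\dim A_{\red} - q\bigr) \;=\; q \;=\; \codim_{\mathfrak{X}} \mathfrak{Z},
$$
so the top-dimensional components of the intersection contribute to $z^q(\mathfrak{X}_{\mathcal{I}})$ as required. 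Reconciling the precise dimensions of \emph{all} irreducible components with the expected codimension $q$ is the delicate point, and is where the proper intersection hypothesis on $\mathfrak{Z}$ with $\mathfrak{X}_{\red}$ enters crucially, via the translation between Krull dimensions of the completed ring $A$ and its reduction $A_{\red}$.
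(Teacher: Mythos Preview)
Your proposal is correct and follows essentially the same reasoning as the paper, though at much greater length. The paper's proof is two sentences: it simply observes that $\mathfrak{X}_{\red} \hookrightarrow \mathfrak{X}_{\mathcal{I}}$ with $|\mathfrak{X}_{\red}| = |\mathfrak{X}_{\mathcal{I}}| = |\mathfrak{X}|$ and $\dim \mathfrak{X}_{\red} = \dim \mathfrak{X}_{\mathcal{I}}$, so proper intersection of $\mathfrak{Z}$ with $\mathfrak{X}_{\red}$ (which is the defining condition for $\mathfrak{Z} \in z^*(\mathfrak{X})$) immediately yields proper intersection with $\mathfrak{X}_{\mathcal{I}}$, and the associated cycle on the scheme $\mathfrak{X}_{\mathcal{I}}$ then makes sense. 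Your affine reduction, the explicit formula for the associated cycle, and the identity $V(\mathfrak{p}+I) = V(\mathfrak{p}+I_0)$ are all unpacking this same observation. One small caution: you write $\mathrm{ht}(\mathfrak{p})$ where the paper's convention (Definition~\ref{defn:int cycle}) uses $\dim \mathfrak{X} - \dim \mathfrak{Z}$ for codimension; these agree in the cases of interest but are not identical in full generality, so it is cleaner to phrase the dimension count directly in the paper's terms.
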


\begin{proof}
Note that for any ideal of definition $\mathcal{I}$ of $\mathfrak{X}$, the closed subscheme $\mathfrak{X}_{\mathcal{I}}$ of $\mathfrak{X}$ given by $\mathcal{I}$ induces a closed immersion $\mathfrak{X}_{\red} \hookrightarrow \mathfrak{X}_{\mathcal{I}}$ such that $|\mathfrak{X}_{\red} | = |\mathfrak{X}_{\mathcal{I}}| = |\mathfrak{X}|$ with $\dim \ \mathfrak{X}_{\red} = \dim \ \mathfrak{X}_{\mathcal{I}}$. The cycles in $z ^* (\mathfrak{X})$ intersect properly with $\mathfrak{X}_{\mathcal{I}}$, too, so we have the lemma. 
\end{proof}

\begin{remk}
In Lemma \ref{lem:basic Gysin 1}, that $\mathfrak{X}_{\mathcal{I}}$ is a noetherian \emph{scheme} is crucial, so that we have the associated cycle $[ \mathfrak{Z} \cap \mathfrak{X}_{\mathcal{I}}]$ on $\mathfrak{X}_{\mathcal{I}}$. 

In general, associated cycles on noetherian \emph{formal} schemes are a bit more complicated than one may hope. We discuss more on this issue in \S \ref{sec:ass cycle} below.
\qed
\end{remk}

\begin{remk}
We noted in Example \ref{exm:223} that cycles in $z_d (\Spf (A))$ are essentially cycles in $z_d (\Spec (A))$ with some extra proper intersection condition with $\Spec (A/I_0)$. One may ask then why we use the formal schemes: at first sight one may think the discussion is unnecessary, because we can work with the cycles in $z_{d, \{ \Spec (A/ I_0) \}} (\Spec (A))$, as is classically written.

We remark here that some other differences do appear later when we consider higher level cycles, i.e. higher Chow cycles on the formal schemes of the form $\mathfrak{X} \times \square^n$ with $\square:= \mathbb{P}^1 \setminus \{ 1 \}$, because restricted formal power series, not polynomials, enter into the picture. See Remark \ref{remk:2.5.2}.
\qed
\end{remk}

\subsection{The associated cycles}\label{sec:ass cycle}
We discuss cycles associated to coherent sheaves on noetherian formal schemes. We define them on noetherian affine and quasi-affine formal schemes only.

\subsubsection{Irreducible components}
A situation quite different from the case of schemes is regarding the notion of \emph{irreducible components} of noetherian formal schemes. For noetherian schemes, the topological irreducible components essentially allow us to define the associated cycles of coherent sheaves. 
They naturally carry the induced reduced integral closed subscheme structures.

However for noetherian \emph{formal} schemes, just resorting to the topological irreducible components of the underlying noetherian topological spaces, does not give a good notion of irreducible components of the formal schemes. At least for noetherian \emph{affine} formal schemes, we can give the following natural notion:

\begin{defn}\label{defn:irred cpnt}
Let $\mathfrak{X}=\Spf (A)$ be a noetherian affine formal scheme. The \emph{irreducible components} of the formal scheme $\mathfrak{X}$ are defined to be the integral closed formal subschemes $\mathfrak{X}_i=\Spf (A/ p_i)$, where $ p_i\subset A$ are the minimal prime ideals. \qed
\end{defn}

\begin{remk}\label{remk:Conrad}
For a general noetherian formal scheme $\mathfrak{X}$, which is not affine, the author does not know whether there is a good notion of its ``irreducible components" even when $\mathfrak{X}$ is defined over a field. 

Ideally, if there is a way to define a ``normalization" of a noetherian formal scheme $\mathfrak{X}$, one may be able to define the irreducible components as follows: for a ``normalization" morphism $\pi: \tilde{\mathfrak{X}} \to \mathfrak{X}$, define the irreducible components of $\mathfrak{X}$ by taking the images (see Definition \ref{defn:image}) of the topological connected components of $\tilde{\mathfrak{X}}$.

For instance, in the special case when $\mathfrak{X}$ is over the valuation ring $R$ of a complete discrete valued field $k$ with a nontrivial non-archimedean norm, B. Conrad \cite{Conrad cpnt} proved that there exists such a normalization. The main focus of \emph{ibid.}~was to define normalizations and irreducible components for rigid analytic spaces, but the proof is given there for formal schemes over $R$ as well. For more stuffs pertaining to this method on formal schemes, one can also read, e.g. J. Nicaise \cite[Definition 2.27 - Lemma 2.29]{Nicaise}. 
\qed
\end{remk}

Due to lack of a suitable normalization procedure for noetherian formal schemes in general, for a while we stick to the affine case to define the associated cycles of coherent sheaves in \S \ref{sec:ass cycle affine}, and then we extend this notion a bit to the quasi-affine case in \S \ref{sec:ass cycle quasi-affine}.

\subsubsection{The case of affine formal schemes}\label{sec:ass cycle affine}

To define the associated cycles of coherent sheaves on noetherian \emph{affine} formal schemes, we use the following notions of localizations and residue fields at integral closed formal subschemes:

\begin{defn}\label{defn:local ring formal}
Let $\mathfrak{X}=\Spf (A) $ be a noetherian affine formal scheme, and let $\mathfrak{Y} \subset \mathfrak{X}$ be a nonempty integral closed formal subscheme. This $\mathfrak{Y}$ is given by a prime ideal $P \subset A$ so that $\mathfrak{Y}= \Spf (B)$ for the integral domain $B= A/P$.

Define the \emph{the local ring of $\mathfrak{X}$ at $\mathfrak{Y}$} to be the localization $\mathcal{O}_{\mathfrak{X}, \mathfrak{Y}} := A_P$, with the maximal ideal $ \mathfrak{M}_{\mathfrak{X}, \mathfrak{Y}}= P \cdot A_P.$

We define the residue field $\kappa (\mathfrak{Y}):= \kappa ( \mathcal{O}_{\mathfrak{X}, \mathfrak{Y}}) =  \mathcal{O}_{\mathfrak{X}, \mathfrak{Y}} / \mathfrak{M}_{\mathfrak{X}, \mathfrak{Y}} = \kappa (P).$\qed
\end{defn}

\medskip

We essentially used the structure of the associated scheme $\Spec (A)$ for the formal scheme $\Spf (A)$ in the above. The integral closed formal subscheme $\mathfrak{Y}$ is not uniquely determined by its topological generic point in $|\mathfrak{Y}| \subset | \mathfrak{X}|$ in general. The local ring $\mathcal{O}_{\mathfrak{X}, \mathfrak{Y}}$ of Definition \ref{defn:local ring formal} is not in general the stalk of the structure sheaf $\mathcal{O}_{\mathfrak{X}}$ at the topological generic point of $\mathfrak{Y}$ either. See Remarks \ref{remk:local ring formal 1} and \ref{remk:local ring formal 2} below.

\begin{remk}\label{remk:local ring formal 1}
The topological generic point $\mathfrak{y}$ of $|\mathfrak{Y}|$ does not uniquely determine $\mathfrak{Y}$ in general. 
For instance, consider $\mathfrak{X} = \Spf (k[[t_1, t_2]])$ with the $(t_1, t_2)$-adic topology on $k[[t_1, t_2]]$. Take integral closed formal subschemes $\mathfrak{Y}_1 = \Spf (k[[t_1, t_2]]/ (t_2))$ and $\mathfrak{Y}_2= \Spf (k[[t_1, t_2]]/ (t_1))$. 

All of $\mathfrak{X}, \mathfrak{Y}_1, \mathfrak{Y}_2$ have the common underlying topological spaces given by the singleton set $\Spec (k)$. Note that the ideals $(t_1)$ and $ (t_2) \subset k[[t_1, t_2]]$ are non-open prime ideals in the $(t_1, t_2)$-adic topology. \qed
\end{remk}

\begin{remk}\label{remk:local ring formal 2}
In the situation of Definition \ref{defn:local ring formal}, we remark that when $|\mathfrak{Y}|$ is irreducible as a topological space, the local ring $\mathcal{O}_{\mathfrak{X}, \mathfrak{y}}$ and the residue field $k(\mathfrak{y})$ at the topological generic point $\mathfrak{y}$ of $|\mathfrak{Y}|$ are different from $\mathcal{O}_{\mathfrak{X}, \mathfrak{Y}}$ and $\kappa (\mathfrak{Y})$ in general.

Consider $\mathfrak{X}=\Spf (k[[t_1, t_2]])$ and $\mathfrak{Y}= \Spf (k[[t_1, t_2]]/ (t_2))$. Let $A:= k[[t_1, t_2]]$ and $P= (t_1)$. Here the local ring at $\mathfrak{Y}$ is $\mathcal{O}_{\mathfrak{X}, \mathfrak{Y}} = A_P= k[[t_1, t_2]]_{P}$ with the maximal ideal $(t_1)A_P$ and its residue field $\kappa (\mathfrak{Y})$ is $A_P/ (t_1)A_P$, which is huge with $\dim_k \ \kappa (\mathfrak{Y})= \infty$.  

On the other hand, the generic point $\mathfrak{y}$ is the singleton $\Spec (k)$ so that the local ring at $\mathfrak{y}$ is $\mathcal{O}_{\mathfrak{X}, \mathfrak{y}} = k[[t_1, t_2]]$ with the maximal ideal $(t_1, t_2)$, and its residue field is $\kappa (\mathfrak{y}) = k[[t_1, t_2]]/ (t_1, t_2) = k$. \qed
\end{remk}

With the notion of local rings in Definition \ref{defn:local ring formal}, we define the associated cycles of coherent $\mathcal{O}_{\mathfrak{X}}$-modules on a noetherian affine formal scheme $\mathfrak{X}= \Spf (A)$, essentially exploiting the associated scheme $\Spec (A)$:

\begin{defn}\label{defn:ass cycle}
Let $\mathfrak{X}=\Spf (A)$ be a noetherian affine formal scheme, and let $\mathcal{F}$ be a coherent $\mathcal{O}_{\mathfrak{X}}$-module. This $\mathcal{F}$ is given by $M^{\Delta}$ for a coherent $A$-module $M$ (see EGA I \cite[Proposition (10.10.2), p.201]{EGA1}).

For an integral closed formal subscheme $\mathfrak{Z} \subset \mathfrak{X}$ given by a prime ideal $P \subset A$ with $\mathfrak{Z} = \Spf (A/P)$, the localization $M_P$ is an $\mathcal{O}_{\mathfrak{X}, \mathfrak{Z}}$-module (see Definition \ref{defn:local ring formal}). Let $\mathcal{F}_{\mathfrak{Z}} := M_P$. Define
$$
n_{\mathfrak{Z}}= n_{\mathfrak{Z}} (\mathcal{F}) :=  \tuborg 0 & \mbox{ if } \mathcal{F}_{\mathfrak{Z}} \mbox{ has an infinite } \mathcal{O}_{\mathfrak{X}, \mathfrak{Z}}\mbox{-length}, \\
\ell_{\mathcal{O}_{\mathfrak{X},\mathfrak{Z}}} (\mathcal{F}_{\mathfrak{Z}}) & \mbox{ if the length is finite.} \sluttuborg
$$

Using this, define the cycle associated to $\mathcal{F}$ to be the formal sum
\begin{equation}\label{eqn:cycle sum}
[ \mathcal{F}] := \sum_{\mathfrak{Z}} n_{\mathfrak{Z}} \cdot \mathfrak{Z} \in \un{z}_* (\mathfrak{X})
\end{equation}
for some integers $n_{\mathfrak{Z}}>0$, where $\mathfrak{Z}$ runs over all integral closed formal subschemes of $\mathfrak{X}$. 

Note that at this moment, the associated cycles could still be defined by open prime ideals as well, so that it is not necessarily in $z_* (\mathfrak{X})$. Recall Definition \ref{defn:formal cycle} for the distinction of these two groups. \qed
\end{defn}

The above Definition \ref{defn:ass cycle} requires us to answer the following question:

\begin{lem}\label{lem:ass cycle}
Let $\mathfrak{X}= \Spf (A) $ be a noetherian affine formal scheme, and let $\mathcal{F}$ be a coherent $\mathcal{O}_{\mathfrak{X}}$-module. Then the sum \eqref{eqn:cycle sum} is finite, so that the cycle $[\mathcal{F}]$ is indeed defined in $\un{z}_* (\mathfrak{X})$.
\end{lem}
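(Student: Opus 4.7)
The plan is to reduce the finiteness to the analogous classical fact about associated cycles on the noetherian affine scheme $\Spec(A)$.

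First, I would translate everything to commutative algebra. By EGA I \cite[Proposition (10.10.2), p.201]{EGA1}, the coherent sheaf $\mathcal{F}$ is of the form $M^{\Delta}$ for a finitely generated $A$-module $M$ (coherent equals finitely generated since $A$ is noetherian). The integral closed formal subschemes $\mathfrak{Z} \subset \mathfrak{X}$ are in bijection with the prime ideals $P \subset A$ via $\mathfrak{Z} = \Spf(A/P)$, and by Definition \ref{defn:local ring formal} we have $\mathcal{O}_{\mathfrak{X}, \mathfrak{Z}} = A_P$ and $\mathcal{F}_{\mathfrak{Z}} = M_P$. Thus the sum \eqref{eqn:cycle sum} becomes
\[
[\mathcal{F}] \;=\; \sum_{P \in \Spec(A)} n_P \cdot [\Spf(A/P)],
\]
where $n_P = \ell_{A_P}(M_P)$ whenever this length is finite, and $n_P = 0$ otherwise.

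Next I would pin down exactly which primes contribute. Since $M$ is a finitely generated $A$-module, $M_P$ is finitely generated over $A_P$, and a standard fact in commutative algebra says that $\ell_{A_P}(M_P)$ is finite if and only if $\Supp_{A_P}(M_P) \subseteq \{PA_P\}$, i.e.\ no prime $Q \subsetneq P$ lies in $\Supp_A(M)$. Combined with the positivity condition $M_P \neq 0$, i.e.\ $P \in \Supp_A(M)$, this is equivalent to $P$ being a minimal element of $\Supp_A(M) = V(\Ann_A(M))$.

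To conclude, since $A$ is noetherian, $V(\Ann_A(M))$ is a noetherian topological subspace of $\Spec(A)$, hence has only finitely many irreducible components and thus only finitely many minimal primes. So only finitely many $P$ contribute, and $[\mathcal{F}]$ is a well-defined element of $\un{z}_*(\mathfrak{X})$. The potentially delicate point to watch is that the combinatorics of all prime ideals of $A$ is what controls the associated cycles on the formal scheme, even though the underlying topological space $|\Spf(A)|$ only records the open primes; this is harmless because the local rings, residue fields, and lengths in Definition \ref{defn:local ring formal} are defined at the purely algebraic level of $A$ and $M$, insensitive to the adic topology, so the scheme-theoretic argument applies without modification.
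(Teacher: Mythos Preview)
Your proof is correct and takes essentially the same approach as the paper: both translate the problem to the finitely generated $A$-module $M$ via EGA I (10.10.2) and then invoke a standard finiteness result in commutative algebra. The paper simply cites the finiteness of the set of associated primes of $M$ (Matsumura, Theorems 6.1, 6.5), whereas you go one step further and identify the contributing primes precisely as the minimal primes of $\Supp_A(M)$; since these form a subset of the associated primes, the two arguments are equivalent in content.
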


\begin{proof}
The coherent sheaf $\mathcal{F}$ is given by $M^{\Delta}$ for a finitely generated $A$-module $M$ (EGA I \cite[Proposition (10.10.2), p.201]{EGA1}). The assertion then follows from the elementary fact in commutative algebra that for a noetherian commutative ring $A$ with unity, the set of associated primes of a finitely generated $A$-module $M$ is a finite set (H. Matsumura \cite[Theorems 6.1, 6.5, pp.38-39]{Matsumura}).
\end{proof}

\begin{remk}
The cycles on a noetherian affine formal scheme $\Spf (A)$ are essentially cycles on the scheme $\Spec (A)$. Thus, we can describe $[\mathcal{F}]$ of \eqref{eqn:cycle sum} also via a Jordan-H\"older type decreasing filtration
$$
0=: \mathcal{F}_{q+1} \subsetneq \mathcal{F}_q \subsetneq \cdots \subsetneq \mathcal{F}_0 = : \mathcal{F},
$$
where each quotient $\mathcal{F}_i / \mathcal{F}_{i+1}$ is isomorphic to some $\mathcal{O}_{\mathfrak{Z}}$ for an integral $\mathfrak{Z}$ (H. Matsumura \cite[Theorem 6.4, p.39]{Matsumura}). The construction of the filtration is done inductively, using irreducible components as in Definition \ref{defn:irred cpnt}. The cycle $[\mathcal{F}]$ is the sum of the above $\mathfrak{Z}$.

This filtration is not unique in general, but the (unordered) collection of successive quotients is. See \cite[Lemma 01YF]{stacks}. \qed
\end{remk}

Definition \ref{defn:ass cycle} readily extends to all perfect complexes on $\mathfrak{X}$:

\begin{cor}\label{cor:perfect ass cycle}
Let $\mathfrak{X}$ be a noetherian affine formal scheme and let $\mathcal{E} \in \mathcal{D}_{\perf} (\mathfrak{X})$ be a perfect complex on $\mathfrak{X}$. Then the association 
$$
\mathcal{E} \mapsto [\mathcal{E}]:= \sum_{i\in \mathbb{Z}} (-1)^i [  \mathcal{H}^i(\mathcal{E})] \in \un{z}_* (\mathfrak{X})
$$
gives a well-defined cycle.
\end{cor}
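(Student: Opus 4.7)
The goal is to check two things: (a) only finitely many of the summands $[\mathcal{H}^i(\mathcal{E})]$ are nonzero, and (b) each $\mathcal{H}^i(\mathcal{E})$ is a coherent $\mathcal{O}_{\mathfrak{X}}$-module, so that the cycle $[\mathcal{H}^i(\mathcal{E})]\in\un{z}_*(\mathfrak{X})$ is defined by Definition \ref{defn:ass cycle} and Lemma \ref{lem:ass cycle}. Granted these, the sum $[\mathcal{E}]=\sum_{i\in\mathbb{Z}}(-1)^i[\mathcal{H}^i(\mathcal{E})]$ is a finite $\mathbb{Z}$-linear combination in $\un{z}_*(\mathfrak{X})$, which is what is asserted.

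For (a), I would unwind the local definition of perfect complex recalled in the excerpt: for each $x\in|\mathfrak{X}|$ there exist an open neighborhood $U_x$ of $x$ and a bounded complex $\mathcal{F}_x$ of locally free $\mathcal{O}_{\mathfrak{X}|_{U_x}}$-modules of finite type with $\mathcal{F}_x\simeq\mathcal{E}|_{U_x}$ in $\mathcal{D}(\mathfrak{X}|_{U_x})$. In particular there are integers $a_x\leq b_x$ with $\mathcal{H}^i(\mathcal{E})|_{U_x}=0$ for $i\notin[a_x,b_x]$. Since $\mathfrak{X}$ is noetherian (hence the underlying space $|\mathfrak{X}|$ is quasi-compact), we can extract a finite subcover $\{U_{x_1},\dots,U_{x_r}\}$ of $|\mathfrak{X}|$, and setting $a=\min a_{x_j}$, $b=\max b_{x_j}$, we obtain $\mathcal{H}^i(\mathcal{E})=0$ for $i\notin[a,b]$.

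For (b), I would invoke the fact, recalled just before Corollary \ref{cor:coh=perfect}, that a perfect complex is in particular pseudo-coherent, and under the noetherian hypothesis the cohomology sheaves of a pseudo-coherent complex are coherent. This applies here with no regularity assumption: only the quasi-compactness and noetherianness of $\mathfrak{X}$ are used. Thus each $\mathcal{H}^i(\mathcal{E})$ is a coherent $\mathcal{O}_{\mathfrak{X}}$-module, and Definition \ref{defn:ass cycle} together with Lemma \ref{lem:ass cycle} produces a well-defined cycle $[\mathcal{H}^i(\mathcal{E})]\in\un{z}_*(\mathfrak{X})$.

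Combining (a) and (b), the alternating sum $\sum_i(-1)^i[\mathcal{H}^i(\mathcal{E})]$ has only finitely many nonzero terms, each a well-defined element of $\un{z}_*(\mathfrak{X})$, so $[\mathcal{E}]$ is itself a well-defined element of $\un{z}_*(\mathfrak{X})$. The main (mild) point to pin down is the global bound on cohomological amplitude in (a); everything else is a direct appeal to the already established material on coherent sheaves and their associated cycles on noetherian affine formal schemes.
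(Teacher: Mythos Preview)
Your proposal is correct and takes essentially the same approach as the paper's own proof, which simply asserts that perfect complexes have coherent cohomology sheaves with only finitely many nonzero, and then appeals to Lemma \ref{lem:ass cycle}. You have merely supplied the details behind those assertions (quasi-compactness for the global amplitude bound, pseudo-coherence for coherence of cohomology); the only point the paper mentions that you leave implicit is that the $\mathcal{H}^i(\mathcal{E})$ depend only on the quasi-isomorphism class of $\mathcal{E}$, but this is automatic from working in $\mathcal{D}_{\perf}(\mathfrak{X})$.
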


\begin{proof}
Since $\mathcal{E}$ is a perfect complex, the cohomology sheaves $\mathcal{H}^i (\mathcal{E})$ are all coherent, and there are at most finitely many indices $i \in \mathbb{Z}$ for which $\mathcal{H}^i (\mathcal{E})$ are possibly nonzero. Since these coherent sheaves are well-defined in the quasi-isomorphism class of $\mathcal{E}$ and their associated cycles are also well-defined by Lemma \ref{lem:ass cycle}, the assertion now follows. 
\end{proof}

In the above, the dimension $d$ part will be denoted by $[\mathcal{E}]_d$. If $\mathfrak{X}$ is equidimensional, then we can also define the codimension $q$-cycle $[\mathcal{E}]^q$.

\subsubsection{The case of quasi-affine formal schemes}\label{sec:ass cycle quasi-affine} 
We generalize the discussion of \S \ref{sec:ass cycle affine} to quasi-affine formal schemes.

A noetherian quasi-affine formal scheme is of the form $\mathfrak{X}|_U$ for an open subset $U \subset |\mathfrak{X}|$ of a noetherian affine formal scheme $\mathfrak{X}= \Spf (A)$. We have $|\mathfrak{X}| = | \Spec (A/I_0)|$ for an ideal of definition $I_0 \subset A$. The space $|\mathfrak{X}|$ is a closed subset of $|\Spec (A)|$ and its topology coincides with the induced topology from $|\Spec (A)|$.

Hence for the open $U \subset |\mathfrak{X}|$, there is an open subset $\tilde{U} \subset | \Spec (A)|$ such that $\tilde{U} \cap | \mathfrak{X}| = U$. For some ideal $J \subset A$, we have $\tilde{U} = |\Spec (A)| \setminus V(J)$, where $V(J)$ is the set of prime ideals of $A$ containing $J$.

Since the open subset $\tilde{U} \subset |\Spec (A)|$ is quasi-compact, there exist finitely many affine open subsets $\tilde{U}_i = \Spec (B_i) \subset |\Spec (A)|$ such that $\bigcup_i \tilde{U}_i = \tilde{U}$. For each $i$, let $U_i:= \tilde{U}_i \cap |\mathfrak{X}|$. This is an affine open subset of $|\mathfrak{X}|$. Thus it induces the noetherian affine open formal subschemes $\mathfrak{X}|_{U_i}$ of $\mathfrak{X}|_U$. 

\medskip

Let $\mathcal{F}$ be a coherent $\mathcal{O}_{\mathfrak{X}|_U}$-module. For each $i$, the restriction $\mathcal{F}_{U_i}:= \mathcal{F}|_{U_i}$ is a coherent $\mathcal{O}_{\mathfrak{X}|_{U_i}}$-module on the affine formal scheme $\mathfrak{X}|_{U_i}$. Hence by the affine case in \S \ref{sec:ass cycle affine}, we have the associated cycle $[\mathcal{F}_{U_i}] \in \un{z}_* (\mathfrak{X}|_{U_i})$ given by a formal finite sum of the form $m_{j}  \cdot \mathfrak{Z}_{j}$, where $m_j \in \mathbb{Z}_{>0}$ and $\mathfrak{Z}_{j}$ is an integral closed formal subscheme of $\mathfrak{X}|_{U_i}$ that corresponds to a prime ideal of $B_i$. Note that the closure of $\mathfrak{Z}_j$ in $\mathfrak{X}=\Spf (A)$ uniquely determines an integral cycle on $\mathfrak{X}$ and one on $\mathfrak{X}|_U$ as well. 

One little issue is whether these cycles $[ \mathcal{F}_{U_i}]$ over the indices $i$ are compatible; more precisely, for a different affine open subset $U_{i'}$, write $[\mathcal{F}_{U_{i'}}]= \sum m_j ' \cdot \mathfrak{Z}_j '$. Suppose $\mathfrak{Z}_j | _{U_i} \subset \mathfrak{X} |_{U_i}$ and $\mathfrak{Z}'_j |_{U_{i'}} \subset \mathfrak{X}|_{U_{i'}}$ are both nonempty, and they give the same restriction $\mathfrak{Z}_j|_{U_i \cap U_{i'}} = \mathfrak{Z}_j' |_{U_i \cap U_{i'}}$ on $\mathfrak{X}|_{U_i \cap U_{i'}}$. Then we wonder whether $m_j = m_j'$. 

Since $U_i \cap U_{i'}$ is affine, so is $\mathfrak{X }|_{U_i \cap U_{i'}}$. 
Since $\mathcal{F}_{U_i}|_{U_i \cap U_{i'}} =\mathcal{F}|_{U_i \cap U_{i'}}= \mathcal{F}_{U_{i'}}|_{U_i \cap U_{i'}}$ as $\mathcal{O}_{\mathfrak{X}|_{U_i \cap U_{i'}}}$-modules, under the restriction maps
$$
\un{z}_* (\mathfrak{X}|_{U_i}) \to \un{z}_* (\mathfrak{X}|_{U_i \cap U_{i'}}) \leftarrow \un{z}_* (\mathfrak{X}|_{U_{i'}}),
$$
we have the equalities of cycles $[\mathcal{F}|_{U_i}]|_{U_i \cap U_{i'}} = [ \mathcal{F} |_{U_i \cap U_{i'}}] = [ \mathcal{F}|_{U_{i'}}]|_{U_i \cap U_{i'}}.$ This implies $m_j = m_j' $.

Define the cycle $[\mathcal{F}] \in \un{z}_* (\mathfrak{X}|_U)$ to be the sum of $m  \cdot \mathfrak{Z}$ over all integral closed formal subscheme $\mathfrak{Z}\subset \mathfrak{X}|_U$ such that the restriction $\mathfrak{Z}|_{U_i}$ to $\mathfrak{X}|_{U_i}$ for some $i$, is nonempty and $m \cdot \mathfrak{Z}|_{U_i}$ is a term of $[\mathcal{F}_{U_i}]$. Since each $[\mathcal{F}_{U_i}]$ is a finite sum, and there are only finitely many affine open $U_i$, the sum $[\mathcal{F}]$ is finite. 

\medskip

This discussion generalizes to perfect complexes, too:

\begin{prop}\label{prop:perfect coherent ass quasi-affine}
For a perfect complex $\mathcal{F}$ on a noetherian quasi-affine formal scheme $\mathfrak{X}|_U$, there exists a well-defined associated cycle $[\mathcal{F}] \in \un{z}_* (\mathfrak{X}|_U)$ given by
$$
[\mathcal{F}] = \sum_{j} (-1)^j [\mathcal{H}^j (\mathcal{F})] \in \un{z}_* (\mathfrak{X}|_U).
$$
\end{prop}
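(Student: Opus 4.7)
The plan is to reduce the statement to the affine case already handled in Corollary \ref{cor:perfect ass cycle} (via Definition \ref{defn:ass cycle}) and then to check the compatibility on overlaps exactly as done in the paragraphs preceding the proposition for coherent sheaves. The main subtlety is that ``perfect'' is a local condition on $\mathfrak{X}|_U$, so one has to work locally on an affine cover and verify that the cohomology sheaves $\mathcal{H}^j(\mathcal{F})$ themselves are genuine coherent $\mathcal{O}_{\mathfrak{X}|_U}$-modules to which Definition \ref{defn:ass cycle} (extended quasi-affinely) applies, and that only finitely many of them are nonzero.

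First I would observe that since $\mathfrak{X}|_U$ is a noetherian (finite dimensional) formal scheme, a perfect complex $\mathcal{F}$ on it has the property that each cohomology sheaf $\mathcal{H}^j(\mathcal{F})$ is coherent and that the set of indices $j$ for which $\mathcal{H}^j(\mathcal{F}) \neq 0$ is finite. Coherence follows because $\mathcal{D}_{\perf} \subset \mathcal{D}_{\rm coh}$ by Corollary \ref{cor:coh=perfect}, and boundedness follows from the local existence of bounded strictly perfect representatives together with quasi-compactness of $|\mathfrak{X}|_U|$ (cover by finitely many affines $U_i$ on each of which $\mathcal{F}|_{U_i}$ is quasi-isomorphic to a bounded complex, and take the union of the bounded intervals).

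Next I would apply the construction of the associated cycle for coherent sheaves on noetherian quasi-affine formal schemes described in \S \ref{sec:ass cycle quasi-affine} to each $\mathcal{H}^j(\mathcal{F})$, obtaining $[\mathcal{H}^j(\mathcal{F})] \in \un{z}_*(\mathfrak{X}|_U)$. Combined with the boundedness of the cohomology, the finite sum
\[
[\mathcal{F}] := \sum_{j \in \mathbb{Z}} (-1)^j [\mathcal{H}^j(\mathcal{F})] \in \un{z}_*(\mathfrak{X}|_U)
\]
is then well-defined. Well-definedness with respect to the quasi-isomorphism class of $\mathcal{F}$ is automatic because quasi-isomorphic complexes have isomorphic cohomology sheaves, hence equal associated cycles by the affine case (Corollary \ref{cor:perfect ass cycle}) and the gluing of affine contributions already verified in \S \ref{sec:ass cycle quasi-affine}.

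The only step that requires a bit of care, and which I expect to be the main technical point, is the gluing on overlaps: if $\{U_i\}$ is a finite affine open cover of $U$, one must verify that the cycles $[\mathcal{H}^j(\mathcal{F})|_{U_i}] \in \un{z}_*(\mathfrak{X}|_{U_i})$ assemble into a single global cycle on $\mathfrak{X}|_U$. This follows by exactly the compatibility argument used just before the proposition for a single coherent sheaf, applied to each $\mathcal{H}^j(\mathcal{F})$: on the affine overlaps $U_i \cap U_{i'}$, the restrictions of $\mathcal{H}^j(\mathcal{F})|_{U_i}$ and $\mathcal{H}^j(\mathcal{F})|_{U_{i'}}$ agree as coherent sheaves, so by the affine construction of Definition \ref{defn:ass cycle} the local multiplicities along any integral closed formal subscheme of $\mathfrak{X}|_{U_i \cap U_{i'}}$ coincide, and hence patch to a well-defined global cycle.
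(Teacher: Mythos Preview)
Your proposal is correct and follows essentially the same approach as the paper's proof, which simply notes that a perfect complex has only finitely many nonzero cohomology sheaves, each coherent and well-defined in the quasi-isomorphism class, and that the associated cycle of each such coherent sheaf is already constructed by the discussion in \S\ref{sec:ass cycle quasi-affine}. Your write-up is more detailed (in particular spelling out the gluing-on-overlaps step), but the content is the same; one small remark is that you cite Corollary~\ref{cor:coh=perfect} for the inclusion $\mathcal{D}_{\perf} \subset \mathcal{D}_{\rm coh}$, which is fine for the inclusion itself, though the equivalence in that corollary requires regularity and is not needed here.
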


\begin{proof}

Because $\mathcal{F}$ is a perfect complex, the cohomology sheaves are well-defined in the quasi-isomorphism class of $\mathcal{F}$, while there are only finitely many possibly nontrivial cohomology sheaves $\mathcal{H}^j (\mathcal{F})$, and each such coherent sheaf has its well-defined associated cycle $[\mathcal{H}^j (\mathcal{F})]$ by the above discussion.
\end{proof}

\subsection{Intersection products}\label{sec:int prod}
Let $\mathfrak{X}$ be a regular noetherian quasi-affine formal scheme and let $\mathfrak{Y}_1, \mathfrak{Y}_2 \subset \mathfrak{X}$ be closed formal subschemes that intersect properly with each other (Definition \ref{defn:int cycle}). Define their intersection product using a Serre-type formula by
\begin{equation}\label{eqn:Serre product}
[\mathfrak{Y}_1] . [\mathfrak{Y}_2] := [\mathcal{O}_{\mathfrak{Y}_1} \otimes_{\mathcal{O}_{\mathfrak{X}}} ^{\mathbf{L}} \mathcal{O}_{\mathfrak{Y}_2}],
\end{equation}
which is the cycle associated to the perfect complex given by the derived tensor product. Here, the regularity of $\mathfrak{X}$ shows that the coherent $\mathcal{O}_{\mathfrak{X}}$-modules $\mathcal{O}_{\mathfrak{Y}_i}$ are perfect complexes on $\mathfrak{X}$, and so is their derived tensor product (Corollary \ref{cor:coh=perfect}).

One may express \eqref{eqn:Serre product} also as
$$
[\mathfrak{Y}_1] . [\mathfrak{Y}_2] :=  \sum_{\mathfrak{Z}} e (\mathfrak{Y}_1 \cap \mathfrak{Y}_2, \mathfrak{Z}) \cdot \mathfrak{Z},
$$
where $\mathfrak{Z}$ runs over all integral closed formal subschemes of $\mathfrak{Y}_1 \cap \mathfrak{Y}_2$, and $e(\mathfrak{Y}_1 \cap \mathfrak{Y}_2, \mathfrak{Z})$ is the intersection number defined to be the alternating sum of the lengths of the higher Tor sheaves of the pair $(\mathcal{O}_{\mathfrak{Y}_1}, \mathcal{O}_{\mathfrak{Y}_2})$ of $\mathcal{O}_{\mathfrak{X}}$-modules at each $\mathfrak{Z}$. If any of the lengths of the Tor sheaves are infinite, this intersection number is defined to be $0$. 

\medskip

The above \eqref{eqn:Serre product} extends to cycles associated to pairs of coherent $\mathcal{O}_{\mathfrak{X}}$-sheaves:

\begin{defn}\label{defn:sheaf proper int}
Let $\mathfrak{X}$ be a regular noetherian quasi-affine formal scheme. Let $\mathcal{F}_1$ and $ \mathcal{F}_2$ be coherent $\mathcal{O}_{\mathfrak{X}}$-modules.

We say that \emph{$\mathcal{F}_1$ and $\mathcal{F}_2$ intersect properly}, if for each pair $(\mathfrak{Z}_1, \mathfrak{Z}_2)$ of the integral cycle components $\mathfrak{Z}_1$ and $\mathfrak{Z}_2$ from the associated cycles $[\mathcal{F}_1]$ and $[\mathcal{F}_2]$ (see Definition \ref{defn:ass cycle}), respectively, they intersect properly with each other in the sense of Definition \ref{defn:int cycle}.\qed
\end{defn}

\begin{lem}\label{lem:sheaf proper int}
Let $\mathfrak{X}$ be a regular noetherian quasi-affine formal scheme. Suppose that coherent $\mathcal{O}_{\mathfrak{X}}$-modules $\mathcal{F}_1$ and $\mathcal{F}_2$ intersect properly. Then we have
\begin{equation}\label{eqn:Serre product2}
[\mathcal{F}_1] . [\mathcal{F}_2] := [ \mathcal{F}_1 \otimes_{\mathcal{O}_{\mathfrak{X}}} ^{\mathbf{L}} \mathcal{F}_2],
\end{equation}
extending \eqref{eqn:Serre product}.
\end{lem}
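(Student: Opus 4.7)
The plan is to reduce the general statement to the special case \eqref{eqn:Serre product} by a devissage through suitable filtrations, using the bi-additivity of both sides of \eqref{eqn:Serre product2} in short exact sequences. First, since the cycle group $\un{z}_* (\mathfrak{X}|_U)$ for a quasi-affine $\mathfrak{X}$ is glued from affine pieces as in \S \ref{sec:ass cycle quasi-affine}, and both the formation of $[\mathcal{F}_1 \otimes^{\mathbf{L}} \mathcal{F}_2]$ and the intersection product restrict to affine opens compatibly with restriction of cycles, I would reduce to the case $\mathfrak{X} = \Spf (A)$ with $A$ a regular noetherian ring, where $\mathcal{F}_i = M_i ^{\Delta}$ for finitely generated $A$-modules $M_i$ by EGA I \cite[Proposition (10.10.2)]{EGA1}.

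Next I would invoke the Jordan--H\"older type filtration on coherent sheaves recorded after Lemma \ref{lem:ass cycle}: each $\mathcal{F}_i$ admits a finite filtration $0 = \mathcal{F}_{i, r_i +1} \subsetneq \cdots \subsetneq \mathcal{F}_{i, 0} = \mathcal{F}_i$ whose successive quotients are isomorphic to structure sheaves $\mathcal{O}_{\mathfrak{Z}_{i,j}}$ of integral closed formal subschemes $\mathfrak{Z}_{i,j} \subset \mathfrak{X}$, and each $\mathfrak{Z}_{i,j}$ is one of the integral components appearing in the associated cycle $[\mathcal{F}_i]$. Since length is additive in short exact sequences of modules over the local rings $\mathcal{O}_{\mathfrak{X}, \mathfrak{Z}}$, we get the additivity
$$
[\mathcal{F}_i] = \sum_{j} [\mathcal{O}_{\mathfrak{Z}_{i,j}}] \quad \text{in } \un{z}_* (\mathfrak{X}).
$$

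On the right-hand side of \eqref{eqn:Serre product2}, each short exact sequence $0 \to \mathcal{F}' \to \mathcal{F} \to \mathcal{F}'' \to 0$ gives rise to a distinguished triangle
$$
\mathcal{F}' \otimes_{\mathcal{O}_{\mathfrak{X}}}^{\mathbf{L}} \mathcal{G} \to \mathcal{F} \otimes_{\mathcal{O}_{\mathfrak{X}}}^{\mathbf{L}} \mathcal{G} \to \mathcal{F}'' \otimes_{\mathcal{O}_{\mathfrak{X}}}^{\mathbf{L}} \mathcal{G} \to [+1]
$$
in $\mathcal{D}_{\perf}(\mathfrak{X})$ (the perfectness holds by regularity and Corollary \ref{cor:coh=perfect}). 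The induced long exact sequence of coherent cohomology sheaves, combined with the additivity of length and the Euler characteristic definition of the associated cycle in Corollary \ref{cor:perfect ass cycle}, yields the additivity
$$
[\mathcal{F} \otimes^{\mathbf{L}} \mathcal{G}] = [\mathcal{F}' \otimes^{\mathbf{L}} \mathcal{G}] + [\mathcal{F}'' \otimes^{\mathbf{L}} \mathcal{G}]
$$
whenever all three derived tensor products have finite-length cohomology along each component, i.e.\ when the proper intersection hypothesis is maintained along the filtration. This is where the hypothesis that $\mathcal{F}_1$ and $\mathcal{F}_2$ intersect properly (in the componentwise sense of Definition \ref{defn:sheaf proper int}) enters decisively: it guarantees that every pair $(\mathfrak{Z}_{1,j}, \mathfrak{Z}_{2,\ell})$ meeting in the filtrations intersects properly on $\mathfrak{X}$, so that the derived tensor products $\mathcal{O}_{\mathfrak{Z}_{1,j}} \otimes^{\mathbf{L}} \mathcal{O}_{\mathfrak{Z}_{2,\ell}}$ have the expected codimension behaviour, and formula \eqref{eqn:Serre product} applies to each such pair.

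Combining these additivities and iterating over the two filtrations, both sides of \eqref{eqn:Serre product2} become equal to the double sum $\sum_{j,\ell} [\mathcal{O}_{\mathfrak{Z}_{1,j}} \otimes^{\mathbf{L}} \mathcal{O}_{\mathfrak{Z}_{2,\ell}}]$, finishing the proof. The main technical point to verify is the additivity of the associated cycle in a distinguished triangle of perfect complexes under the proper intersection hypothesis: one must check that cycles of the same integral component $\mathfrak{Z}$ arising from different filtration quotients combine with the correct multiplicities, which amounts to the standard fact that $\ell_{\mathcal{O}_{\mathfrak{X},\mathfrak{Z}}}$ is additive on short exact sequences of finite length $\mathcal{O}_{\mathfrak{X}, \mathfrak{Z}}$-modules. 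The only subtlety is that the finite-length condition is preserved throughout, which is precisely Definition \ref{defn:sheaf proper int}.
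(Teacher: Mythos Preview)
Your proposal is correct and follows essentially the same d\'evissage as the paper's proof: reduce to the affine case, filter the coherent sheaves by subsheaves with successive quotients $\mathcal{O}_{\mathfrak{Z}}$, and use additivity of the associated cycle along the resulting distinguished triangles to reduce to the base case \eqref{eqn:Serre product}. The only organizational difference is that you filter both $\mathcal{F}_1$ and $\mathcal{F}_2$ simultaneously and collect a double sum, whereas the paper proceeds in stages (first treating the case $\mathcal{F}_2 = \mathcal{O}_{\mathfrak{Z}}$ by filtering $\mathcal{F}_1$, then handling general $\mathcal{F}_2$ by filtering $\mathcal{F}_1$ again and invoking the first step by symmetry); the content is identical.
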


\begin{proof}
By the constructions of the associated cycles in \S \ref{sec:ass cycle quasi-affine} in the quasi-affine case via the finite affine open cover, it is enough to prove the lemma when $\mathfrak{X}$ is affine.

\medskip

\textbf{Step 1:} 
We first consider the special case when $\mathcal{F}_2= \mathcal{O}_{\mathfrak{Z}}$ for an integral closed formal subscheme $\mathfrak{Z}$. We have a decreasing filtration of coherent $\mathcal{O}_{\mathfrak{X}}$-modules on $\mathcal{F}_1$
\begin{equation}\label{eqn:filtration F_1}
0 = \mathcal{G}_{r+1} \subsetneq \mathcal{G}_{r} \subsetneq \cdots \subsetneq \mathcal{G}_1 \subsetneq \mathcal{G}_0 = \mathcal{F}_1,
\end{equation}
such that $\mathcal{G}_i / \mathcal{G}_{i+1} \simeq \mathcal{O}_{\mathfrak{Z}_i}$ as $\mathcal{O}_{\mathfrak{X}}$-modules for integral closed formal subschemes $\mathfrak{Z}_i$ for $0 \leq i \leq r$. They induce short exact sequences of coherent $\mathcal{O}_{\mathfrak{X}}$-modules
\begin{equation}\label{eqn:ses F_1}
0 \to \mathcal{G}_{i+1} \to \mathcal{G}_i \to \mathcal{O}_{\mathfrak{Z}_i} \to 0.
\end{equation}
Apply the derived tensor $ - \otimes_{\mathcal{O}_{\mathfrak{X}}} ^{\mathbf{L}} \mathcal{F}_2$ (which is exact) to obtain short exact sequences of complexes
\begin{equation}\label{eqn:derived L F_2}
0 \to \mathcal{G}_{i+1} \otimes_{\mathcal{O}_{\mathfrak{X}}} ^{\mathbf{L}} \mathcal{F}_2 \to \mathcal{G}_i  \otimes_{\mathcal{O}_{\mathfrak{X}}} ^{\mathbf{L}} \mathcal{F}_2  \to \mathcal{O}_{\mathfrak{Z}_i}  \otimes_{\mathcal{O}_{\mathfrak{X}}} ^{\mathbf{L}} \mathcal{F}_2  \to 0.
\end{equation}
Taking the associated Tor long exact sequences and taking the alternating sums of the associated lengths, we have the equalities of the associated cycles (\cite[Lemma 02QZ]{stacks})
\begin{equation}\label{eqn:exact sum cycle}
[ \mathcal{G}_i  \otimes_{\mathcal{O}_{\mathfrak{X}}} ^{\mathbf{L}} \mathcal{F}_2  ] = [\mathcal{O}_{\mathfrak{Z}_i}  \otimes_{\mathcal{O}_{\mathfrak{X}}} ^{\mathbf{L}} \mathcal{F}_2] + [ \mathcal{G}_{i+1} \otimes_{\mathcal{O}_{\mathfrak{X}}} ^{\mathbf{L}} \mathcal{F}_2].
\end{equation}
Taking the sum of \eqref{eqn:exact sum cycle} over all $0 \leq i \leq r$, after telescoping cancellations, we obtain (note $\mathcal{G}_0 = \mathcal{F}_1$, $\mathcal{G}_{r+1} =0$, and $\mathcal{F}_2 = \mathcal{O}_{\mathfrak{Z}}$)
$$
[ \mathcal{F}_1  \otimes_{\mathcal{O}_{\mathfrak{X}}} ^{\mathbf{L}}  \mathcal{F}_2] = \sum_{i=0} ^r [ \mathcal{O}_{\mathfrak{Z}_i}  \otimes_{\mathcal{O}_{\mathfrak{X}}} ^{\mathbf{L}}   \mathcal{O}_{\mathfrak{Z}}]= \sum_{i=0} ^r [ \mathfrak{Z}_i] \cdot [\mathfrak{Z}] = [\mathcal{F}_1] \cdot [\mathcal{F}_2] ,
$$
because $[\mathcal{F}_1 ] =  \sum_{i=0} ^r [ \mathfrak{Z}_i] $ from \eqref{eqn:filtration F_1}. This proves \eqref{eqn:Serre product2} in this case.

\medskip

\textbf{Step 2:} In case $\mathcal{F}_1$ is of the form $\mathcal{O}_{\mathfrak{Z}}$, by symmetry we also have the identity \eqref{eqn:Serre product2}.

\medskip

\textbf{Step 3:} In the general case when both $\mathcal{F}_1$ and $\mathcal{F}_2$ are coherent $\mathcal{O}_{\mathfrak{X}}$-modules, we take a filtration of $\mathcal{F}_1$ as in \eqref{eqn:filtration F_1}. This induces short exact sequences of the form \eqref{eqn:ses F_1}. Applying the derived tensor $ - \otimes_{\mathcal{O}_{\mathfrak{X}}} ^{\mathbf{L}} \mathcal{F}_2$, we obtain short exact sequences of complexes \eqref{eqn:derived L F_2}. Taking the Tor long exact sequences and taking the associated lengths, we deduce the equalities of the associated cycles \eqref{eqn:exact sum cycle}. Taking the sum over all $0 \leq i \leq r$, after cancellations, we deduce \eqref{eqn:Serre product2}. 
\end{proof}

\begin{remk}
We may further extend it to pairs involving perfect complexes up to certain extent. As such generalities are not needed in this article, we shrink their discussions.\qed
\end{remk}

\subsection{Special pull-backs and push-forwards}\label{sec:pb pf}

Recall that for (higher) Chow cycles on schemes, we always have pull-backs for flat morphisms and push-forwards for proper morphisms (see S. Bloch \cite{Bloch HC} and W. Fulton \cite{Fulton}). When we work with formal schemes, there are some technical difficulties in general. 

In \S \ref{sec:pb pf}, we discuss some special cases where flat pull-backs and finite push-forwards do exist. While it may sound a bit restrictive at first, this is good enough for our purposes in this paper.

\subsubsection{Special flat pull-backs}\label{sec:special flat pb}

When $f: Y_1 \to Y_2$ is a flat morphism of $k$-schemes, there are natural flat pull-back morphisms $f^*: z^* (Y_2) \to z^* (Y_1)$ of Chow cycles (W. Fulton \cite[\S 1.7, p.18]{Fulton}) and $f^*: z^* (Y_2, n) \to z^* (Y_1, n)$ of higher Chow cycles (S. Bloch \cite[Proposition (1.3)]{Bloch HC}).

When $f: \mathfrak{X} \to \mathfrak{Y}$ is a flat morphism of formal schemes (see Fujiwara-Kato \cite[\S 4.8, p.342]{FK}), unfortunately it is unclear if we always  have $f^*: \un{z}^*  (\mathfrak{Y}) \to \un{z}^* (\mathfrak{X})$. For instance when $f: \Spf (A) \to \Spf (B)$ is flat, in general there is no reason to believe that there exists an associated flat morphism $\tilde{f}: \Spec (A) \to \Spec (B)$, while our cycles are essentially those on $\Spec (A)$ and $\Spec (B)$. 

Nevertheless, a good news is that, we need flat pull-backs for only a few specific types of flat morphisms, and for them the flat pull-backs do exist:

\begin{lem}\label{lem:pre flat pb0}
Consider one of the following types of flat morphisms $f: \mathfrak{Y}_1 \to \mathfrak{Y}_2$ of equidimensional noetherian quasi-affine formal $k$-schemes:
\begin{enumerate}
\item [(I)] For a nonempty open subset $U \subset |\mathfrak{X}|$, the open immersion $f: \mathfrak{X}|_U \hookrightarrow \mathfrak{X}$.
\item [(II)] For an ideal sheaf $\mathcal{I} \subset \mathcal{O}_{\mathfrak{X}}$, let $\mathfrak{X}_{\mathcal{I}}$ be the further completion of $\mathfrak{X}$ by the ideal $\mathcal{I}$. This gives a flat morphism $f: \mathfrak{X}_{\mathcal{I}} \to \mathfrak{X}$ (EGA I \cite[Corollaire (10.8.9), p.197]{EGA1} or H. Matsumura \cite[Theorem 8.8, p.60]{Matsumura}).
\item [(III)] The projection morphism $f: \mathfrak{X}_1 \times_k \mathfrak{X}_2 \to \mathfrak{X}_2$.
\item [(IV)] The morphism ${\rm Id}_{\mathfrak{X}} \times g: \mathfrak{X} \times B_1 \to \mathfrak{X} \times B_2$, for a flat morphism $g: B_1 \to B_2$ of $k$-schemes of finite type.
\end{enumerate}
Then we have the flat pull-back $f^*: \un{z}^q (\mathfrak{Y}_2) \to \un{z} ^q (\mathfrak{Y}_1)$ of the naive cycle groups. 
\end{lem}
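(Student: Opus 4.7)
\smallskip

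The plan is to reduce each of the four cases to the underlying affine-scheme level and invoke the standard flat pull-back for cycles on schemes of S.~Bloch \cite{Bloch HC} and W.~Fulton \cite{Fulton}. Since $\mathfrak{Y}_1, \mathfrak{Y}_2$ are quasi-affine and noetherian, both admit finite covers by affine open formal subschemes of the form $\Spf(A)$, and Example \ref{exm:223} identifies $\un{z}^{*}(\Spf(A))$ with the usual cycle group of $\Spec(A)$. Thus, provided in each case $f$ is, affine-locally, induced by a flat morphism of noetherian rings $A_2 \to A_1$, the flat pull-back of cycles on $\Spec(A_2) \to \Spec(A_1)$ supplies the required $f^*$ on naive cycles; compatibility on overlaps and globalization to the quasi-affine case will follow from the same patching argument used for the construction of associated cycles in \S \ref{sec:ass cycle quasi-affine}.

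The verification of flatness in each case would proceed as follows. For (I), an open immersion of affine formal schemes is, on a basic open, a localization of $A$, hence flat. For (II), the further $\mathcal{I}$-adic completion map $A \to \hat{A}_{\mathcal{I}}$ is flat by H.~Matsumura \cite[Theorem 8.8]{Matsumura}, and the associated scheme map $\Spec(\hat{A}_{\mathcal{I}}) \to \Spec(A)$ is therefore flat. For (III), on affine charts $\mathfrak{X}_i = \Spf(A_i)$ the projection corresponds to $A_2 \to A_1 \widehat{\otimes}_k A_2$; since $k$ is a field, $A_2 \to A_1 \otimes_k A_2$ is flat, and further completion along an ideal of definition preserves flatness by the same theorem. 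For (IV), the ring map $A \widehat{\otimes}_k B_1 \to A \widehat{\otimes}_k B_2$ is obtained by base change from the flat map $g^\sharp: B_1 \to B_2$ followed by completion, so again it is flat. In every case, one then defines $f^*[\mathfrak{Z}]$ for an integral $\mathfrak{Z} \subset \mathfrak{Y}_2$ as the cycle associated to the scheme-theoretic pre-image, i.e.\ the sum $\sum_i \ell_{\mathcal{O}_{\tilde{\mathfrak{Y}}_1, \mathfrak{Z}_i}}(\mathcal{O}_{\tilde{\mathfrak{Y}}_1}/f^{-1}\mathcal{I}_{\mathfrak{Z}})_{\mathfrak{Z}_i} \cdot [\mathfrak{Z}_i]$ over the generic points of the pull-back, using the scheme-level flat pull-back under the identification of Example \ref{exm:223}.

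The main technical point to check is that $f^{*}$ lands in $\un{z}^q$, i.e.\ preserves codimension in the \emph{formal} sense of Definition \ref{defn:formal cycle}. Under the equidimensionality hypothesis on $\mathfrak{Y}_1, \mathfrak{Y}_2$, the codimension of an integral closed formal subscheme of $\Spf(A)$ agrees with the codimension of the corresponding integral closed subscheme of $\Spec(A)$, so this reduces to the standard fact that flat morphisms between noetherian schemes preserve codimension under pull-back of closed subsets (see Fulton \cite[\S 1.7]{Fulton}). The hardest step of the verification will be case (III), where the completed tensor $A_1 \widehat{\otimes}_k A_2$ is no longer a plain tensor of the affine pieces, so one must argue that the two systems of ideals (the product of defining ideals and the completion ideal) define the same topology — the same bookkeeping that appeared in Lemma \ref{lem:prod completion} — in order to align the formal and scheme-theoretic cycle data. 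Gluing of the locally defined pull-backs over the quasi-affine covers, analogous to the compatibility check carried out in \S \ref{sec:ass cycle quasi-affine}, completes the argument.
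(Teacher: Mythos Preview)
Your approach is essentially the same as the paper's: reduce to the affine case, identify $\un{z}^*(\Spf(A))$ with the usual cycle group of $\Spec(A)$ via Example~\ref{exm:223}, verify that in each case the relevant ring map is flat, and then invoke the classical flat pull-back for cycles on schemes. The paper defines $f^*(\mathfrak{Z}) := [f^*\mathcal{O}_{\mathfrak{Z}}]$ and checks codimension is preserved case by case, exactly as you outline.

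A few minor remarks. Your concern that case (III) is the ``hardest'' and requires the ideal-topology bookkeeping of Lemma~\ref{lem:prod completion} is unnecessary: once you know the ring map $A_2 \to A_1 \widehat{\otimes}_k A_2$, $a \mapsto 1 \otimes a$, is flat (which is immediate since $k$ is a field and completion is flat over noetherian rings), the scheme-level pull-back $z^q(\Spec(A_2)) \to z^q(\Spec(A_1 \widehat{\otimes}_k A_2))$ does the job with no further alignment needed. Also, in case (IV) you have the direction of the ring map reversed: if $g \colon B_1 \to B_2$ is the scheme morphism, then $g^\sharp$ goes from the coordinate ring of $B_2$ to that of $B_1$. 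These are cosmetic; your argument is correct.
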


\begin{proof}
For an integral cycle $\mathfrak{Z} \in \un{z}^q (\mathfrak{Y}_2)$, define $f^* (\mathfrak{Z}) = [ f^* \mathcal{O}_{\mathfrak{Z}}]$, the cycle associated to the coherent sheaf $f^* \mathcal{O}_{\mathfrak{Z}}$ in the sense of Definition \ref{defn:ass cycle}.

The question is whether $f^*( \mathfrak{Z})$ has the right codimension. 

For (I), it is apparent.

For (II) $\sim$ (IV), the statements are local, so we may assume all formal schemes are affine.

For (II), let $\mathfrak{X} = \Spf (A)$. The ideal sheaf $\mathcal{I}$ is given by an ideal $I \subset A$, and let $\widehat{A}$ be the completion of $A$ along $I$. The natural homomorphism $A \to \widehat{A}$ is flat, so that the morphism $\Spec (\widehat{A}) \to \Spec (A)$ is flat. The assertion then follows from the usual flat pull-backs $z^q (\Spec (A)) \to z^q (\Spec (\widehat{A}))$ of cycles on schemes.

For (III), write $\mathfrak{X}_i=\Spf (A_i) $, where $A_i$ is complete with respect to an ideal $I_i \subset A_i$. The projection morphism $f$ has the associated flat homomorphism $A_2 \to A_1 \widehat{\otimes}_k A_2$ given by $a \mapsto 1 \otimes a$. This gives the flat morphism $\Spec (A_1\widehat{\otimes}_k A_2) \to \Spec (A_2)$. Since $\Spf (A_1 \widehat{\otimes}_k A_2) = \mathfrak{X}_1 \times_k \mathfrak{X}_2$, the assertion then follows from the classical case of flat pull-backs of cycles on schemes.

For (IV), let $\mathfrak{X} = \Spf (A)$. Then ${\rm Id}_{\mathfrak{X}} \times g$ has the associated flat morphism ${\rm Id}_{\Spec (A)} \times g: \Spec (A) \times B_1 \to \Spec (A) \times B_2$, and the assertion follows from the classical case of flat pull-backs of cycles on schemes.
\end{proof}

\begin{lem}\label{lem:pull-back perfect}
Let $f: \mathfrak{X} \to \mathfrak{Y}$ be a flat morphism of equidimensional noetherian quasi-affine formal $k$-schemes, of type ${\rm (I)}$ $\sim$ ${\rm (IV)}$. Let $\mathcal{F}$ be a coherent sheaf on $\mathfrak{Y}$.

 Then we have the equality of cycles
\begin{equation}\label{eqn:pb perfect}
[f^* (\mathcal{F})] = f^* [ \mathcal{F}] \ \mbox{ in } \un{z}^* (\mathfrak{X}).
\end{equation}
\end{lem}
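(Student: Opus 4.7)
The plan is to reduce to the affine case and then use the Jordan–Hölder type filtration of $\mathcal{F}$ by coherent subsheaves whose successive quotients are the structure sheaves of integral closed formal subschemes. Since each of the morphisms of types (I)--(IV) is flat, the functor $f^{*}$ is exact on coherent sheaves, so the filtration pulls back to a filtration of $f^{*}\mathcal{F}$, and a telescoping computation using additivity of the associated cycle in short exact sequences should conclude.

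More concretely, after passing to an affine open cover we may assume $\mathfrak{Y} = \Spf(B)$ and (in types (II)--(IV)) $\mathfrak{X} = \Spf(A)$, with type (I) handled trivially. By the discussion following Definition \ref{defn:ass cycle}, there is a decreasing filtration
$$
0 = \mathcal{G}_{r+1} \subsetneq \mathcal{G}_{r} \subsetneq \cdots \subsetneq \mathcal{G}_{0} = \mathcal{F}
$$
of coherent $\mathcal{O}_{\mathfrak{Y}}$-modules, with $\mathcal{G}_{i}/\mathcal{G}_{i+1} \simeq \mathcal{O}_{\mathfrak{Z}_{i}}$ for integral closed formal subschemes $\mathfrak{Z}_{i}\subset\mathfrak{Y}$, not necessarily distinct. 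Passing to the associated cycles and regrouping gives $[\mathcal{F}] = \sum_{i} [\mathfrak{Z}_{i}]$ in $\un{z}^{*}(\mathfrak{Y})$, so by the definition of $f^{*}$ on integral cycles given in the proof of Lemma \ref{lem:pre flat pb0}, we have
$$
f^{*}[\mathcal{F}] \;=\; \sum_{i} f^{*}[\mathfrak{Z}_{i}] \;=\; \sum_{i} [f^{*}\mathcal{O}_{\mathfrak{Z}_{i}}].
$$

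On the other hand, exactness of $f^{*}$ turns each short exact sequence $0 \to \mathcal{G}_{i+1} \to \mathcal{G}_{i} \to \mathcal{O}_{\mathfrak{Z}_{i}} \to 0$ into a short exact sequence $0 \to f^{*}\mathcal{G}_{i+1} \to f^{*}\mathcal{G}_{i} \to f^{*}\mathcal{O}_{\mathfrak{Z}_{i}} \to 0$ of coherent $\mathcal{O}_{\mathfrak{X}}$-modules. Additivity of the associated cycle in short exact sequences, which follows from additivity of length in an exact sequence of finite-length modules over each local ring $\mathcal{O}_{\mathfrak{X},\mathfrak{W}}$ in the sense of Definition \ref{defn:local ring formal}, yields $[f^{*}\mathcal{G}_{i}] = [f^{*}\mathcal{O}_{\mathfrak{Z}_{i}}] + [f^{*}\mathcal{G}_{i+1}]$. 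Summing over $i$ and telescoping, using $\mathcal{G}_{0}=\mathcal{F}$ and $\mathcal{G}_{r+1}=0$, gives $[f^{*}\mathcal{F}] = \sum_{i}[f^{*}\mathcal{O}_{\mathfrak{Z}_{i}}] = f^{*}[\mathcal{F}]$.

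The point requiring the most care is the interaction of $f^{*}$ with the associated-cycle construction on formal schemes: the associated cycle of $\mathcal{F}$ was defined by passing through the corresponding module over the affine coordinate ring, and one must verify that for each of (I)--(IV) the pulled-back coherent sheaf $f^{*}\mathcal{F}$ has associated cycle computed from the tensor-product module in the expected way. In types (II)--(IV) this amounts to checking that the canonical flat ring maps $B \to \widehat{B}_{I}$, $B \to A_{1}\widehat{\otimes}_{k} A_{2}$, and $B \to B\otimes_{k} \Gamma(B_{1})$ induce genuine flat morphisms of the associated ordinary schemes, so that the equality $[f^{*}\mathcal{O}_{\mathfrak{Z}}] = f^{*}[\mathfrak{Z}]$ reduces to the classical statement for flat pull-back of cycles along flat morphisms of noetherian schemes (as in Fulton \cite{Fulton}, Ch.~1). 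Once this is in place, the rest of the argument is a formal telescoping identity, and the lemma follows.
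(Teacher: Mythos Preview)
Your proof is correct and follows essentially the same approach as the paper: both reduce to the affine case, take a Jordan--H\"older type filtration of $\mathcal{F}$ with successive quotients $\mathcal{O}_{\mathfrak{Z}_i}$, apply the exact functor $f^*$ to the resulting short exact sequences, use additivity of the associated cycle, and telescope. The paper's Step~1 handles the base case $\mathcal{F}=\mathcal{O}_{\mathfrak{Z}}$ by observing that $f^*[\mathfrak{Z}]=[f^*\mathcal{O}_{\mathfrak{Z}}]$ holds by definition, which is exactly what you unpack in your final paragraph.
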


\begin{proof}
As before we may assume $\mathfrak{X}$ and $\mathfrak{Y}$ are both affine.
We prove the lemma in two steps.

\textbf{Step 1:} If $\mathcal{F}= \mathcal{O}_{\mathfrak{Z}}$ for an integral cycle $\mathfrak{Z}$ on $\mathfrak{Y}$, then $[\mathcal{F}] = \mathfrak{Z}$ and $f^*[ \mathfrak{Z}] = [ f^* (\mathcal{O}_{\mathfrak{Z}})]$ so we have the equality \eqref{eqn:pb perfect} by definition. 

\medskip

\textbf{Step 2:} Now suppose that $\mathcal{F}$ is a coherent $\mathcal{O}_{\mathfrak{Y}}$-module. We have a decreasing filtration of coherent $\mathcal{O}_{\mathfrak{Y}}$-submodules on $\mathcal{F}$
$$
0 = \mathcal{G}_{r+1} \subsetneq \mathcal{G}_r \subsetneq \cdots \subsetneq \mathcal{G}_1 \subsetneq \mathcal{G}_0 = \mathcal{F},
$$
such that there are isomorphisms $\mathcal{G}_i / \mathcal{G}_{i+1} \simeq \mathcal{O}_{\mathfrak{Z}_i}$ of $\mathcal{O}_{\mathfrak{Y}}$-modules for some integral closed formal subschemes $\mathfrak{Z}_i$ of $\mathfrak{Y}$ for $0 \leq i \leq r$. They induce short exact sequences of coherent $\mathcal{O}_{\mathfrak{Y}}$-modules
\begin{equation}\label{eqn:stpdAB}
0 \to \mathcal{G}_{i+1} \to \mathcal{G}_i \to \mathcal{O}_{\mathfrak{Z}_i} \to 0.
\end{equation}
Applying the flat pull-backs $f^*$ to \eqref{eqn:stpdAB}, we deduce short exact sequences of coherent $\mathcal{O}_{\mathfrak{X}}$-modules
$$
0 \to f^*( \mathcal{G}_{i+1}) \to f^*( \mathcal{G}_i )\to  f^* (\mathcal{O}_{\mathfrak{Z}_i})  \to 0.
$$
Hence we deduce the equalities of the associated cycles (see \cite[Lemma 02QZ]{stacks})
\begin{equation}\label{eqn:pb coh 1}
[ f^* (\mathcal{G}_i) ] = [ f^* (\mathcal{O}_{\mathfrak{Z}_i})] + [ f^* (\mathcal{G}_{i+1}) ].
\end{equation}

Taking the sum of \eqref{eqn:pb coh 1} over all $0 \leq i \leq r$, after telescoping cancellations, we obtain (note $\mathcal{G}_0 = \mathcal{F}$, $\mathcal{G}_{r+1} = 0$)
$$
[ f^* (\mathcal{F})] = \sum_{i=0} ^r [ f^* (\mathcal{O}_{\mathfrak{Z}_i})] = ^{\dagger} \sum_{i=0}^r f^* [ \mathcal{O}_{\mathfrak{Z}_i}] = f^* \sum_{i=0} ^r [ \mathcal{O}_{\mathfrak{Z}_i}] = f^* [ \mathcal{F}].
$$
where $\dagger$ holds by the Step 1. This proves \eqref{eqn:pb perfect} in this case.
\end{proof}

\begin{remk}
We may be able to extend it to perfect complexes. As this is not needed in this article, we shrink this discussion.\qed
\end{remk}

\subsubsection{Special finite push-forwards}\label{sec:2.7}

Recall that we say that a morphism $f: \mathfrak{X} \to \mathfrak{Y}$ of noetherian formal schemes is (\cite[Definition 4.2.2, p.326]{FK}) \emph{finite} if for any affine open subset $V= \Spf (A)$ of $\mathfrak{Y}$, $f^{-1} (V)$ is affine of the form $\Spf (B)$ for a ring $B$ that is a finitely generated $A$-module. 

\begin{defn}\label{defn:push-forward 1}
For a finite morphism $f: \mathfrak{X} \to \mathfrak{Y}$ of noetherian quasi-affine formal $k$-schemes, the push-forward
$$
f_*: \un{z}_d (\mathfrak{X}) \to \un{z}_d  (\mathfrak{Y})
$$
is defined by sending an integral closed formal subscheme $\mathfrak{Z} \subset \mathfrak{X}$ of dimension $d$ to the $d$-dimensional cycle $[f_* \mathcal{O}_{\mathfrak{Z}}]_d$ associated to the coherent sheaf $f_* \mathcal{O}_{\mathfrak{Z}} $ on $\mathfrak{Y}$ in the sense of Definition \ref{defn:ass cycle}. 
\qed
\end{defn}

When $\mathfrak{X}= \Spf (A)$ and $\mathfrak{Y} = \Spf (B)$, by definition $\un{z}_d (\mathfrak{X}) = z_d (\Spec (A))$ and $\un{z}_d (\mathfrak{Y}) = z_d (\Spec (B))$ and $B \to A$ is a finite homomorphism. Thus $f_*$ is identical to the usual finite push-forward $z_d (\Spec (A)) \to z_d (\Spec (B))$.

\medskip

We also had the following in \cite[\S 2.3]{Park Tate}. It was stated for the affine case, but it generalizes to the quasi-affine case by \S \ref{sec:ass cycle quasi-affine}:

\begin{lem}\label{lem:pf coh cy}
Let $f: \mathfrak{X} \to \mathfrak{Y}$ be a finite morphism of noetherian quasi-affine formal schemes. Let $\mathcal{F}$ be a coherent $\mathcal{O}_{\mathfrak{X}}$-module.  
Then we have the equality of cycles
\begin{equation}\label{eqn:pf coh cy0}
f_* [ \mathcal{F}]= [ f_* \mathcal{F}] \in \un{z}_* (\mathfrak{Y}), 
\end{equation}
where the left hand side is the push-forward (as in Definition \ref{defn:push-forward 1}) of the cycle $[\mathcal{F}]$ (as in Definition \ref{defn:ass cycle}), while the right hand side is the cycle $[f_* \mathcal{F}]$ (as in Definition \ref{defn:ass cycle}) of the push-forward $f_* \mathcal{F}$ of the coherent sheaf.
\end{lem}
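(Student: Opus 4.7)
The plan is to reduce the quasi-affine case to the affine case, then reduce further by d\'evissage to the case of a single integral closed formal subscheme, and finally dispose of that case by translating everything to the classical statement for finite morphisms of noetherian schemes.

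First I would exploit that both sides of \eqref{eqn:pf coh cy0} behave well under restriction to affine open formal subschemes of $\mathfrak{Y}$. Concretely, since $f$ is finite, for any affine open $V = \Spf(B) \subset |\mathfrak{Y}|$ the preimage $f^{-1}(V)$ is affine of the form $\Spf(A)$. The construction of associated cycles on quasi-affine formal schemes in \S \ref{sec:ass cycle quasi-affine} is compatible with restriction to affine opens, and Definition \ref{defn:push-forward 1} is a local construction on the target as well. Hence, by checking the equality \eqref{eqn:pf coh cy0} after restriction to the terms of a finite affine cover of $|\mathfrak{Y}|$, we reduce to the case where $\mathfrak{X} = \Spf(A)$ and $\mathfrak{Y} = \Spf(B)$ are both affine and $f$ is induced by a finite ring homomorphism $\varphi : B \to A$.

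Next I would perform a d\'evissage on $\mathcal{F}$. By Jordan--H\"older (cf.\ H.~Matsumura \cite[Theorem 6.4, p.39]{Matsumura} applied to the finitely generated $A$-module $M$ with $\mathcal{F} = M^{\Delta}$), there exists a finite decreasing filtration
\begin{equation*}
0 = \mathcal{G}_{r+1} \subsetneq \mathcal{G}_r \subsetneq \cdots \subsetneq \mathcal{G}_0 = \mathcal{F}
\end{equation*}
of coherent $\mathcal{O}_{\mathfrak{X}}$-submodules with $\mathcal{G}_i / \mathcal{G}_{i+1} \simeq \mathcal{O}_{\mathfrak{Z}_i}$ for integral closed formal subschemes $\mathfrak{Z}_i \subset \mathfrak{X}$. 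Since $f$ is finite, it is affine, and all sheaves involved are a.q.c., so Lemma \ref{lem:aqc sum}-(7) gives $R^qf_* = 0$ for $q \geq 1$, making $f_*$ exact on the short exact sequences $0 \to \mathcal{G}_{i+1} \to \mathcal{G}_i \to \mathcal{O}_{\mathfrak{Z}_i} \to 0$. Taking associated cycles of the resulting short exact sequences of coherent $\mathcal{O}_{\mathfrak{Y}}$-modules and using additivity of associated cycles on short exact sequences (cf.\ \cite[Lemma 02QZ]{stacks}), a telescoping argument exactly parallel to the one in the proof of Lemma \ref{lem:sheaf proper int} reduces the claim to the case $\mathcal{F} = \mathcal{O}_{\mathfrak{Z}}$ for an integral closed formal subscheme $\mathfrak{Z} \subset \mathfrak{X}$ of some dimension $d$.

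For the integral case, let $\mathfrak{Z} = \Spf(A/P)$ with $P \subset A$ a prime ideal, and set $Q := \varphi^{-1}(P) \subset B$, so that the Zariski closure of $f(\mathfrak{Z})$ is $\mathfrak{W} := \Spf(B/Q)$. Since $A/P$ is a finite $B/Q$-module, we have $\dim \mathfrak{W} = d$, and by Definition \ref{defn:push-forward 1}
\begin{equation*}
f_*[\mathfrak{Z}] = [f_* \mathcal{O}_{\mathfrak{Z}}]_d = [A/P : \mathrm{Frac}(B/Q)] \cdot \mathfrak{W},
\end{equation*}
where the bracket denotes the length, equivalently the generic rank, of $A/P$ as a $B/Q$-module. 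On the other hand, Definition \ref{defn:ass cycle} applied directly to the $B$-module $A/P$, using the associated primes of $A/P$ over $B$, gives the same expression, because cycles on the affine formal schemes $\Spf(A)$ and $\Spf(B)$ coincide with cycles on the underlying affine schemes $\Spec(A)$ and $\Spec(B)$ (Example \ref{exm:223}) and the morphism $\Spec(A) \to \Spec(B)$ is the finite morphism of noetherian schemes induced by $\varphi$. Thus the identity reduces to the classical fact that for a finite morphism of noetherian schemes the cycle associated to the pushforward of a coherent sheaf equals the pushforward of the associated cycle (W.~Fulton \cite[\S 1.4]{Fulton}).

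The main obstacle I anticipate is not any single step, but rather the bookkeeping required to check that the quasi-affine construction of associated cycles in \S \ref{sec:ass cycle quasi-affine}, which is defined by gluing over a finite affine cover, is genuinely compatible with the pushforward along a finite morphism in the formal setting. The key input that makes this painless is the fact that finite morphisms are affine, so affine opens downstairs pull back to affine opens upstairs and no descent for pushforwards is needed beyond the ordinary one.
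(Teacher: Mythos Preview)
Your proposal is correct and follows the standard route: reduce to the affine case via the local nature of the quasi-affine construction in \S\ref{sec:ass cycle quasi-affine}, then d\'evissage to the integral case, then invoke the classical result for finite morphisms of noetherian schemes. The paper does not actually supply a self-contained argument here; it simply cites \cite[\S 2.3]{Park Tate} for the affine case and remarks that the quasi-affine extension follows from \S\ref{sec:ass cycle quasi-affine}, so your write-up is in fact more detailed than what appears in the paper itself.
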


\subsection{Higher Chow cycles of quasi-affine formal $k$-schemes}\label{sec:HC}

In \S \ref{sec:HC}, we recall from \cite[Definition 2.2.2]{Park Tate} the definitions of a version of cubical higher Chow cycles for noetherian quasi-affine formal $k$-schemes. See Definition \ref{defn:HCG}. In \emph{ibid.}, it was done in the affine case, but there is no essential difference. The higher Chow complexes for schemes originate from S. Bloch \cite{Bloch HC}.

An important point for formal schemes is that, in addition to the usual general position requirement (\textbf{GP}) with respect to the faces, we have an additional requirement, called (\textbf{SF}) which uses the largest ideal of definition of the formal scheme $\mathfrak{X}$.

\subsubsection{The cycles} \label{subsec:cycles}
For the projective line $\mathbb{P}_k ^1$, we let $\square_k := \mathbb{P}_k ^1 \setminus \{ 1 \}$. Let $\square^0 _k := \Spec (k)$. For $n \geq 1$, let $\square^n_k$ be the $n$-fold self product of $\square_k$ over $k$. Let $(y_1, \cdots, y_n) \in \square^n_k$ be the coordinates.

Define a face $F \subset \square_k ^n$ to be a closed subscheme given by a system of equations of the form $\{ y_{i_1} = \epsilon_1, \cdots, y_{i_s} = \epsilon_s\}$ for an increasing sequence of indices $1 \leq i_1 < \cdots < i_s \leq n$ and $\epsilon_j \in \{ 0, \infty \}$. In case of the empty set of equations, we take $F= \square_k ^n$. 
The codimension one face given by $\{ y_i = \epsilon\}$ is denoted by $F_i ^{\epsilon}$. 

\medskip

Let $\mathfrak{X}$ be an equidimensional noetherian quasi-affine formal $k$-scheme. The fiber product $\mathfrak{X} \times_k \square_k ^n$ is often denote by $\square_{\mathfrak{X}} ^n$.  
A \emph{face} $F_{\mathfrak{X}}$ of $\square_{\mathfrak{X}} ^n$ is given by $\mathfrak{X} \times_k F$ for a face $F \subset \square_k ^n$. Here $\mathfrak{X} \times_k \square_k ^n$ is also equidimensional; see Greco-Salmon \cite[Theorem 7.6-(b), p.35]{GS}. We remark that in case $\mathfrak{X}$ is regular, so is $\mathfrak{X} \times_k \square_k ^n$; see Greco-Salmon \cite[Theorem 8.10, p.39]{GS} or P. Salmon \cite[Th\'eor\`eme 7, p.398]{Salmon}.

\begin{remk}\label{remk:2.5.2}
What does the fiber product $\square_{\mathfrak{X}} ^n = \mathfrak{X}\times_k \square_k ^n$ of formal schemes look like? For simplicity, identify $\square_k^n$ with $\mathbb{A}_k^n$. 

When $\mathfrak{X}=\Spf (A) $ is just a $k$-scheme so that $\Spf (A) = \Spec (A)$, this is the usual fiber product of $k$-schemes. Thus $\square_{\mathfrak{X}} ^n$ is given by the polynomial ring $A[ y_1, \cdots, y_n]$. 

Now suppose $\mathfrak{X}= \Spf (A)$ is not a scheme, where $A$ is $I$-adically complete for an ideal $I \subset A$. The fiber product $\square_{\mathfrak{X}} ^n $ is given by the formal spectrum of the restricted formal power series ring $A\{ y_1, \cdots, y_n \}$; this is the completion
$$
A \{ y_1, \cdots, y_m \} = \varprojlim_m  \ (A/ I^m)[ y_1, \cdots, y_n].
$$

 We have $A[y_1, \cdots, y_n] \subset A \{ y_1, \cdots, y_n \} \subset A [[y_1, \cdots, y_n]]$, and the inclusions are proper in general.

See EGA I \cite[Ch.0, (7.5.1), p.69]{EGA1} for more on the restricted formal power series rings. The notation in Fujiwara-Kato \cite[\S 8.4, p.183]{FK} is $A\! \left< \! \left< y_1, \cdots, y_n \right\> \! \right>$. See Lemma \ref{lem:rest poly 1} as well as \cite{Park Tate} on using them in the context of cycles.
\qed
\end{remk}

We recall (and extend) the following from \cite[\S 2.2]{Park Tate}:

\begin{defn}\label{defn:HCG}

Let $\mathfrak{X}$ be an equidimensional noetherian quasi-affine formal $k$-scheme. Let $\mathfrak{X}_{\red}$ be the closed subscheme given by the largest ideal of definition $\mathcal{I}_0$ of $\mathfrak{X}$
(see EGA I \cite[Proposition (10.5.4), p.187]{EGA1}).

For $n, q \geq 0$, let $\un{z} ^q (\mathfrak{X}, n)$ be the subgroup of $z^q (\square_{\mathfrak{X}} ^n)$ generated by integral closed formal subschemes $\mathfrak{Z} \subset \square_{\mathfrak{X}} ^n$ of codimension $q$, subject to the following conditions: 
\begin{enumerate}
\item [(\textbf{GP})] (General position) The integral formal scheme $\mathfrak{Z}$ intersects properly with $\mathfrak{X} \times F$ for each face $F \subset \square_k ^n$.

\item [(\textbf{SF})] (Special fiber) For each face $F \subset \square_k ^n$, we have
$$
\codim_{ \mathfrak{X}_{\red} \times F} (\mathfrak{Z} \cap (\mathfrak{X}_{\red} \times F)) \geq q.
$$
\end{enumerate}
We may call the cycles in $\un{z}^q (\widehat{X}, n)$ \emph{admissible} for simplicity.\qed
\end{defn}

We have several remarks on the conditions of Definition \ref{defn:HCG}. 

\begin{remk}\label{remk:TS}
In an earlier version of the article, we imposed an additional requirement called the topological support condition that a given integral cycle $\mathfrak{Z}$ and its reduction $Z$ modulo the ideal of definition $\mathcal{I}_0$ have the same underlying topological spaces as ringed spaces. However, as pointed out by Joseph Ayoub, it turned out that the condition holds always for any noetherian formal scheme, so it was not needed. In the article, we implicitly use this elementary observation, if needed.
\qed
\end{remk}

\begin{remk}\label{remk:condition 2-3}
If $\mathfrak{X}$ in Definition \ref{defn:HCG} is a scheme, note that the condition (\textbf{SF}) holds automatically. Our definition thus coincides with that of the usual cubical version of higher Chow cycles in \cite{Bloch HC}.\qed
\end{remk}

\begin{remk}
Let $Z$ be the reduction of $\mathfrak{Z}$ mod $\mathcal{I}_0$, i.e. $Z= \mathfrak{Z} \cap ( \mathfrak{X}_{\red} \times \square^n )$. The condition (\textbf{SF}) of Definition \ref{defn:HCG} implies that $Z$ intersects properly with $\mathfrak{X} _{\red} \times F$ so that we have the Gysin pull-back map
$$
\un{z}^q (\mathfrak{X}, n) \to \un{z} ^q (\mathfrak{X}_{\red}, n), \ \ \mathfrak{Z} \mapsto [Z],
$$
where $[Z]$ is the associated cycle on the scheme $\mathfrak{X}_{\red} \times \square_k ^n$. 
See Lemma \ref{lem:basic Gysin 1} for $n=0$ case.\qed
\end{remk}

\begin{remk}\label{remk:gen ideal defn}
More generally, when the cycles $\mathfrak{Z}$ have the conditions (\textbf{GP}) and (\textbf{SF}) of Definition \ref{defn:HCG}, for any ideal of definition $\mathcal{I}$ of the formal scheme $\mathfrak{X}$ and the closed subscheme $\mathfrak{X}_{\mathcal{I}}$ defined by the ideal, the cycles $\mathfrak{Z}$ also satisfy the corresponding properties with respect to $\mathcal{I}$, with $\mathfrak{X}_{\red}$ replaced by $\mathfrak{X}_{\mathcal{I}}$ and $\mathcal{I}_0$ replaced by $\mathcal{I}$.

Indeed, for each face $F \subset \square^n$, we have $\mathfrak{Z} \cap (\mathfrak{X}_{\red} \times F) \subset \mathfrak{Z} \cap (\mathfrak{X}_{\mathcal{I}} \times F) \subset \mathfrak{Z} \cap (\mathfrak{X} \times F)$ so that
\begin{equation}\label{eqn:gen ideal defn 1}
|\mathfrak{Z} \cap (\mathfrak{X}_{\red} \times F) | \subset | \mathfrak{Z} \cap (\mathfrak{X}_{\mathcal{I}} \times F) | \subset |\mathfrak{Z} \cap (\mathfrak{X} \times F)|,
\end{equation}
but these underlying topological spaces are all equal: if $\mathcal{J}$ and $\mathcal{K}$ are ideal sheaves for $\mathfrak{Z}$ and $\mathfrak{X} \times F$, then $\mathcal{I}_0+ \mathcal{J} + \mathcal{K}$ and $\mathcal{I} + \mathcal{J}+ \mathcal{K}$ are both ideals of definitions of $\mathcal{Z} \cap (\mathfrak{X} \times F)$. In particular, 
$$| \mathfrak{Z} \cap (\mathfrak{X}_{\red} \times F) | =  |\mathfrak{Z} \cap (\mathfrak{X}_{\mathcal{I}} \times F)|.$$


On the other hand, since $\mathfrak{Z} \cap (\mathfrak{X}_{\red} \times F)$ and $\mathfrak{Z} \cap (\mathfrak{X}_{\mathcal{I}} \times F)$ are schemes, their dimensions as schemes are equal to the dimensions of their underlying topological spaces. In particular, we have
\begin{equation}\label{eqn:ideal defn I}
\tuborg
\dim \ \mathfrak{Z} \cap (\mathfrak{X}_{\red} \times F)  = \dim \ \mathfrak{Z} \cap (\mathfrak{X} _{\mathcal{I}} \times F), \\
\dim \ \mathfrak{X}_{\red} \times F  = \dim \ \mathfrak{X} _{\mathcal{I}} \times F,
\sluttuborg
\end{equation}
so that \eqref{eqn:ideal defn I} and (\textbf{SF}) together imply that
\begin{enumerate}
\item [$(\textbf{SF})_{\mathcal{I}}$] (Special fiber) $\codim_{ \mathfrak{X}_{\mathcal{I}} \times F} (\mathfrak{Z} \cap (\mathfrak{X}_{\mathcal{I}} \times F)) \geq q.$
\end{enumerate}
Furthermore \eqref{eqn:ideal defn I} and $ (\textbf{SF})_{\mathcal{I}}$ imply (\textbf{SF}) as well. 

The condition (\textbf{GP}) has nothing to do with the ideal of definition $\mathcal{I}$. Thus we have just shown that 
$$\{ (\textbf{GP}) , (\textbf{SF})  \} \Leftrightarrow \{  (\textbf{GP}), (\textbf{SF})_{\mathcal{I}}  \}.$$
In particular, Definition \ref{defn:HCG} could have been given for any ideal of definition $\mathcal{I}$ of $\mathfrak{X}$.
This fact is often exploited, e.g. see Claim in Lemma \ref{lem:rest poly 1}.
\qed
\end{remk}

\subsubsection{The cycle complex}

We turn the groups of Definition \ref{defn:HCG} into a complex:

\begin{defn}\label{defn:HCG2}
Let $\mathfrak{X}$ be an equidimensional noetherian quasi-affine formal $k$-scheme. 
Let $n \geq 1$. We define the boundary operators $\partial: z^q (\mathfrak{X}, n) \to z^q (\mathfrak{X}, n-1)$ as follows.

 For $1 \leq i \leq n$ and $\epsilon \in \{ 0, \infty \}$, let $\iota_i ^{\epsilon}: \square_{\mathfrak{X}} ^{n-1} \hookrightarrow \square_{\mathfrak{X}} ^n$ be the closed immersion given by the equation $\{ y_i = \epsilon \}$. For each integral cycle $\mathfrak{Z} \in \un{z} ^q (\mathfrak{X}, n)$, define the face $\partial_i ^{\epsilon} (\mathfrak{Z})$ to be the intersection product 
\begin{equation}\label{eqn:codim 1 face defn}
\partial_i ^{\epsilon} (\mathfrak{Z}) := [F_{i,\mathfrak{X}} ^{\epsilon}].[\mathfrak{Z}] = [ \mathbf{L} (\iota_i ^{\epsilon})^* ( \mathcal{O}_{\mathfrak{Z}})],
\end{equation}
as in \eqref{eqn:Serre product}. This is compatible with the codimension $1$ face map considered for the usual higher Chow cycles, cf. \cite{Bloch HC}; indeed note that we have a short exact sequence of $\mathcal{O}_{\square_{\mathfrak{X}} ^n}$-modules
$$
0 \to \mathcal{O}_{\square_{\mathfrak{X}} ^n} \overset{ \times (y_i - \epsilon)}{\longrightarrow} \mathcal{O}_{\square_{\mathfrak{X}} ^n} \to \mathcal{O}_{F_{i, \mathfrak{X}} ^{\epsilon}} \to 0.
$$
Hence the derived pull-back $ \mathbf{L} (\iota_i ^{\epsilon})^* ( \mathcal{O}_{\mathfrak{Z}})$ is quasi-isomorphic to the complex $\mathcal{O}_{\mathfrak{Z}} \overset{ \times (y_i - \epsilon)}{\longrightarrow}  \mathcal{O}_{\mathfrak{Z}}$. Thus the higher homologies ${\rm H}_i (\mathbf{L} (\iota_i ^{\epsilon})^* (\mathcal{O}_{\mathfrak{Z}})) $ with $i>1$ vanish. Since $\mathfrak{Z}$ intersects the face $F_{i, \mathfrak{X}} ^{\epsilon}$ properly by (\textbf{GP}), we have $\ker (  \times (y_i - \epsilon))= 0$, thus ${\rm H}_1 = 0$ as well. Hence only the zero-th homology of $ \mathbf{L} (\iota_i ^{\epsilon})^* ( \mathcal{O}_{\mathfrak{Z}})$ may survive so that $[\mathbf{L} (\iota_i ^{\epsilon})^* (\mathcal{O}_{\mathfrak{Z}})] = [ (\iota_i ^{\epsilon})^* (\mathcal{O}_{\mathfrak{Z}})]$. 

\medskip

Since the conditions (\textbf{GP}), (\textbf{SF}) of Definition \ref{defn:HCG} for $\mathfrak{Z}$ imply the same conditions for the faces of $\mathfrak{Z}$, we have $\partial_i ^{\epsilon} (\mathfrak{Z}) \in \un{z} ^q (\mathfrak{X}, n-1)$.

 Let $\partial:= \sum_{i=1} ^n (-1)^i (\partial_i ^{\infty} - \partial_i ^0)$. By the usual cubical formalism, one checks that $ \partial \circ \partial  = 0$, and we have a cubical abelian group $(\un{n} \mapsto \un{z} ^q (\mathfrak{X}, n))$, where $\un{n}:= \{ 0, 1, \cdots, n \}$. 

Let $z^q (\mathfrak{X}, \bullet)$ be the associated non-degenerate complex, i.e. the quotient complex
\begin{equation}\label{eqn:Deg}
z^q (\mathfrak{X}, \bullet) = \un{z}^q (\mathfrak{X}, \bullet)/ \un{z}^q (\mathfrak{X}, \bullet)_{\rm degn},
\end{equation}
where the subgroup $\un{z}^q (\mathfrak{X}, \bullet)_{\rm degn}$ consists of the degenerate cycles; this group is generated by the pull-backs of cycles (e.g. Lemma \ref{lem:pre flat pb0}-(III)) via the coordinate projections $\square_{\mathfrak{X}} ^n \to \square_{\mathfrak{X}} ^{n-1}$ that forget one of the coordinates. 

Define the homology group $\CH^q (\mathfrak{X}, n):= {\rm H}_n (z^q (\mathfrak{X}, \bullet))$. 
\qed
\end{defn}

\begin{remk}\label{remk:lci}
In the middle of Definition \ref{defn:HCG2}, we saw that $\mathbf{L}(\iota_i ^{\epsilon})^*$ was equal to the usual intersection product $(\iota_i^{\epsilon})^*$ for cycles intersecting the faces properly. We remark that a similar argument holds for l.c.i.~closed immersions $\iota$ so that the usual Gysin pull-backs $\iota^*$ on cycles intersecting properly with the closed (formal) subscheme are equal to $\mathbf{L} \iota^*$.
This is classical, and one can find an argument via Koszul resolutions, see e.g. Yu. Manin \cite[Theorem 2.3, pp.9--12]{Manin}. \qed
\end{remk}

We remind the reader that the group $\CH^q (\mathfrak{X},n)$ is yet just one of the intermediate byproducts toward the eventual definition of our new group $\BGH^q (Y, n)$ for $Y \in \Sch_k$ given in Definition \ref{defn:Cech final}.

\medskip

We mention one variant of Definition \ref{defn:HCG}:

\begin{defn}\label{defn:HCG var}
Let $\mathfrak{X}$ be an equidimensional noetherian quasi-affine formal $k$-scheme. Let $\mathcal{W}$ be a finite set of closed formal $k$-subschemes of $ \mathfrak{X}$.

Let $z^q_{\mathcal{W}} (\mathfrak{X}, n)$ be the subgroup of $z^q (\mathfrak{X}, n)$ generated by the integral closed formal subschemes $\mathfrak{Z} \subset \mathfrak{X}\times_k \square_k ^n$ such that for each $C \in \mathcal{W}$ and each face $F \subset \square_k ^n$, the intersection of $\mathfrak{Z}$ with $C \times F$ is proper, i.e. ${\rm codim}_{C \times F} (\mathfrak{Z} \cap (C \times F)) \geq q$.

The resulting complex $ z^q _{\mathcal{W}} (\mathfrak{X}, \bullet)$ is a subcomplex of $z^q  (\mathfrak{X}, \bullet)$, and its homology is denoted by $\CH^q _{\mathcal{W}} (\mathfrak{X}, n)$.\qed
\end{defn}

\subsubsection{The special flat pull-backs 2}
Some of the special flat pull-backs of Lemma \ref{lem:pre flat pb0} extend to higher Chow cycles on formal schemes:

\begin{lem}\label{lem:pre flat pb}
Let $f: \mathfrak{X} \to \mathfrak{Y}$ be a flat morphism of equidimensional noetherian quasi-affine formal $k$-schemes, which is one of the types ${\rm (I)}$, ${\rm (II)}$, ${\rm (III)}$ in Lemma \ref{lem:pre flat pb0}. Then $f$ induces a natural flat pull-back morphism $f^*: z^q ( \mathfrak{Y}, \bullet) \to z^q (\mathfrak{X}, \bullet)$.
\end{lem}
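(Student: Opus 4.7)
My plan is to reduce the statement to the naive-cycle setting. For each $n\geq 0$, the base-changed morphism $f\times\mathrm{id}_{\square^n_k}\colon \square^n_{\mathfrak{X}}\to\square^n_{\mathfrak{Y}}$ is again flat of one of the types (I), (II), (III) (in case (III) one simply absorbs $\square^n_k$ into the second factor), so Lemma \ref{lem:pre flat pb0} already furnishes a flat pull-back $f^*\colon \un{z}^q(\square^n_{\mathfrak{Y}})\to\un{z}^q(\square^n_{\mathfrak{X}})$ on the naive cycle groups. On each integral generator $\mathfrak{Z}$ this sends $[\mathfrak{Z}]$ to $[f^{*}\mathcal{O}_{\mathfrak{Z}}]$, equidimensional of codimension $q$. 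I would then verify three things: (a) this pull-back carries the admissible subgroup $z^q(\mathfrak{Y},n)$ into $z^q(\mathfrak{X},n)$, (b) it commutes with the cubical boundary maps $\partial_i^{\epsilon}$, and (c) it preserves the degenerate subcomplex.

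For (a), the condition (\textbf{GP}) is automatic from flat base change: for every face $F\subset\square_k^n$ one has $f^*\mathfrak{Z}\cap(\mathfrak{X}\times F)=(f\times\mathrm{id})^{-1}(\mathfrak{Z}\cap(\mathfrak{Y}\times F))$, whose codimension in $\mathfrak{X}\times F$ equals that of $\mathfrak{Z}\cap(\mathfrak{Y}\times F)$ in $\mathfrak{Y}\times F$. The condition (\textbf{SF}) I would treat case by case. In (I), trivially $\mathfrak{X}_{\red}=\mathfrak{Y}_{\red}|_U$ and the estimate restricts to the open subset. In (III), using Lemma \ref{lem:prod completion} together with the equidimensionality hypothesis one identifies $\mathfrak{X}_{\red}$, up to reduction, with $(\mathfrak{X}_1)_{\red}\times(\mathfrak{X}_2)_{\red}$; the relevant intersection then factors as $(\mathfrak{X}_1)_{\red}\times(\mathfrak{Z}\cap((\mathfrak{X}_2)_{\red}\times F))$ and the codimension estimate descends from (\textbf{SF}) for $\mathfrak{Z}$. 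In (II), with $\mathfrak{X}=\mathfrak{Y}_{\mathcal{I}}$, I would invoke Remark \ref{remk:gen ideal defn}: since (\textbf{SF}) may be tested against \emph{any} ideal of definition of $\mathfrak{X}$, it suffices to test against the one obtained by adjoining $\mathcal{I}$ to the image of the largest ideal of definition of $\mathfrak{Y}$, whose associated closed subscheme is a flat base change of $\mathfrak{Y}_{\red}\cap V(\mathcal{I})$, so the codimension transports through flatness.

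For (b), by \eqref{eqn:codim 1 face defn} each $\partial_i^{\epsilon}$ is computed as the derived pull-back along the regular closed immersion $F_{i,\mathfrak{Y}}^{\epsilon}\hookrightarrow \square^n_{\mathfrak{Y}}$, which by (\textbf{GP}) reduces to the ordinary scheme-theoretic intersection; the identity $f^*\circ\partial_i^{\epsilon}=\partial_i^{\epsilon}\circ f^*$ then follows from the flat base change isomorphism for Tor, i.e.\ from the compatibility of the derived tensor product with the flat pull-back $f^*$ applied to the commuting square $\square^{n-1}_\mathfrak{X}\hookrightarrow\square^n_\mathfrak{X}\to\square^n_\mathfrak{Y}\hookleftarrow\square^{n-1}_\mathfrak{Y}$. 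For (c), the degenerate generators in $\un{z}^q(\mathfrak{Y},n)$ are by definition themselves type-(III) pull-backs along the coordinate projections $\square^n_{\mathfrak{Y}}\to\square^{n-1}_{\mathfrak{Y}}$, with which $f^*$ manifestly commutes, so degeneracies are preserved. The step I expect to be most delicate is (\textbf{SF}) in case (II): because the reduction of a further completion $\mathfrak{Y}_{\mathcal{I}}$ need not equal the scheme-theoretic pull-back of $\mathfrak{Y}_{\red}$, the argument cannot proceed naively and must exploit the flexibility of Remark \ref{remk:gen ideal defn} to switch to an ideal of definition adapted to the completion morphism $f$; this is also presumably the reason type (IV) is excluded from the statement.
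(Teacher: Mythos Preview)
Your overall strategy---invoke Lemma~\ref{lem:pre flat pb0} on $\square^n_{\mathfrak{X}}\to\square^n_{\mathfrak{Y}}$, then verify (\textbf{GP}), (\textbf{SF}), commutation with $\partial_i^\epsilon$, and preservation of degenerates---matches the paper's. The treatment of (\textbf{GP}) and of types (I), (III) for (\textbf{SF}), as well as parts (b) and (c), is essentially the same as the paper's (your (\textbf{GP}) argument is phrased uniformly via flat base change, whereas the paper argues each type separately; note, though, that in type (II) one only has an inequality $\dim f^{*}(\mathfrak{Z}\cap(\mathfrak{Y}\times F))\le\dim(\mathfrak{Z}\cap(\mathfrak{Y}\times F))$, not equality, since completion can strictly lower dimension---the inequality is all that is needed).

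The one substantive difference is (\textbf{SF}) in type (II). The paper does \emph{not} go through Remark~\ref{remk:gen ideal defn}; it simply repeats the dimension-bound argument for completions, citing the commutative-algebra fact that $\dim\widehat{M}\le\dim M$ (Greco--Salmon or Salmon). Your proposed route is more circuitous and, as written, has a gap: while $(\mathcal{J}_0+\mathcal{I})\mathcal{O}_{\mathfrak{X}}$ is indeed an ideal of definition of $\mathfrak{X}$ and the associated closed subscheme identifies with $(\mathfrak{Y}_{\red}\cap V(\mathcal{I}))_{\red}$, the ideal $\mathcal{J}_0+\mathcal{I}$ is generally \emph{not} an ideal of definition of $\mathfrak{Y}$, so (\textbf{SF}) for $\mathfrak{Z}$ gives you control over $\mathfrak{Z}\cap(\mathfrak{Y}_{\red}\times F)$, not directly over its further intersection with $V(\mathcal{I})\times F$. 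Bridging this requires exactly the kind of dimension estimate the paper invokes in the first place. Your instinct that this is the delicate step, and the reason type (IV) is excluded, is correct; the paper's one-line appeal to the completion dimension bound is the cleanest way through.
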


\begin{proof}
Let $n, q \geq 0$ be integers. 
The induced morphism $f= f_n: \mathfrak{X} \times_k \square_k ^n \to \mathfrak{Y} \times_k \square_k ^n$ is also flat of the same type. 

Let $\mathfrak{Z} \in z^q (\mathfrak{Y}, n)$ be an integral cycle. By Lemma \ref{lem:pre flat pb0}, we know that $f^* (\mathfrak{Z})$ is of codimension $q$ in $\mathfrak{X} \times_k \square_k ^n$. To show $f^*(\mathfrak{Z}) \in z^q (\mathfrak{X}, n)$, we check the conditions (\textbf{GP}), (\textbf{SF}) of Definition \ref{defn:HCG}.

Let $F \subset \square_k ^n$ be a face and let $Z$ be the reduction of $\mathfrak{Z}$ modulo the largest ideal of definition, i.e. $Z= \mathfrak{Z} \cap (\mathfrak{X}_{\red} \times \square_k ^n)$.

\medskip

\textbf{Type (I):} First suppose $f$ is a type (I) flat morphism. In this case $\mathfrak{X}= \mathfrak{Y}|_U$ for a nonempty open subset $U \subset |\mathfrak{Y}|$ and $f^* (\mathfrak{Z}) = \mathfrak{Z}|_U$. Thus 
$$f^* (\mathfrak{Z}) \cap (\mathfrak{X} \times F) = \mathfrak{Z}|_U \cap (\mathfrak{Y}|_U \times F) = (\mathfrak{Z} \cap (\mathfrak{Y} \times F))|_U$$
so that $\dim \ f^*(\mathfrak{Z}) \cap (\mathfrak{X} \times F) \leq \dim \ \mathfrak{Z} \cap (\mathfrak{Y} \times F)$. By the condition (\textbf{GP}) for $\mathfrak{Z}$, we have $\dim \ \mathfrak{Z} \cap (\mathfrak{Y} \times F) \leq \dim \ \mathfrak{Y} \times F -q$, where the latter is equal to $\dim \ \mathfrak{Y}|_U \times F - q = \dim \ \mathfrak{X} \times F - q$ because $U$ is nonempty and $\mathfrak{Y}$ is equidimensional. Hence $ \dim \ f^*(\mathfrak{Z}) \cap (\mathfrak{X} \times F)\leq \dim \ \mathfrak{X} \times F - q$, proving the condition (\textbf{GP}) for $f^* (\mathfrak{Z})$. 

Similarly, we have
$$
f^* (\mathfrak{Z}) \cap ( (\mathfrak{Y}|_U)_{\red} \times F) = (\mathfrak{Z} \cap (\mathfrak{Y} _{\red} \times F))|_U,
$$
and combined with the condition (\textbf{SF}) for $\mathfrak{Z}$, we deduce the condition (\textbf{SF}) for $f^* (\mathfrak{Z})$. 


\medskip

\textbf{Type (II):} This time, suppose $f$ is a type (II) flat morphism, i.e. $f: \mathfrak{X}= \mathfrak{Y}_{\mathcal{I}} \to \mathfrak{Y}$ is given by the further completion with respect to an ideal $\mathcal{I} \subset \mathcal{O}_{\mathfrak{Y}}$ such that $| \mathfrak{X}|=Y$ for a closed subset $Y \subset |\mathfrak{Y}|$. 

We show that the conditions (\textbf{GP}), (\textbf{SF}) for $f^* (\mathfrak{Z})$ follow from those of $\mathfrak{Z}$. Here $f^* (\mathfrak{Z})$ is the completion of $\mathfrak{Z}$, and similarly $f^* (\mathfrak{Z}) \cap (\mathfrak{X} \times F)$ is the completion of $\mathfrak{Z} \cap (\mathfrak{Y} \times F)$. 

Hence for (\textbf{GP}), we have $\dim \ f^* (\mathfrak{Z}) \cap (\mathfrak{X} \times F) \leq \dim \ \mathfrak{Z} \cap (\mathfrak{Y} \times F)$, by the basic fact in commutative algebra that the dimension of the completion is bounded by the dimension of the original (see Greco-Salmon \cite[Theorem 8.9, p.38]{GS} or P. Salmon \cite[\S 2, Proposition 3, p.391]{Salmon}). This shows that the condition (\textbf{GP}) for $\mathfrak{Z}$ implies that for $f^* (\mathfrak{Z})$.

The argument for the condition (\textbf{SF}) for $f^* (\mathfrak{Z})$ is similar.


\medskip

\textbf{Type (III):} Suppose $f$ is a type (III) flat morphism, i.e. $f=pr_2: \mathfrak{X} = \mathfrak{X}' \times \mathfrak{Y} \to \mathfrak{Y}$ is the projection. Then $f^* (\mathfrak{Z}) = \mathfrak{X}' \times \mathfrak{Z}$. 

Again, we verify the conditions (\textbf{GP}), (\textbf{SF}) for $f^* (\mathfrak{Z})$ from those of $\mathfrak{Z}$.

For (\textbf{GP}), we have
\begin{equation}\label{eqn:GP 3}
f^* (\mathfrak{Z}) \cap (\mathfrak{X} \times F) = (\mathfrak{X}'  \times \mathfrak{Z}) \cap (\mathfrak{X}'  \times \mathfrak{Y} \times F)= \mathfrak{X}' \times (\mathfrak{Z} \cap (\mathfrak{Y} \times F))
\end{equation}
so that we deduce that if $\mathfrak{Z}$ has (\textbf{GP}), then so does $f^* (\mathfrak{Z})$. 

For (\textbf{SF}), we have $\mathfrak{X}_{\red} = \mathfrak{X}'_{\red} \times \mathfrak{Y}_{\red}$ so that
\begin{equation}\label{eqn:III-SF}
f^* (\mathfrak{Z}) \cap (\mathfrak{X}_{\red} \times F) = (\mathfrak{X}' \times \mathfrak{Z}) \cap (\mathfrak{X}'_{\red} \times \mathfrak{Y}_{\red} \times F) =  \mathfrak{X}'_{\red} \times  (\mathfrak{Z} \cap (\mathfrak{Y}_{\red} \times F)),
\end{equation}
thus we deduce that if $\mathfrak{Z}$ has (\textbf{SF}), then so does $f^* (\mathfrak{Z})$.

\medskip

Thus for each flat morphism $f$ of type (I), (II), or (III), we have shown that if $\mathfrak{Z} \in z^q (\mathfrak{Y}, n)$, then $f^* (\mathfrak{Z}) \in z^q (\mathfrak{X}, n)$.

\medskip

For the compatibility of $f^*$ with the boundary maps, consider the commutative diagram
$$
\xymatrix{
\mathfrak{X} \times \square^{n-1} \ar[d] ^{f_{n-1}}  \ar@{^{(}->}[r] ^{\iota_{i, \mathfrak{X}} ^{\epsilon}} & \mathfrak{X} \times \square^{n} \ar[d] ^{f_n} \\
\mathfrak{Y} \times \square^{n-1}  \ar@{^{(}->}[r] ^{\iota_{i, \mathfrak{Y}} ^{\epsilon}} & \mathfrak{Y} \times \square^{n}.}
$$
The diagram shows that $\mathbf{L} f_{n-1} ^* \circ \mathbf{L} ( \iota_{i, \mathfrak{Y}} ^{\epsilon} )^* = \mathbf{L} ( \iota_{i, \mathfrak{X}} ^{\epsilon} )^*\circ \mathbf{L} f_n ^*$ (\cite[I-3.6, p.119]{Lipman}). Since $f_n$ and $ f_{n-1}$ are flat, we have $\mathbf{L} f_n ^* = f_n ^*$ and $\mathbf{L} f_{n-1} ^* = f_{n-1} ^*$, while we saw $\mathbf{L} (\iota_{i, \mathfrak{X}} ^{\epsilon})^* = (\iota_{i, \mathfrak{X}} ^{\epsilon})^*$ and $\mathbf{L} (\iota_{i, \mathfrak{Y}} ^{\epsilon})^* = (\iota_{i, \mathfrak{Y}} ^{\epsilon})^*$ in Definition \ref{defn:HCG2}. 
Hence we have $f_{n-1} ^* \circ  ( \iota_{i, \mathfrak{Y}} ^{\epsilon} )^* $ $=$ $ ( \iota_{i, \mathfrak{X}} ^{\epsilon} )^*\circ f_n ^*$, i.e. $f_{n-1}^* \circ \partial_i ^{\epsilon} = \partial_i ^{\epsilon} \circ f_n ^*$. This implies $f^* \circ \partial = \partial \circ f^*$ so that $f^*$ is a morphism of complexes.
\end{proof}

\begin{remk}\label{remk:stalk realize}
Lemma \ref{lem:pre flat pb}-(I) implies in particular that, for an equidimensional noetherian formal scheme $\mathfrak{X}$ and a point $y \in |\mathfrak{X}|$ of the underlying topological space, the colimit
$$ 
\varinjlim_{y \in U} z^q (\mathfrak{X}|_U, \bullet),
$$
over all open neighborhoods of $y$ exists. However, we are not certain whether this is equal to the cycle group over a formal scheme. 
For higher Chow complexes of schemes, such ``continuity" is known, but for formal schemes we do not know whether the conditions in Definition \ref{defn:HCG} permit it. However, this won't cause problems for us in this article. 

Later for semi-local $k$-schemes essentially of finite type, we are going to \emph{define} our cycle class groups via the colimits, so that the continuity holds automatically. See Definition \ref{defn:HCG semi-local}.
\qed
\end{remk}

\subsubsection{Special finite push-forwards 2}\label{sec:sfpf2}

We recall the following again from \cite[\S 2]{Park Tate}. It was stated for the affine case, but it generalizes to the quasi-affine case by \S \ref{sec:ass cycle quasi-affine}:

\begin{lem}\label{lem:pre proper pf}
Let $f: \mathfrak{X} \to \mathfrak{Y}$ be a finite surjective morphism of equidimensional noetherian quasi-affine formal $k$-schemes. 

 Then $f$ induces a natural push-forward morphism $f_*: z^q (\mathfrak{X}, \bullet) \to z^{q} (\mathfrak{Y}, \bullet)$ of complexes of abelian groups.
\end{lem}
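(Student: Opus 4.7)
My plan is to mimic the standard push-forward construction for higher Chow cycles of schemes (S. Bloch \cite{Bloch HC}), but carefully translate each step into the formal-scheme setting using the machinery already available (Lemmas \ref{lem:pf coh cy}, \ref{lem:aqc sum}, and Definitions \ref{defn:push-forward 1}, \ref{defn:HCG2}). First I would extend $f$ to a morphism $f_n := f \times \mathrm{id}_{\square_k^n}: \square_\mathfrak{X}^n \to \square_\mathfrak{Y}^n$. Since $f$ is given locally by a module-finite complete ring extension $A \to B$, the induced extension $A\{y_1,\dots,y_n\} \to B\{y_1,\dots,y_n\} = B \otimes_A A\{y_1,\dots,y_n\}$ is still module-finite (no completion is needed after tensoring by the already-finite $B$), so $f_n$ is again a finite surjective morphism of equidimensional noetherian quasi-affine formal $k$-schemes of the same dimension. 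Applying Definition \ref{defn:push-forward 1} to $f_n$ then yields a map on the naive cycle groups; since finite surjective morphisms between equidimensional sources and targets of the same dimension preserve dimension (hence codimension), integral cycles of codimension $q$ are sent to codimension $q$ cycles.

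Next I would verify the admissibility conditions of Definition \ref{defn:HCG}. For (\textbf{GP}), the preimage $f_n^{-1}(\mathfrak{Y} \times F) = \mathfrak{X} \times F$ (since the product is in the second factor), and the restriction $f_n|_{\mathfrak{X}\times F}$ is still finite surjective onto $\mathfrak{Y}\times F$; therefore $|f_{n,*}\mathfrak{Z}| \cap |\mathfrak{Y}\times F| = f_n(|\mathfrak{Z}| \cap |\mathfrak{X}\times F|)$ has the same dimension as $|\mathfrak{Z}| \cap |\mathfrak{X}\times F|$, from which (\textbf{GP}) transfers from $\mathfrak{Z}$ to $f_{n,*}\mathfrak{Z}$. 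For (\textbf{SF}), $f$ restricts to a finite surjective morphism $\mathfrak{X}_\red \to \mathfrak{Y}_\red$ (both are the reductions given by the largest ideals of definition), and the same argument applied to intersections with $\mathfrak{X}_\red\times F$ and $\mathfrak{Y}_\red\times F$ delivers the required codimension bound.

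The main obstacle is compatibility with the boundary operator $\partial$, i.e.~the identity $f_{n-1,*}\circ \partial_i^\epsilon = \partial_i^\epsilon \circ f_{n,*}$. I would derive this from the Cartesian square
\[
\xymatrix{
\square_\mathfrak{X}^{n-1} \ar[d]_{f_{n-1}} \ar@{^{(}->}[r]^{\iota_{i,\mathfrak{X}}^\epsilon} & \square_\mathfrak{X}^{n} \ar[d]^{f_n} \\
\square_\mathfrak{Y}^{n-1} \ar@{^{(}->}[r]^{\iota_{i,\mathfrak{Y}}^\epsilon} & \square_\mathfrak{Y}^{n}
}
\]
together with an affine base-change isomorphism
\[
\mathbf{L}(\iota_{i,\mathfrak{Y}}^\epsilon)^* f_{n,*}\mathcal{F} \;\simeq\; f_{n-1,*}\,\mathbf{L}(\iota_{i,\mathfrak{X}}^\epsilon)^*\mathcal{F}
\]
for coherent $\mathcal{F}$ on $\square_\mathfrak{X}^n$. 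Because $f_n$ is affine and coherent sheaves are a.q.c., Lemma \ref{lem:aqc sum}-(7) gives $R^{\geq 1}f_{n,*}=0$ on such sheaves, and since $\iota_i^\epsilon$ is a regular immersion cut out by the single equation $y_i - \epsilon$, a direct check through the two-term Koszul resolution $0 \to \mathcal{O} \xrightarrow{\cdot (y_i-\epsilon)} \mathcal{O} \to \mathcal{O}_{F_i^\epsilon}\to 0$ proves the base change assertion. For admissible $\mathfrak{Z}$, the proper intersection assumption (\textbf{GP}) forces the higher Tor's on both sides to vanish (as explained in Definition \ref{defn:HCG2}), so the derived pull-backs coincide with the ordinary intersection products. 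Combining the base change with Lemma \ref{lem:pf coh cy} then gives the desired identity $\partial_i^\epsilon\circ f_{n,*}=f_{n-1,*}\circ \partial_i^\epsilon$, and summing with signs yields $\partial\circ f_{n,*}=f_{n-1,*}\circ \partial$.

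Finally I would descend to the non-degenerate quotient \eqref{eqn:Deg}. The coordinate projections $\pi_j: \square_k^n \to \square_k^{n-1}$ fit into analogous Cartesian squares with $f_n$ and $f_{n-1}$, and by the type-(IV) flat pull-back of Lemma \ref{lem:pre flat pb0} together with the same affine base-change argument, push-forward carries pull-backs along the $\pi_j$ to pull-backs along the $\pi_j$; hence the degenerate subgroups are preserved and $f_*$ descends to a well-defined morphism $z^q(\mathfrak{X},\bullet)\to z^q(\mathfrak{Y},\bullet)$ of complexes. The naturality in $f$ follows from the naturality of $[-]$ and of $f_{n,*}$ on coherent sheaves.
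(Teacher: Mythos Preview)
Your argument is correct and is essentially the standard push-forward construction for cubical higher Chow complexes, carefully transplanted to the formal setting. The paper, however, gives no proof at all: it simply recalls the lemma from \cite[\S 2]{Park Tate} (where it was done for affine formal schemes) and remarks that the quasi-affine case follows from the associated-cycle discussion in \S\ref{sec:ass cycle quasi-affine}. So your proposal is not a different route so much as an explicit write-up of what the paper outsources to a reference.

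One small point worth tightening in your (\textbf{SF}) step: you assert that $f$ restricts to a finite surjective morphism $\mathfrak{X}_{\red}\to\mathfrak{Y}_{\red}$. This is true, but it uses that the pull-back of the largest ideal of definition of $\mathfrak{Y}$ is contained in the largest ideal of definition of $\mathfrak{X}$; alternatively you can sidestep this by invoking Remark~\ref{remk:gen ideal defn}, which shows (\textbf{SF}) is equivalent to the analogous condition for \emph{any} ideal of definition, and then work with the ideal of definition on $\mathfrak{X}$ pulled back from $\mathfrak{Y}$. Everything else (the base-change for $\partial_i^\epsilon$ via the two-term Koszul resolution, and the preservation of degenerate cycles) is fine as written.
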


\begin{cor}\label{cor:3.5.3-2}
Let $\mathfrak{X}$ be an equidimensional noetherian quasi-affine formal $k$-scheme. Let $k \hookrightarrow k'$ be a finite extension of fields, and let $f: \mathfrak{X}_{k'} \to \mathfrak{X}$ be the base change morphism. Then 
\begin{enumerate}
\item $f$ is finite surjective and flat of the type ${\rm (III)}$, and 
\item for the flat pull-back $f^*: z^q (\mathfrak{X}, \bullet) \to z^q (\mathfrak{X}_{k'}, \bullet)$ and the push-forward $f_* : z^q (\mathfrak{X}_{k'}, \bullet) \to z^q (\mathfrak{X}, \bullet)$, the composite $f_* \circ f^*$ is equal to $[ k':k] \cdot {\rm Id}$.
\end{enumerate}
\end{cor}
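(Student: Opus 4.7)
The plan is twofold, matching parts (1) and (2) of the statement.

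For (1), I would first note that $\Spec(k') \to \Spec(k)$ is finite, flat, and surjective because $k'$ is free of rank $[k':k]$ as a $k$-module, and that these three properties persist under base change. Regarding $\Spec(k')$ as a zero-dimensional equidimensional noetherian affine $k$-scheme (hence a noetherian quasi-affine formal $k$-scheme with the discrete topology on the ring), the morphism $f: \mathfrak{X}_{k'} = \Spec(k') \times_k \mathfrak{X} \to \mathfrak{X}$ is literally the projection appearing in type ${\rm (III)}$ of Lemma \ref{lem:pre flat pb0}, hence flat. The only subtle point is finiteness, which on an affine chart $\Spf(A)$ amounts to identifying the completed tensor product $A \widehat{\otimes}_k k'$ with $A \otimes_k k'$ and observing that the latter is free of rank $[k':k]$ over $A$. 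This is fine because $k'$ is already complete in its discrete topology as a finite-dimensional $k$-vector space, so no extra completion is needed. Surjectivity of $f$ on the underlying spaces follows from surjectivity of $\Spec(k') \to \Spec(k)$ and the fact that fiber products commute with the passage from $\mathfrak{X}$ to $\mathfrak{X}_{\red}$.

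For (2), my plan is to leverage the sheaf-theoretic compatibilities already established. Lemma \ref{lem:pre flat pb} supplies $f^*: z^q(\mathfrak{X},\bullet) \to z^q(\mathfrak{X}_{k'},\bullet)$ and Lemma \ref{lem:pre proper pf} supplies $f_*: z^q(\mathfrak{X}_{k'},\bullet) \to z^q(\mathfrak{X},\bullet)$. I would take an integral generator $\mathfrak{Z} \in z^q(\mathfrak{X},n)$ and compute
$$
f_* f^* \mathfrak{Z} \;=\; f_*\bigl[\,f^* \mathcal{O}_{\mathfrak{Z}}\,\bigr] \;=\; \bigl[\,f_* f^* \mathcal{O}_{\mathfrak{Z}}\,\bigr],
$$
where the first equality uses Lemma \ref{lem:pull-back perfect} (flat pull-back of cycles equals the cycle of the pulled-back sheaf) and the second uses Lemma \ref{lem:pf coh cy} (push-forward of cycles equals the cycle of the pushed-forward sheaf under a finite morphism). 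By (1), $f_* \mathcal{O}_{\mathfrak{X}_{k'}}$ is a locally free $\mathcal{O}_{\mathfrak{X}}$-module of rank $[k':k]$, and therefore $f_* f^* \mathcal{O}_{\mathfrak{Z}} \simeq \mathcal{O}_{\mathfrak{Z}}^{\oplus [k':k]}$ as $\mathcal{O}_{\mathfrak{X}}$-modules by the trivial projection formula on affine charts (which is valid here because $f$ is affine and $f_* \mathcal{O}_{\mathfrak{X}_{k'}}$ is free). Hence $[f_* f^* \mathcal{O}_{\mathfrak{Z}}] = [k':k]\cdot [\mathcal{O}_{\mathfrak{Z}}] = [k':k]\cdot \mathfrak{Z}$, and extending by $\mathbb{Z}$-linearity yields the claimed identity $f_* \circ f^* = [k':k]\cdot {\rm Id}$ on every $z^q(\mathfrak{X},n)$, compatibly with the boundary operators.

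The main potential obstacle is verifying that the sheaf-level manipulations really behave as expected on formal schemes, where completions and associated cycles are not always transparent; but this is precisely what Lemmas \ref{lem:pull-back perfect} and \ref{lem:pf coh cy} together with the affine/quasi-affine construction of associated cycles in \S \ref{sec:ass cycle} already take care of. Once these are in place, the computation collapses to the classical one for schemes, so no further technical difficulty should arise.
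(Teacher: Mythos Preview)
Your proposal is correct and follows the same route as the paper: identify $f$ as a type (III) projection, invoke Lemma~\ref{lem:pre flat pb} and Lemma~\ref{lem:pre proper pf} for the existence of $f^*$ and $f_*$, and then verify $f_*f^* = [k':k]\cdot{\rm Id}$. The paper's own proof simply declares this last identity ``apparent,'' whereas you spell out the sheaf-level computation via Lemmas~\ref{lem:pull-back perfect} and~\ref{lem:pf coh cy} and the projection formula; this is a welcome elaboration but not a different strategy.
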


\begin{proof}
Here $\mathfrak{X}_{k'} = \mathfrak{X} \times_k \Spec (k')$ so that the base change map $f$ is flat of type (III). This is finite surjective.

The flat pull-back $f^*$ exists by Lemma \ref{lem:pre flat pb} while the finite push-forward $f_*$ exists by Lemma \ref{lem:pre proper pf}. That $f_* \circ f^* = [k' : k] \cdot {\rm Id}$ is apparent.
\end{proof}

\section{Derived rings and the mod equivalences}\label{sec:main mod Y}

Let $Y$ be a quasi-affine $k$-scheme of finite type. Let $Y \hookrightarrow X$ be a closed immersion into an equidimensional smooth $k$-scheme. Let $\widehat{X}$ be the completion of $X$ along $Y$.

\medskip

In \S \ref{sec:main mod Y}, we define the mod $Y$-equivalence relation on the cycle group $z^q (\widehat{X}, n)$ of Definition \ref{defn:HCG} with $\mathfrak{X}= \widehat{X}$. The mod $Y$-equivalence on the cycles comes from the differences of associated cycles of certain pairs of coherent sheaves of algebras, whose derived pull-backs to $Y$ are isomorphic to each other as sheaves of derived rings. This idea uses the philosophy of derived algebraic geometry. This is in part related to (but a bit stronger than) the derived Milnor patching of S. Landsburg \cite{Landsburg Duke}.

We show that the resulting collection $\{ z^q (\widehat{X} \mod Y, n) \}_{n \geq 0}$ forms a complex. Its homology group $\CH^q (\widehat{X} \mod Y, n)$ is one of the central intermediate objects in this article, though there are yet some more steps to climb up.

\subsection{Derived rings}\label{sec:salg}
In \S \ref{sec:salg}, we recall a few basic ideas on derived rings and derived algebraic geometry needed in this paper. Some references are B. To\"en \cite{Toen} and J. Lurie \cite{Lurie SAG}. For a quick essence, one can read Kerz-Strunk-Tamme \cite[\S 2.1]{KST IM}. 
The reader can consult J. Lurie \cite[Ch. 1]{Lurie HTT} for some definitions around $\infty$-categories.

\medskip

\subsubsection{Derived rings} Let $R$ be a commutative ring with unity, and let $A$ and $B$ be $R$-algebras. If we regard $A$ and $B$ as $R$-modules, we have the conventional derived tensor product $A \otimes_R ^{\mathbf{L}} B$ using an $R$-flat resolution of $A$ or $B$. In this process, we lose some information on the ring structures, and sometimes this is not good enough. We want to have \emph{a kind of ring structure} on this derived tensor $A \otimes_R ^{\mathbf{L}} B$, 

A way to do this is to bring all of $A, B, R$ into the bigger category ${\rm sComm}$ of simplicial commutative rings with unity, and to take the homotopy push-out there. A more precise description is given as follows.

Recall that a morphism $R_1 \to R_2$ of simplicial rings is called a \emph{weak-equivalence} if the induced homomorphisms $\pi_i (R_1) \to \pi_i (R_2)$ (where the base points are at $0$) are isomorphisms for all $i \geq 0$. Localizing ${\rm sComm}$ by all weak-equivalences, we obtain \emph{the $\infty$-category of derived rings}
$$
\textbf{sComm} := L ({\rm sComm}),
$$
where $L$ denotes the localization. The push-out of $A \leftarrow R \rightarrow B$ in $\textbf{sComm}$ is the derived ring $A \otimes_R ^{\mathbf{L}} B$. It is represented in the weak-equivalence class by $\tilde{A} \otimes_R B$, where $\tilde{A}$ is a \emph{simplicial resolution} of $A$, and the tensor $\tilde{A} \otimes_R B$ is equipped with the structure of a simplicial ring. We can also regard it as a simplicial $B$-algebra, if desired. See M. Andr\'e \cite[D\'efinition IV-30, p.53]{Andre} for the definition of simplicial resolutions, and see \cite[Th\'eor\`eme IV-44, p.55]{Andre} and \cite[Th\'eor\`eme IX-26, p.128]{Andre} for their existence. S. Iyengar \cite{Iyengar} is another place.

For $i \geq 0$, we have ${\rm \pi}_i (A \otimes_R ^{\mathbf{L}} B) = {\rm H}_i (A \otimes_R ^{\mathbf{L}} B) = \Tor_i ^R (A, B)$. The simplicial ring structure of $A \otimes_R ^{\mathbf{L}} B$ induces a natural graded ring structure on $ \Tor_* ^R (A, B)$. There is even a natural graded $B$-algebra structure $\Tor_* ^R (A, B)$. See \cite[Section 068G]{stacks}.

\medskip

When $R$ is a fixed commutative ring with unity, we can localize the subcategory ${\rm sAlg} (R) \subset {\rm sComm}$ of simplicial $R$-algebras by weak-equivalences to obtain
 $$
 \sAlg (R) := L ( {\rm sAlg} (R)),
 $$
 the $\infty$-category of \emph{derived $R$-algebras}.
 
 When $f: R \to S$ is a homomorphism of commutative rings with unity, we have the induced derived functor
 $$ 
 \mathbf{L} f^*: \sAlg (R) \to \sAlg (S)
 $$
 that sends an $R$-algebra $A$ to the derived $S$-algebra $A \otimes_R ^{\mathbf{L}} S$. 
 
 \medskip
 
For any given (formal) scheme $\mathfrak{X}$, we may promote the $\infty$-categories $\sAlg (R)$ over rings $R$ to the $\infty$-category $\sAlg (\mathcal{O}_{\mathfrak{X}})$ of derived $\mathcal{O}_{\mathfrak{X}}$-algebras. In case $f: \mathfrak{X} \to \mathfrak{Y}$ is a morphism of (formal) schemes, we have the induced derived functor
$$
\mathbf{L} f^* : \sAlg (\mathcal{O}_{\mathfrak{Y}}) \to \sAlg (\mathcal{O}_{\mathfrak{X}}).
$$

\subsubsection{Derived ringed spaces}
Let $Y$ be a noetherian topological space with a finite Krull dimension. 
Recall (e.g. B. To\"en \cite[\S 2.2, pp.184-185]{Toen}, or Kerz-Strunk-Tamme \cite[Definition 2.1]{KST IM}) that a \emph{derived scheme} is a pair $(Y, \mathcal{O})$ equipped with a sheaf (or, more precisely a stack) $\mathcal{O}$ of derived rings, such that the ringed space $(Y, \pi_0 \mathcal{O})$ is a scheme in the usual sense, and for each $i \geq 0$, the sheaf $\pi_i \mathcal{O}$ is a quasi-coherent sheaf of $\pi_0 \mathcal{O}$-modules, or in short, a quasi-coherent $\pi_0 \mathcal{O}$-module. 

For the purpose of this paper, unfortunately the $\infty$-category of derived schemes is a bit narrow, while that of the \emph{derived ringed spaces} is large enough. Recall (B. To\"en \cite[\S 2.2, p.184]{Toen}) that a pair $(Y, \mathcal{O})$ equipped with a sheaf $\mathcal{O}$ of derived rings on $Y$ is called a derived ringed space. They form the $\infty$-category of derived ringed spaces. 

A derived ringed space $(Y, \mathcal{O})$ is a \emph{derived locally ringed space} if the associated truncation $(Y, \pi_0 \mathcal{O})$ is a locally ringed space. 

If we define the $\infty$-category of \emph{derived formal schemes} similarly, then probably the objects we deal with in the paper would lie in there. This particular knowledge is not needed, so we won't pursue the details in this direction.

\subsection{The mod $Y$-equivalence on cycles}\label{sec:mod Y}
We return to the discussions of higher Chow cycles on noetherian quasi-affine formal $k$-schemes.

We specialize to the case when the formal scheme $\mathfrak{X}$ is the formal neighborhood $\widehat{X}$, associated to a closed immersion $Y \hookrightarrow X$ of an quasi-affine $k$-scheme of finite type into an equidimensional smooth $k$-scheme. 

In \S \ref{sec:mod Y}, we define the mod $Y$-equivalence on $z^q (\widehat{X}, n)$. Since $\widehat{X}$ is the disjoint union $\coprod_i \widehat{X}|_{Y_i}$ taken over all connected components $Y_i$ of $Y$ and $z^q (\widehat{X}, n) = \bigoplus_{i} z^q (\widehat{X}|_{Y_i}, n)$, replacing $Y$ by $Y_i$, we may assume that $Y$ is connected.

\begin{defn}\label{defn:mod Y equiv}
Let $Y$ be a connected quasi-affine $k$-scheme of finite type, and let $Y \hookrightarrow X$ be a closed immersion into an equidimensional smooth $k$-scheme. Let $\widehat{X}$ be the completion of $X$ along $Y$. 
Let $\mathcal{W} \subset \widehat{X}$ be a finite set of closed formal subschemes. Recall $\square_{\widehat{X}} ^n$ is regular (see \S \ref{subsec:cycles}).

\begin{enumerate}
\item A perfect complex $\mathcal{F}$ on $\square_{\widehat{X}} ^n$ is said to be \emph{admissible} of codim $q$ with respect to $\mathcal{W}$ if for each $i \in \mathbb{Z}$, the cycle $[\mathcal{H}^i (\mathcal{F})] \in z^q_{\mathcal{W}} (\widehat{X}, n)$ (see Proposition \ref{prop:perfect coherent ass quasi-affine}). We let $\mathcal{D}_{\perf} ^q (\widehat{X}, n, \mathcal{W})$ be the subcategory of $\mathcal{D}_{\perf} (\square_{\widehat{X}}^n)$ generated by admissible perfect complexes of codim $q$ with respect to $\mathcal{W}$.

\item Let $\mathcal{A}$ be a coherent $\mathcal{O}_{\square_{\widehat{X}} ^n}$-algebra. Since $\square_{\widehat{X}} ^n$ is regular, $\mathcal{A}$ is a perfect complex on $\square_{\widehat{X}}^n$ by Corollary \ref{cor:coh=perfect}. 

Let $\mathcal{R}^q (\widehat{X}, n, \mathcal{W})$ be the set of all admissible coherent $\mathcal{O}_{\square_{\widehat{X}} ^n}$-algebras of codim $q$ with respect to $\mathcal{W}$. Any $\mathcal{A}\in \mathcal{R}^q (\widehat{X}, n, \mathcal{W})$ is an a.q.c.~sheaf by Lemma \ref{lem:aqc sum}-(5).

\item Two admissible coherent $\mathcal{O}_{\square_{\widehat{X}} ^n}$-algebras $\mathcal{A}_1, \mathcal{A}_2 \in \mathcal{R}^q (\widehat{X}, n, \mathcal{W})$ are said to be \emph{mod $Y$-equivalent}, if we have an isomorphism 
\begin{equation}\label{eqn:derived int}
\mathcal{A}_1 \otimes_{\mathcal{O}_{\square^n_{\widehat{X}}}}^{\mathbf{L}} \mathcal{O}_{\square_Y ^n} \simeq \mathcal{A}_2 \otimes_{\mathcal{O}_{\square^n_{\widehat{X}}}} ^{\mathbf{L}} \mathcal{O}_{\square_Y ^n}
\end{equation}
in $\sAlg (\mathcal{O}_{\square_Y^n})$. In this case, we write $\mathcal{A}_1 \sim_Y \mathcal{A}_2$.

Let $\mathcal{L}_{\mathcal{W}} ^q (n) = \mathcal{L} ^q (\widehat{X}, Y, n, \mathcal{W})$ be the set of all pairs $(\mathcal{A}_1, \mathcal{A}_2)$ of mod $Y$-equivalent admissible coherent $\mathcal{O}_{\square_{\widehat{X}} ^n}$-algebras $\mathcal{A}_j \in \mathcal{R}^q (\widehat{X}, n, \mathcal{W})$. 

\item Let $\mathcal{M}^q_{\mathcal{W}}(n) = \mathcal{M}^q (\widehat{X}, Y,  n, \mathcal{W})$ be the subgroup of $z^{q}_{ \mathcal{W}} (\widehat{X} ,n)$ generated by the cycles $[\mathcal{A}_1]-[\mathcal{A}_2]$,
where $(\mathcal{A}_1, \mathcal{A}_2)$ runs over all members of $\mathcal{L}_{\mathcal{W}}^q (n)$. 
\end{enumerate}
In case $\mathcal{W}= \emptyset$, we omit $\mathcal{W}$ from the notations of all of the above. \qed
\end{defn}

\begin{remk}
The isomorphism \eqref{eqn:derived int} can be described in the following two additional equivalent ways.

Here is one way: consider two ringed spaces $(|\square_{\widehat{X}}^n|, \mathcal{A}_j)$, $j=1,2$. They give the homotopy fiber products
\begin{equation}\label{eqn:d fiber}
(|\square_{\widehat{X}}^n|, \mathcal{A}_j) \times_{\square_{\widehat{X}} ^n} ^h \square_Y^n,
\end{equation}
 in the $\infty$-category of derived ringed spaces, where note that $|\square_{\widehat{X}}^n|=|\square_Y^n|$. Then the isomorphism \eqref{eqn:derived int} in $\sAlg (\mathcal{O}_{\square_Y^n})$ is equivalent to that the above Cartesian products \eqref{eqn:d fiber} for $j=1,2$ are isomorphic to each other as derived $\mathcal{O}_{\square_Y^n}$-schemes. The Cartesian products \eqref{eqn:d fiber} are given by the derived schemes $(\square_Y^n, \mathcal{A}_j \otimes _{\mathcal{O}_{\square^n_{\widehat{X}}}}^{\mathbf{L}} \mathcal{O}_{\square_Y ^n})$.
 
Another way is the following: they give a graded $\mathcal{O}_{\square_Y^n}$-algebra isomorphism
\begin{equation}\label{eqn:gr Tor}
\Tor_* ^{\mathcal{O}_{\square_{\widehat{X}} ^n}} ( \mathcal{A}_1 , \mathcal{O}_{\square_Y^n}) \simeq \Tor_* ^{\mathcal{O}_{\square_{\widehat{X}} ^n}} ( \mathcal{A}_1 , \mathcal{O}_{\square_Y^n}),
\end{equation}
where the graded algebra structures on the sheaves $\Tor_*$ are given as in \cite[Section 068G]{stacks}.

\medskip

In \cite{Park Tate} by the author, on the generating cycles over $\Spf (k[[t]])$, we used an isomorphism similar to \eqref{eqn:gr Tor}, but in terms of just $\Tor_0$. This is equivalent to \eqref{eqn:gr Tor} in this case, because the cycles over $\Spf (k[[t]])$ had a Tor independence property, so that $\Tor_i = 0$ for $i >0$. See \cite[Lemma 2.2.8, Corollary 2.2.9]{Park Tate}. In general, such Tor independence property is no longer guaranteed, and we need to keep track of the information contained in higher Tor's. However the definition in terms of the graded algebra $\Tor_*$ is technically less versatile to work with. This is why we borrowed  the language of derived rings from the derived algebraic geometry, from which we can also deduce the $\Tor_*$-description as well.
\qed
\end{remk}

\begin{lem}\label{lem:boundary comp}
Let $Y, X, \widehat{X}, \mathcal{W}$ be as in Definition \ref{defn:mod Y equiv}. For each $1 \leq i \leq n$ and $\epsilon \in \{ 0, \infty\}$ we have
$ \partial_i ^{\epsilon} \mathcal{M}^q (\widehat{X} , Y, n, \mathcal{W}) \subset \mathcal{M}^q (\widehat{X}, Y, n-1, \mathcal{W}).$
In particular, the inclusion
\begin{equation}\label{eqn:perfect defn}
\mathcal{M}^q (\widehat{X}, Y,  \bullet, \mathcal{W}) \hookrightarrow z^q_{\mathcal{W}} (\widehat{X}, \bullet)
\end{equation}
 is a morphism of complexes.
\end{lem}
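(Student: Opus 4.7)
The goal is to check $\partial_i^{\epsilon}\bigl(\mathcal{M}^q(\widehat{X},Y,n,\mathcal{W})\bigr)\subset\mathcal{M}^q(\widehat{X},Y,n-1,\mathcal{W})$ for each $(i,\epsilon)$; the statement about $\partial$ and \eqref{eqn:perfect defn} will then follow by signed linearity and compatibility with the degenerate quotient \eqref{eqn:Deg}. By construction it suffices to treat a generator $[\mathcal{A}_1]-[\mathcal{A}_2]$ with $(\mathcal{A}_1,\mathcal{A}_2)\in\mathcal{L}^q_{\mathcal{W}}(n)$. On the codimension one face $\iota_{i,\widehat{X}}^{\epsilon}\colon\square_{\widehat{X}}^{n-1}\hookrightarrow\square_{\widehat{X}}^n$ my candidate generators are $\mathcal{A}_j':=(\iota_{i,\widehat{X}}^{\epsilon})^*\mathcal{A}_j$ for $j=1,2$; each is a coherent $\mathcal{O}_{\square_{\widehat{X}}^{n-1}}$-algebra via the ordinary pull-back of algebras.

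The first step will verify that $\mathcal{A}_j'\in\mathcal{R}^q(\widehat{X},n-1,\mathcal{W})$ and $[\mathcal{A}_j']=\partial_i^{\epsilon}[\mathcal{A}_j]$ in $z^q_{\mathcal{W}}(\widehat{X},n-1)$. Since $F_{i,\widehat{X}}^{\epsilon}$ is the Cartier divisor cut out by $y_i-\epsilon$, the Koszul resolution computes $\mathbf{L}(\iota_{i,\widehat{X}}^{\epsilon})^*\mathcal{A}_j$ by the two-term complex $\mathcal{A}_j\xrightarrow{y_i-\epsilon}\mathcal{A}_j$. By (\textbf{GP}) for $[\mathcal{A}_j]$, no minimal associated prime of $\mathcal{A}_j$ contains $y_i-\epsilon$; the filtration argument of Lemma \ref{lem:sheaf proper int} applied to $(\mathcal{A}_j,\mathcal{O}_{F_{i,\widehat{X}}^{\epsilon}})$ then yields $[\mathbf{L}(\iota_{i,\widehat{X}}^{\epsilon})^*\mathcal{A}_j]=[\mathcal{A}_j]\cdot[F_{i,\widehat{X}}^{\epsilon}]=\partial_i^{\epsilon}[\mathcal{A}_j]$, and (upon ensuring $\mathcal{H}^{-1}=0$; see the obstacle below) the stronger quasi-isomorphism $\mathcal{A}_j'\simeq\mathbf{L}(\iota_{i,\widehat{X}}^{\epsilon})^*\mathcal{A}_j$ in $\sAlg(\mathcal{O}_{\square_{\widehat{X}}^{n-1}})$. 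Proper intersection of every cycle component of $[\mathcal{A}_j]$ with $C\times F$ for $C\in\mathcal{W}$ and each face $F\subset\square^{n-1}$ is inherited from $[\mathcal{A}_j]\in z^q_{\mathcal{W}}(\widehat{X},n)$, so $\mathcal{A}_j'$ is $\mathcal{W}$-admissible.

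The second step will verify $\mathcal{A}_1'\sim_Y\mathcal{A}_2'$. Writing $\iota_Y^m\colon\square_Y^m\hookrightarrow\square_{\widehat{X}}^m$ for $m=n-1,n$ and $\iota_{i,?}^{\epsilon}\colon\square_?^{n-1}\hookrightarrow\square_?^n$ for $?\in\{Y,\widehat{X}\}$, the base change $Y\hookrightarrow\widehat{X}$ gives the commutative square $\iota_{i,\widehat{X}}^{\epsilon}\circ\iota_Y^{n-1}=\iota_Y^n\circ\iota_{i,Y}^{\epsilon}$. Applying $\mathbf{L}(\iota_{i,Y}^{\epsilon})^*$ to the defining equivalence $\mathbf{L}(\iota_Y^n)^*\mathcal{A}_1\simeq\mathbf{L}(\iota_Y^n)^*\mathcal{A}_2$ in $\sAlg(\mathcal{O}_{\square_Y^n})$, and invoking the $\infty$-categorical functoriality $\mathbf{L}(f\circ g)^*\simeq\mathbf{L}g^*\circ\mathbf{L}f^*$ of derived pull-back of derived algebras, produces $\mathbf{L}(\iota_Y^{n-1})^*\bigl(\mathbf{L}(\iota_{i,\widehat{X}}^{\epsilon})^*\mathcal{A}_1\bigr)\simeq\mathbf{L}(\iota_Y^{n-1})^*\bigl(\mathbf{L}(\iota_{i,\widehat{X}}^{\epsilon})^*\mathcal{A}_2\bigr)$. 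Substituting the identification of Step~1 gives $\mathbf{L}(\iota_Y^{n-1})^*\mathcal{A}_1'\simeq\mathbf{L}(\iota_Y^{n-1})^*\mathcal{A}_2'$ in $\sAlg(\mathcal{O}_{\square_Y^{n-1}})$, so $(\mathcal{A}_1',\mathcal{A}_2')\in\mathcal{L}^q_{\mathcal{W}}(n-1)$ and $[\mathcal{A}_1']-[\mathcal{A}_2']\in\mathcal{M}^q(\widehat{X},Y,n-1,\mathcal{W})$; by Step~1 this cycle equals $\partial_i^{\epsilon}([\mathcal{A}_1]-[\mathcal{A}_2])$, completing the inclusion.

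The main obstacle I anticipate is the upgrade in Step~1 from the cycle-level identity to an actual quasi-isomorphism $\mathcal{A}_j'\simeq\mathbf{L}(\iota_{i,\widehat{X}}^{\epsilon})^*\mathcal{A}_j$ in $\sAlg$, equivalently the vanishing of $\mathcal{H}^{-1}\bigl(\mathbf{L}(\iota_{i,\widehat{X}}^{\epsilon})^*\mathcal{A}_j\bigr)=\ker\bigl(y_i-\epsilon\colon\mathcal{A}_j\to\mathcal{A}_j\bigr)$. The (\textbf{GP}) condition prevents $y_i-\epsilon$ from lying in any minimal associated prime, but an embedded associated prime of $\mathcal{A}_j$ could still absorb it. Two routes should resolve this: either enlarge the class of generators of $\mathcal{M}^q$ to include admissible derived algebras (simplicial-ring enhancements of perfect complexes), in which case Step~2 applies directly to the derived pull-backs $\mathbf{L}(\iota_{i,\widehat{X}}^{\epsilon})^*\mathcal{A}_j$ with no need to truncate to $\mathcal{H}^0$; or else replace $\mathcal{A}_j$ in advance by its quotient modulo the $(y_i-\epsilon)$-torsion embedded component, which alters neither the associated cycle nor the derived restriction to $\square_Y^n$, thereby reducing to the case without embedded primes where the Koszul complex degenerates as desired.
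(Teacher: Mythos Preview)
Your argument follows the same route as the paper's: compute $\partial_i^{\epsilon}[\mathcal{A}_j]$ via Lemma~\ref{lem:sheaf proper int} as $[\mathbf{L}(\iota_i^{\epsilon})^*\mathcal{A}_j]$, then use the commutative square \eqref{eqn:i_Y i_i} and functoriality of derived pull-back to transport the $\sAlg(\mathcal{O}_{\square_Y^n})$-equivalence down one dimension. The paper runs exactly this, and in fact writes the conclusion as $(\mathbf{L}(\iota_i^{\epsilon})^*\mathcal{A}_1,\mathbf{L}(\iota_i^{\epsilon})^*\mathcal{A}_2)\in\mathcal{L}^q(\widehat{X},Y,n-1,\mathcal{W})$ without pausing over whether these derived pull-backs are honest coherent algebras. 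So you have in effect noticed a point the paper elides: Definition~\ref{defn:mod Y equiv} literally asks members of $\mathcal{R}^q$ to be coherent $\mathcal{O}_{\square_{\widehat{X}}^{n-1}}$-algebras, and (\textbf{GP}) for $[\mathcal{A}_j]$ only controls minimal primes, not embedded ones. Your first proposed resolution---work with the derived algebras $\mathbf{L}(\iota_i^{\epsilon})^*\mathcal{A}_j$ directly---is precisely what the paper's proof does in practice, and is the correct reading of the argument.

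Your second proposed fix, however, does not work as stated. Passing from $\mathcal{A}_j$ to $\mathcal{A}_j/T$ with $T$ the $(y_i-\epsilon)$-torsion does preserve $[\mathcal{A}_j]$ (embedded components contribute nothing to the top-dimensional cycle), but there is no reason for it to preserve $\mathcal{A}_j\otimes^{\mathbf{L}}_{\mathcal{O}_{\square_{\widehat{X}}^n}}\mathcal{O}_{\square_Y^n}$ in $\sAlg(\mathcal{O}_{\square_Y^n})$: the exact triangle coming from $0\to T\to\mathcal{A}_j\to\mathcal{A}_j/T\to 0$ shows this would require $T\otimes^{\mathbf{L}}\mathcal{O}_{\square_Y^n}\simeq 0$, which is false in general (the support of $T$ sits inside the face, not away from $Y$). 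Moreover, you would have to perform this modification simultaneously for all $(i,\epsilon)$ and all faces of all codimensions, and the corresponding torsion submodules need not be compatible. Drop this route and keep the first one.
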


\begin{proof}
Consider the closed immersion $\iota_i ^{\epsilon} : F_{i, \widehat{X}} ^{\epsilon} =\square_{\widehat{X}} ^{n-1} \hookrightarrow \square_{\widehat{X}} ^n$ given by the equation $\{ y_i = \epsilon\}$. 

When $\mathcal{A} \in \mathcal{R}^q (\widehat{X}, n, \mathcal{W})$, by Lemma \ref{lem:sheaf proper int}, we have
\begin{equation}\label{eqn:codim 1 face LDF perf cy}
\partial_i ^{\epsilon} [\mathcal{A}] = [ \mathcal{O}_{F_{i, \widehat{X}} ^{\epsilon}} \otimes_{\mathcal{O}_{\square_{\widehat{X}} ^n}} ^{\mathbf{L}} \mathcal{A}] = [ \mathbf{L} (\iota_i ^{\epsilon})^* (\mathcal{A})] \in z^q _{\mathcal{W}} (\widehat{X}, n-1).
\end{equation}
So, $\mathbf{L}  (\iota_i ^{\epsilon})^* (\mathcal{A}) \in \mathcal{R}^q (\widehat{X}, n-1, \mathcal{W})$.

For a pair $(\mathcal{A}_1, \mathcal{A}_2) \in \mathcal{L}^q (\widehat{X}, Y, n, \mathcal{W})$, the condition \eqref{eqn:derived int} means that we have an isomorphism in $\sAlg (\mathcal{O}_{\square_Y^n})$
\begin{equation}\label{eqn:Y codim 1}
\mathbf{L} (\iota_Y^n) ^* ( \mathcal{A}_1) \simeq \mathbf{L} (\iota_Y ^n)^* ( \mathcal{A}_2),
\end{equation}
 where $\iota_Y^n$ is the closed immersion $\iota_Y^n: \square_{Y}^n \hookrightarrow \square_{\widehat{X}} ^n$ of formal schemes. 
 
 Consider the Cartesian square of closed immersions
\begin{equation}\label{eqn:i_Y i_i}
\xymatrix{
\square_Y ^{n-1} \ar@{^(->}[r] ^{\iota_Y ^{n-1}} \ar@{^(->}[d] ^{\iota_i ^{\epsilon}} & \square_{\widehat{X}} ^{n-1} \ar@{^(->}[d] ^{\iota_i ^{\epsilon}}\\
\square_Y ^n \ar@{^(->}[r] ^{\iota_Y ^n} & \square_{\widehat{X}} ^n.}
\end{equation}

Applying $\mathbf{L}(\iota_i ^{\epsilon})^*$ to \eqref{eqn:Y codim 1}, and using the identity
$\mathbf{L} (\iota_Y ^{n-1})^* \circ  \mathbf{L} (\iota_i ^{\epsilon})^* = \mathbf{L} (\iota_i ^{\epsilon})^* \circ  \mathbf{L} (\iota_Y ^n)^*$, which follows from the diagram \eqref{eqn:i_Y i_i} (\cite[I-3.6, p.119]{Lipman}), 
we deduce an isomorphism in $\sAlg (\mathcal{O}_{\square_{Y}^{n-1}})$
$$
\mathbf{L} (\iota_Y ^{n-1})^* \left( \mathbf{L} ( \iota_i ^{\epsilon})^* (\mathcal{A}_1) \right) \simeq
\mathbf{L} (\iota_Y ^{n-1})^* \left( \mathbf{L} ( \iota_i ^{\epsilon})^* (\mathcal{A}_2) \right),
$$ 
thus
\begin{equation}\label{eqn:LDF codim 1 pair}
(\mathbf{L} (\iota_i ^{\epsilon} )^* (\mathcal{A}_1), \mathbf{L}(\iota_i ^{\epsilon})^* (\mathcal{A}_2)) \in \mathcal{L} ^q ( \widehat{X}, Y, n-1, \mathcal{W}).
\end{equation}

By \eqref{eqn:codim 1 face LDF perf cy}, we have
\begin{equation}\label{eqn:codim 1 perf cy diff}
\partial_i ^{\epsilon}  ( [ \mathcal{A}_1] - [ \mathcal{A}_2] )= [ \mathbf{L}(\iota_i ^{\epsilon})^* (\mathcal{A}_1)] - [ \mathbf{L} (\iota_i ^{\epsilon})^* (\mathcal{A}_2)].
\end{equation}
Combining \eqref{eqn:LDF codim 1 pair} and \eqref{eqn:codim 1 perf cy diff}, we deduce that
$$
\partial _i ^{\epsilon} (\mathcal{M} ^q (\widehat{X}, Y, n, \mathcal{W})) \subset \mathcal{M}^q (\widehat{X}, Y, n-1, \mathcal{W}).
$$
This proves the lemma.
\end{proof}

\begin{defn}\label{defn:complex}
Let $Y, X, \widehat{X}, \mathcal{W}$ be as in Definition \ref{defn:mod Y equiv}. Define 
\begin{equation}\label{eqn:comp00}
z^q _{ \mathcal{W}} (\widehat{X} \mod Y, n) := \frac{z^q _{ \mathcal{W}} (\widehat{X}, n)}{ \mathcal{M}^q (\widehat{X}, Y, n, {\mathcal{W}}) }.
\end{equation}
The equivalence relation on the group $z^q _{\mathcal{W}} (\widehat{X},n)$ defined by the subgroup $\mathcal{M}_{\mathcal{W}}^q (\widehat{X}, Y, \mathcal{W})$, is denoted by $\sim_Y$.

Lemma \ref{lem:boundary comp} shows that the collection of \eqref{eqn:comp00} over $n \geq 0$ defines a complex, which we denote by $z^q _{\mathcal{W}} (\widehat{X} \mod Y, \bullet)$. Its homology is denoted by
\begin{equation}\label{eqn:comp01}
\CH^q _{\mathcal{W}} (\widehat{X} \mod Y, n) := {\rm H}_n (z^q _{ \mathcal{W}} (\widehat{X} \mod Y, \bullet)).
\end{equation}

When $\mathcal{W}= \emptyset$, we drop $\mathcal{W}$ from the notations.\qed
\end{defn}

This group is one step closer to the main object of this article, though not the last object yet. 
The author solicits patience of the reader as there seems no easy royal road at present.

\medskip

Note that the ideal of the closed immersion $Y \hookrightarrow \widehat{X}$ is an ideal of definition of the formal scheme $\widehat{X}$. For general ideals of definition of $\widehat{X}$, we have the following:

\begin{lem}\label{lem:m m'}
Let $Y, X, \widehat{X}$ be as in the above. Let $\mathcal{I}, \mathcal{J}$ be ideals of definition of $\widehat{X}$ such that $\mathcal{J} \subset \mathcal{I}$, and let $\widehat{X}_{\mathcal{I}} \hookrightarrow \widehat{X}_{\mathcal{J}} \hookrightarrow \widehat{X}$ be the induced closed immersions. Then we have 
\begin{equation}\label{eqn:m m' -1}
\mathcal{L}^q (\widehat{X}, \widehat{X}_{\mathcal{J}} , n) \subset \mathcal{L}^q (\widehat{X}, \widehat{X}_{\mathcal{I}}, n).
\end{equation}

In particular, $\mathcal{M} ^q (\widehat{X}, \widehat{X}_{\mathcal{J}} , \bullet)$ is a subcomplex of $\mathcal{M}^q (\widehat{X}, \widehat{X}_{\mathcal{I}}, \bullet).$
\end{lem}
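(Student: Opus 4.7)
The plan is to reduce the claim to the transitivity (associativity) of derived push-outs in the $\infty$-category of derived algebras, which is the derived analogue of $(M \otimes_A B) \otimes_B C = M \otimes_A C$ for a tower $A \to B \to C$.

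First, observe that the admissibility requirement $\mathcal{A}_j \in \mathcal{R}^q(\widehat{X}, n)$ (cf.\ Definition \ref{defn:mod Y equiv}) depends only on $\widehat{X}$ and the faces, via the cycle group $z^q(\widehat{X}, n)$ whose definition involves $\widehat{X}_{\red}$ (the scheme associated to the \emph{largest} ideal of definition) and \emph{not} on the particular ideal of definition appearing as the second argument of $\mathcal{L}^q$. Consequently, the admissibility parts of the defining conditions for $\mathcal{L}^q(\widehat{X}, \widehat{X}_{\mathcal{J}}, n)$ and $\mathcal{L}^q(\widehat{X}, \widehat{X}_{\mathcal{I}}, n)$ coincide, and only the mod-equivalences remain to be compared.

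Second, since $\mathcal{J} \subset \mathcal{I}$, the canonical surjection $\mathcal{O}_{\widehat{X}}/\mathcal{J} \twoheadrightarrow \mathcal{O}_{\widehat{X}}/\mathcal{I}$ realizes $\widehat{X}_{\mathcal{I}}$ as a closed subscheme of $\widehat{X}_{\mathcal{J}}$, and base changing with $\square_k^n$ yields a closed immersion $\iota: \square^n_{\widehat{X}_{\mathcal{I}}} \hookrightarrow \square^n_{\widehat{X}_{\mathcal{J}}}$. Given a pair $(\mathcal{A}_1, \mathcal{A}_2) \in \mathcal{L}^q(\widehat{X}, \widehat{X}_{\mathcal{J}}, n)$, the defining isomorphism
$$
\mathcal{A}_1 \otimes^{\mathbf{L}}_{\mathcal{O}_{\square^n_{\widehat{X}}}} \mathcal{O}_{\square^n_{\widehat{X}_{\mathcal{J}}}} \simeq \mathcal{A}_2 \otimes^{\mathbf{L}}_{\mathcal{O}_{\square^n_{\widehat{X}}}} \mathcal{O}_{\square^n_{\widehat{X}_{\mathcal{J}}}}
$$
in $\sAlg(\mathcal{O}_{\square^n_{\widehat{X}_{\mathcal{J}}}})$ can be transported by the derived pull-back functor
$$
\mathbf{L}\iota^* = - \otimes^{\mathbf{L}}_{\mathcal{O}_{\square^n_{\widehat{X}_{\mathcal{J}}}}} \mathcal{O}_{\square^n_{\widehat{X}_{\mathcal{I}}}}: \sAlg(\mathcal{O}_{\square^n_{\widehat{X}_{\mathcal{J}}}}) \to \sAlg(\mathcal{O}_{\square^n_{\widehat{X}_{\mathcal{I}}}}).
$$
By associativity of the homotopy push-out along the tower $\mathcal{O}_{\square^n_{\widehat{X}}} \to \mathcal{O}_{\square^n_{\widehat{X}_{\mathcal{J}}}} \to \mathcal{O}_{\square^n_{\widehat{X}_{\mathcal{I}}}}$, each side of the resulting isomorphism is canonically identified with $\mathcal{A}_j \otimes^{\mathbf{L}}_{\mathcal{O}_{\square^n_{\widehat{X}}}} \mathcal{O}_{\square^n_{\widehat{X}_{\mathcal{I}}}}$ for $j=1,2$. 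This yields $(\mathcal{A}_1, \mathcal{A}_2) \in \mathcal{L}^q(\widehat{X}, \widehat{X}_{\mathcal{I}}, n)$, proving \eqref{eqn:m m' -1}.

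For the last assertion, the group $\mathcal{M}^q(\widehat{X}, \widehat{X}_{\mathcal{J}}, n)$ is by definition generated by the cycles $[\mathcal{A}_1] - [\mathcal{A}_2]$ for pairs in $\mathcal{L}^q(\widehat{X}, \widehat{X}_{\mathcal{J}}, n)$, so \eqref{eqn:m m' -1} implies $\mathcal{M}^q(\widehat{X}, \widehat{X}_{\mathcal{J}}, n) \subset \mathcal{M}^q(\widehat{X}, \widehat{X}_{\mathcal{I}}, n)$ in each degree; since both are subcomplexes of $z^q(\widehat{X}, \bullet)$ by Lemma \ref{lem:boundary comp}, the inclusion automatically respects the boundaries, making $\mathcal{M}^q(\widehat{X}, \widehat{X}_{\mathcal{J}}, \bullet)$ a subcomplex of $\mathcal{M}^q(\widehat{X}, \widehat{X}_{\mathcal{I}}, \bullet)$. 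I anticipate essentially no obstacle in this argument: it is a purely formal consequence of the universal property of the homotopy push-out in $\sAlg$, and the only care required is the verification that $\mathbf{L}\iota^*$ is indeed a functor of $\infty$-categories preserving isomorphisms, which is built into the derived algebraic geometry framework recalled in \S \ref{sec:salg}.
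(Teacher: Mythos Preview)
Your proof is correct and follows essentially the same approach as the paper: both arguments apply the derived pull-back along the further closed immersion $\square^n_{\widehat{X}_{\mathcal{I}}} \hookrightarrow \square^n_{\widehat{X}_{\mathcal{J}}}$ to the given isomorphism and then invoke the identity $\mathbf{L}\iota_{\mathcal{I}}^* = \mathbf{L}\iota_{\mathcal{I}/\mathcal{J}}^* \circ \mathbf{L}\iota_{\mathcal{J}}^*$ (which you phrase equivalently as associativity of the homotopy push-out). Your explicit remark that the admissibility condition $\mathcal{A}_j \in \mathcal{R}^q(\widehat{X},n)$ is independent of the second argument is a useful clarification that the paper leaves implicit.
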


\begin{proof}
Let $\iota_{\mathcal{I}}: \widehat{X}_{\mathcal{I}} \hookrightarrow \widehat{X}$, $\iota_J: \widehat{X}_{\mathcal{J}} \hookrightarrow \widehat{X}$, and $\iota_{\mathcal{I}/\mathcal{J}} : \widehat{X}_{\mathcal{I}} \hookrightarrow \widehat{X}_{\mathcal{J}}$ be the induced closed immersions. Since $\iota_{\mathcal{I}} = \iota_{\mathcal{J}} \circ \iota_{\mathcal{I}/\mathcal{J}}$, we deduce the equality of functors
\begin{equation}\label{eqn:m m' 0}
\mathbf{L} \iota_{\mathcal{I}} ^* = \mathbf{L} \iota_{\mathcal{I}/\mathcal{J} } ^* \circ \mathbf{L} \iota_{\mathcal{J}} ^*: 
\sAlg (\mathcal{O}_{\square_{\widehat{X}} ^n}) \to \sAlg (\mathcal{O}_{\square_{\widehat{X}_{\mathcal{I}}}^n}).
\end{equation}

Let $(\mathcal{A}_1, \mathcal{A}_2) \in \mathcal{L}^q (\widehat{X}, \widehat{X}_{\mathcal{J}}, n)$, so 
we have an isomorphism in $\sAlg (\mathcal{O}_{\square_{\widehat{X}_{\mathcal{J}}}^n})$
\begin{equation}\label{eqn:m m' 1}
\mathbf{L} \iota_{\mathcal{J} } ^* \mathcal{A}_1 \simeq \mathbf{L} \iota_{\mathcal{J}} ^* \mathcal{A}_2.
\end{equation}
Applying $\mathbf{L} \iota_{\mathcal{I}/\mathcal{J}} ^*$ to \eqref{eqn:m m' 1}, we obtain an isomorphism $\mathbf{L} \iota_{\mathcal{I}/\mathcal{J}} ^* \circ \mathbf{L} \iota_{\mathcal{J}} ^* \mathcal{F}_1 \simeq \mathbf{L} \iota_{\mathcal{I}/\mathcal{J}} ^* \circ\mathbf{L} \iota_{\mathcal{J}} ^* \mathcal{F}_2$ in $\sAlg (\mathcal{O}_{\square_{\widehat{X}_{\mathcal{I}}} ^n})$, which is equivalent to having an isomorphism in $\sAlg (\mathcal{O}_{\square_{\widehat{X}_{\mathcal{I}}}^n})$
\begin{equation}\label{eqn:m m' 2}
\mathbf{L} \iota_{\mathcal{I} } ^* \mathcal{A}_1 \simeq \mathbf{L} \iota_{\mathcal{I}} ^* \mathcal{A}_2
\end{equation}
 by \eqref{eqn:m m' 0}. This means $(\mathcal{A}_1, \mathcal{A}_2) \in \mathcal{L} ^q (\widehat{X}, \widehat{X}_{\mathcal{I}}, n)$, proving \eqref{eqn:m m' -1}.
\end{proof}

Here is one observation on the generators of the group $\mathcal{M}^q (\widehat{X}, Y, n)$. 

\begin{lem}\label{lem:modulus simple generator}
Let $\alpha \in \mathcal{M}^q (\widehat{X}, Y, n)$. Then there is a pair $(\mathcal{A}_1, \mathcal{A}_2 ) \in \mathcal{L}^q (\widehat{X}, Y, n)$ such that 
\begin{equation}\label{eqn:mod simple 0}
\alpha = [ \mathcal{A}_1 ] - [ \mathcal{A}_2].
\end{equation}
\end{lem}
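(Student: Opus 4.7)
The plan is to show that the subgroup $\mathcal{M}^q(\widehat{X}, Y, n)$ is already generated by single differences, by absorbing all the pairs appearing in a given representative of $\alpha$ into one pair via a product-of-algebras construction.

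First, starting from the definition, write $\alpha$ as a finite $\mathbb{Z}$-linear combination
\[
\alpha = \sum_{j=1}^{N_0} m_j\bigl([\mathcal{B}_{j,1}] - [\mathcal{B}_{j,2}]\bigr),\qquad m_j\in\mathbb{Z},
\]
with $(\mathcal{B}_{j,1},\mathcal{B}_{j,2})\in\mathcal{L}^q(\widehat{X},Y,n)$. Since the mod $Y$-equivalence \eqref{eqn:derived int} is clearly symmetric (swapping the two sides of an isomorphism in $\sAlg(\mathcal{O}_{\square_Y^n})$), whenever $m_j<0$ I replace the pair by $(\mathcal{B}_{j,2},\mathcal{B}_{j,1})$; then repeating each pair $|m_j|$ times, I may assume
\[
\alpha = \sum_{k=1}^{N}\bigl([\mathcal{C}_{k,1}] - [\mathcal{C}_{k,2}]\bigr)
\]
with all $(\mathcal{C}_{k,1},\mathcal{C}_{k,2})\in\mathcal{L}^q(\widehat{X},Y,n)$.

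Next, set $\mathcal{A}_i := \prod_{k=1}^{N}\mathcal{C}_{k,i}$ for $i=1,2$, where the product is taken in the category of coherent sheaves of $\mathcal{O}_{\square_{\widehat{X}}^n}$-algebras (i.e.\ the componentwise product ring structure on the direct sum of underlying modules). A finite product of coherent sheaves of algebras is a coherent sheaf of algebras, so $\mathcal{A}_i$ is again a coherent $\mathcal{O}_{\square_{\widehat{X}}^n}$-algebra. As its underlying $\mathcal{O}_{\square_{\widehat{X}}^n}$-module is $\bigoplus_k \mathcal{C}_{k,i}$, the associated cycle (Proposition \ref{prop:perfect coherent ass quasi-affine}) satisfies $[\mathcal{A}_i]=\sum_k [\mathcal{C}_{k,i}]\in z^q(\widehat{X},n)$, so each $\mathcal{A}_i$ is admissible of codimension $q$ in the sense of Definition \ref{defn:mod Y equiv}, i.e.\ $\mathcal{A}_i\in\mathcal{R}^q(\widehat{X},n)$.

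Finally, I must check that $(\mathcal{A}_1,\mathcal{A}_2)\in\mathcal{L}^q(\widehat{X},Y,n)$. Let $\iota_Y\colon\square_Y^n\hookrightarrow\square_{\widehat{X}}^n$ be the closed immersion. Derived tensor product commutes with finite direct sums on the module level, and the ring structure on a finite product is determined by the componentwise multiplication on the direct sum; hence
\[
\mathbf{L}\iota_Y^{*}\,\mathcal{A}_i \;\simeq\; \prod_{k=1}^{N}\mathbf{L}\iota_Y^{*}\,\mathcal{C}_{k,i}
\quad\text{in }\sAlg(\mathcal{O}_{\square_Y^n}),
\]
i.e.\ $\mathbf{L}\iota_Y^{*}$ preserves finite products of derived rings. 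For each $k$ there is an isomorphism $\mathbf{L}\iota_Y^{*}\,\mathcal{C}_{k,1}\simeq\mathbf{L}\iota_Y^{*}\,\mathcal{C}_{k,2}$ in $\sAlg(\mathcal{O}_{\square_Y^n})$ by hypothesis, and taking the product of these gives an isomorphism $\mathbf{L}\iota_Y^{*}\mathcal{A}_1\simeq\mathbf{L}\iota_Y^{*}\mathcal{A}_2$. Thus $(\mathcal{A}_1,\mathcal{A}_2)\in\mathcal{L}^q(\widehat{X},Y,n)$, and by construction $[\mathcal{A}_1]-[\mathcal{A}_2]=\sum_k([\mathcal{C}_{k,1}]-[\mathcal{C}_{k,2}])=\alpha$, proving \eqref{eqn:mod simple 0}.

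The only delicate point is the identification $\mathbf{L}\iota_Y^{*}(\prod_k\mathcal{C}_{k,i})\simeq\prod_k\mathbf{L}\iota_Y^{*}\mathcal{C}_{k,i}$ in $\sAlg$, but since we are dealing with \emph{finite} products this is a formal consequence of the fact that in $\sAlg(\mathcal{O})$ finite products agree with finite coproducts on the underlying modules, and $\mathbf{L}\iota_Y^{*}$ preserves the latter.
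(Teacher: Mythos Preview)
Your proof is correct and follows essentially the same approach as the paper's: write $\alpha$ as a finite sum of generators, arrange all coefficients to be positive by swapping pairs, and then take the finite product of the algebras on each side. The only cosmetic difference is that the paper keeps the multiplicities as exponents $\mathcal{A}_{i,j}^{\times n_i}$ in the product rather than repeating pairs, and it states the membership $(\mathcal{A}_1,\mathcal{A}_2)\in\mathcal{L}^q(\widehat{X},Y,n)$ without spelling out, as you do, that $\mathbf{L}\iota_Y^{*}$ preserves finite products in $\sAlg$.
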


\begin{proof}
A cycle $\alpha \in \mathcal{M} ^q (\widehat{X}, Y, n)$ is \emph{a priori} a finite formal sum
\begin{equation}\label{eqn:mod simple 1}
\alpha  = \sum_{i=1} ^N n_i ( [ \mathcal{A}_{i,1}] - [\mathcal{A}_{i, 2}]),
\end{equation}
where $n_i \in \mathbb{Z}$ is a nonzero integer, and $(\mathcal{A}_{i,1}, \mathcal{A}_{i,2}) \in \mathcal{L}^q (\widehat{X}, Y, n)$ for $1 \leq i \leq N$. We may assume each $n_i >0$ for otherwise we may interchange $\mathcal{A}_{i,1}$ and $\mathcal{A}_{i,2}$. Then for $j=1,2$, we consider the finite products of sheaves of rings
\begin{equation}\label{eqn:prod ring sheaf}
\mathcal{A}_j:= \prod_{i=1} ^N \mathcal{A}_{i, j} ^{\times n_i}.
\end{equation}
They are admissible coherent $\mathcal{O}_{\square_{\widehat{X}} ^n}$-algebras such that $(\mathcal{A}_1 , \mathcal{A}_2) \in \mathcal{L}^q (\widehat{X}, Y, n)$. 

Note that \eqref{eqn:prod ring sheaf} gives an isomorphism $\mathcal{A}_j \simeq \bigoplus_{i=1} ^N \mathcal{A}_{i,j} ^{\oplus n_i}$ as coherent $\mathcal{O}_{\square_{\widehat{X}}^n}$-modules, while the associated cycles depend just on the $\mathcal{O}_{\square_{\widehat{X}}^n}$-module structures, not on the $\mathcal{O}_{\square_{\widehat{X}} ^n}$-algebra structures. Hence we have 
$$[\mathcal{A}_j] = \sum_{i=1} ^N n_i [ \mathcal{A}_{i,j}].$$
 This shows that \eqref{eqn:mod simple 1} is equivalent to \eqref{eqn:mod simple 0}.
\end{proof}

We record the following simple observation:

\begin{lem}\label{lem:open pb}
Let $Y$ be a quasi-affine $k$-scheme of finite type and let $Y \hookrightarrow X$ be a closed immersion into an equidimensional smooth $k$-scheme. Let $\widehat{X}$ be the completion of $X$ along $Y$. Then we have:
\begin{enumerate}
\item For any Zariski open subscheme $U \subset X$ containing $Y$, the inclusion $Y\hookrightarrow U$ is a closed immersion, and the map of the completions $\widehat{U} \to \widehat{X}$ of $U$ and $X$ along $Y$, respectively, is the identity map.
\item We have $ z^q   (\widehat{U} , \bullet) = z^q  (\widehat{X} , \bullet)$, $ z^q   (\widehat{U} \mod Y, \bullet) = z^q  (\widehat{X} \mod Y, \bullet)$ and
$$
\CH^q(\widehat{U}, n) = \CH^q  (\widehat{X} , n), \ \  \CH^q(\widehat{U} \mod Y, n) = \CH^q  (\widehat{X} \mod Y, n).
$$
\end{enumerate}
\end{lem}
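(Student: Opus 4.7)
The plan is to reduce both claims to the elementary fact that the formal completion of a scheme along a closed subscheme is determined by any Zariski open neighborhood of that closed subscheme, and then to observe that every construction in \S\ref{sec:cycles}--\S\ref{sec:main mod Y} is built purely out of this formal completion and the closed immersion $Y \hookrightarrow \widehat{X}$.

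First, for (1), since $Y \hookrightarrow X$ is a closed immersion with $Y \subset U$, the underlying set $|Y|$ is closed in $|X|$ and hence closed in $|U|$; moreover $\mathcal{O}_U = \mathcal{O}_X|_U$, so the quotient map $\mathcal{O}_X \twoheadrightarrow \mathcal{O}_Y$ restricts on $U$ to the quotient realizing $Y$ as a closed subscheme of $U$. Writing $\mathcal{I} \subset \mathcal{O}_X$ for the ideal sheaf of $Y$ and $\mathcal{I}' := \mathcal{I}|_U$ for the ideal sheaf of $Y$ in $U$, the coherent sheaf $\mathcal{O}_X / \mathcal{I}^{n+1}$ is topologically supported on $|Y|$, so its stalks at points of $X \setminus Y$ vanish; consequently $(\mathcal{O}_X/\mathcal{I}^{n+1})|_{|Y|} = (\mathcal{O}_U/(\mathcal{I}')^{n+1})|_{|Y|}$ for every $n$, and passing to the inverse limit yields $\widehat{U}=\widehat{X}$ as formal schemes (cf. EGA I \cite[(10.8)]{EGA1}).

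For (2), I would then observe that every ingredient entering the definitions of $z^q(\widehat{X},\bullet)$, of the subcomplex $\mathcal{M}^q(\widehat{X},Y,\bullet)$, and of the quotient complex $z^q(\widehat{X}\mod Y,\bullet)$ depends solely on the formal scheme $\widehat{X}$ together with the closed immersion $Y \hookrightarrow \widehat{X}$. Specifically, by Lemma \ref{lem:prod completion} and (1), the cubes $\square^n_{\widehat{X}}=\widehat{X}\times_k\square^n_k$ coincide with $\square^n_{\widehat{U}}$; the admissibility conditions (\textbf{GP}) and (\textbf{SF}) of Definition \ref{defn:HCG}, the category $\mathcal{R}^q$ of admissible coherent algebras, and the mod $Y$-equivalence of Definition \ref{defn:mod Y equiv}, including the derived pull-back along $\square^n_Y \hookrightarrow \square^n_{\widehat{X}}$, all refer only to this common formal scheme and the unchanged embedding of $Y$. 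Hence the complexes coincide on the nose, and taking homology gives the stated equalities of Chow groups.

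I do not anticipate any genuine obstacle: the statement is a routine compatibility check asserting that every construction in \S\ref{sec:cycles}--\S\ref{sec:main mod Y} is Zariski-local around $Y$. The only point worth flagging explicitly is that the largest ideal of definition of $\widehat{X}$, which enters (\textbf{SF}) and the mod $Y$-equivalence through the restriction to $\square^n_Y$, must be seen to be insensitive to whether one computes from $U$ or from $X$; but this is immediate from the identification $\widehat{U}=\widehat{X}$ in (1), so the argument reduces to bookkeeping rather than a substantive step.
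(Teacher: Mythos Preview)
Your proposal is correct and follows the same approach as the paper, which simply records that (1) is obvious and that (2) follows immediately from (1) because $\widehat{U}=\widehat{X}$; you have just written out the details the paper leaves implicit.
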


\begin{proof}
(1) is obvious, and (2) follows immediately from (1) because $\widehat{U} = \widehat{X}$.
\end{proof}

\subsection{Connection to the Milnor patching}\label{sec:Milnor}

In Definitions \ref{defn:mod Y equiv} and \ref{defn:complex}, we used pairs $(\mathcal{A}_1, \mathcal{A}_2)$ of coherent $\mathcal{O}_{\square_{\widehat{X}}^n}$-algebras to define the mod $Y$-equivalence relation on the cycles in $z^q (\widehat{X}, n)$.

In \S \ref{sec:Milnor}, we explain a connection between the mod $Y$-equivalence and the \emph{Milnor patching}, a classical vector bundle patching problem. Since this discussion is not needed for the proofs of the main theorem of the paper, one may skip \S \ref{sec:Milnor} and jump to \S \ref{sec:basic functor} if desired.

We stick to the case when $Y$ is affine. 

\medskip

The classical Milnor patching \cite[\S 2, pp.19--24]{Milnor K} reads as follows: given a Cartesian square of noetherian commutative rings with unity,
\begin{equation}\label{eqn:Milnor K}
\xymatrix{
R \ar[r]^{i_1} \ar[d]^{i_2} & R_1 \ar[d] ^{j_1} \\
R_2 \ar[r] ^{j_2} & R',}
\end{equation}
such that $j_1$ or $j_2$ is surjective, J. Milnor proved (\cite[Theorems 2.1, 2.2]{Milnor K}):

\begin{thm}[Milnor]\label{thm:Milnor K}
In the situation of \eqref{eqn:Milnor K}, let $P_i$ be projective $R_i$-modules for $i=1,2$ such that $P_1 \otimes _{R_1} R' \simeq P_2 \otimes_{R_2} R'$ as $R'$-modules.

Then there exists a projective $R$-module $M$ such that $M \otimes_{R} R_i \simeq P_i$ for $i=1,2$. If $P_1, P_2$ are finitely generated, then so is $M$. Furthermore, every projective $R$-module $M$ is obtained in this way.
\end{thm}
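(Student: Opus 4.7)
The plan is to follow the classical argument of Milnor. First I would construct $M$ as the fiber product
$$
M := P_1 \times_{P'} P_2 = \{ (p_1, p_2) \in P_1 \times P_2 \mid h(p_1 \otimes 1) = p_2 \otimes 1 \},
$$
where $h: P_1 \otimes_{R_1} R' \overset{\sim}{\to} P_2 \otimes_{R_2} R'$ is the given isomorphism and $P'$ is the resulting common $R'$-module. Through the ring maps $i_1, i_2$, the set $M$ inherits an $R$-module structure, and the projections $\pi_i: M \to P_i$ are $R$-linear. To check that $M \otimes_R R_i \simeq P_i$ for $i = 1, 2$, I would use the hypothesis that $j_1$ or $j_2$ is surjective, say $j_2$: projectivity of $P_2$ then forces $P_2 \to P_2 \otimes_{R_2} R' = P'$ to be surjective, so every $p_1 \in P_1$ admits a partner $p_2 \in P_2$ with $h(p_1 \otimes 1) = p_2 \otimes 1$. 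Combining this with the exact sequence $0 \to R \to R_1 \oplus R_2 \to R' \to 0$ encoded by the Cartesianness of \eqref{eqn:Milnor K}, one concludes the desired base change isomorphisms by a direct diagram chase.

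The main obstacle will be to prove that $M$ is projective, for which I expect to need Milnor's doubling trick. First, I would reduce to the case where $P_i = R_i^n$ is free of common rank $n$, so that $h$ is encoded by an invertible matrix $\alpha \in GL_n(R')$. The identity
$$
\begin{pmatrix} \alpha & 0 \\ 0 & \alpha^{-1} \end{pmatrix}
= \begin{pmatrix} 1 & \alpha \\ 0 & 1 \end{pmatrix}
\begin{pmatrix} 1 & 0 \\ -\alpha^{-1} & 1 \end{pmatrix}
\begin{pmatrix} 1 & \alpha \\ 0 & 1 \end{pmatrix}
\begin{pmatrix} 0 & -1 \\ 1 & 0 \end{pmatrix}
\quad \text{in } GL_{2n}(R')
$$
writes $\alpha \oplus \alpha^{-1}$ as a product of four matrices each of which lifts canonically to $GL_{2n}(R_1)$ and to $GL_{2n}(R_2)$. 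It follows that the module $M \oplus M'$, where $M'$ is the analogous fiber product built from $\alpha^{-1}$, is the trivially patched $R^{2n}$. Hence $M$ is a direct summand of $R^{2n}$, so projective. For general projective $P_i$, I would pick $Q_i$ with $P_i \oplus Q_i$ free of a common rank and apply the free case to a compatible patching, whence $M$ is a summand of the resulting patched free module. The same construction delivers finite generation of $M$ whenever $P_1, P_2$ are finitely generated, by arranging the $Q_i$ to be finitely generated.

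For the final assertion that every projective $R$-module arises this way, given projective $M$ I would set $P_i := M \otimes_R R_i$, note the comparison map $M \to P_1 \times_{P'} P_2$ is clearly well-defined, and check it is an isomorphism. The case $M = R$ is exactly the Cartesianness of \eqref{eqn:Milnor K}; the case $M = R^m$ follows by taking direct sums; and the general projective case reduces by writing $M$ as a summand of a finite free module and applying functoriality of the fiber product construction together with the preceding free case.
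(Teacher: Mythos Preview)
The paper does not give its own proof of this theorem: it is quoted verbatim as a result of Milnor, with the citation ``J.~Milnor proved (\cite[Theorems 2.1, 2.2]{Milnor K})'' immediately preceding the statement, and no argument is supplied. Your proposal is precisely the classical argument from Milnor's book---the fiber-product construction, the Whitehead-type factorization of $\alpha \oplus \alpha^{-1}$ into liftable elementary matrices, and the reduction of the general projective case to the free case---so there is nothing to compare: you are reproducing the cited source.

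Two small remarks on the write-up. First, the surjectivity of $P_2 \to P_2 \otimes_{R_2} R'$ follows from the surjectivity of $j_2$ alone and does not use projectivity of $P_2$; you invoke projectivity at the wrong spot. Second, the phrase ``apply the free case to a compatible patching'' in the reduction step is where the real work hides: one must actually produce complements $Q_1, Q_2$ and an isomorphism $Q_1 \otimes R' \simeq Q_2 \otimes R'$ so that the direct-sum patching is the identity on a free module, and this is not automatic from arbitrary choices of $Q_i$. Milnor handles this by first choosing $Q_1$ with $P_1 \oplus Q_1$ free and then using surjectivity of $j_2$ again to manufacture the compatible data over $R_2$; you should make sure you can carry this out rather than leave it as a phrase.
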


We can rephrase Theorem \ref{thm:Milnor K} geometrically as follows: the Cartesian diagram \eqref{eqn:Milnor K} is equivalent to that $\Spec (R) = \Spec (R_1) \coprod_{\Spec (R')} \Spec (R_2)$. Suppose we have vector bundles $V_i$ on $\Spec (R_i)$ for $i=1,2$, whose restrictions to $\Spec (R')$ are equal to each other. Then there is a vector bundle $V$ on $\Spec (R)$ obtained by patching $V_1$ and $V_2$ along $\Spec (R')$.

\medskip

The Milnor patching does not extend to finitely generated modules (or coherent sheaves) in general, but its derived version for perfect complexes, instead of coherent sheaves, holds. It was studied by S. Landsburg \cite[\S 2]{Landsburg Duke} (or \cite[Theorem 1.4 and Appendix]{Landsburg AJM} for the same proof). 
We present a special case of his result in the following form:

\begin{thm}[Landsburg]\label{thm:Landsburg0}
Consider the co-Cartesian square of noetherian affine schemes, where the morphisms are closed immersions:
\begin{equation}\label{eqn:coCat}
\xymatrix{
Y \ar[r]^{j_1} \ar[d] ^{j_2} & X_1 \ar[d] ^{\iota_1} \\
X_2 \ar[r] ^{\iota_2} & \widetilde{X}.}
\end{equation}
Suppose we are given perfect complexes $\mathcal{F}_i$ on $X_i$ for $i=1,2$ such that there is a quasi-isomorphism $\mathbf{L}j_1 ^* \mathcal{F}_1 \simeq \mathbf{L} j_2 ^* \mathcal{F}_2$ on $Y$. Then 
there is a perfect complex $\widetilde{\mathcal{F}}$ on $\widetilde{X}$ such that $\mathbf{L} \iota_i ^* \widetilde{\mathcal{F}}$ 
are quasi-isomorphic to $\mathcal{F}_i$ for $i=1,2$, respectively. 
Conversely, every perfect complex on $\widetilde{X}$ is obtained in this way.
\end{thm}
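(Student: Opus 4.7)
The plan is to reduce to commutative algebra and apply the Milnor patching Theorem \ref{thm:Milnor K} degree-wise. Writing $\widetilde{X} = \Spec \widetilde{R}$, $X_i = \Spec R_i$, $Y = \Spec R_Y$, the co-Cartesian square \eqref{eqn:coCat} of closed immersions translates into a Cartesian square of surjective ring maps $\widetilde{R} = R_1 \times_{R_Y} R_2$; perfect complexes on these affine schemes correspond to bounded complexes of finitely generated projective modules. The central piece of linear algebra underlying everything is the short exact sequence of $\widetilde{R}$-modules
\begin{equation*}
0 \to \widetilde{R} \to R_1 \oplus R_2 \xrightarrow{(r_1,r_2)\mapsto \bar r_1 - \bar r_2} R_Y \to 0.
\end{equation*}

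For the converse direction, given any perfect $\widetilde{\mathcal{F}}$ on $\widetilde{X}$, I would derived-tensor the above sequence with $\widetilde{\mathcal{F}}$ over $\widetilde{R}$. Since $\iota_i$ and $\iota_i\circ j_i$ are affine closed immersions with $\mathbf{L}\iota_i^*\widetilde{\mathcal{F}} = \widetilde{\mathcal{F}} \otimes^{\mathbf{L}}_{\widetilde{R}} R_i$, this produces a distinguished triangle
\begin{equation*}
\widetilde{\mathcal{F}} \longrightarrow \mathbf{L}\iota_1^*\widetilde{\mathcal{F}} \oplus \mathbf{L}\iota_2^*\widetilde{\mathcal{F}} \longrightarrow \mathbf{L}(\iota_1 j_1)^*\widetilde{\mathcal{F}} \longrightarrow \widetilde{\mathcal{F}}[1]
\end{equation*}
in $\mathcal{D}_{\perf}(\widetilde{R})$, exhibiting $\widetilde{\mathcal{F}}$ as the homotopy fibre product of its derived restrictions. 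Hence setting $\mathcal{F}_i := \mathbf{L}\iota_i^*\widetilde{\mathcal{F}}$ recovers $\widetilde{\mathcal{F}}$ from the forward construction below.

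For the forward direction, represent each $\mathcal{F}_i$ by a bounded complex $P_i^\bullet$ of finitely generated projective $R_i$-modules. Because isomorphisms between such bounded complexes of projectives in $\mathcal{D}(R_Y)$ are chain-homotopy equivalences, lift the given derived quasi-isomorphism to an honest chain map $\phi: P_1^\bullet \otimes_{R_1} R_Y \to P_2^\bullet \otimes_{R_2} R_Y$. By stabilising each $P_i^\bullet$ with suitable contractible complexes of finitely generated projective modules, one may arrange that $\phi_n$ is an isomorphism in every degree $n$ (the standard fact that chain-homotopy equivalent bounded complexes of projectives become isomorphic after adding contractibles). Then define the degree-wise Milnor pullback
\begin{equation*}
\widetilde{P}^n := \bigl\{ (a_1,a_2) \in P_1^n \oplus P_2^n \;\bigm|\; \phi_n(a_1 \otimes 1) = a_2 \otimes 1 \bigr\},
\end{equation*}
equipped with the induced differential. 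By Theorem \ref{thm:Milnor K} applied in each degree, $\widetilde{P}^n$ is a finitely generated projective $\widetilde{R}$-module, so $\widetilde{P}^\bullet$ defines a perfect complex $\widetilde{\mathcal{F}}$ on $\widetilde{X}$. Finally, the projections $\widetilde{P}^n \otimes_{\widetilde{R}} R_i \to P_i^n$ are isomorphisms (again by Milnor's theorem, which also describes how the pullback restricts), and since $\widetilde{P}^\bullet$ is a bounded complex of projectives these ordinary tensors compute $\mathbf{L}\iota_i^*\widetilde{\mathcal{F}}$, yielding $\mathbf{L}\iota_i^*\widetilde{\mathcal{F}} \simeq \mathcal{F}_i$.

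The main obstacle is the step of making $\phi$ a degree-wise isomorphism while remaining inside bounded complexes of finitely generated projectives: Milnor's patching is a statement about projective \emph{modules}, not complexes, and a degree-wise pullback along a mere chain-homotopy equivalence need not be projective. The stabilisation-by-contractibles reduction is therefore the technical heart of the argument; once it is in place, both the perfectness of $\widetilde{\mathcal{F}}$ and the identification of its derived restrictions follow formally from Theorem \ref{thm:Milnor K} and the Mayer--Vietoris triangle arising from the displayed short exact sequence.
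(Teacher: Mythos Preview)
The paper does not supply its own proof of Theorem~\ref{thm:Landsburg0}; it is stated as a result of Landsburg with references to \cite[\S 2]{Landsburg Duke} and \cite[Theorem 1.4 and Appendix]{Landsburg AJM}. Your outline is correct and is essentially Landsburg's own argument: represent by bounded complexes of finite projectives, stabilise by contractibles so that the comparison map over $R_Y$ is a degree-wise isomorphism, and then apply Milnor's module-level patching (Theorem~\ref{thm:Milnor K}) in each degree. You have correctly located the technical crux in the stabilisation step; one point worth making explicit there is that the contractible summands you need over $R_Y$ lift to contractible complexes of finite projectives over $R_1$ and $R_2$ because the maps $R_i \twoheadrightarrow R_Y$ are surjective (so finite projective $R_Y$-modules lift to finite projective $R_i$-modules), which is what lets you modify $P_i^\bullet$ upstairs rather than only their images downstairs.
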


\medskip

We extend Theorem \ref{thm:Landsburg0} a bit to a situation involving formal schemes:

\begin{thm}\label{thm:Landsburg}
Let $Y$ be an affine $k$-scheme of finite type. Let $Y \hookrightarrow \mathfrak{X}_{i}$ be closed immersions into noetherian affine formal $k$-schemes for $i = 1,2$ such that we have the equalities of their underlying topological spaces $|Y|= |\mathfrak{X}|$. Then:

\begin{enumerate}
\item They form the following co-Cartesian diagram with $\mathfrak{X}:= \mathfrak{X}_1 \coprod_{Y} \mathfrak{X}_2$ as a noetherian affine formal scheme, where the arrows are closed immersions:
$$
\xymatrix{
Y \ar[r]^{j_1} \ar[d] ^{j_2} & \mathfrak{X}_1 \ar[d] ^{\iota_1} \\
\mathfrak{X}_2 \ar[r] ^{\iota_2} & \mathfrak{X}.}
$$
\item In addition, suppose $\mathcal{F}_1, \mathcal{F}_2$ are perfect complexes on $\mathfrak{X}_1, \mathfrak{X}_2$, respectively, such that there is a quasi-isomorphism $\mathbf{L} j_1 ^* \mathcal{F}_1 \simeq \mathbf{L} j_2 ^* \mathcal{F}_2$ on $Y$. 

Then there is a perfect complex $\widetilde{\mathcal{F}}$ on $\mathfrak{X}$ such that $\mathbf{L} \iota_i ^* \widetilde{\mathcal{F}}$
 are quasi-isomorphic to $\mathcal{F}_{i} $ for $i=1,2$, respectively. 
 Conversely, every perfect complex on $\mathfrak{X}$ is obtained in this way.
\end{enumerate}
\end{thm}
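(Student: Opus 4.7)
The plan is to reduce the statement to the classical ring-theoretic Landsburg patching (Theorem \ref{thm:Landsburg0}) by realizing the formal pushout as the formal spectrum of a ring-theoretic fiber product, and then to use that perfect complexes on noetherian affine formal schemes correspond to perfect complexes of modules over the defining ring. Concretely, I would write $Y = \Spec (B)$ and $\mathfrak{X}_i = \Spf (A_i)$, where $A_i$ is noetherian and $K_i$-adically complete for $K_i := \ker(\pi_i : A_i \twoheadrightarrow B)$; the hypothesis $|Y| = |\mathfrak{X}_i|$ forces $K_i$ to be an ideal of definition of $A_i$, since $V(K_i) = |\mathfrak{X}_i|$. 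Then form $A := A_1 \times_B A_2$ (ring-theoretic fiber product) together with the ideal $K := K_1 \times K_2 \subset A$, which is a genuine ideal of $A$ because $K_i \subset \ker(\pi_i)$ places $K_1 \times K_2$ inside $A$.

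For part (1), I would check in turn that $A$ is noetherian, using the $A$-module short exact sequence $0 \to K_1 \to A \to A_2 \to 0$ (where $K_1 \times \{0\} \subset A$ carries its $A$-module structure via $A \twoheadrightarrow A_1$); both the kernel and the quotient are noetherian as $A$-modules, hence so is $A$. A direct computation shows $(K_1 \times K_2)^n = K_1^n \times K_2^n$ inside $A$ — since $(k,0)$ and $(0,k')$ both lie in $A$ and generate all the required products — and then $A/K^n \cong A_1/K_1^n \times_B A_2/K_2^n$ by lifting pairs $(\bar a_1, \bar a_2)$: arbitrary lifts $a_i \in A_i$ automatically satisfy $\pi_1(a_1) = \pi_2(a_2)$ in $B$. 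Taking the inverse limit over $n$ and using that limits commute with fiber products gives $A = \varprojlim_n A/K^n$, so $A$ is $K$-adically complete, and thus $\Spf(A)$ is a noetherian affine formal scheme. The universal property of $\Spf(A)$ as the pushout $\mathfrak{X}_1 \coprod_Y \mathfrak{X}_2$ then follows from the universal property of $A_1 \times_B A_2$ applied to adic continuous ring maps.

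For part (2), by Corollary \ref{cor:coh=perfect} and Lemma \ref{lem:aqc sum}, perfect complexes on the noetherian affine formal schemes $\mathfrak{X}_i$ and $\mathfrak{X}$ correspond to perfect complexes of $A_i$- and $A$-modules respectively, and the derived pullbacks $\mathbf{L} j_i^*$ and $\mathbf{L} \iota_i^*$ translate into the derived base change functors $(-) \otimes_{A_i}^{\mathbf{L}} B$ and $(-) \otimes_A^{\mathbf{L}} A_i$. The hypothesis becomes a quasi-isomorphism $\mathcal{F}_1 \otimes_{A_1}^{\mathbf{L}} B \simeq \mathcal{F}_2 \otimes_{A_2}^{\mathbf{L}} B$ of perfect $B$-complexes. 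Since part (1) provides the Cartesian square of noetherian rings $A = A_1 \times_B A_2$ with both $\pi_i$ surjective, Theorem \ref{thm:Landsburg0} applied to this square yields a perfect $A$-complex $\widetilde{\mathcal{F}}$ with $\widetilde{\mathcal{F}} \otimes_A^{\mathbf{L}} A_i \simeq \mathcal{F}_i$ for $i=1,2$, together with the converse statement that every perfect complex on $\mathfrak{X}$ arises this way.

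The main obstacle is the identification $A/K^n \cong A_1/K_1^n \times_B A_2/K_2^n$ and the resulting $K$-adic completeness of $A$, where one needs the ring-theoretic fiber product to interact correctly with the adic topologies so as to realize $\Spf(A)$ as the pushout of formal schemes rather than merely of underlying schemes. Once this algebraic-topological compatibility is established, no new derived-categorical work is required: Landsburg's theorem for the algebraic Cartesian square $A = A_1 \times_B A_2$ yields the formal-scheme patching essentially for free, which is the payoff of this reduction.
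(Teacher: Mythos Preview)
Your construction in part (1) is correct and essentially equivalent to the paper's: you build $A = A_1 \times_B A_2$ directly and verify it is noetherian and $K$-adically complete, whereas the paper invokes Ferrand's pushout theorem at each truncation level $\mathfrak{X}_{1,n} \coprod_Y \mathfrak{X}_{2,n}$ and then passes to the ind-scheme colimit. Your identifications $K^n = K_1^n \times K_2^n$ and $A/K^n \cong A_1/K_1^n \times_B A_2/K_2^n$ are exactly what Ferrand gives in the affine case, so the two arguments coincide in content.

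For part (2), your route is genuinely different from the paper's, and more direct in spirit: you apply Theorem~\ref{thm:Landsburg0} once, to the single Cartesian square of noetherian rings $A = A_1 \times_B A_2$, and then transport the conclusion to the formal schemes. The paper instead applies Theorem~\ref{thm:Landsburg0} at every finite level $n$ to get $\widetilde{\mathcal{F}}_n$ on the \emph{scheme} $\mathfrak{X}_n$, checks compatibility under the transition maps, and then defines $\widetilde{\mathcal{F}} := \mathbf{R}\varprojlim_n \widetilde{\mathcal{F}}_n$, appealing to a lemma of Bhatt (or \cite[Lemma~0CQG]{stacks}) to see that this derived limit is perfect on $\mathfrak{X}$.

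The gap in your argument is the step where you pass between perfect complexes on $\Spf(A_i)$, $\Spf(A)$ and perfect complexes of $A_i$-, $A$-modules. The results you cite do not do this: Corollary~\ref{cor:coh=perfect} requires the formal scheme to be \emph{regular}, which is not assumed of $\mathfrak{X}_i$ and will essentially never hold for the pushout $\mathfrak{X}$ (gluing two copies along a closed subscheme produces something non-regular); and Lemma~\ref{lem:aqc sum} concerns a.q.c.\ sheaves, not the comparison of perfect categories. What you need is an equivalence $\mathcal{D}_{\perf}(\Spf(R)) \simeq \mathcal{D}_{\perf}(R)$ for a general noetherian adic ring $R$, compatible with derived base change. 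This is plausible---it is the formal analogue of the fact that perfect complexes on a noetherian affine scheme are globally strictly perfect---but it requires an argument (e.g.\ showing every perfect complex on $\Spf(R)$ is globally quasi-isomorphic to a bounded complex of finite free $R^{\Delta}$-modules), and the paper does not supply one. The paper's level-by-level strategy is designed precisely to sidestep this point: at each finite stage one is on an honest affine scheme, where the correspondence is standard, and the passage to the formal scheme is handled by the derived-limit lemma rather than by a direct comparison of perfect categories.
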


We need the following to prove part of the above:

\begin{thm}[{D. Ferrand \cite[Th\'eor\`eme 5.4]{Ferrand}}]\label{thm:Ferrand pushout}Let $X'$ be a scheme, $Y' \subset X'$ a closed subscheme, and $g: Y' \to Y$ is a finite morphism of schemes. Consider the pushout $X:= X' \coprod_{Y'} Y$ in the category of ringed spaces so that we have the co-Cartesian square
$$
\xymatrix{Y' \ar[r] ^g \ar[d] & Y \ar[d] ^u\\
X' \ar[r] ^f & X.}
$$
Suppose that $X'$ and $Y$ satisfy the following property:
\begin{enumerate}
\item [$(FA)$] Each finite subset of points is contained in an affine open subset.
\end{enumerate}
Then $X$ is a scheme satisfying $(FA)$, the diagram is Cartesian as well, and the morphism $f$ is finite, while $u$ is a closed immersion.
\end{thm}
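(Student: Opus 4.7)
The plan is to prove the theorem by first establishing it in the affine case and then globalizing using the property (FA).

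First, suppose $X'$, $Y$, $Y'$ are all affine: write $X' = \Spec(A')$, $Y = \Spec(B)$, $Y' = \Spec(B')$, so that the closed immersion $Y' \hookrightarrow X'$ corresponds to a surjection $\pi\colon A' \twoheadrightarrow B'$, and $g\colon Y' \to Y$ corresponds to a finite ring homomorphism $\varphi\colon B \to B'$. Form the fiber product ring
$$A := A' \times_{B'} B = \{(a,b) \in A' \times B : \pi(a) = \varphi(b)\},$$
and set $\widetilde{X} := \Spec(A)$. The projection $p_1\colon A \to A'$ is surjective (lift $a' \in A'$ by choosing any $b \in B$ with $\varphi(b) = \pi(a')$, using surjectivity of $\varphi\colon B \twoheadrightarrow \varphi(B) \supseteq \pi(A')$ in the relevant quotient, together with finiteness) and its kernel $I = \{(0,b) : \varphi(b) = 0\} \cong \ker(\varphi)$ is an ideal of $A$ with $A/I \simeq A'$, giving a closed immersion $f\colon X' \hookrightarrow \widetilde{X}$. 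The projection $p_2\colon A \to B$ is finite: pick elements $\overline{b}_1,\ldots,\overline{b}_n \in B$ whose images generate $B'$ as an $A'$-module, lift to $(a_i,b_i) \in A$, and combine with finitely many generators of $\ker(\varphi)$ as a $B$-module (finite since $\varphi$ is finite) to obtain a finite generating set of $B$ over $A$. The ring square is Cartesian by construction.

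Second, I must verify that $\widetilde{X} = \Spec(A)$ represents the pushout in the category of ringed spaces, not only in affine schemes. Topologically, the prime spectrum of $A$ decomposes into primes containing $I$ (in bijection with $\Spec(A') = |X'|$) and primes not containing $I$ (via $A_I = B_{\varphi(\ker\pi)}$, in bijection with $|Y| \setminus |Y'|$), exhibiting $|\widetilde{X}|$ as the set-theoretic pushout. On the sheaf level, the key point is the lemma that for a multiplicative set $S \subset A$, localization commutes with the fiber product $A' \times_{B'} B$ provided $\pi$ is surjective; applying this on distinguished opens yields $\mathcal{O}_{\widetilde{X}}$ as the pushout sheaf.

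Third, globalize using (FA). Given any finite set $T$ of points of $X' \sqcup Y$ to be contained in an affine open of the eventual pushout, apply (FA) on $X'$ to obtain an affine open $U' \subset X'$ containing $T \cap |X'|$ together with $g^{-1}(T \cap |Y|) \cap |Y'|$ (finite since $g$ is finite), and (FA) on $Y$ to obtain an affine open $V \subset Y$ containing $T \cap |Y|$ together with $g(Y' \cap U')$, shrinking so that $g^{-1}(V) \cap Y' = Y' \cap U'$ and this common $Y'$-part is affine. The affine case applied to $(U',V,g|_{Y'\cap U'})$ produces an affine pushout $\Spec(A_{U'\!,V})$. These affine pushouts glue on overlaps by the uniqueness of pushouts and the localization-fiber-product compatibility above, yielding a scheme $X$ together with the morphisms $f$ and $u$. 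Finiteness of $f$ and the closed immersion property of $u$ are affine-local on the target, hence follow from the affine step; the Cartesian property holds globally because it holds on each affine piece; and (FA) for $X$ follows from (FA) for $X'$ and $Y$ combined with the construction.

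The main obstacles are twofold. First, the globalization step requires that the affine opens chosen in $X'$ and $Y$ be compatible under $g$ — any finite set one wishes to contain in an affine open of the pushout must already be enlarged by preimages under $g$ and images under $g$ before (FA) is invoked; this is precisely why (FA) together with the finiteness of $g$ are both indispensable hypotheses, and without either the pushout can fail to exist as a scheme. Second, identifying $\Spec(A' \times_{B'} B)$ as the pushout in ringed spaces, not merely in affine schemes, rests on the commutation of localization with fiber products under a surjectivity hypothesis — this is the essential algebraic input, and the surjectivity of $\pi$ (i.e., the fact that $Y' \hookrightarrow X'$ is a closed immersion) is used here in a nontrivial way.
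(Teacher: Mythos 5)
The paper does not give its own proof of this statement; it cites D.~Ferrand's \emph{Th\'eor\`eme 5.4}. I will therefore assess your proposal on its own terms.

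There is a genuine error at the core of your affine step: you have the roles of $f$ and $u$ reversed. You claim that $p_1\colon A \to A'$ is surjective, so that $f\colon X' \to X$ is a closed immersion, and then treat $p_2\colon A \to B$ as the map that needs a finiteness argument. But the theorem asserts the opposite pairing: $f$ is finite and $u$ is a closed immersion. Your argument for surjectivity of $p_1$ ("choose $b$ with $\varphi(b) = \pi(a')$, using surjectivity of $\varphi\colon B \twoheadrightarrow \varphi(B) \supseteq \pi(A')$") is where it goes wrong: $\varphi$ is only assumed finite, not surjective, so there is no reason that $\varphi(B) \supseteq \pi(A') = B'$. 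Consequently $A/I \simeq A'$ also fails in general, and the topological decomposition built on it (primes of $A$ containing $I$ in bijection with $|X'|$) is also wrong.

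A concrete counterexample: take $A' = k[x]$, $B' = k \times k$ with $\pi\colon k[x] \to k\times k$, $p \mapsto (p(0), p(1))$, and $B = k$ with $\varphi$ the diagonal $c \mapsto (c,c)$ (so $g$ is the finite, non-closed-immersion fold map $\{0,1\} \to \mathrm{pt}$). Then $A = \{(p,c) : p(0)=p(1)=c\}$, the coordinate ring of the nodal curve. Here $p_1\colon A \to k[x]$, $(p,c) \mapsto p$, is injective but \emph{not} surjective (its image consists of those $p$ with $p(0)=p(1)$), so $f$ is the normalization map — finite but certainly not a closed immersion. The surjection is $p_2\colon A \to k$, giving the closed immersion of the node.

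The corrected argument is as follows. $p_2\colon A \to B$ is surjective because $\pi$ is surjective: given $b \in B$, any $a' \in A'$ with $\pi(a') = \varphi(b)$ gives $(a',b) \in A$. Hence $u\colon Y \to X$ is a closed immersion with ideal $\tilde{J} := \ker(p_2) = \ker(\pi) \times \{0\}$. For $p_1\colon A \to A'$, set $J := \ker(\pi)$; the map $j \mapsto (j,0)$ identifies $J$ with the ideal $\tilde{J}$ of $A$, so $J$ is a common ideal of $A$ (via $p_1$) and of $A'$. Since $A'/J = B'$ is a finite $B = A/J$-module by finiteness of $g$, lift a finite $B$-generating set of $B'$ to $x_1, \dots, x_n \in A'$; then $1, x_1, \dots, x_n$ generate $A'$ over $A$, because any $a' \in A'$ can, modulo $\sum p_1(A) x_i$, be reduced into $J$, which already lies in $p_1(A)$. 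Hence $p_1$ is module-finite, so $f$ is finite. The rest of your sketch — localization commuting with the conductor-square fiber product, and the (FA) gluing step, with the observation that one must enlarge the finite set by $g$-preimages and $g$-images before invoking (FA) — is the right strategy, but it needs to be rebuilt on the corrected affine foundation.
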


\begin{proof}[Proof of Theorem \ref{thm:Landsburg}]
Let $\mathcal{I}_i \subset \mathcal{O}_{\mathfrak{X}_i}$ be the ideal of the closed immersion $Y \hookrightarrow \mathfrak{X}_i$. 

(1) For each $n \geq 1$, let $\mathfrak{X}_{i, n}$ be the scheme defined by $\mathcal{I}_i ^n$. We have the natural closed immersion $Y \hookrightarrow \mathfrak{X}_{i, n}$. 

Here the underlying topological spaces of $Y$ and $\mathfrak{X}_{i,n}$ are equal, and being affine schemes, all $\mathfrak{X}_{i,n}$ satisfy the condition $(FA)$ of Theorem \ref{thm:Ferrand pushout}. Hence by this theorem of Ferrand, we have the push-out $\mathfrak{X}_n:= \mathfrak{X}_{1,n} \coprod_Y \mathfrak{X}_{2,n}$ as a noetherian affine scheme. We then have the desired noetherian affine formal scheme $\mathfrak{X}:= \mathfrak{X}_{1} \coprod_Y \mathfrak{X}_2$ as the colimit of the ind-scheme $\{\mathfrak{X}_{1,n} \coprod_Y \mathfrak{X}_{2,n}\}_{n}$, which exists in the category of noetherian formal schemes.

(2) For $n \geq 1$, 
we have morphisms ${\mathfrak{X}}_n \overset{ \alpha_{n} ^{n+1}}{\to} {\mathfrak{X}}_{n+1} \overset{\alpha^{n+1}}{\to}
 {\mathfrak{X}}$.

For $i=1,2$, the given morphism $j_i: Y \to \mathfrak{X}_i$ factors into $Y \overset{ \mathfrak{j}_{i, n}}{\to} \mathfrak{X}_{i, n} \overset{\mathfrak{j} _{i, n} ^{n+1}}{\to} \mathfrak{X}_{i, n+1} \overset{\mathfrak{j}_i ^{n+1}}{\to} \mathfrak{X}_i$. For each $n \geq 1$ and each $i=1,2$, the morphism $\iota_i: \mathfrak{X}_i \to 
{\mathfrak{X}}$ induces $\iota_{i, n}: \mathfrak{X}_{i, n} \to {\mathfrak{X}_n}$. 

They give the following commutative diagram, where each horizontal face is a push-out diagram:
$$
\xymatrix{ 
& Y \ar[dl]_{\mathfrak{j}_{2, n}} \ar[rr]^{ \mathfrak{j}_{1, n}} \ar@{=}[dd] & & \mathfrak{X}_{1,n} \ar[dl] ^{ \iota_{1, n}} \ar[dd]^{\mathfrak{j}_{1, n} ^{n+1}} \ar@/^3pc/@{-->}[dddd]^{\mathfrak{j}_{1} ^n}  \\
\mathfrak{X}_{2,n}  \ar[dd]_{\mathfrak{j}_{2, n} ^{n+1}} \ar[rr]^{ \ \ \ \ \ \ \  \iota_{2, n}} \ar@/_3pc/@{-->}[dddd]_{\mathfrak{j}_{2} ^n}  & & {\mathfrak{X}}_n  \ar[dd] & \\
& Y \ar[dl]_{\mathfrak{j}_{2, n+1}} \ar[rr]^{ \ \ \ \ \ \ \  \mathfrak{j}_{1, n+1}} \ar@{=}[dd]  & & \mathfrak{X}_{1, n+1} \ar[dl] ^{\iota _{1, n+1}} \ar@{-->}[dd]^{\mathfrak{j}_1 ^{n+1}} \\
 \mathfrak{X}_{2, n+1} \ar@{-->}[dd]_{\mathfrak{j}_{2} ^{n+1}} \ar[rr] ^{ \ \ \ \ \ \ \ \ \ \iota_{2, n+1}}& & {\mathfrak{X}}_{n+1} \ar@{-->}[dd] & \\
& Y \ar[dl]_{j_2} \ar[rr]^{ \ \ \ \ \ \  j_1} & & \mathfrak{X}_1 \ar[dl]^{\iota_1} \\
\mathfrak{X}_2 \ar[rr]^{\iota_2} & & {\mathfrak{X}.} &}$$

In the above diagram, the top horizontal
face is the $n$-th level, the middle one is the $(n+1)$-th level, and the bottom is the $\infty$-level. 

For each $n \geq 1$, and for $i=1,2$, define $\mathcal{F}_{i, n}:= \mathbf{L}  (\mathfrak{j}_i ^n)^* \mathcal{F}_i$, which is the left derived pull-back of $\mathcal{F}_i$ to $\mathfrak{X}_{i, n}$. The given condition implies that their left derived pull-backs to $Y$ are quasi-isomorphic to each other. Hence for each level $n \geq 1$, by the derived Milnor patching in Theorem \ref{thm:Landsburg0}, there exists a perfect complex $\widetilde{\mathcal{F}}_n$ on ${\mathfrak{X}}_n$ such that their (left derived) pull-backs to $\mathfrak{X}_{i, n}$ are quasi-isomorphic to $\mathcal{F}_{i, n}$. 

On the other hand, since the diagram above commutes, 
by the derived Milnor patching, 
we deduce that the natural maps 
$$
\mathbf{L} (\alpha_n ^{n+1})^* \widetilde{\mathcal{F}}_{n+1} = \widetilde{\mathcal{F}}_{n+1} \times_{{\mathfrak{X}}_{n+1}}^h  {\mathfrak{X}}_n \overset{\sim}{\to} \widetilde{\mathcal{F}}_n 
$$
are quasi-isomorphisms on $\mathfrak{X}_n$ for $n \geq 1$. Let $\widetilde{\mathcal{F}} := \mathbf{R} \varprojlim_n \widetilde{\mathcal{F}}_n$ on ${\mathfrak{X}}$. Then by \cite[Lemma 0CQG]{stacks} (or B. Bhatt \cite[Lemma 4.2]{Bhatt}), this is a perfect complex on ${\mathfrak{X}}$, and the morphism
$$
\mathbf{L} (\alpha^n)^* \widetilde{\mathcal{F}} = \widetilde{\mathcal{F}} \times ^h _{{\mathfrak{X}}} {\mathfrak{X}}_n \to \widetilde{\mathcal{F}}_n
$$
is a quasi-isomorphism for each $n \geq 1$. One checks that this perfect complex $\widetilde{\mathcal{F}}$ on $\widetilde{\mathfrak{X}}$ satisfies the desired properties.
\end{proof}

\medskip 

We now interpret the mod $Y$-equivalence in terms of the derived Milnor patching in Theorem \ref{thm:Landsburg}. 

Let $Y$ be a affine $k$-scheme of finite type, let $Y \hookrightarrow X$ be a closed immersion into an equidimensional smooth $k$-scheme, and let $\widehat{X}$ be the completion of $X$ along $Y$. In the situation of Theorem \ref{thm:Landsburg}, we take  $\mathfrak{X}_1= \mathfrak{X}_2 = \widehat{X}$, and take $\mathfrak{X}$ to be the push-out $\widehat{X} \coprod_Y \widehat{X}$. We call it the \emph{double} of $\widehat{X}$ along $Y$, and denote it by $D_{\widehat{X}} = D (\widehat{X}, Y)$.  We have closed immersions $\iota_{i} : \widehat{X} \hookrightarrow D_{\widehat{X}}$ for $i= 1, 2$.

Let $(\mathcal{A}_1, \mathcal{A}_2)  \in \mathcal{L}^q (\widehat{X} , Y, n)$. Since $\square_{\widehat{X}}^n$ is regular by Lemma \ref{lem:exoskin}, both $\mathcal{A}_1, \mathcal{A}_2$ are perfect complexes on $\square_{\widehat{X}}^n$ by Corollary \ref{cor:coh=perfect}. Our given assumptions on the pair imply that $\mathbf{L} j_1 ^* \mathcal{A} _1 \simeq \mathbf{L} j_2 ^* \mathcal{A}_2$ in $\mathcal{D}_{\perf} (\square_Y ^n)$. Thus by Theorem \ref{thm:Landsburg}, they patch to give a perfect complex $\tilde{\mathcal{F}}$ on $  \square_{D_{\widehat{X}}} ^n= \square_{\widehat{X}} ^n \coprod _{\square_Y^n} \square_{\widehat{X}} ^n$. Let $\tilde{\mathcal{L}} ^q (D_{\widehat{X}}, n)$ be the set of all perfect complexes $\tilde{\mathcal{F}}$ on $D_{\widehat{X}}$ obtained in this manner from pairs in $\mathcal{L}^q (\widehat{X}, Y, n)$. Abusing notations, we write $\tilde{\mathcal{F}} = (\mathcal{A}_1, \mathcal{A}_2)$.

\medskip

For $i = 1,2$, consider the set maps $\mathbf{L}\iota_{i} ^*: \tilde{ \mathcal{L}} ^q (D_{\widehat{X}}, n) \to \mathcal{D}_{\perf} ^q (\widehat{X}, n)$ that send $\tilde{\mathcal{F}} = (\mathcal{A}_1, \mathcal{A}_2)$ to $\mathbf{L} \iota_{i} ^* (\tilde{\mathcal{F}}) = \mathcal{A}_{i}$. Define the difference map
$$
\tau_{\widehat{X}} (n) =  [ \mathbf{L} \iota_1 ^* ( - ) ]  - [   \mathbf{L} \iota_2 ^* (-) ]:  \tilde{ \mathcal{L}} ^q (D_{\widehat{X}}, n)\to z^q (\widehat{X}, n),
$$
that sends $\tilde{\mathcal{F}} = (\mathcal{A}_1, \mathcal{A}_2)$ to the cycle $[\mathcal{A}_1] - [\mathcal{A}_2]$.

The subgroup 
$$
\left< \tau_{\widehat{X}} (n) (  \tilde{ \mathcal{L}} ^q (D_{\widehat{X}}, n)  ) \right> \subset z^q (\widehat{X}, n)
$$
 generated by the image of $\tau_{\widehat{X}} (n)$ is precisely $\mathcal{M}^q (\widehat{X}, Y, n)$ by definition. Let $\tilde{\tau}_{\widehat{X}} (n)$ be the induced homomorphism $\mathbb{Z} [ \tilde{ \mathcal{L}} ^q (D_{\widehat{X}}, n)]  \to z^q (\widehat{X} , n)$, so we have
$$
{\rm coker} (\tilde{\tau}_{\widehat{X}} (n))= \frac{z^q (\widehat{X}, n)}{\mathcal{M}^q (\widehat{X}, Y, n)}.
$$

\subsection{Special flat pull-backs 3: the mod $Y$-equivalence}\label{sec:basic functor}

In Lemmas \ref{lem:pre flat pb0} and \ref{lem:pre flat pb}, we saw that the flat pull-backs of various cycles exist for a few special types of flat morphisms of noetherian quasi-affine formal $k$-schemes. 

In \S \ref{sec:basic functor}, when the formal schemes are of the form $\widehat{X}$, we show that the special flat pull-backs also respect the mod $Y$-equivalences.

Specifically, the special types of flat morphisms $f$ are the following:

\begin{enumerate}
\item [(I)] The open immersions $f: \widehat{X} |_{U} \hookrightarrow \widehat{X}$ of formal schemes for nonempty open subsets $U \subset Y$.

\item [(II)] The further completion morphisms $f: \widehat{X}' \to \widehat{X}$. 

Here $Y' \hookrightarrow Y \hookrightarrow X$ are closed immersions, with $X$ smooth, $\widehat{X}$ and $\widehat{X}'$ are the completions of $X$ along $Y$ and $Y'$, respectively. The formal scheme $\widehat{X}'$ is also the further completion of the formal scheme $\widehat{X}$ by the ideal sheaf of $Y'$ in $\widehat{X}$.
\item [(III)] The projection morphism $f: \widehat{X}_1 \times \widehat{X}_2 \to \widehat{X}_2$.

Here, $\widehat{X}_i$ is the completion of $X_i$ along $Y_i$ for a closed immersion $Y_i \hookrightarrow X_i$ into an equidimensional smooth $k$-scheme $X_i$. The product $\widehat{X}_1 \times \widehat{X}_2$ is the completion of $X_1 \times X_2$ along $Y_1 \times Y_2$, see Lemma \ref{lem:prod completion}.

\end{enumerate}

\begin{lem}\label{lem:fpb}
For $i=1,2$, let $Y_i$ be quasi-affine $k$-schemes of finite type, and let $Y_i \hookrightarrow X_i$ be closed immersions into equidimensional smooth $k$-schemes. Take the completions $\widehat{X}_i$ of $X_i$ along $Y_i$. 

Suppose that the above objects fit in the commutative diagram:
\begin{equation}\label{eqn:fpb diag}
\xymatrix{
\widehat{X}_1 \ar[r]^{f} & \widehat{X}_2 \\
Y_1 \ar[r]^g \ar@{^(->}[u] & Y_2 \ar@{^(->}[u] 
}
\end{equation}
such that $f$ is a flat $k$-morphism of one of the above types ${\rm (I)}$, ${\rm (II)}$, ${\rm (III)}$, the vertical maps are the induced closed immersions, and the restriction $g= f|_{Y_1} $ maps $Y_1$ into $Y_2$. 

Then the flat pull-back $f^*$ of Lemma \ref{lem:pre flat pb} induces a morphism of complexes 
\begin{equation}\label{eqn:fpb00}
f^*: z^q  (\widehat{X}_2 \mod Y_2, \bullet) \to z^q  (\widehat{X}_1 \mod Y_1, \bullet).
\end{equation}
\end{lem}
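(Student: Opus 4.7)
The plan is as follows. Since $f^*: z^q (\widehat{X}_2, \bullet) \to z^q (\widehat{X}_1, \bullet)$ is already a morphism of complexes by Lemma \ref{lem:pre flat pb}, it will suffice to show that $f^*$ carries the subgroup $\mathcal{M}^q (\widehat{X}_2, Y_2, n)$ into $\mathcal{M}^q (\widehat{X}_1, Y_1, n)$ for each $n \geq 0$; the induced map on quotients will then automatically commute with the boundary operators, giving the desired morphism \eqref{eqn:fpb00}.

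By Lemma \ref{lem:modulus simple generator}, it is enough to start from a single pair $(\mathcal{A}_1, \mathcal{A}_2) \in \mathcal{L}^q (\widehat{X}_2, Y_2, n)$ and produce a pair $(\mathcal{B}_1, \mathcal{B}_2) \in \mathcal{L}^q (\widehat{X}_1, Y_1, n)$ such that $f^*([\mathcal{A}_1] - [\mathcal{A}_2]) = [\mathcal{B}_1] - [\mathcal{B}_2]$ in $z^q (\widehat{X}_1, n)$. The natural candidate is $\mathcal{B}_i := f_n^* \mathcal{A}_i$, where $f_n: \square_{\widehat{X}_1}^n \to \square_{\widehat{X}_2}^n$ is the flat morphism on cubes induced by $f$ (still of the same type (I), (II), or (III)). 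The cycle-level identity $f^*[\mathcal{A}_i] = [f_n^* \mathcal{A}_i]$ is provided by Lemma \ref{lem:pull-back perfect}; flatness of $f_n$ ensures that $f_n^* \mathcal{A}_i$ is again a coherent sheaf of $\mathcal{O}_{\square_{\widehat{X}_1}^n}$-algebras, and Lemma \ref{lem:pre flat pb} confirms that the associated cycle lies in $z^q (\widehat{X}_1, n)$, so that $f_n^* \mathcal{A}_i \in \mathcal{R}^q (\widehat{X}_1, n)$.

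It remains to establish the mod $Y_1$-equivalence $f_n^* \mathcal{A}_1 \sim_{Y_1} f_n^* \mathcal{A}_2$. Let $j_{Y_i}^n: \square_{Y_i}^n \hookrightarrow \square_{\widehat{X}_i}^n$ be the induced closed immersions and $g_n: \square_{Y_1}^n \to \square_{Y_2}^n$ the morphism induced by $g$. The commutativity of \eqref{eqn:fpb diag}, combined with functoriality of the fiber product with $\square_k^n$, yields the identity
$$
f_n \circ j_{Y_1}^n = j_{Y_2}^n \circ g_n.
$$
Functoriality of derived pullback on $\sAlg$ then gives the canonical equivalence
$$
\mathbf{L}(j_{Y_1}^n)^* \circ f_n^* \simeq \mathbf{L} g_n^* \circ \mathbf{L}(j_{Y_2}^n)^*,
$$
where $\mathbf{L} f_n^* = f_n^*$ since $f_n$ is flat. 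Applying this to the defining equivalence $\mathbf{L}(j_{Y_2}^n)^* \mathcal{A}_1 \simeq \mathbf{L}(j_{Y_2}^n)^* \mathcal{A}_2$ in $\sAlg(\mathcal{O}_{\square_{Y_2}^n})$ and propagating through $\mathbf{L} g_n^*$ produces the required equivalence $\mathbf{L}(j_{Y_1}^n)^*(f_n^* \mathcal{A}_1) \simeq \mathbf{L}(j_{Y_1}^n)^*(f_n^* \mathcal{A}_2)$ in $\sAlg(\mathcal{O}_{\square_{Y_1}^n})$.

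The step requiring the most bookkeeping is not conceptual but the case-by-case verification that each of the three flat morphism types genuinely produces a diagram of the shape \eqref{eqn:fpb diag} with $g = f|_{Y_1}$ a well-defined morphism of $k$-schemes. For type (I) open immersions and type (II) further completions this is immediate from the construction. For type (III) the projection $\widehat{X}_1 \times \widehat{X}_2 \to \widehat{X}_2$, one invokes Lemma \ref{lem:prod completion} to identify the domain with the formal neighborhood of $Y_1 \times Y_2$, and $g$ is then simply the second projection. Once these compatibilities are in place, the remainder of the argument is the formal derived-pullback manipulation above, and the morphism \eqref{eqn:fpb00} of complexes follows.
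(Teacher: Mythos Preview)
Your proof is correct and follows the same overall strategy as the paper: reduce to showing $f^*(\mathcal{M}^q(\widehat{X}_2,Y_2,n))\subset\mathcal{M}^q(\widehat{X}_1,Y_1,n)$, verify that $f_n^*\mathcal{A}_i\in\mathcal{R}^q(\widehat{X}_1,n)$ via Lemma~\ref{lem:pull-back perfect} and Lemma~\ref{lem:pre flat pb}, and then transfer the mod~$Y$ equivalence across $f$.

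Where you differ is in the execution of this last step. You use the commutative square $f_n\circ j_{Y_1}^n=j_{Y_2}^n\circ g_n$ and functoriality of derived pullback on $\sAlg$ to obtain $\mathbf{L}(j_{Y_1}^n)^*\circ f_n^*\simeq\mathbf{L}g_n^*\circ\mathbf{L}(j_{Y_2}^n)^*$ directly. The paper instead works on the $\widehat{X}_2$ side: it tensors the given isomorphism over $\mathcal{O}_{\square_{Y_2}^n}$ with $f_*\mathcal{O}_{\square_{Y_1}^n}$, applies $f^*$, and then uses the counit $f^*f_*\mathcal{O}_{\square_{Y_1}^n}\to\mathcal{O}_{\square_{Y_1}^n}$ to land in $\sAlg(\mathcal{O}_{\square_{Y_1}^n})$. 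Your route is shorter and conceptually cleaner; the paper's route avoids explicitly naming $g_n$ but is otherwise equivalent. One small remark: your final paragraph about case-by-case verification of the diagram~\eqref{eqn:fpb diag} is unnecessary, since the existence of that commutative square is already part of the hypothesis of the lemma.
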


\begin{proof}
Under the given assumptions on $f$, by Lemma \ref{lem:pre flat pb}, we have the flat pull-back morphism
\begin{equation}\label{eqn:lala1}
f^*: z^q  (\widehat{X}_2, \bullet) \to z^q (\widehat{X}_1, \bullet).
\end{equation}

It remains to show that for each $n \geq 0$, the pull-back $f^*$ of \eqref{eqn:lala1} maps $\mathcal{M} ^q (\widehat{X}_2, Y_2, n)$ to $\mathcal{M}^q (\widehat{X}_1, Y_1, n)$. By Lemma \ref{lem:pull-back perfect}, this follows if we show that for $(\mathcal{A}_1, \mathcal{A}_2) \in \mathcal{L}^q (\widehat{X}_2, Y_2 ,n)$, we have $(f^{*} ( \mathcal{A}_1), f^{*} (\mathcal{A}_2)) \in \mathcal{L}^q (\widehat{X}_1, Y_1, n)$.

\medskip

\textbf{Claim 1:} \emph{For $\mathcal{A} \in \mathcal{R}^q (\widehat{X}_2, n)$, we have $f^* (\mathcal{A}) \in \mathcal{R}^q (\widehat{X}_1, n)$.}

\medskip

Let $\mathcal{A} \in \mathcal{R}^q (\widehat{X}_2, n)$. 
Since $f$ is flat, we have $\mathbf{L}f^* = f^*$. The sheaf pull-back $f^* \mathcal{A}$ is a coherent $\mathcal{O}_{\square_{\widehat{X}_1} ^n}$-algebra, while by Lemma \ref{lem:pull-back perfect}, we have $[f^* (\mathcal{A})] = f^* [ \mathcal{A}]$. However, the latter belongs to $z^q (\widehat{X}_1 , n)$ by \eqref{eqn:lala1}. Hence $f^*(\mathcal{A}) \in \mathcal{R}^q (\widehat{X}_1, n)$, proving Claim 1.

\medskip

\textbf{Claim 2:} \emph{Let $(\mathcal{A}_1, \mathcal{A}_2) \in \mathcal{L}^q (\widehat{X}_2, Y_2, n)$. Then $(f^* (\mathcal{A}_1), f^* ( \mathcal{A}_2)) \in \mathcal{L}^q (\widehat{X}_1, Y_1, n)$.}

\medskip

Since $\mathcal{A}_j \in \mathcal{R}^q (\widehat{X}_2, n)$, we have $f^* (\mathcal{A}_j ) \in \mathcal{R}^q (\widehat{X}_1, n)$ for $j=1,2$ by Claim 1. Now the given conditions on $(\mathcal{A}_1, \mathcal{A}_2)$ say that we have an isomorphism
\begin{equation}\label{eqn:fp00}
\mathcal{A}_1 \otimes_{\mathcal{O}_{ \square_{\widehat{X}_2} ^n}} ^{\mathbf{L}} \mathcal{O}_{\square_{Y_2} ^n } \simeq \mathcal{A}_2 \otimes_{\mathcal{O}_{\square_{\widehat{X}_2} ^n}} ^{\mathbf{L}} \mathcal{O}_{\square_{Y_2} ^n }
\end{equation}
in $\sAlg (\mathcal{O}_{\square_{Y_2}^n})$. Via the natural morphism $\mathcal{O}_{\square_{Y_2} ^n } \to f_* \mathcal{O}_{\square_{Y_1} ^n}$ of sheaves of rings, applying $ - \otimes_{\mathcal{O}_{\square_{Y_2} ^n }}^{\mathbf{L}} f_* \mathcal{O}_{\square_{Y_1} ^n}$ to \eqref{eqn:fp00}, we obtain an isomorphism
\begin{equation}\label{eqn:fp002}
\mathcal{A}_1 \otimes_{\mathcal{O}_{\square_{\widehat{X}_2} ^n}} ^{\mathbf{L}} f_* \mathcal{O}_{\square_{Y_1} ^n } \simeq \mathcal{A}_2 \otimes_{\mathcal{O}_{\square_{\widehat{X}_2} ^n}}  ^{\mathbf{L}} f_* \mathcal{O}_{\square_{Y_1} ^n}
\end{equation}
in $\sAlg (\mathcal{O}_{\square_{Y_2}^n})$.
Since $f$ is flat, we have $\mathbf{L} f^* = f^*$. For $j=1,2$, we also have
\begin{equation}\label{eqn:fp003}
f^* (\mathcal{A}_j \otimes _{\mathcal{O}_{\square_{\widehat{X}_2} ^n}} ^{\mathbf{L}} f_* \mathcal{O}_{\square_{Y_1} ^n}) = f^* (\mathcal{A}_j)\otimes_{\mathcal{O}_{\square_{\widehat{X}_1} ^n}} ^{\mathbf{L}} f^* f_* \mathcal{O}_{\square_{Y_1} ^n}.
\end{equation}
Note also that from the adjoint functors $(f^*, f_*)$, there is the natural morphism $f^* f_* \mathcal{O}_{\square_{Y_1} ^n} \to \mathcal{O}_{\square_{Y_1} ^n}$ of sheaves of rings. Hence applying $\mathbf{L}f^* = f^*$ to \eqref{eqn:fp002}, using \eqref{eqn:fp003}, and then applying $- \otimes_{ f^* f_* \mathcal{O}_{\square_{Y_1} ^n}} ^{\mathbf{L}} \mathcal{O}_{\square_{Y_1} ^n}$, we obtain the isomorphism
\begin{equation}\label{eqn:fp004}
f^* (\mathcal{A}_1) \otimes_{\mathcal{O}_{\square_{\widehat{X}_1} ^n}} ^{\mathbf{L}} \mathcal{O}_{\square_{Y_1} ^n} \simeq f^* (\mathcal{A}_2) \otimes_{\mathcal{O}_{\square_{\widehat{X}_1} ^n}} ^{\mathbf{L}} \mathcal{O}_{\square_{Y_1} ^n}
\end{equation}
in $\sAlg (\mathcal{O}_{\square_{Y_1}^n})$. 

This means $(f^*( \mathcal{A}_1), f^* ( \mathcal{A}_2)) \in \mathcal{L}^q (\widehat{X}_1, Y_1, n)$, proving the Claim 2, thus the lemma.
\end{proof}

\subsection{Special finite push-forwards 3}\label{sec:special pf}

The purpose of \S \ref{sec:special pf} is to show that push-forward operations based on the finite push-forward of \S \ref{sec:2.7} and \S \ref{sec:sfpf2} respect the mod $Y$-equivalences, under further extra assumptions.  

\begin{prop}\label{prop:pushforward}

Let $g: Y_1 \to Y_2$ be a finite flat surjective morphism of quasi-affine $k$-schemes of finite type. 
Suppose there exists a Cartesian square
\begin{equation}\label{eqn:pf origin}
\xymatrix{
X_1 \ar[r] ^f & X_2 \\
Y_1 \ar@{^(->}[u] \ar[r] ^g & Y_2, \ar@{^(->}[u]}
\end{equation}
where the vertical maps are closed immersions into equidimensional smooth $k$-schemes, and $f$ is finite surjective. Then:
\begin{enumerate}
\item The induced morphism $\widehat{f}: \widehat{X}_1 \to \widehat{X}_2$ of noetherian quasi-affine formal $k$-schemes gives the Cartesian diagram
\begin{equation}\label{eqn:pf star}
\xymatrix{
X_1 \ar[r] ^f & X_2 \\
\widehat{X}_1 \ar@{^(->}[u] \ar[r] ^{\widehat{f}} & \widehat{X}_2.\ar@{^(->}[u]}
\end{equation}

\item The morphism $\widehat{f}$ is finite faithfully flat.

\item We have the induced push-forward morphisms of complexes
\begin{equation}\label{eqn:ex pf eq1}
\tuborg \widehat{f}_*: z^q (\widehat{X}_1, \bullet) \to z^q (\widehat{X}_2, \bullet), \\ 
\widehat{f}_*: z^q (\widehat{X}_1 \mod Y_1, \bullet) \to z^q (\widehat{X}_2 \mod Y_2, \bullet).\sluttuborg
\end{equation}
\end{enumerate}
\end{prop}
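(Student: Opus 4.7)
The plan is to handle the three assertions in order, with the technical crux concentrated in the derived base change needed for part (3).

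For (1), I would first reduce to the affine case: choose affine neighborhoods $X_i = \Spec A_i$ with $Y_i = \Spec A_i/I_i$. The Cartesian hypothesis on the original square becomes the statement $I_1 = I_2 A_1$ (equivalently $A_1/I_1 = A_1 \otimes_{A_2} A_2/I_2$), so $I_1^n = I_2^n A_1$ for every $n \geq 1$. Since $A_1$ is a finite $A_2$-module, tensor commutes with the relevant inverse limit, giving
$$
\widehat{A_1} \;=\; \varprojlim_n A_1/I_1^n \;=\; \varprojlim_n A_1 \otimes_{A_2} A_2/I_2^n \;=\; A_1 \otimes_{A_2} \widehat{A_2}.
$$
This identifies $\widehat{X}_1$ with $X_1 \times_{X_2} \widehat{X}_2$ in the category of noetherian formal schemes, which is the claimed Cartesian diagram.

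For (2), the identity $\widehat{A_1} = A_1 \otimes_{A_2} \widehat{A_2}$ immediately gives that $\widehat{f}$ is finite. Flatness follows because $f: X_1 \to X_2$ is a finite surjective morphism between equidimensional smooth $k$-schemes of the same dimension, hence is flat by miracle flatness (Matsumura), and flatness is stable under the base change $- \otimes_{A_2} \widehat{A_2}$. Surjectivity of $\widehat{f}$ on underlying topological spaces reduces to surjectivity of $g$ since $|\widehat{X}_i| = |Y_i|$; finite plus flat plus surjective yields faithful flatness.

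For (3), the first push-forward $\widehat{f}_*: z^q(\widehat{X}_1, \bullet) \to z^q(\widehat{X}_2, \bullet)$ is produced by Lemma \ref{lem:pre proper pf} once we verify the dimension hypothesis and that the condition (\textbf{SF}) of Definition \ref{defn:HCG} is preserved; the latter is immediate because $\widehat{f}$ restricted to $\widehat{X}_{1,\red} = Y_1$ equals the finite surjective morphism $g: Y_1 \to Y_2 = \widehat{X}_{2,\red}$. For the mod-$Y$ version, by Lemma \ref{lem:modulus simple generator} it is enough to check that for any pair $(\mathcal{A}_1, \mathcal{A}_2) \in \mathcal{L}^q(\widehat{X}_1, Y_1, n)$ one has $(\widehat{f}_* \mathcal{A}_1, \widehat{f}_* \mathcal{A}_2) \in \mathcal{L}^q(\widehat{X}_2, Y_2, n)$. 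Admissibility of $\widehat{f}_* \mathcal{A}_j$ as a codim-$q$ coherent $\mathcal{O}_{\square_{\widehat{X}_2}^n}$-algebra is clear ($\widehat{f}$ is finite) combined with Lemma \ref{lem:pf coh cy}, which yields $[\widehat{f}_* \mathcal{A}_j] = \widehat{f}_* [\mathcal{A}_j] \in z^q(\widehat{X}_2, n)$. The core of the argument is the derived tensor condition: applying $\square^n$ to the Cartesian square of (1) gives the Cartesian square
$$
\xymatrix{
\square_{Y_1}^n \ar@{^(->}[r]^{j_1} \ar[d]_{g} & \square_{\widehat{X}_1}^n \ar[d]^{\widehat{f}} \\
\square_{Y_2}^n \ar@{^(->}[r]^{j_2} & \square_{\widehat{X}_2}^n,
}
$$
and since $\widehat{f}$ is flat and affine, so higher direct images vanish on a.q.c.~sheaves by Lemma \ref{lem:aqc sum}-(7), derived flat base change gives a natural isomorphism
$$
\mathbf{L} j_2^* \,\widehat{f}_* \mathcal{A}_j \;\simeq\; g_* \,\mathbf{L} j_1^* \mathcal{A}_j \qquad (j = 1, 2)
$$
in $\sAlg(\mathcal{O}_{\square_{Y_2}^n})$. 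The hypothesis $\mathbf{L} j_1^* \mathcal{A}_1 \simeq \mathbf{L} j_1^* \mathcal{A}_2$ in $\sAlg(\mathcal{O}_{\square_{Y_1}^n})$, pushed forward by $g_*$ and combined with this base change, yields $\mathbf{L} j_2^* \,\widehat{f}_* \mathcal{A}_1 \simeq \mathbf{L} j_2^* \,\widehat{f}_* \mathcal{A}_2$, which is exactly the mod $Y_2$-equivalence.

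The main obstacle I expect is the last step: namely, asserting the derived flat base change in the $\infty$-category $\sAlg$ of derived algebras, rather than only in the ordinary derived category of $\mathcal{O}$-modules. What makes it go through here is precisely the combination of flatness of $\widehat{f}$ established in (2) with affineness (hence higher-direct-image vanishing on a.q.c.~sheaves via Lemma \ref{lem:aqc sum}-(7)), which lets the ordinary pushforward represent the derived pushforward and respects the simplicial ring structure; one must also remember that the algebra isomorphism is read off via the truncation functor $\pi_0$ together with the compatible homotopy $\pi_\ast$, so that the equivalence in $\sAlg$ is genuinely stronger than a mere quasi-isomorphism of underlying complexes.
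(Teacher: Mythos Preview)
Your proofs of (1) and (2) are essentially identical to the paper's. For (3) you take a different route than the paper, and it is worth comparing.

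The paper argues as follows. Starting from the given isomorphism $\mathcal{A}_1 \otimes^{\mathbf{L}}_{\mathcal{O}_{\square_{\widehat{X}_1}^n}} \mathcal{O}_{\square_{Y_1}^n} \simeq \mathcal{A}_2 \otimes^{\mathbf{L}}_{\mathcal{O}_{\square_{\widehat{X}_1}^n}} \mathcal{O}_{\square_{Y_1}^n}$, it first \emph{descends} this along the faithfully flat ring map $\widehat{f}^*\mathcal{O}_{\square_{Y_2}^n} \to \mathcal{O}_{\square_{Y_1}^n}$ (this is where flatness of $g$ is genuinely used) to obtain the same isomorphism with $\widehat{f}^*\mathcal{O}_{\square_{Y_2}^n}$ in place of $\mathcal{O}_{\square_{Y_1}^n}$. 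It then applies $\mathbf{R}\widehat{f}_*$ and invokes the projection formula (SGA~VI) to land on the desired isomorphism over $\square_{Y_2}^n$. Your route instead packages the whole computation into a single application of derived flat base change across the square $\square_{Y_1}^n \simeq \square_{Y_2}^n \times_{\square_{\widehat{X}_2}^n} \square_{\widehat{X}_1}^n$, followed by $g_*$; this is cleaner and, notably, does not seem to consume the hypothesis that $g$ is flat (only that $\widehat{f}$ is, which you already have from~(2)). The paper's approach, on the other hand, makes the role of $g$ more transparent and stays closer to classical references (projection formula rather than $\infty$-categorical base change), which may be why the author chose it.

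One small slip: you write ``applying $\square^n$ to the Cartesian square of (1) gives'' the square with $Y_i$ and $\widehat{X}_i$. The square in (1) involves $X_i$ and $\widehat{X}_i$, not $Y_i$. The square you actually need, $Y_1 \simeq Y_2 \times_{\widehat{X}_2} \widehat{X}_1$, is not that square; it follows by combining the \emph{original} Cartesian hypothesis $Y_1 \simeq Y_2 \times_{X_2} X_1$ with the conclusion of (1), via $Y_2 \times_{\widehat{X}_2} \widehat{X}_1 \simeq Y_2 \times_{\widehat{X}_2} (\widehat{X}_2 \times_{X_2} X_1) \simeq Y_2 \times_{X_2} X_1$. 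This is a one-line check, but you should state it. Your closing remarks about the $\sAlg$ subtlety are apt; the paper faces exactly the same issue and handles it at the same level of rigor.
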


\begin{proof}
(1) Since the question is local on $X_2$, it is enough to prove it when $X_2$ is affine. Under this assumption, all of $X_1, X_2, Y_1, Y_2$ as well as $\widehat{X}_1, \widehat{X}_2$ are affine. 

In this case, the Cartesian square \eqref{eqn:pf origin} corresponds to the push-out square of $k$-algebras
\begin{equation}\label{eqn:aaaa1}
\xymatrix{
R_2 \ar[d] \ar[r] & R_1 \ar[d] &  \\
R_2/ I_2 \ar[r] & R_1/ I_1, &  R_1/ I_1 \overset{(\star)}{ \simeq} (R_2/ I_2) \otimes_{R_2} R_1,}
\end{equation}
where $\Spec (R_i) = X_i$ and $\Spec (R_i/I_i) = Y_i$.  Since $(R_2/ I_2) \otimes_{R_2} R_1 \simeq R_1/ ( I_2 R_1 )$, the equality $R_1/I_1 = R_1/ (I_2 R_1)$ coming from $(\star)$ in \eqref{eqn:aaaa1} implies that $I_1 = I_2 R_1$. Thus for each $m \geq 1$, we have $I_1 ^m = I_2 ^m R_1$. This implies that $R_1/ I_1 ^m \simeq (R_2/ I_2^m) \otimes_{R_2} R_1$. Taking $\underset{m}{\varprojlim}$ to this, we deduce $\widehat{R}_1 \simeq \widehat{R}_2 \otimes_{R_2} R_1$, because $R_1$ is a finite $R_2$-module (see H. Matsumura \cite[Theorem 8.7, p.60]{Matsumura}). Thus the diagram
$$
\xymatrix{
R_2 \ar[d] \ar[r] & R_1 \ar[d] \\
\widehat{R}_2 \ar[r] & \widehat{R}_1}
$$ is a push-out square. Hence the diagram \eqref{eqn:pf star} is a Cartesian square, proving (1).

\medskip

(2) Since $f$ is a finite surjective morphism between two smooth schemes, the morphism $f$ is automatically flat by EGA ${\rm IV}_2$ \cite[Proposition (6.1.5), p.136]{EGA4-2}. Thus $f$ is finite faithfully flat. Since the diagram \eqref{eqn:pf star} is Cartesian by (1), the pull-back $\widehat{f}$ is also finite faithfully flat.

\medskip

(3) Since $\widehat{f}$ is finite surjective by (2), we have the induced push-forward morphism
\begin{equation}\label{eqn:bebek_pb0}
\widehat{f}_* : z^q (\widehat{X}_1, \bullet)  \to z^q  (\widehat{X}_2, \bullet)
\end{equation}
by Lemma \ref{lem:pre proper pf}. 
This is the first push-forward map of \eqref{eqn:ex pf eq1}.

\medskip

It remains to show that this $\widehat{f}_*$ respects the mod $Y_i$-equivalences, i.e. 
\begin{equation}\label{eqn:pf-1}
\widehat{f}_* (\mathcal{M} ^q (\widehat{X}_1, Y_1, n) ) \subset \mathcal{M}^q (\widehat{X}_2, Y_2, n)
\end{equation}
for $n \geq 0$.

\medskip

\textbf{Claim 1:} If $\mathcal{A} \in \mathcal{R}^q (\widehat{X}_1, n)$, then $\mathbf{R}\widehat{f}_* \mathcal{A} = \widehat{f}_* \mathcal{A}$, $[\widehat{f}_* \mathcal{A}] = \widehat{f}_* [ \mathcal{A}]$, and $\widehat{f}_* (\mathcal{A}) \in \mathcal{R}^q (\widehat{X}_2, n)$. 

\medskip

Since $\mathcal{A}$ is coherent, it is an a.q.c.~sheaf by Lemma \ref{lem:aqc sum}-(5). Since $\widehat{f}_*$ is finite, thus affine, we have $R^r \widehat{f}_* (\mathcal{A}) =  0$ for $r \geq 1 $ and $\mathbf{R}\widehat{f}_* (\mathcal{A}) = \widehat{f}_* (\mathcal{A})$ by Lemma \ref{lem:aqc sum}-(7). Here, $\widehat{f}_* (\mathcal{A})$ is a coherent $\mathcal{O}_{\square_{\widehat{X}_2}^n}$-algebra, while by Lemma \ref{lem:pf coh cy}, we have $ [ \widehat{f}_* ( \mathcal{A})] =\widehat{f}_* [ \mathcal{A}] $, which is in $z^q (\widehat{X}_2, n)$ by \eqref{eqn:bebek_pb0}. Hence $\widehat{f}_* (\mathcal{A}) \in \mathcal{R} ^q (\widehat{X}_2, n)$. This answers the Claim 1.

\medskip

Now let $(\mathcal{A}_1, \mathcal{A}_2) \in \mathcal{L}^q (\widehat{X}_1, Y_1, n)$, so we have an isomorphism
\begin{equation}\label{eqn:pf00}
\mathcal{A}_1 \otimes_{\mathcal{O}_{\square^n _{\widehat{X}_1}}} ^{\mathbf{L}} \mathcal{O}_{\square^n _{Y_1}} \simeq \mathcal{A}_2 \otimes_{\mathcal{O}_{\square^n _{\widehat{X}_1}}} ^{\mathbf{L}} \mathcal{O}_{\square^n _{Y_1}} 
\end{equation}
in $\sAlg (\mathcal{O}_{\square_{Y_1} ^n})$. We make:

\medskip

\textbf{Claim 2:} \emph{We have an isomorphism in ${\mathbf{sAlg}} (\mathcal{O}_{\square_{Y_1} ^n})$
\begin{equation}\label{eqn:pf01}
\mathcal{A}_1 \otimes _{\mathcal{O}_{\square^n _{\widehat{X}_1}}} ^{\mathbf{L}} \widehat{f}^* \mathcal{O}_{\square^n _{Y_2}} \simeq \mathcal{A}_2 \otimes _{\mathcal{O}_{\square^n _{\widehat{X}_1}}} ^{\mathbf{L}} \widehat{f}^* \mathcal{O}_{\square^n_{Y_2}}.
\end{equation}
}

\medskip

The natural morphism $\widehat{f}^* \mathcal{O}_{\square^n _{Y_2}} \to \mathcal{O}_{\square^n _{Y_1}}$ of sheaves of rings is faithfully flat. Thus, we have the isomorphism \eqref{eqn:pf01} if and only if \eqref{eqn:pf01} tensored with $- \otimes_{ \widehat{f}^* \mathcal{O}_{\square^n_{Y_2}}}  {\mathcal{O}_{\square^n_{Y_1}}} $ is an isomorphism in $\sAlg (\mathcal{O}_{\square_{Y_1}^n})$. But the latter is \eqref{eqn:pf00}, so we have proved the Claim 2.

\medskip

Since $\widehat{f}$ is flat, we have
$\mathbf{L}\widehat{f}^* =\widehat{ f}^*$.
Hence applying $\mathbf{R} \widehat{f}_*$ to each side of \eqref{eqn:pf01}, we obtain 
$$
\mathbf{R} \widehat{f}_* (\mathcal{A}_i) \otimes_{\mathcal{O}_{\square^n _{\widehat{X}_2}}} ^{\mathbf{L}}  \mathcal{O}_{\square^n_{Y_2}} \simeq \mathbf{R} \widehat{f}_* ( \mathcal{A}_i \otimes_{\mathcal{O}_{\square^n _{\widehat{X}_1}}} ^{\mathbf{L}} \widehat{f}^* \mathcal{O}_{\square^n_{Y_2}} )
$$
 for $i=1,2$, by the projection formula (SGA VI \cite[Expos\'e III, Proposition 3.7, p.247]{SGA6}). Thus we deduce an isomorphism
\begin{equation}\label{eqn:star_poof}
\mathbf{R} \widehat{f}_* (\mathcal{A}_1) \otimes_{\mathcal{O}_{\square^n _{\widehat{X}_2}}} ^{\mathbf{L}} \mathcal{O}_{\square^n_{Y_2}} \simeq \mathbf{R} \widehat{f}_* (\mathcal{A}_2) \otimes_{\mathcal{O}_{\square^n _{\widehat{X}_2}}} ^{\mathbf{L}} \mathcal{O}_{\square^n_{Y_2}}
\end{equation}
in $\sAlg(\mathcal{O}_{\square_{Y_2}^n})$. Combining \eqref{eqn:star_poof} with the Claim 1, we deduce 
$$
(\widehat{f}_* (\mathcal{A}_1),  \widehat{f}_* (\mathcal{A}_2)) \in \mathcal{L} ^q (\widehat{X}_2, Y_2, n),
$$
and
$$
\widehat{f}_* ([ \mathcal{A}_1] - [\mathcal{A}_2] ) = [ \widehat{f}_* (\mathcal{A}_1)] - [ \widehat{f}_* (\mathcal{A}_2)] \in \mathcal{M}^q (\widehat{X}_2, Y_2, n),
$$
proving \eqref{eqn:pf-1}. This gives the second push-forward map of \eqref{eqn:ex pf eq1}. This complete the proof of the proposition.
\end{proof}

\begin{remk}
We must mention that for a given finite flat surjective morphism $g: Y_1 \to Y_2$ in $\QAff_k$ (or even in $\Aff_k$), in general we may not always be able to find $f: X_1 \to X_2$ and the embeddings $Y_i \hookrightarrow X_i$, that satisfy the assumptions of Proposition \ref{prop:pushforward}. 

Nevertheless, there are certain cases where we do have them. One of such cases is presented in the following Lemma \ref{lem:finite ext base change 1}. 
More cases are to be discussed in the future in a follow-up work on the push-forward structures.
\qed
\end{remk}

\begin{lem}\label{lem:finite ext base change 1}
Let $Y$ be a quasi-affine $k$-scheme of finite type, and let $Y \hookrightarrow X$ be a closed immersion into an equidimensional smooth  $k$-scheme. Let $\widehat{X}$ be the completion of $X$ along $Y$. Let $k \hookrightarrow k'$ be a finite extension of fields. Then:

\begin{enumerate}
\item $X_{k'}$ is smooth over $k'$ and for the closed immersion $Y_{k'} \hookrightarrow X_{k'}$, the completion of $X_{k'}$ along $Y_{k'}$ is  the base change $\widehat{X}_{k'}$ of $\widehat{X}$ to $k'$. 
\item The diagram
$$
\xymatrix{
X_{k'} \ar[r] ^f & X \\
Y_{k'} \ar@{^(->}[u] \ar[r] ^g & Y\ar@{^(->}[u],}
$$
satisfies the assumptions of Proposition \ref{prop:pushforward}.
\end{enumerate}
 In particular, we have the induced push-forward morphism
 $$
 \widehat{f}_*: z^q (\widehat{X}_{k'} \mod Y_{k'}, \bullet) \to z^q (\widehat{X} \mod Y, \bullet).
 $$
\end{lem}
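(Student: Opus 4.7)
For part (1), smoothness of $X_{k'}$ over $k'$ and equidimensionality follow from the stability of these properties under the flat base change $\Spec k' \to \Spec k$. The completion identification is local, so I would reduce to the affine case $X = \Spec A$ with $Y = V(I)$ for an ideal $I \subset A$. Then $Y_{k'} \hookrightarrow X_{k'} = \Spec (A \otimes_k k')$ is cut out by the ideal $I \cdot (A \otimes_k k') = I \otimes_k k'$. Setting $B := A \otimes_k k'$, since $B$ is a finite (free) $A$-module, H. Matsumura's Theorem 8.7 applies to give
$$
\widehat{B} \simeq B \otimes_A \widehat{A} \simeq \widehat{A} \otimes_k k',
$$
where $\widehat{B}$ denotes the $I \cdot B$-adic completion. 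Taking $\Spf$ yields the desired identification $\widehat{X_{k'}} \simeq \widehat{X}_{k'}$.

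For part (2), I would first verify that the square in the statement is Cartesian; this follows from the computation
$$
Y \times_X X_{k'} = Y \times_X (X \times_k k') = Y \times_k k' = Y_{k'}.
$$
Both $f$ and $g$ are finite surjective as base changes of $\Spec k' \to \Spec k$, and are automatically flat since $k'$ is flat over $k$. These are exactly the hypotheses of Proposition \ref{prop:pushforward}, which then yields the push-forward $\widehat{f}_* : z^q (\widehat{X}_{k'} \mod Y_{k'}, \bullet) \to z^q (\widehat{X} \mod Y, \bullet)$.

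The main subtlety to check is that Proposition \ref{prop:pushforward} applies despite the fact that $X_{k'}$ is smooth over $k'$ rather than over $k$ in the possibly inseparable case. Inspecting its proof, smoothness is used only via EGA IV$_2$ Proposition (6.1.5) to deduce that $f$ is flat; but this flatness is automatic here from the base change, so all remaining steps --- the Cartesian identification of the completions (which in fact reduces to the argument already established in part (1)), and the preservation under derived pull-back of the $\sAlg(\mathcal{O}_{\square_Y^n})$-isomorphism defining mod $Y$-equivalence --- carry over unchanged. I expect this bookkeeping to be the only point requiring care.
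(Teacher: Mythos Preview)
Your proposal is correct and follows the same route as the paper, which simply says ``(1) and (2) are apparent. The rest follows from Proposition~\ref{prop:pushforward}.'' Your write-up is considerably more careful, and in one respect more correct: you catch that when $k'/k$ is inseparable, $X_{k'}$ is smooth over $k'$ but not over $k$, so the literal hypotheses of Proposition~\ref{prop:pushforward} (``closed immersions into equidimensional smooth $k$-schemes'') are not met. Your fix---observing that smoothness is only used in the proof of part~(2) of that proposition to obtain flatness of $f$, which here is free from the base change, while regularity of $\widehat{X}_{k'}$ (needed for the mod~$Y$ machinery) still holds since $X_{k'}$ is smooth over the field $k'$---is exactly right and patches a small gap the paper's one-line proof leaves implicit.
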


\begin{proof}
(1) and (2) are apparent. The rest follows from Proposition \ref{prop:pushforward}.
\end{proof}

The following is an analogue of \cite[Corollary (1.2)]{Bloch HC}:

\begin{cor}\label{cor:finite ext base change 2}
Let $Y$ be a quasi-affine $k$-scheme of finite type, and let $Y \hookrightarrow X$ be a closed immersion into an equidimensional smooth $k$-scheme. Let $\widehat{X}$ be the completion of $X$ along $Y$. Let $k \hookrightarrow k'$ be a finite extension of fields. Let $\widehat{f}: \widehat{X}_{k'} \to \widehat{X}$ be the base change.

Then the pull-back $\widehat{f}^*$ and the push-forward $\widehat{f}_*$ exist, and the composite
$$
\widehat{f}_* \circ \widehat{f}^*: z^q (\widehat{X} \mod Y, \bullet) \to z^q (\widehat{X} \mod Y, \bullet)
$$
is equal to $[ k' : k] \cdot {\rm Id}$.
\end{cor}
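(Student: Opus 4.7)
The plan is to assemble three results already established in the excerpt. First, I would observe that the base change morphism $\widehat{f}: \widehat{X}_{k'} \to \widehat{X}$ fits into the special flat framework: since $\widehat{X}_{k'} = \widehat{X} \times_k \Spec(k')$ as noetherian quasi-affine formal $k$-schemes, this $\widehat{f}$ is flat of type ${\rm (III)}$ in the sense of Lemma \ref{lem:pre flat pb0}, and it is finite surjective because $k'/k$ is a finite field extension. Moreover the restriction $\widehat{f}|_{Y_{k'}}: Y_{k'} \to Y$ is precisely the finite flat surjective base change $g$ of Lemma \ref{lem:finite ext base change 1}.

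Second, I would produce the two arrows on the mod $Y$-complex separately. For the pull-back, I would apply Lemma \ref{lem:fpb} to the commutative diagram
$$
\xymatrix{
\widehat{X}_{k'} \ar[r]^{\widehat{f}} & \widehat{X} \\
Y_{k'} \ar@{^{(}->}[u] \ar[r]^{g} & Y, \ar@{^{(}->}[u]
}
$$
which yields the morphism of complexes
$$
\widehat{f}^*: z^q(\widehat{X} \mod Y, \bullet) \to z^q(\widehat{X}_{k'} \mod Y_{k'}, \bullet).
$$
For the push-forward, Lemma \ref{lem:finite ext base change 1} directly provides
$$
\widehat{f}_*: z^q(\widehat{X}_{k'} \mod Y_{k'}, \bullet) \to z^q(\widehat{X} \mod Y, \bullet),
$$
as the smoothness of $X_{k'}$ over $k'$ and the Cartesian square $X_{k'} \to X$, $Y_{k'} \to Y$ satisfy the hypotheses of Proposition \ref{prop:pushforward}.

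Third, for the identity $\widehat{f}_* \circ \widehat{f}^* = [k':k] \cdot {\rm Id}$ on the quotient, I would invoke Corollary \ref{cor:3.5.3-2}-(2), which already asserts this identity on the unquotiented complex $z^q(\widehat{X}, \bullet)$. Since the pull-back and push-forward on the mod $Y$-complexes are by construction induced from those on the un-modded complexes (both passing to the quotient by the mod $Y_{k'}$- and mod $Y$-subgroups respectively, as verified in the proofs of Lemma \ref{lem:fpb} and Proposition \ref{prop:pushforward}), the relation $\widehat{f}_* \circ \widehat{f}^* = [k':k] \cdot {\rm Id}$ descends to the quotient complex $z^q(\widehat{X} \mod Y, \bullet)$.

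I do not anticipate a substantive obstacle: the corollary is a direct assembly of Lemmas \ref{lem:fpb}, \ref{lem:finite ext base change 1} and Corollary \ref{cor:3.5.3-2}. The only routine verification to carry out carefully is that the morphisms constructed via the two lemmas are indeed compatible with the ones in Corollary \ref{cor:3.5.3-2} under the quotient maps $z^q(\widehat{X}, \bullet) \twoheadrightarrow z^q(\widehat{X} \mod Y, \bullet)$ and $z^q(\widehat{X}_{k'}, \bullet) \twoheadrightarrow z^q(\widehat{X}_{k'} \mod Y_{k'}, \bullet)$, which is immediate from how $\widehat{f}^*$ and $\widehat{f}_*$ were defined on cycles.
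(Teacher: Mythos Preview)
Your proposal is correct and follows essentially the same route as the paper: the paper's proof cites Lemma~\ref{lem:fpb} for $\widehat{f}^*$, Lemma~\ref{lem:finite ext base change 1} for $\widehat{f}_*$, and declares the composite identity immediate. Your additional step of invoking Corollary~\ref{cor:3.5.3-2} on the unquotiented complex and noting that the identity descends to the quotient is exactly the content the paper leaves implicit under ``immediate''.
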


\begin{proof}
We have $\widehat{f}^*$ and $ \widehat{f}_*$ by Lemmas \ref{lem:fpb} and \ref{lem:finite ext base change 1}. 
That the composite $\widehat{f}_*  \circ \widehat{f}^*$ equals $[k' : k] \cdot {\rm Id}$ is immediate. 
\end{proof}

\section{Zariski sheafifications and flasque sheaves}\label{sec:sheaf}

Let $Y$ be a quasi-affine $k$-scheme of finite type. Let $Y \hookrightarrow X$ be a closed immersion into an equidimensional smooth $k$-scheme. Let $\widehat{X}$ be the completion of $X$ along $Y$.

In \S \ref{sec:sheaf}, from the complex $z^q (\widehat{X} \mod Y, \bullet)$ of abelian groups, we derive two Zariski sheaves $\mathcal{S}_{\bullet}^q$ and $\BGHz^q (\widehat{X} \mod Y, \bullet)$. We simultaneously do the corresponding job for $z^q (\widehat{X}, \bullet)$ as well.

Each $\mathcal{S}_{n}^q (U) $ is defined to be the quotient of $z^q(\widehat{X} \mod Y, n)$ modulo the subgroup $G^q (Y \setminus U, n)$ generated by the cycles whose topological supports lie in $(Y \setminus U) \times \square^n$, while $\BGHz^q (\widehat{X} \mod Y, n)$ is given by the sheafification of $U \mapsto z^q (\widehat{X}|_U \mod Y|_U, n)$ for open $U \subset Y$.

We show that each $\mathcal{S}_n ^q$ is a flasque sheaf, and that there is a natural injective morphism of complexes of sheaves
\begin{equation}\label{eqn:sheaf 00}
\mathcal{S}_{\bullet}^q \to \BGHz^q (\widehat{X} \mod Y, \bullet).
\end{equation}
We prove the equality
$$
\CH^q (\widehat{X}\mod Y, n) \simeq \mathbb{H}_{\rm Zar} ^{-n} (Y, \mathcal{S}_{\bullet}^q),
$$
while we define 
$$
\BGH^q  (\widehat{X} \mod Y, n):= \mathbb{H}_{\rm Zar} ^{-n} (Y, \BGHz^q (\widehat{X} \mod Y, \bullet)).
$$
Thus, we deduce that there is a natural homomorphism of groups
\begin{equation}\label{eqn:sheaf 01}
\CH^q (\widehat{X} \mod Y, n) \to \BGH^q (\widehat{X} \mod Y, n).
\end{equation}

We guess the morphism \eqref{eqn:sheaf 00} is a quasi-isomorphism, equivalently \eqref{eqn:sheaf 01} is an isomorphism, but this isn't proven in this article. Some further remarks on this matter are mentioned in the Appendix, \S \ref{sec:appendix}.


\subsection{Presheaf of cycles and sheafification}\label{sec:presheaf}

Let $Y$ be a quasi-affine $k$-scheme of finite type. Let $Y \hookrightarrow X$ be a closed immersion into an equidimensional smooth $k$-scheme, and let $\widehat{X}$ be the completion of $X$ along $Y$. 

For open subsets $U \subset |Y|$, we can also regard $U$ as the open subscheme $ (U, \mathcal{O}_Y|_U)$ of $Y$. In this case, we have the induced closed immersion $ U \hookrightarrow \widehat{X}|_U$, where $\widehat{X}|_U$ is the quasi-affine open formal subscheme $(U, \mathcal{O}_{\widehat{X}}|_U)$ of $\widehat{X}$. 

Let $\mathcal{W} $ be a finite set of closed formal subschemes of $\widehat{X}$.

\begin{lem}\label{lem:presheaf small}
Let $Y, X, \widehat{X}$, $\mathcal{W}$ be as before. Let $Op (Y)$ be the set of all Zariski open subsets of $Y$. 
Then the association
$$
\tilde{\mathcal{P}}_{\bullet} ^q: U \in Op (Y) \mapsto z^q_{\mathcal{W}|_U}  (\widehat{X}|_U, \bullet)
$$
is a presheaf of complexes of abelian groups on $Y_{\rm Zar}$.
\end{lem}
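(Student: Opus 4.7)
The plan is to construct, for every inclusion $V \subset U$ of Zariski opens of $Y$, a restriction morphism
\[
\rho^U_V : z^q_{\mathcal{W}|_U}(\widehat{X}|_U,\bullet) \longrightarrow z^q_{\mathcal{W}|_V}(\widehat{X}|_V,\bullet)
\]
of complexes of abelian groups, and then to verify the two presheaf axioms $\rho^U_U = \mathrm{Id}$ and $\rho^V_W \circ \rho^U_V = \rho^U_W$ for $W \subset V \subset U$.

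First I would take $\rho^U_V$ to be the flat pull-back along the open immersion $\iota^U_V: \widehat{X}|_V \hookrightarrow \widehat{X}|_U$ of noetherian quasi-affine formal $k$-schemes. This is a type~(I) flat morphism in the sense of Lemma \ref{lem:pre flat pb0}, so Lemma \ref{lem:pre flat pb} already delivers a well-defined morphism of complexes $(\iota^U_V)^* : z^q(\widehat{X}|_U,\bullet) \to z^q(\widehat{X}|_V,\bullet)$. Concretely, an integral admissible cycle $\mathfrak{Z}$ in $\widehat{X}|_U \times \square^n_k$ is sent to $\mathfrak{Z} \cap (\widehat{X}|_V \times \square^n_k)$, i.e. the restriction of the integral formal subscheme to the open $V \subset U$.

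Next I would check that this restriction respects the auxiliary proper-intersection condition with $\mathcal{W}$ of Definition \ref{defn:HCG var}. Since $\widehat{X}$ and hence $\widehat{X}|_U$ is equidimensional, for any $C \in \mathcal{W}|_U$ and any face $F \subset \square_k^n$ one has $(C|_V) \times F = (C \times F) \cap (\widehat{X}|_V \times \square_k^n)$; consequently
\[
\bigl(\mathfrak{Z} \cap (C \times F)\bigr)\big|_V \; = \; \bigl(\mathfrak{Z}|_V\bigr) \cap \bigl(C|_V \times F\bigr),
\]
so the proper-intersection dimension bound assumed for $\mathfrak{Z}$ over $U$ passes to $\mathfrak{Z}|_V$ over $V$ by the same argument as used for the condition (\textbf{GP}) in the type~(I) part of the proof of Lemma \ref{lem:pre flat pb}. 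Therefore $(\iota^U_V)^*$ restricts to $\rho^U_V : z^q_{\mathcal{W}|_U}(\widehat{X}|_U,\bullet) \to z^q_{\mathcal{W}|_V}(\widehat{X}|_V,\bullet)$, and it is a morphism of complexes by Lemma \ref{lem:pre flat pb}.

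Finally, functoriality is immediate from the equality $\iota^U_W = \iota^U_V \circ \iota^V_W$ of open immersions and the standard functoriality of flat pull-backs: $(\iota^U_V \circ \iota^V_W)^* = (\iota^V_W)^* \circ (\iota^U_V)^*$, and $(\mathrm{Id}_{\widehat{X}|_U})^* = \mathrm{Id}$. None of the steps should present a genuine obstacle, as each piece has already been set up in \S\ref{sec:pb pf} and \S\ref{sec:HC}; the only point requiring any care is the second step, where one must confirm that restriction to an open subset preserves the $\mathcal{W}$-proper intersection condition, but this is an elementary dimension computation as above.
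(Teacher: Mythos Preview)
Your proposal is correct and follows essentially the same route as the paper: both use the type~(I) flat pull-back of Lemma~\ref{lem:pre flat pb} along the open immersion $\widehat{X}|_V \hookrightarrow \widehat{X}|_U$ to define $\rho^U_V$, and then invoke functoriality of pull-backs for the cocycle condition. If anything, your argument is slightly more explicit than the paper's, which simply cites Lemma~\ref{lem:pre flat pb} without spelling out that the auxiliary $\mathcal{W}$-proper-intersection condition of Definition~\ref{defn:HCG var} is preserved under restriction; your second step fills in that detail.
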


\begin{proof}
By Lemma \ref{lem:pre flat pb}, we have the pull-back restriction map 
$$
\rho^{U}_V: z^q _{\mathcal{W}_U}(\widehat{X}|_U, \bullet) \to z^q _{\mathcal{W}|_V} (\widehat{X}|_V, \bullet)
$$
 for each pair $V \subset U$ of open subsets in $Op (Y)$. If $W \subset V \subset U$ are open in $Y$, we have $\rho^U_W = \rho ^V_W \circ \rho ^U_V$. 
This gives a presheaf.
\end{proof}

\begin{defn}\label{defn:sh_complex0}
Let $Y, X, \widehat{X}$, $\mathcal{W}$ be as before. Define $\BGHz^q _{\mathcal{W}} (\widehat{X}, \bullet)$ to be the Zariski sheafification on $Y_{\rm Zar}$ of the presheaf $\tilde{\mathcal{P}}_{\bullet} ^q$ of Lemma \ref{lem:presheaf small}.
When $\mathcal{W}= \emptyset$, we drop $\mathcal{W}$ from the notation.\qed
\end{defn}

\begin{lem}\label{lem:mod Y presheaf small}
Let $Y, X, \widehat{X}$, $\mathcal{W}$, $U \in Op (Y)$, and $\widehat{X}|_U$ as before. Then
\begin{equation}\label{eqn:mod Y presheaf small}
\mathcal{P}_{\bullet}^q : U \in Op (Y) \mapsto z^q _{\mathcal{W}|_U} (\widehat{X}|_U \mod U, \bullet)
\end{equation}
is a presheaf of complexes of abelian groups on $Y_{\rm Zar}$, where the last $U$ in ``$\mod U$" is seen as the open subscheme $(U, \mathcal{O}_Y|_U)$ of $Y$.
\end{lem}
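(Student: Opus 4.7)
The plan is to check that the restriction maps $\rho^U_V$ constructed in Lemma \ref{lem:presheaf small} descend to the quotients defining $z^q_{\mathcal{W}|_U}(\widehat{X}|_U \mod U, \bullet)$, and that the resulting restriction maps still satisfy the presheaf axioms. The statement is essentially a corollary of Lemma \ref{lem:fpb}, so the work consists mostly in assembling the pieces rather than producing a genuinely new argument.

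Fix $V \subset U$ in $Op(Y)$. I would first observe that the open immersion $j : \widehat{X}|_V \hookrightarrow \widehat{X}|_U$ of noetherian quasi-affine formal $k$-schemes is flat of type (I) in the sense of Lemma \ref{lem:pre flat pb0}, and its restriction to the subscheme level is the open immersion $V \hookrightarrow U$. These data fit precisely into the commutative diagram \eqref{eqn:fpb diag} required by Lemma \ref{lem:fpb}. Applying that lemma to $j$ yields a morphism of complexes
\[
\bar{\rho}^U_V : z^q(\widehat{X}|_U \mod U, \bullet) \to z^q(\widehat{X}|_V \mod V, \bullet),
\]
induced from the flat pull-back by descent through the subgroup $\mathcal{M}^q(\widehat{X}|_U, U, \bullet)$. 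The small bookkeeping point is to track the auxiliary collection $\mathcal{W}$: since restricting along an open immersion preserves the proper-intersection conditions with members of $\mathcal{W}$ and with faces (cf.\ Definition \ref{defn:HCG var}), the same pull-back sends $z^q_{\mathcal{W}|_U}(\widehat{X}|_U, \bullet)$ into $z^q_{\mathcal{W}|_V}(\widehat{X}|_V, \bullet)$, and sends the generating pairs $(\mathcal{A}_1, \mathcal{A}_2) \in \mathcal{L}^q(\widehat{X}|_U, U, n, \mathcal{W}|_U)$ to pairs in $\mathcal{L}^q(\widehat{X}|_V, V, n, \mathcal{W}|_V)$ by the argument of Claims 1 and 2 in the proof of Lemma \ref{lem:fpb}.

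For the presheaf axioms, I would note that for $W \subset V \subset U$ in $Op(Y)$, the composite open immersion $\widehat{X}|_W \hookrightarrow \widehat{X}|_V \hookrightarrow \widehat{X}|_U$ agrees with $\widehat{X}|_W \hookrightarrow \widehat{X}|_U$, and the flat pull-backs of type (I) on cycle complexes compose accordingly (this is already exploited implicitly in Lemma \ref{lem:presheaf small}). Hence $\bar{\rho}^U_W = \bar{\rho}^V_W \circ \bar{\rho}^U_V$, while $\bar{\rho}^U_U = {\rm Id}$ by construction, giving the desired presheaf $\mathcal{P}^q_{\bullet}$. The only nontrivial ingredient is the compatibility of the mod-$U$ relation with open restriction, and since this has already been isolated and proved in Lemma \ref{lem:fpb}(I), no further obstacle appears.
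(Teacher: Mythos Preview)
Your proposal is correct and takes essentially the same approach as the paper: the paper's proof is the single sentence ``This time, we can use Lemma~\ref{lem:fpb} instead of Lemma~\ref{lem:pre flat pb},'' and you have spelled out exactly what that entails, including the bookkeeping for the auxiliary set $\mathcal{W}$.
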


\begin{proof}
This time, we can use Lemma \ref{lem:fpb} instead of Lemma \ref{lem:pre flat pb}.
\end{proof}

\begin{defn}\label{defn:sh_complex}
Let $Y, X, \widehat{X}$, $\mathcal{W}$ be as before. Define $\BGHz^q _{\mathcal{W}} (\widehat{X} \mod Y, \bullet)$ to be the Zariski sheafification on $Y_{\rm Zar}$ of the presheaf $\mathcal{P}_{\bullet}^q$ of \eqref{eqn:mod Y presheaf small}. 
When $\mathcal{W}= \emptyset$, we drop $\mathcal{W}$ from the notation.\qed
\end{defn}

\begin{remk}
One may ask whether the inclusions
$$
\BGHz^q_{\mathcal{W}} (\widehat{X}, \bullet) \hookrightarrow \BGHz^q (\widehat{X}, \bullet) \ \ \mbox{ and } \ \ \BGHz^q_{\mathcal{W}} (\widehat{X} \mod Y, \bullet) \hookrightarrow \BGHz^q (\widehat{X} \mod Y, \bullet)
$$
are quasi-isomorphisms. We do not know the answer in general. We have a positive answer in a special case needed in this article. See Theorem \ref{thm:formal moving}.\qed.
\end{remk}

\begin{remk}\label{remk:stalk realize 2}
The stalks of the sheaves in Definitions \ref{defn:sh_complex0} and \ref{defn:sh_complex} are given by the colimits as in Remark \ref{remk:stalk realize}. More precisely, by Lemmas \ref{lem:pre flat pb} and \ref{lem:fpb}, for a scheme point $y \in Y$, we have
$$
\BGHz^q (\widehat{X}, \bullet)_y: = \varinjlim_{y \in U} z^q (\widehat{X}|_U, \bullet),
$$
\begin{equation}\label{eqn:stalk description}
\BGHz^q (\widehat{X} \mod Y, \bullet)_y := \varinjlim_{y \in U} z^q (\widehat{X}|_U \mod U, \bullet),
\end{equation}
where the colimits are taken over all open neighborhoods of $y$ in $Y$, while the last $U$ in \eqref{eqn:stalk description} is the open subscheme $(U, \mathcal{O}_Y|_U)$ of $Y$.

As remarked in Remark \ref{remk:stalk realize}, we do not know whether the stalks can be realized as certain cycle complexes of some formal schemes. Thus, later when we mention semi-local schemes, their cycle class groups will be defined using the colimits. See Definition \ref{defn:HCG semi-local}.\qed
\end{remk}

\subsection{Topological supports and flasque sheaves}\label{sec:desc sheaf}

\subsubsection{The topological supports}

\begin{defn}\label{defn:top support}
Let $\mathfrak{X} $ be a noetherian formal scheme and let $W \subset | \mathfrak{X}|$ be a closed subset. 

We say that an integral cycle $\mathfrak{Z}$ on $\mathfrak{X}$ is \emph{supported in $W$}, if the topological support of $\mathcal{O}_{\mathfrak{Z}}$ is contained in $W$. We say that a cycle $\alpha$ on $\mathfrak{X}$ is \emph{supported in $W$}, if the union of the topological supports of the integral components of $\alpha$ is contained in $W$.\qed
\end{defn}

We concentrate on the following situation: let $Y$ be a quasi-affine $k$-scheme of finite type and let $Y \hookrightarrow X$ be a closed immersion into an equidimensional smooth $k$-scheme. Take $\mathfrak{X}=\widehat{X}$, the completion of $X$ along $Y$.

\begin{defn}
Let $W \subset | Y| = | \widehat{X}|$ be a closed subset, and let $U:=Y \setminus W$.  
Define 
$$
 \tilde{G} ^q  (W, n):= \ker ( z^q (\widehat{X}, n) \to z^q (\widehat{X}|_U, n)),
 $$
for the flat restriction map of Lemma \ref{lem:pre flat pb}. This is the group of cycles in $z^q (\widehat{X}, n)$ supported in $W \times \square^n$.
\qed
\end{defn}

We make the following apparent observation:

\begin{lem}\label{lem:topsupp_rel0}
Let $Y, X, \widehat{X}$ be as before. Let $W_1, W_2 \subset Y$ be closed subsets. 
\begin{enumerate}
\item If $W_1 \subset W_2$, then $\tilde{G}^q (W_1, n) \subset \tilde{G}^q (W_2, n)$.
\item We have $\tilde{G}^q  (W_1, n) \cap \tilde{G}^q  (W_2, n) = \tilde{G}^q  (W_1 \cap W_2, n)$.
\end{enumerate}
\end{lem}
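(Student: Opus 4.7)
The plan is to first reduce both parts of the lemma to an explicit description of $\tilde{G}^q(W,n)$ in terms of topological supports of the integral components of its cycles. The parenthetical remark after the definition of $\tilde{G}^q(W,n)$ already asserts this description, but it needs a one-line justification: for an integral cycle $\mathfrak{Z} \in z^q(\widehat{X},n)$, the flat pull-back $f^*\mathfrak{Z}$ along the open immersion $f\colon \widehat{X}|_U \hookrightarrow \widehat{X}$ (which exists by Lemma \ref{lem:pre flat pb}) equals $\mathfrak{Z}|_U$ when the restriction is nonempty and $0$ otherwise, since $f^*\mathcal{O}_{\mathfrak{Z}}=\mathcal{O}_{\mathfrak{Z}\cap (\widehat{X}|_U\times\square^n)}$. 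Distinct integral closed formal subschemes of $\widehat{X}\times\square^n$ that have nonempty restriction to $\widehat{X}|_U\times\square^n$ remain distinct after restriction (one recovers them as Zariski closures in the sense of Definition \ref{defn:image}), so no cancellation can occur. Hence a cycle $\alpha=\sum_i n_i \mathfrak{Z}_i$ with distinct integral $\mathfrak{Z}_i$ lies in $\tilde{G}^q(W,n)$ if and only if $|\mathfrak{Z}_i| \subset W \times |\square^n|$ for every $i$.

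For part (1), suppose $W_1 \subset W_2$. Then every integral component of a cycle $\alpha \in \tilde{G}^q(W_1,n)$ has topological support contained in $W_1 \times |\square^n| \subset W_2 \times |\square^n|$, so $\alpha \in \tilde{G}^q(W_2,n)$. (Equivalently one may note that with $U_i := Y\setminus W_i$ one has $U_2 \subset U_1$, so the restriction $z^q(\widehat{X},n)\to z^q(\widehat{X}|_{U_2},n)$ factors through $z^q(\widehat{X}|_{U_1},n)$; the kernel containment is then immediate.)

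For part (2), the inclusion $\tilde{G}^q(W_1\cap W_2,n) \subset \tilde{G}^q(W_1,n) \cap \tilde{G}^q(W_2,n)$ follows from (1) applied to $W_1 \cap W_2 \subset W_j$ for $j=1,2$. For the reverse inclusion, take $\alpha = \sum_i n_i \mathfrak{Z}_i$ in the intersection. By the support description of the first paragraph, each $\mathfrak{Z}_i$ satisfies $|\mathfrak{Z}_i| \subset W_1 \times |\square^n|$ and $|\mathfrak{Z}_i| \subset W_2 \times |\square^n|$, hence $|\mathfrak{Z}_i| \subset (W_1 \cap W_2) \times |\square^n|$, so $\alpha \in \tilde{G}^q(W_1\cap W_2, n)$.

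There is no real obstacle here; the content is essentially bookkeeping on topological supports of integral closed formal subschemes. The only point requiring attention is the verification that no cancellation occurs among the restricted integral components, which is what licenses passing from ``$\alpha$ restricts to zero'' to ``each integral component of $\alpha$ restricts to zero.''
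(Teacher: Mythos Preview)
Your proof is correct and is essentially the same approach as the paper's, which simply says ``This is immediate.'' You have supplied more detail than the paper does (in particular the no-cancellation remark justifying the support description), but the underlying idea is identical.
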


\begin{proof}
This is immediate.
\end{proof}

To go modulo $Y$, we need to check the following. The basic intuition is that the mod $Y$-equivalence does not modify the supports inside $Y \times \square^n$:

\begin{lem}\label{lem:topsupp}
Let $Y, X, \widehat{X}$ be as before. Let $(\mathcal{A}_1, \mathcal{A}_2) \in \mathcal{L}^q (\widehat{X}, Y, n)$. Let $\mathfrak{Z}_i := [\mathcal{A}_i]$ for $i=1,2$. Then $|\mathfrak{Z}_1| = |\mathfrak{Z}_2|$.

In particular, for cycles in $z^q (\widehat{X}, n)$, the notion of topological support is stable under the mod $Y$-equivalence. 
\end{lem}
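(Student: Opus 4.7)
The plan is to identify the topological support $|\mathfrak{Z}_i|$ with the sheaf-theoretic support $\Supp(\mathcal{A}_i)$ as closed subsets of $|\square_{\widehat{X}}^n|$, and then to recover $\Supp(\mathcal{A}_i)$ from the derived base change to $\square_Y^n$, which is common to the two indices by hypothesis.

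First I would check that $|\mathfrak{Z}_i| = \Supp(\mathcal{A}_i)$. This is local on an affine piece $\Spf(A) \subset \square_{\widehat{X}}^n$, where (by Example~\ref{exm:223}) cycles on $\Spf(A)$ are cycles on $\Spec(A)$ and $\mathcal{A}_i$ corresponds to a finitely generated $A$-module $M_i$. Definition~\ref{defn:ass cycle} produces a multiplicity $n_{\mathfrak{Z}} \ne 0$ at $\mathfrak{Z} = \Spf(A/P)$ precisely when $(M_i)_P$ has nonzero finite length, i.e.\ exactly when $P$ is minimal in $\Supp(M_i) = V(\operatorname{Ann}(M_i))$. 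Since the support equals the union of the closures of its minimal primes, the union of the underlying spaces of the integral components of $[\mathcal{A}_i]$ equals $\Supp(M_i)$.

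Next I would show that tensoring with $\mathcal{O}_{\square_Y^n}$ preserves topological support. Let $\mathcal{I} \subset \mathcal{O}_{\square_{\widehat{X}}^n}$ be the ideal defining $\square_Y^n \hookrightarrow \square_{\widehat{X}}^n$; since $Y \hookrightarrow \widehat{X}$ is given by an ideal of definition, so is $\mathcal{I}$. At each $x \in |\square_{\widehat{X}}^n|$ the stalk $\mathcal{I}_x$ lies in $\mathfrak{m}_x$, so Nakayama's lemma applied to the finitely generated stalk $(\mathcal{A}_i)_x$ gives $(\mathcal{A}_i)_x = 0$ iff $(\mathcal{A}_i/\mathcal{I}\mathcal{A}_i)_x = 0$. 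Since
\[
\mathcal{A}_i/\mathcal{I}\mathcal{A}_i \;=\; \mathcal{A}_i \otimes_{\mathcal{O}_{\square_{\widehat{X}}^n}} \mathcal{O}_{\square_Y^n} \;=\; \mathcal{H}^0\bigl(\mathcal{A}_i \otimes^{\mathbf{L}}_{\mathcal{O}_{\square_{\widehat{X}}^n}} \mathcal{O}_{\square_Y^n}\bigr),
\]
this yields $\Supp(\mathcal{A}_i) = \Supp\bigl(\mathcal{H}^0(\mathcal{A}_i \otimes^{\mathbf{L}} \mathcal{O}_{\square_Y^n})\bigr)$. The hypothesis $(\mathcal{A}_1,\mathcal{A}_2) \in \mathcal{L}^q(\widehat{X},Y,n)$ provides an isomorphism of the two derived tensor products in $\sAlg(\mathcal{O}_{\square_Y^n})$; passing to $\pi_0 = \mathcal{H}^0$ yields an isomorphism of coherent $\mathcal{O}_{\square_Y^n}$-modules, hence equality of supports. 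Chaining the three identifications:
\[
|\mathfrak{Z}_1| \;=\; \Supp(\mathcal{A}_1) \;=\; \Supp\bigl(\mathcal{H}^0(\mathcal{A}_1 \otimes^{\mathbf{L}} \mathcal{O}_{\square_Y^n})\bigr) \;=\; \Supp\bigl(\mathcal{H}^0(\mathcal{A}_2 \otimes^{\mathbf{L}} \mathcal{O}_{\square_Y^n})\bigr) \;=\; |\mathfrak{Z}_2|.
\]
The ``in particular'' assertion then follows from Lemma~\ref{lem:modulus simple generator}: every element of $\mathcal{M}^q(\widehat{X},Y,n)$ is a single difference $[\mathcal{A}_1] - [\mathcal{A}_2]$ whose two summands share the same topological support.

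The main point requiring care is the first step, where the formal-scheme cycle formalism of Definition~\ref{defn:ass cycle} must be matched consistently with the scheme-theoretic support on the underlying $\Spec(A)$; the remaining steps are essentially formal once one has Nakayama's lemma for coherent sheaves on formal schemes and the identification $\pi_0 = \mathcal{H}^0$ of derived rings.
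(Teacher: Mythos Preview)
Your proposal is correct and follows essentially the same route as the paper's proof: both identify $|\mathfrak{Z}_i|$ with the support of $\mathcal{A}_i \otimes \mathcal{O}_{\square_Y^n}$ (via Nakayama, using that the ideal of $Y$ in $\widehat{X}$ is an ideal of definition) and then invoke the $\pi_0=\Tor_0$ isomorphism coming from the hypothesis. The only cosmetic difference is that the paper passes through $Y_{\red}$ (tensoring the $\Tor_0$-isomorphism over $Y$ further down to $Y_{\red}$), whereas you work directly over $Y$; your version is slightly more streamlined.
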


\begin{proof}
Let $\mathcal{I}_0$ be the largest ideal of definition of $\widehat{X}$ and let $Y_{\red}= \widehat{X}_{\red}$ be the scheme it defines.

The topological support of $Z_i = (\mathfrak{Z}_i$ modulo $\mathcal{I}_0)$ is equal to that of $\mathfrak{Z}_i$, i.e. $|\mathfrak{Z}_i| = |Z_i |$.

Note that $|Z_i|$ is given by the support of $\mathcal{A}_i \otimes_{\mathcal{O}_{\square_{\widehat{X}} ^n}} \mathcal{O}_{\square_{Y_{\red}}^n} = \Tor_0 ^{ \mathcal{O}_{\square_{\widehat{X}} ^n}} ( \mathcal{A}_i, \mathcal{O}_{\square_{Y _{\red}}^n})$. On the other hand, the condition $(\mathcal{A}_1, \mathcal{A}_2) \in \mathcal{L}^q (\widehat{X}, Y, n)$ implies in particular that 
$$
\Tor_0 ^{ \mathcal{O}_{\square_{\widehat{X}} ^n}} ( \mathcal{A}_1, \mathcal{O}_{\square_Y^n}) \simeq \Tor_0 ^{ \mathcal{O}_{\square_{\widehat{X}} ^n}} ( \mathcal{A}_2, \mathcal{O}_{\square_Y^n}),
$$
which in turns implies that
$$
\Tor_0 ^{ \mathcal{O}_{\square_{\widehat{X}} ^n}} ( \mathcal{A}_1, \mathcal{O}_{\square_{Y_{\red}}^n}) \simeq \Tor_0 ^{ \mathcal{O}_{\square_{\widehat{X}} ^n}} ( \mathcal{A}_2, \mathcal{O}_{\square_{Y_{\red}}^n}),
$$
by applying $- \otimes_{\mathcal{O}_{\square_Y^n}} \mathcal{O}_{\square_{Y_{\red}}^n}$. From this, we deduce that $|Z_1| = |Z_2|$.

Hence we have the equalities of the topological supports
$$
|\mathfrak{Z}_1| = |Z_1| = |Z_2| = |\mathfrak{Z}_2|.
$$

The second assertion follows from the first one together with Lemma \ref{lem:modulus simple generator}. 
\end{proof}

\begin{defn}
For a closed subset $W \subset Y$, define $G^q (W, n) \subset z^q (\widehat{X} \mod Y, n)$ to be the subgroup of cycles supported in $W \times \square^n$. This is well-defined by Lemma \ref{lem:topsupp}.\qed
\end{defn}
We have $G^q (W, n) = \tilde{G}^q (W, n) / (\tilde{G}^q (W, n) \cap \mathcal{M} ^q (\widehat{X}, Y, n))$ and the exact sequence
$$
0 \to G^q (W, n) \to z^q  (\widehat{X} \mod Y, n) \to z^q  (\widehat{X}|_U \mod U, n)
$$
for $U:= Y \setminus W$.

\begin{lem}\label{lem:topsupp_rel}
Let $Y, X, \widehat{X}$ be as before. Let $W_1, W_2 \subset Y$ be closed subsets. 
\begin{enumerate}
\item If $W_1 \subset W_2$, then $G^q (W_1, n) \subset G^q (W_2, n)$.
\item We have $G^q  (W_1, n) \cap G^q  (W_2, n) = G^q  (W_1 \cap W_2, n)$.
\end{enumerate}
\end{lem}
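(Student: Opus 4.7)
The plan is to derive Part~(1) as an immediate consequence of Lemma~\ref{lem:topsupp_rel0}(1) by passing to the quotient: since $G^q(W, n)$ is the image of $\tilde G^q(W, n)$ under the surjection $z^q(\widehat X, n) \twoheadrightarrow z^q(\widehat X \mod Y, n)$, the inclusion $\tilde G^q(W_1, n) \subset \tilde G^q(W_2, n)$ descends to $G^q(W_1, n) \subset G^q(W_2, n)$. The inclusion $G^q(W_1 \cap W_2, n) \subset G^q(W_1, n) \cap G^q(W_2, n)$ in Part~(2) will then follow from Part~(1).

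For the substantive reverse inclusion in Part~(2), I would start from a class $[\alpha] \in G^q(W_1, n) \cap G^q(W_2, n)$ and choose representatives $\beta_i \in \tilde G^q(W_i, n)$ of $[\alpha]$, so that $\beta_1 - \beta_2 \in \mathcal{M}^q(\widehat X, Y, n)$. Using Lemma~\ref{lem:modulus simple generator}, I would write this difference as $[\mathcal{A}_1] - [\mathcal{A}_2]$ for a \emph{single} pair $(\mathcal{A}_1, \mathcal{A}_2) \in \mathcal{L}^q(\widehat X, Y, n)$, and by Lemma~\ref{lem:topsupp} these cycles share a common topological support $T := |[\mathcal{A}_1]| = |[\mathcal{A}_2]|$. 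Decomposing each $\beta_i = \beta_i^T + \beta_i^{T^c}$ according to whether each integral component has irreducible support contained in $T$, the relation $\beta_1 - \beta_2 = [\mathcal{A}_1] - [\mathcal{A}_2]$ together with the fact that the right-hand side is concentrated on $T$ forces $\beta_1^{T^c} = \beta_2^{T^c}$; this common off-$T$ cycle lies in $\tilde G^q(W_1, n) \cap \tilde G^q(W_2, n) = \tilde G^q(W_1 \cap W_2, n)$ by Lemma~\ref{lem:topsupp_rel0}(2). It remains to handle the on-$T$ parts $\beta_1^T$ and $\beta_2^T$.

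The hard part will be producing a representative of the mod $Y$-class of $\beta_1^T$ with topological support inside $W_1 \cap W_2 \cap T$: naively iterating the above decomposition on the smaller pair $(W_1 \cap T, W_2 \cap T)$ is circular. My preferred route is to reformulate via the equivalent characterization recorded just after Definition~\ref{defn:complex}, namely that $G^q(W, n)$ equals the kernel of the restriction map $z^q(\widehat X \mod Y, n) \to z^q(\widehat X|_{Y \setminus W} \mod (Y \setminus W), n)$. Setting $U_i := Y \setminus W_i$ and $U_{12} := U_1 \cup U_2 = Y \setminus (W_1 \cap W_2)$, the desired inclusion then reduces to the identity/separation axiom for the presheaf $U \mapsto z^q(\widehat X|_U \mod U, n)$ applied to the two-element Zariski cover $\{U_1, U_2\}$ of $U_{12}$. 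This in turn amounts to patching generating pairs of $\mathcal{M}^q$ defined separately over $U_1$ and $U_2$ whose differences agree on $U_1 \cap U_2$ to a single generating pair over $U_{12}$, which should be achievable by means of the derived Milnor-type formalism developed in \S~\ref{sec:Milnor}, especially Theorem~\ref{thm:Landsburg}.
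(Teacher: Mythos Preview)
The paper's proof is literally ``This is immediate,'' and the intended reason is that the \emph{second} assertion of Lemma~\ref{lem:topsupp} (``the notion of topological support is stable under the mod $Y$-equivalence'') says that every class in $z^q(\widehat X\mod Y,n)$ has a well-defined topological support, independent of the chosen representative. This is precisely why the paper says the definition of $G^q(W,n)$ is ``well-defined by Lemma~\ref{lem:topsupp}''. Once support is an invariant of the class, $G^q(W,n)$ is exactly the set of classes whose support lies in $W\times\square^n$, and both (1) and (2) become the same set-theoretic statements as in Lemma~\ref{lem:topsupp_rel0}; no further argument is needed.

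You invoke only the \emph{first} assertion of Lemma~\ref{lem:topsupp} (that $|[\mathcal A_1]|=|[\mathcal A_2]|$ for a single pair) and never use the invariance of support on classes. This omission is what forces you into the decomposition $\beta_i=\beta_i^T+\beta_i^{T^c}$ and then into what you call ``the hard part''. Your proposed resolution --- reformulating as the separation axiom for $U\mapsto z^q(\widehat X|_U\mod U,n)$ and appealing to the derived Milnor-type Theorem~\ref{thm:Landsburg} --- is both unnecessary and not clearly adequate: that theorem patches perfect complexes across a push-out along a \emph{closed} immersion, whereas what you would need is to glue generating pairs of $\mathcal M^q$ across a Zariski \emph{open} cover, which is a different problem. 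In short, your Part~(1) is fine, but for Part~(2) you have missed the one-line route via the well-definedness of support, and your alternative route ends in an unjustified leap.
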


\begin{proof}
This is immediate.
\end{proof}

\subsubsection{The quotients by the support subgroups}\label{sec:6.2.2}
Proposition \ref{prop:flasque sh} below is inspired by S. Bloch \cite[Theorem (3.4), p.278]{Bloch HC}. We follow the sketch in \cite[\S 4]{cubical localization} and extend it to our presheaves of cycles on formal schemes.

\begin{lem}\label{lem:flasque psh}
Let $Y, X, \widehat{X}$ be as before. Let $n \geq 0$ be integers. Consider the associations
\begin{equation}\label{eqn:sheaf small}
\tilde{\mathcal{S}}_n = \tilde{\mathcal{S}}_n^q :  U \in Op (Y) \mapsto  \frac{ z^q  ( \widehat{X} , n)}{\tilde{G}^q (Y\setminus U, n )},
\end{equation}
\begin{equation}\label{eqn:mod Y sheaf small}
\mathcal{S}_n = \mathcal{S}_n^q :  U \in Op (Y) \mapsto  \frac{ z^q  ( \widehat{X}  \mod Y, n)}{G^q (Y\setminus U, n )}.
\end{equation}
Then $\tilde{\mathcal{S}}_n$ and $\mathcal{S}_n$ are flasque presheaves on $Y_{\rm Zar}$.
\end{lem}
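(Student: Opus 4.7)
The plan is to verify that $\tilde{\mathcal{S}}_n$ and $\mathcal{S}_n$ are honest presheaves of abelian groups on $Y_{\rm Zar}$, and then separately to check that every restriction map is surjective. Because both definitions are already written as quotients of a \emph{fixed} ambient group (namely $z^q(\widehat{X},n)$ in the first case and $z^q(\widehat{X}\mod Y,n)$ in the second), there is nothing genuinely deep to do here: the flasqueness will essentially be formal, once we observe that enlarging $U$ only enlarges the subgroup by which we quotient.

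First I will construct the restriction maps. Given $V\subset U$ in $Op(Y)$, the inclusion of closed complements $Y\setminus U\subset Y\setminus V$ together with Lemma \ref{lem:topsupp_rel0}-(1) yields $\tilde{G}^q(Y\setminus U,n)\subset \tilde{G}^q(Y\setminus V,n)$ inside the common group $z^q(\widehat{X},n)$. Hence the identity on $z^q(\widehat{X},n)$ descends to a well-defined homomorphism
\[
\rho^U_V:\tilde{\mathcal{S}}_n(U)=\frac{z^q(\widehat{X},n)}{\tilde{G}^q(Y\setminus U,n)}\longrightarrow \frac{z^q(\widehat{X},n)}{\tilde{G}^q(Y\setminus V,n)}=\tilde{\mathcal{S}}_n(V),
\]
which is the canonical quotient projection. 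The identities $\rho^U_U=\mathrm{Id}$ and $\rho^V_W\circ\rho^U_V=\rho^U_W$ for $W\subset V\subset U$ are immediate, so $\tilde{\mathcal{S}}_n$ is a presheaf. Exactly the same argument, with Lemma \ref{lem:topsupp_rel0}-(1) replaced by Lemma \ref{lem:topsupp_rel}-(1), produces the restriction maps for $\mathcal{S}_n$ and shows that it too is a presheaf. (Here it is essential that $G^q(W,n)$ is well-defined on $z^q(\widehat{X}\mod Y,n)$, which is exactly the content of the topological-support invariance of the mod $Y$-equivalence, i.e. Lemma \ref{lem:topsupp}.)

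Flasqueness is then automatic: in both cases the restriction $\rho^U_V$ is the canonical projection from a quotient of an abelian group to a further quotient of the same group by a larger subgroup, and such projections are always surjective. This covers both presheaves simultaneously.

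I do not anticipate a genuine obstacle. The only subtle point one should flag is that the statement for $\mathcal{S}_n$ requires knowing that the support subgroup $G^q(Y\setminus U,n)$ is actually a subgroup of $z^q(\widehat{X}\mod Y,n)$, so that one may form the quotient in \eqref{eqn:mod Y sheaf small} in the first place; this is precisely guaranteed by Lemma \ref{lem:topsupp}, which ensures that topological supports are unchanged under the mod $Y$-equivalence. Once that is in hand, the lemma is a straightforward application of the nesting properties in Lemmas \ref{lem:topsupp_rel0} and \ref{lem:topsupp_rel}.
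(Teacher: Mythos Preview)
Your proof is correct and follows essentially the same route as the paper: both observe that $V\subset U$ gives $Y\setminus U\subset Y\setminus V$, invoke Lemmas \ref{lem:topsupp_rel0}-(1) and \ref{lem:topsupp_rel}-(1) to get the inclusion of support subgroups, and conclude that the restriction maps are surjective quotient projections. (One harmless verbal slip in your overview: enlarging $U$ actually \emph{shrinks} the subgroup $\tilde{G}^q(Y\setminus U,n)$, not enlarges it---but your formal argument has the inclusions the right way around.)
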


\begin{proof}
For two Zariski open subsets $V \subset U$ of $Y$, we have closed subsets $Y \setminus U \subset Y \setminus V \subset Y$. They induce the natural inclusions (Lemmas \ref{lem:topsupp_rel0} and \ref{lem:topsupp_rel})
$$
\tuborg \tilde{G}^q (Y \setminus U,n) \subset \tilde{G}^q (Y \setminus V,n) \subset z^q  (\widehat{X}, n), \\
G^q (Y \setminus U,n) \subset G^q (Y \setminus V,n) \subset z^q  (\widehat{X} \mod Y, n).\sluttuborg
$$
These in turn induce the natural surjections
$$
\tuborg
 \rho_{V} ^U: \tilde{\mathcal{S}}_n (U) =  \frac{ z^q ( \widehat{X} , n)}{\tilde{G}^q ( Y \setminus U, n) } \twoheadrightarrow \tilde{\mathcal{S}}_n (V) =  \frac{ z^q  ( \widehat{X} , n)}{\tilde{G}^q ( Y \setminus V, n)}, \\
\rho_{V} ^U: \mathcal{S}_n (U) =  \frac{ z^q ( \widehat{X}  \mod Y, n)}{G^q ( Y \setminus U, n) } \twoheadrightarrow \mathcal{S}_n (V) =  \frac{ z^q  ( \widehat{X}  \mod Y, n)}{G^q ( Y \setminus V, n)}.
\sluttuborg
$$
When $U_3 \subset U_2 \subset U_1$ are open subsets of $Y$, that $\rho_{U_3} ^{U_1} = \rho_{U_3} ^{U_2} \circ \rho_{U_2} ^{U_1}$ is apparent. Hence $\tilde{\mathcal{S}}_n$ and $\mathcal{S}_n$ are flasque presheaves.
\end{proof}

\begin{prop}\label{prop:flasque sh}
The presheaves $\tilde{\mathcal{S}}_n$ and $\mathcal{S}_n$ of Lemma \ref{lem:flasque psh} are flasque sheaves on $Y_{\rm Zar}$. 
\end{prop}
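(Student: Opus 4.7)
The plan is to verify the sheaf axioms for $\tilde{\mathcal{S}}_n$ and $\mathcal{S}_n$, noting that Lemma \ref{lem:flasque psh} already supplies the flasque presheaf structure. Since $|Y|$ is noetherian, every open cover of a Zariski open $U\subset Y$ admits a finite subcover, and by a standard inductive argument on the size of the cover (of Mayer--Vietoris type), it suffices to check both axioms for two-element covers $U = U_1\cup U_2$. The separation axiom is then immediate from Lemma \ref{lem:topsupp_rel}(2) (and Lemma \ref{lem:topsupp_rel0}(2) for $\tilde{\mathcal{S}}_n$): if a representative $\alpha \in z^q(\widehat{X} \mod Y, n)$ has class $0$ in both $\mathcal{S}_n(U_i)$, then $\alpha$ lies in $G^q(Y\setminus U_1, n)\cap G^q(Y\setminus U_2, n) = G^q(Y\setminus U, n)$, hence $[\alpha]=0$ in $\mathcal{S}_n(U)$.

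For gluing, given compatible sections lifted to $\alpha_1, \alpha_2 \in z^q(\widehat{X} \mod Y, n)$, the difference satisfies $\alpha_1 - \alpha_2 \in G^q((Y\setminus U_1)\cup(Y\setminus U_2), n)$. The whole argument therefore reduces to establishing the decomposition identity
$$
G^q(W_1 \cup W_2, n) = G^q(W_1, n) + G^q(W_2, n)
$$
for any pair of closed subsets $W_1, W_2 \subset Y$ (and the analogous statement with $\tilde{G}^q$). Granting this, write $\alpha_1 - \alpha_2 = \beta_1 + \beta_2$ with $\supp(\beta_i) \subset Y\setminus U_i$, and set $\alpha := \alpha_1 - \beta_1 = \alpha_2 + \beta_2$. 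Then $\alpha - \alpha_1 = -\beta_1 \in G^q(Y\setminus U_1, n)$ and $\alpha - \alpha_2 = \beta_2 \in G^q(Y\setminus U_2, n)$, which provides the required global section.

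The main obstacle will be establishing this decomposition identity. Given an integral cycle $\mathfrak{Z}$ with $|\mathfrak{Z}|\subset W_1 \cup W_2$, one first uses Remark \ref{remk:TS} to identify $|\mathfrak{Z}|$ with the topological space of the scheme-theoretic intersection $Z = \mathfrak{Z}\cap \widehat{X}_{\rm red}$ inside $Y$; the (finitely many) irreducible components of $|Z|$ are each contained in one of $W_1$ or $W_2$ and thus partition accordingly. The delicate point is that $\mathfrak{Z}$ itself is a single basis element in the free abelian group of cycles and cannot be split in $z^q(\widehat{X},n)$ directly; the splitting must instead be realized modulo the mod $Y$-equivalence, by exhibiting explicit pairs $(\mathcal{A}_1,\mathcal{A}_2)\in \mathcal{L}^q(\widehat{X}, Y, n)$ whose associated cycle difference replaces $\mathfrak{Z}$ by a sum of cycles whose individual topological supports lie in $W_1$ or $W_2$. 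Concretely, the derived Milnor patching of Theorem \ref{thm:Landsburg} together with the coherent-algebra construction of \S \ref{sec:ass cycle} should allow one to manufacture such pairs from affine coverings of $Z$ separating its irreducible components, which is the formal-scheme analogue of the support decomposition that drives Bloch's argument in \cite[Theorem (3.4), p.278]{Bloch HC}.
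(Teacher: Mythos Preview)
Your reduction to the decomposition identity
\[
G^q(W_1\cup W_2,n)=G^q(W_1,n)+G^q(W_2,n)
\qquad\text{(and its }\tilde G^q\text{ analogue)}
\]
is a genuine gap: the $\tilde G^q$ version is \emph{false}. Take $Y=\mathbb A^1_k\hookrightarrow X=\mathbb A^2_k$ along $\{t=0\}$, so $\widehat X=\Spf(k[x][[t]])$. The prime ideal $(x^2-1-t)$ defines an integral cycle $\mathfrak Z\in z^1(\widehat X,0)$ (one checks (\textbf{SF}) easily), and $|\mathfrak Z|=V(x^2-1)=\{1,-1\}\subset|Y|$ is \emph{reducible}. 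With $W_1=\{1\}$, $W_2=\{-1\}$ we have $\mathfrak Z\in\tilde G^1(W_1\cup W_2,0)$, yet $\mathfrak Z$ is a single free generator and cannot be written as a sum in $\tilde G^1(W_1,0)+\tilde G^1(W_2,0)$. Your proposed fix---splitting modulo the mod~$Y$-equivalence---is unavailable for $\tilde{\mathcal S}_n$, where there is no such relation, and even for $\mathcal S_n$ you only sketch an idea (derived Milnor patching) without establishing it in the required generality.

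The paper sidesteps the decomposition identity altogether. Given compatible $x_1,x_2$ with $x_1-x_2$ supported on $(Y\setminus U)\cup(Y\setminus V)$, one does not split the difference; instead one observes $x_1|_{U\cap V}=x_2|_{U\cap V}$, glues $x_1|_U$ and $x_2|_V$ to a cycle $x$ on $\widehat X|_{U\cup V}\times\square^n$, and takes its Zariski closure $\tilde x$ in $\widehat X\times\square^n$. The substantive step is showing $\tilde x$ is admissible: writing $\tilde x=\tilde x_U+\Delta_U=\tilde x_V+\Delta_V$ with $\tilde x_U,\tilde x_V$ the (admissible) closures of $x_1|_U,x_2|_V$ and $\Delta_U,\Delta_V$ supported on the complements, any non-admissible component of $\Delta_U$ must cancel against one of $\Delta_V$ (by the equality $-\Delta_U+\Delta_V=\tilde x_U-\tilde x_V$), hence lies over $(Y\setminus U)\cap(Y\setminus V)=Y\setminus(U\cup V)$; but $\tilde x$ has no such components by construction, forcing $\tilde x$ admissible. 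This argument is uniform for $\tilde{\mathcal S}_n$ and $\mathcal S_n$.
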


\begin{proof}
By Lemma \ref{lem:flasque psh}, we know that $\tilde{\mathcal{S}}_n$ and $\mathcal{S}_n$ are flasque presheaves. It remains to prove that $\tilde{\mathcal{S}}_n$ and $\mathcal{S}_n$ are sheaves. Since the proof for $\tilde{\mathcal{S}}_n$ is identical to that for $\mathcal{S}_n$, we give the argument for $\mathcal{S}_n$ only.

\medskip

Since $Y$ is quasi-compact, by induction on the number of open sets in a finite open cover of $Y$, one reduces to check that for two open subsets $U, V \subset Y$, the sequence
\begin{equation}\label{eqn:sheafproof}
0 \to \mathcal{S}_n (U \cup V) \overset{\delta_0}{\to} \mathcal{S}_n (U) \oplus \mathcal{S}_n (V) \overset{\delta_1}{\to} \mathcal{S}_n (U \cap V)
\end{equation}
is exact, where $\delta_0 = (\rho_U ^{U \cup V}, \rho_V ^{U \cup V})$ and $\delta_1 (\bar{x}_1, \bar{x}_2) = \rho^U_{U\cap V} (\bar{x}_1) - \rho ^V _{U \cap V} (\bar{x}_2)$. 

\medskip

\textbf{Step 1:} We first show that $\delta_0$ is injective. 

Let $\bar{x} \in \mathcal{S}_n (U \cup V)$. Suppose that $\delta_0 (\bar{x}) = 0$. It means that there is a cycle $x \in z^q (\widehat{X} \mod Y, n)$ representing $\bar{x}$ such that it satisfies 
\begin{equation}\label{eqn:exactness1}
x \in G^q (Y \setminus U,n ) \cap G^q (Y \setminus V, n).
\end{equation}
The intersection of the groups in \eqref{eqn:exactness1} is equal to $G^q ((Y \setminus U) \cap (Y \setminus  V), n)= G^q (Y \setminus (U \cup V), n)$ by Lemma \ref{lem:topsupp_rel}. Hence \eqref{eqn:exactness1} means $\bar{x} = 0$ in $\mathcal{S}_n (U \cup V)$, showing that $\delta_0$ is injective.

\medskip

\textbf{Step 2:} That $\delta_1 \circ \delta_0 = 0$ is apparent so that ${\rm im} (\delta_0) \subset \ker \delta_1$. 

\medskip

\textbf{Step 3:} We prove ${\rm im} (\delta_0) \supset \ker \delta_1$. 

Let $(\bar{x}_1, \bar{x}_2) \in \ker \delta_1$. This means that there are representative cycles $x_1 , x_2 \in z^q  (\widehat{X} \mod Y, n)$ of $\bar{x}_1, \bar{x}_2$ such that they satisfy 
\begin{equation}\label{eqn:exactness2}
x_1 - x_2 \in G^q ( Y \setminus (U \cap V), n) = G^q ( (Y \setminus U) \cup (Y \setminus V), n).
\end{equation}

 Since $x_1 - x_2$ is supported in $ (Y \setminus (U \cap V)) \times \square^n$ by \eqref{eqn:exactness2}, the restriction $x_1|_{U\cap V} - x_2 |_{U \cap V}$ has the empty topological support. Hence the cycles $x_1|_U$ and $x_2|_V$ glue on $\widehat{X}|_{U \cup V} \times \square^n$ so that there exists a cycle $x$ on $\widehat{X}|_{U \cup V} \times \square^n$, such that
 $$
 x|_{U} = x_1|_U, \ \ x|_V = x_2 | _V.
 $$

Let $\tilde{x}$ be the Zariski closure of $x$ in $\widehat{X} \times \square ^n$. By construction, it has no component whose topological support is contained in $(Y \setminus (U \cup V)) \times \square^n$. On the other hand, we don't know whether $\tilde{x} \in z^q (\widehat{X} \mod Y, n)$. We claim it is, and prove it in the following.

\medskip

Let $\tilde{x}_U$ be the Zariski closure of $x_1|_U$ in $\widehat{X} \times \square^n$. By construction, it has no component whose topological support is contained in $ (Y \setminus U) \times \square^n$. On the other hand, it is the closure of an open restriction $x_1|_U$ of already admissible $x_1 \in z^q (\widehat{X} \mod Y, n)$, so we have $\tilde{x}_U \in z^q (\widehat{X} \mod Y, n)$. Similarly, if we define $\tilde{x}_V$ to be the Zariski closure of $x_2 |_V$ in $\widehat{X} \times \square^n$, then it has no component topologically supported in $(Y \setminus V) \times \square^n$, while $\tilde{x}_V \in z^q (\widehat{X} \mod Y, n)$.

Now, by construction, we can write
$$
\tilde{x} = \tilde{x}_U + \Delta_U = \tilde{x}_V + \Delta_V
$$
for some cycles $\Delta_U $ and  $\Delta_V$ on $\widehat{X} \times \square^n$ supported in $(Y \setminus U) \times \square^n$ and $(Y \setminus V)\times \square^n$, respectively. Thus
\begin{equation}\label{eqn:S sheaf 0}
- \Delta_U + \Delta_V = \tilde{x}_U - \tilde{x}_V  \in z^q (\widehat{X} \mod Y, n), 
\end{equation}
though we do not know whether individually $\Delta_U$ or $ \Delta_V$ belongs to $z^q (\widehat{X} \mod Y, n)$.

If there is an integral component $\mathfrak{W}$ of $\Delta_U$ that is \emph{not} admissible, i.e. not in $z^q (\widehat{X} \mod Y, n)$, then by \eqref{eqn:S sheaf 0} there must exist the corresponding $\mathfrak{W}$ in $\Delta_V$ to cancel the $\mathfrak{W}$ of $\Delta_U$. In particular, $\mathfrak{W}$ is topologically supported in $|\Delta_U| \cap |\Delta_V| \subset (Y \setminus (U \cup V)) \times \square^n$. Thus, we may write
\begin{equation}\label{eqn:S sheaf 1}
\Delta_U = \Delta_U ' + \Delta_{U,V},
\end{equation}
where $\Delta_U' \in z^q (\widehat{X} \mod Y, n)$ and $\Delta_{U,V}$ is a cycle on $\widehat{X} \times \square^n$ whose topological support is in $(Y \setminus (U \cup V)) \times \square^n$. Here, in case $\Delta_U '$ has an integral component whose support is contained in $(Y \setminus (U \cup V)) \times \square^n$, then move it into $\Delta_{U,V}$, so that we may assume $\Delta_U' \in z^q (\widehat{X} \mod Y, n)$ and no component of $\Delta_U'$ has the topological support contained in $(Y \setminus (U \cup V)) \times \square^n$.

Combining \eqref{eqn:S sheaf 0} and \eqref{eqn:S sheaf 1}, we now have
$$
\tilde{x} = \tilde{x}_U + \Delta_U ' + \Delta_{U,V},
$$
but by the construction of $\tilde{x}$, it has no component that is topologically supported in $(Y \setminus (U \cup V)) \times \square^n$. Hence we must have $\Delta_{U,V} =0$. In particular, we have $\tilde{x} = \tilde{x}_U + \Delta_U ' \in z^q (\widehat{X} \mod Y, n)$, proving the claim.

\medskip

Let $\bar{x} \in \mathcal{S}_n (U \cup V)= \frac{z^q (\widehat{X} \mod Y, n)}{G^q (Y \setminus (U \cup V), n)}$ be the class of $\tilde{x}$. Because $\tilde{x} = \tilde{x}_U + \Delta_U'$, while $\Delta_U ' \in G^q (Y \setminus U, n)$, we have $\tilde{x}_U \equiv \rho^{U \cup V}_U (\bar{x})$ in $\mathcal{S}_{n} (U)$. By symmetry we have $\tilde{x}_V \equiv \rho ^{U \cup V}_V (\bar{x})$ in $\mathcal{S}_n (V)$. 

Since $\bar{x}_1 \equiv \tilde{x}_U$ in $\mathcal{S}_n (U)$ and $\bar{x}_2 \equiv \tilde{x}_V$ in $\mathcal{S}_n (V)$ by the definition of $\tilde{x}_U$ and $\tilde{x}_V$, we have $\delta_0 (\bar{x}) =(\bar{x}_1, \bar{x}_2)$. This proves the exactness of \eqref{eqn:sheafproof} in the middle. 

\medskip

This shows \eqref{eqn:sheafproof} is exact, finishing the proof.
\end{proof}

\begin{cor}\label{cor:flasque sh}
For each pair of open subsets $U, V \subset Y$, we have the short exact sequences
$$
\tuborg 
0 \to \tilde{ \mathcal{S}}_n (U \cup V) \overset{\delta_0}{\to} \tilde{\mathcal{S}}_n (U) \oplus \tilde{\mathcal{S}}_n (V) \overset{\delta_1}{\to}  \tilde{ \mathcal{S}}_n (U \cap V) \to 0, \\
0 \to \mathcal{S}_n (U \cup V) \overset{\delta_0}{\to} \mathcal{S}_n (U) \oplus \mathcal{S}_n (V) \overset{\delta_1}{\to} \mathcal{S}_n (U \cap V) \to 0.
\sluttuborg
$$
\end{cor}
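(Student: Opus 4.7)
The plan is to deduce the corollary as a direct consequence of Proposition \ref{prop:flasque sh}, which already furnishes both the sheaf property and the flasqueness of $\tilde{\mathcal{S}}_n$ and $\mathcal{S}_n$. Since the argument for $\tilde{\mathcal{S}}_n$ is identical to that for $\mathcal{S}_n$ (the only difference being whether we quotient by the mod $Y$-equivalence or not), I will describe it only for $\mathcal{S}_n$.

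First, I would observe that exactness at the two leftmost spots, i.e., the injectivity of $\delta_0$ and the equality $\ker \delta_1 = \operatorname{im} \delta_0$, was already established in the course of proving Proposition \ref{prop:flasque sh}: the exactness of the four-term sequence \eqref{eqn:sheafproof} is precisely the sheaf axiom on the two-element open cover $\{U, V\}$ of $U \cup V$. So nothing new is needed there, and I would just invoke it.

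The only remaining point is the surjectivity of $\delta_1: \mathcal{S}_n(U) \oplus \mathcal{S}_n(V) \to \mathcal{S}_n(U \cap V)$. Here I would use the flasque property directly: since $U \cap V \subset U$ is an inclusion of open subsets of $Y$, the restriction
\[
\rho^U_{U \cap V} : \mathcal{S}_n(U) \twoheadrightarrow \mathcal{S}_n(U \cap V)
\]
is surjective by Lemma \ref{lem:flasque psh} (or equivalently by the flasqueness statement in Proposition \ref{prop:flasque sh}). Given any class $\bar{z} \in \mathcal{S}_n(U \cap V)$, choose $\bar{x}_1 \in \mathcal{S}_n(U)$ with $\rho^U_{U \cap V}(\bar{x}_1) = \bar{z}$, and then
\[
\delta_1(\bar{x}_1, 0) = \rho^U_{U \cap V}(\bar{x}_1) - \rho^V_{U \cap V}(0) = \bar{z},
\]
which proves the surjectivity. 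The same argument applies verbatim to $\tilde{\mathcal{S}}_n$ using the flasque presheaf structure established in Lemma \ref{lem:flasque psh}.

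There is no real obstacle here, the corollary being a formal consequence of the combined sheaf plus flasque properties; the genuine work was done in Proposition \ref{prop:flasque sh}, where the nontrivial gluing argument via Zariski closures of cycles and the cancellation of non-admissible components was carried out.
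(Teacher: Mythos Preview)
Your proposal is correct and follows essentially the same approach as the paper: invoke the exact sequence \eqref{eqn:sheafproof} from Proposition \ref{prop:flasque sh} for exactness at the left and middle, and then use the flasqueness from Lemma \ref{lem:flasque psh} to get surjectivity of $\delta_1$ via the restriction $\mathcal{S}_n(U) \twoheadrightarrow \mathcal{S}_n(U\cap V)$.
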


\begin{proof}
By the exact sequence \eqref{eqn:sheafproof} of Proposition \ref{prop:flasque sh} and the corresponding sequence for $\tilde{\mathcal{S}}_n$, it only remains to check the surjectivity of the map $\delta_1$. But since $\tilde{\mathcal{S}}_n$ and $\mathcal{S}_n$ are flasque (Lemma \ref{lem:flasque psh}), the restriction maps $\tilde{\mathcal{S}}_n (U) \to \tilde{\mathcal{S}}_n (U \cap V)$ and $\mathcal{S}_n (U) \to \mathcal{S}_n (U \cap V)$ are already surjective. Thus so are the maps $\delta_1$.
\end{proof}

\subsection{The morphism $\mathcal{S}_{\bullet} ^q \to \BGHz^q (\widehat{X} \mod Y, \bullet)$}

We discuss how the sheaves $\mathcal{S}_{\bullet}^q$ and $\BGHz^q (\widehat{X} \mod Y, \bullet)$ are related (respectively, $\tilde{\mathcal{S}}_{\bullet} ^q$ and $\BGHz^q (\widehat{X}, \bullet)$).

\begin{lem}\label{lem:flasque natural}
Let $Y$ be a quasi-affine $k$-scheme of finite type. Let $Y \hookrightarrow X$ be a closed immersion into an equidimensional smooth $k$-scheme, and let $\widehat{X}$ be the completion of $X$ along $Y$. 

Then there exist natural injective morphisms of complexes of sheaves on $Y_{\rm Zar}$
\begin{equation}\label{eqn:SP_sh}
\tuborg 
\tilde{\mathcal{S}}_{\bullet}^q  \to \BGHz^q (\widehat{X}, \bullet), \\
\mathcal{S}_{\bullet}^q  \to \BGHz^q (\widehat{X} \mod Y, \bullet).
\sluttuborg
\end{equation}

\end{lem}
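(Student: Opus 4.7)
The plan is to construct the two morphisms first at the level of presheaves using the tautological restriction-to-open-subset maps, and then to pass to sheaves. Recall from Definitions \ref{defn:sh_complex0} and \ref{defn:sh_complex} that $\BGHz^q(\widehat{X},\bullet)$ and $\BGHz^q(\widehat{X}\mod Y,\bullet)$ are the Zariski sheafifications of the presheaves $\tilde{\mathcal{P}}^q_\bullet : U \mapsto z^q(\widehat{X}|_U,\bullet)$ and $\mathcal{P}^q_\bullet : U \mapsto z^q(\widehat{X}|_U \mod U,\bullet)$, respectively. By the very definition of $\tilde{G}^q(Y\setminus U,n)$ in \S \ref{sec:desc sheaf} and by the short exact sequence displayed after Lemma \ref{lem:topsupp}, the flat pull-backs of Lemmas \ref{lem:pre flat pb} and \ref{lem:fpb} induce injections
$$
\tilde{\mathcal{S}}^q_n(U) \hookrightarrow \tilde{\mathcal{P}}^q_n(U), \qquad \mathcal{S}^q_n(U) \hookrightarrow \mathcal{P}^q_n(U),
$$
functorial in $U \in Op(Y)$. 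Compatibility with the restriction maps $\rho^U_V$ of the presheaves is immediate, as both sides use the same cycle-theoretic restriction coming from $\widehat{X}|_V \hookrightarrow \widehat{X}|_U$.

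Compatibility with the boundary operators $\partial$ of Definition \ref{defn:HCG2} is the next point to verify. On the target side, the boundary maps are compatible with the flat restrictions by Lemma \ref{lem:pre flat pb}. On the source side, one observes that the subgroups $\tilde{G}^q(Y\setminus U,n)$ and $G^q(Y\setminus U,n)$ are preserved by $\partial^\epsilon_i$, since taking codimension-one faces can only shrink the topological support of a cycle inside $(Y\setminus U)\times\square^{n-1}$. Thus the presheaf injections assemble into morphisms of presheaves of complexes $\tilde{\mathcal{S}}^q_\bullet \hookrightarrow \tilde{\mathcal{P}}^q_\bullet$ and $\mathcal{S}^q_\bullet \hookrightarrow \mathcal{P}^q_\bullet$.

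Composing with the natural maps to the sheafifications of the targets gives morphisms of presheaves of complexes of abelian groups
$$
\tilde{\mathcal{S}}^q_\bullet \to \BGHz^q(\widehat{X},\bullet), \qquad \mathcal{S}^q_\bullet \to \BGHz^q(\widehat{X}\mod Y,\bullet).
$$
Since, by Proposition \ref{prop:flasque sh}, the sources $\tilde{\mathcal{S}}^q_\bullet$ and $\mathcal{S}^q_\bullet$ are already (flasque) sheaves on $Y_{\rm Zar}$, these presheaf morphisms automatically factor through the sheafifications on the source side and hence are honest morphisms of complexes of sheaves.

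For injectivity, I would argue stalkwise: Zariski sheafification is exact, so it sends the objectwise injections $\tilde{\mathcal{S}}^q_n \hookrightarrow \tilde{\mathcal{P}}^q_n$ and $\mathcal{S}^q_n \hookrightarrow \mathcal{P}^q_n$ of presheaves to monomorphisms of sheaves; the identification of the source sheafifications with $\tilde{\mathcal{S}}^q_n$ and $\mathcal{S}^q_n$ themselves (again by Proposition \ref{prop:flasque sh}) then yields injectivity of \eqref{eqn:SP_sh}. The only serious content in this argument is the sheaf property of $\tilde{\mathcal{S}}^q_n$ and $\mathcal{S}^q_n$, which is already established; the remainder of the proof is a routine diagram-chase for the compatibility with $\partial$ and $\rho^U_V$, and I do not expect any genuine obstacle at this stage.
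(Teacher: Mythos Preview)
Your proposal is correct and follows essentially the same route as the paper's own proof: build the injective presheaf maps $\tilde{\mathcal{S}}^q_\bullet \hookrightarrow \tilde{\mathcal{P}}^q_\bullet$ and $\mathcal{S}^q_\bullet \hookrightarrow \mathcal{P}^q_\bullet$ from the defining exact sequences, then sheafify the targets and use Proposition~\ref{prop:flasque sh} to identify the sources with their own sheafifications. You are somewhat more explicit than the paper about the compatibility with $\partial$ and about deducing injectivity from exactness of sheafification, but there is no genuine difference in strategy.
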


\begin{proof}
For each nonempty open subset $U \subset Y$, the sequences
$$
\tuborg 
0 \to \tilde{G}^q (Y \setminus U, n) \to z^q (\widehat{X} , n) \to z^q  (\widehat{X}|_U, n), \\
0 \to G^q (Y \setminus U, n) \to z^q (\widehat{X} \mod Y, n) \to z^q  (\widehat{X}|_U \mod U, n)
\sluttuborg
$$ 
are exact. Hence we deduce natural injective homomorphisms 
\begin{equation}\label{eqn:global qi-local}
\tuborg \tilde{\mathcal{S}}_n ^q (U) = \frac{ z^q (\widehat{X} , n)}{\tilde{G}^q (Y \setminus U, n)} \to \tilde{\mathcal{P}}_n ^q (U) =z^q (\widehat{X}|_U , n), \\
\mathcal{S}_n ^q (U) = \frac{ z^q (\widehat{X} \mod Y, n)}{G^q (Y \setminus U, n)} \to \mathcal{P}_n ^q (U) =z^q (\widehat{X}|_U \mod U, n).\sluttuborg
\end{equation}
They are compatible with the boundary maps $\partial$. So, collecting them over $n \geq 0$, we deduce the injective homomorphisms of complexes of presheaves on $Y$
\begin{equation}\label{eqn:global qi-local2}
\tilde{\mathcal{S}}_{\bullet}^q  \to \tilde{\mathcal{P}}_{\bullet} ^q \ \ \mbox{and} \ \ \mathcal{S}_{\bullet}^q  \to \mathcal{P}_{\bullet} ^q.
\end{equation}
They in turn induce the injective morphisms \eqref{eqn:SP_sh} of complexes of sheaves on $Y_{\rm Zar}$
because $\BGHz^q (\widehat{X}, \bullet)$ and $\BGHz^q (\widehat{X} \mod Y, \bullet)$ are the sheafifications of $\tilde{\mathcal{P}}_{\bullet}^q$ and $\mathcal{P}_{\bullet}^q $, respectively (Definitions \ref{defn:sh_complex0} and \ref{defn:sh_complex}), while $\tilde{\mathcal{S}}_{\bullet} ^q$ and $\mathcal{S}_{\bullet} ^q$ are already sheaves by Proposition \ref{prop:flasque sh}. This proves the lemma.
\end{proof}

\begin{thm}\label{thm:two BGH -1}
Let $Y$ be a quasi-affine $k$-scheme of finite type. Let $Y \hookrightarrow X$ be a closed immersion into an equidimensional smooth $k$-scheme, and let $\widehat{X}$ be the completion of $X$ along $Y$. 

Then we have isomorphisms of abelian groups
\begin{equation}\label{eqn:aaaa2}
\tuborg
\CH^q (\widehat{X}, n) \simeq \mathbb{H}_{\rm Zar} ^{-n} (Y, \tilde{\mathcal{S}}_{\bullet} ^q), \\
\CH^q (\widehat{X}\mod Y, n) \simeq \mathbb{H}_{\rm Zar} ^{-n} (Y, \mathcal{S}_{\bullet} ^q).
\sluttuborg
\end{equation}
In particular, the cycle groups satisfy the Zariski descent.
\end{thm}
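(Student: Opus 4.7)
The plan is to exploit the flasqueness already established in Proposition \ref{prop:flasque sh}, together with an identification of the global sections of $\tilde{\mathcal{S}}_\bullet^q$ and $\mathcal{S}_\bullet^q$ with the cycle complexes $z^q(\widehat{X},\bullet)$ and $z^q(\widehat{X}\mod Y,\bullet)$, respectively. The two assertions in \eqref{eqn:aaaa2} will then follow by the standard fact that a bounded above complex of flasque sheaves on a noetherian topological space computes its hypercohomology via its complex of global sections.

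First, I would observe that taking $U = Y$ in the defining formulas \eqref{eqn:sheaf small} and \eqref{eqn:mod Y sheaf small} gives $Y \setminus U = \emptyset$, and by definition $\tilde{G}^q(\emptyset,n) = 0$ and $G^q(\emptyset,n) = 0$. Hence
\[
\tilde{\mathcal{S}}_n^q(Y) \;=\; z^q(\widehat{X},n), \qquad \mathcal{S}_n^q(Y) \;=\; z^q(\widehat{X}\mod Y,n),
\]
and this identification is compatible with the cubical boundary maps $\partial$, so that
\[
\Gamma(Y, \tilde{\mathcal{S}}_\bullet^q) \;=\; z^q(\widehat{X},\bullet), \qquad \Gamma(Y,\mathcal{S}_\bullet^q) \;=\; z^q(\widehat{X}\mod Y,\bullet)
\]
as complexes of abelian groups (with the homological indexing $n \geq 0$ placed in cohomological degree $-n$).

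Next, by Proposition \ref{prop:flasque sh}, for each $n \geq 0$ the sheaves $\tilde{\mathcal{S}}_n^q$ and $\mathcal{S}_n^q$ are flasque, hence $\Gamma(Y,-)$-acyclic. Since $Y$ is a noetherian topological space of finite Krull dimension, a bounded above complex of $\Gamma(Y,-)$-acyclic sheaves is naturally quasi-isomorphic to its derived global sections; concretely, the canonical morphisms
\[
\Gamma(Y, \tilde{\mathcal{S}}_\bullet^q) \;\xrightarrow{\sim}\; \mathbf{R}\Gamma_{\rm Zar}(Y, \tilde{\mathcal{S}}_\bullet^q), \qquad \Gamma(Y, \mathcal{S}_\bullet^q) \;\xrightarrow{\sim}\; \mathbf{R}\Gamma_{\rm Zar}(Y, \mathcal{S}_\bullet^q)
\]
are quasi-isomorphisms in $\mathcal{D}({\rm Ab})$. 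Taking $(-n)$-th cohomology and invoking the two identifications of global sections above, together with the definitions $\CH^q(\widehat{X},n) = {\rm H}_n(z^q(\widehat{X},\bullet))$ and $\CH^q(\widehat{X}\mod Y,n) = {\rm H}_n(z^q(\widehat{X}\mod Y,\bullet))$ (Definitions \ref{defn:HCG2} and \ref{defn:complex}), we obtain the two isomorphisms in \eqref{eqn:aaaa2}. The final assertion on Zariski descent is immediate: for any Zariski open $U \subset Y$, restricting the flasque resolution to $U$ and repeating the above argument gives the same conclusion for $\widehat{X}|_U$ and $U$, so the groups $\CH^q(\widehat{X}\mod Y, n)$ are realized as Zariski hypercohomology of a complex of sheaves, yielding in particular a Mayer--Vietoris long exact sequence through Corollary \ref{cor:flasque sh}. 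The only subtle point---and effectively the only place where a nonformal input is needed---is the computation of the global sections, which relies on the fact that $\tilde{G}^q$ and $G^q$ vanish on the empty closed subset; everything else is formal sheaf-theoretic bookkeeping.
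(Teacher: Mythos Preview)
Your proof is correct and follows essentially the same approach as the paper: use the flasqueness of $\tilde{\mathcal{S}}_n^q$ and $\mathcal{S}_n^q$ from Proposition~\ref{prop:flasque sh} to identify hypercohomology with cohomology of global sections, and compute the global sections by taking $U=Y$ so that $\tilde{G}^q(\emptyset,n)=G^q(\emptyset,n)=0$. The paper presents the two steps in the opposite order and appeals to Corollary~\ref{cor:flasque sh} for the Zariski descent statement, but the substance is identical.
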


\begin{proof}
The complexes $\tilde{\mathcal{S}}_{\bullet} ^q$ and $\mathcal{S}_{\bullet}^q$ are complexes of flasque sheaves (Proposition \ref{prop:flasque sh}). Hence we have
$$
\tuborg
\mathbb{H}_{\rm Zar} ^{-n} (Y, \tilde{\mathcal{S}}_{\bullet} ^q)=  {\rm H}^{-n} \Gamma ( Y, \tilde{\mathcal{S}}_{\bullet} ^q)= {\rm H}_{n} \Gamma ( Y, \tilde{ \mathcal{S}}_{\bullet} ^q), \\
 \mathbb{H}_{\rm Zar} ^{-n} (Y, \mathcal{S}_{\bullet} ^q ) = {\rm H}^{-n} \Gamma ( Y, \mathcal{S}_{\bullet} ^q)= {\rm H}_{n} \Gamma ( Y, \mathcal{S}_{\bullet} ^q).
\sluttuborg
$$

On the other hand, for each $n \geq 0$ by definition
$$
\tuborg 
\Gamma (Y,  \tilde{\mathcal{S}}_n ^q) =  \frac{z^q  (\widehat{X} , n)}{\tilde{G}^q (Y \setminus Y,n)} = z^q (\widehat{X}, n),\\
\Gamma (Y,  \mathcal{S}_n ^q) =  \frac{z^q  (\widehat{X} \mod Y, n)}{G^q (Y \setminus Y,n)} = z^q (\widehat{X} \mod Y, n),
\sluttuborg
$$
so that the $n$-th homology groups of the complexes $\Gamma (Y,  \tilde{\mathcal{S}}_{\bullet}^q)$ and $\Gamma (Y,  \mathcal{S}_{\bullet}^q)$ are precisely $\CH^q (\widehat{X}, n)$ and $\CH^q (\widehat{X} \mod Y, n)$, respectively. This proves \eqref{eqn:aaaa2}.

The last assertion follows by combining the first part with Corollary \ref{cor:flasque sh}.
\end{proof}

We introduce the following notation:

\begin{defn}\label{defn:BGH quasi-affine}
Let $Y$ be a quasi-affine $k$-scheme of finite type. Let $Y \hookrightarrow X$ be a closed immersion into an equidimensional smooth $k$-scheme, and let $\widehat{X}$ be the completion of $X$ along $Y$. Define
$$
\tuborg
\BGH^q (\widehat{X}, n):= \mathbb{H}_{\rm Zar} ^{-n} (Y, \BGHz^q (\widehat{X}, \bullet)), \\
\BGH^q (\widehat{X} \mod Y, n):= \mathbb{H}_{\rm Zar} ^{-n} (Y, \BGHz^q (\widehat{X} \mod Y, \bullet).
\sluttuborg
$$
\end{defn}

By Lemma \ref{lem:flasque natural} and Theorem \ref{thm:two BGH -1}, we have:

\begin{cor}\label{cor:two BGH map}
Let $Y$ be a quasi-affine $k$-scheme of finite type. Let $Y \hookrightarrow X$ be a closed immersion into an equidimensional smooth $k$-scheme, and let $\widehat{X}$ be the completion of $X$ along $Y$. 

Then there exist natural homomorphisms of groups
$$
\tuborg
\CH^q (\widehat{X}, n) \to \BGH^q (\widehat{X}, n), \\
\CH^q (\widehat{X} \mod Y, n) \to \BGH^q (\widehat{X} \mod Y, n).
\sluttuborg
$$
If $|Y|$ is a singleton, the above maps are isomorphisms.
\end{cor}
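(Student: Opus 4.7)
The plan is to produce the two natural homomorphisms as a direct consequence of results already assembled in the text. By Lemma \ref{lem:flasque natural}, we have natural injective morphisms of complexes of Zariski sheaves on $Y$:
\[
\tilde{\mathcal{S}}_\bullet^q \hookrightarrow \BGHz^q(\widehat{X}, \bullet), \qquad \mathcal{S}_\bullet^q \hookrightarrow \BGHz^q(\widehat{X} \mod Y, \bullet).
\]
Applying the functor $\mathbb{H}_{\Zar}^{-n}(Y, -)$ to each and using the canonical identifications $\CH^q(\widehat{X}, n) \simeq \mathbb{H}_{\Zar}^{-n}(Y, \tilde{\mathcal{S}}_\bullet^q)$ and $\CH^q(\widehat{X} \mod Y, n) \simeq \mathbb{H}_{\Zar}^{-n}(Y, \mathcal{S}_\bullet^q)$ from Theorem \ref{thm:two BGH -1}, together with Definition \ref{defn:BGH quasi-affine} for the right-hand sides, produces the desired maps. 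Naturality in the evident sense follows from that of the constructions.

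For the singleton case, the plan is to observe that the sheafification step is trivial. If $|Y|$ is a single topological point, the Zariski topology on $Y$ has only the two open sets $\emptyset$ and $Y$, and any presheaf $P$ of abelian groups on $Y_{\Zar}$ with $P(\emptyset) = 0$ automatically satisfies the sheaf axiom, since the only open cover of $Y$ is $\{Y\}$. Both defining presheaves qualify: $\tilde{\mathcal{P}}_n^q(\emptyset) = z^q(\widehat{X}|_\emptyset, n) = 0$ and likewise for $\mathcal{P}_n^q$. Moreover, since $\widehat{X}|_Y = \widehat{X}$ and $(Y, \mathcal{O}_Y|_Y) = Y$, evaluating the presheaves at $Y$ gives
\[
\tilde{\mathcal{P}}_n^q(Y) = z^q(\widehat{X}, n) = \tilde{\mathcal{S}}_n^q(Y), \qquad \mathcal{P}_n^q(Y) = z^q(\widehat{X} \mod Y, n) = \mathcal{S}_n^q(Y).
\]
Thus $\tilde{\mathcal{P}}_\bullet^q$ and $\mathcal{P}_\bullet^q$ are already sheaves, they coincide with $\tilde{\mathcal{S}}_\bullet^q$ and $\mathcal{S}_\bullet^q$ on $Y_{\Zar}$, and the inclusions from Lemma \ref{lem:flasque natural} are in fact isomorphisms of complexes of sheaves. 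Taking $\mathbb{H}_{\Zar}^{-n}(Y, -)$ then yields the isomorphism statement.

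No serious obstacle is expected: the construction is entirely formal, and the singleton case reduces to the triviality of Zariski sheafification on a one-point topological space. Rather, this corollary plays the role of a consistency check, confirming that when there is nothing to glue the map $\CH^q(\widehat{X} \mod Y, n) \to \BGH^q(\widehat{X} \mod Y, n)$ loses no information. The nontrivial content concerning this map for non-singleton $Y$ is encoded in the conjectural quasi-isomorphism \eqref{eqn:blowup moving intro}, deferred to the appendix.
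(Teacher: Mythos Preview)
Your proof is correct and follows essentially the same approach as the paper: the first assertion is obtained by combining Lemma~\ref{lem:flasque natural} with Theorem~\ref{thm:two BGH -1}, and for the singleton case you and the paper both observe that the injective morphisms of \eqref{eqn:SP_sh} become isomorphisms because the only nonempty open is $Y$ itself. The paper phrases this last point as ``the maps in \eqref{eqn:SP_sh} are surjections, as well,'' whereas you spell out that sheafification is trivial on a one-point space; these are the same observation.
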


\begin{proof}
The first assertion follows immediately from Lemma \ref{lem:flasque natural} and Theorem \ref{thm:two BGH -1}. 

For the second part, note that when the underlying topological space $|Y|$ is a singleton, for any nonempty open subset $U \subset |Y|$, $U = |Y|$ and the maps in \eqref{eqn:SP_sh} are surjections, as well.
\end{proof}

We guess \eqref{eqn:SP_sh} are quasi-isomorphisms in general, so that the homomorphisms in Corollary \ref{cor:two BGH map} are isomorphisms. We do not need it in this article. Relevant discussions are made in Appendix \S \ref{sec:appendix}.

\section{A moving lemma for cycles on formal schemes}\label{sec:first indep}

The goal of \S \ref{sec:first indep} is to prove a version of moving lemma for the complexes $\BGHz^q (\widehat{X}, \bullet)$ and $\BGHz^q (\widehat{X} \mod Y, \bullet)$ of sheaves of cycles on formal schemes, and to deduce the existence of pull-backs. 

\medskip

More precisely, we prove the following. Suppose we have closed immersions $Y \hookrightarrow X_1 \hookrightarrow X_2$. We prove there exist zigzags of morphisms of complexes sheaves
$$
\tuborg
\BGHz^q ( \widehat{X}_2, \bullet) \underset{\sim}{\overset{\mathfrak{i}}{\hookleftarrow}} \BGHz^q _{\{ \widehat{X}_1 \}} (\widehat{X}_2, \bullet) \overset{\widehat{\iota}^*}{\to} \BGHz^q (\widehat{X}_1, \bullet),\\
\BGHz^q ( \widehat{X}_2 \mod Y, \bullet) \underset{\sim}{\overset{\mathfrak{i}}{\hookleftarrow}} \BGHz^q _{\{ \widehat{X}_1 \}} (\widehat{X}_2 \mod Y, \bullet) \overset{\widehat{\iota}^*}{\to} \BGHz^q (\widehat{X}_1 \mod Y, \bullet),
\sluttuborg
$$
where the Gysin maps $\widehat{\iota}^*$ are constructed in \S \ref{sec:4.1}, and the inclusions $\mathfrak{i}$ are quasi-isomorphisms. The latter is the moving lemma, Theorem \ref{thm:formal moving}. Thus we have morphisms in $\mathcal{D}^- ({\rm Ab} (Y))$
$$
\tuborg
\BGHz^q (\widehat{X}_2, \bullet) \to \BGHz^q (\widehat{X}_1, \bullet), \\
\BGHz^q (\widehat{X}_2 \mod Y, \bullet) \to \BGHz^q (\widehat{X}_1 \mod Y, \bullet).
\sluttuborg
$$

\medskip

More generally, for $Y_1, Y_2 \in \QAff_k$ with closed immersions $Y \hookrightarrow X_i$ for $i=1,2$, suppose we have morphisms $f : X_1 \to X_2$ and $g: Y_1 \to Y_2$ that form a commutative diagram
$$
\xymatrix{
{X}_1 \ar[r]^{f} & {X}_2 \\ 
Y_1 \ar@{^{(}->}[u] \ar[r] ^g & Y_2. \ar@{^{(}->}[u]
}
$$
Based on Theorem \ref{thm:formal moving}, we prove in Theorem \ref{thm:pull-back moving} that there are pull-back morphisms
$$
\tuborg
\BGHz^q (\widehat{X}_2, \bullet) \to \mathbf{R} g_* \BGHz^q (\widehat{X}_1, \bullet), \\
\BGHz^q (\widehat{X}_2 \mod Y_2, \bullet) \to \mathbf{R}  g_* \BGHz^q (\widehat{X}_1 \mod Y_1, \bullet),\sluttuborg
$$
 in the derived category $\mathcal{D}^- ({\rm Ab} (Y_2))$ of bounded above complexes of sheaves on $(Y_2)_{\rm Zar}$.

\medskip

They will be a basis of our constructions of the yeni higher Chow sheaves in \S \ref{sec:finite type} via a \v{C}ech machine and (homotopy) colimits.

\subsection{A Gysin pull-back}\label{sec:4.1}

We start with the basic case:

\begin{prop}\label{prop:1st indep}
Let $Y$ be a quasi-affine $k$-scheme of finite type. Suppose we have closed immersions $Y \hookrightarrow X_1 \hookrightarrow X_2$, where $X_1$ and $X_2$ are equidimensional smooth $k$-schemes. 
Let $\widehat{X}_i$ be the completion of $X_i$ along $Y$ for $i=1,2$. 
Let $\widehat{\iota}: \widehat{X}_1 \hookrightarrow \widehat{X}_2$ be the induced closed immersion of formal schemes.

Then there exist the Gysin morphisms
\begin{equation}\label{eqn:1st -1}
\tuborg
\widehat{\iota}^*: \BGHz^q _{ \{ \widehat{X}_1 \}} ( \widehat{X}_2 , \bullet) \to \BGHz^q (\widehat{X}_1, \bullet), \\
\widehat{\iota}^*: \BGHz^q _{ \{ \widehat{X}_1 \}} ( \widehat{X}_2 \mod Y, \bullet) \to \BGHz^q (\widehat{X}_1 \mod Y, \bullet).
\sluttuborg
\end{equation}
\end{prop}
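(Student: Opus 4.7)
The plan is to construct the Gysin map first at the level of cycle complexes of quasi-affine formal schemes, then pass to presheaves on $Y_{\rm Zar}$ and to their sheafifications. Since $X_1 \hookrightarrow X_2$ is a closed immersion of smooth $k$-schemes, it is a regular (hence l.c.i.) closed immersion. Completing along $Y$ and base changing by $\square^n$, the induced closed immersion $\widehat{\iota}_n: \square_{\widehat{X}_1}^n \hookrightarrow \square_{\widehat{X}_2}^n$ is still regular, so $\mathbf{L}\widehat{\iota}_n^*$ preserves perfect complexes. For an integral cycle $\mathfrak{Z} \in z^q_{\{\widehat{X}_1\}}(\widehat{X}_2, n)$, define
$$
\widehat{\iota}^*(\mathfrak{Z}) := [\mathbf{L}\widehat{\iota}_n^* (\mathcal{O}_{\mathfrak{Z}})] = [\mathcal{O}_{\widehat{X}_1 \times \square^n} \otimes^{\mathbf{L}}_{\mathcal{O}_{\widehat{X}_2 \times \square^n}} \mathcal{O}_{\mathfrak{Z}}],
$$
as in the Serre-style formula \eqref{eqn:Serre product} and Corollary \ref{cor:perfect ass cycle}.

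Next I would verify that $\widehat{\iota}^*(\mathfrak{Z}) \in z^q(\widehat{X}_1, n)$. Condition (\textbf{GP}) of Definition \ref{defn:HCG} is immediate from the $\{\widehat{X}_1\}$-proper intersection hypothesis of Definition \ref{defn:HCG var} combined with the codimension-additivity given by regular intersection on $\square_{\widehat{X}_2}^n$. For (\textbf{SF}), observe that $\widehat{X}_{1,\red} = Y_{\red} = \widehat{X}_{2,\red}$, so the condition on $\widehat{\iota}^*(\mathfrak{Z})$ reduces to the corresponding (\textbf{SF}) condition already satisfied by $\mathfrak{Z}$. Compatibility with the cubical boundary operators follows from the base-change identity $\mathbf{L}(\iota_i^\epsilon)^* \circ \mathbf{L}\widehat{\iota}_n^* = \mathbf{L}\widehat{\iota}_{n-1}^* \circ \mathbf{L}(\iota_i^\epsilon)^*$ applied to the Cartesian square of regular immersions, as in Lemma \ref{lem:boundary comp}; via Remark \ref{remk:lci}, these derived pull-backs coincide with the ordinary intersection products on cycles, so $\widehat{\iota}^* \partial = \partial \widehat{\iota}^*$ at the cycle level. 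This defines the first morphism of \eqref{eqn:1st -1} at the level of cycle complexes, hence after restriction to Zariski opens $U \subset Y$ (using the transitivity of $\mathbf{L}\widehat{\iota}^*$ with respect to open restrictions of type (I) in Lemmas \ref{lem:pre flat pb0}, \ref{lem:pre flat pb}), at the level of presheaves $\tilde{\mathcal{P}}_\bullet^q$, and after Zariski sheafification.

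For the mod $Y$-equivalence version, I would show that $\widehat{\iota}^*$ carries $\mathcal{M}^q(\widehat{X}_2, Y, n, \{\widehat{X}_1\})$ into $\mathcal{M}^q(\widehat{X}_1, Y, n)$. Given any pair $(\mathcal{A}_1, \mathcal{A}_2) \in \mathcal{L}^q(\widehat{X}_2, Y, n, \{\widehat{X}_1\})$, the defining isomorphism
$$
\mathcal{A}_1 \otimes^{\mathbf{L}}_{\mathcal{O}_{\square_{\widehat{X}_2}^n}} \mathcal{O}_{\square_Y^n} \simeq \mathcal{A}_2 \otimes^{\mathbf{L}}_{\mathcal{O}_{\square_{\widehat{X}_2}^n}} \mathcal{O}_{\square_Y^n}
$$
in $\sAlg(\mathcal{O}_{\square_Y^n})$ is precisely $\mathbf{L}(\iota_Y^{(2)})^* \mathcal{A}_1 \simeq \mathbf{L}(\iota_Y^{(2)})^* \mathcal{A}_2$, where $\iota_Y^{(2)}: \square_Y^n \hookrightarrow \square_{\widehat{X}_2}^n$. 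The factorization $\iota_Y^{(2)} = \widehat{\iota}_n \circ \iota_Y^{(1)}$ through $\square_{\widehat{X}_1}^n$ gives
$
\mathbf{L}(\iota_Y^{(1)})^* \bigl(\mathbf{L}\widehat{\iota}_n^* \mathcal{A}_1\bigr) \simeq \mathbf{L}(\iota_Y^{(1)})^* \bigl(\mathbf{L}\widehat{\iota}_n^* \mathcal{A}_2\bigr)
$
in $\sAlg(\mathcal{O}_{\square_Y^n})$, exactly as in the proof of Lemma \ref{lem:fpb}, Claim 2. Thus $(\mathbf{L}\widehat{\iota}_n^*\mathcal{A}_1, \mathbf{L}\widehat{\iota}_n^*\mathcal{A}_2) \in \mathcal{L}^q(\widehat{X}_1, Y, n)$ (here the perfect complexes may no longer be concentrated in degree zero, so one uses the extension of the mod $Y$-equivalence to perfect complexes via their associated cycles, as for the definition of $\mathcal{D}_{\perf}^q$ and the variant of Definition \ref{defn:mod Y equiv}). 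Passing to the difference $[\mathbf{L}\widehat{\iota}_n^*\mathcal{A}_1] - [\mathbf{L}\widehat{\iota}_n^*\mathcal{A}_2]$ shows $\widehat{\iota}^*$ descends to the mod-$Y$ quotient and sheafifies to the second morphism of \eqref{eqn:1st -1}.

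The main obstacle will be the verification of the codimension bounds (\textbf{GP}), (\textbf{SF}) for the derived pull-back cycle $[\mathbf{L}\widehat{\iota}_n^* \mathcal{O}_{\mathfrak{Z}}]$: one must control the codimensions of all higher $\Tor$-sheaves appearing in the alternating sum, not only $\Tor_0$. This is where the regularity of $\widehat{\iota}_n$ (hence a locally Koszul resolution of $\mathcal{O}_{\square_{\widehat{X}_1}^n}$ over $\mathcal{O}_{\square_{\widehat{X}_2}^n}$) combined with the proper intersection of $\mathfrak{Z}$ with $\widehat{X}_1 \times F$ for every face $F$ is essential: the Koszul complex argument forces every $\Tor_i$ to be supported in the proper intersection, giving uniform codimension $q$. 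The same argument, applied fiberwise over $Y_{\red}$, yields (\textbf{SF}); the rest of the proof is then essentially bookkeeping with the functoriality properties of $\mathbf{L}(-)^*$ on derived ring spectra.
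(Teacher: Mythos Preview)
Your approach is essentially the same as the paper's: define $\widehat{\iota}^*(\mathfrak{Z}) = [\mathbf{L}\widehat{\iota}_n^*\mathcal{O}_{\mathfrak{Z}}]$ via the Serre product, verify (\textbf{GP}) from the $\{\widehat{X}_1\}$-condition and (\textbf{SF}) from $\widehat{X}_{1,\red} = Y_{\red} = \widehat{X}_{2,\red}$, sheafify over $Y_{\rm Zar}$, and for the mod-$Y$ map factor $\iota_Y^{(2)} = \widehat{\iota}_n \circ \iota_Y^{(1)}$ to transport the $\sAlg(\mathcal{O}_{\square_Y^n})$-isomorphism. You are a bit more explicit than the paper about commutation with $\partial$ and about controlling the supports of the higher $\Tor$'s via the Koszul resolution, but the skeleton is identical.

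The one substantive divergence is your parenthetical that $\mathbf{L}\widehat{\iota}_n^*\mathcal{A}_i$ may fail to be concentrated in degree zero, and your proposed remedy of ``using the extension of the mod $Y$-equivalence to perfect complexes''. The paper does not take this detour: it simply writes $\mathbf{L}\widehat{\iota}^*(\mathcal{A}) \in \mathcal{R}^q(\widehat{X}_1,n)$ and then $(\mathbf{L}\widehat{\iota}^*\mathcal{A}_1, \mathbf{L}\widehat{\iota}^*\mathcal{A}_2) \in \mathcal{L}^q(\widehat{X}_1,Y,n)$, treating the derived pull-back as if it were an honest coherent $\mathcal{O}$-algebra. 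Your caution here is reasonable, since Definition~\ref{defn:mod Y equiv} literally speaks only of coherent algebras; however, be aware that the ``extension to perfect complexes'' you invoke is not actually set up in the paper, so if you want to be more rigorous than the paper on this point you would have to supply that extension yourself (for instance, show that enlarging $\mathcal{L}^q$ to admissible derived-ring pairs does not change the subgroup $\mathcal{M}^q$ they generate).
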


\begin{proof}
We first claim that we have the intersection morphism of complexes
\begin{equation}\label{eqn:Gysin pb0}
\widehat{\iota}^*: z^q _{ \{\widehat{X}_1\}} (\widehat{X}_2 , \bullet) \to z^q  (\widehat{X}_1, \bullet)
\end{equation}
defined by sending each integral cycle $\mathfrak{Z}$ to 
$$
\widehat{\iota}^* [\mathfrak{Z}] := [\mathfrak{Z}] . [ \widehat{X}_1 \times_k \square_k ^n] = [ \mathfrak{Z} \cap (\widehat{X}_1 \times _k \square_k ^n)] = [ \mathcal{O}_{\mathfrak{Z}} \otimes_{\mathcal{O}_{\square_{\widehat{X}_2} ^n} }^{\mathbf{L}} \mathcal{O}_{\square_{\widehat{X}_1} ^n}] = [ \mathbf{L} \widehat{\iota}^* ( \mathcal{O}_{\mathfrak{Z}})],
$$
 as given in Lemma \ref{lem:sheaf proper int}. One notes that since $\widehat{\iota}: \widehat{X}_1 \hookrightarrow \widehat{X}_2$ is a local complete intersection (l.c.i.) ($X_1$, $X_2$ being smooth, $X_1 \hookrightarrow X_2$ is l.c.i., and see e.g. \cite[Proposition 09QB]{stacks}), the derived pull-back $\mathbf{L} \widehat{\iota}^*$ is given just by the usual intersection pull-back $\widehat{\iota}^*$ (see Remark \ref{remk:lci}). 
 
 To prove the claim, it is enough to show that for each $n \geq 0$, we have
 $$
 \widehat{\iota}^* \left( z^q _{ \{ \widehat{X}_1 \}} (\widehat{X}_2, n) \right) \subset z^q (\widehat{X}_1, n).
 $$
For that, one needs to check the conditions (\textbf{GP}), (\textbf{SF}) of Definition \ref{defn:HCG}.

\medskip

By Definition \ref{defn:HCG var}, that $\mathfrak{Z} \in z^q _{ \{\widehat{X}_1\}} (\widehat{X}_2 , \bullet) $ means that $\mathfrak{Z}$ intersects properly with $\widehat{X}_1 \times F$ for all faces $F \subset \square_k^n$. Thus indeed $\widehat{\iota} ^* [\mathfrak{Z}]$ satisfies (\textbf{GP}). 

From the closed immersions $Y \hookrightarrow \widehat{X}_1 \hookrightarrow \widehat{X}_2$, we have $Y_{\red} = \widehat{X}_{1, \red} = \widehat{X}_{2, \red}$. Thus, the condition (\textbf{SF}) for $\widehat{\iota}^* [ \mathfrak{Z}]$ is obvious from that of $\mathfrak{Z}$. This proves the claim.

\medskip

Now, for each nonempty open subset $U \subset Y$, the map \eqref{eqn:Gysin pb0} gives 
\begin{equation}\label{eqn:1st -2}
\widehat{\iota}^*_U: z^q _{ \{\widehat{X}_1|_U\}} (\widehat{X}_2|_U , \bullet) \to z^q  (\widehat{X}_1|_U, \bullet).
\end{equation}
 Sheafifying the presheaves, we deduce the first morphism of \eqref{eqn:1st -1}.

\medskip

To construct the second morphism $\widehat{\iota}^*:  \BGHz^q _{ \{ \widehat{X}_1 \}} ( \widehat{X}_2 \mod Y, \bullet) \to  \BGHz^q (\widehat{X}_1 \mod Y, \bullet)$ of \eqref{eqn:1st -1}, we show that we can naturally deduce from \eqref{eqn:1st -2}  the morphism of complexes
\begin{equation}\label{eqn:1st -2-1}
\widehat{\iota}^*_U : z^q _{ \{ \widehat{X}_1|_U\}} ( \widehat{X}_2|_{U} \mod U, \bullet) \to z^q  (\widehat{X}_1 |_{ U} \mod U , \bullet)
\end{equation}
for each nonempty open $U \subset Y $, as we can then sheafify them. Replacing $Y$ by $U$, one reduces to showing it in the case $U= Y$.

\medskip

It is enough to prove that $\widehat{\iota}^* (\mathcal{M} ^q (\widehat{X}_2, Y, n, \{ \widehat{X}_1 \} ))  \subset \mathcal{M}^q (\widehat{X}_1, Y, n)$.

Observe that for $\mathcal{A} \in \mathcal{R}^q  (\widehat{X}_2, n, \{ \widehat{X}_1 \})$ in the sense of Definition \ref{defn:mod Y equiv}, by Lemma \ref{lem:sheaf proper int}, we have
\begin{equation}\label{eqn:Gysin LDF perf cy}
\widehat{\iota}^* [ \mathcal{A}] = [ \mathcal{A} \otimes_{\mathcal{O}_{\square_{\widehat{X}_2}} ^n} ^{\mathbf{L}} \mathcal{O}_{\square_{\widehat{X}_1} ^n}] = [ \mathbf{L} \widehat{\iota}^* (\mathcal{A})].
\end{equation}
Thus we have $\mathbf{L} \widehat{\iota}^* (\mathcal{A}) \in \mathcal{R}^q (\widehat{X}_1, n)$. 

\medskip

Let $\iota_{Y,i} ^n$ be the closed immersion $\iota_{Y,i} ^n: \square_{Y} ^n \hookrightarrow \square_{\widehat{X}_i}^n$ for $i=1,2$. 

For a pair $(\mathcal{A}_1, \mathcal{A}_2) \in \mathcal{L} ^q (\widehat{X}_2, Y, n, \{\widehat{X}_1\})$, we have an isomorphism in $\sAlg ( \mathcal{O}_{\square_Y ^n})$
\begin{equation}\label{eqn:Y_2 Gysin}
\mathbf{L} (\iota_{Y, 2} ^n)^* (\mathcal{A}_1) \simeq \mathbf{L} (\iota_{Y, 2} ^n)^* (\mathcal{A}_2).
\end{equation}
We also have the commutative diagram of closed immersions 
\begin{equation}\label{eqn:iota iota}
\xymatrix{ \square_{\widehat{X}_1}^n \ar@{^(->}[r] ^{\widehat{\iota}} & \square_{\widehat{X}_2} ^n \\
\square_{Y} ^n. \ar@{^(->}[u]^{\iota_{Y,1} ^n} \ar@{^(->}[ru] _{\iota_{Y,2}^n} & }
\end{equation}

The diagram \eqref{eqn:iota iota} implies the identity $ \mathbf{L} (\iota_{Y, 2} ^n)^* = \mathbf{L} (\iota_{Y,1} ^n)^* \circ \mathbf{L} \widehat{\iota}^*$ (\cite[I-3.6, p.119]{Lipman}). Hence from \eqref{eqn:Y_2 Gysin}, we deduce an isomorphism $\mathbf{L} (\iota_{Y,1} ^n)^* \circ \mathbf{L} \widehat{\iota}^* (\mathcal{A}_1) \simeq \mathbf{L} (\iota_{Y,1}^n)^* \circ \mathbf{L} \widehat{\iota}^* (\mathcal{A}_2)$ in $\sAlg (\mathcal{O}_{\square_Y^n})$, i.e.
\begin{equation}\label{eqn:LDF Gysin pair}
(\mathbf{L} \widehat{\iota}^* (\mathcal{A}_1) , \mathbf{L} \widehat{\iota}^* (\mathcal{A}_2)) \in \mathcal{L}^q (\widehat{X}_1, Y, n).
\end{equation}

Combining \eqref{eqn:Gysin LDF perf cy} with \eqref{eqn:LDF Gysin pair}, we have
$$
\widehat{\iota}^* ( [\mathcal{A}_1] - [\mathcal{A}_2] ) = [ \mathbf{L}\widehat{\iota}^* (\mathcal{A}_1) ] - [\mathbf{L}\widehat{\iota}^* (\mathcal{A}_2)].
$$
This implies that $\widehat{\iota}^* (\mathcal{M}^q (\widehat{X}_2, Y, n, \{ \widehat{X}_1 \})) \subset \mathcal{M}^q (\widehat{X}_1, Y, n)$. Now sheafifying the presheaves in \eqref{eqn:1st -2-1} over $U$, we obtain the second morphism of \eqref{eqn:1st -1}. This completes the proof.  
\end{proof}

\subsection{A formal affine moving lemma}\label{sec:formal_move}
We saw in Proposition \ref{prop:1st indep} that there are the Gysin intersection morphisms
$$
\tuborg
\iota: \BGHz^q _{\{ \widehat{X}_1 \}} (\widehat{X}_2 , \bullet) \to \BGHz^q (\widehat{X}_1, \bullet), \\
\iota: \BGHz^q _{\{ \widehat{X}_1 \}} (\widehat{X}_2 \mod Y, \bullet) \to \BGHz^q (\widehat{X}_1 \mod Y,\bullet).
\sluttuborg
$$
We want to know whether $ \BGHz^q  (\widehat{X}_2, \bullet)$ is quasi-isomorphic to its subcomplex $\BGHz^q _{\{ \widehat{X}_1 \}} (\widehat{X}_2, \bullet) $, and similarly for $ \BGHz^q  (\widehat{X}_2 \mod Y, \bullet)$.

\medskip

The goal of \S \ref{sec:formal_move} is to address this issue:

\begin{thm}\label{thm:formal moving}
Let $Y$ be a quasi-affine $k$-scheme of finite type, and let $Y \hookrightarrow X_1 \hookrightarrow X_2$ be closed immersions into equidimensional smooth $k$-schemes $X_1, X_2$. Let $\widehat{X}_i$ be the completion of $X_i$ along $Y$ for $i=1,2$.

 Then the inclusion morphisms of complexes of sheaves on $Y_{\rm Zar}$
\begin{equation}\label{eqn:formal moving}
\tuborg
\mathfrak{i}: \BGHz^q _{\{\widehat{X}_1\}} (\widehat{X}_2 , \bullet) \hookrightarrow \BGHz^q (\widehat{X}_2 , \bullet), \\
\mathfrak{i}: \BGHz^q _{\{\widehat{X}_1\}} (\widehat{X}_2 \mod Y, \bullet) \hookrightarrow \BGHz^q (\widehat{X}_2 \mod Y, \bullet)
\sluttuborg
\end{equation}
are quasi-isomorphisms.
\end{thm}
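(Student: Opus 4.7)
The plan is to establish both quasi-isomorphisms in \eqref{eqn:formal moving} by checking them stalk-wise on $Y_{\rm Zar}$ and then constructing an explicit chain-level homotopy that moves cycles into the ``properly intersecting'' subcomplex. By the description of stalks as colimits over open neighborhoods (cf.\ Remark \ref{remk:stalk realize 2}), the question reduces to showing that, for sufficiently small affine open $U \subseteq Y$, the inclusions
$$
z^q_{\{\widehat{X}_1|_U\}}(\widehat{X}_2|_U,\bullet)\hookrightarrow z^q(\widehat{X}_2|_U,\bullet),\qquad
z^q_{\{\widehat{X}_1|_U\}}(\widehat{X}_2|_U\!\mod U,\bullet)\hookrightarrow z^q(\widehat{X}_2|_U\!\mod U,\bullet)
$$
are quasi-isomorphisms. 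I would therefore work locally and shrink $Y$ at will.

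Next, I would exploit smoothness: since $X_1\hookrightarrow X_2$ is a regular closed immersion of smooth $k$-schemes, Zariski-locally around every point of $Y$ one can choose a retraction, producing an isomorphism $X_2\simeq X_1\times_k \mathbb{A}^r_k$ that identifies $X_1$ with the zero section. Completing along $Y$ and invoking Lemma \ref{lem:prod completion}, this upgrades to $\widehat{X}_2\simeq \widehat{X}_1\times_k\widehat{\mathbb{A}}^r_0$, where $\widehat{\mathbb{A}}^r_0:=\Spf k[[t_1,\dots,t_r]]$ and $\widehat{X}_1$ is cut out by $t_1=\cdots=t_r=0$. In this local model, I would consider the scaling morphism
$$
\mu\colon \widehat{X}_2\times_k\square_k\to\widehat{X}_2,\qquad (x,t;s)\mapsto (x,s\cdot t),
$$
which is legitimate because each $t_i$ is topologically nilpotent in the restricted power-series ring; the $s=0$ fiber factors through the zero section $\widehat{X}_1\hookrightarrow\widehat{X}_2$, the $s=\infty$ fiber collapses to the generic formal direction, and $\mu$ restricts to the identity on $Y\hookrightarrow\widehat{X}_2$. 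Using a suitable parametrization of $\square=\mathbb{P}^1\setminus\{1\}$ (e.g.\ a change of variable sending the cubical endpoints $\{0,\infty\}$ to $\{s=0,\, s=1\}$ after a twist), pulling back along $\mu$ and reorganizing cubical coordinates gives a degree $+1$ operator $h\colon z^q(\widehat{X}_2,n)\to z^q(\widehat{X}_2,n+1)$ such that, modulo degenerate cycles,
$$
\partial h+h\partial \;=\; \mathrm{Id}-\pi^*\circ s^*,
$$
where $\pi\colon\widehat{X}_2\to\widehat{X}_1$ and $s\colon\widehat{X}_1\hookrightarrow\widehat{X}_2$ are the projection and zero section. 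Because $\pi^*$ lands in products of the form $\mathfrak{Z}'\times\widehat{\mathbb{A}}^r_0$, its image meets every $\widehat{X}_1\times F$ properly, hence sits inside $z^q_{\{\widehat{X}_1\}}(\widehat{X}_2,\bullet)$; this chain-homotopy formula then identifies the quotient complex as acyclic, yielding the first quasi-isomorphism.

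The principal technical obstacle will be the verification that $h(\mathfrak{Z})$ is actually an admissible cycle: both the general-position condition (\textbf{GP}) against \emph{every} face of $\square^{n+1}$ (including the degenerate behaviour at $s=\infty$ of the original $\mu$) and the special-fiber condition (\textbf{SF}) against $\widehat{X}_{2,\red}=Y_{\red}$ must be checked. The naive scaling may create improperly intersecting components along the ``infinity'' face, and resolving this appears to require either a change of parameter combined with an auxiliary generic automorphism of $\widehat{\mathbb{A}}^r_0$, or a Bloch--Levine-type blow-up of bad faces to spread the bad locus away—this is precisely the ``new moving lemma on complexes of sheaves of cycles on formal schemes'' promised in the introduction, and I expect its formulation to consume the bulk of the argument. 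The (\textbf{SF}) condition, on the other hand, is comparatively benign, since scaling in the $t_i$-directions preserves the topological support inside $Y_{\red}\times\square^{n+1}$.

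Finally, the mod-$Y$ statement will follow from the absolute one by verifying that $h$ descends through the mod-$Y$-equivalence. Concretely, because $\mu$ is flat (the composite of a type (III) projection with a type (I)/(IV) change of $\square$-coordinates in the sense of Lemma~\ref{lem:fpb}) and restricts to the identity on $Y$, the derived pullback along $\mu$ sends any pair $(\mathcal{A}_1,\mathcal{A}_2)\in \mathcal{L}^q(\widehat{X}_2,Y,n)$ to a pair in $\mathcal{L}^q(\widehat{X}_2,Y,n+1)$ by the same computation as in the proof of Lemma~\ref{lem:fpb}; consequently $h(\mathcal{M}^q(\widehat{X}_2,Y,n))\subseteq \mathcal{M}^q(\widehat{X}_2,Y,n+1)$, and the chain-homotopy identity passes to the quotient, giving the second quasi-isomorphism of \eqref{eqn:formal moving}.
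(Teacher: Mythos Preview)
Your reduction to stalks and to the local complete-intersection model $\widehat{X}_2=\Spf(A[[t_1,\dots,t_r]])$ with $\widehat{X}_1=\{t=0\}$ matches the paper's set-up. The divergence is in the choice of homotopy, and here your proposal has a genuine circularity.

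Your scaling map $\mu(x,t;s)=(x,st)$ has the feature that its $s=0$ fiber is the composite $\widehat{X}_2\overset{\pi}{\to}\widehat{X}_1\overset{s}{\hookrightarrow}\widehat{X}_2$. The operator $s^*$ in your formula $\partial h+h\partial=\mathrm{Id}-\pi^*s^*$ is the Gysin pull-back along the closed immersion $\widehat{X}_1\hookrightarrow\widehat{X}_2$. But this Gysin map is only defined on cycles that already intersect $\widehat{X}_1\times F$ properly---that is, on $z^q_{\{\widehat{X}_1\}}(\widehat{X}_2,\bullet)$, not on all of $z^q(\widehat{X}_2,\bullet)$. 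So the right-hand side of your homotopy identity is not defined on the very cycles you need to move, and the argument collapses. Your parenthetical awareness that the ``$s=0$'' face is problematic is correct, but the suggested remedies (a generic automorphism, or Bloch--Levine blow-ups) are either unspecified or, in the latter case, precisely the unproven Guess~\ref{conj:localization} of the Appendix.

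The paper instead uses \emph{translations} $\psi_{\underline{c}}\colon t_i\mapsto t_i+c_i$ with $c_i$ in the ideal of definition. The crucial difference is that $\psi_{\underline{c}}$ is an \emph{isomorphism}, so $\psi_{\underline{c}}^*$ is always defined; the work is to choose $\underline{c}$ (depending on the given cycle $\mathfrak{Z}$) so that $\psi_{\underline{c}}^*(\mathfrak{Z})$ lands in the good subcomplex. This is a prime-avoidance argument (Lemma~\ref{lem:translation PP}, Lemma~\ref{lem:translation}): for finitely many primes $I_j$ not containing $(J,t)$, one finds $c\in(J)$ with $t+c\notin I_j$. Because $\underline{c}$ depends on $\mathfrak{Z}$, there is no single chain-homotopy operator; one proves surjectivity and injectivity on homology separately (Theorem~\ref{thm:quasi-iso1-pre}), using the linear-interpolation homotopy $H_{n,\underline{c}}$ sending $\underline{t}\mapsto\underline{t}+y_{n+1}\underline{c}$ to connect $\mathfrak{Z}$ and $\psi_{\underline{c}}^*(\mathfrak{Z})$. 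The mod~$Y$ compatibility (Lemmas~\ref{lem:psi mod Y resp}, \ref{lem:homo mod Y resp}) then goes through exactly as you outlined for your $h$.
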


This is a kind of moving lemma in the context of complexes of sheaves of abelian groups of cycles on formal schemes. 

\medskip

Since $X_1$ and $X_2$ are both smooth, the closed immersion $X_1 \hookrightarrow X_2$ is l.c.i. On the other hand, since the morphisms \eqref{eqn:formal moving} being quasi-isomorphisms is a local statement, we reduce it to small enough open neighborhoods of points of $Y$. We will see that for each $y \in Y$, there is an affine open neighborhood $U$ of $y$ for which $\widehat{X}_1 |_U \hookrightarrow \widehat{X}_2|_U$ comes from a complete intersection. 

\medskip

We first study the case when $X_1 \hookrightarrow X_2$ is a complete intersection, and then we reduce the general case to the complete intersection case. In this complete intersection case, if we write $\widehat{X}_1 = \Spf (A)$, then $\widehat{X}_2= \Spf (A[[\un{t}]])$ for some indeterminates $\un{t}= \{ t_1, \cdots, t_r\}$ (see SGA I \cite[Expos\'e II, Remarque 4.14, p.45]{SGA1}). We want to use a kind of ``general translation"  in the formal power series setting. However, some naive attempts to take ``translation by the generic point" that worked in the classical moving lemma for cycles over affine spaces, do not work well in our case.

\medskip

Here instead, we will use a combination of linear translations of the variables $t_i$ by some ``messy" $c_i$, and discuss how we can choose such $c_i$'s that give us nice properties. The reader may note that in Park-Pelaez \cite[\S 3.3]{PP}, which was concurrently written alongside this article, has a $K$-theoretic analogue of what is studied in this article, and some relevant arguments in \emph{loc.cit.} and some parts of what we prove in this article overlap. To make both papers self-contained, these papers will have their respective needed arguments, although there are some overlaps. 

In the complete intersection case of Theorem \ref{thm:formal moving}, a bit stronger ``unsheafified" version is proven in Theorems \ref{thm:quasi-iso1-pre} and \ref{thm:quasi-iso1}. The general case will be deduced from them.

\subsubsection{Some translations in formal power series}
Let $Y= \Spec (B)$ be a connected affine $k$-scheme of finite type. Suppose we have closed immersions $Y \hookrightarrow X_1 \hookrightarrow X_2$ into equidimensional smooth $k$-schemes $X_1, X_2$, and suppose $X_1 \hookrightarrow X_2$ is a complete intersection.  
Let $r$ be its codimension. If $r=0$, then there is nothing to prove, so suppose $r \geq 1$. Let $\widehat{X}_i$ be the completion of $X_i$ along $Y$.

\medskip

Write $\widehat{X}_1=\Spf (A)$ for an equidimensional regular affine $k$-domain $A$ of finite Krull dimension, not necessarily of finite type over $k$. Let $J \subset A$ be the ideal such that $A/J = B$ and $A$ is $J$-adically complete. We can write $\widehat{X}_2= \Spf (A[[\un{t}]])$ for some indeterminates $\un{t} = \{ t_1, \cdots, t_r\}$.

\medskip

Using the automorphism of $\mathbb{P}_k ^1$ given by $y \mapsto y/ (y-1)$, we again identify $\square_k ^1$ with $\mathbb{A}_k ^1$, where the faces are given by $\{0, 1 \}$. For each $n\geq 0$, let $R_n:= A[[\un{t}]]  \widehat{\otimes}_k k[y_1, \cdots, y_n]= A[[ \un{t}]]\{ y_1, \cdots, y_n \}$ so that $\Spf (R_n) = \square_{\widehat{X}_2} ^n$. This ring $R_n$ is complete with respect to the ideal $(J,  (\un{t}))$.

\medskip

For rings $T \subset U$, and a subset $S$ of $T$, the notation $(S)_{U}$ means the ideal of $U$ generated by the set $S$.

\begin{defn}
 Under the above assumptions, we introduce the following notations:
\begin{enumerate}
\item Let $A_0:= A$, and for $1 \leq i \leq r$, let $A_i:= A[[t_1, \cdots, t_i]]$. We regard $A_i \subset A_{i+1}$ for each $i$ in the apparent way.
\item Let $J_0:= J$, and for $1 \leq i \leq r$, let $J_i:= ( J, t_1, \cdots, t_i)_{A_i}$. The ring $A_i$ is complete with respect to $J_i$. Let $\widehat{X}_{1, i}:= \Spf (A_i)$. Here, $\widehat{X}_{1,0}= \widehat{X}_1$ and $\widehat{X}_{1,r} = \widehat{X}_2$. We also have closed immersions $\widehat{X}_{1, i} \subset \widehat{X}_{1, i+1}$ given by $A_{i+1}/ (t_{i+1}) = A_i$.
\item For $0 \leq i \leq r-1$, we let $(J_i) \subset A_{i+1}$ denote the ideal $(J_i)_{A_{i+1}}$.
\item Let $\mathbb{V}_1:= (J_0)$, and inductively let $\mathbb{V}_{i+1} := \mathbb{V}_i \times (J_i)$ for $1 \leq i \leq r-1$, the cartesian products of sets.
\item For each $\un{c} = (c_1, \cdots, c_r) \in \mathbb{V}_r$, consider the translation automorphism of $A_r=A[[\un{t}]]$ given by $t _i \mapsto t_i + c_i$ for all $1 \leq i \leq r$. Since $c_i \in J_r=(J, t_1, \cdots, t_r)_{A_r}$ for all $i$, and $A_r$ is complete with respect to this ideal, this translation defined by $\un{c}$, is indeed an automorphism of $A_r= A[[\un{t}]]$. It induces the corresponding automorphism of the formal scheme $\widehat{X}_2$
\begin{equation}\label{eqn:psi_c 0}
\psi_{\un{c}}: \widehat{X}_2 \to \widehat{X}_2.
\end{equation}
We use these notations freely in \S \ref{sec:formal_move}.\qed
\end{enumerate}

\end{defn}

For $\un{c} \in \mathbb{V}_r$, the automorphism \eqref{eqn:psi_c 0} in turn induces the automorphism (also denoted by the same notation)
\begin{equation}\label{eqn:psi_c n}
\psi_{\un{c}}:  \widehat{X}_2 \times_k \square_{k} ^n \to \widehat{X}_2 \times_k \square_{k} ^n.
\end{equation}

This is an isomorphism, so we can define the translation pull-back $\psi_{\un{c}} ^*$ on cycles as well. We first mention:

\begin{lem}\label{lem:translation0}
Let $\un{c} \in \mathbb{V}_r$. Let $\mathfrak{Z} \in z^q (\widehat{X}_2, n)$. Then $\psi_{\un{c}} ^* (\mathfrak{Z}) \in z^q (\widehat{X}_2, n)$.
\end{lem}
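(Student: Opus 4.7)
The plan is to show that the translation automorphism $\psi_{\un{c}}$ of $\widehat{X}_2$ respects both the faces of $\square_k^n$ and the reduction $\widehat{X}_{2,\red}$, from which the admissibility conditions (\textbf{GP}) and (\textbf{SF}) of Definition \ref{defn:HCG} will be immediately preserved by $\psi_{\un{c}}^*$. Since $\psi_{\un{c}}$ extends coordinate-wise to the automorphism of $\widehat{X}_2 \times_k \square_k^n$ of \eqref{eqn:psi_c n}, and since any automorphism preserves codimensions, all the work will lie in checking that the subschemes against which we test properness are invariant under $\psi_{\un{c}}$.

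First I would verify that $\psi_{\un{c}}$ is indeed a well-defined $k$-algebra automorphism of $A[[\un{t}]]$. By construction of $\mathbb{V}_r$, each $c_i$ belongs to $J_{i-1} = (J, t_1, \ldots, t_{i-1})_{A_{i-1}}$ and in particular does not involve $t_i, t_{i+1}, \ldots, t_r$; moreover every $c_i$ lies in the ideal $(J, t_1, \ldots, t_r)_{A_r}$, which is an ideal of definition of $A[[\un{t}]]$. Hence the "upper-triangular" substitution $t_i \mapsto t_i + c_i$ is a continuous $k$-algebra endomorphism of $A[[\un{t}]]$, with a continuous two-sided inverse constructed by solving recursively for $t_1, t_2, \ldots, t_r$ in turn; this is a standard fact about adic rings.

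Second, since $\psi_{\un{c}}$ fixes the coordinates $y_1, \ldots, y_n$ of $\square_k^n$, for every face $F \subset \square_k^n$ we have the equality of closed formal subschemes
$$
\psi_{\un{c}}^{-1}(\widehat{X}_2 \times_k F) = \widehat{X}_2 \times_k F.
$$
Moreover, because every $c_i$ lies in $(J, \un{t})_{A_r} \subset \sqrt{(J,\un{t})}$, the automorphism $\psi_{\un{c}}$ induces the \emph{identity} on $A[[\un{t}]]/\sqrt{(J,\un{t})}$, hence the identity on $\widehat{X}_{2,\red}$. In particular we also have $\psi_{\un{c}}^{-1}(\widehat{X}_{2,\red} \times_k F) = \widehat{X}_{2,\red} \times_k F$ for every face $F$.

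Finally, for an integral $\mathfrak{Z} \in z^q(\widehat{X}_2, n)$, the cycle $\psi_{\un{c}}^* (\mathfrak{Z})$ is again integral of codimension $q$ because $\psi_{\un{c}}$ is an automorphism of $\widehat{X}_2 \times \square^n$. For each face $F$ we have
$$
\psi_{\un{c}}^*(\mathfrak{Z}) \cap (\widehat{X}_2 \times F) = \psi_{\un{c}}^{-1}\bigl(\mathfrak{Z} \cap (\widehat{X}_2 \times F)\bigr),
$$
and the analogous equality with $\widehat{X}_{2,\red}$ replacing $\widehat{X}_2$, so the codimensions on the two sides agree. Thus (\textbf{GP}) and (\textbf{SF}) for $\mathfrak{Z}$ transfer to $\psi_{\un{c}}^*\mathfrak{Z}$; extending $\mathbb{Z}$-linearly yields $\psi_{\un{c}}^*(z^q(\widehat{X}_2, n)) \subset z^q(\widehat{X}_2, n)$. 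The only real subtlety, which is mild, is the first step: confirming that the mutually interacting formal substitutions $t_i \mapsto t_i + c_i$ truly assemble into a continuous automorphism of $A[[\un{t}]]$; once that is in hand the rest is bookkeeping.
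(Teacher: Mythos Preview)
Your proof is correct and follows essentially the same approach as the paper's: both arguments use that $\psi_{\un{c}}$ fixes the $y$-coordinates (hence preserves $\widehat{X}_2 \times F$) to transfer (\textbf{GP}), and that $\psi_{\un{c}}$ induces the identity on $\widehat{X}_{2,\red}$ (since each $c_i$ lies in the largest ideal of definition) to transfer (\textbf{SF}). One small imprecision: by the paper's conventions $c_i \in (J_{i-1})_{A_i}$, not $J_{i-1} \subset A_{i-1}$, so $c_i$ may well involve $t_i$; your ``upper-triangular'' description is therefore slightly off, though this does not affect the argument since the paper already records (just before the lemma) that $\psi_{\un{c}}$ is an automorphism by completeness.
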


\begin{proof}
We may assume $\mathfrak{Z}$ is an integral cycle.

 We first prove the condition (\textbf{GP}) for $\psi_{\un{c}} ^* (\mathfrak{Z})$. Let $F\subset \square^n$ be a face. Since $\psi_{\un{c}}$ is an isomorphism, it preserves dimensions. Hence from $\psi_{\un{c}} ^* (\mathfrak{Z}) \cap (\widehat{X}_2 \times F) = \psi_{\un{c}} ^* (\mathfrak{Z} \cap (\widehat{X}_2 \times F))$, we deduce (\textbf{GP}) for $\psi_{\un{c}} ^* (\mathfrak{Z})$ from that of $\mathfrak{Z}$.

The condition (\textbf{SF}) for $\psi_{\un{c}} ^* (\mathfrak{Z})$ holds trivially because all of $t_i, t_{i}+c_i, c_i$ belong to the largest ideal of definition, so that $\psi_{\un{c}} ^* (\mathfrak{Z}) \cap ((\widehat{X}_2)_{\red} \times F) = \mathfrak{Z} \cap  ((\widehat{X}_2)_{\red} \times F)$.
\end{proof}

Given Lemma \ref{lem:translation0}, we ask whether we can further achieve $\psi_{\un{c}} ^* (\mathfrak{Z}) \in z^q _{\{ \widehat{X}_1 \}} (\widehat{X}_2, n)$ for some $\un{c}\in \mathbb{V}_r$. For such arguments, the cardinality of $J$ is important. Here is the trivial case:

\begin{lem}\label{lem:translation fail}
Suppose the ideal $J$ satisfies $|J|< \infty$. Then we have:
\begin{enumerate}
\item $J=0$. 
\item $Y= \widehat{X}_1= X_1$ and $Y$ is smooth over $k$.
\end{enumerate}
\end{lem}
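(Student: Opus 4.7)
The plan is to prove (1) by a short contradiction argument and then derive (2) almost for free from (1). For (1), the crucial input is that $A$ is an integral domain: if $J \neq 0$, I would pick any nonzero $x \in J$, and observe that multiplication by $x$ gives an injection $A \hookrightarrow xA \subset J$, forcing $|A| \leq |J| < \infty$. A finite integral domain is a field, so $A$ must be a field, whose only ideals are $0$ and $A$ itself; since $J \neq 0$ we would be forced into $J = A$, giving $B = A/J = 0$ and $Y = \emptyset$, which contradicts the standing assumption that $Y$ is connected (hence nonempty). This yields $J = 0$.

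For (2), $J = 0$ immediately gives $A = A/J = B$, hence $\widehat{X}_1 = \Spf(A) = \Spec(B) = Y$ as ringed spaces. To pass from $\widehat{X}_1 = Y$ to $Y = X_1$, I would analyze the canonical morphism $\widehat{X}_1 \to X_1$ on stalks at a point $y \in Y$: it is the completion of $\mathcal{O}_{X_1, y}$ along the ideal $\mathcal{I}_{Y, y}$ of $Y$ in $X_1$, and by what we just showed its target is $\mathcal{O}_{Y, y} = \mathcal{O}_{X_1, y}/\mathcal{I}_{Y, y}$. This forces $\mathcal{I}_{Y, y} = \mathcal{I}_{Y, y}^n$ for all $n$, and in particular $\mathcal{I}_{Y, y} = \mathcal{I}_{Y, y}^2$; Nakayama, applied in the noetherian local ring $\mathcal{O}_{X_1, y}$ to the finitely generated ideal $\mathcal{I}_{Y, y}$ contained in the maximal ideal, gives $\mathcal{I}_{Y, y} = 0$. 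Thus the closed immersion $Y \hookrightarrow X_1$ is stalk-wise an isomorphism along $Y$, so $Y$ is simultaneously open and closed in $X_1$. Using Lemma \ref{lem:open pb} to shrink $X_1$ to this open subscheme $Y$, one obtains $Y = X_1$, which inherits smoothness from $X_1$.

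I do not expect any real obstacle here: both the cardinality step and the Nakayama step are essentially one-line observations. The only mild point of care is interpreting the conclusion $Y = X_1$, since $X_1$ may a priori have connected components disjoint from $Y$; but this is harmless, as Lemma \ref{lem:open pb} allows us to freely replace $X_1$ by any open subscheme containing $Y$, in particular by $Y$ itself once $Y$ has been shown to be open in $X_1$.
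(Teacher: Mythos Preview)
Your argument for (1) is correct and is a pleasant alternative to the paper's route. The paper argues via the descending chain $J \supset J^2 \supset \cdots$: finiteness of $J$ makes it stationary at some $J^N$, $J$-adic completeness then gives $A = A/J^N$, hence $J^N = 0$, and the domain hypothesis forces $J = 0$. Your argument bypasses completeness entirely by injecting $A$ into $J$ via multiplication by a nonzero element; both are fine.

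For (2), your strategy (show $\mathcal{I}_{Y,y} = \mathcal{I}_{Y,y}^2$ and apply Nakayama) is sound, but the step ``the canonical morphism $\widehat{X}_1 \to X_1$ on stalks at $y$ is the completion of $\mathcal{O}_{X_1,y}$ along $\mathcal{I}_{Y,y}$'' is false: stalks of formal completions are \emph{not} completions of stalks in general (the relevant $\varprojlim$ and $\varinjlim$ do not commute). For instance, for $X_1 = \mathbb{A}^2_k$ and $Y = \{y=0\}$, the stalk of $\mathcal{O}_{\widehat{X}_1}$ at the generic point of $Y$ is strictly smaller than $k(x)[[y]]$. Since your deduction ``this forces $\mathcal{I}_{Y,y} = \mathcal{I}_{Y,y}^n$'' rests on this identification, the argument as written has a gap.

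The fix is immediate and stays within your framework: from $J = 0$ you get that every infinitesimal neighborhood $Y_n$ (with ideal $\mathcal{I}_Y^{n+1}$ in $X_1$) is a closed subscheme of $\widehat{X}_1 = Y$, since the ideal of $Y_n$ in $\widehat{X}_1$ is $J^{n+1} = 0$. Combined with the tautological $Y \hookrightarrow Y_n$, this gives $Y = Y_n$, i.e.\ $\mathcal{I}_Y/\mathcal{I}_Y^{n+1} = 0$ as sheaves; now take stalks and apply Nakayama exactly as you do. The paper instead works globally (writing $X_1 = \Spec(A')$, using Krull's intersection theorem to conclude $I = 0$ from $\widehat{I} = 0$), so your stalk-wise Nakayama approach is genuinely different and arguably cleaner in that it avoids assuming $X_1$ is affine.
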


\begin{proof}

Recall $Y= \Spec (B)$ and $\widehat{X}_1 = \Spf (A)$ with $A/J = B$. Write $X_1 = \Spec (A')$ for some smooth $k$-algebra $A'$. Let $I \subset A'$ be the ideal such that $A'/I = B$. By definition, $A= \underset{m}{\varprojlim} \ A' / I^m$, $J= \widehat{I}$, and $A/J = A' /I = B$.

\medskip

(1) Since $J$ is a finite set, the descending chain of ideals 
$$ 
\cdots \subset J^3 \subset J^ 2 \subset J
$$
is stationary, so that there is some $N \geq 1$ such that $J^N = J^{N+1} = \cdots$. 

Since $A$ is $J$-adically complete, we have 
$$
A= \varprojlim_m \ A/ J^m = A/J^N.
$$
 This implies that $J^N = 0$. However, $A$ being a regular $k$-domain, it has no nonzero nilpotent element. Thus $J= 0$, proving (1).
 
 \medskip

(2) That $J=0$ implies $0 = \widehat{I}$, so that $I=0$ and $A= A' = B$. This means $Y = \widehat{X}_1 = X_1$.

 Since $X_1$ is smooth over $k$ and $Y= X_1$, the scheme $Y$ is smooth over $k$.
\end{proof}

\begin{lem}\label{lem:translation fail-1}
Suppose $Y =  X_1$. Then we have:
\begin{enumerate}
\item $\widehat{X}_1 = Y$ and  $(\widehat{X}_2)_{\red} = Y$.
\item $z^q _{\{ \widehat{X}_1\}} (\widehat{X}_2, n) = z^q (\widehat{X}_2, n)$.
\end{enumerate}
\end{lem}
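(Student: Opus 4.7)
The plan is to reduce both statements to the simple structural observation that, when $Y = X_1$, the completion $\widehat{X}_1$ is trivially $Y$, and the completion $\widehat{X}_2$ along $Y$ is locally a formal power series ring over $B$ in a regular sequence.

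For part (1), I would first note that the closed immersion $Y \hookrightarrow X_1$ is the identity, so its defining ideal in $X_1$ is zero. Completing $X_1$ along the zero ideal does nothing, hence $\widehat{X}_1 = X_1 = Y$. For the identification $(\widehat{X}_2)_{\red} = Y$, I would work Zariski-locally on $X_2$ and write $X_2 = \Spec(A')$ with $Y \hookrightarrow X_2$ cut out by an ideal $I \subset A'$ and $A'/I = B$. Since both $X_2$ and $Y$ are smooth over $k$, the closed immersion $Y \hookrightarrow X_2$ is a regular embedding (this is the place I would cite SGA I \cite[Expos\'e II, Remarque 4.14]{SGA1} or equivalent), so that locally $I$ is generated by a regular sequence $t_1, \ldots, t_r$ and $\widehat{A'}_I \simeq B[[t_1, \ldots, t_r]]$. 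Because $I$ annihilates $A'/I$, the quotient of the completion by $I$ is exactly $B$, which is reduced (as $Y$ is smooth). Therefore $IA_2$ is already a radical ideal and is the largest ideal of definition of $\widehat{X}_2 = \Spf(A_2)$, giving $(\widehat{X}_2)_{\red} = \Spec(B) = Y$.

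For part (2), the inclusion $z^q_{\{\widehat{X}_1\}}(\widehat{X}_2, n) \subset z^q(\widehat{X}_2, n)$ is by definition. For the reverse inclusion, let $\mathfrak{Z} \in z^q(\widehat{X}_2, n)$ and $F \subset \square_k^n$ a face. I need to show that $\mathfrak{Z}$ intersects $\widehat{X}_1 \times F$ properly. By part (1), as closed formal subschemes of $\widehat{X}_2$, we have the equality $\widehat{X}_1 = (\widehat{X}_2)_{\red}$, so the required proper intersection condition coincides exactly with the condition (\textbf{SF}) of Definition~\ref{defn:HCG} that $\mathfrak{Z}$ already satisfies. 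Hence $\mathfrak{Z} \in z^q_{\{\widehat{X}_1\}}(\widehat{X}_2, n)$.

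I do not foresee a serious obstacle: the only non-tautological ingredient is the identification of the largest ideal of definition of $\widehat{X}_2$ with $IA_2$ in the regular embedding case, which follows from the explicit formal-power-series local model for completion along a regular embedding. Everything else is a direct unwinding of definitions.
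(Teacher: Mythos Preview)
Your proof is correct and, for part (2), identical to the paper's: once $(\widehat{X}_2)_{\red} = Y = \widehat{X}_1$, the extra proper-intersection constraint with $\widehat{X}_1 \times F$ is exactly the condition (\textbf{SF}), so there is nothing further to check.

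For part (1) you take a slightly more elaborate route than the paper. You localise, invoke the regular-embedding structure to get the explicit model $B[[t_1,\dots,t_r]]$, and then read off that the ideal of definition is radical. The paper argues more directly: the ideal sheaf of $Y$ in $\widehat{X}_2$ is by construction an ideal of definition, and since $Y = X_1$ is smooth over $k$ the quotient $\mathcal{O}_Y$ is already reduced, so this ideal must be the \emph{largest} ideal of definition; hence $(\widehat{X}_2)_{\red} = Y$. Your argument is not wrong, but the power-series model and the citation to SGA I are unnecessary here --- the only fact you actually use from that discussion is that $B$ is reduced, which follows immediately from smoothness of $Y$. The paper's version isolates exactly this and avoids working locally.
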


\begin{proof}
Since $Y= X_1$, we have $\widehat{X}_1 = Y$. Since $Y=X_1$ is smooth over $k$, it has no nonzero nilpotent element, and  the closed immersion $Y \hookrightarrow \widehat{X}_2$ is given by the largest ideal of definition of $\widehat{X}_2$. Thus $(\widehat{X}_2)_{\red} = Y$. 

By the special fiber condition (\textbf{SF}) in Definition \ref{defn:HCG}, all cycles in $z^q (\widehat{X}_2, n)$ already intersect $(\widehat{X}_2)_{\red} \times F$ properly for all faces $F \subset \square^n$. But $(\widehat{X}_2)_{\red} \times F= Y \times F= \widehat{X}_1 \times F$. Hence $ z^q _{\{ \widehat{X}_1\}} (\widehat{X}_2, n) = z^q (\widehat{X}_2, n)$.
\end{proof}

Now suppose $J$ is infinite. We want to prove Lemma \ref{lem:translation} below. As said before, part of the arguments here will be repeated to prove part of \cite{PP}. Although there are some overlaps, both of the articles will retain their respective needed arguments to make them self-contained.


\begin{lem}\label{lem:translation PP}
Let $A$ be a noetherian integral domain complete with respect to an ideal $J \subset A$ and suppose $|J|=\infty$. For an indeterminate $t$, consider $A[[t]]$. Let $(J) \subset A[[t]]$ be the ideal $(J)_{A[[t]]}$, which is proper.

Let $I_1, \cdots, I_N \subset A[[t]]$ be prime ideals such that $(J, t) \not \subset I_i$ for all $1 \leq i \leq N$. Then there exists some $c \in (J)$ such that $t+c \not \in I_i$ for all $1 \leq i \leq N$. 
\end{lem}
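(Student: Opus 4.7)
My plan is to recast the problem as a form of prime avoidance for the ideal $(J)$ inside the noetherian ring $A[[t]]$, avoiding finitely many affine translates of the sub-ideals $(J) \cap I_i$. For each $i$, set $B_i := \{ c \in (J) : t + c \in I_i \}$; we seek $c \in (J) \setminus \bigcup_i B_i$. If $B_i$ is non-empty, fix $c_i^* \in B_i$, so that $B_i = c_i^* + ((J) \cap I_i)$ is a coset of the sub-ideal $(J) \cap I_i \subset (J)$. The first key observation is that whenever $B_i \neq \emptyset$ one has $(J) \not\subset I_i$: otherwise $c_i^* \in (J) \subset I_i$ would give $t = (t + c_i^*) - c_i^* \in I_i$, and combined with $(J) \subset I_i$ this would contradict $(J, t) \not\subset I_i$.

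Next I would discard those $i$ with $B_i = \emptyset$ (they impose no constraint) and those primes $I_i$ that properly contain another $I_j$ (since $I_j \subsetneq I_i$ means $t + c \notin I_i$ is stronger than $t + c \notin I_j$), so one may assume the remaining $I_i$'s are pairwise incomparable, each $B_i$ is nonempty, and $(J) \not\subset I_i$ for each $i$. Then standard prime avoidance applied to the product ideal $(J) \cdot \prod_{j \neq i} I_j$, which is not contained in the prime $I_i$ (as none of its factors is), produces an element $x_i \in (J) \cap \bigcap_{j \neq i} I_j$ with $x_i \notin I_i$.

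I would then seek $c$ in the form $c := \sum_{i=1}^{N} \lambda_i x_i \in (J)$ with coefficients $\lambda_i \in A$. Because $x_j \in I_i$ for $j \neq i$, we have $c \equiv \lambda_i x_i \pmod{I_i}$, so the requirement $c \notin B_i$ becomes $\lambda_i x_i \not\equiv c_i^* \pmod{I_i}$. Since $A[[t]]/I_i$ is an integral domain and the image of $x_i$ there is nonzero, the equation $\lambda_i x_i \equiv c_i^* \pmod{I_i}$ admits at most one solution $\overline{\lambda}_i \in A[[t]]/I_i$, and hence at most one residue class in the image $A/(A \cap I_i)$. The quotient $A/(A \cap I_i)$ is a nonzero integral domain, so it contains at least two residue classes, and $\lambda_i \in A$ can be chosen to avoid the forbidden one independently for each $i$. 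The main step requiring care will be the bookkeeping between the two rings $A$ and $A[[t]]$: the avoidance elements $x_i$ must lie in $(J) \subset A[[t]]$ to guarantee $c \in (J)$, whereas the fine-tuning scalars $\lambda_i$ are drawn from $A$. I do not expect the assumption $|J| = \infty$ to enter this particular step, since $A/(A \cap I_i)$ is automatically large enough to dodge a single residue class; it is most likely invoked in the subsequent Lemma \ref{lem:translation}, where all $r$ variables $t_1, \ldots, t_r$ must be translated simultaneously.
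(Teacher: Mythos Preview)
Your approach is correct in spirit and genuinely different from the paper's, but there is one small slip to fix.

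\textbf{The slip.} Your reduction to pairwise incomparable primes is stated backwards. You write ``discard those primes $I_i$ that properly contain another $I_j$,'' yet your own parenthetical correctly observes that when $I_j \subsetneq I_i$ the condition $t+c \notin I_i$ is the \emph{stronger} one. Hence you must \emph{retain} the maximal primes and discard those properly contained in another; otherwise the conclusion for the surviving (minimal) primes does not propagate upward to the discarded larger ones. With this correction the remaining primes are pairwise incomparable, each still has $B_i\neq\emptyset$ and $(J)\not\subset I_i$, and your construction of $x_i\in (J)\cap\bigcap_{j\neq i}I_j$ with $x_i\notin I_i$ and of $c=\sum_i\lambda_i x_i$ goes through exactly as you describe.

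\textbf{Comparison with the paper.} The paper's proof is organized quite differently. It splits into cases according to whether $t\in I_i$ or not; for the primes containing $t$ it uses ordinary prime avoidance to find a single $c_0\in(J)$ outside all of them, and then replaces $c_0$ by a suitable power $c_0^p$. The key step there (using that $(J)\subset A[[t]]$ is contained in the adic radical, so $1-c_0^a$ is a unit) is that for each prime $I$ with $t\notin I$ there is at most one exponent $p\geq 1$ with $t+c_0^p\in I$, so finitely many bad exponents can be avoided. Your route is a cleaner, more structural ``affine prime avoidance'': encode each bad set $B_i$ as a coset of $(J)\cap I_i$, manufacture separating elements $x_i$, and tune the scalars $\lambda_i\in A$ independently modulo each $I_i$. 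This avoids the case analysis and the appeal to completeness entirely, and---as you correctly note---does not use the hypothesis $|J|=\infty$. (On close reading neither does the paper's argument; that hypothesis is present to match the standing assumptions of \S\ref{sec:formal_move} rather than because this particular lemma needs it.)
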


\begin{proof}
There are a few cases to consider. The easiest case is:

\medskip

\textbf{Case 1:} Suppose $t \not \in I_i$ for all $1 \leq i \leq N$. In this case, we may simply take $c=0$ to get to the conclusion of the lemma.

\medskip

Before we discuss the other cases, we make the following claim:

\medskip

\emph{Claim 1:} \emph{Suppose $t\in I_{i_j}$ for some indices $1 \leq i_1 < \cdots < i_u \leq N$. Then 
\begin{enumerate}
\item $(J) \not \subset \bigcup_{j=1} ^u I_{i_j}$,
\item $(J) \setminus ( (J) \cap (\bigcup_{j=1} ^u I_{i_j}))$ is an infinite set, and
\item for each $c \in (J) \setminus ( (J) \cap (\bigcup_{j=1} ^u I_{i_j}))$, we have $t+c \not \in I_{i_j}$ for $1 \leq j \leq u$.
\end{enumerate}
}

\medskip

For (1), note we are given that $(J, t) \not \subset I_{i_j}$, while $t \in I_{i_j}$ for $1 \leq j \leq u$. Thus we have $(J) \not \subset I_{i_j}$. Hence by the prime avoidance theorem (see Atiyah-MacDonald \cite[Proposition 1.11-i), p.8]{AM}), we deduce (1).

For (2), choose any $c \in (J) \setminus  ((J) \cap (\bigcup_{j=1} ^u I_{i_j}))$. We show that all members of $\{ c, c^2, c^3, \cdots \}$ are distinct from each other, and no member of them belongs to any of $I_{i_j}$. 

For the first part, suppose $c^p = c^{p'}$ for some integers $1 \leq p < p'$. Then $c^p (1- c^{a}) = 0$, where $a:= p'-p$. Since $c \not = 0$ and $A[[t]]$ is an integral domain, we deduce that $1- c^a = 0$, i.e. $c$ is a unit. But $(J)$ is a proper ideal of $A[[t]]$ and $c \in (J)$, so that $c$ cannot be a unit, contradiction. Thus the members of $\{ c, c^2, c^3, \cdots \}$ are distinct from each other, and the set is infinite.

If $c^p \in I_{i_j}$ for some $j$ and $p \geq 1$, then since $I_{i_j}$ is prime, we deduce that $c \in I_{i_j}$, which contradicts our choice of $c$. Hence we proved that the infinite set $\{ c, c^2, c^3, \cdots \}$ is in the set $ (J) \setminus ( (J) \cap (\bigcup_{j=1} ^u I_{i_j}))$, proving (2).

For (3), suppose $t+c \in I_{i_j}$ for some $1 \leq j \leq u$. Since we are given that $t \in I_{i_j}$, we have $(t+c) - t = c \in I_{i_j}$, which contradicts our choice of $c$. Hence $t+c \not \in I_{i_j}$. This proves (3), completing the proof of Claim 1.

\medskip

Returning to the proof of the lemma, consider the following case, opposite to the Case 1:

\medskip

\textbf{Case 2:} Suppose $t \in I_i$ for all $1 \leq i \leq N$. In this case, applying Claim 1, we see that $(J) \setminus ( (J) \cap (\bigcup_{i=1} ^N I_i))$ is an infinite set, such that for any member $c$ of the set, we have $t+c \not \in I_i$ for all $1 \leq i \leq N$. This proves the lemma in this case.

\medskip

The remaining case is the ``mixed case" where we have $t \in I_i$ for some indices $i$, while $t \not \in I_{i'}$ for some other indices $i'$. Before we work out this case, consider the following:

\medskip

\emph{Claim 2:} \emph{For a prime ideal $I \subset A[[t]]$, suppose $t \not \in I$. Let $c \in (J)$.
\begin{enumerate}
\item If $c \in I$, then $t+ c^p \not \in I$ for each integer $p \geq 1$.
\item If $c \not \in I$, then there exists at most one integer $p \geq 1$ such that $t+ c^p \in I$.
\end{enumerate}
}

\medskip

For (1), suppose $c \in I$. Then for $p \geq 1$, we have $c^p \in I$. Thus if $t + c^p \in I $ for some $p$, then $(t+ c^p )- c^p = t \in I$, contradicting our assumption that $t \not \in I$. Thus $t + c^p \not \in I$ for each $p \geq 1$.

For (2), suppose $c \not \in I$. If there are two distinct positive integers $p< p'$ such that both $t+ c^p$ and $ t+ c^{p'} \in I$, then
$$
(t+ c^p) - (t + c^{p'}) = c^p (1- c^a) \in I,
$$
 where $a:= p'-p$. Since $I$ is prime and $c \not \in I$, we have $1- c^a \in I$.

But, $A[[t]]$ is complete with respect to $(J, t)$, and it contains $c$. So we have $1 + c^a + c^{2a} + \cdots \in A[[t]]$. Thus 
$$
1= (1+ c^a + c^{2a} + \cdots) (1- c^a) \in I,
$$
contradicting that $I$ is a prime ideal, so that it is proper in $A[[t]]$. This shows (2).

\medskip

Returning to the proof, now finally we have the last remaining case:

\medskip

\textbf{Case 3:} Suppose we have $t \in I_i$ for some indices $i$, and $t \not \in I_{i'}$ for some other indices $i'$ at the same time. After relabeling them, we may assume that there is a positive integer $1 \leq s <N$ such that we have
$$
\tuborg
t \not \in I_i, & \mbox{ for } 1 \leq i \leq s, \\
t \in I_i, & \mbox{ for } s+1 \leq i \leq N.
\sluttuborg
$$ 

First applying Claim 1 to the prime ideals $I_{s+1}, \cdots, I_N$, we can form the infinite set $\widetilde{J}:= (J) \setminus  ((J) \cap ( \bigcup_{i=s+1} ^N I_i))$.

Choose any $c_0 \in \widetilde{J}$. Then $c_0 ^p \in \widetilde{J}$ for all $p \geq 1$ so that by Claim 1-(3), we have $t+ c_0 ^p \not \in I_i$ for all $s+1 \leq i \leq N$ and all $p \geq 1$. 

On the other hand, for $1 \leq i \leq s$, by Claim 2, there exists a finite subset $B \subset \mathbb{N}$ of the size $|B| \leq s$ such that for all $p \in \mathbb{N} \setminus B$, we have $t+ c_0 ^p \not \in I_i$ for all $1 \leq i \leq s$. 

Thus for $p \in \mathbb{N} \setminus B$, we have just shown that $t + c_0 ^p \not \in I_i$ for all $1 \leq i \leq N$. This proves the lemma in this last Case 3.

This complete the proof of the lemma.
\end{proof}

\begin{lem}\label{lem:translation}
Suppose that $|J|= \infty$. Let $\mathfrak{Z}  \in z^q (\widehat{X}_2, n)$.

Then there exists some $\un{c} \in \mathbb{V}_r$ such that $\psi_{\un{c}} ^* (\mathfrak{Z}) \in z^q _{\{ \widehat{X}_1 \}} (\widehat{X}_2, n)$.
\end{lem}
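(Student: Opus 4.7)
The plan is to argue by induction on the codimension $r$, using Lemma \ref{lem:translation PP} to handle the ``good'' minimal primes and condition (\textbf{SF}) together with equidimensionality to absorb the ``bad'' ones. After reducing to integral components $\mathfrak{Z}_\alpha$ with defining prime $P_\alpha \subset R_n$, the goal becomes: for each face $F$ and each such $P_\alpha$, find $\un{c}$ so that every minimal prime of $P_\alpha + I_F + (t_1+c_1, \ldots, t_r+c_r)$ in $R_n$ has height $\geq q + r + |F|$. Since each minimal prime of this extended ideal dominates some minimal prime $P$ of $P_\alpha + I_F$ (which has height $\geq q + |F|$ by (\textbf{GP})), by Krull's principal ideal theorem it suffices to ensure, for each such $P$, that either the $t_i + c_i$ form a regular sequence modulo $P$, or else $P$ already has height $\geq q + r + |F|$. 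This motivates splitting the minimal primes into ``good'' ones with $(J, \un{t}) \not\subset P$ and ``bad'' ones with $(J, \un{t}) \subset P$.

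For a bad prime $P$, minimality over $P_\alpha + I_F$ combined with $(\sqrt{J}, \un{t}) \subset P$ forces $P$ to be minimal over $P_\alpha + I_F + (\sqrt{J}, \un{t})$, and the (\textbf{SF}) condition translates to the image $\overline{P}$ in $R_n/(\sqrt{J}, \un{t}, I_F) \simeq B_\red[\un{y}]/I_F$ having height $\geq q$. A dimension count using the equidimensionality $\dim R_n = \dim A + r + n$, the inequality $\dim B_\red \leq \dim A$ (since $Y_\red \hookrightarrow X_1$ and $X_1$ is smooth), and the fact that $(\sqrt{J}, \un{t}, I_F)$ has height $\dim A + r - \dim B_\red + |F|$ in $R_n$, then yields $\text{height}_{R_n}(P) \geq q + r + |F|$. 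Thus bad primes are already large enough independently of $\un{c}$.

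For the $r=1$ case, enumerate the (finitely many) good minimal primes $P \subset R_n$ across all pairs $(\alpha, F)$ and form their restrictions $P \cap A[[t]]$, which remain prime in $A[[t]]$ with $(J,t) \not\subset P \cap A[[t]]$. Applying Lemma \ref{lem:translation PP} in $A[[t]]$ produces $c \in (J)_{A[[t]]}$ with $t + c \notin P \cap A[[t]]$; since $t + c$ lies in $A[[t]]$, this forces $t + c \notin P$ in $R_n$ as well. Krull's theorem delivers the height increase for good primes, and Lemma \ref{lem:translation0} guarantees that $\psi_c^*(\mathfrak{Z}) \in z^q(\widehat{X}_2, n)$. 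For the inductive step with $r > 1$, apply the $r = 1$ result to the pair $(\widehat{X}_{1,r-1}, \widehat{X}_2) = (\Spf A_{r-1}, \Spf A_{r-1}[[t_r]])$ with ideal $J_{r-1}$ (still infinite, since $J \subset J_{r-1}$) to obtain $c_r \in (J_{r-1})$ with $\mathfrak{Z}' := \psi_{(0,\ldots,0,c_r)}^*(\mathfrak{Z}) \in z^q_{\{\widehat{X}_{1,r-1}\}}(\widehat{X}_2, n)$. Pulling back via Proposition \ref{prop:1st indep} yields $\mathfrak{Z}'' := \widehat{\iota}^*(\mathfrak{Z}') \in z^q(\widehat{X}_{1,r-1}, n)$, and the inductive hypothesis applied to $(\widehat{X}_1, \widehat{X}_{1,r-1})$ with $\mathfrak{Z}''$ gives $(c_1, \ldots, c_{r-1}) \in \mathbb{V}_{r-1}$. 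Commutativity of translations in distinct variables and the fact that $\psi_{(c_1,\ldots,c_{r-1}, 0)}$ restricts faithfully to $\widehat{X}_{1,r-1}$ show that $\un{c} = (c_1, \ldots, c_r)$ works.

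The main obstacle is the dimension-theoretic bookkeeping for bad primes: verifying the isomorphism $R_n / (\sqrt{J}, \un{t}, I_F) \simeq B_\red[\un{y}]/I_F$ and the equidimensional Krull-dimension formulas for the restricted formal power series ring $A[[\un{t}]]\{\un{y}\}$, so that one can pass cleanly between heights in $R_n$ and heights in the quotient. A secondary technical point is that the inductive step is applied to the pair $(\widehat{X}_1, \widehat{X}_{1,r-1})$ in which the outer formal scheme $\widehat{X}_{1,r-1}$ is not \emph{a priori} the completion of a smooth $k$-scheme along $Y$; this is handled either by proving the statement in the slightly strengthened form (requiring only that the outer formal scheme be regular, equidimensional, noetherian and of the form $\Spf A[[\un{t}]]$ with $A$ complete with respect to an infinite ideal $J$), or equivalently by working locally on $Y$ where the complete intersection $X_1 \hookrightarrow X_2$ can be factored through an intermediate smooth closed subscheme.
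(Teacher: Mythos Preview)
Your proposal is correct and follows essentially the same route as the paper: induction on $r$, with the base case $r=1$ handled by contracting the finitely many minimal primes of the face intersections down to $A[[t_1]]$ and applying Lemma~\ref{lem:translation PP}, and the inductive step carried out by first translating in $t_r$ to land in $z^q_{\{\widehat{X}_{1,r-1}\}}(\widehat{X}_2,n)$, then restricting to $\widehat{X}_{1,r-1}$ and invoking the hypothesis for the pair $(\widehat{X}_1,\widehat{X}_{1,r-1})$.

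The one place where you are \emph{more} explicit than the paper is precisely the ``bad prime'' analysis. The paper simply asserts that every minimal prime $I^F_{i,j}$ of $\mathfrak{Z}_i\cap(\widehat{X}_2\times F)$ satisfies $(J,t_1)\not\subset I^F_{i,j}$, without justification; your (\textbf{SF})-based dimension count is exactly what is needed to supply this. Note, however, that your phrasing is slightly roundabout: since $R_n$ is regular, Serre's inequality together with (\textbf{GP}) forces every minimal prime $P$ of $P_\alpha+I_F$ to have height \emph{exactly} $q+|F|$, so your inequality $\operatorname{height}(P)\ge q+r+|F|$ for a bad $P$ is really a contradiction (for $r\ge 1$), i.e.\ it shows that bad primes do not occur. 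This is cleaner than carrying them along as a separate case, and it is how the paper's unproved claim should be read. Your remark about needing the lemma in the slightly strengthened form (regular equidimensional $A$ with $|J|=\infty$, rather than literally a completion of a smooth $k$-scheme) is well taken and applies equally to the paper's inductive step.
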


\begin{proof}
Write $\mathfrak{Z} = \sum_{i=1} ^N m_i \mathfrak{Z}_i \in z^q (\widehat{X}_2, n)$ for some integers $m_i \not = 0$ and finitely many distinct integral cycles $\mathfrak{Z}_i \in z^q (\widehat{X}_2, n)$. 

By Lemma \ref{lem:translation0}, we already have $\psi_{\un{c}} ^* (\mathfrak{Z}) \in z^q (\widehat{X}_2, n)$ for each $\un{c} \in \mathbb{V}_r$. It remains to show that for a suitable $\un{c}\in \mathbb{V}_r$, each cycle $\psi_{\un{c}} ^* (\mathfrak{Z}_i )$ intersects $\widehat{X}_1 \times F$ properly for all faces $F \subset \square_k ^n$. 

\medskip

We prove it by induction on the codimension $r$ of $\widehat{X}_1$ in $\widehat{X}_2$. 

\medskip

\textbf{Step 1:} First suppose $r=1$. Note that $\square_{\widehat{X}_2}^n= \Spf (A[[t_1]]\{ y_1, \cdots, y_n \})$. 

Let $I_i \subset A[[t_1]]\{ y_1, \cdots y_n\}$ be the prime ideal of $\mathfrak{Z}_i$ for $1 \leq i \leq N$.

\medskip

Let $F \subset \square_k^n$ be a face. Inside the space $\widehat{X}_2 \times F$, the integral components of the intersection $\mathfrak{Z}_i \cap (\widehat{X}_2 \times F)$ are given by a finite set of prime ideals $I_{i, j}^F \subset A[[t_1]] \widehat{\otimes}_k \Gamma (F)$, where $\Gamma(F)$ is the coordinate ring of $F$. 
They satisfy $(J, t_1)_{A[[t_1]]} \not \subset I_{i,j} ^F$. 

\medskip

To have proper intersections with $\widehat{X}_1 \times F$ over all $F$ after a translation, we seek some $c \in (J)$ so that $t_1+c \not \in I_{i, j} ^F$ for all $i, j$ and $F$ at the same time. It is achieved if the contracted prime ideals (via $A[[t_1]] \hookrightarrow A[[t_1]] \widehat{\otimes}_k \Gamma (F)$),
$$
I_{i, j} ^{F, 0} := A [[ t_1 ]] \cap I_{i,j} ^F \subset A[[t_1 ]]
$$ 
over all $i,  j , F$ have the property that $t_1 + c \not \in I_{i,j} ^{F, 0}$. Note that we still have $(J, t_1)_{A[[t_1]]} \not \subset I_{i,j} ^{F, 0}$.

Since $\{ I_{i,j} ^{F, 0} \}_{i, j, F}$ is a finite collection of prime ideals of $A[[t_1]]$ such that none of them contains $(J, t_1)_{A[[t_1]]}$, we deduce from Lemma \ref{lem:translation PP} that there exists some $c \in (J)$ such that $t_1 + c \not \in I_{i,j} ^{F, 0}$ for all $i, j, F$.

Thus for such $c \in (J)$, the translated cycle $\psi_{c} ^* (\mathfrak{Z}_i)$ intersects properly with $\widehat{X}_1 \times F$ for all $i$ and $F$, as desired. This proves the lemma when $r=1$.

\medskip

\textbf{Step 2:} Suppose $r \geq 2$ and suppose that the lemma holds when the codimension of $\widehat{X}_1$ in $\widehat{X}_2$ is one of $1, \cdots, r-1$. For the ring $A_{r-1}:= A[[t_1, \cdots, t_{r-1}]]$, which is complete with respect to $J_{r-1}$, we have the formal scheme $\widehat{X}_{1, r-1} = \Spf (A_{r-1})$. We have the closed immersion $\widehat{X}_{1, r-1} \hookrightarrow \widehat{X}_2$ of codimension $1$ defined by $t_r$.

For a given cycle $\mathfrak{Z}$, by the Step 1 applied to $\widehat{X}_{1, r-1}$, there exists some $c_r \in (J_{r-1}) $ such that 
$$
\psi_{t_r \mapsto t_r + c_r} ^* (\mathfrak{Z}) \ \ \mbox{ and } \ \ \widehat{X}_{1, r-1} \times F
$$
intersect properly for all face $F \subset \square_k ^n$.

\medskip

Let $\mathfrak{Z}'$ be the cycle associated to the intersection of $\psi_{t_r \mapsto t_r + c_r} ^* (\mathfrak{Z})$ and $\widehat{X}_{1, r-1} \times \square_k^n$. 
Here $\widehat{X}_1 \subset \widehat{X}_{1, r-1}$ is of codimension $r-1$ and defined by the ideal generated by $t_1, \cdots, t_{r-1}$. Thus by the induction hypothesis, there exists some $\un{c}'= (c_1, \cdots, c_{r-1}) \in \mathbb{V}_{r-1}$ such that the pull-back $\psi_{\un{c}'} ^* (\mathfrak{Z}')$ intersects $\widehat{X}_1 \times F $ properly for all faces $F \subset \square_k ^n$.

Combining with the previously chosen $c_r \in (J_{r-1})$, we have $\un{c}:= (\un{c}', c_r)\in \mathbb{V}_r $ such that $\psi_{\un{c}} ^* (\mathfrak{Z})$ intersects $\widehat{X}_1 \times_k F$ properly for all faces $F \subset \square_k ^n$. 

This proves the lemma by induction.
\end{proof}

In fact, the observant reader notices that the proof given in Lemma \ref{lem:translation} works for a bit more general circumstances. We state them as follows. Since the proof is identical, we omit the arguments.

\begin{lem}\label{lem:translation1}
Suppose that $|J| = \infty$. Let $n_1, n_2, \cdots, n_p, q_1, \cdots, q_p \geq 0 $ be integers, and suppose we have $\mathfrak{Z}_i \in z^{q_i} (\widehat{X}_2, n_i)$ for $1 \leq i \leq p$. 

Then there exists $\un{c} \in \mathbb{V}_r$ such that $\psi_{\un{c}} ^* (\mathfrak{Z}_i) \in z^{q_i} _{\{\widehat{X}_1\}} (\widehat{X}_2, n_i)$ for all $1 \leq i \leq p$.
\end{lem}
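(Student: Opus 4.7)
My plan is to run the same argument as in Lemma \ref{lem:translation} essentially verbatim, with the only change being that we aggregate the prime ideals from all finitely many cycles at once. Since Lemma \ref{lem:translation PP} only cares about having a \emph{finite} list of primes to avoid, dealing with $p$ cycles instead of $1$ costs nothing: the total list of primes to avoid is still finite.

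Concretely, I would proceed by induction on the codimension $r$ of $\widehat{X}_1$ in $\widehat{X}_2$. For the base case $r=1$, so that $\square_{\widehat{X}_2}^{n_i} = \Spf(A[[t_1]]\{y_1,\ldots,y_{n_i}\})$, I would write each $\mathfrak{Z}_i = \sum_j m_{i,j} \mathfrak{Z}_{i,j}$ as an integral decomposition, and then for every face $F \subset \square_k^{n_i}$ collect the (finitely many) prime ideals $I_{i,j,F} \subset A[[t_1]]\,\widehat{\otimes}_k\Gamma(F)$ giving the integral components of $\mathfrak{Z}_{i,j} \cap (\widehat{X}_2 \times F)$. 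Contracting along $A[[t_1]] \hookrightarrow A[[t_1]]\,\widehat{\otimes}_k \Gamma(F)$ yields a finite family of prime ideals $I_{i,j,F}^{0} \subset A[[t_1]]$, none of which contains $(J,t_1)_{A[[t_1]]}$ (by condition $(\textbf{GP})$ for each $\mathfrak{Z}_i$ and the fact that contraction preserves the non-containment property). Applying Lemma \ref{lem:translation PP} to this single finite family, I obtain some $c \in (J)$ with $t_1 + c \notin I_{i,j,F}^{0}$ for all indices $i,j$ and all faces $F \subset \square_k^{n_i}$ simultaneously. Combined with Lemma \ref{lem:translation0} (which gives the conditions $(\textbf{GP})$ and $(\textbf{SF})$ for $\psi_c^*(\mathfrak{Z}_i)$ as cycles in $z^{q_i}(\widehat{X}_2,n_i)$), this proves $\psi_c^*(\mathfrak{Z}_i) \in z^{q_i}_{\{\widehat{X}_1\}}(\widehat{X}_2,n_i)$ for all $i$.

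For the inductive step with $r \geq 2$, I would imitate Step 2 of Lemma \ref{lem:translation} twice: first apply the base case to the codimension-one closed immersion $\widehat{X}_{1,r-1} \hookrightarrow \widehat{X}_2$ and the whole family $\{\mathfrak{Z}_i\}_{i=1}^p$ to obtain some $c_r \in (J_{r-1})$ such that $\psi_{t_r \mapsto t_r + c_r}^*(\mathfrak{Z}_i)$ intersects $\widehat{X}_{1,r-1} \times F$ properly for every face $F \subset \square_k^{n_i}$ and every $i$. Let $\mathfrak{Z}_i'$ be the cycle associated to the intersection. Then by the induction hypothesis applied to the closed immersion $\widehat{X}_1 \hookrightarrow \widehat{X}_{1,r-1}$ of codimension $r-1$ and the finite family $\{\mathfrak{Z}_i'\}_{i=1}^p$, there is some $\un{c}' = (c_1,\ldots,c_{r-1}) \in \mathbb{V}_{r-1}$ making all $\psi_{\un{c}'}^*(\mathfrak{Z}_i')$ intersect $\widehat{X}_1 \times F$ properly. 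Concatenating, $\un{c} := (\un{c}', c_r) \in \mathbb{V}_r$ has the required property.

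The only thing to be a bit careful about is the bookkeeping in the inductive step, ensuring that the $c_r$ chosen uniformly for all $\mathfrak{Z}_i$ still makes sense when we subsequently treat all the intersected cycles $\mathfrak{Z}_i'$ as a single finite family at the next stage. This is not really an obstacle, since the induction hypothesis is formulated exactly for finite families. So I do not expect any serious difficulty; the statement is truly a cosmetic upgrade of Lemma \ref{lem:translation}, and the proof will consist of observing that each invocation of Lemma \ref{lem:translation PP} can digest a finite union of prime ideals regardless of whether they come from one cycle or several.
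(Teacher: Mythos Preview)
Your proposal is correct and is exactly the approach the paper has in mind: the paper itself omits the argument, remarking that the proof of Lemma~\ref{lem:translation} works verbatim for finitely many cycles because Lemma~\ref{lem:translation PP} accepts any finite collection of primes. Your write-up makes this explicit and handles the induction and bookkeeping just as in Lemma~\ref{lem:translation}.
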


\subsubsection{Homotopy}
We want to relate the given cycle $\mathfrak{Z}$ and its translated cycle $\psi_{\un{c}} ^* (\mathfrak{Z})$ of Lemma \ref{lem:translation} by a homotopy. 

For each $\un{c} \in \mathbb{V}_r$, consider the composite
$$
H_{n,\un{c}}:  \Spf (A[[\un{t}]])\times_k \square_k ^{n+1} \overset{\phi_{n+1, \un{c}}}{\to} \Spf (A[[\un{t}]]) \times_k \square_k ^{n+1} \overset{pr}{\to} \Spf (A[[\un{t}]]) \times_k \square_k ^n,
$$
where $\phi_{n+1,\un{c}}$ is given by 
$$
 \tuborg
  \un{t} \mapsto \un{t} + y_{n+1} \un{c}, \\ 
 \mbox{fixing each } a \in A \mbox{ and each of } y_1, \cdots, y_{n+1}, 
 \sluttuborg
 $$
while $pr$ is the projection that ignores the last coordinate $y_{n+1}$. Note that both $\phi_{n+1, \un{c}}$ and $pr$ are morphisms for which their pull-backs of cycles exist.

\medskip

Recall that $\widehat{X}_2= \Spf (A[[\un{t}]])$. For $\mathfrak{Z} \in z^q  (\widehat{X}_2, n)$, and for a choice of $\un{c} \in \mathbb{V}_r$ as in Lemma \ref{lem:translation1} applied to the finite collection of cycles $\partial_i ^{\epsilon}\mathfrak{Z}$ for $1 \leq i \leq n$, $\epsilon \in \{ 0, 1 \}$ and $\mathfrak{Z}$, we have
\begin{eqnarray}\label{eqn:homotopy var1}
& & (-1)^{n-1} H_{n-1, \un{c}} ^* (\partial \mathfrak{Z}) = (-1)^{n-1} H_{n-1, \un{c}} ^* \sum_{i=1} ^n (-1)^i (\partial_i ^{1} - \partial_i ^0) (\mathfrak{Z})
\end{eqnarray}
\begin{eqnarray}
\notag &= & (-1)^{n-1} \sum_{i=1} ^n (-1)^i \phi_{n, \un{c}} ^* pr^* (\partial_i ^1 - \partial_i ^0) ( \mathfrak{Z})\\
\notag & =& (-1)^{n-1} \sum_{i=1} ^n (-1)^i ( \partial_i ^1 - \partial_i ^0) \phi_{n+1,\un{c}} ^* pr^* (\mathfrak{Z}) \\
\notag &= & (-1)^{n-1}  \sum_{i=1} ^n (-1)^i (\partial_i ^1 - \partial_i ^0) H_{n,\un{c}} ^* (\mathfrak{Z}),
\end{eqnarray}
and 
\begin{equation}\label{eqn:homotopy var2}
(-1)^j \partial   H_{n,\un{c}} ^* (\mathfrak{Z}) = (-1)^n \sum_{i=1} ^{n+1} (-1)^i (\partial_i ^1 - \partial_i ^0) H_{n,\un{c}} ^* (\mathfrak{Z}).
\end{equation}
Here, $\partial_{n+1} ^1 H_{n,\un{c}} ^* (\mathfrak{Z}) = \psi_{\un{c}} ^* (\mathfrak{Z})$ and $\partial_{n+1} ^0 H_{n, \un{c}} ^* (\mathfrak{Z}) = \mathfrak{Z}$. Combining them with \eqref{eqn:homotopy var1} and \eqref{eqn:homotopy var2}, we deduce
\begin{equation}\label{eqn:homotopy master}
 (-1)^n \partial H_{n,\un{c}} ^* (\mathfrak{Z}) + (-1)^{n-1} H_{n-1, \un{c}} ^* (\partial \mathfrak{Z}) = \mathfrak{Z} - \psi_{\un{c}} ^* (\mathfrak{Z}).
\end{equation}

\medskip

\begin{lem}\label{lem:homo1 inj}
If $\mathfrak{Z} \in z^q _{\{ \widehat{X}_1\}} (\widehat{X}_2, n)$, then $H_{n, \un{c}} ^* (\mathfrak{Z}) \in z^q _{\{ \widehat{X}_1\}} (\widehat{X}_2, n+1)$. 
\end{lem}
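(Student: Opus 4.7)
The plan is to verify all defining conditions of $z^q_{\{\widehat{X}_1\}}(\widehat{X}_2, n+1)$ for $H_{n,\un{c}}^*(\mathfrak{Z})$, using the factorization $H_{n,\un{c}}^* = \phi_{n+1,\un{c}}^* \circ pr^*$. First I would handle the projection $pr \colon \widehat{X}_2 \times \square^{n+1} \to \widehat{X}_2 \times \square^n$, which is flat of type (III) in Lemma \ref{lem:pre flat pb}, giving $pr^*(\mathfrak{Z}) = \mathfrak{Z} \times \square^1 \in z^q(\widehat{X}_2, n+1)$. Moreover, for each face $F' = F \times F_{n+1}$ of $\square^{n+1}$, one has $pr^*(\mathfrak{Z}) \cap (\widehat{X}_1 \times F') = (\mathfrak{Z} \cap \widehat{X}_1 \times F) \times F_{n+1}$, whose codimension in $\widehat{X}_1 \times F'$ agrees with that of $\mathfrak{Z} \cap \widehat{X}_1 \times F$ in $\widehat{X}_1 \times F$, so the hypothesis $\mathfrak{Z} \in z^q_{\{\widehat{X}_1\}}(\widehat{X}_2, n)$ upgrades to $pr^*(\mathfrak{Z}) \in z^q_{\{\widehat{X}_1\}}(\widehat{X}_2, n+1)$.

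Next I would handle the automorphism $\phi_{n+1,\un{c}}$ of $\widehat{X}_2 \times \square^{n+1}$ sending $t_i \mapsto t_i + y_{n+1} c_i$. Since $c_i \in J_r$ and hence $y_{n+1} c_i$ lies in the largest ideal of definition $J_r \cdot R_{n+1}$, this is a well-defined automorphism of $R_{n+1} = A[[\un{t}]]\{y_1, \dots, y_{n+1}\}$, identical to the identity modulo that ideal, and it fixes every coordinate in $A$ and every $y_i$; thus it preserves each face $F' \subset \square_k^{n+1}$ set-theoretically. The argument of Lemma \ref{lem:translation0} applies verbatim to give that $\phi_{n+1,\un{c}}^*$ preserves both (\textbf{GP}) and (\textbf{SF}), whence $H_{n,\un{c}}^*(\mathfrak{Z}) = \phi_{n+1,\un{c}}^*(pr^*(\mathfrak{Z})) \in z^q(\widehat{X}_2, n+1)$.

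For the remaining proper intersection with $\widehat{X}_1 \times F'$, I would use that $\phi_{n+1,\un{c}}$ is a dimension-preserving automorphism to transfer the question onto $pr^*(\mathfrak{Z})$:
\[
H_{n,\un{c}}^*(\mathfrak{Z}) \cap (\widehat{X}_1 \times F') \;\cong\; pr^*(\mathfrak{Z}) \cap \Phi_{n+1,\un{c}}(\widehat{X}_1 \times F'),
\]
where the right hand side equals the intersection of $\mathfrak{Z} \times \square^1$ with the closed formal subscheme $\{t_i - y_{n+1} c_i = 0\}_{i=1}^r \times F \times F_{n+1}$ of $\widehat{X}_2 \times \square^{n+1}$. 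The regular sequence $(t_i - y_{n+1} c_i)_{i=1,\dots,r}$ cuts out this subscheme as a codimension-$r$ regular formal subscheme isomorphic to $\widehat{X}_1 \times \square^1$ via $\Phi_{n+1,\un{c}}$, and since $y_{n+1} c_i$ lies in the largest ideal of definition we have the scheme-theoretic inclusion $(\widehat{X}_2)_{\red} \times \square^1 \subset \{t_i = y_{n+1} c_i\}$. Face by face: at $F_{n+1} = \{0\}$ the right hand side reduces to $\mathfrak{Z} \cap \widehat{X}_1 \times F$ (proper by hypothesis); at $F_{n+1} = \{1\}$ it becomes $\mathfrak{Z} \cap Y_{\un{c}} \times F$ with $Y_{\un{c}} = \{t_i = c_i\}$; at $F_{n+1} = \square^1$ it gives the one-parameter family in $y_{n+1}$. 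Since the regular closed immersion $\{t_i = y_{n+1} c_i\} \hookrightarrow \widehat{X}_2 \times \square^1$ is locally l.c.i.\ of the same codimension $r$ as $\widehat{X}_1 \times \square^1 \hookrightarrow \widehat{X}_2 \times \square^1$, and contains the scheme $(\widehat{X}_2)_{\red} \times \square^1$ on which (\textbf{SF}) for $pr^*(\mathfrak{Z})$ already gives a proper intersection, I would argue that the codimension in $\{t_i = y_{n+1} c_i\} \times F \times F_{n+1}$ is at least $q$.

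The main obstacle is the final step: translating the bound on the dimension of $\mathfrak{Z} \cap (\widehat{X}_2)_{\red} \times F$ coming from (\textbf{SF}) into a bound on the formal-scheme dimension of $\mathfrak{Z} \cap Y_{\un{c}} \times F$, since $Y_{\un{c}}$ is a genuine formal thickening of $(\widehat{X}_2)_{\red}$ whose intrinsic dimension strictly exceeds that of its underlying topological space. I expect to resolve it by a Koszul-resolution argument along the regular sequence $(t_i - y_{n+1} c_i)$, exploiting integrality of $\mathfrak{Z}$ together with the analogue of Remark \ref{remk:gen ideal defn} for the ideal $(t_i - y_{n+1} c_i) + \sqrt{J_r}$, which equals the largest ideal of definition of $\widehat{X}_2 \times \square^1$; this reduces the dimension of the formal intersection to the dimension of the underlying topological intersection, at which point (\textbf{SF}) for $pr^*(\mathfrak{Z})$ furnishes exactly the codimension bound required.
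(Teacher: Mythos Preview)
Your treatment of (\textbf{GP}) and (\textbf{SF}) via the factorization $H_{n,\un c}^{*}=\phi_{n+1,\un c}^{*}\circ pr^{*}$ is correct and is essentially what the paper does, reorganized. The paper, like you, handles the $\widehat X_{1}$-condition by a case analysis on the last face $F''\in\{\square^{1},\{0\},\{1\}\}$.

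Where you and the paper diverge is at the $\widehat X_{1}$-condition for $F''\neq\{0\}$. The paper asserts for instance
\[
H_{n,\un c}^{*}(\mathfrak Z)\cap(\widehat X_{1}\times F'\times\{1\})=\psi_{\un c}^{*}\bigl(\mathfrak Z\cap(\widehat X_{1}\times F')\bigr),
\]
and an analogous identity for $F''=\square^{1}$. These equalities would require $\psi_{\un c}$ (respectively $\phi_{n+1,\un c}$) to preserve $\widehat X_{1}$; but since $c_{i}\in (J,t_{1},\dots,t_{i-1})_{A_{i}}$ generally has a nonzero $J$-component, the ideal $(t_{1},\dots,t_{r})$ is not sent to itself under $t_{i}\mapsto t_{i}+c_{i}$. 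The correct left-hand side at $F''=\{1\}$ is $\psi_{\un c}^{*}(\mathfrak Z)\cap(\widehat X_{1}\times F')$, which is exactly the quantity you isolate. So you have in effect detected that the paper is writing loosely here.

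Your proposed repair through (\textbf{SF}), however, does not close the gap. Condition (\textbf{SF}) gives
\[
\dim\bigl(\mathfrak Z\cap(\widehat X_{2})_{\red}\times F\bigr)\;\le\;\dim Y_{\red}+\dim F-q,
\]
a bound on the dimension of a \emph{scheme}. What is needed is
\[
\dim\bigl(\mathfrak Z\cap Y_{\un c}\times F\bigr)\;\le\;\dim X_{1}+\dim F-q,
\]
a bound on the dimension of a \emph{formal scheme}. These differ by $\dim X_{1}-\dim Y_{\red}$, and there is no inequality of the shape $\dim V\le\dim|V|+(\dim Y_{\un c}-\dim|Y_{\un c}|)$ for arbitrary closed formal subschemes $V\subset Y_{\un c}$: take $V$ to be the formal neighbourhood in $Y_{\un c}\simeq\widehat X_{1}$ of a single closed point of $Y$, so that $\dim V=\dim X_{1}$ while $\dim|V|=0$. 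No Koszul-resolution argument along $(t_{i}-y_{n+1}c_{i})$ recovers the missing $\dim X_{1}-\dim Y_{\red}$, because that sequence only witnesses the codimension of $Y_{\un c}$ in $\widehat X_{2}$, not the excess of formal over topological dimension inside $Y_{\un c}$.

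The way out is not (\textbf{SF}) but the context in which the lemma is used. In the proof of Theorem~\ref{thm:quasi-iso1-pre} the vector $\un c$ is chosen, via Lemma~\ref{lem:translation1}, so that in addition $\psi_{\un c}^{*}(\mathfrak Z)\in z^{q}_{\{\widehat X_{1}\}}(\widehat X_{2},n)$. With that extra hypothesis the case $F''=\{1\}$ is the assumption on $\psi_{\un c}^{*}(\mathfrak Z)$, the case $F''=\{0\}$ is the assumption on $\mathfrak Z$, and the case $F''=\square^{1}$ can be deduced from these two fibers. So rather than pursuing the (\textbf{SF}) route, add $\psi_{\un c}^{*}(\mathfrak Z)\in z^{q}_{\{\widehat X_{1}\}}(\widehat X_{2},n)$ to the hypotheses and argue directly from the two controlled fibers.
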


\begin{proof}
Since $H_{n, \un{c}}$ is a composite of a projection $pr$ (a flat morphism of type (III)) and an isomorphism $\phi_{n+1, \un{c}}$, we know that the cycle $H_{n, \un{c}} ^* (\mathfrak{Z})$ has the right codimension $q$. It remains to check the conditions (\textbf{GP}), (\textbf{SF}) of Definition \ref{defn:HCG} for $H_{n, \un{c}} ^* (\mathfrak{Z})$ as well as the conditions in Definition \ref{defn:HCG var} with respect to $\widehat{X}_1 \times F$ over all faces $F \subset \square_{k} ^{n+1}$. Note that the largest ideal of definition of $\widehat{X}_2$ contains $(J, (\un{t}))$. We may assume $\mathfrak{Z}$ is integral. 

\medskip

We first check the condition (\textbf{GP}). Let $F \subset \square ^{n+1}$ be a face. We can write it as $F= F' \times F''$ where $F' \subset \square^{n}$ and $F'' \subset \square^1$ are faces.

If $F'' = \square^1$, then one checks that $H_{n, \un{c}}^* (\mathfrak{Z}) \cap (\widehat{X}_2 \times F' \times \square^1) = H_{n,\un{c}}^* (\mathfrak{Z} \cap (\widehat{X}_2 \times F' ))$. Thus it has the right codimension by the condition (\textbf{GP}) of $\mathfrak{Z}$.

If $F'' = \{ 0 \}$, then $H_{n,\un{c}} ^* (\mathfrak{Z}) \cap (\widehat{X}_2 \times F' \times \{ 0 \}) = \mathfrak{Z} \cap (\widehat{X}_2 \times F')$. Thus it has the right codimension by the condition (\textbf{GP}) for $\mathfrak{Z}$.

If $F'' = \{ 1 \}$, then $H_{n, \un{c}} ^* (\mathfrak{Z}) \cap (\widehat{X}_2 \times F' \times \{ 1 \}) = \psi_{\un{c}} ^* (\mathfrak{Z} \cap (\widehat{X}_2 \times F'))$. Since $\psi_{\un{c}}$ is an isomorphism, by the condition (\textbf{GP}) for $\mathfrak{Z}$, the above cycle also has the right codimension. 

The above discussions prove the condition (\textbf{GP}) for $H_{n, \un{c}} ^* (\mathfrak{Z})$.

\medskip

The condition (\textbf{SF}) for $H_{n, \un{c}} ^* (\mathfrak{Z})$ holds immediately because $H_{n, \un{c}} ^* (\mathfrak{Z}) \cap ( (\widehat{X}_2)_{\red} \times F) $ given by reduction mod $(J, \un{t})$ is just $(\mathfrak{Z} \cap ( (\widehat{X}_2)_{\red} \times F')) \times F''$ because all $t_i$ and $\un{c}$ vanish in the reduction by $(J, \un{t})$, so that they have the right codimensions by the condition (\textbf{SF}) for $\mathfrak{Z}$. 

\medskip



Finally, we check that $H_{n,\un{c}} ^* (\mathfrak{Z}) \cap (\widehat{X}_1 \times F)$ is proper for each face $F= F' \times F''$.

If $F''= \square_k ^1$, then $H_{n,\un{c}} ^* (\mathfrak{Z}) \cap (\widehat{X}_1 \times F' \times \square_k ^1) = H_{n, \un{c}} ^* ( \mathfrak{Z} \cap (\widehat{X}_1 \times F'))$ so that it has the right codimension by the given condition on the intersection $\mathfrak{Z} \cap (\widehat{X}_1 \times F')$.

If $F'' = \{ 0 \}$, then $H_{n, \un{c}} ^* (\mathfrak{Z}) \cap (\widehat{X}_1 \times F' \times \{ 0 \}) = \mathfrak{Z} \cap (\widehat{X}_1 \times F')$, and it has the right codimension by the given condition on $\mathfrak{Z} \cap (\widehat{X}_1 \times F')$.

If $F'' = \{ 1 \}$, then $H_{n, \un{c}} ^* (\mathfrak{Z}) \cap (\widehat{X}_1 \times F' \times \{ 1 \}) = \psi_{\un{c}} ^* (\mathfrak{Z} \cap (\widehat{X}_1 \times F'))$, and it has the right codimension by the given condition on $\mathfrak{Z} \cap (\widehat{X}_1 \times F')$ as well as that $\psi_{\un{c}}$ is an isomorphism.

Thus we checked that $H_{n, \un{c}} ^* (\mathfrak{Z}) \in z_{ \{ \widehat{X}_1 \}} ^q (\widehat{X}_2, n+1)$ as desired.
This proves the lemma.
\end{proof}

The following limited version of moving lemma holds for the higher Chow complexes of certain regular affine formal $k$-schemes:

\begin{thm}\label{thm:quasi-iso1-pre}
Let $Y$ be a connected affine $k$-scheme of finite type, and let $Y \hookrightarrow X_1 \hookrightarrow X_2$ be closed immersions into equidimensional smooth $k$-schemes $X_1$ and $X_2$, such that $X_1 \hookrightarrow X_2$ is a complete intersection. Let $\widehat{X}_i$ be the completion of $X_i$ along $Y$ for $i=1,2$. 

Then the inclusion $\mathfrak{i}: z^q _{ \{ \widehat{X}_1 \}} (\widehat{X}_2, \bullet)  \hookrightarrow z^q  (\widehat{X}_2, \bullet) $
is a quasi-isomorphism.
\end{thm}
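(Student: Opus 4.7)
The plan is to split into two cases according to the cardinality of the ideal $J \subset A$ (where $\widehat{X}_1 = \Spf(A)$ and $A/J$ is the coordinate ring of $Y$), and then execute the standard Bloch-style moving argument using the translations $\psi_{\underline{c}}$ and the homotopies $H_{n,\underline{c}}$ constructed above.

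\emph{Case 1: $|J| < \infty$.} By Lemma \ref{lem:translation fail}, we have $Y = X_1$, and then by Lemma \ref{lem:translation fail-1}, the identity $z^q_{\{\widehat{X}_1\}}(\widehat{X}_2, n) = z^q(\widehat{X}_2, n)$ holds for every $n$. Hence the inclusion $\mathfrak{i}$ is an equality of complexes, so trivially a quasi-isomorphism, and no further argument is needed.

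\emph{Case 2: $|J| = \infty$.} This is the substantive case, and I would handle it by proving surjectivity and injectivity on $\mathrm{H}_n$ separately. For surjectivity, fix a cycle $\mathfrak{Z} \in z^q(\widehat{X}_2, n)$ with $\partial \mathfrak{Z} = 0$. Apply Lemma \ref{lem:translation1} to the finite collection consisting of $\mathfrak{Z}$ together with the codimension-one faces $\partial_i^{\epsilon} \mathfrak{Z}$, producing $\underline{c} \in \mathbb{V}_r$ with $\psi_{\underline{c}}^*(\mathfrak{Z}) \in z^q_{\{\widehat{X}_1\}}(\widehat{X}_2, n)$. The master homotopy identity \eqref{eqn:homotopy master} then reads
$$
\mathfrak{Z} - \psi_{\underline{c}}^*(\mathfrak{Z}) = (-1)^n \partial H_{n, \underline{c}}^*(\mathfrak{Z}),
$$
because the term with $\partial \mathfrak{Z}$ drops out. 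Thus the class of $\mathfrak{Z}$ in $\mathrm{H}_n(z^q(\widehat{X}_2,\bullet))$ equals the class of $\psi_{\underline{c}}^*(\mathfrak{Z})$, which lies in the image of $\mathfrak{i}_*$.

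For injectivity, take $\mathfrak{Z} \in z^q_{\{\widehat{X}_1\}}(\widehat{X}_2, n)$ with $\mathfrak{Z} = \partial \mathfrak{W}$ for some $\mathfrak{W} \in z^q(\widehat{X}_2, n+1)$. Apply Lemma \ref{lem:translation1} to the finite collection consisting of $\mathfrak{W}$ and its codimension-one faces $\partial_i^{\epsilon}\mathfrak{W}$ to obtain $\underline{c} \in \mathbb{V}_r$ with $\psi_{\underline{c}}^*(\mathfrak{W}) \in z^q_{\{\widehat{X}_1\}}(\widehat{X}_2, n+1)$. Applying \eqref{eqn:homotopy master} at level $n+1$ to $\mathfrak{W}$ and then taking $\partial$ (or equivalently applying it at level $n$ to $\mathfrak{Z}$, using $\partial \mathfrak{W} = \mathfrak{Z}$), we obtain
$$
\mathfrak{Z} = \partial\bigl(\psi_{\underline{c}}^*(\mathfrak{W}) + (-1)^n H_{n, \underline{c}}^*(\mathfrak{Z})\bigr).
$$
By the choice of $\underline{c}$ the first summand is in $z^q_{\{\widehat{X}_1\}}(\widehat{X}_2, n+1)$, and by Lemma \ref{lem:homo1 inj} (applicable because $\mathfrak{Z}$ already lies in the good subcomplex) the second summand is in $z^q_{\{\widehat{X}_1\}}(\widehat{X}_2, n+1)$ as well. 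Hence $\mathfrak{Z}$ is a boundary in the subcomplex, proving injectivity.

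The two ingredients driving everything are already in place: Lemma \ref{lem:translation1} supplies a single $\underline{c}$ that simultaneously fixes any prescribed finite list of cycles, and Lemma \ref{lem:homo1 inj} guarantees that the homotopy operator $H_{n,\underline{c}}^*$ preserves membership in the good subcomplex. The main delicate point is ensuring the translation argument works in the formal power series setting where ordinary "translation by a generic point" is unavailable; this is precisely what the infinite cardinality of $J$ and the prime-avoidance estimate of Lemma \ref{lem:translation PP} purchase. A minor bookkeeping item, which I would verify along the way rather than as a separate step, is that all operations $\psi_{\underline{c}}^*$, $H_{n,\underline{c}}^*$ are compatible with the passage to the non-degenerate quotient \eqref{eqn:Deg}; this is automatic since $\psi_{\underline{c}}$ is an automorphism and $H_{n,\underline{c}}$ factors through a coordinate projection, so degenerate cycles pull back to degenerate cycles.
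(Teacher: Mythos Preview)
Your proposal is correct and follows essentially the same approach as the paper: the same case split (finite vs.\ infinite $J$, equivalently $Y = X_1$ vs.\ $Y \subsetneq X_1$), the same use of the translation Lemmas \ref{lem:translation}/\ref{lem:translation1} together with the homotopy identity \eqref{eqn:homotopy master} for surjectivity, and the same combination of $\psi_{\underline{c}}^*(\mathfrak{W})$ with Lemma \ref{lem:homo1 inj} applied to $\mathfrak{Z}$ for injectivity. The only cosmetic difference is that you feed a slightly larger (and harmless) finite list of cycles into Lemma \ref{lem:translation1} than the paper does.
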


\begin{proof} First consider the trivial case when $Y =X_1$. Then by Lemma \ref{lem:translation fail-1}, we have $ z^q _{ \{ \widehat{X}_1 \}} (\widehat{X}_2, \bullet) = z^q  (\widehat{X}_2, \bullet) $, so there is nothing to prove. 

\medskip

We now suppose $Y \subsetneq X_1$. In this case, the ideal $J$ that defines $Y$ in $\widehat{X}_1$ is an infinite set by Lemma \ref{lem:translation fail}. So we can use Lemma \ref{lem:translation}.

\medskip

Let's first prove the surjectivity in homology. For a class $\alpha \in \CH^q (\widehat{X}_2, n)$, choose any cycle representative $\mathfrak{Z} \in z^q (\widehat{X}_2, n)$ such that $\partial \mathfrak{Z}= 0$. 

By Lemma \ref{lem:translation}, there is $\un{c} \in \mathbb{V}_r$ such that $\psi_{\un{c}} ^* (\mathfrak{Z}) \in z^q _{\{ \widehat{X}_1 \}} (\widehat{X}_2, n)$. By \eqref{eqn:homotopy master}, we have
\begin{equation}\label{eqn:quasi-iso1-pre-1}
 (-1)^n \partial H_{n, \un{c}} ^* (\mathfrak{Z}) = \mathfrak{Z} - \psi_{\un{c}} ^* (\mathfrak{Z}).
 \end{equation}
Applying $\partial$ to \eqref{eqn:quasi-iso1-pre-1}, we get $0 = 0 - \partial ( \psi_{\un{c}} ^* (\mathfrak{Z}))$ because $\partial^2 =0$ and $\partial \mathfrak{Z} = 0$. Hence $\partial ( \psi_{\un{c}} ^* (\mathfrak{Z}))= 0$, and by \eqref{eqn:quasi-iso1-pre-1}, $\alpha$ can also be represented by $ \psi_{\un{c}} ^* (\mathfrak{Z})$. But this means $\alpha$ lies in the image of the induced natural homomorphism $\CH_{\{ \widehat{X}_1 \}} ^q (\widehat{X}_2, n) \to \CH^q (\widehat{X}_2, n)$. This proves the surjectivity in homology.

\medskip

We now prove the injectivity in homology. Let 
$$
\alpha \in \ker (\CH_{\{ \widehat{X}_1 \}} ^q (\widehat{X}_2, n) \to \CH^q (\widehat{X}_2, n)).
$$ 
This is represented by a cycle $\mathfrak{Z} \in z^q _{\{\widehat{X}_1\}} (\widehat{X}_2, n)$ such that $\mathfrak{Z} = \partial \mathfrak{W}$ for some $\mathfrak{W} \in z^q (\widehat{X}_2, n+1)$. Since $\partial^2 = 0$, this implies that $\partial \mathfrak{Z} = 0$. 

By Lemma \ref{lem:translation1} applied to $\mathfrak{W}$ and $\mathfrak{Z}$, there exists some $\un{c} \in \mathbb{V}_r$ such that 
\begin{equation}\label{eqn:homo1 inj 00}
\psi_{\un{c}} ^* (\mathfrak{W}) \in z^q _{\{ \widehat{X}_1 \}} (\widehat{X}_2, n+1), \ \ \psi_{\un{c}} ^* (\mathfrak{Z}) = z^q _{\{ \widehat{X}_1 \}} (\widehat{X}_2, n).
\end{equation}
Applying \eqref{eqn:homotopy master} to $ \mathfrak{W}$, we have
\begin{equation}\label{eqn:homotopy_app}
(-1)^{n+1} \partial H_{n+1, \un{c}} ^* (\mathfrak{W}) + (-1)^n H_{n, \un{c}} ^* (\partial \mathfrak{W}) = \mathfrak{W} - \psi_{\un{c}} ^* (\mathfrak{W}).
\end{equation}
Applying $\partial$ to \eqref{eqn:homotopy_app}, we deduce that 
\begin{equation}\label{eqn:homotopy_app1.5}
(-1)^n \partial H_{n, \un{c}} ^* (\partial \mathfrak{W}) = \partial \mathfrak{W} -  \partial ( \psi_{\un{c}} ^* (\mathfrak{W})).
\end{equation}
 Since $\mathfrak{Z}= \partial \mathfrak{W}$, after rearranging the terms, \eqref{eqn:homotopy_app1.5} can be rewritten as
\begin{equation}\label{eqn:homotopy_app2}
\mathfrak{Z} = \partial (\psi_{\un{c}} ^* (\mathfrak{W}) ) + (-1)^{n} \partial H_{n,\un{c}} ^* (\mathfrak{Z}) = \partial  (\psi_{\un{c}} ^* (\mathfrak{W})  + (-1)^n H_{n,\un{c}} ^* (\mathfrak{Z}))
\end{equation}

By \eqref{eqn:homo1 inj 00} and Lemma \ref{lem:homo1 inj}, we have 
\begin{equation}\label{eqn:homotopy_app3}
\psi_{\un{c}} ^* (\mathfrak{W})  + (-1)^n H_{n,\un{c}} ^* (\mathfrak{Z}) \in z^q _{\{ \widehat{X}_1 \}} (\widehat{X}_2, n+1).
\end{equation} Then \eqref{eqn:homotopy_app2} and \eqref{eqn:homotopy_app3} together imply that $\alpha = [\mathfrak{Z}] = 0$ in $ \CH^q _{\{ \widehat{X}_1 \}} (\widehat{X}_2, n) $. This proves the injectivity in homology.

Thus we proved that $\mathfrak{i}$ is a quasi-isomorphism.
\end{proof}

\subsubsection{Mod $Y$-equivalence under translation} We want to repeat the above discussion modulo $Y$. We first check:

\begin{lem}\label{lem:psi mod Y resp}
For $\un{c} \in \mathbb{V}_r$, the map $\psi_{\un{c}} ^*$ respects the mod $Y$-equivalence. 
\end{lem}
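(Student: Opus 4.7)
The plan is to exploit the basic observation that, although $\psi_{\underline{c}}$ is a nontrivial automorphism of $\widehat{X}_2$, it becomes the identity after restriction to $Y$. Concretely, the ideal of $Y$ in $A_r = A[[\underline{t}]]$ is $J_r = (J, t_1, \ldots, t_r)$, and each $c_i$ belongs to $(J_{i-1}) \subset J_r$ by the definition of $\mathbb{V}_r$. Hence the automorphism $t_i \mapsto t_i + c_i$ reduces modulo $J_r$ to the identity, i.e.\ $\psi_{\underline{c}} \circ \iota_Y = \iota_Y$ as morphisms $\square_Y^n \to \square_{\widehat{X}_2}^n$, where $\iota_Y = \iota_{Y,2}^n$ is the closed immersion. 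This is the only geometric input the proof will need.

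The first step will be to check that $\psi_{\underline{c}}^*$ preserves admissibility, i.e.\ $\psi_{\underline{c}}^*(\mathcal{R}^q(\widehat{X}_2, n)) \subset \mathcal{R}^q(\widehat{X}_2, n)$. Since $\psi_{\underline{c}}$ is an isomorphism of regular formal schemes, $\psi_{\underline{c}}^*\mathcal{A}$ is again a coherent $\mathcal{O}_{\square_{\widehat{X}_2}^n}$-algebra, and $\mathbf{L}\psi_{\underline{c}}^* = \psi_{\underline{c}}^*$. By Lemma~\ref{lem:pull-back perfect} (applied after noting that $\psi_{\underline{c}}$ is flat, being an isomorphism) we get the equality of cycles $[\psi_{\underline{c}}^* \mathcal{A}] = \psi_{\underline{c}}^* [\mathcal{A}]$, and Lemma~\ref{lem:translation0} shows that the right-hand side again lies in $z^q(\widehat{X}_2, n)$, whence $\psi_{\underline{c}}^*\mathcal{A} \in \mathcal{R}^q(\widehat{X}_2, n)$.

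The second step, which is the heart of the lemma, is to check that $\psi_{\underline{c}}^*$ preserves the relation $\mathcal{L}^q(\widehat{X}_2, Y, n)$. Given a pair $(\mathcal{A}_1, \mathcal{A}_2) \in \mathcal{L}^q(\widehat{X}_2, Y, n)$, we have an isomorphism
$$
\mathbf{L}\iota_Y^* \mathcal{A}_1 \simeq \mathbf{L}\iota_Y^* \mathcal{A}_2 \quad \text{in } \sAlg(\mathcal{O}_{\square_Y^n}).
$$
Applying $\psi_{\underline{c}}^*$ to each side and using $\mathbf{L}\iota_Y^*(\psi_{\underline{c}}^*(-)) = \mathbf{L}(\psi_{\underline{c}} \circ \iota_Y)^*(-) = \mathbf{L}\iota_Y^*(-)$ (by the functoriality of derived pullback, cf.\ \cite[I-3.6, p.119]{Lipman}, together with the identity $\psi_{\underline{c}} \circ \iota_Y = \iota_Y$ from the first paragraph), we obtain
$$
\mathbf{L}\iota_Y^*(\psi_{\underline{c}}^* \mathcal{A}_1) \simeq \mathbf{L}\iota_Y^* \mathcal{A}_1 \simeq \mathbf{L}\iota_Y^* \mathcal{A}_2 \simeq \mathbf{L}\iota_Y^*(\psi_{\underline{c}}^* \mathcal{A}_2)
$$
in $\sAlg(\mathcal{O}_{\square_Y^n})$. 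Combined with the first step, this says $(\psi_{\underline{c}}^* \mathcal{A}_1, \psi_{\underline{c}}^* \mathcal{A}_2) \in \mathcal{L}^q(\widehat{X}_2, Y, n)$.

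Finally, combining the equality $[\psi_{\underline{c}}^*\mathcal{A}_j] = \psi_{\underline{c}}^*[\mathcal{A}_j]$ from the first step with the closure statement of the second step, we conclude
$$
\psi_{\underline{c}}^*([\mathcal{A}_1] - [\mathcal{A}_2]) = [\psi_{\underline{c}}^* \mathcal{A}_1] - [\psi_{\underline{c}}^* \mathcal{A}_2] \in \mathcal{M}^q(\widehat{X}_2, Y, n),
$$
and Lemma~\ref{lem:modulus simple generator} reduces the general case to such generators. Hence $\psi_{\underline{c}}^*(\mathcal{M}^q(\widehat{X}_2, Y, n)) \subset \mathcal{M}^q(\widehat{X}_2, Y, n)$, and $\psi_{\underline{c}}^*$ descends to $z^q(\widehat{X}_2 \mod Y, \bullet)$. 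The main conceptual point is not hard once one has isolated the identity $\psi_{\underline{c}} \circ \iota_Y = \iota_Y$; the only bookkeeping is to track the compatibility with derived pullback, which is formal.
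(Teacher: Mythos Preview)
Your proof is correct and follows essentially the same approach as the paper's: both hinge on the fact that $\psi_{\un{c}}$ restricts to the identity on $\square_Y^n$ (equivalently, $\psi_{\un{c}}\circ\iota_Y=\iota_Y$), so that applying $\psi_{\un{c}}^*$ to the defining isomorphism in $\sAlg(\mathcal{O}_{\square_Y^n})$ yields the same relation for $(\psi_{\un{c}}^*\mathcal{A}_1,\psi_{\un{c}}^*\mathcal{A}_2)$. Your version is slightly more explicit in isolating the identity $\psi_{\un{c}}\circ\iota_Y=\iota_Y$ and in separating out the admissibility step, whereas the paper phrases the key point as $\psi_{\un{c}}^*\mathcal{O}_{\square_Y^n}=\mathcal{O}_{\square_Y^n}$ and applies $\psi_{\un{c}}^*$ directly to the derived tensor product.
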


\begin{proof}
Suppose $(\mathcal{A}_1, \mathcal{A}_2) \in \mathcal{L}^q (\widehat{X}_2, Y, n)$. It means that we have an isomorphism
\begin{equation}\label{eqn:translation given1}
 \mathcal{A}_1 \otimes _{\mathcal{O}_{\square_{\widehat{X}} ^n}} ^{\mathbf{L}} \mathcal{O}_{\square_Y ^n} \simeq  \mathcal{A}_2 \otimes _{\mathcal{O}_{\square_{\widehat{X}} ^n}} ^{\mathbf{L}} \mathcal{O}_{\square_Y ^n}
 \end{equation}
 in $\sAlg (\mathcal{O}_{\square_Y ^n})$. We apply $\psi_{\un{c}} ^*$ to \eqref{eqn:translation given1}. Since $\psi_{\un{c}}$ sends $t_i$ to $t_i + c_i$, fixing each $a \in A$ and each of $y_{\ell}$, it induces automorphisms of $\square_{\widehat{X}} ^n$ and $\square_Y ^n$, respectively. Hence $\psi_{\un{c}} ^* \mathcal{O}_{\square_{\widehat{X}} ^n} = \mathcal{O}_{\square_{\widehat{X}} ^n}$ and $\psi_{\un{c}} ^* \mathcal{O}_{\square_Y^n} = \mathcal{O}_{\square_Y ^n}$. Thus applying $\psi_{\un{c}} ^*$ to \eqref{eqn:translation given1}, we have an isomorphism
 $$
 (\psi_{\un{c}} ^* \mathcal{A}_1) \otimes _{\mathcal{O}_{\square_{\widehat{X}} ^n}} ^{\mathbf{L}} \mathcal{O}_{\square_Y ^n} \simeq
(\psi_{\un{c}} ^* \mathcal{A}_2) \otimes _{\mathcal{O}_{\square_{\widehat{X}} ^n} } ^{\mathbf{L}} \mathcal{O}_{\square_Y ^n}
$$
in $\sAlg (\mathcal{O}_{\square_Y^n})$. This means $(\psi_{\un{c}} ^{*} (\mathcal{A}_1), \psi_{\un{c}}^{*} (\mathcal{A}_2)) \in \mathcal{L}^q (\widehat{X}_2, Y, n)$.
\end{proof}

\begin{lem}\label{lem:homo mod Y resp}
The homotopy $H_{\bullet, \un{c}}^*$ respects the mod $Y$-equivalence.
\end{lem}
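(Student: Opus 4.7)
The plan is to exploit the factorization $H_{n,\un{c}} = pr \circ \phi_{n+1,\un{c}}$ and treat the two factors separately, so that $H_{n,\un{c}}^{*} = \phi_{n+1,\un{c}}^{*} \circ pr^{*}$. Both are flat pull-backs, so I may rely on their exactness when commuting with the derived tensor $\otimes^{\mathbf L}$. The structural point I want to use is that each of these two morphisms restricts compatibly to $Y$: the projection $pr \colon \square^{n+1}_{\widehat{X}_2} \to \square^{n}_{\widehat{X}_2}$ restricts to the projection $(pr|_Y) \colon \square^{n+1}_Y \to \square^{n}_Y$, while the automorphism $\phi_{n+1,\un c}$ restricts to the identity on $\square^{n+1}_Y$.

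For $pr^{*}$, I would follow the template of Lemma \ref{lem:fpb}. The projection is flat of type (III), the square
$$
\xymatrix{
\square^{n+1}_Y \ar[r]^{pr|_Y} \ar@{^{(}->}[d] & \square^{n}_Y \ar@{^{(}->}[d] \\
\square^{n+1}_{\widehat{X}_2} \ar[r]^{pr} & \square^{n}_{\widehat{X}_2}
}
$$
is Cartesian, and flat base change yields
$$
pr^{*}\mathcal{A} \otimes^{\mathbf L}_{\mathcal{O}_{\square^{n+1}_{\widehat{X}_2}}} \mathcal{O}_{\square^{n+1}_Y} \simeq (pr|_Y)^{*}\bigl(\mathcal{A} \otimes^{\mathbf L}_{\mathcal{O}_{\square^{n}_{\widehat{X}_2}}} \mathcal{O}_{\square^{n}_Y}\bigr)
$$
in $\sAlg(\mathcal{O}_{\square^{n+1}_Y})$. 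Applied to a pair $(\mathcal{A}_1,\mathcal{A}_2)\in \mathcal{L}^q(\widehat{X}_2,Y,n)$, this delivers $(pr^{*}\mathcal{A}_1, pr^{*}\mathcal{A}_2)\in \mathcal{L}^q(\widehat{X}_2,Y,n+1)$. For $\phi_{n+1,\un{c}}^{*}$, I would imitate the proof of Lemma \ref{lem:psi mod Y resp}: since $\phi_{n+1,\un{c}}$ fixes $A$ and every $y_j$, and $\phi_{n+1,\un{c}}^{*}(\mathcal{O}_{\square^{n+1}_Y}) = \mathcal{O}_{\square^{n+1}_Y}$ (see next paragraph), applying $\phi_{n+1,\un{c}}^{*}$ to the mod $Y$ isomorphism for $(pr^{*}\mathcal{A}_1, pr^{*}\mathcal{A}_2)$ keeps both sides isomorphic in $\sAlg(\mathcal{O}_{\square^{n+1}_Y})$. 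Combining the two steps produces $(H_{n,\un{c}}^{*}\mathcal{A}_1, H_{n,\un{c}}^{*}\mathcal{A}_2)\in \mathcal{L}^q(\widehat{X}_2,Y,n+1)$. Combined with Lemma \ref{lem:homo1 inj}, which ensures that $[H_{n,\un{c}}^{*}\mathcal{A}_i] \in z^q(\widehat{X}_2,n+1)$, and Lemma \ref{lem:pull-back perfect}, which gives $[H_{n,\un{c}}^{*}\mathcal{A}_i] = H_{n,\un{c}}^{*}[\mathcal{A}_i]$, we conclude $H_{n,\un{c}}^{*}([\mathcal{A}_1]-[\mathcal{A}_2]) \in \mathcal{M}^q(\widehat{X}_2,Y,n+1)$, and then Lemma \ref{lem:modulus simple generator} extends this to all of $\mathcal{M}^q(\widehat{X}_2,Y,n)$.

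The principal technical point, which I expect to be the one requiring care, is verifying that $\phi_{n+1,\un{c}}$ preserves the ideal sheaf $\mathcal{I}_Y = (J,t_1,\ldots,t_r)$ defining $\square^{n+1}_Y$ inside $\square^{n+1}_{\widehat{X}_2}$, so that $\phi_{n+1,\un{c}}^{*}(\mathcal{O}_{\square^{n+1}_Y}) = \mathcal{O}_{\square^{n+1}_Y}$. By construction $\phi_{n+1,\un{c}}^{*}$ fixes $J$ and sends $t_i$ to $t_i + y_{n+1}c_i$. Since each $c_i \in (J_{i-1}) = (J,t_1,\ldots,t_{i-1})$, an easy induction on $i$ shows
$$
(J,\, t_1 + y_{n+1}c_1,\, \ldots,\, t_r + y_{n+1}c_r) = (J,t_1,\ldots,t_r) = \mathcal{I}_Y,
$$
and the symmetric argument for $\phi_{n+1,\un{c}}^{-1}$ gives the equality of ideals. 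This is an elementary computation, but it is the place where the constraint $\un{c}\in \mathbb V_r$ is actually used; without it, the automorphism would not restrict to the identity on $Y$ and the whole argument would break. Once this ideal identification is in hand, the rest is a routine application of flat base change and the arguments already laid out in Lemmas \ref{lem:fpb} and \ref{lem:psi mod Y resp}.
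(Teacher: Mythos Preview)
Your proof is correct and follows essentially the same route as the paper: factor $H_{n,\un c}^* = \phi_{n+1,\un c}^* \circ pr^*$, handle $pr^*$ via flat base change (the paper does this by hand with tensor manipulations, you invoke the template of Lemma~\ref{lem:fpb}), and handle $\phi_{n+1,\un c}^*$ exactly as in Lemma~\ref{lem:psi mod Y resp} once one knows $\phi_{n+1,\un c}$ restricts to an automorphism of $\square_Y^{n+1}$. Your explicit inductive verification that $(J,\,t_1+y_{n+1}c_1,\ldots,t_r+y_{n+1}c_r)=(J,t_1,\ldots,t_r)$ using $c_i\in(J,t_1,\ldots,t_{i-1})$ is more detailed than the paper, which simply asserts the restriction; one small imprecision is that Lemma~\ref{lem:homo1 inj} literally assumes $\mathfrak Z\in z^q_{\{\widehat X_1\}}$, but the (\textbf{GP}), (\textbf{SF}) portions of its proof give the admissibility you need without that hypothesis.
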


\begin{proof}
Note that $H_{n, \un{c}} ^* = \phi_{n+1, \un{c}} ^* \circ pr^*$. 

Let $(\mathcal{A}_1, \mathcal{A}_2) \in \mathcal{L}^q (\widehat{X}, Y, n)$ so that there is an isomorphism 
\begin{equation}\label{eqn:mod Y homo pr}
\mathcal{A}_1 \otimes_{\mathcal{O}_{\square_{\widehat{X}} ^n}} ^{\mathbf{L}} \mathcal{O}_{\square_Y ^n} \simeq \mathcal{A}_2 \otimes^{\mathbf{L}}_{\mathcal{O}_{\square_{\widehat{X}}^n}} \mathcal{O}_{\square_Y ^n}
\end{equation}
in $\sAlg (\mathcal{O}_{\square_{Y} ^n})$.

We have morphisms of sheaves of rings $pr^* \mathcal{O}_{\square_{\widehat{X}} ^n} \to \mathcal{O}_{\square_{\widehat{X}} ^{n+1}}$ and $pr^* \mathcal{O}_{\square_{Y} ^n} \to \mathcal{O}_{\square_{Y} ^{n+1}}$, and $pr$ is flat (of type (III) as in Lemma \ref{lem:pre flat pb}). Applying $pr^*$ to \eqref{eqn:mod Y homo pr}, we have
$$
pr^* (\mathcal{A}_1) \otimes_{pr^* (\mathcal{O}_{\square_{\widehat{X}} ^n})} ^{\mathbf{L}} pr ^* (\mathcal{O}_{\square_Y ^n}) \simeq  pr^* (\mathcal{A}_2 )\otimes^{\mathbf{L}}_{ pr^* (\mathcal{O}_{\square_{\widehat{X}}^n})}  pr ^* (\mathcal{O}_{\square_Y ^n}),
$$
which implies that 
\begin{equation}\label{eqn:mod Y homo pr -1}
pr^* (\mathcal{A}_1) \otimes_{ \mathcal{O}_{\square_{\widehat{X}} ^{n+1}}} ^{\mathbf{L}} pr ^* (\mathcal{O}_{\square_Y ^n}) \simeq  pr^* (\mathcal{A}_2 )\otimes^{\mathbf{L}}_{  \mathcal{O}_{\square_{\widehat{X}} ^{n+1}}}  pr ^* (\mathcal{O}_{\square_Y ^n}),
\end{equation}
via $pr^* \mathcal{O}_{\square_{\widehat{X}} ^n} \to \mathcal{O}_{\square_{\widehat{X}} ^{n+1}}$. Applying $- \otimes _{pr^* (\mathcal{O}_{\square_Y ^n})} \mathcal{O}_{\square_Y ^{n+1}}$ to \eqref{eqn:mod Y homo pr -1}, we deduce an isomorphism
\begin{equation}\label{eqn:mod Y homo phi}
pr^* (\mathcal{A}_1) \otimes^{\mathbf{L}} _{\mathcal{O}_{\square_{\widehat{X}} ^{n+1}}} \mathcal{O}_{\square_Y ^{n+1}} \simeq pr^* (\mathcal{A}_2) \otimes^{\mathbf{L}}_{\mathcal{O}_{\square_{\widehat{X}}^{n+1}}} \mathcal{O}_{\square_Y ^{n+1}}
\end{equation}
in $\sAlg (\mathcal{O}_{\square_{Y} ^{n+1}})$. 

Since the map $\phi_{n+1, \un{c}}$ sends $\un{t}$ to $\un{t} + y_{n+1} \un{c}$, fixing each $a \in A$ and each of $y_{\ell}$, it induces automorphisms of $\square_{\widehat{X}} ^{n+1}$ and $\square_Y^{n+1}$, respectively. Hence $\phi_{n+1, \un{c}} ^* \mathcal{O}_{\square_{\widehat{X}} ^{n+1}} \simeq \mathcal{O}_{\square_{\widehat{X}} ^{n+1}}$ and $\phi_{n+1, \un{c}} ^* \mathcal{O}_{\square_{Y} ^{n+1}} \simeq \mathcal{O}_{\square_{Y} ^{n+1}}$. Thus, applying $\phi_{n+1, \un{c}} ^*$ to \eqref{eqn:mod Y homo phi}, we deduce an isomorphism 
$$
\phi_{n+1, \un{c}} ^* pr^* (\mathcal{A}_1) \otimes^{\mathbf{L}} _{\mathcal{O}_{\square_{\widehat{X}} ^{n+1}}} \mathcal{O}_{\square_Y ^{n+1} }\simeq \phi_{n+1, \un{c}} ^*  pr^* (\mathcal{A}_2) \otimes^{\mathbf{L}}_{\mathcal{O}_{\square_{\widehat{X}}^{n+1}}} \mathcal{O}_{\square_Y ^{n+1}},
$$
 i.e. an isomorphism
$$
H_{n, \un{c}} ^* (\mathcal{A}_1) \otimes^{\mathbf{L}} _{\mathcal{O}_{\square_{\widehat{X}} ^{n+1}}} \mathcal{O}_{\square_Y ^{n+1} } \simeq H_{n, \un{c}} ^* (\mathcal{A}_2) \otimes^{\mathbf{L}}_{\mathcal{O}_{\square_{\widehat{X}}^{n+1}}} \mathcal{O}_{\square _Y ^{n+1}}
$$
in $\sAlg (\mathcal{O}_{\square_Y ^{n+1}})$. Thus $(H_{n, \un{c}} ^* (\mathcal{A}_1), H_{n, \un{c}} ^* (\mathcal{A}_2)) \in \mathcal{L}^q (\widehat{X}, Y, n+1)$. 
\end{proof}

Lemmas \ref{lem:psi mod Y resp} and \ref{lem:homo mod Y resp} show that we can still use the homotopy identity \eqref{eqn:homotopy master} for the mod $Y$ cycle complexes as well. We now deduce another moving lemma, this time the mod $Y$-version:

\begin{thm}\label{thm:quasi-iso1}
Let $Y$ be a connected affine $k$-scheme of finite type, and let $Y \hookrightarrow X_1 \hookrightarrow X_2$ be closed immersions into equidimensional smooth $k$-schemes $X_1$ and $X_2$, such that $X_1 \hookrightarrow X_2$ is a complete intersection. Let $\widehat{X}_i$ be the completion of $X_i$ along $Y$ for $i=1,2$. 

Then the inclusion
\begin{equation}\label{eqn:mod Y affine moving 0}
\mathfrak{i}: z^q _{\{ \widehat{X}_1 \}} (\widehat{X}_2 \mod Y, \bullet)  \hookrightarrow z^q (\widehat{X}_2 \mod Y, \bullet)
\end{equation}
is a quasi-isomorphism.
\end{thm}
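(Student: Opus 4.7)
The plan is to follow the structure of the proof of Theorem \ref{thm:quasi-iso1-pre} almost verbatim, promoting every cycle-level identity to the quotient complex modulo $\mathcal{M}^q(\widehat{X}_2, Y, \bullet)$ using Lemmas \ref{lem:psi mod Y resp} and \ref{lem:homo mod Y resp}. The trivial case $Y = X_1$ is immediate from Lemma \ref{lem:translation fail-1}, which gives equality of complexes (not just a quasi-isomorphism), so we may assume $Y \subsetneq X_1$; then by Lemma \ref{lem:translation fail} the defining ideal $J$ is infinite, and the translation arguments of Lemmas \ref{lem:translation} and \ref{lem:translation1} apply.

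For surjectivity in homology, I would take a class in $\CH^q(\widehat{X}_2 \mod Y, n)$ represented by the image of some $\mathfrak{Z} \in z^q(\widehat{X}_2, n)$ with $\partial\mathfrak{Z} \in \mathcal{M}^q(\widehat{X}_2, Y, n-1)$. Applying Lemma \ref{lem:translation1} to the finite family consisting of $\mathfrak{Z}$ together with the finitely many integral components of $\partial\mathfrak{Z}$, I obtain $\un{c} \in \mathbb{V}_r$ with $\psi_{\un{c}}^*(\mathfrak{Z}) \in z^q_{\{\widehat{X}_1\}}(\widehat{X}_2, n)$. The homotopy identity \eqref{eqn:homotopy master} reads
\begin{equation*}
\mathfrak{Z} - \psi_{\un{c}}^*(\mathfrak{Z}) = (-1)^n \partial H_{n,\un{c}}^*(\mathfrak{Z}) + (-1)^{n-1} H_{n-1,\un{c}}^*(\partial\mathfrak{Z}).
\end{equation*}
By Lemma \ref{lem:homo mod Y resp}, the term $H_{n-1,\un{c}}^*(\partial\mathfrak{Z})$ lies in $\mathcal{M}^q(\widehat{X}_2, Y, n)$; by Lemma \ref{lem:psi mod Y resp}, $\partial\psi_{\un{c}}^*(\mathfrak{Z}) = \psi_{\un{c}}^*(\partial\mathfrak{Z})$ also lies in $\mathcal{M}^q(\widehat{X}_2, Y, n-1)$. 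Thus modulo $Y$, the identity becomes $\overline{\mathfrak{Z}} - \overline{\psi_{\un{c}}^*(\mathfrak{Z})} = (-1)^n \partial \overline{H_{n,\un{c}}^*(\mathfrak{Z})}$, exhibiting the original class as the homology class of a cycle in $z^q_{\{\widehat{X}_1\}}(\widehat{X}_2 \mod Y, n)$.

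For injectivity, suppose $\overline{\mathfrak{Z}} \in z^q_{\{\widehat{X}_1\}}(\widehat{X}_2 \mod Y, n)$ is a boundary in $z^q(\widehat{X}_2 \mod Y, \bullet)$, i.e.\ there is $\mathfrak{W} \in z^q(\widehat{X}_2, n+1)$ with $\mathfrak{Z} - \partial\mathfrak{W} \in \mathcal{M}^q(\widehat{X}_2, Y, n)$. I would apply Lemma \ref{lem:translation1} to $\mathfrak{W}$ together with $\mathfrak{Z}$ and all components of $\partial\mathfrak{W}$ to select $\un{c} \in \mathbb{V}_r$ so that $\psi_{\un{c}}^*(\mathfrak{W}) \in z^q_{\{\widehat{X}_1\}}(\widehat{X}_2, n+1)$ and $\psi_{\un{c}}^*(\mathfrak{Z}) \in z^q_{\{\widehat{X}_1\}}(\widehat{X}_2, n)$. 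Repeating the computation \eqref{eqn:homotopy_app2} of Theorem \ref{thm:quasi-iso1-pre} with $\mathfrak{W}$ and passing to $z^q(\widehat{X}_2 \mod Y, \bullet)$, the right-hand side becomes $\partial\bigl(\,\overline{\psi_{\un{c}}^*(\mathfrak{W})} + (-1)^n \overline{H_{n,\un{c}}^*(\mathfrak{Z})}\bigr)$. By Lemma \ref{lem:homo1 inj} and the choice of $\un{c}$, the cycle inside the boundary lies in $z^q_{\{\widehat{X}_1\}}(\widehat{X}_2 \mod Y, n+1)$, so $\overline{\mathfrak{Z}}$ vanishes in $\CH^q_{\{\widehat{X}_1\}}(\widehat{X}_2 \mod Y, n)$.

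There should be no real obstacle beyond bookkeeping: the nontrivial inputs — the existence of a common translation (Lemma \ref{lem:translation1}), the stability of the sharpened intersection condition under the homotopy (Lemma \ref{lem:homo1 inj}), and the compatibility of both $\psi_{\un{c}}^*$ and $H_{\bullet,\un{c}}^*$ with mod $Y$-equivalence (Lemmas \ref{lem:psi mod Y resp}, \ref{lem:homo mod Y resp}) — are already established. The only delicate point worth highlighting is that in the surjectivity argument one must not forget that a representing cycle $\mathfrak{Z}$ need only satisfy $\partial\mathfrak{Z} \in \mathcal{M}^q(\widehat{X}_2, Y, n-1)$ rather than $\partial\mathfrak{Z} = 0$, which is why Lemma \ref{lem:translation1} (and not just Lemma \ref{lem:translation}) must be applied to the combined system $\{\mathfrak{Z}, \partial\mathfrak{Z}\}$ so that the homotopy correction term $H_{n-1,\un{c}}^*(\partial\mathfrak{Z})$ is defined and lies in the mod $Y$-subgroup.
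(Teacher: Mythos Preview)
Your approach is essentially the paper's: handle the trivial case $Y=X_1$ via Lemma~\ref{lem:translation fail-1}, then run the argument of Theorem~\ref{thm:quasi-iso1-pre} using that $\psi_{\un{c}}^*$ and $H_{\bullet,\un{c}}^*$ descend to the mod $Y$ quotient (Lemmas~\ref{lem:psi mod Y resp}, \ref{lem:homo mod Y resp}). The paper phrases this as working directly in the quotient complex $z^q(\widehat{X}_2 \mod Y, \bullet)$, where a representing cycle $\bar{\mathfrak{Z}}$ literally satisfies $\partial\bar{\mathfrak{Z}}=0$ and the homotopy identity \eqref{eqn:homotopy master} holds verbatim; you instead work with lifts to $z^q(\widehat{X}_2, \bullet)$ and track the $\mathcal{M}^q$-error terms explicitly, which amounts to the same thing.

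Your ``delicate point'' is mis-diagnosed, however. You claim Lemma~\ref{lem:translation1} must be applied to the combined system $\{\mathfrak{Z}, \partial\mathfrak{Z}\}$ so that $H_{n-1,\un{c}}^*(\partial\mathfrak{Z})$ is ``defined and lies in the mod $Y$-subgroup.'' But this term is always defined (the homotopy is a flat pull-back on the full cycle group), and it always lies in $\mathcal{M}^q$ by Lemma~\ref{lem:homo mod Y resp}---you yourself invoke that lemma for exactly this purpose two sentences earlier. No special choice of $\un{c}$ is needed to make the correction term vanish mod $Y$; it vanishes for every $\un{c}$. This is precisely why the paper can say the argument is ``identical'' to Theorem~\ref{thm:quasi-iso1-pre}: in the quotient, $\partial\bar{\mathfrak{Z}}=0$ and the correction term is absent from the start. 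The only genuine use of Lemma~\ref{lem:translation} (and Lemma~\ref{lem:translation1} for injectivity) is, exactly as in the non-mod-$Y$ case, to move $\mathfrak{Z}$ (and $\mathfrak{W}$) into $z^q_{\{\widehat{X}_1\}}$.
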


\begin{proof}In the trivial case when $Y= X_1$, by Lemma \ref{lem:translation fail-1} we have $z^q _{\{ \widehat{X}_1\}} (\widehat{X}_2, \bullet) = z^q (\widehat{X}_2, \bullet)$. Thus, the map \eqref{eqn:mod Y affine moving 0} is the identity, and there is nothing to prove.

\medskip

Now suppose $Y \subsetneq X_1$. By Lemma \ref{lem:translation fail}, we have $|J|= \infty$ so that we may use Lemma \ref{lem:translation}.

 The morphism $\mathfrak{i}$ induces the homomorphism for each $n \geq 0$:
 \begin{equation}\label{eqn:induced1}
{\rm H}_n (\mathfrak{i}): \CH^q _{ \{ \widehat{X}_1 \}} (\widehat{X}_2 \mod Y, n) \to \CH^q  (\widehat{X}_2 \mod Y, n).
\end{equation}
It remains to show that \eqref{eqn:induced1} is an isomorphism. 

But once we have Lemmas \ref{lem:translation}, \ref{lem:psi mod Y resp} and \ref{lem:homo mod Y resp}, we can use \eqref{eqn:homotopy master} and the mod $Y$ analogue of Lemma \ref{lem:homo1 inj}. From these, the arguments of the proof that \eqref{eqn:induced1} is an isomorphism are identical to the one given in Theorem \ref{thm:quasi-iso1-pre}. We omit details.
\end{proof}

Note that Theorems \ref{thm:quasi-iso1-pre} and \ref{thm:quasi-iso1} imply the special case of Theorem \ref{thm:formal moving} when $X_1 \hookrightarrow X_2$ is a complete intersection. We now prove it in general:

\medskip

\begin{proof}[Proof of Theorem \ref{thm:formal moving}]
Let $Y$ be a quasi-affine $k$-scheme of finite type, and $Y \hookrightarrow X_1 \hookrightarrow X_2$ are closed immersions into equidimensional smooth $k$-schemes $X_1$ and $X_2$. Let $\widehat{X}_i$ be the completion of $X_i$ along $Y$ for $i=1,2$.

Here, we no longer suppose that $X_1 \hookrightarrow X_2$ is a complete intersection. But still $X_1, X_2$ being smooth, the immersion $X_1 \hookrightarrow X_2$ is l.c.i., so that there is a finite affine open cover $\{ V_j \}$ of $X_2$ such that the induced closed immersions $X_1 \cap V_j \hookrightarrow X_2 \cap V_j$ are complete intersections for all $j$.

Let $U_j:= Y \cap V_j$. Here, each open subset $U_j \subset Y$ of $Y$ is affine, being a closed subset of an affine scheme $V_j$. Thus $\{ U_j \}$ is an affine open cover of $Y$. Regarding $U_j$ as an open subscheme, for the closed immersions $U_j \hookrightarrow X_1 \cap V_j \hookrightarrow X_2 \cap V_j$, the completion of $X_i \cap V_j$ along $U_j$ is identical to $\widehat{X}_i |_{U_j}$. 

In case some $U_j$ is not connected, we can work with each connected component of $U_j$ separately. So, replacing $\{U_j \}$ by the collection of all connected components, we may assume that $\{U_j\}$ is a finite set of connected affine open subschemes of $Y$ that covers $Y$.

\medskip

Returning back to the proof of the theorem, note that the inclusion morphisms \eqref{eqn:formal moving} are quasi-isomorphisms if and only if for each scheme point $y \in Y$, the induced morphisms of the stalks
\begin{equation}\label{eqn:fm-1}
\tuborg
\mathfrak{i}: \BGHz^q _{ \{ \widehat{X}_1 \}} (\widehat{X}_2, \bullet)_y \hookrightarrow \BGHz^q (\widehat{X}_2, \bullet)_y, \\
\mathfrak{i}: \BGHz^q _{ \{ \widehat{X}_1 \}} (\widehat{X}_2 \mod Y, \bullet)_y \hookrightarrow \BGHz^q (\widehat{X}_2 \mod Y, \bullet)_y
\sluttuborg
\end{equation}
are quasi-isomorphisms. Since $\{ U_j \}$ is an open cover of $Y$, for a given $y \in Y$, there is some $j_0$ such that $y \in U_{j_0}$. To prove that each of the morphisms in \eqref{eqn:fm-1} over $y \in Y$ is a quasi-isomorphism, it is enough to prove that the morphisms of complexes
\begin{equation}\label{eqn:fm-U}
\tuborg
\mathfrak{i}: z^q _{ \{ \widehat{X}_1|_{U_{j_0}} \}} (\widehat{X}_2|_{U_{j_0}}, \bullet) \hookrightarrow z^q (\widehat{X}_2|_{U_{j_0}}, \bullet), \\
\mathfrak{i}: z^q _{ \{ \widehat{X}_1|_{U_{j_0}} \}} (\widehat{X}_2|_{U_{j_0}} \mod U_{j_0}, \bullet) \hookrightarrow z^q (\widehat{X}_2|_{U_{j_0}} \mod U_{j_0}, \bullet)
\sluttuborg
\end{equation}
are quasi-isomorphisms. Here $U_{j_0}$ is a connected affine $k$-scheme of finite type, and $\widehat{X}_i |_{U_{j_0}}$ is the completion of $X_i \cap V_{j_0}$ along $U_{j_0}$, while $X_1 \cap V_{j_0} \hookrightarrow X_2 \cap V_{j_0}$ is a complete intersection. So, the morphisms in \eqref{eqn:fm-U} are indeed quasi-isomorphisms by Theorems \ref{thm:quasi-iso1-pre} and \ref{thm:quasi-iso1}, respectively. This completes the proof of the theorem.
\end{proof}

The discussions in \S \ref{sec:4.1} $\sim$ \S \ref{sec:formal_move} summarize as follows:

\begin{thm}\label{thm:sh intersection pb}
Let $Y$ be a quasi-affine $k$-scheme of finite type, and let $Y \hookrightarrow X_1 \overset{\iota}{\hookrightarrow} X_2$ be closed immersions into equidimensional smooth $k$-schemes $X_1, X_2$. Let $\widehat{X}_i$ be the completion of $X_i$ along $Y$ for $i=1,2$. Then the following hold:

\begin{enumerate}
\item We have zigzags of morphisms of complexes of sheaves on $Y_{\rm Zar}$
$$
\tuborg
\BGHz^q ( \widehat{X}_2, \bullet) \overset{\mathfrak{i}}{ \hookleftarrow} \BGHz^q _{\{ \widehat{X}_1 \}} (\widehat{X}_2 , \bullet) \overset{\widehat{\iota}^*}{\to} \BGHz^q (\widehat{X}_1, \bullet),\\
\BGHz^q ( \widehat{X}_2 \mod Y, \bullet) \overset{\mathfrak{i}}{ \hookleftarrow} \BGHz^q _{\{ \widehat{X}_1 \}} (\widehat{X}_2 \mod Y, \bullet) \overset{\widehat{\iota}^*}{\to} \BGHz^q (\widehat{X}_1 \mod Y, \bullet),
\sluttuborg
$$
where the morphisms $\mathfrak{i}$ are quasi-isomorphisms. 

\item They give morphisms in the derived category $\mathcal{D}^- ({\rm Ab} (Y))$ of bounded above complexes of sheaves on $Y_{\rm Zar}$:
$$
\tuborg
\iota^*:  \BGHz^q  (\widehat{X}_2 , \bullet) \to \BGHz^q (\widehat{X}_1, \bullet),\\
\iota^*: \BGHz^q (\widehat{X}_2 \mod Y, \bullet) \to \BGHz^q (\widehat{X}_1 \mod Y, \bullet).
\sluttuborg
$$

\item In particular, we have the induced homomorphisms
$$
\tuborg
\iota^*: \BGH^q (\widehat{X}_2 , n) \to \BGH^q (\widehat{X}_1, n), \\
\iota^*: \BGH^q (\widehat{X}_2 \mod Y, n) \to \BGH^q (\widehat{X}_1 \mod Y, n).
\sluttuborg
$$
\end{enumerate}
\end{thm}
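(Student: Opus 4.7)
The statement is essentially a consolidation of the two main preceding results, so my plan is to assemble the zigzag, invert the quasi-isomorphism in the derived category, and pass to hypercohomology.

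For part (1), the first arrow $\mathfrak{i}$ in each zigzag is simply the inclusion of a subcomplex of sheaves into the ambient complex, obtained by sheafifying the presheaf inclusions $z^q_{\{\widehat{X}_1|_U\}}(\widehat{X}_2|_U, \bullet) \hookrightarrow z^q(\widehat{X}_2|_U, \bullet)$ and $z^q_{\{\widehat{X}_1|_U\}}(\widehat{X}_2|_U \mod U, \bullet) \hookrightarrow z^q(\widehat{X}_2|_U \mod U, \bullet)$ over open $U \subset Y$ (well-defined by Lemmas \ref{lem:presheaf small} and \ref{lem:mod Y presheaf small}, applied with $\mathcal{W} = \{\widehat{X}_1\}$). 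That $\mathfrak{i}$ is a quasi-isomorphism in both cases is precisely Theorem \ref{thm:formal moving}. The second arrow $\widehat{\iota}^*$ in each zigzag is supplied verbatim by Proposition \ref{prop:1st indep}, which constructs the Gysin intersection morphism on the support-restricted complexes and shows compatibility with the mod $Y$-equivalence. No further work is needed here beyond citing these two results and recording that they combine into the stated zigzags.

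For part (2), one observes that a quasi-isomorphism of complexes of sheaves becomes invertible in the derived category $\mathcal{D}^-({\rm Ab}(Y))$, so that the composite
\[
\iota^* := \widehat{\iota}^* \circ \mathfrak{i}^{-1}: \BGHz^q(\widehat{X}_2, \bullet) \to \BGHz^q(\widehat{X}_1, \bullet)
\]
and its mod $Y$ analogue are well-defined morphisms in $\mathcal{D}^-({\rm Ab}(Y))$. This requires no new input beyond part (1).

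For part (3), it is then enough to apply the functor $\mathbb{H}_{\rm Zar}^{-n}(Y, -)$, which is well-defined on $\mathcal{D}^-({\rm Ab}(Y))$, to the derived category morphisms of part (2). Using Definition \ref{defn:BGH quasi-affine}, this produces the announced homomorphisms on yeni higher Chow groups.

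There is no real obstacle in this argument; all the technical work has already been carried out in Proposition \ref{prop:1st indep} (for the Gysin map, which is the content of \S \ref{sec:4.1}) and in Theorem \ref{thm:formal moving} (for the moving lemma, which is the heart of \S \ref{sec:formal_move}). Thus the proof is essentially a bookkeeping step combining these two inputs, and the only care required is to notice that the mod $Y$-equivalence and the support restriction interact cleanly, which has already been verified in the proofs of the two cited results.
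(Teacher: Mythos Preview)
Your proposal is correct and matches the paper's own proof essentially verbatim: the paper simply cites Theorem \ref{thm:formal moving} and Proposition \ref{prop:1st indep} for (1), invokes the invertibility of $\mathfrak{i}$ in the derived category for (2), and appeals to Definition \ref{defn:BGH quasi-affine} for (3). Your write-up is in fact more detailed than the paper's, which treats this theorem as a pure bookkeeping summary of the preceding two results.
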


\begin{proof}
(1) follows from Theorem \ref{thm:formal moving} and Proposition \ref{prop:1st indep}. 

(2) follows from (1) because $\mathfrak{i}$ are quasi-isomorphisms by Theorem \ref{thm:formal moving}.

(3) follows from (1) and Definition \ref{defn:BGH quasi-affine}.
\end{proof}

\subsection{General pull-backs up to quasi-isomorphism}\label{sec:pull-back moving}

As an important consequence of the moving lemma (Theorem \ref{thm:formal moving}), in the following Theorem \ref{thm:pull-back moving}, we prove the existence of more general pull-backs for ``algebraizable" morphisms of formal schemes not necessarily composites of the types (I), (II), (III) flat morphisms.

\begin{thm}\label{thm:pull-back moving}
Let $g: Y_1 \to Y_2 \in \QAff_k$ be a morphism. We have the following:
\begin{enumerate}
\item Suppose that there are closed immersions $Y_i \hookrightarrow X_i$ for $i=1,2$ into equidimensional smooth $k$-schemes and a morphism $f : X_1 \to X_2$, such that they form a commutative diagram
\begin{equation}\label{eqn:pull-back moving 1-1}
\xymatrix{
{X}_1 \ar[r]^{f} & {X}_2 \\ 
Y_1 \ar@{^{(}->}[u] \ar[r] ^g & Y_2. \ar@{^{(}->}[u]
}
\end{equation}

Then it induces the pull-back morphisms in $\mathcal{D}^- ({\rm Ab} (Y_2))$: 
\begin{equation}\label{eqn:pull-back moving 1-2}
\tuborg
g^*_{f}: \BGHz^q (\widehat{X}_2, \bullet) \to \mathbf{R} g_* \BGHz^q (\widehat{X}_1, \bullet), \\
g^*_{f} : \BGHz^q (\widehat{X}_2 \mod Y_2, \bullet) \to \mathbf{R}  g_* \BGHz^q (\widehat{X}_1 \mod Y_1, \bullet).
\sluttuborg
\end{equation}

\item The pull-backs in \eqref{eqn:pull-back moving 1-2} are functorial in the following sense: let $g_1: Y_1 \to Y_2$ and $g_2: Y_2 \to Y_3$ be morphisms in $\QAff_k$. Let $Y_i \hookrightarrow X_i$ for $i=1,2,3$ be closed immersions into equidimensional smooth $k$-schemes, and suppose we have morphisms $f_1: X_1\to X_2$ and $f_2: X_2 \to X_3$ such that they form the commutative diagram
\begin{equation}\label{eqn:pull-back moving 2-1}
\xymatrix{
X_1 \ar[r] ^{f_1} & X_2 \ar[r] ^{f_2} & X_3 \\
Y_1 \ar[r] ^{g_1} \ar@{^{(}->}[u] & Y_2 \ar[r] ^{g_2} \ar@{^{(}->}[u]  & Y_3. \ar@{^{(}->}[u]  }
\end{equation}

Then we have the equalities of the pull-back morphisms in $\mathcal{D}^- ({\rm Ab}(Y_2))$:
\begin{equation}\label{eqn:pull-back moving 2-2}
\tuborg
(g_2 \circ g_1)_{f_2 \circ f_1} ^* = (g_1 )_{f_1} ^* \circ  (g_2)_{f_2} ^* : \BGHz^q (\widehat{X}_3, \bullet) \to \mathbf{R} (g_2 \circ g_1)_* \BGHz^q (\widehat{X}_1, \bullet),\\
(g_2 \circ g_1)_{f_2 \circ f_1} ^* = (g_1 )_{f_1} ^* \circ  (g_2)_{f_2} ^* : \BGHz^q (\widehat{X}_3 \mod Y_3, \bullet)
\\ 
\hskip5cm \to \mathbf{R} (g_2 \circ g_1)_* \BGHz^q (\widehat{X}_1 \mod Y_1, \bullet),
\sluttuborg
\end{equation}
where the names of the above pull-back morphisms follow the notations of \eqref{eqn:pull-back moving 1-2}.

\item The pull-backs in \eqref{eqn:pull-back moving 1-2} are functorial in $f$ in the following sense: suppose we have the following commutative diagram
\begin{equation}\label{eqn:pull-back moving 3-1}
\xymatrix{ 
 & X_1 ' \ar[r] ^{f'} \ar[dl] _{h_1} & X_2 ' \ar[dr] ^{h_2} &  \\
 X_1 \ar[rrr] ^f & & &  X_2 & \\
 &  Y_1  \ar@{^{(}->}[uu]  \ar@{^{(}->}[ul] \ar[r] ^g &  Y_2,  \ar@{^{(}->}[uu]  \ar@{^{(}->}[ur] &
}
\end{equation}
where $X_1, X_2, X_1', X_2'$ are all equidimensional smooth $k$-schemes. 

Then it induces the following commutative diagrams in $\mathcal{D}^- ({\rm Ab} (Y_2))$:
$$
\xymatrix{
\BGHz^q (\widehat{X}_2, \bullet) \ar[r] ^{ g_f ^* \ } \ar[d] ^{ ({\rm Id}_{Y_2})_{h_2} ^*}  & \mathbf{R}g_* \BGHz^q (\widehat{X}_1, \bullet) \ar[d] ^{ ({\rm Id}_{Y_1})_{h_1}^*} \\
\BGHz^q (\widehat{X}_2', \bullet) \ar[r] ^{g_{f'} ^* \ } &  \mathbf{R}g_*  \BGHz^q (\widehat{X}_1', \bullet),
}
\hskip0.4cm
\xymatrix{
\BGHz^q (\widehat{X}_2 \mod Y_2, \bullet) \ar[r] ^{ g_f ^* \ \ \  } \ar[d] ^{ ({\rm Id}_{Y_2})_{h_2} ^*}  & \mathbf{R}g_* \BGHz^q (\widehat{X}_1 \mod Y_1, \bullet) \ar[d] ^{ ({\rm Id}_{Y_1})_{h_1}^*} \\
\BGHz^q (\widehat{X}_2' \mod Y_2, \bullet) \ar[r] ^{g_{f'} ^* \ \ \  } &  \mathbf{R}g_*  \BGHz^q (\widehat{X}_1' \mod Y_1, \bullet),
}
$$
where the names of the arrows follow the notational convention in \eqref{eqn:pull-back moving 1-2}.

\end{enumerate}

\end{thm}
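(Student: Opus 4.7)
The plan for part (1) is to construct $g^*_f$ by factoring $f$ through its graph and then combining the three flat/Gysin constructions already available. Specifically, factor $f = pr_2 \circ \Gamma_f$ where $\Gamma_f : X_1 \hookrightarrow X_1 \times X_2$ is the graph (a closed immersion since $X_2$ is separated over $k$) and $pr_2 : X_1 \times X_2 \to X_2$, and analogously $g = pr_2 \circ \Gamma_g$ with $\Gamma_g : Y_1 \hookrightarrow Y_1 \times Y_2$. By Lemma \ref{lem:prod completion} the completion of $X_1 \times X_2$ along $Y_1 \times Y_2$ equals $\widehat{X}_1 \times \widehat{X}_2$; the further completion along the smaller closed subset $Y_1 \subset Y_1 \times Y_2$ gives a noetherian formal scheme $\mathfrak{X}_{12} := (X_1 \times X_2)^{\wedge}_{Y_1}$, together with a type (II) flat morphism $\mathfrak{X}_{12} \to \widehat{X}_1 \times \widehat{X}_2$ covering $\Gamma_g$, and with a closed immersion $\widehat{X}_1 \hookrightarrow \mathfrak{X}_{12}$ over $Y_1$ obtained by completing $\Gamma_f$ along $Y_1$.

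Define $g^*_f$ as the composite in $\mathcal{D}^-({\rm Ab}(Y_2))$ of three arrows: (a) the type (III) flat pull-back $\BGHz^q(\widehat{X}_2 \mod Y_2, \bullet) \to \mathbf{R}(pr_2)_* \BGHz^q(\widehat{X}_1 \times \widehat{X}_2 \mod Y_1 \times Y_2, \bullet)$ furnished by Lemma \ref{lem:fpb} (sheafified over the open sets of $Y_2$); (b) the type (II) further-completion pull-back $\BGHz^q(\widehat{X}_1 \times \widehat{X}_2 \mod Y_1 \times Y_2, \bullet) \to (\Gamma_g)_* \BGHz^q(\mathfrak{X}_{12} \mod Y_1, \bullet)$, again from Lemma \ref{lem:fpb}, sheafified over the open sets of $Y_1 \times Y_2$; and (c) the Gysin pull-back $\Gamma_f^* : \BGHz^q(\mathfrak{X}_{12} \mod Y_1, \bullet) \to \BGHz^q(\widehat{X}_1 \mod Y_1, \bullet)$ provided by Theorem \ref{thm:sh intersection pb} applied to the smooth closed immersion $\Gamma_f$ completed along $Y_1$. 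Composing and using $\mathbf{R}g_* = \mathbf{R}(pr_2)_* \circ (\Gamma_g)_*$ (the latter being exact since $\Gamma_g$ is a closed immersion) yields the desired morphism; the version without ``$\mod Y$" is constructed by exactly the same three steps.

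Parts (2) and (3) are then reduced to chasing compatibilities of these three elementary operations. For (2), the graph of $f_2 \circ f_1$ sits inside the triple product $X_1 \times X_2 \times X_3$, and carrying out the pull-back through this triple product recovers both $(g_2 \circ g_1)^*_{f_2 \circ f_1}$ and the composite $(g_1)^*_{f_1} \circ (g_2)^*_{f_2}$; they agree by functoriality of flat pull-backs (immediate from Lemma \ref{lem:fpb}), composability of Gysin pull-backs along l.c.i.~closed immersions (Remark \ref{remk:lci}), and the compatibility of projections with further completions along compatible subsets. Part (3) is similar in spirit: decompose each $h_i$ through its own graph inside the enlarged product $X_1 \times X_1' \times X_2 \times X_2'$, after which the commutativity of the square in diagram \eqref{eqn:pull-back moving 3-1} reduces to the same compatibilities of flat and Gysin operations plus another invocation of Theorem \ref{thm:formal moving}.

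The main anticipated obstacle is organizational rather than conceptual: one must keep careful track of which sheaf lives on which of the multiple base spaces ($Y_1$, $Y_2$, $Y_1 \times Y_2$, and their analogues in (2) and (3)) and consistently invert the quasi-isomorphism $\mathfrak{i}$ of Theorem \ref{thm:formal moving} so that each Gysin pull-back is realised as an honest morphism in the derived category. A secondary technical point is base change between $\mathbf{R}g_*$ and the flat pull-backs of types (II) and (III); this reduces to standard derived base change for flat morphisms combined with the l.c.i.~identity $\mathbf{L}\iota^* = \iota^*$ on properly intersecting cycles (already exploited in Proposition \ref{prop:1st indep}).
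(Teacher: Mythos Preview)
Your proposal is correct and follows essentially the same route as the paper: factor through the graph $\Gamma_f : X_1 \hookrightarrow X_1 \times X_2$, use the type~(III) projection pull-back to $\widehat{X}_1 \times \widehat{X}_2$, the type~(II) further-completion pull-back to $(X_1 \times X_2)^{\wedge}_{Y_1}$, and then the Gysin map from Theorem~\ref{thm:sh intersection pb}, composed via $\mathbf{R}(pr_2)_* \circ \mathbf{R}(\Gamma_g)_* = \mathbf{R}g_*$. The paper's treatment of (2) and (3) is terser than yours (for (3) it simply invokes $h_2 \circ f' = f \circ h_1$), but the underlying verification is the one you outline.
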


\begin{proof}
(1) Let $\widehat{X}_i$ be the completion of $X_i$ along $Y_i$ for $i=1,2$. The diagram \eqref{eqn:pull-back moving 1-1} induces the morphism $\widehat{f}: \widehat{X}_1 \to \widehat{X}_2$.

Note that $\widehat{X}_1 \times \widehat{X}_2$ is the completion of $X_1 \times X_2$ along $Y_1 \times Y_2$ by Lemma \ref{lem:prod completion}. We also have the closed immersions $\iota= gr_f: X_1 \hookrightarrow X_1 \times X_2$, and $Y_1 \overset{gr_g}{\to} Y_1 \times Y_2 \hookrightarrow X_1 \times X_2$. 

Let $\widehat{X}_{12}$ be the completion of $X_1 \times X_2$ along $Y_1$ via the above closed immersions. This $\widehat{X}_{12}$ is equal to the further completion $\alpha: \widehat{X}_{12} \to \widehat{X}_1 \times \widehat{X}_2$ along $Y_1$ as well. Thus we have the commutative diagram
$$
\xymatrix{
\widehat{X}_1 \ar@{^{(}->}[r] ^{\widehat{\iota}}\ar@/^1.5pc/[rrr] ^{\widehat{f}} & \widehat{X}_{12} \ar[r] ^{\alpha \ \ \ } & \widehat{X}_{1} \times \widehat{X}_2 \ar[r] ^{ \ \ \widehat{pr}_2} & \widehat{X}_2 \\
Y_1 \ar@{^{(}->}[u] \ar@{=}[r] & Y_1  \ar@{^{(}->}[u]   \ar@{^{(}->}[r]^{gr_g \ \ } & Y_1 \times Y_2  \ar@{^{(}->}[u]  \ar[r] ^{\ \ pr_2} & Y_2.\ar@{^{(}->}[u] }
$$

In what follows in the proof, all morphisms of complexes of sheaves are morphisms in suitable derived categories of abelian sheaves.

Since the morphisms $\alpha$ and $\widehat{pr}_2$ are type (II) and (II) flat morphisms, respectively, by Lemmas \ref{lem:pre flat pb} and \ref{lem:fpb}, we have the induced flat pull-backs. Thus we do have the associated pull-backs
\begin{equation}\label{eqn:pull-back moving 1-pr}
\tuborg
\widehat{pr}_2 ^*: \BGHz^q (\widehat{X}_2, \bullet) \to \mathbf{R} (pr_2)_* \BGHz^q (\widehat{X}_1 \times \widehat{X}_2, \bullet),\\
\widehat{pr}_2 ^*: \BGHz^q (\widehat{X}_2 \mod Y_2 , \bullet) \to \mathbf{R} (pr_2)_* \BGHz^q (\widehat{X}_1 \times \widehat{X}_2 \mod Y_1 \times Y_2, \bullet),
\sluttuborg
\end{equation}
and
\begin{equation}\label{eqn:pull-back moving 1-alpha}
\tuborg
\alpha^*: \BGHz^q (\widehat{X}_1 \times \widehat{X}_2, \bullet) \to \mathbf{R} (gr_g)_* \BGHz^q (\widehat{X}_{12} , \bullet), \\
\alpha^*: \BGHz^q (\widehat{X}_1 \times \widehat{X}_2 \mod Y_1 \times Y_2, \bullet) \to \mathbf{R} (gr_g)_* \BGHz^q (\widehat{X}_{12} \mod Y_1 , \bullet).
\sluttuborg
\end{equation}
On the other hand, the morphism $\widehat{\iota}$ is deduced from the closed immersion $ X_1 \overset{\iota=gr_f}{\hookrightarrow} X_{12}$ so that by Theorem \ref{thm:sh intersection pb}-(2), we have morphisms
\begin{equation}\label{eqn:pull-back moving 1-iota}
\tuborg
\iota^*: \BGHz^q (\widehat{X}_{12}, \bullet) \to \BGHz^q (\widehat{X}_1, \bullet), \\
\iota^*: \BGHz^q (\widehat{X}_{12} \mod Y_1, \bullet) \to \BGHz^q (\widehat{X}_1 \mod Y_1, \bullet). \\
\sluttuborg
\end{equation}

Composing $\mathbf{R} (pr_2)_* \circ \mathbf{R} (gr_g)_*$ of $\iota^*$ of \eqref{eqn:pull-back moving 1-iota}, $\mathbf{R}(pr_2)_*$ of $\alpha^*$ of \eqref{eqn:pull-back moving 1-alpha}, and \eqref{eqn:pull-back moving 1-pr} in $\mathcal{D}^- ({\rm Ab}(Y_2))$, we deduce \eqref{eqn:pull-back moving 1-2}, because $pr_2 \circ gr_g = g$ so that $\mathbf{R} (pr_2)_* \circ \mathbf{R} (gr_g)_*= \mathbf{R} g_*$. This proves (1). 

We remark that the morphisms in \eqref{eqn:pull-back moving 1-2} can be written as zigzags of concrete morphisms of complexes, but we won't elaborate on this point; keeping track of all of them will make the compositions unbearably complicated as we go further.

\medskip

(2) Let $\widehat{X}_i$ be the completion of $X_i$ along $Y_i$. Following the conventions in the proof of (1), we let $\widehat{X}_{12}$ be the completion of $X_1 \times X_2$ along $Y_1$ via the closed immersion $Y_1 \overset{gr_{g_1}}{\hookrightarrow} Y_1 \times Y_2$, and we similarly define $\widehat{X}_{12}$ and $\widehat{X}_{23}$. 

Then the equalities in \eqref{eqn:pull-back moving 1-2} immediately follow from the commutative diagram
$$
\xymatrix{
\widehat{X}_1 \ar@{^{(}->}[r]  \ar@{^{(}->}[drr]  & \widehat{X}_{12} \ar[r] ^{\alpha_{12} \ \ } & \widehat{X}_1 \times \widehat{X}_2 \ar[r]^{ \ \ pr} & \widehat{X}_2  \ar@{^{(}->}[r] & \widehat{X}_{23} \ar[r] ^{\alpha_{23} \ \ } & \widehat{X}_2 \times \widehat{X}_3 \ar[r] ^{ \ \ pr} & \widehat{X}_3 \\
& & \widehat{X}_{13} \ar[rr] ^{\alpha_{23}} & & \widehat{X}_1 \times \widehat{X}_3. \ar[rru] ^{pr} & &}
$$

\medskip

(3) Once we know the existence of the morphisms in the diagrams, the commutativity is immediate because we have $h_2 \circ f' = f \circ h_1$ by \eqref{eqn:pull-back moving 3-1}.
\end{proof}

\section{A generalization of the \v{C}ech construction}\label{sec:finite type}

When $Y $ is a quasi-affine $k$-scheme $Y$ of finite type, after choosing a closed immersion $Y \hookrightarrow X$ into an equidimensional smooth $k$-scheme, and taking the completion $\widehat{X}$ of $X$ along $Y$, we constructed the complexes of sheaves 
$$
\BGHz^q (\widehat{X}, \bullet) \mbox{ and } \BGHz^q (\widehat{X} \mod Y, \bullet),
$$
and their hypercohomology groups $\BGH^q (\widehat{X}, n)$ and $\BGH^q (\widehat{X} \mod Y, n)$. 
 
 \medskip

Using them as part of the building blocks, the purpose of \S \ref{sec:finite type} is to define their globalized \v{C}ech versions for all $Y \in \Sch_k$. Here, we bring a variant of the \v{C}ech machine of R. Hartshorne \cite[Remark, p.28]{Hartshorne DR}.

\subsection{The \v{C}ech machine}\label{sec:Cech machine}

The construction below was inspired by the two-page long outline in R. Hartshorne \cite[Remark, p.28]{Hartshorne DR} for the algebraic de Rham cohomology. As cycles are generally harder to deal with than differential forms, there are various differences. An analogue for the algebraic $K$-theory is constructed in \cite[\S 4]{PP}.

\subsubsection{Systems of local embeddings and the \v{C}ech complexes}\label{sec:Cech complex sheaf} Recall the following minor variant of the notion of systems of local embeddings of R. Hartshorne \cite[Remark, p.28]{Hartshorne DR}:

\begin{defn}\label{defn:system embedd Hartshorne}
A \emph{system of local embeddings} for a scheme $Y \in \Sch_k$ is a finite set $\mathcal{U}= \{ (U_i, X_i)\}_{i \in \Lambda}$ of pairs, where $\{ U_i\}$ is a quasi-affine open cover of $Y$, and for each $i\in \Lambda$, we have a closed immersion $U_i \hookrightarrow X_i$ into an equidimensional smooth $k$-scheme $X_i$. 
\qed
\end{defn}

In what follows, we use a \v{C}ech-style machine to define a bi-complex of sheaves on $Y$, arising from families of complexes of sheaves over all multi-indices $I \in \Lambda^{p+1}$ and $p \geq 0$, for a given system $\mathcal{U}$ of local embeddings for $Y$. 

\medskip

For $Y \in \Sch_k$, fix a system $\mathcal{U}= \{ (U_i, X_i ) \}_{i \in \Lambda}$ of local embeddings. Let $p, q \geq 0$ be integers. For each $(p+1)$-tuple of indices $I= ( i_0 , \cdots ,i_p ) \in \Lambda^{p+1}$, define the open set $U_{I} := U_{i_0} \cap \cdots \cap U_{i_p}$. Since each $U_i$ is quasi-affine, this $U_I$ is also quasi-affine. Define $X_I:= X_{i_0} \times \cdots \times X_{i_p}$, which is an equidimensional smooth $k$-scheme again. We have the induced diagonal closed immersion $U_I \hookrightarrow X_I$.

Let $\widehat{X}_I$ be the completion of $X_I$ along $U_I$. Consider the complexes of sheaves on $U_I$
$$
\BGHz^q (\widehat{X}_I, \bullet) \ \ \ \mbox{ and } \ \ \ \BGHz^q (\widehat{X}_I \mod U_I, \bullet).
$$

\medskip

For a given $I= ( i_0, \cdots,  i_p)\in \Lambda^{p+1}$, and $0 \leq j \leq p$, let $I_j' := ( i_0 ,\cdots, \widehat{i}_j , \cdots , i_p)\in \Lambda^p$, where ${i}_j$ is omitted from $I$. We have the open immersion $g_I^j: U_I \hookrightarrow U_{I'_j}$, and the projection $X_I \hookrightarrow X_{I'_j}$. They induce the commutative diagram
$$
\xymatrix{
 \widehat{X}_I \ar[r]^{f_{I} ^j} & \widehat{X}_{I'_j} \\
 U_I \ar@{^{(}->}[u] \ar@{^{(}->}[r] ^{g_{I} ^j} & U_{I'_j}, \ar@{^{(}->}[u]}
 $$ 
 where the induced map $f_{I}^j$ is a morphism of quasi-affine formal schemes, which is a composite of flat morphisms of types (I), (II), (III) of Lemmas \ref{lem:pre flat pb} and \ref{lem:fpb}.
(N.B. We give a simple example. Take $I= (1,2)$ and $I_1' = (2)$. We have the associated commutative diagram
 $$
 \xymatrix{
 X_1 \times X_2 \ar[r] & X_1 \times X_2 \ar[r] & X_1 \times X_2 \ar[r] ^{\ \ \ pr} & X_2 \\
 U_1 \cap U_2 \ar@{^{(}->}[u] \ar@{^{(}->}[r] ^{\Delta \ \ \ \ \ \ \ \  } & (U_1 \cap U_2) \times (U_1 \cap U_2) \ar[r]^{ \ \ \ \ \ \ op}  \ar@{^{(}->}[u] & U_1 \times U_2  \ar@{^{(}->}[u] \ar[r] ^{ \ \ \ pr} & U_2, \ar@{^{(}->}[u]}
 $$
 where $\Delta$ is the diagonal embedding, $op$ is the open immersion, and $pr$ are the projections. They induce flat morphisms of formal schemes of the types (II), (I), (III), respectively. The general case is similar.) 

Thus we have the flat pull-back morphisms of complexes of sheaves
 $$
 \tuborg
  \delta_{I,j}:= (f_I^j)^*: \BGHz^q (\widehat{X}_{I_j'}  , \bullet) \to \mathbf{R}  (g_I^j)_* \BGHz^q (\widehat{X}_{I}  , \bullet),\\
 \delta_{I,j}:= (f_I^j)^*: \BGHz^q (\widehat{X}_{I_j'} \mod U_{I'_j} , \bullet) \to \mathbf{R} (g_I^j)_* \BGHz^q (\widehat{X}_{I} \mod U_I , \bullet).
 \sluttuborg
 $$

For each $U_I$, let $\iota=\iota_I: U_I \hookrightarrow Y$ be the open immersion, where we often ignore the subscript $I$ of $\iota_I$ in what follows for notational simplicity. Define 
$$ 
\tuborg
\check{\mathfrak{C}}^q(\mathcal{U}^{\infty})^p:= \underset{ I \in \Lambda ^{p+1}}{\prod} \mathbf{R} \iota_* \BGHz^q (\widehat{X}_I , \bullet), \\
\check{\mathfrak{C}}^q(\mathcal{U})^p:= \underset{ I \in \Lambda ^{p+1}}{\prod} \mathbf{R} \iota_* \BGHz^q (\widehat{X}_I \mod U_I, \bullet).
\sluttuborg
$$
For each $I \in \Lambda^{p+1}$, define
$$
\tuborg
\delta^{p-1} _I:= \overset{p}{\underset{j=0}{\sum}} (-1)^j  \delta_{I,j}: \overset{p}{\underset{j=0}{\prod}} \mathbf{R} \iota_* \BGHz^q (\widehat{X}_{I_j'} , \bullet) \to \mathbf{R} \iota_* \BGHz^q (\widehat{X}_I , \bullet), \\
\delta^{p-1} _I:= \overset{p}{\underset{j=0}{\sum}} (-1)^j  \delta_{I,j}: \overset{p}{\underset{j=0}{\prod}}  \mathbf{R}\iota_* \BGHz^q (\widehat{X}_{I_j'} \mod U_{I_j'}, \bullet) \to \mathbf{R} \iota_* \BGHz^q (\widehat{X}_I \mod U_I, \bullet).
\sluttuborg
$$
Combining them over all $I \in \Lambda^{p+1}$, we deduce the \v{C}ech boundary maps
\begin{equation}\label{eqn:cech boundary map}
\tuborg
\delta^{p-1} :  \check{\mathfrak{C}}^q (\mathcal{U}^{\infty})^{p-1} \to  \check{\mathfrak{C}}^q (\mathcal{U}^{\infty})^p, \\
\delta^{p-1} :  \check{\mathfrak{C}}^q (\mathcal{U})^{p-1} \to  \check{\mathfrak{C}}^q (\mathcal{U})^p.
\sluttuborg
\end{equation}
One checks that $\delta ^p \circ \delta^{p-1} = 0$ by the usual \v{C}ech formalism. Hence we have the double complexes of sheaves (strictly speaking, objects in ${\rm Kom}^+ (\mathcal{D}^- ({\rm Ab} (Y)))$)
$$
\tuborg
\check{\mathfrak{C}} ^q (\mathcal{U}^{\infty}): = \check{\mathfrak{C}} ^q (\mathcal{U}^{\infty})^{\bullet},\\
\check{\mathfrak{C}} ^q (\mathcal{U}): = \check{\mathfrak{C}} ^q (\mathcal{U})^{\bullet}.
\sluttuborg
$$ Here, we regard the homological complexes $\BGHz^q (\widehat{X}_I , \bullet)$ and $\BGHz^q (\widehat{X}_I \mod U_I, \bullet)$ as cohomological complexes with the negative indexing, so that $\check{\mathfrak{C}} ^q (\mathcal{U}^{\infty})$ and $\check{\mathfrak{C}} ^q (\mathcal{U})$ are cohomological double complexes.

 \begin{defn}\label{defn:Cech cx}
 Let $Y \in \Sch_k$. Let $\mathcal{U}$ be a system of local embeddings for $Y$. Define the following objects in $\mathcal{D} ({\rm Ab} (Y))$
 $$
 \tuborg
 {\BGHz} ^q (\mathcal{U}^{\infty}, \bullet):= {\rm Tot}\  \check{\mathfrak{C}}^q (\mathcal{U}^{\infty}), \\
  {\BGHz} ^q (\mathcal{U}, \bullet):= {\rm Tot}\  \check{\mathfrak{C}}^q (\mathcal{U}),
\sluttuborg
 $$
respectively. Define the the Zariski hypercohomology groups

 \begin{equation}\label{eqn:vBGH}
 \tuborg
  \BGH^q (\mathcal{U}^{\infty}, n):= \mathbb{H}_{\rm Zar} ^{-n}  ( Y,  {\BGHz} ^q (\mathcal{U}^{\infty}, \bullet)),\\
 \BGH^q (\mathcal{U}, n):= \mathbb{H}_{\rm Zar} ^{-n}  ( Y,   {\BGHz} ^q (\mathcal{U}, \bullet)),
 \sluttuborg
 \end{equation}
respectively. \qed
 \end{defn}
 
 These groups are not yet the final groups we aim to define. We need one more step: to take (homotopy) colimits over all $\mathcal{U}$. To do so, we need a suitable notion of refinements of systems $\mathcal{U}$.

 \subsubsection{Refinements}\label{sec:refinement}
 We consider the following version of refinements for systems of local embeddings: 

 \begin{defn}\label{defn:refinement system}
 Let $\mathcal{U} = \{ ( U_i, X_i ) \}_{i \in \Lambda}$ and $\mathcal{V}= \{ ( V_j, X_j'  )\}_{j \in \Lambda'}$  be systems of local embeddings for $Y\in \Sch_k$ in the sense of Definition \ref{defn:system embedd Hartshorne}.
 
  Suppose there is a set map $\lambda: \Lambda' \to \Lambda$ such that for each $j\in \Lambda'$,
 \begin{enumerate}
 \item we have the inclusion $V_j \subset U_{\lambda (j)}$, and
 \item there is a morphism $X'_j \to X_{\lambda (j)}$ whose restriction to $V_j$ gives the inclusion $V_j \subset U_{\lambda (j)}$. 
 \end{enumerate}
  Then we say that $\mathcal{V}$ is a \emph{refinement} of $\mathcal{U}$.
 \qed
 \end{defn}

\begin{remk}
We remark that our notion of refinements is more flexible than the original one considered in R. Hartshorne \cite[Remark, p.28]{Hartshorne DR}. There, he assumes an additional requirement that morphisms $X'_j \to X_{\lambda (j)}$ in Definition \ref{defn:refinement system}-(2) are smooth. 

Because we have a somewhat general pull-back property of Theorem \ref{thm:pull-back moving} in suitable derived categories, we do not need to assume any extra hypothesis on the morphisms.
\qed
\end{remk}

In what follows, we will show that our notion of refinements for systems of local embeddings of Definition \ref{defn:refinement system} works well in our \v{C}ech-style construction.  
 
 \begin{lem}\label{lem:common refinement}
 Let $Y \in \Sch_k$. 
 Let $\mathcal{U}  = \{ ( U_i, X_i) \}_{i \in \Lambda}$ and $\mathcal{V}= \{ ( V_j, X'_j )\}_{j \in \Lambda'}$ be two systems of local embeddings for $Y$. 
 
 Then there exists a system $\mathcal{W}$ of local embeddings for $Y$, that is a refinement of both $\mathcal{U}$ and $\mathcal{V}$. 
 \end{lem}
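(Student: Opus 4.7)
The plan is to take the obvious Cartesian-product-indexed refinement: index set $\Lambda \times \Lambda'$, open cover $W_{(i,j)} := U_i \cap V_j$, and smooth ambient scheme a product of (suitable opens in) $X_i$ and $X'_j$. First I will note that $\{W_{(i,j)}\}_{(i,j)}$ covers $Y$, since for $y \in Y$ one has $i \in \Lambda$, $j \in \Lambda'$ with $y \in U_i$ and $y \in V_j$, hence $y \in W_{(i,j)}$. Each $W_{(i,j)} = U_i \cap V_j$ is quasi-compact (as a Zariski open in the noetherian scheme $Y$) and sits as a quasi-compact open inside the quasi-affine $U_i$, so it is quasi-affine and in particular separated.

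The main obstacle, and the only nontrivial point, is producing a genuine closed immersion $W_{(i,j)} \hookrightarrow X''_{(i,j)}$ into an equidimensional smooth $k$-scheme. The naive map $W_{(i,j)} \to X_i \times X'_j$ coming from the two inclusions $W_{(i,j)} \hookrightarrow U_i \hookrightarrow X_i$ and $W_{(i,j)} \hookrightarrow V_j \hookrightarrow X'_j$ is only a locally closed immersion on each factor. To fix this, I will shrink the ambient smooth schemes. The set $U_i \setminus W_{(i,j)}$ is closed in $U_i$, hence closed in $X_i$ because $U_i \hookrightarrow X_i$ is a closed immersion; let $X_i^{(i,j)} := X_i \setminus (U_i \setminus W_{(i,j)})$, an open subscheme of $X_i$ which is therefore equidimensional and smooth. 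By construction $U_i \cap X_i^{(i,j)} = W_{(i,j)}$, so the induced map $W_{(i,j)} \hookrightarrow X_i^{(i,j)}$ is a closed immersion. Define $X'^{(i,j)}_j \subset X'_j$ symmetrically, and set $X''_{(i,j)} := X_i^{(i,j)} \times_k X'^{(i,j)}_j$, which is equidimensional smooth over $k$.

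Now $W_{(i,j)} \to X''_{(i,j)}$ factors as the diagonal $W_{(i,j)} \to W_{(i,j)} \times_k W_{(i,j)}$ followed by the product $W_{(i,j)} \times_k W_{(i,j)} \hookrightarrow X_i^{(i,j)} \times_k X'^{(i,j)}_j$ of the two closed immersions. The product of closed immersions is a closed immersion, and the diagonal is a closed immersion since $W_{(i,j)}$ is separated; composing gives the required closed immersion $W_{(i,j)} \hookrightarrow X''_{(i,j)}$. Setting $\mathcal{W} := \{(W_{(i,j)}, X''_{(i,j)})\}_{(i,j) \in \Lambda \times \Lambda'}$ yields a system of local embeddings for $Y$.

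Finally I will verify the refinement conditions of Definition \ref{defn:refinement system}. Define $\lambda : \Lambda \times \Lambda' \to \Lambda$ by $\lambda(i,j) = i$; then $W_{(i,j)} \subset U_i = U_{\lambda(i,j)}$ and the first projection $X''_{(i,j)} = X_i^{(i,j)} \times_k X'^{(i,j)}_j \to X_i^{(i,j)} \hookrightarrow X_i$ restricts on $W_{(i,j)}$ to the inclusion $W_{(i,j)} \hookrightarrow U_i$, so $\mathcal{W}$ refines $\mathcal{U}$. Symmetrically, taking $\lambda' : \Lambda \times \Lambda' \to \Lambda'$ with $\lambda'(i,j) = j$ and using the second projection shows $\mathcal{W}$ refines $\mathcal{V}$, completing the proof.
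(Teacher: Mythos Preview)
Your proof is correct and follows essentially the same approach as the paper: both take the index set $\Lambda \times \Lambda'$, the open cover $W_{(i,j)} = U_i \cap V_j$, shrink each $X_i$ and $X'_j$ to opens over which $W_{(i,j)}$ becomes closed, and then take the diagonal embedding into the product. You are simply more explicit about constructing the shrunk opens as complements and about invoking separatedness of $W_{(i,j)}$ to ensure the diagonal is closed, points the paper leaves implicit.
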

 
 \begin{proof}
 
  Define the set $\Lambda'':= \Lambda \times \Lambda'$. For $(i, j) \in \Lambda''$, let $W_{ij}:= U_i \cap V_j$. Since $U_i, V_j$ are quasi-affine, so is $W_{ij}$. 
   
 We are given a closed immersion $U_i \hookrightarrow X_i$, and $W_{ij}$ is open in $U_i$. The subspace topology on $U_i$ from $X_i$ coincides with the topology of $U_i$. Thus there is an open subscheme $X_{ij} \subset X_i$ with the induced closed immersion $W_{ij} \hookrightarrow X_{ij}$. Similarly, $W_{ij}$ is open in $V_j$, so that there is an open subscheme $X'_{ij} \subset X'_{j}$ with the closed immersion $W_{ij} \hookrightarrow X'_{ij}$.

The closed immersions $W_{ij} \hookrightarrow X_{ij}, X'_{ij}$ induce the diagonal immersion $W_{ij} \hookrightarrow X''_{ij}:= X_{ij} \times_k X'_{ij}$. Let $\mathcal{W}:= \{ (W_{ij}, X''_{ij} ) \}_{(i,j)\in \Lambda''}$. This is a system of local embeddings for $Y$.
 
 For the projection maps $\lambda, \lambda': \Lambda'' = \Lambda \times \Lambda' \to \Lambda, \Lambda'$ of the index sets, respectively, we have the corresponding morphisms
 $$
 \tuborg X''_{ij} = X_{ij} \times X_{ij}' \to X_{ij} \overset{op}{\hookrightarrow} X_i, \\
 X''_{ij}= X_{ij} \times X_{ij} ' \to  X_{ij}' \overset{op}{\hookrightarrow} X_j'.
 \sluttuborg
 $$
 Hence $\mathcal{W}$ is a refinement of both $\mathcal{U}$ and $\mathcal{V}$. 
 \end{proof}

 \begin{lem}\label{lem:induced map proj refine}
 Let $\mathcal{U} =\{ ( U_i, X_i) \}_{i \in \Lambda}$ and $\mathcal{V}= \{ ( V_j, X'_j )\}_{j \in \Lambda'}$ be systems of local embeddings for $Y \in \Sch_k$. Suppose that $\mathcal{V}$ is a refinement of $\mathcal{U}$
 given by a set map $\lambda: \Lambda' \to \Lambda$. 
 
 Then there are associated morphisms 
 $$
 \tuborg
  \lambda^*: {\BGHz}^q (\mathcal{U}^{\infty}, \bullet) \to {\BGHz}^q (\mathcal{V}^{\infty}, \bullet), \\
 \lambda^*: {\BGHz}^q (\mathcal{U}, \bullet) \to {\BGHz}^q (\mathcal{V}, \bullet)
 \sluttuborg
 $$
 in the derived category $\mathcal{D} ({\rm Ab} (Y))$. 
 \end{lem}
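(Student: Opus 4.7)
The plan is to construct $\lambda^{*}$ levelwise on the \v{C}ech double complex $\check{\mathfrak{C}}^q(\mathcal{U})$ (respectively $\check{\mathfrak{C}}^q(\mathcal{U}^{\infty})$), check compatibility with the horizontal \v{C}ech differential $\delta$, and then pass to the total complex. Throughout, I will work in the bounded-above derived category $\mathcal{D}^{-}(\mathbf{Ab}(Y))$ so that I can freely invoke Theorem \ref{thm:pull-back moving}.

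First, extend $\lambda$ to multi-indices by $\lambda(J) := (\lambda(j_0), \dots, \lambda(j_p))$ for $J = (j_0, \dots, j_p) \in (\Lambda')^{p+1}$. By hypothesis we have $V_{j_\ell} \subset U_{\lambda(j_\ell)}$ together with a morphism $X'_{j_\ell} \to X_{\lambda(j_\ell)}$ that restricts to this inclusion. Taking intersections yields an open immersion $g_J \colon V_J \hookrightarrow U_{\lambda(J)}$, and taking products gives a morphism $f_J \colon X'_J \to X_{\lambda(J)}$ fitting into a commutative square of the form \eqref{eqn:pull-back moving 1-1}. Applying Theorem \ref{thm:pull-back moving}-(1) and then pushing forward by $\iota \colon U_{\lambda(J)} \hookrightarrow Y$ (using the equality $\iota_{\lambda(J)} = \iota_J \circ g_J$) produces morphisms in $\mathcal{D}^{-}(\mathbf{Ab}(Y))$
\begin{equation*}
\lambda^{*}_J \colon \mathbf{R}\iota_{\lambda(J),*} \BGHz^q(\widehat{X}_{\lambda(J)}, \bullet) \longrightarrow \mathbf{R}\iota_{J,*} \BGHz^q(\widehat{X}'_J, \bullet),
\end{equation*}
and similarly mod $U_{\lambda(J)}$ versus mod $V_J$. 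Taking products over $J \in (\Lambda')^{p+1}$ and composing with the projections indexed by $\lambda$ gives a candidate column map
\[
\lambda^{*,p} \colon \check{\mathfrak{C}}^q(\mathcal{U})^p \longrightarrow \check{\mathfrak{C}}^q(\mathcal{V})^p,
\]
and analogously for $\mathcal{U}^{\infty}$.

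Next I would verify that $\lambda^{*,\bullet}$ commutes with the \v{C}ech differentials $\delta^{p-1}$ of \eqref{eqn:cech boundary map} in $\mathcal{D}^{-}(\mathbf{Ab}(Y))$. For each $J \in (\Lambda')^{p+1}$ and each $0 \leq j \leq p$, the face $J'_j \in (\Lambda')^p$ has image $\lambda(J'_j) = \lambda(J)'_j$, so there is a commutative diagram
\[
\xymatrix{
V_J \ar@{^{(}->}[r] \ar@{^{(}->}[d] & V_{J'_j} \ar@{^{(}->}[d] \\
U_{\lambda(J)} \ar@{^{(}->}[r] & U_{\lambda(J)'_j}
}
\]
together with the corresponding commutative square of products of the $X$'s. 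The two possible pull-back morphisms (``refine then differentiate'' versus ``differentiate then refine'') are both obtained by composing two instances of Theorem \ref{thm:pull-back moving}-(1), and they agree by combining Theorem \ref{thm:pull-back moving}-(2) (composition of pull-backs along the vertical/horizontal paths) with Theorem \ref{thm:pull-back moving}-(3) (functoriality in the smooth ambient morphism), applied to the evident $3$-dimensional commutative diagram attached to the pair $(J, j)$. Assuming this check, $\lambda^{*,\bullet}$ becomes a morphism of double complexes in the derived sense, and totalization produces the desired morphism $\lambda^{*} \colon \BGHz^q(\mathcal{U}, \bullet) \to \BGHz^q(\mathcal{V}, \bullet)$ in $\mathcal{D}(\mathbf{Ab}(Y))$ (and similarly for $\mathcal{U}^{\infty}$).

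The main obstacle is the compatibility check in the preceding paragraph: the \v{C}ech differentials already live in a derived category, so ``commutativity'' needs to be verified as an equality of morphisms in $\mathcal{D}^{-}(\mathbf{Ab}(Y))$ rather than on the nose. This is precisely why Theorem \ref{thm:pull-back moving}-(2) and (3) were formulated as derived-categorical statements; applied to the cubical diagram built out of a face $J \mapsto J'_j$ and the refinement $\lambda$, they force the two composite pull-back morphisms to coincide. Once this is settled, the construction is functorial in $(\mathcal{U}, \mathcal{V})$ up to homotopy, which is all that is needed before passing to $\underset{\mathcal{U}}{\hocolim}$ in the next step of the construction.
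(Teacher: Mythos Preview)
Your proposal is correct and follows essentially the same approach as the paper: construct $\lambda^{*}$ levelwise on the \v{C}ech double complex by applying Theorem~\ref{thm:pull-back moving}-(1) to the square $(V_J \hookrightarrow U_{\lambda(J)},\, X'_J \to X_{\lambda(J)})$, combine via the projections indexed by $\lambda$, and totalize. The paper's proof is in fact less detailed than yours on the compatibility with the \v{C}ech differential---it simply cites \cite[Lemma 10.4.1, p.111]{BT} and leaves the details---whereas you correctly identify Theorem~\ref{thm:pull-back moving}-(2),(3) as the tools that make this work in the derived category.
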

 
 \begin{proof}
  We construct morphisms of double complexes of sheaves (strictly speaking, morphisms in ${\rm Kom}^+ (\mathcal{D}^- ({\rm Ab} (Y)))$)
\begin{equation}\label{eqn:proj refine morph}
\tuborg
\lambda^*: \check{\mathfrak{C}} ^q (\mathcal{U}^{\infty}) \to \check{\mathfrak{C}} ^q (\mathcal{V}^{\infty}),\\
\lambda^*: \check{\mathfrak{C}} ^q (\mathcal{U}) \to \check{\mathfrak{C}} ^q (\mathcal{V}).
\sluttuborg
\end{equation}
 
For each integer $p \geq 0$, we want to construct morphisms in $\mathcal{D}^- ({\rm Ab} (Y))$
$$
\tuborg
\check{\mathfrak{C}} ^q (\mathcal{U}^{\infty})^p  \to \check{\mathfrak{C}}^q (\mathcal{V}^{\infty})^p, \\
\check{\mathfrak{C}} ^q (\mathcal{U})^p  \to \check{\mathfrak{C}}^q (\mathcal{V})^p, \ \ \mbox{ i.e.}
\sluttuborg
$$
\begin{equation}\label{eqn:pth refinement}
\tuborg
\underset{I \in \Lambda^{p+1}}{\prod} \mathbf{R} \iota_* \BGHz^q (\widehat{X}_I, \bullet) \to \underset{J \in (\Lambda')^{p+1}}{\prod}  \mathbf{R}  \iota_* \BGHz^q (\widehat{X}_J', \bullet), \\
\underset{I \in \Lambda^{p+1}}{\prod}  \mathbf{R}  \iota_* \BGHz^q (\widehat{X}_I \mod U_I, \bullet) \to \underset{J \in (\Lambda')^{p+1}}{\prod}   \mathbf{R}  \iota_* \BGHz^q (\widehat{X}_J' \mod V_J, \bullet),
\sluttuborg
\end{equation}
where  
$U_I, V_J, \widehat{X}_I, \widehat{X}'_J$ follow the previous notational conventions, and $\iota$ denotes any of the open immersions of $U_I$ and $V_J$ into $Y$.

\medskip

 Let $J= (j_0, \cdots, j_p) \in (\Lambda')^{p+1}$. We continue to follow the notations of \S \ref{sec:Cech complex sheaf}.
 Since $\mathcal{V}$ is a refinement of $\mathcal{U}$, this gives $\lambda (J) := (\lambda (j_0), \cdots, \lambda (j_p)) \in \Lambda^{p+1}$, and we have the commutative diagram
 \begin{equation}\label{eqn:pth refinement 2}
\xymatrix{
{X}_J ' \ar[r] &  {X}_{\lambda (J)}\\
V_J \ar@{^{(}->}[u] \ar@{^{(}->}[r]^{g_J} & U_{\lambda (J)},  \ar@{^{(}->}[u] }
\end{equation}
where the vertical maps are closed immersions into equidimensional smooth $k$-schemes and the bottom horizontal map $g_J$ is the open immersion. By Theorem \ref{thm:pull-back moving} applied to \eqref{eqn:pth refinement 2}, we have the morphisms in $\mathcal{D}^- ({\rm Ab} (Y))$
  \begin{equation}\label{eqn:Cech fpb}
\tuborg
\phi ^{\lambda(J)}_{J}: \mathbf{R} \iota_* \BGHz^q (\widehat{X}_{\lambda (J)} , \bullet) \to \mathbf{R} \iota_* \BGHz ^q (\widehat{X}_{J}' , \bullet), \\
\phi ^{\lambda(J)}_{J}:  \mathbf{R} \iota_* \BGHz^q (\widehat{X}_{\lambda (J)} \mod U_{\lambda (J)}, \bullet) \to \mathbf{R} \iota_* \BGHz ^q (\widehat{X}_{J}' \mod V_J, \bullet).
\sluttuborg
\end{equation}

\medskip

Once we have \eqref{eqn:Cech fpb}, the maps \eqref{eqn:pth refinement} are given by the above data: namely, the $J$-th factor of $\check{\mathfrak{C}} ^q (\mathcal{V}^{\infty})^p$ (resp. $\check{\mathfrak{C}} ^q (\mathcal{V})^p$) in \eqref{eqn:pth refinement} is defined to be the projection from $\check{\mathfrak{C}}^q (\mathcal{U}^{\infty})^p$ (resp. $\check{\mathfrak{C}}^q (\mathcal{U})^p$) to its $\lambda(J)$-th factor, composed with the flat pull-back \eqref{eqn:Cech fpb}. This defines the morphisms \eqref{eqn:pth refinement}. In short, we write \eqref{eqn:pth refinement} as
\begin{equation}\label{eqn:BT convention}
(\lambda^* \omega)( V_J) = \omega (U_{\lambda (J)}).
\end{equation}
(N.B. This notational convention can be found in a few general references, e.g. in Bott-Tu \cite[p.111]{BT}.)

One checks that this is compatible with the \v{C}ech boundary maps \eqref{eqn:cech boundary map}. We leave out the details (cf. \cite[Lemma 10.4.1, p.111]{BT}).
Hence taking the total complexes of \eqref{eqn:proj refine morph}, we obtain the lemma.
\end{proof}

\begin{lem}\label{lem:induced map up to homotopy}
Let $\mathcal{U} =\{ ( U_i, X_i) \}_{i \in \Lambda}$ and $\mathcal{V}= \{ ( V_j, X'_j )\}_{j \in \Lambda'}$ be systems of local embeddings for $Y \in \Sch_k$. Suppose that $\mathcal{V}$ is a refinement of $\mathcal{U}$ for two different set maps $\lambda_1$ and $\lambda_2: \Lambda' \to \Lambda$.

Then the induced morphisms $\lambda_1^*$ and $\lambda_2^*$ in $\mathcal{D} ({\rm Ab} (Y))$
$$
\tuborg
\lambda_1 ^*, \lambda_2 ^*: {\BGHz}^q (\mathcal{U}^{\infty}, \bullet) \to {\BGHz}^q (\mathcal{V}^{\infty}, \bullet), \\
\lambda_1 ^*, \lambda_2 ^*: {\BGHz}^q (\mathcal{U}, \bullet) \to {\BGHz}^q (\mathcal{V}, \bullet)
\sluttuborg
$$
are homotopic to each other, respectively. 
\end{lem}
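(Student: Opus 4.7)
\medskip

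The plan is to mimic the classical Čech argument from Bott-Tu \cite[\S 10]{BT} that two simplicial approximations of a refinement map induce homotopic chain maps. For each $J=(j_0,\dots,j_p)\in (\Lambda')^{p+1}$ and each $0\le k\le p$, I would introduce the interpolated $(p+2)$-tuple
$$I_k(J):=(\lambda_1(j_0),\dots,\lambda_1(j_k),\lambda_2(j_k),\dots,\lambda_2(j_p))\in \Lambda^{p+2}.$$
Since $V_{j_\ell}\subset U_{\lambda_1(j_\ell)}\cap U_{\lambda_2(j_\ell)}$ for each $\ell$, the open $V_J$ is contained in $U_{I_k(J)}$, and the given morphisms $X'_{j_\ell}\to X_{\lambda_1(j_\ell)}, X_{\lambda_2(j_\ell)}$ assemble, via the diagonal into the product of factors that appear twice, to give a morphism $X'_J\to X_{I_k(J)}$ compatible with the inclusion $V_J\hookrightarrow U_{I_k(J)}$. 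By Theorem \ref{thm:pull-back moving}, this diagram induces morphisms in $\mathcal{D}^-({\rm Ab}(Y))$,
$$
\tuborg
\phi_{J,k}:\mathbf{R}\iota_*\BGHz^q(\widehat{X}_{I_k(J)},\bullet)\to \mathbf{R}\iota_*\BGHz^q(\widehat{X}'_J,\bullet),\\
\phi_{J,k}:\mathbf{R}\iota_*\BGHz^q(\widehat{X}_{I_k(J)}\mod U_{I_k(J)},\bullet)\to \mathbf{R}\iota_*\BGHz^q(\widehat{X}'_J\mod V_J,\bullet).
\sluttuborg
$$

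Next, I would define the homotopy operator $h^p:\check{\mathfrak{C}}^q(\mathcal{U})^{p+1}\to \check{\mathfrak{C}}^q(\mathcal{V})^p$ (and the analogous map for $\check{\mathfrak{C}}^q(\mathcal{U}^{\infty})$) by the standard interpolation formula
$$
(h^p\omega)(V_J)=\sum_{k=0}^{p}(-1)^{k}\,\phi_{J,k}\bigl(\omega(U_{I_k(J)})\bigr),
$$
using the shorthand convention of \eqref{eqn:BT convention}. The claim to check is the cochain-homotopy identity
$$
\delta^{p-1}\circ h^{p-1}+h^p\circ \delta^p=\lambda_2^*-\lambda_1^*
$$
in $\mathcal{D}^-({\rm Ab}(Y))$. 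On each individual factor indexed by $J\in(\Lambda')^{p+1}$, this reduces to a purely combinatorial identity: the composites obtained by first dropping an index from $I_k(J)$ and then pulling back to $V_J$ coincide, in pairs, with those obtained by first pulling back and then dropping an index in $(\Lambda')^{p+1}$; the telescoping cancellations leave precisely $\phi_{J,p}(\omega(U_{\lambda_2(J)}))-\phi_{J,0}(\omega(U_{\lambda_1(J)}))=\lambda_2^*\omega(V_J)-\lambda_1^*\omega(V_J)$. Passing to total complexes then yields $\lambda_1^*=\lambda_2^*$ in $\mathcal{D}({\rm Ab}(Y))$.

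The essential obstacle is that the individual pull-back morphisms $\phi_{J,k}$, as well as the face morphisms $\delta_{I,j}$ defining \eqref{eqn:cech boundary map}, live only in a derived category: they are not strict chain maps but zigzags through subcomplexes like $\BGHz^q_{\{\widehat{X}_{I_k(J)}\}}(\cdot,\bullet)$ built in Theorem \ref{thm:sh intersection pb}. Consequently, the telescoping cancellations above are not on-the-nose equalities of chain maps but rather equalities of morphisms in $\mathcal{D}^-({\rm Ab}(Y))$, and they must be deduced from the two functoriality statements of Theorem \ref{thm:pull-back moving}-(2) and -(3): the former to identify composites of pull-backs along composable diagrams, and the latter to identify a common pull-back factoring through two different intermediate smooth ambient schemes (which is exactly what one encounters when comparing, for two adjacent values of $k$, the composite of a face map with $\phi_{J,k}$ with the composite with $\phi_{J,k\pm 1}$). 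Once the necessary diagrams of smooth ambient schemes are written out and Theorem \ref{thm:pull-back moving}-(2),(3) are applied, the rest is the classical Čech combinatorial bookkeeping.
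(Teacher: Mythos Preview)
Your proposal is correct and follows essentially the same approach as the paper: both construct the standard \v{C}ech homotopy operator via the interpolated multi-indices $I_k(J)=(\lambda_1(j_0),\dots,\lambda_1(j_k),\lambda_2(j_k),\dots,\lambda_2(j_p))$, exactly as in Bott--Tu. Your discussion is in fact more thorough than the paper's brief proof, since you explicitly flag the derived-category subtlety (that the $\phi_{J,k}$ are zigzags rather than strict chain maps) and point to Theorem~\ref{thm:pull-back moving}-(2),(3) as the tool validating the telescoping cancellations in $\mathcal{D}^-({\rm Ab}(Y))$, whereas the paper simply writes the formula and says ``one checks''.
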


\begin{proof}
This is standard: as in Bott-Tu \cite[Lemma 10.4.2, p.111]{BT} or Bosch-G\"untzer-Remmert \cite[\S 8.1.3, p.324]{BGR}, we define a homotopy $H: \check{\mathfrak{C}} ^q (\mathcal{U}^{\infty})^p \to \check{\mathfrak{C}} ^q (\mathcal{V}^{\infty})^{p-1}$ (resp. $H: \check{\mathfrak{C}} ^q (\mathcal{U})^p \to \check{\mathfrak{C}} ^q (\mathcal{V})^{p-1}$) by 
$$ 
(H \omega) (V_{( j_0, \cdots, j_{p-1})}) = \sum_{i=0} ^{p-1} (-1)^i \omega ( U_{ (\lambda_1 (j_0), \cdots, \lambda_1 (j_i), \lambda_2 (j_i), \cdots, \lambda_2 (j_{p-1}))}).
$$
For the morphisms $\lambda_1 ^*, \lambda_2 ^*: \check{\mathfrak{C}}^q (\mathcal{U}^{\infty})^{\bullet} \to \check{\mathfrak{C}} ^q (\mathcal{V}^{\infty})^{\bullet}$, one checks that they satisfy $\lambda_1 ^* - \lambda_2 ^* = \delta H + H \delta$. Taking the totals, we obtain the lemma.
\end{proof}

 \subsubsection{The yeni higher Chow groups on $\Sch_k$} 
 We now define the final object in this paper we were after: 
 
 \begin{defn}\label{defn:Cech final}
 Let $Y\in \Sch_k$. Let $\mathcal{U}$ be a system of local embeddings for $Y$ in the sense of Definition \ref{defn:system embedd Hartshorne}. 
 
 For the complexes of sheaves with respect to $\mathcal{U}$, $ \BGHz^q (\mathcal{U}^{\infty}, n)$ and $ \BGHz^q (\mathcal{U}, n)$ of Definition \ref{defn:Cech cx}, we define the yeni higher Chow sheaves of $Y^{\infty}$ and $Y$ to be the homotopy colimits in $\mathcal{D} ({\rm Ab} (Y))$
  $$
 \tuborg
  \BGHz^q (Y^{\infty}, \bullet):= \underset{\mathcal{U}}{\hocolim} \  \BGHz^q (\mathcal{U}^{\infty}, \bullet), \\
 \BGHz^q (Y, \bullet):=  \underset{\mathcal{U}}{\hocolim} \ \BGHz^q (\mathcal{U}, \bullet),
 \sluttuborg
 $$
 over all systems $\mathcal{U}$ of local embeddings for $Y$. Define the yeni higher Chow groups of $Y^{\infty}$ and $Y$ to be their hypercohomology groups 
 \begin{equation}\label{eqn:Cech final 1}
 \tuborg
  \BGH^q (Y^{\infty}, n):= \mathbb{H}_{\rm Zar} ^{-n} (Y, \BGHz^q (Y^{\infty}, \bullet)) = \underset{\mathcal{U}}{\colim} \  \BGH^q (\mathcal{U}^{\infty}, n), \\
 \BGH^q (Y, n):=  \mathbb{H} _{\rm Zar} ^{-n} (Y, \BGHz^q (Y, \bullet)) = \underset{\mathcal{U}}{\colim} \ \BGH^q (\mathcal{U}, n)
 \sluttuborg
 \end{equation}
 where the colimits are taken over all systems $\mathcal{U}$ of local embeddings for $Y$. 
 \qed
 \end{defn}

In particular, by construction, the yeni higher Chow groups in \eqref{eqn:Cech final 1} depend neither on the choices of any finite quasi-affine open cover $\{ U_i \}$ of $Y$, nor on the choices of the closed immersions $U_i \hookrightarrow X_i$ into equidimensional smooth $k$-schemes.

 \subsection{Two consistency questions}
 
 We check that our theory defined in Definition \ref{defn:Cech final} is consistent with the groups defined in \cite{Bloch HC} and \cite{Park Tate}.

\subsubsection{Consistency in the smooth case}\label{sec:1st indep sm}

To see that our theory extends the pre-existing theory of motivic cohomology on smooth $k$-schemes, i.e. higher Chow groups, we ask: ``If $Y$ was equidimensional and smooth over $k$, then are yeni higher Chow groups isomorphic to Bloch's higher Chow groups?" 

We answer it in Theorem \ref{thm:sm formal} below. First recall the following result embedded in the proof of S. Bloch \cite[Theorem (9.1)]{Bloch HC}. 

\begin{thm}\label{thm:Bloch hyper}
Let $Y$ be a quasi-projective $k$-scheme. 

\begin{enumerate} 
\item Let $Y \hookrightarrow X$ be a closed immersion into a smooth $k$-scheme. Let $z^* (\bullet)_X$ be the Zariski sheafification on $X_{\rm Zar}$ of the higher Chow complex of $X$. Then we have 
$$
\CH^* (Y, n) = \mathbb{H}_{Y} ^{-n} (X, z^* (\bullet)_X),
$$
where the left hand side is the higher Chow group of $Y$ of S. Bloch \cite{Bloch HC}, and the right hand side is the local hypercohomology with the support $Y$.
\item In addition, if $Y$ is smooth equidimensional, then $\CH^q (Y, n) = \mathbb{H} ^{-n}_{\rm Zar} (Y, z^q ( \bullet)_Y).$
\end{enumerate}
\end{thm}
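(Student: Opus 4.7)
My plan is to reproduce, in the style of S.~Bloch \cite{Bloch HC}, the local hypercohomology description by combining his Zariski descent for the higher Chow complex on smooth schemes with his localization theorem. For part (2), I would argue first: when $Y$ is smooth equidimensional, Bloch's flasque result (Theorem~(3.4) in \cite{Bloch HC}, cf.~the analogous Proposition~\ref{prop:flasque sh} here) gives that the Zariski sheafification $z^q(\bullet)_Y$ has flasque terms, so
\[
\CH^q(Y,n) \;=\; H^{-n}\bigl(\Gamma(Y,z^q(\bullet)_Y)\bigr) \;=\; \mathbb{H}_{\rm Zar}^{-n}(Y, z^q(\bullet)_Y).
\]
This is essentially formal once the flasqueness is in hand.

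For part (1), I would run the following three steps. First, by Bloch's Zariski descent applied to the smooth scheme $X$, for any open $V \subset X$ we have $\mathbb{H}_{\rm Zar}^{-n}(V, z^q(\bullet)_X) = \CH^q(V,n)$; in particular this holds for $V = X$ and $V = U := X\setminus Y$. Second, the flasqueness of the constituents of $z^q(\bullet)_X$ implies that the local hypercohomology is computed by the sections with support in $Y$:
\[
\mathbb{H}_Y^{-n}(X, z^q(\bullet)_X) \;=\; H^{-n}\bigl(\Gamma_Y(X, z^q(\bullet)_X)\bigr) \;=\; H^{-n}\bigl(z_Y^q(X,\bullet)\bigr),
\]
where $z_Y^q(X,\bullet)$ is the subcomplex of codimension-$q$ admissible cycles on $X\times\square^\bullet$ topologically supported on $Y\times\square^\bullet$. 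Third, I would identify this support subcomplex with the higher Chow complex of $Y$: an integral codimension-$q$ cycle on $X$ supported on $Y$ is the same data as an integral cycle on $Y$ of the appropriate dimension (dim $X - q$, if we fix the dimension indexing), and the face maps agree. This produces the desired isomorphism $\CH^*(Y,n) \simeq \mathbb{H}_Y^{-n}(X, z^q(\bullet)_X)$.

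The only conceptual ingredient beyond descent is the fact that the inclusion $z_Y^q(X,\bullet) \hookrightarrow z^q(X,\bullet)$ computes Bloch's localization long exact sequence, i.e.\ that the quotient $z^q(X,\bullet)/z_Y^q(X,\bullet)$ is quasi-isomorphic to $z^q(U,\bullet)$ via restriction. This is precisely the content of Bloch's localization theorem (\cite{Bloch moving}, see also \cite{Levine moving}), and it is the main nontrivial input. As a consistency check, one compares the long exact sequence coming from the triangle
\[
R\Gamma_Y(X, z^q(\bullet)_X) \to R\Gamma(X, z^q(\bullet)_X) \to R\Gamma(U, z^q(\bullet)_X|_U)
\]
with Bloch's localization sequence $\cdots \to \CH^*(Y,n) \to \CH^q(X,n) \to \CH^q(U,n) \to \CH^*(Y,n-1) \to \cdots$; they agree termwise by the preceding steps, and the compatibility of the connecting maps is a diagram chase using the naturality of both constructions with respect to open restriction. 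The hardest step is thus the localization theorem itself, which we quote from Bloch; everything else is formal sheaf theory and the elementary identification of supported cycles.
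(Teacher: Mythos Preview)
Your approach is correct and uses the same two ingredients as the paper's proof: Bloch's flasque sheaf result \cite[Theorem~(3.4)]{Bloch HC} and the localization theorem \cite{Bloch moving}. One imprecision worth flagging: the sheafification $z^q(\bullet)_X$ is \emph{not} itself termwise flasque, so your Step~2 equality $\mathbb{H}_Y^{-n}=H^{-n}\Gamma_Y$ does not follow directly. What Bloch's Theorem~(3.4) gives is the auxiliary flasque sheaf $\mathcal{S}_n^q(V)=z^q(X,n)/z^q_{X\setminus V}(X,n)$ (the analogue of Proposition~\ref{prop:flasque sh} here), together with an injection $\mathcal{S}_\bullet^q\hookrightarrow z^q(\bullet)_X$; it is precisely the localization theorem that makes this injection a quasi-isomorphism (compare Proposition~\ref{prop:flasque rep}). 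Once you route through $\mathcal{S}_\bullet^q$, your computation $\Gamma_Y(X,\mathcal{S}_\bullet^q)=z_Y^q(X,\bullet)$ and the direct identification $z_Y^q(X,\bullet)\cong z_{\dim X-q}(Y,\bullet)$ go through exactly as you wrote. So localization is not merely a consistency check at the end --- it is already hidden in Step~2.

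A small stylistic difference: the paper deduces (2) from (1) by taking $X=Y$, so that local hypercohomology with support in $Y=X$ is just ordinary hypercohomology; your direct argument for (2) is fine but duplicates work.
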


\begin{proof}
(1) It readily follows by combining the localization sequence for higher Chow groups (\cite[Theorem (0.1)]{Bloch moving}) with \cite[Theorem (3.4)]{Bloch HC}. (N.B. The localization sequence was first claimed as \cite[Theorem (3.1)]{Bloch HC}, and its proof was fixed in \cite{Bloch moving}. cf. \cite{cubical localization}.)

(2) Since $Y$ is smooth over $k$ itself, take $X= Y$ in (1).
\end{proof}

The assumption that $Y$ is quasi-projective in the above Theorem \ref{thm:Bloch hyper}, can be removed. The point is that the technical localization theorem in \cite{Bloch moving} can be extended to all $k$-schemes of finite type as in \cite{Levine moving}. The argument was given for the simplicial version of the higher Chow groups. In the cubical version, one can also give an argument, see e.g. \cite{cubical localization}. 

From \cite[Theorem 4.1.8]{cubical localization}, we recall have the following consequence of the localization:

\begin{thm}\label{thm:Bloch hyper 1}
Let $Y$ be a $k$-scheme of finite type. Then
$$
\CH_d (Y, n) \simeq \mathbb{H}_{\rm Zar} ^{-n} (Y, \BGHz_d (\bullet)_Y),
$$
where $\BGHz_d (\bullet)_Y$ is the Zariski sheafification of the presheaf $U \in Op (Y) \mapsto \BGHz_d (U, \bullet)$.

\end{thm}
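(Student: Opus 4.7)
The plan is to adapt the flasque sheaf argument developed in Proposition \ref{prop:flasque sh} and Theorem \ref{thm:two BGH -1} of this paper to Bloch's higher Chow complex, relying on the extension of Bloch's localization theorem to arbitrary $k$-schemes of finite type established in \cite{Levine moving} and \cite{cubical localization}.

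First, for each closed subset $W \subset Y$, let $G_d(W, n) \subset z_d(Y, n)$ denote the subgroup of cycles topologically supported in $W \times \square^n$. Define a presheaf of complexes on $Y_{\rm Zar}$ by
$$\mathcal{S}_n(U) := z_d(Y, n)/G_d(Y \setminus U, n), \quad U \in Op(Y).$$
Restriction maps are the evident surjections arising from the inclusions $G_d(Y \setminus U, n) \subset G_d(Y \setminus V, n)$ for $V \subset U$, so $\mathcal{S}_n$ is flasque as a presheaf. Observe that $\Gamma(Y, \mathcal{S}_n) = z_d(Y, n)$.

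Second, I would show that $\mathcal{S}_n$ is a sheaf. By induction on finite open covers, it suffices to verify, for each pair $U, V \subset Y$ of opens, the exactness of
$$0 \to \mathcal{S}_n(U \cup V) \to \mathcal{S}_n(U) \oplus \mathcal{S}_n(V) \to \mathcal{S}_n(U \cap V) \to 0.$$
Injectivity of the first map uses the elementary identity $G_d(Y \setminus U, n) \cap G_d(Y \setminus V, n) = G_d(Y \setminus (U \cup V), n)$. Middle exactness follows by the cycle-gluing construction of Step 3 in the proof of Proposition \ref{prop:flasque sh}, now without any mod $Y$-equivalence to track, which is in fact simpler. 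Surjectivity on the right comes from flasqueness of $\mathcal{S}_n$ as a presheaf.

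Third, I would show that the canonical injection $\mathcal{S}_\bullet \hookrightarrow \BGHz_d(\bullet)_Y$ is a quasi-isomorphism of complexes of Zariski sheaves. At each point $y \in Y$, the stalk of $\BGHz_d(\bullet)_Y$ is $\colim_{y \in U} z_d(U, \bullet)$, while the stalk of $\mathcal{S}_\bullet$ is the colimit of $z_d(Y, \bullet)/G_d(Y \setminus U, \bullet)$. The comparison between these two presheaves of complexes is exactly the assertion that the restriction map $z_d(Y, \bullet) \to z_d(U, \bullet)$ is a quasi-isomorphism modulo the cycles supported on the complement; this is the statement of the localization long exact sequence of \cite[Theorem (0.1)]{Bloch moving} and its non-quasi-projective extension in \cite[Theorem 4.1.8]{cubical localization}.

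Finally, since $\mathcal{S}_\bullet$ is a complex of flasque sheaves on $Y_{\rm Zar}$, the quasi-isomorphism $\mathcal{S}_\bullet \simeq \BGHz_d(\bullet)_Y$ gives
$$\mathbb{H}_{\rm Zar}^{-n}(Y, \BGHz_d(\bullet)_Y) \simeq \mathbb{H}_{\rm Zar}^{-n}(Y, \mathcal{S}_\bullet) = \mathrm{H}^{-n}\Gamma(Y, \mathcal{S}_\bullet) = \mathrm{H}_n(z_d(Y, \bullet)) = \CH_d(Y, n).$$
The main obstacle is the Mayer-Vietoris input needed in Steps 2 and 3, whose validity on all $k$-schemes of finite type (not only the quasi-projective ones treated originally in \cite{Bloch HC} and \cite{Bloch moving}) rests on the extensions in \cite{Levine moving} and \cite{cubical localization}; granting those technical results, the sheafification/flasque-resolution argument above is essentially formal.
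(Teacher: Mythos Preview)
The paper does not prove this theorem; it simply quotes it from \cite[Theorem 4.1.8]{cubical localization} as a known consequence of the localization theorem extended to all $k$-schemes of finite type. Your sketch is correct and is precisely the standard argument one would expect to find in the cited reference: it is Bloch's original flasque-sheaf method from \cite[Theorem (3.4)]{Bloch HC}, with the localization input upgraded from the quasi-projective case of \cite{Bloch moving} to the general case via \cite{Levine moving} and \cite{cubical localization}. In short, you have reconstructed the proof that the paper chose to outsource rather than reproduce.
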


This immediately implies:

\begin{cor}\label{cor:Bloch hyper}
Let $Y$ be an equidimensional smooth $k$-scheme. Then $\CH^q (Y, n) = \mathbb{H} ^{-n}_{\rm Zar} (Y, z^q ( \bullet)_Y).$
\end{cor}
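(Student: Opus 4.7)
The plan is to deduce Corollary~\ref{cor:Bloch hyper} from Theorem~\ref{thm:Bloch hyper 1} by matching the codimension-indexed cycle complex on $Y$ with the dimension-indexed one, exploiting that $Y$ is equidimensional smooth. The content of the corollary is essentially a re-indexing, so I expect no substantive difficulty.

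First, let $d_Y := \dim Y$, which is well defined because $Y$ is equidimensional. For any nonempty open $U \in Op(Y)$, every irreducible component of $U$ is an open of some irreducible component of $Y$, hence has dimension $d_Y$; therefore $U$ is itself equidimensional of dimension $d_Y$. By the definition of codimensional cycle groups in Definition~\ref{defn:HCG2} (applied in the scheme case, where the condition $(\textbf{SF})$ is automatic by Remark~\ref{remk:condition 2-3}), this gives the equality of presheaves
\begin{equation*}
\bigl( U \mapsto z^q(U,\bullet) \bigr) \;=\; \bigl( U \mapsto z_{d_Y-q}(U,\bullet) \bigr)
\end{equation*}
on the full subcategory of nonempty opens, and hence the equality of their Zariski sheafifications
\begin{equation*}
z^q(\bullet)_Y \;=\; \BGHz_{d_Y-q}(\bullet)_Y
\end{equation*}
on $Y_{\rm Zar}$.

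Next, I would apply Theorem~\ref{thm:Bloch hyper 1} with the integer $d := d_Y - q$, yielding the natural isomorphism
\begin{equation*}
\CH_{d_Y - q}(Y,n) \;\simeq\; \mathbb{H}^{-n}_{\rm Zar}\bigl(Y, \BGHz_{d_Y-q}(\bullet)_Y\bigr) \;=\; \mathbb{H}^{-n}_{\rm Zar}\bigl(Y, z^q(\bullet)_Y\bigr),
\end{equation*}
where the last equality uses the identification of sheaves established above.

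Finally, since $Y$ is equidimensional of dimension $d_Y$, the convention $\CH^q(Y,n) = \CH_{d_Y-q}(Y,n)$ (from the codimension/dimension duality on equidimensional schemes) converts the left-hand side to $\CH^q(Y,n)$, completing the proof. The only point that requires a moment of care is the equidimensionality of nonempty opens, which is needed so that the codimensional and dimensional cycle presheaves coincide locally on $Y$, not merely globally; this is immediate from the definitions.
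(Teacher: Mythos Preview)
Your proof is correct and is exactly the re-indexing argument the paper intends by ``This immediately implies'': passing from codimension to dimension via the equidimensionality of $Y$ (and of its nonempty opens) and then invoking Theorem~\ref{thm:Bloch hyper 1}. The only minor slip is that the equality $z^q = z_{d_Y - q}$ comes from Definition~\ref{defn:formal cycle} rather than Definition~\ref{defn:HCG2}, but this does not affect the argument.
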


\begin{thm}\label{thm:sm formal}
Let $Y$ be an equidimensional smooth $k$-scheme.

Then we have isomorphisms of the various higher Chow groups: 
\begin{equation}\label{eqn:sm formal}
\BGH^q(Y^{\infty}, n) \simeq \BGH^q (Y, n) \simeq \CH^q (Y, n).
\end{equation}
\end{thm}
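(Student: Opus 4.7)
For smooth equidimensional $Y$, I would fix an affine open cover $\{U_i\}_{i \in \Lambda}$ (every affine being quasi-affine) and consider the \emph{tautological} system of local embeddings $\mathcal{U}_0 = \{(U_i, U_i)\}_{i \in \Lambda}$ in which the closed immersion $U_i \hookrightarrow X_i := U_i$ is the identity; each $U_i$ is smooth, being open in $Y$. For a multi-index $I = (i_0, \dots, i_p)$ one has $U_I = U_{i_0} \cap \cdots \cap U_{i_p}$ quasi-affine smooth, $X_I = U_{i_0} \times_k \cdots \times_k U_{i_p}$ smooth, and the diagonal closed immersion $U_I \hookrightarrow X_I$. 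In the extreme case $p = 0$, $X_I = U_I = \widehat{X}_I$, the modulus subgroup $\mathcal{M}^q(\widehat{X}_I, U_I, n)$ vanishes (its defining relation forces $\mathcal{A}_1 \simeq \mathcal{A}_2$), and $\BGHz^q(\widehat{X}_I \mod U_I, \bullet)$ coincides with Bloch's classical higher Chow sheaf complex $z^q(\bullet)_{U_I}$.

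The technical heart is the following \emph{local} quasi-isomorphism: for any closed immersion $U \hookrightarrow X$ of equidimensional smooth $k$-schemes with $U$ quasi-affine, the natural Gysin morphism
$$
\iota^*: \BGHz^q(\widehat{X} \mod U, \bullet) \to z^q(\bullet)_U,
$$
obtained from Proposition \ref{prop:1st indep} applied to $\widehat{U}=U \hookrightarrow \widehat{X}$ composed with the moving-lemma quasi-isomorphism of Theorem \ref{thm:formal moving}, is a quasi-isomorphism in $\mathcal{D}^-({\rm Ab}(U))$. I would prove this Zariski-locally on $U$, using that for sufficiently small affine $V = \Spec(A) \subset U$ one has $\widehat{X}|_V \simeq \Spf(A[[t_1,\dots,t_r]])$ with $r = \codim_X U$, under which $\iota^*$ corresponds to the substitution $\underline{t}=0$. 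The translation machinery of Lemma \ref{lem:translation}, the homotopy identity \eqref{eqn:homotopy master}, and Lemma \ref{lem:homo mod Y resp} then combine to yield a chain-level quasi-inverse: the iterated translations $\psi_{\underline{c}}^*$ bring an arbitrary admissible cycle into $z^q_{\{U\}}(\widehat{X}, n)$, and the vanishing of $\mathcal{M}^q$ at $\underline{t}=0$ identifies the resulting Gysin pull-back with the original cycle up to mod $U$-equivalence and boundary. This is a formal-scheme version of the classical homotopy invariance $z^q(U,\bullet) \simeq z^q(U \times \mathbb{A}^r, \bullet)$ of Bloch's complex, transplanted through the mod $U$-equivalence. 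For the $\infty$-variant, one shows moreover that the natural surjection $\BGHz^q(\widehat{X}, \bullet) \twoheadrightarrow \BGHz^q(\widehat{X} \mod U, \bullet)$ is itself a quasi-isomorphism when $U$ is smooth, since its kernel is null-homotopic by the same translation argument.

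Granted the local quasi-isomorphism, term-by-term the \v{C}ech bicomplex $\check{\mathfrak{C}}^q(\mathcal{U}_0)$ of Definition \ref{defn:Cech cx} becomes quasi-isomorphic to the ordinary \v{C}ech bicomplex $\bigl\{\prod_{I}\mathbf{R}(\iota_I)_* z^q(\bullet)_{U_I}\bigr\}_{p}$ of the cover $\{U_i\}$ of $Y$, and a standard spectral-sequence / total-complex comparison yields
$$
\BGH^q(\mathcal{U}_0, n) = \mathbb{H}^{-n}_{\Zar}\bigl(Y,\, \Tot\,\check{\mathfrak{C}}^q(\mathcal{U}_0)\bigr) \simeq \mathbb{H}^{-n}_{\Zar}\bigl(Y,\, z^q(\bullet)_Y\bigr) = \CH^q(Y, n),
$$
the last equality being Corollary \ref{cor:Bloch hyper}. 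To pass from $\mathcal{U}_0$ to the homotopy colimit defining $\BGH^q(Y, n)$, I would invoke cofinality of refinements (Lemma \ref{lem:common refinement}) together with the homotopy-uniqueness of the refinement maps (Lemma \ref{lem:induced map up to homotopy}): applying the local lemma to every intermediate embedding $(U_i \cap V_j) \hookrightarrow X_i \times X'_j$ arising in a refinement $\mathcal{V} \to \mathcal{U}$ shows that all structure maps of the diagram whose homotopy colimit defines $\BGHz^q(Y,\bullet)$ are quasi-isomorphisms, so the colimit equals any term and $\BGH^q(Y, n) \simeq \BGH^q(\mathcal{U}_0, n) \simeq \CH^q(Y, n)$. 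The analogous scheme applied to the $\infty$-complexes yields $\BGH^q(Y^{\infty}, n) \simeq \CH^q(Y, n)$, closing both isomorphisms of \eqref{eqn:sm formal}. The main obstacle will be the chain-level quasi-inverse of the preceding paragraph: Theorems \ref{thm:quasi-iso1-pre} and \ref{thm:quasi-iso1} already supply moving for the inclusion $\BGHz^q_{\{U\}} \hookrightarrow \BGHz^q$, but upgrading to the claim that $\iota^*$ itself is a quasi-isomorphism requires verifying that the translation/homotopy construction descends cleanly through the derived-ring isomorphisms of Definition \ref{defn:mod Y equiv} (via Lemmas \ref{lem:psi mod Y resp}, \ref{lem:homo mod Y resp}) and sheafifies consistently across Zariski charts.
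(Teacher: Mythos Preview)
Your outline matches the paper's: pass to a tautological system $\mathcal{U}_0=\{(U_i,U_i)\}$ and identify its \v{C}ech complex with the ordinary \v{C}ech resolution of $z^q(\bullet)_Y$. The paper simply asserts $\widehat{X}_I=X_I=U_I$ for every $I\in\Lambda^{p+1}$; you correctly observe that this is only literal for $p=0$, since for $p\geq 1$ one has $X_I=U_{i_0}\times\cdots\times U_{i_p}$ and $\widehat{X}_I$ is its completion along the diagonal $U_I$. So you have isolated precisely the step the paper glosses over, and your formulation of what is needed --- that the Gysin map $\iota^*:\BGHz^q(\widehat{X}\bmod U,\bullet)\to z^q(\bullet)_U$ is a quasi-isomorphism for every closed immersion $U\hookrightarrow X$ of smooth schemes --- is the right one and would complete the argument.

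However, your sketch of this local quasi-isomorphism does not go through with the tools you invoke. First, since $U$ is smooth and hence reduced, $(\widehat{X})_{\red}=U$, so condition (\textbf{SF}) already gives $z^q(\widehat{X},n)=z^q_{\{U\}}(\widehat{X},n)$; no translation is needed even to define $\iota^*$. Second, the translation homotopy \eqref{eqn:homotopy master} connects a cycle $\mathfrak{Z}$ to its shift $\psi_{\underline{c}}^*\mathfrak{Z}$, which is what proves the moving lemma of Theorems~\ref{thm:quasi-iso1-pre} and \ref{thm:quasi-iso1}; it says nothing about the relation between $\mathfrak{Z}$ and the ``constant'' cycle $\pi^*\iota^*\mathfrak{Z}$. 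What is genuinely required is a contraction-type homotopy along the lines of $\underline{t}\mapsto y_{n+1}\underline{t}$, the formal analogue of the $\mathbb{A}^r$-homotopy invariance you name, and the pullback along such a map is not one of the flat types (I)--(IV) of Lemma~\ref{lem:pre flat pb}, so its effect on admissibility and on $\mathcal{M}^q$ needs a separate argument. The alternative route --- showing $\mathfrak{Z}\sim_U\pi^*\iota^*\mathfrak{Z}$ directly via the pair $(\mathcal{O}_{\mathfrak{Z}},\pi^*\mathcal{O}_{\mathfrak{Z}\cap U})$ --- would require $\mathbf{L}\iota^*\mathcal{O}_{\mathfrak{Z}}$ to be concentrated in degree~$0$, i.e.\ that $\underline{t}$ be a regular sequence on $\mathcal{O}_{\mathfrak{Z}}$, and (\textbf{SF}) alone does not force this for a general integral $\mathfrak{Z}$. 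The same objection applies to your assertion that the kernel $\mathcal{M}^q$ is ``null-homotopic by the same translation argument'' in the $\infty$-variant.
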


\begin{proof}
Since $Y$ is smooth, it is a disjoint union of connected components, where each component is smooth irreducible. Working with each irreducible component separately, we may assume $Y$ is irreducible.

\medskip

Let $\mathcal{U} = \{ (U_i, X_i) \}_{i \in \Lambda}$ be an arbitrary system of local embeddings for $Y$. In particular $|\mathcal{U}|:= \{ U_i \}$ is a finite quasi-affine open cover of $Y$.

Since $Y$ is smooth irreducible, each $U_i$ is smooth irreducible, and the identity map $U_i \to U_i$ can be seen as a closed immersion. Hence $\mathcal{U}_0:= \{ (U_i , U_i) \}_{i \in \Lambda}$ is a system of local embeddings, and it is also a refinement of $\mathcal{U}$.

Recall that when $\mathcal{F}$ is a sheaf, the usual \v{C}ech complex (see \cite[Section 02FR]{stacks})
$$
0\to \mathcal{F} \to \mathfrak{C} (| \mathcal{U}_0|, \mathcal{F})^0  \overset{\delta}{\to} \mathfrak{C} (| \mathcal{U}_0|, \mathcal{F}) ^1  \overset{\delta}{\to} \cdots
$$
is exact (see \cite[Lemma 02FU]{stacks}), so that $\mathcal{F} \to \mathfrak{C} (| \mathcal{U}_0|, \mathcal{F})^{\bullet}$ is a resolution. Applying this to the complex $z^q (\bullet)_Y$ of sheaves, we see that the natural morphism
\begin{equation}\label{eqn:sm formal 00}
z^q (\bullet)_Y \to {\rm Tot} \ \mathfrak{C} ( |\mathcal{U}_0|, z^q (\bullet)_Y)^{\bullet}
\end{equation}
is a quasi-isomorphism. 

On the other hand, for the identity closed immersion $U_i \to U_i$, the completion $\widehat{U}_i$ of $U_i$ along $U_i$ is also $U_i$ itself. Thus, for each $I \in \Lambda^{p+1}$, we have $\widehat{X}_I = X_I = U_I$ and 
$$
\BGHz^q (\widehat{X}_I, \bullet) = \BGHz^q (\widehat{X}_I \mod U_I, \bullet) = z^q (\bullet)_{U_I}.
$$
Hence putting them into their respective \v{C}ech-style complexes, we have
$$
\check{\mathfrak{C}}^q (\mathcal{U}_0 ^{\infty})^{\bullet} = \check{\mathfrak{C}}^q (\mathcal{U}_0 )^{\bullet} = \mathfrak{C}( | \mathcal{U}_0|, z^q (\bullet)_Y)^{\bullet}.
$$


Thus taking the hypercohomology groups of their total complexes (where for $z^q (\bullet)_Y$, using Corollary \ref{cor:Bloch hyper} and \eqref{eqn:sm formal 00}), we deduce
\begin{equation}\label{eqn:sm formal 1}
\BGH^q (\mathcal{U}_0 ^{\infty}, n) \simeq \BGH^q (\mathcal{U}_0, n) \simeq \CH^q (Y, n).
\end{equation}

Here $\mathcal{U}$ was arbitrary, but in \eqref{eqn:sm formal 1}, the group $\CH^q (Y, n)$ is independent of $\mathcal{U}$. Hence the other two groups in \eqref{eqn:sm formal 1} are also independent of $\mathcal{U}$. Since $\mathcal{U}_0$ is a refinement of $\mathcal{U}$, and since the groups in \eqref{eqn:sm formal 1} are all independent of $\mathcal{U}$, we deduce that their colimits over all $\mathcal{U}$ are also independent. This means we have the isomorphisms \eqref{eqn:sm formal}.
\end{proof}

 \subsubsection{Consistency with \cite{Park Tate} for $k[t]/(t^m)$}
 Let $k_m:= k[t]/(t^m)$. In \cite{Park Tate}, for $Y= \Spec (k_m)$, the group $\BGH^q (Y, n)$ was defined to be $\CH^q (\Spf (k[[t]]) \mod Y, n)$ in terms of our notation in Definition \ref{defn:complex}, without taking any colimits. Let's check that this notion coincides with our Definition \ref{defn:Cech final} for the above $Y$:
 
 \begin{thm}\label{thm:consistency k_m}
 Let $Y= \Spec (k_m)$. Then we have 
 $$
 \BGH^q (Y, n) = \CH^q (\Spf (k[[t]]) \mod Y, n),
 $$
 where the left hand side is the group defined as in Definition \ref{defn:Cech final}, and the right hand side is the group in Definition \ref{defn:complex}, which is identical to the group in \cite{Park Tate}.
 \end{thm}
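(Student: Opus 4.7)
The plan is to identify a cofinal subsystem of the filtered diagram defining $\BGH^q(Y, n)$ consisting of a single ``minimal'' local embedding of $Y$, and to compute its contribution directly.

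First I would exploit that $|Y|$ is a single point: every nonempty Zariski open of $Y$ is $Y$ itself, every system of local embeddings has the shape $\mathcal{U} = \{(Y, X_i)\}_{i \in \Lambda}$, the Zariski sheafification of Definition \ref{defn:sh_complex} is trivial, and $\BGH^q(\mathcal{U}, n)$ is the $(-n)$-th cohomology of the total of a \v{C}ech bicomplex of plain abelian groups. By Corollary \ref{cor:two BGH map}, for any closed immersion $Y \hookrightarrow X$ into a smooth $k$-scheme with completion $\widehat{X}$, one has $\BGH^q(\widehat{X} \mod Y, n) = \CH^q(\widehat{X} \mod Y, n)$; in particular for the standard closed immersion $Y \hookrightarrow \mathbb{A}^1_k$, the right hand side equals $\CH^q(\Spf(k[[t]]) \mod Y, n)$.

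Next I introduce the distinguished system $\mathcal{U}_0 := \{(Y, \mathbb{A}^1_k)\}$ and analyze its \v{C}ech bicomplex. At the single index $I_p = (0,\ldots,0) \in \Lambda_0^{p+1}$, the ambient scheme $X_{I_p} = \mathbb{A}^{p+1}_k$ carries $Y$ as the diagonal; since the ideal $(t_0 - t_1, \ldots, t_{p-1} - t_p, t_0^m)$ of $Y \subset \mathbb{A}^{p+1}_k$ has the same radical as $(t_0, \ldots, t_p)$, we have $\widehat{X}_{I_p} = \Spf(k[[t_0, \ldots, t_p]])$, with $Y$ embedded via $t_i \mapsto t$. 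The $p$-th column of the bicomplex is $z^q(\widehat{X}_{I_p} \mod Y, \bullet)$, and the \v{C}ech face maps are the flat pullbacks along the coordinate projections $(t_0, \ldots, t_p) \mapsto (t_0, \ldots, \widehat{t_j}, \ldots, t_p)$ (type (III) flat morphisms in the sense of Lemma \ref{lem:pre flat pb}). I would then prove the augmentation
$$z^q(\Spf(k[[t]]) \mod Y, \bullet) \longrightarrow \mathrm{Tot}(\check{\mathfrak{C}}^q(\mathcal{U}_0))$$
is a quasi-isomorphism. The two degree-$1$ face maps $\widehat{\pi}_0^*, \widehat{\pi}_1^*$ are the pullbacks $t \mapsto t_1$ and $t \mapsto t_0$, respectively, and the linear path $t \mapsto (1-y_{n+1}) t_0 + y_{n+1} t_1$ is precisely the translation-homotopy of Section \ref{sec:first indep} that underlies equation \eqref{eqn:homotopy master}; it trivializes $\delta^0 = \widehat{\pi}_0^* - \widehat{\pi}_1^*$ up to boundaries. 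For higher $p$ the iterated use of the Gysin pullbacks of Theorem \ref{thm:sh intersection pb} along the diagonal closed immersions $\Spf(k[[t_0,\ldots,\widehat{t_j},\ldots,t_p]]) \hookrightarrow \Spf(k[[t_0, \ldots, t_p]])$ provides the extra cosimplicial degeneracies in $\mathcal{D}^-({\rm Ab}(Y))$, and the standard fact that a cosimplicial object with an extra degeneracy is homotopically trivial forces the totalization to be quasi-isomorphic to its degree-$0$ column.

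To show cofinality of $\mathcal{U}_0$, given any system $\mathcal{V} = \{(Y, X'_j)\}_{j \in \Lambda'}$ I would form the product system $\mathcal{W} := \{(Y, X'_j \times_k \mathbb{A}^1_k)\}_{j \in \Lambda'}$ with the diagonal closed immersion coming from the given $Y \hookrightarrow X'_j$ and the standard $Y \hookrightarrow \mathbb{A}^1_k$. Then $\mathcal{W}$ refines $\mathcal{V}$ via the first projection and $\mathcal{U}_0$ via the second, and the same contracting-homotopy argument applied to the extra $\mathbb{A}^1$-direction shows that $\BGH^q(\mathcal{U}_0, n) \to \BGH^q(\mathcal{W}, n)$ is an isomorphism. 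Combining with the previous paragraph, the filtered colimit collapses: $\BGH^q(Y, n) = \BGH^q(\mathcal{U}_0, n) \cong \CH^q(\Spf(k[[t]]) \mod Y, n)$.

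The main obstacle is that the Gysin pullbacks of Theorem \ref{thm:sh intersection pb} are morphisms only in $\mathcal{D}^-({\rm Ab}(Y))$, defined through the zigzag through the subcomplexes satisfying the extra general-position conditions of Theorems \ref{thm:quasi-iso1-pre} and \ref{thm:quasi-iso1}, so the contraction is not a chain-level homotopy. To handle this rigorously I would first move to the subcomplexes respecting the diagonal loci using the moving lemma (Theorem \ref{thm:formal moving}), then build the homotopy at that level, and finally transport back via the quasi-isomorphisms, treating the whole \v{C}ech bicomplex as a cosimplicial object in $\mathcal{D}^-({\rm Ab}(Y))$ so that the cosimplicial contracting argument applies in that derived category.
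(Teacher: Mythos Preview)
Your approach is sound but takes a genuinely different and more laborious route than the paper. The paper's argument is short and geometric: since $|Y|$ is a point, every system $\mathcal{U}$ is refined by a singleton $\{(Y,X)\}$ obtained by picking any one entry; then a Bertini theorem for hypersurfaces containing a prescribed closed subscheme (Kleiman--Altman for infinite $k$, or its finite-field analogues via Poonen's method) cuts $X$ down to a smooth curve $C\supset Y$, giving the further singleton refinement $\{(Y,C)\}$; finally the Cohen structure theorem yields $\widehat{C}\simeq\Spf(k[[t]])$ regardless of which $C$ arose, so all such singletons have the same \v{C}ech bicomplex and hence the same $\BGH^q(\{(Y,C)\},n)$, which is therefore the colimit. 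The paper then identifies this common value with $\CH^q(\Spf(k[[t]])\mod Y,n)$ in a single sentence.

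What your approach contributes is an explicit justification of that last identification: even for a singleton system the \v{C}ech bicomplex involves the products $C^{p+1}$ (here $\Spf(k[[t_0,\ldots,t_p]])$), and your contracting-homotopy argument---using the translation homotopies of \S\ref{sec:first indep} together with the derived-category degeneracies from Theorem~\ref{thm:sh intersection pb}---is exactly the extra-degeneracy collapse one needs to reduce that cosimplicial object to its augmentation. The paper leaves this implicit.

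Conversely, your product-system cofinality step is messier than necessary. The claim that ``the same contracting-homotopy argument applied to the extra $\mathbb{A}^1$-direction shows $\BGH^q(\mathcal{U}_0,n)\to\BGH^q(\mathcal{W},n)$ is an isomorphism'' is not the same situation: $\mathcal{W}$ is indexed over $\Lambda'$, so you must establish a quasi-isomorphism entry by entry and then check compatibility with the \v{C}ech differentials over $(\Lambda')^{p+1}$, which is doable but more work than you indicate. The paper's Bertini+Cohen route sidesteps this entirely---singletons are already cofinal, and curve singletons are cofinal among those. You could streamline your argument by adopting that cofinality step while keeping your explicit \v{C}ech-collapse computation.
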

 
 \begin{proof}
 Let $\mathcal{U}$ be an arbitrary system of local embeddings for $Y$. Since $Y= \Spec (k_m)$ is topologically just a singleton space, any member $(U_i, X_i) \in \mathcal{U}$ is of the form $(Y, X)$ for a closed immersion $Y \hookrightarrow X$ into an equidimensional smooth $k$-scheme $X$. Pick any one of them, and define the new singleton system $\mathcal{U}_1:= \{ (Y, X) \}$ of local embeddings for $Y$. It is a refinement of $\mathcal{U}$. After shrinking $X$ and replacing $\mathcal{U}_1$ by the new system if necessary, we may assume $X$ is an integral smooth affine $k$-scheme of finite type.
 
 For the closed immersion $Y \hookrightarrow X$, let $p \in X$ be the $k$-rational point that is the image of the unique closed point of $Y$. 
 
 Since the embedding dimension of $k_m$ is $1$, by successive applications of the version of the Bertini theorems for hypersurfaces containing a given closed subscheme (see Kleiman-Altman \cite{KA} when $k$ is infinite, and see J. Gunther \cite[Theorem 1.1]{Gunther} or F. Wutz \cite[Theorem 2.1]{Wutz} when $k$ is finite. The latter articles for a finite field uses a method from B. Poonen \cite{Poonen}.), there exists a smooth curve $C \subset X$ containing $Y$ as a closed subscheme. This gives the new singleton system $\mathcal{U}_2 := \{ ( Y, C) \}$ for $Y$, and it is again a refinement of $\mathcal{U}_1$. Note that the curve $C$ may not be a rational curve. 
 
 Consider the completion $\widehat{C}$ of $C$ along $Y$. Since $Y$ is a fat point, this $\widehat{C}$ is equal to the completion of $C$ at the closed point $p \in C$. The stalk at $p$ is given by $\widehat{\mathcal{O}}_{C, p} $, which is isomorphic to $k[[t]]$ by the Cohen structure theorem (I. Cohen \cite{Cohen}). Hence $\widehat{C} \simeq \Spf (k[[t]])$. 
 
Since $\mathcal{U}$ was arbitrary, we see that $\BGH^q (\mathcal{U}_2, n) = \CH^q (\Spf (k[[t]]) \mod Y, n)$ is a final object in the inductive system. Thus this is equal to $\BGH^q (Y, n)$. This proves the theorem.
 \end{proof}

\subsection{Zariski descent and stacks}\label{sec:Zariski descent full}
 
When $Y \in \Sch_k$, we constructed the complexes $\BGHz^q (Y^{\infty}, \bullet)$, $\BGHz^q (Y, \bullet)$ in $\mathcal{D} ({\rm Ab}(Y))$, as homotopy colimits over all systems of local embeddings. To double check that the theory is sound regarding taking restrictions to open subsets, one question we check is the following kind of consistency:

\begin{lem}\label{lem:restriction coincidence}
Let $Y \in \Sch_k$. Let $\iota_U: U \subset Y$ be an open subscheme. 

Then we have isomorphisms in $\mathcal{D} ({\rm Ab} (U))$:
\begin{equation}\label{eqn:restriction coincidence 1}
\tuborg
\BGHz^q (Y^{\infty}, \bullet)|_U \overset{\sim}{\to} \BGHz^q (U^{\infty}, \bullet), \\
\BGHz^q (Y, \bullet)|_U \overset{\sim}{\to} \BGHz^q (U, \bullet),
\sluttuborg
\end{equation}
where ${}|_U$ means applying the flat pull-back $\iota_U^*= \mathbf{L} \iota_U^*: \mathcal{D} ({\rm Ab} (Y)) \to \mathcal{D} ({\rm Ab} (U))$.
\end{lem}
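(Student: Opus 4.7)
The plan is to reduce the claim to the level of a single system of local embeddings, and then to conclude by a cofinality statement at the level of homotopy colimits. Given a system $\mathcal{U}=\{(U_i,X_i)\}_{i\in\Lambda}$ of local embeddings for $Y$, I define $\mathcal{U}|_U:=\{(U_i\cap U,X_i)\}_{i\in\Lambda}$ (discarding any indices with $U_i\cap U=\emptyset$, and, if convenient, shrinking each $X_i$ to an open neighborhood of $U_i\cap U$); this is a system of local embeddings for $U$. Restriction thus yields a functor ${\rm res}\colon{\rm Sys}(Y)\to{\rm Sys}(U)$ on the filtered categories of systems under refinement (Definition \ref{defn:refinement system}). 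The two steps are: (I) a levelwise identification $\BGHz^q(\mathcal{U},\bullet)|_U\simeq\BGHz^q(\mathcal{U}|_U,\bullet)$ (and its $\infty$-analogue); and (II) cofinality of ${\rm res}$.

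For step (I), Lemma \ref{lem:open pb} implies that for each multi-index $I\in\Lambda^{p+1}$, the open formal subscheme $\widehat{X}_I|_{U_I\cap U}$ coincides with the completion of $X_I$ along $U_I\cap U$. Hence the presheaves defining $\BGHz^q(\widehat{X}_I,\bullet)$ and $\BGHz^q(\widehat{X}_I\mod U_I,\bullet)$ restrict on $U_I\cap U$ to the corresponding presheaves constructed from $\mathcal{U}|_U$. Combining this with base change for the open immersions, $\iota_U^*\circ\mathbf{R}(\iota_{U_I})_*=\mathbf{R}(\iota_{U_I\cap U})_*\circ(-)|_{U_I\cap U}$, yields an isomorphism on each factor of \v{C}ech degree $p$ in $\mathcal{D}^-({\rm Ab}(U))$. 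These isomorphisms commute with the \v{C}ech boundary maps \eqref{eqn:cech boundary map}, because those are built from type (I) flat pull-backs (Lemma \ref{lem:pre flat pb}) which are preserved by restriction. Totalizing gives the desired isomorphism for a single $\mathcal{U}$.

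Since $\iota_U^*$ is exact and commutes with filtered (homotopy) colimits of bounded-above complexes of sheaves, step (I) gives $\BGHz^q(Y,\bullet)|_U\simeq\hocolim_{\mathcal{U}}\BGHz^q(\mathcal{U}|_U,\bullet)$, and similarly for $Y^{\infty}$. To identify this with $\BGHz^q(U,\bullet)$ (resp.\ $\BGHz^q(U^{\infty},\bullet)$), I show that ${\rm res}$ is filtered-cofinal. Given $\mathcal{V}=\{(V_j,X'_j)\}_{j\in\Lambda'}$ for $U$, let $\mathcal{U}^{\mathcal{V}}:=\mathcal{V}\sqcup\mathcal{W}$, where each $V_j$ is regarded as a quasi-affine open of $Y$ and $\mathcal{W}$ is any system of local embeddings for $Y$ (e.g.\ an affine open cover of $Y$ with embeddings into affine spaces). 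Then the index set of $\mathcal{U}^{\mathcal{V}}|_U$ contains $\Lambda'$, and via this inclusion $\mathcal{V}$ refines $\mathcal{U}^{\mathcal{V}}|_U$, so the comma category $(\mathcal{V}\downarrow{\rm res})$ is non-empty. Lemma \ref{lem:common refinement} applied in ${\rm Sys}(Y)$ gives common upper bounds for any two of its objects, and Lemma \ref{lem:induced map up to homotopy} identifies the two resulting transition maps up to homotopy, making the comma category homotopically filtered. The standard cofinality theorem for filtered homotopy colimits then yields the desired isomorphisms in both cases.

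The main obstacle is the 2-categorical structure of refinements: different refinement set maps $\lambda$ induce only homotopic, not equal, morphisms on complexes (Lemma \ref{lem:induced map up to homotopy}). Consequently the cofinality argument must be understood in the homotopical sense; however, for filtered homotopy colimits of bounded-above complexes of Zariski abelian sheaves the classical directed-colimit realization suffices to make the argument one-categorical on the nose after passing to $\mathcal{D}({\rm Ab}(U))$. A secondary technical check — that the restriction $U_I\cap U\hookrightarrow U_I$ used in step (I) intertwines with the flat pull-backs defining the \v{C}ech differentials — is automatic, since that restriction is itself a type (I) flat morphism of Lemma \ref{lem:pre flat pb}, and no additional moving lemma is required.
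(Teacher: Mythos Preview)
Your Step (I) is fine and essentially matches how the paper sets up the comparison map. The problem is Step (II): the restriction functor ${\rm res}\colon{\rm Sys}(Y)\to{\rm Sys}(U)$ is \emph{not} cofinal in general, and your argument for it points in the wrong direction.

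Concretely, recall that in the indexing category a morphism $\mathcal{U}\to\mathcal{V}$ exists when $\mathcal{V}$ refines $\mathcal{U}$ (this is how Lemma~\ref{lem:induced map proj refine} produces transition maps). Cofinality therefore requires, for each $\mathcal{V}\in{\rm Sys}(U)$, some $\mathcal{U}\in{\rm Sys}(Y)$ with $\mathcal{U}|_U$ \emph{refining} $\mathcal{V}$. What your construction $\mathcal{U}^{\mathcal{V}}=\mathcal{V}\sqcup\mathcal{W}$ actually gives is the opposite: $\mathcal{V}$ refines $\mathcal{U}^{\mathcal{V}}|_U$ via the inclusion of index sets. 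That yields an object of $({\rm res}\downarrow\mathcal{V})$, not of $(\mathcal{V}\downarrow{\rm res})$. Worse, the required direction genuinely fails. Take $Y=\mathbb{A}^2_k$, $U=\mathbb{A}^2_k\setminus\{0\}$, and $\mathcal{V}=\{D(x),D(y)\}$ with the obvious embeddings. Any system $\mathcal{U}$ for $Y$ must contain some $U_i$ with $0\in U_i$; then $U_i\cap U$ contains both a point with $x=0$ and a point with $y=0$, so $U_i\cap U$ lies in neither $D(x)$ nor $D(y)$, and $\mathcal{U}|_U$ cannot refine $\mathcal{V}$.

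The paper sidesteps this obstruction by checking the map on stalks. At a fixed point $x\in U$, only those members $(U_i,X_i)$ with $x\in U_i$ contribute to the stalk, and for these ``local'' subsystems one can indeed dominate any $\mathcal{V}_x$ by some $\mathcal{U}_x$ coming from a system for $Y$: one simply adjoins to $\mathcal{V}$ any system for $Y$, since at the stalk level the extra opens not containing $x$ are invisible. This is precisely the content of the paper's short argument around \eqref{eqn:restriction coincidence 2}. Your framework can be repaired along the same lines: replace the global cofinality claim by the stalkwise statement, which is both true and sufficient since isomorphisms in $\mathcal{D}({\rm Ab}(U))$ are detected on stalks.
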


\begin{proof}
We introduce a notation: let $S(Y)$ be the collection of all systems of local embeddings for $Y$. This forms a category, where morphisms are given by refinements.

Since the statement holds vacuously when $U=\emptyset$, suppose $U \not = \emptyset$.

Let $\mathcal{U}= \{ (U_i, X_i ) \}_{i \in \Lambda} \in S(Y)$. For each $i \in \Lambda$, the topology on $U_i$ coincides with the subspace topology on $U_i$ induced from $X_i$ via the closed immersion $U_i \hookrightarrow X_i$. Thus, for the open subscheme $U_i \cap U  \subset U_i$, there exists an open subscheme $X_i' \subset X_i$ such that $U_i \cap X_i' = U_i \cap U$ and $U_i \cap U \hookrightarrow X_i'$ is a closed immersion. Here $X_i'$ is again an equidimensional smooth $k$-scheme. Thus, we have the induced restricted system $\mathcal{U}|_U= \{ (U_i \cap U, X'_i) \}_{i \in \Lambda} \in S(U)$.

Since $\BGHz^q (Y^{\infty}, \bullet)$ and $\BGHz^q (U^{\infty}, \bullet)$ (resp. $\BGHz^q (Y, \bullet)$ and $\BGHz^q (U, \bullet)$) are defined as homotopy colimits over $S(Y)^{\op}$ and $S(U)^{\op}$, respectively, the map
$$
S(Y)^{\op} \to S(U)^{\op} 
$$
given by $\mathcal{U} \mapsto \mathcal{U}|_U$ induces the natural morphisms of \eqref{eqn:restriction coincidence 1} in $\mathcal{D} ({\rm Ab}(U))$. To see that the morphisms are isomorphisms, it remains to check that at each scheme point $x \in U$, the morphisms of the stalks in $\mathcal{D} ({\rm Ab})$
\begin{equation}\label{eqn:restriction coincidence 2}
\tuborg
\BGHz^q (Y^{\infty}, \bullet) |_{U, x} = \BGHz^q (Y^{\infty}, \bullet)_x {\to} \BGHz^q (U^{\infty}, \bullet)_x, \\
\BGHz^q (Y, \bullet)|_{U, x} = \BGHz^q (Y, \bullet)_x {\to} \BGHz^q (U, \bullet)_x,
\sluttuborg
\end{equation}
are isomorphisms in $\mathcal{D} ({\rm Ab})$. This is immediate: for any system $\mathcal{V}= \{ (U_i, X_i)\}$ of local embeddings for $U$, for $x \in U$, consider the subset $\mathcal{V}_x$ of the pairs $(U_i, X_i)$ where $x \in U_i$. We may replace it with its refinement where each $U_i$ is quasi-affine. Then one can find a system $\mathcal{U}$ of local embeddings for $Y$, such that the subset $\mathcal{U}_x$, defined similarly, is a refinement of $\mathcal{V}_x$. Hence the morphisms \eqref{eqn:restriction coincidence 2} are isomorphisms in $\mathcal{D} ({\rm Ab})$ as desired. 
\end{proof}

\begin{cor}\label{cor:final stack}
Let $Y\in \Sch_k$. Let $Op (Y)$ be the category of all Zariski open subsets of $Y$. Then the assignment 
$$
U\in Op(Y)^{\op} \mapsto \{ \mbox{the isomorphism class of } \BGHz^q (U, \bullet)\} \mbox{ in } \mathcal{D} ({\rm Ab}(U))
$$
is a stack. The corresponding property for $\BGHz^q (U^{\infty}, \bullet)$ also holds.
\end{cor}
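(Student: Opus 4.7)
The plan is to derive the stack property from Lemma \ref{lem:restriction coincidence}, which already encodes how $\BGHz^q(-,\bullet)$ behaves under restriction along open immersions, together with the construction of $\BGHz^q(U,\bullet)$ as a homotopy colimit over the cofiltered category of systems of local embeddings.

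First I would establish the pseudo-functor structure on $Op(Y)^{\op}$. For each open inclusion $\iota_V^U : V \hookrightarrow U$, the flat (hence exact) pull-back $(\iota_V^U)^{*} : \mathcal{D}(\mathrm{Ab}(U)) \to \mathcal{D}(\mathrm{Ab}(V))$ sends the isomorphism class of $\BGHz^q(U,\bullet)$ to that of $\BGHz^q(V,\bullet)$ by Lemma \ref{lem:restriction coincidence}. For nested opens $W \subset V \subset U$, the canonical natural isomorphism $(\iota_W^V)^{*} \circ (\iota_V^U)^{*} \simeq (\iota_W^U)^{*}$ supplies the coherent associators. Since the target category on each open is a connected groupoid (only isomorphisms), the higher coherence conditions are automatic. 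The same argument handles $\BGHz^q(U^{\infty},\bullet)$ verbatim.

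Second I would verify descent. Given $U \in Op(Y)$, an open cover $\{U_i\}_{i \in I}$ of $U$, representatives $\mathcal{F}_i \in \mathcal{D}(\mathrm{Ab}(U_i))$ of $\BGHz^q(U_i,\bullet)$, and gluing isomorphisms $\phi_{ij} : \mathcal{F}_i|_{U_{ij}} \xrightarrow{\sim} \mathcal{F}_j|_{U_{ij}}$ satisfying the cocycle identity on triple intersections, I would produce an object on $U$ glueing them, and show its class is that of $\BGHz^q(U,\bullet)$. The key step is to pick a system of local embeddings $\mathcal{U} = \{(V_\alpha, X_\alpha)\}_{\alpha \in \Lambda}$ for $U$ subordinate to the cover (by refining, each $V_\alpha$ is contained in some $U_{i(\alpha)}$, using Lemma \ref{lem:common refinement} applied to systems of local embeddings built over each $U_i$). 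Then $\mathcal{U}$ restricts to a system $\mathcal{U}|_{U_i}$ on each $U_i$, and the complex $\BGHz^q(\mathcal{U},\bullet) \in \mathcal{D}(\mathrm{Ab}(U))$ restricts on $U_i$ to $\BGHz^q(\mathcal{U}|_{U_i},\bullet)$ by the proof of Lemma \ref{lem:restriction coincidence}. After passing to the homotopy colimit over all systems, Definition \ref{defn:Cech final} and Lemma \ref{lem:restriction coincidence} yield $\BGHz^q(U,\bullet)|_{U_i} \simeq \BGHz^q(U_i,\bullet)$; this furnishes, up to isomorphism, a representative on $U$ whose restrictions realize the $\mathcal{F}_i$ compatibly with the $\phi_{ij}$. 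The fully-faithful half (that isomorphisms glue) follows because morphisms in $\mathcal{D}(\mathrm{Ab}(U))$ are themselves governed by a Zariski sheaf condition: any local system of isomorphisms between two representatives of $\BGHz^q(U,\bullet)$ which agrees on overlaps patches to a global isomorphism.

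The main obstacle is precisely what was flagged in the introduction: $\BGHz^q(U,\bullet)$ is \emph{not} a complex of $\mathcal{O}_U$-modules, so the classical gluing machinery of \cite[\S 0D65]{stacks} or \cite{BBD} does not apply. The way around this is to avoid gluing the complexes directly and instead exploit the functoriality of the \v{C}ech-style construction of \S\ref{sec:Cech machine}, together with the stability of refinements (Lemmas \ref{lem:common refinement}, \ref{lem:induced map proj refine}, \ref{lem:induced map up to homotopy}). Once it is recognised that every cover of $U$ admits a subordinate system of local embeddings for $U$, descent for $\BGHz^q(-,\bullet)$ reduces to the elementary observation that taking $\mathrm{Tot}\,\check{\mathfrak{C}}^q(\mathcal{U})$ commutes with passage to the induced restricted system. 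The argument for $\BGHz^q(U^{\infty},\bullet)$ is strictly parallel, substituting $\BGHz^q_{\mathcal{W}}$-free versions throughout.
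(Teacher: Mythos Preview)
Your proposal eventually lands on the right mechanism, but it is considerably more elaborate than what the paper does, and one of your claims is not correct as stated.

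The paper's proof is almost immediate from Lemma~\ref{lem:restriction coincidence}. One reduces by induction to the case of two opens $U, V \subset Y$. The point is that the candidate ``glued'' object on $U \cup V$ does not need to be \emph{constructed} from the local data at all: $\BGHz^q(U \cup V, \bullet)$ already exists by Definition~\ref{defn:Cech final}, and Lemma~\ref{lem:restriction coincidence} gives $\BGHz^q(U \cup V, \bullet)|_U \simeq \BGHz^q(U, \bullet)$ and $\BGHz^q(U \cup V, \bullet)|_V \simeq \BGHz^q(V, \bullet)$, while the same lemma gives the compatibility on $U \cap V$. That is the entire argument. Your discussion of subordinate systems of local embeddings and restricted \v{C}ech complexes is essentially re-proving Lemma~\ref{lem:restriction coincidence} rather than invoking it.

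More seriously, your ``fully-faithful half'' is problematic. The assertion that ``morphisms in $\mathcal{D}(\mathrm{Ab}(U))$ are themselves governed by a Zariski sheaf condition'' is false in general: Hom-sets in derived categories do \emph{not} satisfy Zariski descent, which is exactly why gluing in derived categories (as in \cite[\S 0D65]{stacks} or \cite{BBD}) requires extra structure that is unavailable here, a point the paper stresses in the introduction. The paper avoids this entirely because the pseudo-functor in question has, by design, a connected groupoid with a single isomorphism class as each fiber; the content of ``stack'' in this degenerate setting is just that a global representative with the correct restrictions exists, and that is supplied directly by the construction, not by any gluing of isomorphisms.
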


\begin{proof}
We prove the first statement only, as the argument for the second one is identical. 
By induction on the number of open subsets, it is enough to check the descent property for two open subsets $U, V \subset Y$. 

Indeed, by Lemma \ref{lem:restriction coincidence}, we have isomorphisms in $\mathcal{D} ({\rm Ab} (U\cap V))$
$$
\BGHz^q (U, \bullet) |_{U \cap V} \simeq \BGHz^q (U \cap V, \bullet) \simeq \BGHz^q (V, \bullet) |_{U \cap V}.
$$
On the other hand, we have $\BGHz^q (U \cup V, \bullet)$ in $ \mathcal{D} ({\rm Ab} (U \cup V))$ such that there are isomorphisms
$$
\tuborg
\BGHz^q (U \cup V, \bullet)|_{U} \simeq \BGHz^q (U, \bullet), \ \ \mbox{ in } \mathcal{D} ({\rm Ab} (U)),\\
\BGHz^q (U \cup V, \bullet)|_{V} \simeq \BGHz^q (V, \bullet), \ \ \mbox{ in } \mathcal{D} ({\rm Ab} (V)),\\
\sluttuborg
$$
by Lemma \ref{lem:restriction coincidence} again. This proves the corollary.
\end{proof}

\begin{remk}
The above means that, roughly speaking, $\BGHz^q (U \cup V, \bullet)$ is a ``gluing" of $\BGHz^q (U , \bullet)$ and $\BGHz^q (V, \bullet)$ along $\BGHz^q (U \cap V, \bullet)$.
\qed
\end{remk}

\begin{thm}\label{thm:Zariski descent full} 
For $Y \in \Sch_k$, the yeni higher Chow groups of $Y^{\infty}$ and $Y$ satisfy the Zariski descent. 
\end{thm}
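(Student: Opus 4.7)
The plan is to deduce the Mayer--Vietoris long exact sequence directly from the stack property (Corollary \ref{cor:final stack}) together with the definitions of $\BGH^q(Y,n)$ and $\BGH^q(Y^\infty,n)$ as Zariski hypercohomology groups in Definition \ref{defn:Cech final}. Since the arguments for the two cases $Y^\infty$ and $Y$ are formally identical, I will only describe the one for $Y$.

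First, for open subschemes $U,V \subset Y$ with inclusions $j_U, j_V, j_{UV}$ into $U\cup V$, I would exploit the restriction isomorphisms
\[
\BGHz^q(U\cup V,\bullet)|_U \simeq \BGHz^q(U,\bullet),\quad \BGHz^q(U\cup V,\bullet)|_V \simeq \BGHz^q(V,\bullet),\quad \BGHz^q(U\cup V,\bullet)|_{U\cap V}\simeq \BGHz^q(U\cap V,\bullet),
\]
supplied by Lemma \ref{lem:restriction coincidence}. Set $\mathcal{F}:=\BGHz^q(U\cup V,\bullet)\in\mathcal{D}(\mathrm{Ab}(U\cup V))$. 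The standard Mayer--Vietoris triangle for a complex of abelian sheaves on a topological space, applied to the cover $\{U,V\}$ of $U\cup V$, produces a distinguished triangle
\[
\mathcal{F} \longrightarrow \mathbf{R}(j_U)_*(\mathcal{F}|_U) \oplus \mathbf{R}(j_V)_*(\mathcal{F}|_V) \longrightarrow \mathbf{R}(j_{UV})_*(\mathcal{F}|_{U\cap V}) \longrightarrow \mathcal{F}[1]
\]
in $\mathcal{D}(\mathrm{Ab}(U\cup V))$; this is what makes the stack of Corollary \ref{cor:final stack} behave like a sheaf in the derived sense.

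Next, I would apply $\mathbf{R}\Gamma(U\cup V,-)$ to this triangle. Using $\mathbf{R}\Gamma(U\cup V,\mathbf{R}(j_W)_*\mathcal{G}) = \mathbf{R}\Gamma(W,\mathcal{G})$ for each of $W\in\{U,V,U\cap V\}$, together with the restriction identifications above, this yields a distinguished triangle
\[
\mathbf{R}\Gamma(U\cup V, \BGHz^q(U\cup V,\bullet)) \to \mathbf{R}\Gamma(U,\BGHz^q(U,\bullet)) \oplus \mathbf{R}\Gamma(V,\BGHz^q(V,\bullet)) \to \mathbf{R}\Gamma(U\cap V,\BGHz^q(U\cap V,\bullet))
\]
in $\mathcal{D}(\mathrm{Ab})$. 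Taking the associated long exact sequence of hypercohomology, shifted with the $(-n)$-conventions of Definition \ref{defn:Cech final}, I obtain
\[
\cdots \to \BGH^q(U\cup V,n)\to \BGH^q(U,n)\oplus \BGH^q(V,n)\to \BGH^q(U\cap V,n)\to \BGH^q(U\cup V,n-1)\to\cdots,
\]
which is the Mayer--Vietoris long exact sequence required by Theorem \ref{thm:main intro}(5). The same argument, verbatim but using the companion statement of Corollary \ref{cor:final stack} and Lemma \ref{lem:restriction coincidence} for $\BGHz^q(-^\infty,\bullet)$, handles the case of $Y^\infty$.

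The only non-formal ingredient is the restriction isomorphism of Lemma \ref{lem:restriction coincidence} and the gluing property of Corollary \ref{cor:final stack}; both have already been established. The main conceptual obstacle was proved upstream, namely establishing that the assignment $U\mapsto \BGHz^q(U,\bullet)$ is a stack rather than a mere presheaf of isomorphism classes. Since our $\BGHz^q$ are defined as homotopy colimits of \v{C}ech-style totalizations indexed by systems of local embeddings, the sheaf-like descent is not automatic from the construction; it was ultimately made possible by the moving lemma of Section \ref{sec:first indep} (Theorem \ref{thm:formal moving}) and the derived pull-back functoriality of Theorem \ref{thm:pull-back moving}, which together force the stalkwise comparison \eqref{eqn:restriction coincidence 2} to be an isomorphism. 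Once those are in hand, the present theorem is a straightforward application of the hypercohomology formalism.
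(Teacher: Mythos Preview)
Your proof is correct and follows essentially the same approach as the paper: both use the standard Mayer--Vietoris distinguished triangle for hypercohomology of a complex of sheaves, combined with the restriction isomorphisms of Lemma \ref{lem:restriction coincidence} to identify the terms with the yeni higher Chow groups of $U\cup V$, $U$, $V$, and $U\cap V$. The only cosmetic difference is that the paper starts from $\BGHz^q(Y,\bullet)|_{U\cup V}$ and then invokes Lemma \ref{lem:restriction coincidence}, whereas you start directly from $\BGHz^q(U\cup V,\bullet)$; since these are isomorphic by that same lemma, the arguments are equivalent.
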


\begin{proof}
Let $Y \in \Sch_k$ be a fixed scheme and let $U, V \subset Y$ be open subschemes. We prove the assertion for the yeni higher Chow groups of $Y$ only, as the other case for $Y^{\infty}$ is similar.

For the object $\BGHz^q (Y, \bullet)$ in $\mathcal{D} ({\rm Ab}(Y))$, we have the long exact sequence of the hypercohomology groups
$$
\cdots \to \mathbb{H}^{-n} _{\rm Zar}(U \cup V, \BGHz^q ( Y, \bullet)|_{U \cup V}) \to \mathbb{H}^{-n}_{\rm Zar} (U, \BGHz^q (Y, \bullet) |_{U}) \oplus  \mathbb{H}^{-n} _{\rm Zar} (V, \BGHz^q (Y, \bullet) |_{V})
$$
$$
\hskip5cm  \to \mathbb{H}_{\rm Zar}^{-n} (U \cap V, \BGHz^q (Y, \bullet)_{U \cap V}) \to \cdots.
$$
But by Lemma \ref{lem:restriction coincidence}, we have 
$$
 \mathbb{H}^{-n} _{\rm Zar}(U \cup V, \BGHz^q ( Y, \bullet)|_{U \cup V}) \simeq \mathbb{H}_{\rm Zar} ^{-n} (U \cup V, \BGHz^q (U \cup V, \bullet)),
 $$
 and similarly for all other open subsets $U, V, U \cap V$. Hence by the above long exact sequence and by Definition \ref{defn:Cech final}, we have the long exact sequence
 $$
 \cdots \to \BGH^q (U \cup V, n) \to \BGH^q (U, n) \oplus \BGH^q (V, n) \to \BGH^q (U \cap V, n) 
 $$
 $$
 \hskip5cm \to \BGH^q (U \cup V, n-1) \to \cdots.
 $$
 This proves the theorem.
\end{proof}

 \subsection{The cycle versions}\label{sec:cycle Cech BGH}

 For $Y \in \Sch_k$, recall that in Definition \ref{defn:Cech final}, we defined some hypercohomology groups.
 
 \medskip
 
 In \S \ref{sec:cycle Cech BGH}, we want to give analogous constructions of cycle class groups. In this paper, they play some auxiliary roles only, while we expect that they are isomorphic to the hypercohomology version. See the Appendix, \S \ref{sec:appendix} for some relevant discussions.
 
 
\medskip
 
 Let $Y \in \Sch_k$, and let $\mathcal{U}= \{ (U_i, X_i)\}_{i \in \Lambda}$ be a system of local embeddings for $Y$. Let $q \geq 0$ be a fixed integer. The sketch is largely identical to the discussions in \S \ref{sec:Cech complex sheaf}, where the complexes $\BGHz^q (\widehat{X}_I, \bullet)$ of sheaves are replaced by the complexes $z^q (\widehat{X}_I , \bullet)$ of abelian groups, and similarly $\BGHz^q (\widehat{X}_I \mod U_I, \bullet)$ by $z^q (\widehat{X}_I \mod U_I, \bullet)$.
 
 \medskip
 
Let $p \geq 0$. For each multi-index $I= (i_0, \cdots, \cdots, i_p) \in \Lambda^{p+1}$, let $U_I:= U_{i_0} \cap \cdots \cap U_{i_p}$ and $X_I:= X_{i_0} \times_k \cdots \times _k X_{i_p}$. We have the diagonal closed immersion $U_I \hookrightarrow X_I$. Let $\widehat{X}_I$ be the completion of $X_I$ along $U_I$. 

\medskip
 
For each $0 \leq j\leq p$, let $I_j':= (i_0, \cdots, \widehat{i}_j, \cdots, i_p)$. This gives the open immersion $g_I ^j: U_I \hookrightarrow U_{I'_j}$ and the natural projection $X_I \to X_{I'_j}$ that ignores $X_{i_j}$. This induces $f_I ^j: \widehat{X}_I \to \widehat{X}_{I'_j}$ whose restriction to $U_I$ is $U_{I'_j}$, and it is a composite of flat morphisms of type (I), (II), (III) (see Lemma \ref{lem:fpb}) so that we have the flat pull-back  morphisms
$$
\tuborg
\delta_{I,j} = (f_I ^j)^*: z^q (\widehat{X}_{I'_j}, \bullet) \to z^q (\widehat{X}_I, \bullet), \\
\delta_{I,j} = (f_I ^j)^*: z^q (\widehat{X}_{I'_j} \mod U_{I'_j} \bullet) \to z^q (\widehat{X}_I \mod U_I, \bullet).
\sluttuborg
$$
Let
$$
\tuborg
\check{\rm C}^q (\mathcal{U}^{\infty}) ^p:= \underset{I \in \Lambda ^{p+1}}{\prod} z^q (\widehat{X}_I, \bullet), \\
\check{\rm C}^q (\mathcal{U}) ^p:= \underset{I \in \Lambda ^{p+1}}{\prod} z^q (\widehat{X}_I \mod U_I, \bullet).
\sluttuborg
$$
For each $I \in \Lambda ^{p+1}$, we have
$$
\tuborg
\delta_I ^{p-1} = \overset{p}{ \underset{j=0}{ \sum}}(-1)^j \delta_{I,j} : \overset{p}{ \underset{j=0}{\prod} }z^q (\widehat{X}_{I'_j} , \bullet) \to z^q (\widehat{X}_I, \bullet), \\
\delta_I ^{p-1} = \overset{p}{ \underset{j=0}{ \sum}} (-1)^j \delta_{I,j} : \overset{p}{ \underset{j=0}{\prod} } z^q (\widehat{X}_{I'_j} \mod U_{I'_j} , \bullet) \to z^q (\widehat{X}_I \mod U_I, \bullet).
\sluttuborg
$$
Combining the above over all multi-indices $I \in\Lambda^{p+1}$, we have the \v{C}ech boundary maps
$$
\tuborg
\delta^{p-1} : \check{\rm C}  ^q (\mathcal{U}^{\infty})^{p-1} \to \check{\rm C}^q (\mathcal{U}^{\infty}) ^p, \\
\delta^{p-1} : \check{\rm C}  ^q (\mathcal{U})^{p-1} \to \check{\rm C}^q (\mathcal{U}) ^p.
\sluttuborg
$$
 By the usual \v{C}ech formalism, we have $\delta^p \circ \delta ^{p-1} = 0$ so that we obtain the double complexes $\check{\rm C} ^q(\mathcal{U}^{\infty})= \check{\rm C} ^q(\mathcal{U}^{\infty})^{\bullet}$ and $\check{\rm C} ^q(\mathcal{U})= \check{\rm C} ^q(\mathcal{U})^{\bullet}$ of abelian groups. Here we regard $z^q (\widehat{X}_I, \bullet)$ and $z^q (\widehat{X}_I \mod U_I, \bullet)$ as cohomological complexes with the negative indexing. 

\begin{defn}\label{defn:Cech cycle cx}
Let $Y \in \Sch_k$, and let $\mathcal{U}$ be a system of local embeddings for $Y$. Define the cycle complexes
$$
\tuborg
{z}^q (\mathcal{U}^{\infty}, \bullet) := {\rm Tot}\  \check{\rm C} ^q (\mathcal{U}^{\infty}),\\
{z}^q (\mathcal{U}, \bullet) := {\rm Tot}\  \check{\rm C} ^q (\mathcal{U}).
\sluttuborg
$$
Let $\CH ^q (\mathcal{U}^{\infty}, n):={\rm  H}_{n} ({z}^q (\mathcal{U}^{\infty}, \bullet))$ and $\CH ^q (\mathcal{U}, n):={\rm  H}_{n} ({z}^q (\mathcal{U}, \bullet))$.\qed
\end{defn}

There are natural homomorphisms from the above cycle class groups $\CH ^q (\mathcal{U}^{\infty}, n)$ and $\CH ^q (\mathcal{U}, n)$ to the hypercohomology groups $\BGH^q (\mathcal{U}^{\infty}, n)$ and $\BGH^q (\mathcal{U}, n)$:

\begin{lem}\label{lem:flasque natural cech}
Let $Y\in \Sch_k$ be a $k$-scheme of finite type and let $\mathcal{U} =\{ (U_i, X_i) \}_{i \in \Lambda}$ be a system of local embeddings for $Y$. 

Then there exist natural homomorphisms
\begin{equation}\label{eqn:flasque natural cech 01}
\tuborg
\CH^q ( \mathcal{U}^{\infty}, n) \to \BGH^q (\mathcal{U}^{\infty}, n), \\
\CH^q (\mathcal{U}, n) \to \BGH^q (\mathcal{U}, n).
\sluttuborg
\end{equation}
\end{lem}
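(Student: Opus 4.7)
The plan is to obtain the maps \eqref{eqn:flasque natural cech 01} by globalizing, via the \v{C}ech double-complex construction, the comparison maps $\CH^q(\widehat{X}_I, n) \to \BGH^q(\widehat{X}_I, n)$ and $\CH^q(\widehat{X}_I \mod U_I, n) \to \BGH^q(\widehat{X}_I \mod U_I, n)$ already established in Corollary \ref{cor:two BGH map} for each individual multi-index $I \in \Lambda^{p+1}$. For each such $I$, Proposition \ref{prop:flasque sh}, Theorem \ref{thm:two BGH -1}, and Lemma \ref{lem:flasque natural} together produce on $U_I$ natural injective morphisms of complexes of flasque Zariski sheaves
$$\tilde{\mathcal{S}}^q_{I,\bullet} \hookrightarrow \BGHz^q(\widehat{X}_I, \bullet), \qquad \mathcal{S}^q_{I,\bullet} \hookrightarrow \BGHz^q(\widehat{X}_I \mod U_I, \bullet),$$
whose sections on $U_I$ are $z^q(\widehat{X}_I, \bullet)$ and $z^q(\widehat{X}_I \mod U_I, \bullet)$, respectively.

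Since $\iota = \iota_I \colon U_I \hookrightarrow Y$ is an open immersion and the left-hand sheaves are flasque, applying $\iota_*$ (which equals $\mathbf{R}\iota_*$ on flasque sheaves) yields morphisms $\iota_* \tilde{\mathcal{S}}^q_{I,\bullet} \to \mathbf{R}\iota_* \BGHz^q(\widehat{X}_I, \bullet)$ on $Y$, with $\iota_* \tilde{\mathcal{S}}^q_{I,\bullet}$ still flasque and with $\Gamma(Y, \iota_* \tilde{\mathcal{S}}^q_{I,\bullet}) = z^q(\widehat{X}_I, \bullet)$, and similarly in the mod $U_I$ case. I would then assemble these across $I \in \Lambda^{p+1}$ and $p \geq 0$ into morphisms of \v{C}ech double complexes on $Y$
$$\textstyle\prod_{I \in \Lambda^{p+1}} \iota_* \tilde{\mathcal{S}}^q_{I,\bullet} \longrightarrow \check{\mathfrak{C}}^q(\mathcal{U}^{\infty})^p, \qquad \prod_{I \in \Lambda^{p+1}} \iota_* \mathcal{S}^q_{I,\bullet} \longrightarrow \check{\mathfrak{C}}^q(\mathcal{U})^p,$$
whose global sections on $Y$ recover $\check{\rm C}^q(\mathcal{U}^{\infty})^p$ and $\check{\rm C}^q(\mathcal{U})^p$, respectively.

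The essential verification is that the \v{C}ech coboundaries $\delta^{p-1}$ on the two sides are intertwined by these morphisms. This reduces to the fact that on each individual factor the coboundaries on both sides are induced by the same flat pull-back operations $(f_I^j)^*$ of Lemmas \ref{lem:pre flat pb} and \ref{lem:fpb}, and these are compatible with the presheaf inclusions in \eqref{eqn:global qi-local} that were sheafified to produce the morphisms of Lemma \ref{lem:flasque natural}.

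Passing to total complexes and taking Zariski hypercohomology on $Y$, and using that each $\mathrm{Tot}\,\prod_I \iota_* \tilde{\mathcal{S}}^q_{I,\bullet}$ and $\mathrm{Tot}\,\prod_I \iota_* \mathcal{S}^q_{I,\bullet}$ is a complex of flasque sheaves with global sections $z^q(\mathcal{U}^{\infty},\bullet)$ and $z^q(\mathcal{U},\bullet)$, one obtains
$$z^q(\mathcal{U}^{\infty},\bullet) = \mathbb{R}\Gamma\bigl(Y, \mathrm{Tot}\,\textstyle\prod_I \iota_* \tilde{\mathcal{S}}^q_{I,\bullet}\bigr) \longrightarrow \mathbb{R}\Gamma\bigl(Y, \mathrm{Tot}\,\check{\mathfrak{C}}^q(\mathcal{U}^{\infty})\bigr),$$
and similarly in the mod $U_I$ case. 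Taking $H_n = H^{-n}$ delivers the two homomorphisms of \eqref{eqn:flasque natural cech 01}. The main obstacle I expect is the bookkeeping for the compatibility with the \v{C}ech coboundaries in the presence of $\mathbf{R}\iota_*$; it is precisely here that the flasqueness of $\tilde{\mathcal{S}}^q_{I,\bullet}$ and $\mathcal{S}^q_{I,\bullet}$ secured by Proposition \ref{prop:flasque sh} is used crucially, since it lets us work with honest $\iota_*$ and avoid having to choose compatible injective resolutions over all $I$ simultaneously.
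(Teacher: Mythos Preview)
Your proposal is correct and follows essentially the same approach as the paper's own proof: construct a parallel \v{C}ech double complex using the flasque sheaves $\tilde{\mathcal{S}}^q_{I,\bullet}$ and $\mathcal{S}^q_{I,\bullet}$, use flasqueness to replace $\mathbf{R}\iota_*$ by $\iota_*$, and then compute hypercohomology of the resulting flasque total complex via its global sections, which recover $z^q(\mathcal{U}^{\infty},\bullet)$ and $z^q(\mathcal{U},\bullet)$. The paper leaves the compatibility of the \v{C}ech coboundaries with the inclusions $\tilde{\mathcal{S}}^q_{I,\bullet} \hookrightarrow \BGHz^q(\widehat{X}_I,\bullet)$ implicit, whereas you flag it explicitly and correctly trace it back to the flat pull-backs of Lemmas~\ref{lem:pre flat pb} and~\ref{lem:fpb}.
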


\begin{proof}
Denote the sheaves $\tilde{\mathcal{S}}_{\bullet} ^q$ and $\mathcal{S}_{\bullet} ^q$ in Lemma \ref{lem:flasque natural} associated to the triple $(Y, X, \widehat{X})$ by $\tilde{\mathcal{S}}_{\bullet} ^q (\widehat{X})$ and $\mathcal{S}_{\bullet} ^q (\widehat{X} \mod Y)$. Form $\tilde{\mathcal{S}}_{\bullet} ^q (\widehat{X}_I )$ and $\mathcal{S}_{\bullet} ^q (\widehat{X}_I \mod U_I)$ for each multi-index $I \in \Lambda^{p+1}$ for $p \geq 0$, similarly.

We have the natural injective morphisms of complexes of sheaves
$$
\tuborg
\tilde{\mathcal{S}}_{\bullet}^q (\widehat{X}_I ) \to \BGHz^q (\widehat{X}_I, \bullet), \\
\mathcal{S}_{\bullet}^q (\widehat{X} _I \mod U_I ) \to \BGHz^q (\widehat{X}_I \mod U_I, \bullet).
\sluttuborg
$$

We repeat the \v{C}ech construction of  \S \ref{sec:Cech complex sheaf} with the sheaves $\tilde{\mathcal{S}}_{\bullet}^q (\widehat{X}_I )$ over all $I \in \Lambda^{p+1}$ and $p \geq 0$, instead of $\BGHz^q(\widehat{X}_I,\bullet)$ (resp.  $\mathcal{S}_{\bullet}^q (\widehat{X}_I \mod U_I )$ instead of 
 $\BGHz^q(\widehat{X}_I \mod U_I,\bullet)$).

 To distinguish these, let $\check{\mathfrak{C}} (\mathcal{U}^{\infty}, \tilde{\mathcal{S}}^q)$ and $\check{\mathfrak{C}} (\mathcal{U}^{\infty}, \BGHz^q)$ denote the so-obtained double complexes of sheaves, respectively, and similarly, define $\check{\mathfrak{C}} (\mathcal{U}, \mathcal{S}^q)$ and $\check{\mathfrak{C}} (\mathcal{U}, \BGHz^q)$. We thus have the induced morphisms
\begin{equation}\label{eqn:tot two Cech sheaves 01}
\tuborg
{\rm Tot}\  \check{\mathfrak{C}} (\mathcal{U}^{\infty}, \tilde{\mathcal{S}}^q) \to {\rm Tot}\  \check{\mathfrak{C}} (\mathcal{U}^{\infty}, \BGHz^q),\\
{\rm Tot}\  \check{\mathfrak{C}} (\mathcal{U}, \mathcal{S}^q) \to {\rm Tot}\  \check{\mathfrak{C}} (\mathcal{U}, \BGHz^q).
\sluttuborg
\end{equation}

To deduce the homomorphisms \eqref{eqn:flasque natural cech 01} from \eqref{eqn:tot two Cech sheaves 01}, it is enough to identify the hypercohomology groups of the first column total complexes of \eqref{eqn:tot two Cech sheaves 01}. For this, we note that the sheaves $\tilde{\mathcal{S}}_{n} ^q (\widehat{X}_I)$ and $\mathcal{S}_{n} ^q (\widehat{X}_I \mod U_I)$ are flasque by Proposition \ref{prop:flasque sh}. Hence we have
$$
\tuborg
\mathbf{R} \iota_* \tilde{\mathcal{S}}_{\bullet} ^q (\widehat{X}_I) = \iota_*  \tilde{\mathcal{S}}_{\bullet} ^q (\widehat{X}_I) , \\
\mathbf{R} \iota_* {\mathcal{S}}_{\bullet} ^q (\widehat{X}_I) = \iota_*  {\mathcal{S}}_{\bullet} ^q (\widehat{X}_I) , 
\sluttuborg
$$
so that  ${\rm Tot}\  \check{\mathfrak{C}} (\mathcal{U}^{\infty}, \tilde{\mathcal{S}}^q)$ and ${\rm Tot}\  \check{\mathfrak{C}} (\mathcal{U}, \mathcal{S}^q)$ are complexes of flasque sheaves.

Hence we have 
\begin{equation}\label{eqn:tot two Cech sheaves homo comp 01}
\tuborg
\mathbb{H}_{\rm Zar} ^{-n} (Y, {\rm Tot}  \ \check{\mathfrak{C}} (\mathcal{U}^{\infty}, \tilde{\mathcal{S}}^q)) \\
=^{\dagger} {\rm H} ^{-n} \Gamma (Y,  {\rm Tot} \  \check{\mathfrak{C}} (\mathcal{U}^{\infty}, \tilde{\mathcal{S}}^q) )= {\rm H}_n \Gamma (Y, {\rm Tot}\   \check{\mathfrak{C}} (\mathcal{U}^{\infty}, \tilde{\mathcal{S}}^q)), \mbox{ and }\\
\mathbb{H}_{\rm Zar} ^{-n} (Y, {\rm Tot}  \ \check{\mathfrak{C}} (\mathcal{U}, \mathcal{S}^q)) \\
=^{\dagger} {\rm H} ^{-n} \Gamma (Y,  {\rm Tot} \  \check{\mathfrak{C}} (\mathcal{U}, \mathcal{S}^q) )= {\rm H}_n \Gamma (Y, {\rm Tot}\   \check{\mathfrak{C}} (\mathcal{U}, \mathcal{S}^q)),
\sluttuborg
\end{equation}
where $\dagger$ hold because ${\rm Tot} \ \check{\mathfrak{C}} (\mathcal{U}^{\infty}, \tilde{\mathcal{S}}^q)$ and ${\rm Tot} \ \check{\mathfrak{C}} (\mathcal{U}, \mathcal{S}^q)$ are flasque. By the definitions of  $\tilde{\mathcal{S}}^q_{\bullet} (\widehat{X}_I)$ and $\mathcal{S}^q_{\bullet} (\widehat{X}_I \mod U_I)$, we have 
$$
\tuborg
\Gamma (Y, {\rm Tot} \  \check{\mathfrak{C}}  (\mathcal{U}^{\infty}, \tilde{\mathcal{S}}^q))= {\rm Tot}\  \Gamma (Y,  \check{\mathfrak{C}} (\mathcal{U}^{\infty}, \tilde{\mathcal{S}}^q)) = {\rm Tot} \ \check{\rm C}^q (\mathcal{U}^{\infty}) ={z}^q (\mathcal{U}^{\infty}, \bullet), \\
\Gamma (Y, {\rm Tot} \  \check{\mathfrak{C}}  (\mathcal{U}, \mathcal{S}^q))= {\rm Tot}\  \Gamma (Y,  \check{\mathfrak{C}} (\mathcal{U}, \mathcal{S}^q)) = {\rm Tot} \ \check{\rm C}^q (\mathcal{U}) ={z}^q (\mathcal{U}, \bullet).
\sluttuborg
$$
Putting them back into \eqref{eqn:tot two Cech sheaves homo comp 01}, from \eqref{eqn:tot two Cech sheaves 01} we deduce \eqref{eqn:flasque natural cech 01} as desired.
\end{proof}

\section{Functoriality theorem and applications}\label{sec:functoriality}

In \S \ref{sec:finite type}, for each $Y \in \Sch_k$ we defined the yeni higher Chow groups of $Y^{\infty}$ and $Y$ (see Definition \ref{defn:Cech final})
$$
\BGH^q (Y^{\infty}, n) \mbox{ and } \BGH^q (Y, n).
$$

 The goal of \S \ref{sec:functoriality} is to discuss their functoriality and some applications.

In \S \ref{sec:Cech3} we prove that the yeni higher Chow groups are functorial in $Y$. In \S \ref{sec:product}, we show that the groups have the product structures and form bi-graded rings. In \S \ref{sec:relative theory}, we define the relative theory. In \S \ref{sec:local deform}, we discuss some applications to deformation theory.

 \subsection{Functoriality}\label{sec:Cech3}
 
 We first associate homomorphisms of the groups to morphisms $g: Y_1 \to Y_2$ in $\Sch_k$. Since our groups are defined via systems of local embeddings, it is necessary to have a way to relate a given system $\mathcal{U}$ of local embeddings for $Y_2$ to a system $\mathcal{V}$ of local embeddings for $Y_1$ via the morphism $g$: 
 
 \begin{defn}\label{defn:higher refinement}
 Let $g: Y_1 \to Y_2$ be a morphism in $\Sch_k$. Let $\mathcal{U}= \{ (U_i, X_i)\}_{i \in \Lambda}$ and $\mathcal{V}= \{ (V_j, X'_j)\}_{j \in \Lambda'}$ be systems of local embeddings for $Y_2$ and $Y_1$, respectively.
 \begin{enumerate}
 \item We say that \emph{$\mathcal{V}$ is associated to $\mathcal{U}$ by $g$}, or \emph{$(\mathcal{U}, \mathcal{V})$ is a pair associated via $g$}, if there is a set map $\lambda: \Lambda' \to \Lambda$ such that for each $j \in \Lambda'$, we have $g (V_j) \subset U_{\lambda (j)}$, and there is a morphism $f_j: X_j' \to X_{\lambda (j)}$ such that $f_j|_{V_j} = g|_{V_j}$.
 
 \item Suppose $\mathcal{V}$ is associated to $\mathcal{U}$ by $g$ for a set map $\lambda: \Lambda' \to \Lambda$. Let $p \geq 0$ be an integer. Let $I \in \Lambda^{p+1}$ and $J \in (\Lambda')^{p+1}$. We say that \emph{$J$ is associated to $I$ by $g$} or \emph{$(I, J)$ is a pair associated via $g$}, if we have $\lambda (J) = I$. \qed
 \end{enumerate}
  \end{defn}
 
 We remark that when $g={\rm Id}_Y: Y \to Y$ with $Y_1 = Y_2 = Y$, the notion in Definition \ref{defn:higher refinement}-(1) coincides with the notion of refinements in Definition \ref{defn:refinement system}.
 
\begin{lem}\label{lem:system ass functor}
Let $g: Y_1 \to Y_2$ be a morphism in $\Sch_k$. Let $\mathcal{U} = \{ (U_i, X_i) \}_{i \in \Lambda}$ be any system of local embeddings for $Y_2$. 

Then there exists a system $\mathcal{V}$ of local embeddings for $Y_1$ associated to $\mathcal{U}$ by $g$.
\end{lem}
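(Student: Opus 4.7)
The plan is to build $\mathcal{V}$ in three steps: first reduce to a refinement of $\mathcal{U}$ whose ambient schemes are separated, then pull back to $Y_1$ a cover by affines of finite type, and finally construct the smooth ambient schemes via suitable fiber products. For the first step, for each $i \in \Lambda$ I would choose finitely many affine opens $\{X_{i,\alpha}\}_\alpha$ of $X_i$ whose union contains the quasi-affine (hence quasi-compact) open $U_i$, set $U_{i,\alpha} := U_i \cap X_{i,\alpha}$, and retain the induced closed immersions $U_{i,\alpha} \hookrightarrow X_{i,\alpha}$. Then $\mathcal{U}' := \{(U_{i,\alpha}, X_{i,\alpha})\}$ refines $\mathcal{U}$ in the sense of Definition \ref{defn:refinement system}, via the set map $\mu: (i,\alpha) \mapsto i$ and the open immersions $X_{i,\alpha} \hookrightarrow X_i$, and each of its ambients is affine, hence separated. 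The ``associated by $g$'' relation composes with refinements in the following sense: if $\mathcal{U}'$ refines $\mathcal{U}$ via $\mu$ and $\mathcal{V}$ is associated to $\mathcal{U}'$ by $g$ via $\nu$, then $\mu \circ \nu$ exhibits $\mathcal{V}$ as associated to $\mathcal{U}$ by $g$ (one simply composes the given morphisms $X''_j \to X'_{\nu(j)} \to X_{\mu(\nu(j))}$ and verifies the restriction to $V_j$ yields $g|_{V_j}$). Hence it suffices to construct $\mathcal{V}$ associated to $\mathcal{U}'$ by $g$.

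Second, I would pass to $Y_1$: the open subschemes $g^{-1}(U_{i,\alpha}) \subset Y_1$ form an open cover of $Y_1$, and each may be further covered by affine opens of the form $\Spec A_j$ with $A_j$ a finitely generated $k$-algebra (since $Y_1 \in \Sch_k$ is of finite type). Using that $Y_1$ is quasi-compact, extract a finite subcover $\{V_j\}_{j \in \Lambda''}$ together with a set map $\lambda: \Lambda'' \to \mu^{-1}(\Lambda) $ such that $g(V_j) \subset U_{\lambda(j)}$ for every $j$. Third, for each $j$ pick a closed immersion $\iota_j: V_j = \Spec A_j \hookrightarrow \mathbb{A}^{N_j}_k$ and set $X'_j := \mathbb{A}^{N_j}_k \times_k X_{\lambda(j)}$, which is equidimensional and smooth over $k$. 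The morphism $(\iota_j, g|_{V_j}): V_j \to X'_j$ factors as
$$
V_j \xrightarrow{(\mathrm{id}_{V_j},\, g|_{V_j})} V_j \times_k X_{\lambda(j)} \xrightarrow{\iota_j \times \mathrm{id}} \mathbb{A}^{N_j}_k \times_k X_{\lambda(j)};
$$
the second arrow is a closed immersion by base change, and the first arrow is the graph of $g|_{V_j}$, a section of the first projection $V_j \times_k X_{\lambda(j)} \to V_j$, which is separated because $X_{\lambda(j)}$ is affine, so that the graph is also a closed immersion. The second projection $f_j := \mathrm{pr}_2: X'_j \to X_{\lambda(j)}$ restricts to $g|_{V_j}$ on $V_j$ by construction, so $\mathcal{V} := \{(V_j, X'_j)\}_{j \in \Lambda''}$ together with $\lambda$ and $\{f_j\}$ is a system of local embeddings for $Y_1$ associated to $\mathcal{U}'$ (and hence, composing with $\mu$, to $\mathcal{U}$) by $g$.

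The main obstacle I anticipate is precisely the separatedness issue in the third step: without separatedness of $X_{\lambda(j)}$, the graph-section of $g|_{V_j}$ into $V_j \times_k X_{\lambda(j)}$ fails to be a closed immersion in general, and one cannot package the chosen embedding $\iota_j$ together with the morphism $g|_{V_j}$ into a closed immersion of $V_j$ into a smooth $k$-scheme endowed with a morphism to $X_{\lambda(j)}$. The preliminary refinement to affine (hence separated) ambients in the first step circumvents this, and the only technical bookkeeping required is to verify that the refinement relation composes correctly with the notion of being associated via $g$, which is a direct unpacking of Definitions \ref{defn:refinement system} and \ref{defn:higher refinement}.
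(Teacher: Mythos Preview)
Your proof is correct and follows essentially the same strategy as the paper: take a quasi-affine cover of each $g^{-1}(U_i)$, embed each piece $V_j$ into some smooth $Z_j$, set $X'_j := Z_j \times_k X_{\lambda(j)}$, and use the second projection as the required morphism to $X_{\lambda(j)}$.

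The one difference is your preliminary refinement of $\mathcal{U}$ to affine ambients, which is unnecessary. The paper avoids it by factoring the embedding as
\[
V_j \xrightarrow{\;gr_{g|_{V_j}}\;} V_j \times_k U_{\lambda(j)} \hookrightarrow Z_j \times_k X_{\lambda(j)},
\]
using that $g(V_j)\subset U_{\lambda(j)}$. Since $U_{\lambda(j)}$ is quasi-affine, hence separated, the graph into $V_j\times_k U_{\lambda(j)}$ is already a closed immersion; the second arrow is a product of the closed immersions $V_j\hookrightarrow Z_j$ and $U_{\lambda(j)}\hookrightarrow X_{\lambda(j)}$. So no hypothesis on $X_{\lambda(j)}$ is needed, and your first step and the bookkeeping about composing refinements with the ``associated by $g$'' relation can be dropped.
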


\begin{proof}

For each $i \in \Lambda$, consider the open subset $g^{-1} (U_i) \subset Y_1$. It may not be quasi-affine, but we can find a finite quasi-affine open cover $\{ V_{ij} \}_{j \in \Lambda_i'}$ of $g^{-1} (U_i)$ for a finite set $\Lambda_i'$. Let $\Lambda':= \coprod_{ i \in \Lambda} \Lambda_i '$. Let $\lambda: \Lambda' \to \Lambda$ be the projection set map that sends any $j \in \Lambda_i'$ to $i$.

 For each $V_{ij}$, choose a closed immersion $\iota_{ij}: V_{ij} \hookrightarrow Z_{ij}$ into an equidimensional smooth $k$-scheme. Let $X_{ij}' := Z_{ij} \times X_i$, and consider the a closed immersion
 $$ 
 V_{ij} \overset{ gr_{g_{ij}}}{\hookrightarrow} V_{ij} \times U_i \hookrightarrow Z_{ij} \times  X_i = X_{ij}',
 $$
 where $g_{ij} := g|_{V_{ij}}$. Let $\mathcal{V}:= \{ ( V_{ij}, X'_{ij}) \}_{i,j}$.  This gives a system of local embeddings for $Y_1$.

 For the projection map $f_{ij}: X_{ij}' = Z_{ij} \times X_i \to X_i$, we have $f_{ij}|_{V_{ij}} = g_{ij} = g|_{V_{ij}}$. Thus $\mathcal{V}$ is associated to $\mathcal{U}$ by $g$, as desired.
\end{proof}
 
 \begin{lem}\label{lem:system transitive}
  Let $g_1: Y_1 \to Y_2$ and $g_2: Y_2 \to Y_3$ be morphisms in $\Sch_k$. Let $\mathcal{U}$ (resp. $\mathcal{V}, \mathcal{W}$) be a system of local embeddings for $Y_3$ (resp. $Y_2, Y_1$), such that $(\mathcal{U}, \mathcal{V})$ (resp. $(\mathcal{V}, \mathcal{W})$)  is a pair associated via $g_2$ (resp. $g_1$).

Then $(\mathcal{U}, \mathcal{W})$ is a pair associated via $g_2 \circ g_1$.
 \end{lem}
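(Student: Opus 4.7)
The plan is simply to unpack the definition of ``associated via $g$'' (Definition 6.1.1) and compose the data given for the two pairs.

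First I would name the systems and their associating data: write $\mathcal{U} = \{(U_i, X_i)\}_{i \in \Lambda}$, $\mathcal{V} = \{(V_j, X'_j)\}_{j \in \Lambda'}$, $\mathcal{W} = \{(W_k, X''_k)\}_{k \in \Lambda''}$. From $(\mathcal{U}, \mathcal{V})$ being associated via $g_2$, I get a set map $\mu: \Lambda' \to \Lambda$ together with, for each $j \in \Lambda'$, an inclusion $g_2(V_j) \subset U_{\mu(j)}$ and a morphism $f_j: X'_j \to X_{\mu(j)}$ restricting to $g_2|_{V_j}$ on $V_j$. Similarly, from $(\mathcal{V}, \mathcal{W})$ being associated via $g_1$, I get $\nu: \Lambda'' \to \Lambda'$ and, for each $k \in \Lambda''$, the inclusion $g_1(W_k) \subset V_{\nu(k)}$ together with a morphism $h_k : X''_k \to X'_{\nu(k)}$ with $h_k|_{W_k} = g_1|_{W_k}$.

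Next I would compose these data: set $\lambda := \mu \circ \nu : \Lambda'' \to \Lambda$, and for each $k \in \Lambda''$ set $\phi_k := f_{\nu(k)} \circ h_k : X''_k \to X_{\lambda(k)}$. The required inclusion follows by a one-line chase,
\[
(g_2 \circ g_1)(W_k) = g_2(g_1(W_k)) \subset g_2(V_{\nu(k)}) \subset U_{\mu(\nu(k))} = U_{\lambda(k)}.
\]
To check $\phi_k|_{W_k} = (g_2 \circ g_1)|_{W_k}$, I would note that on $W_k$ we have $h_k = g_1$, and $g_1(W_k) \subset V_{\nu(k)}$, on which $f_{\nu(k)}$ equals $g_2$; hence for $w \in W_k$, $\phi_k(w) = f_{\nu(k)}(h_k(w)) = f_{\nu(k)}(g_1(w)) = g_2(g_1(w))$, which is what we want.

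There is no genuine obstacle here: the statement is a formal transitivity, and the only thing to verify is that the composite morphisms $\phi_k$ still agree with $g_2 \circ g_1$ on $W_k$, which is immediate from the compatibility conditions that come with the two given associated pairs. No use is made of smoothness, equidimensionality, or the cycle-theoretic machinery; the proof lives entirely on the level of indexing set maps and scheme-theoretic restrictions.
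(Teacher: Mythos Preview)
Your proposal is correct and is precisely the routine verification the paper has in mind; the paper itself dismisses the proof as an ``Easy exercise'' without writing out any details. Your unpacking of Definition~\ref{defn:higher refinement} and composition of the set maps and scheme morphisms is exactly what is needed.
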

 
 \begin{proof}
 Easy exercise.
 \end{proof}
 
 The following is based on Theorem \ref{thm:pull-back moving}:
 
 \begin{thm}\label{thm:functoriality general}
 Let $g: Y_1 \to Y_2$ be a morphism in $\Sch_k$. 
 
 \begin{enumerate}
 \item Let $\mathcal{U}$ (resp. $\mathcal{V}$) be a system of local embeddings for $Y_2$ (resp. $ Y_1$), such that $(\mathcal{U}, \mathcal{V})$ is a pair associated via $g$.
 
 Then there exist associated morphisms in $\mathcal{D} ({\rm Ab} (Y_2))$:
 \begin{equation}\label{eqn:functoriality general 1}
 \tuborg 
g_{\mathcal{U}, \mathcal{V}}^*:  \BGHz^q (\mathcal{U}^{\infty}, \bullet) \to \mathbf{R}g_* \BGHz^q (\mathcal{V}^{\infty}, \bullet), \\
g_{\mathcal{U}, \mathcal{V}}^*:  \BGHz^q (\mathcal{U}, \bullet) \to \mathbf{R}g_* \BGHz^q (\mathcal{V}, \bullet).
\sluttuborg
\end{equation}
 
 \item The pull-backs of \eqref{eqn:functoriality general 1} are functorial in the following sense: let $g_1: Y_1 \to Y_2$ and $g_2: Y_2 \to Y_3$ be morphisms in $\Sch_k$. Let $\mathcal{U}$ (resp. $\mathcal{V}, \mathcal{W}$) be a system of local embeddings for $Y_3$ (resp. $Y_2, Y_1$), such that $(\mathcal{U}, \mathcal{V})$ (resp. $(\mathcal{V}, \mathcal{W})$)  is a pair associated via $g_2$ (resp. $g_1$).
 
 Then we have equalities of the pull-back morphisms in $\mathcal{D} ({\rm Ab} (Y_2))$:
 \begin{equation}\label{eqn:functoriality general 2}
\tuborg
(g_2 \circ g_1)_{\mathcal{U}, \mathcal{W}} ^* = (g_1 )_{\mathcal{V}, \mathcal{W}} ^* \circ  (g_2)_{\mathcal{U}, \mathcal{V}} ^* : \BGHz^q (\mathcal{U}^{\infty}, \bullet) \to \mathbf{R} (g_2 \circ g_1)_* \BGHz^q (\mathcal{W}^{\infty}, \bullet),\\
(g_2 \circ g_1)_{\mathcal{U}, \mathcal{W}} ^* = (g_1 )_{\mathcal{V}, \mathcal{W}} ^* \circ  (g_2)_{\mathcal{U}, \mathcal{V}} ^*: \BGHz^q (\mathcal{U}, \bullet) \to \mathbf{R} (g_2 \circ g_1)_* \BGHz^q (\mathcal{W}, \bullet),
\sluttuborg
\end{equation}
where the names of the above pull-backs follow the notations of \eqref{eqn:functoriality general 1}.

\item The morphism of \eqref{eqn:functoriality general 1} is functorial in the pair $(\mathcal{U}, \mathcal{V})$ in the following sense: for $g: Y_1 \to Y_2$, suppose $(\mathcal{U}, \mathcal{V})$ and $(\mathcal{U}', \mathcal{V}')$ are two pairs associated via $g$, such that $\mathcal{U}'$ (resp. $\mathcal{V}'$) is a refinement of $\mathcal{U}$ (resp. $\mathcal{V}$). 

Then we have the following commutative diagrams in $\mathcal{D} ({\rm Ab} (Y_2))$:
$$
\xymatrix{
\BGHz^q (\mathcal{U} ^{\infty}, \bullet) \ar[r] ^{ g_{\mathcal{U}, \mathcal{V}} ^* \ } \ar[d] ^{ ({\rm Id}_{Y_2})_{\mathcal{U}, \mathcal{U}'} ^*}  & \mathbf{R}g_* \BGHz^q (\mathcal{V}^{\infty}, \bullet) \ar[d] ^{ ({\rm Id}_{Y_1})_{\mathcal{V}, \mathcal{V}'}^*} \\
\BGHz^q ((\mathcal{U}')^{\infty}, \bullet) \ar[r] ^{g_{\mathcal{U}', \mathcal{V}'} ^* \ } &  \mathbf{R}g_*  \BGHz^q ((\mathcal{V}')^{\infty}, \bullet),
}
\hskip0.4cm
\xymatrix{
\BGHz^q (\mathcal{U}, \bullet) \ar[r] ^{ g_{\mathcal{U}, \mathcal{V}} ^* \ } \ar[d] ^{ ({\rm Id}_{Y_2})_{\mathcal{U}, \mathcal{U}'} ^*}  & \mathbf{R}g_* \BGHz^q (\mathcal{V}, \bullet) \ar[d] ^{ ({\rm Id}_{Y_1})_{\mathcal{V}, \mathcal{V}'}^*} \\
\BGHz^q (\mathcal{U}', \bullet) \ar[r] ^{g_{\mathcal{U}', \mathcal{V}'} ^* \ } &  \mathbf{R}g_*  \BGHz^q (\mathcal{V}', \bullet),
}
$$
where the names of the arrows follow the notational convention in \eqref{eqn:functoriality general 1}.
 \end{enumerate}
 \end{thm}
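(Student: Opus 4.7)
The overall strategy is to imitate, in the setting of a general morphism $g : Y_1 \to Y_2$ and an associated pair $(\mathcal{U}, \mathcal{V})$, the construction of $\lambda^{*}$ carried out in Lemma \ref{lem:induced map proj refine} for refinements, using Theorem \ref{thm:pull-back moving} as the essential input instead of just the flat pull-backs. The case $g = \mathrm{Id}_Y$ already recovers Lemma \ref{lem:induced map proj refine}, which serves as a template.

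For part (1), fix a set map $\lambda : \Lambda' \to \Lambda$ and morphisms $f_j : X_j' \to X_{\lambda(j)}$ witnessing that $(\mathcal{U}, \mathcal{V})$ is associated via $g$. For each $p \geq 0$ and each multi-index $J = (j_0, \dots, j_p) \in (\Lambda')^{p+1}$ set $\lambda(J) = (\lambda(j_0), \dots, \lambda(j_p)) \in \Lambda^{p+1}$. Taking products of the $f_{j_\ell}$ gives a morphism $X_J' \to X_{\lambda(J)}$ extending $g|_{V_J} : V_J \to U_{\lambda(J)}$, so we are in the situation of Theorem \ref{thm:pull-back moving}-(1) with ``$g = g|_{V_J}$'' and ``$f = f_{j_0} \times \cdots \times f_{j_p}$''. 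This produces morphisms in $\mathcal{D}^-(\mathrm{Ab}(Y_2))$
\[
\BGHz^q(\widehat{X}_{\lambda(J)}, \bullet) \longrightarrow \mathbf{R} g_{*} \BGHz^q(\widehat{X}'_J, \bullet)
\]
and their mod--$V_J$ analogues. Following the convention of \eqref{eqn:BT convention}, I define $g_{\mathcal{U},\mathcal{V}}^{*}$ on the $p$-th level by specifying its $J$-component to be the projection of $\check{\mathfrak{C}}^q(\mathcal{U}^\infty)^p$ (resp.\ $\check{\mathfrak{C}}^q(\mathcal{U})^p$) onto the $\lambda(J)$-factor, composed with the morphism above. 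Compatibility with the \v{C}ech boundaries $\delta^{p-1}$ is verified exactly as in Lemma \ref{lem:induced map proj refine}: for each pair $(J'_\ell, J)$ with $J'_\ell$ obtained from $J$ by deletion, the square built from the open immersion $V_J \hookrightarrow V_{J'_\ell}$, the projection $\widehat{X}'_J \to \widehat{X}'_{J'_\ell}$, and the corresponding maps upstairs, commutes, and the required identity reduces to Theorem \ref{thm:pull-back moving}-(3) applied to the pair of diagrams above and below. Passing to totals then yields \eqref{eqn:functoriality general 1} in $\mathcal{D}(\mathrm{Ab}(Y_2))$.

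For part (2), I work at the level of the \v{C}ech double complexes. By Lemma \ref{lem:system transitive}, $(\mathcal{U}, \mathcal{W})$ is associated via $g_2 \circ g_1$ with set map $\lambda_1 \circ \lambda_2$, where $\lambda_2 : \Lambda_{\mathcal{V}} \to \Lambda_{\mathcal{U}}$ and $\lambda_1 : \Lambda_{\mathcal{W}} \to \Lambda_{\mathcal{V}}$ are the given set maps, and the witnessing morphisms of smooth schemes are the composites. For each $K \in \Lambda_{\mathcal{W}}^{p+1}$ the diagram
\[
\xymatrix{
X''_K \ar[r] & X'_{\lambda_1(K)} \ar[r] & X_{\lambda_2(\lambda_1(K))} \\
W_K \ar@{^(->}[u] \ar[r] & V_{\lambda_1(K)} \ar@{^(->}[u] \ar[r] & U_{\lambda_2 \lambda_1(K)} \ar@{^(->}[u]
}
\]
fits into the hypotheses of Theorem \ref{thm:pull-back moving}-(2), whose conclusion gives termwise the desired equality; combining with the fact that $\mathbf{R}(g_2 \circ g_1)_{*} = \mathbf{R}(g_2)_{*} \circ \mathbf{R}(g_1)_{*}$ and taking totals yields \eqref{eqn:functoriality general 2}.

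Part (3) is analogous: given the additional refinement data $(\mathcal{U}', \mathcal{V}')$ of $(\mathcal{U}, \mathcal{V})$, each multi-index square appearing in the defining formulas for the four morphisms fits into a commutative diagram of the shape \eqref{eqn:pull-back moving 3-1}, and Theorem \ref{thm:pull-back moving}-(3) supplies the termwise commutativity. Summing over indices and totalling yields the two diagrams.

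The main technical nuisance throughout is not any one commuting square but rather the fact that \eqref{eqn:functoriality general 1} consists of morphisms only in a derived category: individual $J$-components are themselves zigzags coming from Theorem \ref{thm:sh intersection pb}, and keeping track of the functorial coherence of these zigzags, the $\mathbf{R} g_{*}$'s, and the \v{C}ech differentials simultaneously is where care is needed. This is handled uniformly by invoking Theorem \ref{thm:pull-back moving}-(2),(3) directly rather than re-deriving the zigzags, which keeps the argument formal and of the same shape as Lemmas \ref{lem:induced map proj refine} and \ref{lem:induced map up to homotopy}.
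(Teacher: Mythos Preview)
Your proposal is correct and follows essentially the same approach as the paper: both construct $g_{\mathcal{U},\mathcal{V}}^*$ by applying Theorem~\ref{thm:pull-back moving}-(1) to each associated pair $(\lambda(J),J)$ of multi-indices, assemble these into morphisms of the \v{C}ech double complexes $\check{\mathfrak{C}}^q(\mathcal{U}^\infty) \to \mathbf{R}g_* \check{\mathfrak{C}}^q(\mathcal{V}^\infty)$ (resp.\ the $\bmod$ versions), and take totals; parts (2) and (3) are then reduced termwise to Theorem~\ref{thm:pull-back moving}-(2),(3). Your treatment is in fact slightly more explicit than the paper's about checking compatibility with the \v{C}ech differentials and about the derived-category bookkeeping, but the underlying argument is the same.
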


\begin{proof} 

(1) We are given a morphism $g: Y_1 \to Y_2$ in $\Sch_k$ and a pair $(\mathcal{U}, \mathcal{V})$ of systems associated via $g$. Write $\mathcal{U}= \{ (U_i, X_i ) \}_{i \in \Lambda}$ and $\mathcal{V}= \{  (V_j, X_j ') \}_{j \in \Lambda'}$. Let $\lambda: \Lambda' \to \Lambda$ be the set function that gives the association via $g$.

For an integer $p \geq 0$, suppose $I \in \Lambda^{p+1}$, $J \in (\Lambda')^{p+1}$ such that $(I, J)$ is associated via $g$, i.e. $I = \lambda (J)$. Then we have a commutative diagram
$$
\xymatrix{
X_J ' \ar[r] ^{f_{I,J}} & X_I \\
V_J \ar[r] ^{g_{I,J}} \ar@{^{(}->}[u] & U_I. \ar@{^{(}->}[u]}
$$
Let $\widehat{X}_I$ (resp. $\widehat{X}_J'$) be the completion of $X_I$ (resp. $X_J'$) along $U_I$ (resp. $V_J$).
By Theorem \ref{thm:pull-back moving}-(1), we have morphisms in $\mathcal{D}^- ({\rm Ab} (U_I))$:
\begin{equation}\label{eqn:functoriality general 1-2}
\tuborg
g_{I,J} ^*: \BGHz^q (\widehat{X}_I, \bullet) \to \mathbf{R} (g_{I,J})_* \BGHz^q (\widehat{X}_J', \bullet),\\
g_{I,J} ^*: \BGHz^q (\widehat{X}_I \mod U_I, \bullet) \to \mathbf{R} (g_{I,J})_* \BGHz^q (\widehat{X}_J' \mod V_J, \bullet).
\sluttuborg
\end{equation}

Taking the right derived push-forwards to $Y_2$ via the open immersions $U_I \hookrightarrow Y_2$, and collecting all of \eqref{eqn:functoriality general 1-2} over all pairs $(I, J)$ associated via $g$ over all $p \geq 0$ as in the previous \v{C}ech construction, we obtain morphisms in $\mathcal{D} ({\rm Ab} (Y_2))$:
\begin{equation}\label{eqn:functoriality general 1-3}
\tuborg
g_{\mathcal{U}, \mathcal{V}} ^*: \check{\mathfrak{C}}^q (\mathcal{U}^{\infty}) \to \mathbf{R} g_* \check{\mathfrak{C}} ^q (\mathcal{V}^{\infty}), \\
g_{\mathcal{U}, \mathcal{V}} ^*: \check{\mathfrak{C}}^q (\mathcal{U}) \to \mathbf{R}  g_* \check{\mathfrak{C}} ^q (\mathcal{V}).
\sluttuborg
\end{equation}
Taking the total complexes, we finally get the morphisms in \eqref{eqn:functoriality general 1} as desired.

(2), (3): Once we know the existence of the morphisms in (1), the arguments are straightforward, similar to the arguments of (2), (3) of Theorem \ref{thm:pull-back moving}. We omit details. 
 \end{proof}

 \begin{lem}\label{lem:refinement ass}
 Let $g: Y_1 \to Y_2$ be a morphism in $\Sch_k$. Let $\mathcal{U}=\{ (U_i, X_i) \}_{i \in \Lambda}$ be a given system of local embeddings for $Y_2$. Let $\mathcal{V} = \{ (V_j, X_j')\}_{j \in \Lambda'}$ be a system for $Y_1$. 
 
 Then there exists a system $\mathcal{V}'$ of local embeddings for $Y_1$, which is a refinement of $\mathcal{V}$, and associated to $\mathcal{U}$ via $g$.
 \end{lem}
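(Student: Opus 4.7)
The plan is to build $\mathcal{V}'$ by simultaneously refining the cover $\{V_j\}$ of $Y_1$ along the preimage cover $\{g^{-1}(U_i)\}$ and enlarging each ambient smooth scheme by taking a product with $X_i$; the refinement map towards $\mathcal{V}$ will be induced by a projection onto $X_j'$, while the association map towards $\mathcal{U}$ via $g$ will be induced by a projection onto $X_i$. This is morally a combination of the ``common refinement'' trick of Lemma \ref{lem:common refinement} and the construction of Lemma \ref{lem:system ass functor}.

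First, for each $(i,j) \in \Lambda \times \Lambda'$, form the open subset $V_{ij} := V_j \cap g^{-1}(U_i) \subset Y_1$. Since $\{V_j\}$ covers $Y_1$ and $\{g^{-1}(U_i)\}_i$ covers $Y_1$, the collection $\{V_{ij}\}_{(i,j)}$ covers $Y_1$. Each $V_{ij}$ may fail to be quasi-affine, so choose for each a finite quasi-affine open cover $\{V_{ijk}\}_{k \in \Lambda''_{ij}}$, and let $\Lambda'' := \coprod_{i,j} \Lambda''_{ij}$, so that $\{V_{ijk}\}_{(i,j,k)\in\Lambda''}$ is a finite quasi-affine open cover of $Y_1$.

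Next, for each triple $(i,j,k)$, since $V_{ijk} \subset V_j$ is open and $V_j \hookrightarrow X_j'$ is a closed immersion, there is an open subscheme $Z_{ijk} \subset X_j'$ with $V_{ijk} \hookrightarrow Z_{ijk}$ a closed immersion. Define the ambient smooth $k$-scheme $X'_{ijk} := Z_{ijk} \times_k X_i$. Using the graph of the restriction $g_{ijk} := g|_{V_{ijk}}\colon V_{ijk} \to U_i$ (which is a closed immersion since $U_i$, being quasi-affine, is separated), compose
\[
V_{ijk} \xhookrightarrow{\mathrm{gr}_{g_{ijk}}} V_{ijk} \times_k U_i \hookrightarrow Z_{ijk} \times_k X_i = X'_{ijk}
\]
to obtain a closed immersion into an equidimensional smooth $k$-scheme. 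Set $\mathcal{V}' := \{(V_{ijk},\, X'_{ijk})\}_{(i,j,k)\in\Lambda''}$.

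Finally, verify the two structural properties. For the refinement of $\mathcal{V}$, take $\lambda_1 \colon \Lambda'' \to \Lambda'$, $(i,j,k) \mapsto j$. Then $V_{ijk} \subset V_j$, and the morphism $X'_{ijk} = Z_{ijk} \times_k X_i \to Z_{ijk} \hookrightarrow X_j'$ (projection followed by the open immersion) restricts on $V_{ijk}$ to the inclusion $V_{ijk} \subset V_j$, as required by Definition \ref{defn:refinement system}. For the association to $\mathcal{U}$ via $g$, take $\lambda_2 \colon \Lambda'' \to \Lambda$, $(i,j,k) \mapsto i$. Then $g(V_{ijk}) \subset U_i$ by construction, and the projection $f_{ijk} \colon X'_{ijk} = Z_{ijk} \times_k X_i \to X_i$ restricts on $V_{ijk}$ to $g|_{V_{ijk}}$, satisfying Definition \ref{defn:higher refinement}-(1). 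The only subtle point is the closedness of the graph $\mathrm{gr}_{g_{ijk}}$, which is handled by separatedness of $U_i$; everything else is a routine bookkeeping of the two projections out of the product $Z_{ijk} \times_k X_i$.
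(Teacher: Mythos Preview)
Your proof is correct and follows essentially the same approach as the paper: the paper invokes Lemma~\ref{lem:system ass functor} to produce some $\mathcal{W}$ associated to $\mathcal{U}$ via $g$, then Lemma~\ref{lem:common refinement} to obtain a common refinement of $\mathcal{V}$ and $\mathcal{W}$ (implicitly using that a refinement of an associated system is again associated), whereas you unfold these two steps into a single explicit construction. The resulting ambient schemes differ slightly---the paper's would carry an extra smooth factor from the auxiliary embeddings chosen in Lemma~\ref{lem:system ass functor}---but this is immaterial.
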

 \begin{proof}
 
 We can first choose any system $\mathcal{W}$ of local embeddings for $Y_1$ that is associated to $\mathcal{U}$ via $g$, using Lemma \ref{lem:system ass functor}.
 
 If this is a refinement of $\mathcal{V}$, then with $\mathcal{V}' = \mathcal{W}$, we are done.
 
 If not, then we can find a common refinement $\mathcal{V}'$ of both $\mathcal{V}$ and $\mathcal{W}$ by Lemma \ref{lem:common refinement}. This answers the lemma.
 \end{proof}

Let $g: Y_1 \to Y_2$ be a morphism in $\Sch_k$. Let $(\mathcal{U}, \mathcal{V})$ be a pair of systems associated via $g$. If $\mathcal{V}'$ is a refinement of $\mathcal{V}$, one notes that $(\mathcal{U}, \mathcal{V}')$ is also a pair associated via $g$. Hence for the maps $g_{\mathcal{U}, \mathcal{V}}^*$, we may take the homotopy colimit 
$$g_{\mathcal{U}, \cdot}^*:= \underset{\mathcal{V}}{\hocolim} \ g_{\mathcal{U}, \mathcal{V}}^*.$$

On the other hand, for each refinement system $\mathcal{U}'$ of $\mathcal{U}$ given by a set map $\lambda: \Lambda' \to \Lambda$ of the index sets of the systems, by Lemma \ref{lem:refinement ass} and Theorem \ref{thm:functoriality general}, we have the commutative diagram in $\mathcal{D} ({\rm Ab} (Y_2))$
$$
\xymatrix{ 
\BGHz^q (\mathcal{U}^{\infty}, \bullet) \ar[r] ^{ g_{\mathcal{U}, \cdot} ^*} \ar[d] ^{\lambda^*} & \mathbf{R} g_* \BGHz^q (Y_1^{\infty}, \bullet) \\
\BGHz^q ({\mathcal{U}'} ^{\infty}, \bullet), \ar[ru] ^{ g_{\mathcal{U}', \cdot} ^*} & }
\ \ \ \ 
\xymatrix{ 
\BGHz^q (\mathcal{U}, \bullet) \ar[r] ^{ g_{\mathcal{U}, \cdot} ^*} \ar[d] ^{\lambda^*} & \mathbf{R} g_* \BGHz^q (Y_1, \bullet) \\
\BGHz^q (\mathcal{U}', \bullet). \ar[ru] ^{ g_{\mathcal{U}', \cdot} ^*} & }
$$
Thus we can take the homotopy colimits over $\mathcal{U}$ as well. This gives:

\begin{defn}\label{defn:general pull-back}
Let $g: Y_1 \to Y_2$ be a morphism in $\Sch_k$. 

We have the induced morphisms in $\mathcal{D} ({\rm Ab}(Y_2))$:
$$
\tuborg
g^*:= \underset{\mathcal{U}}{\hocolim} \  \underset{\mathcal{V}}{\hocolim}  \ g_{\mathcal{U}, \mathcal{V}} ^*: \BGHz^q (Y_2^{\infty}, \bullet) \to \mathbf{R} g_*  \BGHz^q (Y_1 ^{\infty}, \bullet), \\
g^*:= \underset{\mathcal{U}}{\hocolim}  \ \underset{\mathcal{V}}{\hocolim}  \ g_{\mathcal{U}, \mathcal{V}} ^*: \BGHz^q (Y_2, \bullet) \to \mathbf{R} g_*  \BGHz^q (Y_1 , \bullet).
\sluttuborg
$$
We will call them \emph{the pull-backs by $g$ on the yeni higher Chow sheaves}.
\qed
\end{defn}

\begin{cor}\label{cor:functoriality general}
 If $g_1: Y_1 \to Y_2$ and $g_2: Y_2 \to Y_3$ are morphisms, then we have the equalities of the morphisms in $\mathcal{D} ({\rm Ab} (Y_3))$:
$$
\tuborg
(g_2 \circ g_1)^* = g_1^* \circ g_2 ^*: \BGHz^q (Y_3 ^{\infty}, \bullet) \to \mathbf{R} (g_2\circ g_1)_* \BGHz^q (Y_1 ^{\infty}, \bullet), \\
(g_2 \circ g_1)^* = g_1^* \circ g_2 ^*: \BGHz^q (Y_3 , \bullet) \to \mathbf{R} (g_2\circ g_1)_* \BGHz^q (Y_1 , \bullet).
\sluttuborg
$$
\end{cor}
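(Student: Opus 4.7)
The plan is to reduce the composition identity to the analogous fixed-system statement in Theorem \ref{thm:functoriality general}-(2), and then to pass to the homotopy colimits defining $g^*$ in Definition \ref{defn:general pull-back} by a cofinality argument. Given morphisms $g_1: Y_1 \to Y_2$ and $g_2: Y_2 \to Y_3$, let $\mathcal{T}$ be the category whose objects are triples $(\mathcal{U}, \mathcal{V}, \mathcal{W})$ of systems of local embeddings for $Y_3, Y_2, Y_1$ such that $(\mathcal{U}, \mathcal{V})$ is associated via $g_2$ and $(\mathcal{V}, \mathcal{W})$ is associated via $g_1$, with morphisms given by compatible refinements in each slot. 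Let $\mathcal{C}$ be the analogous category of pairs $(\mathcal{U}, \mathcal{W}')$ associated via $g_2 \circ g_1$. By Lemma \ref{lem:system transitive}, forgetting $\mathcal{V}$ gives a functor $\Phi: \mathcal{T} \to \mathcal{C}$.

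The first substantive step is to show that $\Phi$ is cofinal. Given $(\mathcal{U}, \mathcal{W}') \in \mathcal{C}$, first apply Lemma \ref{lem:system ass functor} to produce a system $\mathcal{V}$ for $Y_2$ associated to $\mathcal{U}$ via $g_2$. Then apply Lemma \ref{lem:refinement ass} to $g_1 : Y_1 \to Y_2$, the target system $\mathcal{V}$, and the source system $\mathcal{W}'$, to obtain a system $\mathcal{W}$ for $Y_1$ that refines $\mathcal{W}'$ and is associated to $\mathcal{V}$ via $g_1$. The triple $(\mathcal{U}, \mathcal{V}, \mathcal{W})$ lies in $\mathcal{T}$ and $\Phi(\mathcal{U}, \mathcal{V}, \mathcal{W}) = (\mathcal{U}, \mathcal{W})$ refines $(\mathcal{U}, \mathcal{W}')$; any two such choices admit a common refinement by iterating Lemmas \ref{lem:common refinement} and \ref{lem:refinement ass}, so the comma category under each object of $\mathcal{C}$ is nonempty and connected, giving cofinality.

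Next, for each triple $(\mathcal{U}, \mathcal{V}, \mathcal{W}) \in \mathcal{T}$, Theorem \ref{thm:functoriality general}-(2) supplies the equality of morphisms in $\mathcal{D}({\rm Ab}(Y_3))$
$$
(g_2 \circ g_1)^*_{\mathcal{U}, \mathcal{W}} \;=\; \mathbf{R}(g_2)_*\bigl((g_1)^*_{\mathcal{V}, \mathcal{W}}\bigr) \circ (g_2)^*_{\mathcal{U}, \mathcal{V}},
$$
and Theorem \ref{thm:functoriality general}-(3) ensures that these fixed-system identities commute with refinements in each of the three slots. Taking $\hocolim$ over $\mathcal{T}$ of both sides, the left-hand side becomes $(g_2 \circ g_1)^*$ by the cofinality of $\Phi$ together with Definition \ref{defn:general pull-back}, while the right-hand side can be reorganized by a Fubini argument for homotopy colimits: performing the colimit first over $\mathcal{W}$ for fixed $(\mathcal{U}, \mathcal{V})$ yields $\mathbf{R}(g_2)_* g_1^*$ precomposed with $(g_2)^*_{\mathcal{U}, \mathcal{V}}$, and then taking $\hocolim$ over $(\mathcal{U}, \mathcal{V})$ yields $g_1^* \circ g_2^*$ by Definition \ref{defn:general pull-back} applied to each of $g_1$ and $g_2$.

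The main obstacle is the last Fubini-type reorganization of the iterated homotopy colimit: one must check that the separate hocolims appearing in the definitions of $g_1^*$ and $g_2^*$ can be combined into the single hocolim over $\mathcal{T}$ coherently at the level of $\mathcal{D}({\rm Ab}(Y_3))$. The compatibility statement in Theorem \ref{thm:functoriality general}-(3), applied both to the refinements of the pair $(\mathcal{U}, \mathcal{V})$ on the $Y_2$ side and to the refinements of $\mathcal{W}$ on the $Y_1$ side, is exactly what makes the bi-indexed diagram commute up to the required coherence, so that the iterated hocolim of Definition \ref{defn:general pull-back} genuinely agrees with the hocolim over the triple category $\mathcal{T}$.
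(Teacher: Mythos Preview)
Your argument is correct and is essentially the only reasonable way to make this statement precise. The paper itself gives no proof of this corollary: it is stated immediately after Definition~\ref{defn:general pull-back} and is meant to be read as a direct consequence of Theorem~\ref{thm:functoriality general}-(2),(3) together with the definition of $g^*$ as the iterated homotopy colimit $\underset{\mathcal{U}}{\hocolim}\,\underset{\mathcal{V}}{\hocolim}\, g^*_{\mathcal{U},\mathcal{V}}$.

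What you have done is spell out the two points the paper leaves implicit: first, that the indexing category of triples $(\mathcal{U},\mathcal{V},\mathcal{W})$ is cofinal in the category of pairs $(\mathcal{U},\mathcal{W}')$ associated via $g_2\circ g_1$ (your use of Lemmas~\ref{lem:system ass functor}, \ref{lem:refinement ass}, \ref{lem:common refinement}, and \ref{lem:system transitive} is exactly right here); and second, that the iterated hocolims in Definition~\ref{defn:general pull-back} can be reorganized via a Fubini-type argument, which is justified by the compatibility diagrams of Theorem~\ref{thm:functoriality general}-(3). Both are genuine points that deserve to be checked, and your identification of the Fubini step as the main obstacle is accurate. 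There is no alternative route here---the paper's silence simply reflects that these manipulations are standard once the fixed-system functoriality of Theorem~\ref{thm:functoriality general} is in hand.
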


\subsection{The product structure}\label{sec:product}

One important application of the functoriality in Definition \ref{defn:general pull-back} and Corollary \ref{cor:functoriality general} is the product structure. We first discuss the concatenation product for a pair of closed immersions $Y_i \hookrightarrow X_i$ for $i=1,2$, consisting of quasi-affine $k$-schemes $Y_i$ of finite type into equidimensional smooth $k$-schemes $X_i$. Here, we implicitly use the fact that for topological rings $A$, the restricted formal power series have canonical isomorphisms (see EGA I \cite[Ch. 0, (7.5.2), p.70]{EGA1})
$$
(A \{ y_1, \cdots, y_r\}) \{ y_{r+1}, \cdots, y_s\} \simeq A \{ y_1, \cdots, y_s \},
$$
or for two topological $k$-algebras $A_1, A_2$, 
$$
(A_1 \{ y_1, \cdots, y_r \}) \widehat{\otimes}_k (A_2 \{ y_{r+1}, \cdots, y_s\}) \simeq (A_1 \widehat{\otimes}_k A_2) \{ y_1, \cdots, y_s\}.
$$

\begin{lem}\label{lem:concatenation}
For $i=1,2$, let $Y_i$ be quasi-affine $k$-scheme of finite type, and let $Y_i \hookrightarrow X_i$ be a closed immersion into an equidimensional smooth $k$-scheme. Let $\widehat{X}_i$ be the completion of $X_i$ along $Y_i$. Let $\widehat{X_1 \times X_2}$ be the completion of $X_1\times X_2$ along $Y_1 \times Y_2$. 
Then:

\begin{enumerate}
\item For any pairs $(q_1, n_1)$ and $(q_2, n_2)$ of nonnegative integers, there exist natural concatenation products
\begin{equation}\label{eqn:concat2}
\tuborg 
\boxtimes: z^{q_1} (\widehat{X}_1 , n_1) \otimes z^{q_2} (\widehat{X}_2, n_2) \to z^{q_1 + z_2} (\widehat{X_1 \times X_2} , n_1+ n_2),\\
\boxtimes: z^{q_1} (\widehat{X}_1 \mod Y_1, n_1) \otimes z^{q_2} (\widehat{X}_2 \mod Y_2, n_2)\\
 \to z^{q_1 + z_2} (\widehat{X_1 \times X_2} \mod Y_1 \times Y_2, n_1+ n_2).
 \sluttuborg
 \end{equation}

They are associative.

\item The boundary operators $\partial$ satisfy the Leibniz rule with respect to $\boxtimes$, i.e.
$$
\partial (\mathfrak{Z}_1 \boxtimes \mathfrak{Z}_2) = (\partial \mathfrak{Z}_1) \boxtimes \mathfrak{Z}_2 + (-1)^{n_1} \mathfrak{Z}_1 \boxtimes (\partial \mathfrak{Z}_2),
$$
 and they induce the concatenation homomorphisms
\begin{equation}\label{eqn:concat2-1}
\tuborg
\boxtimes: \CH^{q_1} (\widehat{X}_1, n_1)  \otimes _{\mathbb{Z}} \CH^{q_2} (\widehat{X}_2, n_2) \to \CH^{q_1+q_2} (\widehat{X_1 \times X_2}, n_1 + n_2), \\
\boxtimes: \CH^{q_1} (\widehat{X}_1 \mod Y_1, n_1)  \otimes _{\mathbb{Z}} \CH^{q_2} (\widehat{X}_2 \mod Y_2, n_2) \\
\hskip3cm \to \CH^{q_1+q_2} (\widehat{X_1 \times X_2} \mod Y_1 \times Y_2, n_1 + n_2).
\sluttuborg
\end{equation}
\end{enumerate}
\end{lem}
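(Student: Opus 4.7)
The plan is to construct the concatenation product at the level of cycles on formal schemes, then show it respects the mod $Y$-equivalence, and finally deduce the Leibniz rule from a direct calculation of the boundary.

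\textbf{Step 1: External product of cycles.} I would start by defining, for integral cycles $\mathfrak{Z}_i \subset \widehat{X}_i \times_k \square_k^{n_i}$ in $z^{q_i}(\widehat{X}_i, n_i)$, the cycle $\mathfrak{Z}_1 \boxtimes \mathfrak{Z}_2$ on $(\widehat{X}_1 \times_k \widehat{X}_2) \times_k \square_k^{n_1+n_2}$ by taking the product and permuting the coordinate factors to place the two cube factors adjacent. Using Lemma \ref{lem:prod completion}, $\widehat{X}_1 \times_k \widehat{X}_2 \simeq \widehat{X_1 \times X_2}$ as formal schemes. The associated cycle is a formal sum of integral closed formal subschemes of the correct codimension $q_1+q_2$, since the components of a product of integral formal subschemes are obtained from the components of their base change and have the expected dimension over a field. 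To verify that $\mathfrak{Z}_1 \boxtimes \mathfrak{Z}_2 \in z^{q_1+q_2}(\widehat{X_1 \times X_2}, n_1+n_2)$, one writes an arbitrary face of $\square_k^{n_1+n_2}$ (after the permutation) as $F_1 \times F_2$ for faces $F_i \subset \square_k^{n_i}$, and uses the identity
\[
(\mathfrak{Z}_1 \boxtimes \mathfrak{Z}_2) \cap ((\widehat{X}_1 \times \widehat{X}_2) \times F_1 \times F_2) = (\mathfrak{Z}_1 \cap (\widehat{X}_1 \times F_1)) \boxtimes (\mathfrak{Z}_2 \cap (\widehat{X}_2 \times F_2)),
\]
to reduce (\textbf{GP}) to the (\textbf{GP}) condition for each factor. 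The condition (\textbf{SF}) follows from the same identity after replacing $\widehat{X}_i$ by $\widehat{X}_{i,\red}$ and observing that $(\widehat{X_1 \times X_2})_{\red}$ pulls back via the projections to $\widehat{X}_{i,\red}$. Bilinearity extends $\boxtimes$ to general cycles, and associativity is immediate.

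\textbf{Step 2: Compatibility with mod $Y$-equivalence.} To produce the concatenation on $z^{q_1}(\widehat{X}_1 \mod Y_1, n_1) \otimes z^{q_2}(\widehat{X}_2 \mod Y_2, n_2)$, I must show $\boxtimes$ carries the relations $\mathcal{M}^{q_1}(\widehat{X}_1, Y_1, n_1)$ and $\mathcal{M}^{q_2}(\widehat{X}_2, Y_2, n_2)$ into $\mathcal{M}^{q_1+q_2}(\widehat{X_1 \times X_2}, Y_1 \times Y_2, n_1+n_2)$. Given a pair $(\mathcal{A}_1, \mathcal{A}_2) \in \mathcal{L}^{q_1}(\widehat{X}_1, Y_1, n_1)$ and $\mathcal{B} \in \mathcal{R}^{q_2}(\widehat{X}_2, n_2)$, let $pr_i$ denote the projection from $\square_{\widehat{X_1 \times X_2}}^{n_1+n_2}$ (after permutation) to $\square_{\widehat{X}_i}^{n_i}$. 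The external product of the coherent algebras is the derived tensor
\[
\mathcal{A}_j \boxtimes \mathcal{B} := pr_1^* \mathcal{A}_j \otimes_{\mathcal{O}}^{\mathbf{L}} pr_2^* \mathcal{B},
\]
and by Lemma \ref{lem:sheaf proper int} together with Step 1 its associated cycle equals $[\mathcal{A}_j] \boxtimes [\mathcal{B}]$. Applying the base change along $\square_{Y_1 \times Y_2}^{n_1+n_2} \hookrightarrow \square_{\widehat{X_1 \times X_2}}^{n_1+n_2}$, the defining isomorphism $\mathbf{L}(\iota_{Y_1})^* \mathcal{A}_1 \simeq \mathbf{L}(\iota_{Y_1})^* \mathcal{A}_2$ in $\sAlg(\mathcal{O}_{\square_{Y_1}^{n_1}})$ pulls back through $pr_1^*$ and tensors with $pr_2^* \mathcal{B} \otimes_{\mathcal{O}}^{\mathbf{L}} \mathcal{O}_{\square_{Y_1 \times Y_2}^{n_1+n_2}}$ to yield an isomorphism in $\sAlg(\mathcal{O}_{\square_{Y_1 \times Y_2}^{n_1+n_2}})$, exactly as in the proof of Lemma \ref{lem:fpb}. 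Hence $(\mathcal{A}_1 \boxtimes \mathcal{B}, \mathcal{A}_2 \boxtimes \mathcal{B}) \in \mathcal{L}^{q_1+q_2}(\widehat{X_1 \times X_2}, Y_1 \times Y_2, n_1+n_2)$. Symmetry handles the other slot, and Lemma \ref{lem:modulus simple generator} reduces all modulus relations to such simple pairs.

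\textbf{Step 3: Leibniz rule and induced map on homology.} The boundary $\partial$ on the concatenation is computed using the cubical formalism: the codimension-one faces of $\square^{n_1+n_2}$ split into those coming from $\square^{n_1}$ (the first $n_1$ coordinates) and those from $\square^{n_2}$ (the last $n_2$ coordinates). The sign convention $(-1)^{n_1}$ on the second block appears from the permutation of the cube coordinates. Using $\partial_i^{\epsilon}(\mathfrak{Z}_1 \boxtimes \mathfrak{Z}_2) = (\partial_i^{\epsilon} \mathfrak{Z}_1) \boxtimes \mathfrak{Z}_2$ for $i \leq n_1$ and the analogous identity for the other block yields the Leibniz rule immediately, and this descends to the mod $Y$-quotients by Step 2. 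Passing to the homology of the total cubical complexes then produces the maps in \eqref{eqn:concat2-1}.

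\textbf{Main obstacle.} The verification of (\textbf{SF}) in Step 1 and the compatibility in Step 2 are the most delicate. For (\textbf{SF}), one must check that the reduction $(\widehat{X_1 \times X_2})_{\red}$ interacts with products of cycles compatibly; this is where I would use Lemma \ref{lem:prod completion} at the level of the largest ideals of definition to factor through $\widehat{X}_{1,\red} \times \widehat{X}_{2,\red}$, which contains the reduction of the product. For Step 2, the subtlety lies in ensuring the derived isomorphism in $\sAlg$ persists after pulling back through $pr_1^*$ and deriving with $pr_2^*\mathcal{B}$; this requires flatness of the projections (types (III) and (IV) of Lemma \ref{lem:pre flat pb0}) and the projection-formula style argument already used in Proposition \ref{prop:pushforward}.
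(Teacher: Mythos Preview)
Your proposal is correct and follows essentially the same approach as the paper: define $\boxtimes$ via the product of closed formal subschemes (equivalently, $\mathcal{O}_{\mathfrak{Z}_1}\otimes_k\mathcal{O}_{\mathfrak{Z}_2}$), verify (\textbf{GP}) and (\textbf{SF}) by factoring faces as $F_1\times F_2$, check compatibility with the mod $Y$-equivalence by tensoring the given isomorphism in $\sAlg$ with the second factor, and compute the Leibniz rule directly. The paper's treatment of Step~2 is slightly more economical in that it works with an integral cycle $\mathfrak{Z}'$ and its structure sheaf $\mathcal{O}_{\mathfrak{Z}'}$ rather than a general $\mathcal{B}\in\mathcal{R}^{q_2}$, and phrases the tensor manipulation directly over $k$ (using $\mathcal{O}_{\square_{\widehat{X}_1\times\widehat{X}_2}^{n_1+n_2}}\simeq\mathcal{O}_{\square_{\widehat{X}_1}^{n_1}}\otimes_k\mathcal{O}_{\square_{\widehat{X}_2}^{n_2}}$) instead of via $pr_i^*$, but this is only a cosmetic difference.
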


\begin{proof}
We implicitly use the identification
$$
 \tau_{n_1, n_2}: (\widehat{X}_1 \times_k \square_k ^{n_1}) \times_k (\widehat{X}_2 \times_k \square_k ^{n_2}) \simeq \widehat{X}_1 \times_k \widehat{X}_2 \times_k \square_k ^{n_1+ n_2},
 $$
given by $(x, t_1, \cdots, t_{n_1}) \times ( x', t_1', \cdots, t_{n_2} ') \mapsto (x, x', t_1, \cdots, t_{n_1}, t_1', \cdots, t_{n_2}').$ 

\medskip

(1) The concatenation homomorphism
\begin{equation}\label{eqn:concat1}
\boxtimes: z^{q_1} (\widehat{X}_1, n_1) \otimes z^{q_2} (\widehat{X}_2, n_2) \to z ^{q_1 + q_2} (\widehat{X}_1 \times \widehat{X}_2, n_1 + n_2)
\end{equation}
is given by sending the pair $(\mathfrak{Z}_1, \mathfrak{Z}_2)$ of integral cycles to the cycle associated to $\mathfrak{Z}_1\times_k \mathfrak{Z}_2$, i.e. the cycle associated to the sheaf $\mathcal{O}_{\mathfrak{Z}_1} \otimes_k \mathcal{O}_{\mathfrak{Z}_2}$, and extending $\mathbb{Z}$-bilinearly. Since $k$ is a field, we have $\otimes_k = \otimes_k ^{\mathbf{L}}$. One checks that $\mathfrak{Z}_1 \boxtimes \mathfrak{Z}_2$ satisfies the conditions (\textbf{GP}), (\textbf{SF}) of Definition \ref{defn:HCG} from the given conditions (\textbf{GP}), (\textbf{SF}) of $\mathfrak{Z}_1$ and $\mathfrak{Z}_2$. We omit details.

\medskip

 Note that the completion of $X_1 \times X_2$ along $Y_1 \times Y_2$ is equal to the fiber product $\widehat{X}_1 \times \widehat{X}_2$ (Lemma \ref{lem:prod completion}), so that $z^{q_1+ q_2} (\widehat{X}_1 \times \widehat{X}_2, n_1 + n_2) = z^{q_1 + q_2} (\widehat{X_1 \times X_2}, n_1 + n_2)$. Thus we have the first one in \eqref{eqn:concat2}.
 
 \medskip

To show that \eqref{eqn:concat1} descends to the second product in \eqref{eqn:concat2} modulo the mod equivalences,
one needs to check that both $\mathcal{M} ^{q_1} (\widehat{X}_1, Y_1, n_1)\boxtimes z^{q_2} (\widehat{X}_2, n_2)$ and $z^{q_1} (\widehat{X}_1, n_1) \boxtimes \mathcal{M} ^{q_2} (\widehat{X}_2, Y_2, n_2)$ belong to $\mathcal{M} ^{q_1 + q_2} (\widehat{X_1 \times X_2}, Y_1 \times Y_2, n_1 + n_2)$. We do it just for the first group, as the argument for the second group is identical by symmetry.

\medskip

Let $(\mathcal{A}_1, \mathcal{A}_2) \in \mathcal{L}^{q_1} (\widehat{X}_1, Y_1, n_1)$, so that we have an isomorphism 
\begin{equation}\label{eqn:concat3}
 \mathcal{A}_1 \otimes _{\mathcal{O}_{\square_{\widehat{X}_1} ^{n_1}}} ^{\mathbf{L}} \mathcal{O}_{\square_{Y_1} ^{n_1}} \simeq \mathcal{A}_2 \otimes _{\mathcal{O}_{\square_{\widehat{X}_1} ^{n_1}}} ^{\mathbf{L}} \mathcal{O}_{\square_{Y_1} ^{n_1}}
\end{equation}
 in $\sAlg (\mathcal{O}_{\square_{Y_1}^{n_1}})$. Since $\mathcal{M}^{q_1} (\widehat{X}_1, Y_1, n_1) \boxtimes z^{q_2} (\widehat{X}_2, n_2)$ is generated by cycles of the form $[\mathcal{A}_1] \boxtimes \mathfrak{Z}' - [ \mathcal{A}_2] \boxtimes \mathfrak{Z}'$ where $\mathfrak{Z}' \in z^{q_2} (\widehat{X}_2, n_2)$ is integral, it remains to prove that we have an isomorphism
\begin{equation}\label{eqn:concat4}
(\mathcal{A}_1 \otimes_k \mathcal{O}_{\mathfrak{Z}'}) \otimes_{ \mathcal{O}_{\square_{\widehat{X}_1\times \widehat{X}_2} ^{n_1+n_2}}} ^{\mathbf{L}} \mathcal{O}_{\square_{Y_1\times Y_2 } ^{n_1+n_2}}\simeq 
(\mathcal{A}_2 \otimes_k \mathcal{O}_{\mathfrak{Z}'}) \otimes_{ \mathcal{O}_{\square_{\widehat{X}_1\times \widehat{X}_2} ^{n_1+n_2}}} ^{\mathbf{L}} \mathcal{O}_{\square_{Y_1\times Y_2 } ^{n_1+n_2}}
\end{equation}
in $\sAlg (\mathcal{O}_{\square_{Y_1\times Y_2} ^{n_1+n_2}}).$

Applying $(-)  \otimes_k (\mathcal{O}_{\mathfrak{Z}'} \otimes_{\mathcal{O}_{\square_{\widehat{X}_2} ^{n_2}}} ^{\mathbf{L}} \mathcal{O}_{\square_{Y_2}^{n_2}})$ to \eqref{eqn:concat3}, we have an isomorphism
\begin{equation}\label{eqn:concat5}
( \mathcal{A}_1 \otimes _{\mathcal{O}_{\square_{\widehat{X}_1} ^{n_1}}} ^{\mathbf{L}} \mathcal{O}_{\square_{Y_1} ^{n_1}} ) \otimes_k (\mathcal{O}_{\mathfrak{Z}'} \otimes_{\mathcal{O}_{\square_{\widehat{X}_2} ^{n_2}}} ^{\mathbf{L}} \mathcal{O}_{\square_{Y_2}^{n_2}})
\end{equation}
$$\simeq ( \mathcal{A}_2 \otimes _{\mathcal{O}_{\square_{\widehat{X}_1} ^{n_1}}} ^{\mathbf{L}} \mathcal{O}_{\square_{Y_1} ^{n_1}} ) \otimes_k (\mathcal{O}_{\mathfrak{Z}'} \otimes_{\mathcal{O}_{\square_{\widehat{X}_2} ^{n_2}}} ^{\mathbf{L}} \mathcal{O}_{\square_{Y_2}^{n_2}})
$$
in $\sAlg (\mathcal{O}_{\square_{Y_1 \times Y_2} ^{n_1+ n_2}})$. Here, $\mathcal{O}_{\square_{\widehat{X}_1\times \widehat{X}_2} ^{n_1+n_2}} \simeq \mathcal{O}_{\square_{\widehat{X}_1} ^{n_1}} \otimes_k \mathcal{O}_{\square_{\widehat{X}_2} ^{n_2}}$ and $\mathcal{O}_{\square_{Y_1 \times Y_2} ^{n_1+n_2}} \simeq \mathcal{O}_{\square_{Y_1} ^{n_1}} \otimes_k \mathcal{O}_{\square_{Y_2} ^{n_2}}$ so that for $i=1,2$, both sides of \eqref{eqn:concat5} satisfy
\begin{equation}\label{eqn:concat6}
(\mathcal{A}_i \otimes_{\mathcal{O}_{\square_{\widehat{X}_1}^{n_1}}} ^{\mathbf{L}} \mathcal{O}_{\square_{Y_1} ^{n_1}} )\otimes_k (\mathcal{O}_{\mathfrak{Z}'} \otimes_{\mathcal{O}_{\square_{\widehat{X}_2} ^{n_2}}} ^{\mathbf{L}} \mathcal{O}_{\square_{Y_2}^{n_2}}).
\end{equation}
$$
\simeq (\mathcal{A}_i \otimes_k \mathcal{O}_{\mathfrak{Z}'}) \otimes_{ \mathcal{O}_{\square_{\widehat{X}_1\times \widehat{X}_2} ^{n_1+n_2}}} ^{\mathbf{L}} \mathcal{O}_{\square_{Y_1\times Y_2 } ^{n_1+n_2}}.
$$
Thus by \eqref{eqn:concat6} the isomorphism \eqref{eqn:concat5} implies the isomorphism \eqref{eqn:concat4}. 

That $\boxtimes$ is associative follows from the associativity of the tensor products. This proves the part (1) of the lemma. 

\medskip

(2) For the Leibniz rule, let $\mathfrak{Z}_j \in z^{q_j} (\widehat{X}_j , n_j)$ be cycles for $j=1,2$. Note that by definition
$$
\partial_i ^{\epsilon} (\mathfrak{Z}_1 \boxtimes \mathfrak{Z}_2) 
=\tuborg (\partial_i ^{\epsilon} \mathfrak{Z}_1) \boxtimes \mathfrak{Z}_2, & \mbox{ for } 1 \leq i \leq n_1, \\
\mathfrak{Z}_1 \boxtimes (\partial_{i- n_1} ^{\epsilon} \mathfrak{Z}_2), & \mbox{ for } n_1 + 1 \leq i \leq n_1 + n_2.\sluttuborg
$$

Hence 
\begin{eqnarray*}
& & \partial (\mathfrak{Z}_1 \boxtimes \mathfrak{Z}_2) = \sum_{i=1} ^{n_1+n_2} (-1)^i (\partial_i ^{\infty} - \partial_i ^0) (\mathfrak{Z}_1 \boxtimes \mathfrak{Z}_2)\\
&=& \sum_{i=1} ^{n_1} (-1)^i (\partial_i ^{\infty} - \partial_i ^0) (\mathfrak{Z}_1 \boxtimes \mathfrak{Z}_2) + \sum_{i= n_1 +1} ^{n_1 + n_2} (-1)^i (\partial_i ^{\infty} - \partial_i ^0) (\mathfrak{Z}_1 \boxtimes \mathfrak{Z}_2)\\
&=& \left( \sum_{i=1} ^{n_1} (-1)^i ( \partial_i ^{\infty} - \partial_i ^0) \mathfrak{Z}_1 \right) \boxtimes \mathfrak{Z}_2 + (-1)^{n_1} \mathfrak{Z}_1 \boxtimes \left( \sum_{i=1}^{n_2} (-1)^i (\partial_i ^{\infty} - \partial_i ^0) \mathfrak{Z}_2 \right) \\
&=& (\partial  \mathfrak{Z}_1) \boxtimes \mathfrak{Z}_2 + (-1)^{n_1} \mathfrak{Z}_1 \boxtimes (\partial \mathfrak{Z}_2)
\end{eqnarray*}
as desired.

\medskip

From this, we deduce the maps in \eqref{eqn:concat2-1}. Indeed, let $\alpha_j \in \CH^{q_j} (\widehat{X}_i, n_j)$ be cycle classes for $j=1,2$. Let $\mathfrak{Z}_j \in z^{q_j} (\widehat{X}_j , n_j)$ be a cycle such that $\partial \mathfrak{Z}_j = 0$, which represents the cycle class $\alpha_j$. By the Leibniz rule, 
$$
\partial (\mathfrak{Z}_1 \boxtimes \mathfrak{Z}_2) = (\partial \mathfrak{Z}_1) \boxtimes \mathfrak{Z}_2 + (-1)^{n_1} \mathfrak{Z}_1 \boxtimes (\partial \mathfrak{Z}_2) = 0 + 0 = 0,
$$
 and in particular $\mathfrak{Z}_1 \boxtimes \mathfrak{Z}_2$ represents a class in $\CH^{q_1 + q_2 } (\widehat{X_1 \times X_2}, n_1 + n_2).$

On the other hand, for $\mathfrak{Z}_j '\in z^{q_j} (\widehat{X}_j, n_j+1)$, by the Leibniz rule we have
$$
 (\partial \mathfrak{Z}_1') \boxtimes \mathfrak{Z}_2 = \partial (\mathfrak{Z}_1' \boxtimes \mathfrak{Z}_2), \ \ \mathfrak{Z}_1 \boxtimes (\partial \mathfrak{Z}_2') = (-1)^{n_1} \partial (\mathfrak{Z}_1 \boxtimes \mathfrak{Z}_2').
 $$
 Thus $\boxtimes$ sends boundaries to boundaries. Hence we have the first morphism of \eqref{eqn:concat2-1}. The proof for the second one is identical.
\end{proof}

\medskip

Using the above discussions in Lemma \ref{lem:concatenation} as a basis, we now discuss the construction of the product structure on the groups for each $Y \in \Sch_k$. We first claim: 
\begin{lem}\label{lem:general concat}
Let $Y \in \Sch_k$. Then there are the concatenation products
\begin{equation}\label{eqn:concat sch}
\tuborg
\boxtimes: \BGH^{q_1} (Y^{\infty}, n_1) \otimes_{\mathbb{Z}} \BGH^{q_2} (Y^{\infty},  n_2) \to \BGH^{q_1 + q_2}((Y \times Y,)^{\infty},  n_1 + n_2),\\
\boxtimes: \BGH^{q_1} (Y, n_1) \otimes_{\mathbb{Z}} \BGH^{q_2} (Y,  n_2) \to \BGH^{q_1 + q_2}(Y \times Y, n_1 + n_2).
\sluttuborg
\end{equation}
\end{lem}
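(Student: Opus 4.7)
The plan is to combine the local concatenation products of Lemma~\ref{lem:concatenation} with an Alexander-Whitney style \v{C}ech cup product, and then pass to the homotopy colimit over systems of local embeddings. Since the target is $\BGH^{q_1+q_2}(Y\times Y, n_1+n_2)$ and not $\BGH^{q_1+q_2}(Y, n_1+n_2)$, this is the external product; the internal product alluded to in \S\ref{sec:1.3} will be obtained afterwards by pulling back along the diagonal via Definition~\ref{defn:general pull-back}.

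First, given systems $\mathcal{U}=\{(U_i, X_i)\}_{i\in\Lambda}$ and $\mathcal{V}=\{(V_j,X_j')\}_{j\in\Lambda'}$ of local embeddings for $Y$, form the product system $\mathcal{U}\times\mathcal{V}=\{(U_i\times V_j,\, X_i\times X_j')\}_{(i,j)\in\Lambda\times\Lambda'}$ for $Y\times Y$; this is a system of local embeddings because products of quasi-affines are quasi-affine and products of equidimensional smooth $k$-schemes are equidimensional smooth. For each pair of multi-indices $(I,J)\in\Lambda^{p+1}\times(\Lambda')^{p'+1}$, Lemma~\ref{lem:prod completion} identifies $\widehat{X_I\times X_J'}$ with $\widehat{X}_I\times\widehat{X}_J'$, so Lemma~\ref{lem:concatenation} gives a local concatenation pairing on cycle complexes which sheafifies in the open sets $U_I\subset Y$ and $V_J\subset Y$ to a morphism of the building-block sheaf complexes $\BGHz^{q_i}(\widehat{X}_I\mod U_I,\bullet)$ on each factor (and likewise for $\BGHz^{q_i}(\widehat{X}_I,\bullet)$ in the $Y^\infty$-case).

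Next, I would assemble these pairings into a morphism of double complexes
\[
\Psi_{\mathcal{U},\mathcal{V}}: \check{\mathfrak{C}}^{q_1}(\mathcal{U})\boxtimes\check{\mathfrak{C}}^{q_2}(\mathcal{V})\to \check{\mathfrak{C}}^{q_1+q_2}(\mathcal{U}\times\mathcal{V})
\]
using the standard Alexander-Whitney diagonal: a \v{C}ech multi-index $K=((i_0,j_0),\dots,(i_r,j_r))$ at level $r=p+p'$ receives contributions from $I=(i_0,\dots,i_p)$ and $J=(j_p,\dots,j_r)$, with the local concatenation composed with the evident restriction to $U_K\times V_K\subset U_I\times V_J$. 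The Leibniz rule of Lemma~\ref{lem:concatenation}-(2), together with the usual simplicial identities, should deliver compatibility with both the inner differential $\partial$ and the \v{C}ech coboundary $\delta$. Taking totals and Zariski hypercohomology then produces a pairing $\BGH^{q_1}(\mathcal{U}, n_1)\otimes\BGH^{q_2}(\mathcal{V}, n_2)\to\BGH^{q_1+q_2}(\mathcal{U}\times\mathcal{V}, n_1+n_2)$, and the same construction works verbatim with $\infty$ in place of mod $Y$.

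Finally, I would verify compatibility with refinements: if $\mathcal{U}'$ refines $\mathcal{U}$ and $\mathcal{V}'$ refines $\mathcal{V}$, then $\mathcal{U}'\times\mathcal{V}'$ refines $\mathcal{U}\times\mathcal{V}$, and the diagram of pairings commutes by Theorem~\ref{thm:functoriality general}-(3) applied to the identities of $Y$ and $Y\times Y$. The systems of the form $\mathcal{U}\times\mathcal{V}$ are cofinal among all systems of local embeddings for $Y\times Y$: given any such system, cover each piece by products of quasi-affine opens of $Y$ and use Lemma~\ref{lem:common refinement} to find a common refinement. Passing to the colimit in \eqref{eqn:Cech final 1} then yields \eqref{eqn:concat sch}. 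The main obstacle, and where the bulk of the verification will sit, is the bookkeeping for the Alexander-Whitney map — checking that the composite-of-types-(I)-(II)-(III) pull-backs used in the \v{C}ech boundary commute strictly with the sheaf-level $\boxtimes$ (they do, since concatenation is defined by an external tensor and each boundary pull-back is external to at least one factor), and that the Leibniz signs from both the $\partial$- and $\delta$-directions line up correctly after totalizing. This is routine but tedious; there is no genuine conceptual barrier once the local pairing and the cofinality of products of systems are in hand.
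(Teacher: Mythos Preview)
Your approach is essentially the paper's: build the external product from the local concatenation of Lemma~\ref{lem:concatenation}, thread it through the \v{C}ech machinery of \S\ref{sec:Cech machine}, and pass to the colimit over systems. The paper is terser in two places. First, it uses a single system $\mathcal{U}$ on both factors and forms $\mathcal{U}^{\times 2}=\{(U_i\times U_j,\,X_i\times X_j)\}$, whereas you allow two different systems; this is an inessential variation. Second, the paper pairs multi-indices $I,J\in\Lambda^{p+1}$ at the \emph{same} \v{C}ech level (the diagonal of the bicosimplicial structure on $\mathcal{U}^{\times 2}$), while you spell out the Alexander--Whitney front/back map. Both are standard routes to the \v{C}ech cup product and give the same map on hypercohomology.

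One point to fix: your cofinality claim for product systems among all systems for $Y\times Y$ is not actually needed, and the argument you give (``use Lemma~\ref{lem:common refinement}'') does not produce a \emph{product} refinement --- the common refinement constructed there has no reason to decompose as $\mathcal{U}'\times\mathcal{V}'$. The paper sidesteps this entirely: since $\mathcal{U}^{\times 2}$ is itself a system of local embeddings for $Y\times Y$, there is a structural map $\BGH^{q_1+q_2}(\mathcal{U}^{\times 2},n_1+n_2)\to\BGH^{q_1+q_2}(Y\times Y,n_1+n_2)$ into the colimit, and compatibility with refinements on the $\mathcal{U}$-side is all that is required to pass to the colimit on the source. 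You can do the same with $\mathcal{U}\times\mathcal{V}$ and drop the cofinality step.
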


\begin{proof}
Let $\mathcal{U} =\{ (U_i, X_i) \}_{i \in \Lambda}$ be a system of local embeddings of $Y$. Here one checks readily that the set $\mathcal{U}^{\times 2}:= \{ ( U_i \times U_j, X_i \times X_j )\}_{(i,j) \in \Lambda^2}$ gives a system of local embeddings for $Y \times Y$.

\medskip

For each integer $p \geq 0$ and for each pair of multi-indices $I, J \in \Lambda^{p+1}$, by Lemma \ref{lem:concatenation} there is the concatenation products
$$
\tuborg
\boxtimes_{I,J}: \BGHz^{q_1} (\widehat{X}_I , n_1) \otimes_{\mathbb{Z}} \BGHz^{q_2} (\widehat{X}_J , n_2) \to \BGHz^{q_1 + q_2} (\widehat{X_I\times X_J} , n_1 + n_2),\\
\boxtimes_{I,J}: \BGHz^{q_1} (\widehat{X}_I \mod U_I, n_1) \otimes_{\mathbb{Z}} \BGHz^{q_2} (\widehat{X}_J \mod U_J, n_2) \\
\hskip3cm \to \BGHz^{q_1 + q_2} (\widehat{X_I \times X_J} \mod U_I \times U_J, n_1 + n_2).
\sluttuborg
$$
Collecting them over all pairs $(I,J)$ and over all $p \geq 0$, we deduce the concatenation products
\begin{equation}\label{eqn:general concat 1}
\tuborg
\boxtimes:  \check{\mathfrak C} ^{q_1} (\mathcal{U}^{\infty}, n_1) \otimes_{\mathbb{Z}} \check{\mathfrak C}^{q_2} (\mathcal{U}^{\infty}, n_2) \to \check{\mathfrak C} ^{q_1+ q_2} ((\mathcal{U}^{\times 2})^{\infty}, n_1 + n_2),\\
\boxtimes:  \check{\mathfrak C} ^{q_1} (\mathcal{U}, n_1) \otimes_{\mathbb{Z}} \check{\mathfrak C}^{q_2} (\mathcal{U}, n_2) \to \check{\mathfrak C} ^{q_1+ q_2} (\mathcal{U}^{\times 2}, n_1 + n_2).
\sluttuborg
\end{equation}
One checks that $\partial$ and $\boxtimes$ satisfy the Leibniz rule as in Lemma \ref{lem:concatenation}. We deduce
$$
\tuborg
\boxtimes: \BGH^{q_1} (\mathcal{U}^{\infty}, n_1) \otimes_{\mathbb{Z}} \BGH^{q_2} (\mathcal{U}^{\infty}, n_2) \to \BGH^{q_1 + q_2} ( (\mathcal{U} \times \mathcal{U})^{\infty}, n_1+ n_2), \\
\boxtimes: \BGH^{q_1} (\mathcal{U}, n_1) \otimes_{\mathbb{Z}} \BGH^{q_2} (\mathcal{U}, n_2) \to \BGH^{q_1 + q_2} ( \mathcal{U} \times \mathcal{U}, n_1+ n_2).
\sluttuborg
$$

\medskip

Taking the colimit over the systems $\mathcal{U}$ and also over all systems of local embeddings for $Y \times Y$, they give the concatenation products of \eqref{eqn:concat sch}.
\end{proof}

We deduce the following cup product structures:

\begin{thm}\label{thm:general product}
Let $Y\in \Sch_k$. Then we have the cup product maps
$$
\tuborg
 \cup: \BGH^{q_1} (Y^{\infty}, n_1) \otimes_{\mathbb{Z}} \BGH^{q_2} (Y^{\infty}, n_2) \to \BGH ^{q_1 + q_2} (Y^{\infty}, n_1 + n_2), \\
\cup: \BGH^{q_1} (Y, n_1) \otimes_{\mathbb{Z}} \BGH^{q_2} (Y, n_2) \to \BGH ^{q_1 + q_2} (Y, n_1 + n_2),
\sluttuborg
$$
so that
\begin{equation}\label{eqn:bigraded rings}
\bigoplus_{n, q \geq 0} \BGH^q (Y^{\infty}, n), \ \ \ \bigoplus_{n, q \geq 0} \BGH^q (Y, n),
\end{equation}
are bi-graded rings, that are graded-commutative in $n$.

Furthermore, the product structures are functorial in $Y$.
\end{thm}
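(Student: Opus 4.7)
The plan is to define the cup product as the composite
$$\alpha_1 \cup \alpha_2 \ := \ \Delta_Y^*(\alpha_1 \boxtimes \alpha_2),$$
where $\boxtimes$ is the concatenation (external) product of Lemma \ref{lem:general concat} and $\Delta_Y^*$ is the pull-back along the diagonal morphism $\Delta_Y : Y \to Y\times_k Y$ supplied by Corollary \ref{cor:functoriality general}. The same recipe applies verbatim to the $Y^{\infty}$ groups. This step only assembles objects already constructed in the excerpt; no new analysis of cycles on formal schemes is needed.

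Associativity and bi-additivity are then inherited from the corresponding properties of $\boxtimes$ in Lemma \ref{lem:concatenation}, together with the functoriality of pull-backs (Corollary \ref{cor:functoriality general}). More precisely, from the commutative diagram
$$\xymatrix{Y \ar[r]^{\Delta_Y} \ar[d]_{\Delta_Y} & Y\times Y \ar[d]^{{\rm Id}\times \Delta_Y} \\ Y\times Y \ar[r]^{\Delta_Y\times {\rm Id} \ \ } & Y\times Y\times Y,}$$
applying $(\Delta_Y\times {\rm Id})^* = ({\rm Id}\times \Delta_Y)^*$ after $\Delta_Y^*$ and invoking the associativity of $\boxtimes$ gives
$(\alpha_1\cup\alpha_2)\cup\alpha_3 = \alpha_1\cup(\alpha_2\cup\alpha_3).$
This yields the bi-graded ring structure of \eqref{eqn:bigraded rings}.

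For graded commutativity in $n$, consider the swap automorphism $\tau: Y\times Y \to Y\times Y$ exchanging the two factors. Since $\tau\circ \Delta_Y=\Delta_Y$, Corollary \ref{cor:functoriality general} gives $\Delta_Y^* = \Delta_Y^* \circ \tau^*$. On the level of cycles produced by $\boxtimes$, the effect of $\tau^*$ is to permute the two blocks of cubical coordinates on $\square_k^{n_1}\times \square_k^{n_2}$; a standard cubical sign computation (as in Bloch's cubical higher Chow complex) shows that after re-identifying $\square_k^{n_2}\times \square_k^{n_1}$ with $\square_k^{n_1+n_2}$ in the canonical order one picks up $(-1)^{n_1n_2}$. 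Thus $\tau^*(\alpha_1\boxtimes\alpha_2)=(-1)^{n_1n_2}\,\alpha_2\boxtimes\alpha_1$, hence $\alpha_1\cup\alpha_2=(-1)^{n_1n_2}\alpha_2\cup\alpha_1$.

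Finally, functoriality of $\cup$ in $Y$ reduces to two inputs: the compatibility $(g\times g)\circ \Delta_{Y_1}=\Delta_{Y_2}\circ g$ for any morphism $g:Y_1\to Y_2$ in $\Sch_k$, and the compatibility of $\boxtimes$ with pull-backs, i.e. $(g\times g)^*(\alpha\boxtimes\beta) = g^*\alpha\boxtimes g^*\beta$. The latter is proved by unwinding the definition of $\boxtimes$ in Lemma \ref{lem:general concat}: since $\boxtimes$ is constructed index-by-index via tensor products over $k$ of perfect complexes on formal schemes $\widehat{X}_I$, it commutes with the derived pull-backs that assemble $g^*$ (Theorems \ref{thm:pull-back moving} and \ref{thm:functoriality general}) after one refines the systems of local embeddings so that all relevant pairs $(\mathcal{U}, \mathcal{V})$ are associated via $g$ (using Lemmas \ref{lem:system ass functor} and \ref{lem:refinement ass}). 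Passing to the homotopy colimit over such systems and applying Corollary \ref{cor:functoriality general} then gives $g^*(\alpha_1\cup\alpha_2)=g^*\alpha_1\cup g^*\alpha_2$. The main technical nuisance will be this last bookkeeping: keeping track of refinements of systems of local embeddings simultaneously compatible with $g\times g$ and $\Delta$; however, all the needed input is already recorded in \S\ref{sec:Cech3}, so the verification is routine given the preceding machinery.
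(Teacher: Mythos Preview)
Your overall architecture matches the paper: define $\cup := \Delta_Y^* \circ \boxtimes$ using Lemma \ref{lem:general concat} and the functoriality package (Corollary \ref{cor:functoriality general}), then check associativity and functoriality via the obvious diagonal diagrams. That part is fine and essentially identical to the paper's proof.

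The gap is in graded commutativity. You write that ``a standard cubical sign computation (as in Bloch's cubical higher Chow complex) shows that \dots one picks up $(-1)^{n_1n_2}$.'' But this is precisely the nontrivial step. At the cycle level, the swap $\tau^*$ literally permutes the cubical coordinates, and $\tau^*(\mathfrak{Z}_1\boxtimes\mathfrak{Z}_2)$ equals $\mathfrak{Z}_2\boxtimes\mathfrak{Z}_1$ on the nose, with no sign. The sign $(-1)^{n_1n_2}$ only appears after passing to homology, and for cubical complexes this is not automatic: one must exhibit, for each transposition of adjacent coordinates, an explicit cycle in level $n+1$ whose boundary is $\mathfrak{Z}+\sigma*\mathfrak{Z}$. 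The paper does exactly this in Theorem \ref{thm:normal permutation} (Appendix, \S\ref{sec:permutation}), constructing homotopy cycles $\gamma_{\mathfrak{Z}}^\tau$ via a rational map $\nu$ on $\widehat{X}\times\square^n\times\square^1$, after first reducing to normalized cycles via the normalization theorem (Theorem \ref{thm:normalization}). One then has to verify that these auxiliary cycles satisfy the conditions (\textbf{GP}) and (\textbf{SF}) of Definition \ref{defn:HCG} in the formal-scheme setting, which is not covered by the classical argument for schemes. So your appeal to a ``standard'' computation hides genuine content that the paper supplies separately; you should either invoke Theorem \ref{thm:normal permutation} or reproduce that argument.
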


\begin{proof}
Note that for each $Y \in \Sch_k$, there exists the diagonal morphism $\Delta_Y: Y \to Y \times Y$. When $Y$ is not separated over $k$, this $\Delta_Y$ may not be a closed immersion, but this is not a problem because the functoriality (Definition \ref{defn:general pull-back} and Corollary \ref{cor:functoriality general}) holds for any morphism in $\Sch_k$. Hence by the functoriality, one deduces the pull-back morphisms
\begin{equation}\label{eqn:Cech4-1}
\tuborg
\Delta_Y ^*: \BGH^q ((Y \times Y)^{\infty}, n) \to \BGH^q (Y^{\infty}, n), \\
\Delta_Y ^*: \BGH^q (Y \times Y, n) \to \BGH^q (Y, n).
\sluttuborg
\end{equation}

\medskip

The cup product structure we seek is simply given by composing \eqref{eqn:concat sch} of Lemma \ref{lem:general concat} with \eqref{eqn:Cech4-1}, namely $\cup:=\Delta_Y ^* \circ \boxtimes$. Thus the direct sums in \eqref{eqn:bigraded rings} are bi-graded rings. (The graded-commutativity in $n$ is proven in Theorem \ref{thm:normal permutation} in the Appendix, \S \ref{sec:permutation}.)

The functoriality of the bi-graded ring structures follows from the naturality of the pull-backs in Theorem \ref{thm:pull-back moving}-(3), and repeating the entire constructions. We omit details.
\end{proof}

\subsection{The relative yeni higher Chow groups}\label{sec:relative theory}

Recall that the original higher Chow groups of S. Bloch \cite{Bloch HC} had limited ways to define their relative groups, see, e.g. \cite[p.293]{Bloch HC}. For a relative pair $(Y, Y')$ for a closed immersion $Y' \subset Y$, with $Y$ smooth, under the moving lemma, see e.g. S. Landsburg \cite[Definition 4.1]{Landsburg Ill}. For some $0$-cycles on singular varieties, there is also a version by Levine-Weibel \cite{LW}. 

In contrast, the yeni higher Chow groups naturally have the following kinds of relative groups for all morphisms in $\Sch_k$ and for all dimensions:

 \begin{defn}
 Let $g: Y_1 \to Y_2$ be a morphism in $\Sch_k$. For the pull-backs
 $$
 \tuborg
 g^*: \BGHz^q (Y_2^{\infty}, \bullet) \to \mathbf{R} g_* \BGHz^q (Y_1 ^{\infty}, \bullet), \\
  g^*: \BGHz^q (Y_2^{\infty}, \bullet) \to \mathbf{R} g_* \BGHz^q (Y_1 ^{\infty}, \bullet),
  \sluttuborg
 $$
 we define the \emph{relative yeni higher Chow sheaves} for $g^{\infty}$ and $g$ to be the homotopy fibers in $\mathcal{D} ({\rm Ab} (Y_2))$:
$$
\tuborg
\BGHz^q (g^{\infty}, \bullet):= \hofib (g^*: \BGHz^q (Y_2^{\infty}, \bullet) \to \mathbf{R} g_* \BGHz^q (Y_1 ^{\infty}, \bullet)), \\
\BGHz^q (g, \bullet):= \hofib ( g^*: \BGHz^q (Y_2^{\infty}, \bullet) \to \mathbf{R} g_* \BGHz^q (Y_1 ^{\infty}, \bullet)),
\sluttuborg
$$
respectively. Define
$$
\tuborg
\BGH^q (g^{\infty}, n) := \mathbb{H}_{\rm Zar} ^{-n} (Y_2, \BGHz^q (g^{\infty}, \bullet)),\\
\BGH^q (g, n) := \mathbb{H}_{\rm Zar} ^{-n} (Y_2, \BGHz^q (g, \bullet)).
\sluttuborg
$$
Call them the relative yeni higher Chow groups of $g^{\infty}$ and $g$, respectively.\qed
\end{defn}

We do not study $\BGH^q (g, n)$ in detail, though we leave the following question: 

\begin{ques}{\rm Let $g: D \hookrightarrow Y$ be an effective Cartier divisor on a $k$-scheme of finite type. How is our relative yeni higher Chow group $\BGH^q (g, n)$ related to the higher Chow group $\CH^q (Y|D, n)$ with modulus of Binda-Saito \cite{BS}? \qed }
\end{ques}

\begin{exm}\label{exm:splitcase}
Let $A$ be $k$-algebra of finite type. For an ideal $I \subset A$, suppose that the surjection $A \twoheadrightarrow A/I$ has a splitting homomorphism $A/I \to A$. Then by the functoriality, we have the direct sum decomposition
$$
\BGH^q (A, n) \simeq \BGH^q (A/I, n) \oplus \BGH^q (A \to A/I, n),
$$ 
where $\BGH^q (A, n):= \BGH^q (\Spec (A), n)$, etc.
In particular, $\BGH^q (A \to A/I, n) = \ker ( \BGH^q (A, n) \to \BGH^q (A/I, n))$. 

This applies to the special case when $A= k[[t]]/(t^m)$ and $I= (t)$, where the inclusion $k= A/I \hookrightarrow A$ gives a splitting. 
\qed
\end{exm}

\subsection{Local deformation functor}\label{sec:local deform}

The yeni higher Chow groups of $Y$ may be useful to local deformation theory of the motivic cohomology. This is not the main focus of this article, so we minimize our discussion. The groups for $Y^{\infty}$ are not interesting for non-reduced schemes (see Remark \ref{remk:no orta}), though.

An Artin local $k$-algebra $A$ is finite over $k$, thus it is of finite type over $k$. In particular, we can write $A= k[t_1, \cdots, t_r]/ I$ for an ideal $I$. It can be rephrased as:

\begin{cor}\label{cor:embedding}
Let $A$ be an Artin local $k$-algebra and let $Y= \Spec (A)$. Then there exists a closed immersion $Y \hookrightarrow X$ into the smooth affine $k$-space $X= \mathbb{A}_k ^r$. 
\end{cor}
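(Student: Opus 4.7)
The plan is to directly unwind what it means for $A$ to be an Artin local $k$-algebra, using the paragraph immediately preceding the corollary statement. Since the preceding sentence already recalls that such $A$ is finite over $k$ and hence of finite type, the corollary is essentially a formal restatement, and my proof will simply make the closed immersion explicit.

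First I would note that since $A$ is finite over $k$, it is a finitely generated $k$-algebra; for instance, any $k$-vector space basis (or more economically, a lift of a basis of $A/\mathfrak{m}$ over $k$ together with generators of the maximal ideal $\mathfrak{m}$) serves as a finite set of $k$-algebra generators. Pick any such finite generating set $a_1, \ldots, a_r \in A$.

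Next I would use these generators to define a $k$-algebra homomorphism $\varphi: k[t_1, \ldots, t_r] \to A$ by $t_i \mapsto a_i$. By the choice of the $a_i$, this $\varphi$ is surjective, so that for the ideal $I := \ker \varphi \subset k[t_1, \ldots, t_r]$ we obtain an isomorphism $A \simeq k[t_1, \ldots, t_r]/I$. Passing to spectra yields a closed immersion
\[
Y = \Spec(A) \simeq \Spec(k[t_1, \ldots, t_r]/I) \hookrightarrow \Spec(k[t_1, \ldots, t_r]) = \mathbb{A}_k^r,
\]
and $X := \mathbb{A}_k^r$ is smooth affine over $k$, which is exactly the assertion.

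The proof has essentially no obstacle: it is a purely formal consequence of the remark that an Artin local $k$-algebra is finite, hence finitely generated, over $k$. The content of the corollary lies not in its proof but in its role in \S \ref{sec:local deform}, namely that every $Y = \Spec(A)$ for $A$ Artin local admits a canonical class of embeddings into smooth affine $k$-schemes, so that the constructions of \S \ref{sec:main mod Y} and the subsequent \v{C}ech machinery of \S \ref{sec:finite type} can be brought to bear on the deformation-theoretic setting.
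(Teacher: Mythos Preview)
Your proof is correct and takes essentially the same approach as the paper: the paper does not give a separate proof, but simply states that the preceding sentence (``we can write $A = k[t_1, \cdots, t_r]/I$ for an ideal $I$'') ``can be rephrased as'' the corollary, and your argument is exactly that rephrasing spelled out explicitly.
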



Recall:

\begin{defn}
A covariant functor $({\rm Art}_k) \to ({\rm Set})$ from the category of Artin local $k$-algebras is called a local deformation functor. \qed
\end{defn}

It was called \emph{a functor of Artin rings} in M. Schlessinger \cite{Schlessinger}.

The association $A \mapsto \Spec (A)$ for $A \in ({\rm Art}_k)^{\rm op}$ allows us to regard $({\rm Art}_k)^{\rm op}$ as a subcategory of the category $\Sch_k$. Restricting the contravariant functor $\BGH^q (-, n)$ on $\Sch_k$ to the subcategory $({\rm Art}_k)^{\rm op}$, we deduce:

\begin{cor}\label{cor:functoriality local}
The association $A \mapsto \BGH^q (A, n)$ defines a covariant functor $\BGH^q (-, n): ({\rm Art}_k) \to ({\rm Ab})$, i.e. it gives a local deformation functor.
\end{cor}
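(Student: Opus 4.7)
The plan is to obtain this as an essentially formal consequence of the contravariant functoriality of $\BGH^q(-, n)$ established earlier on the whole category $\Sch_k$, combined with the observation that Artin local $k$-algebras sit inside $\Sch_k$ via $\Spec$. So the work is to assemble two functors, and no new cycle-theoretic input is needed.

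First I would verify that for each $A \in ({\rm Art}_k)$, the scheme $\Spec(A)$ actually belongs to $\Sch_k$. An Artin local $k$-algebra is by definition a local noetherian $k$-algebra of Krull dimension zero whose residue field is a finite extension of $k$; since it is artinian with such a residue field, $A$ is finite-dimensional over $k$, hence of finite type. Thus $\Spec(A) \in \Aff_k \subset \Sch_k$. (This is exactly the content of Corollary \ref{cor:embedding}, which additionally records the closed immersion $\Spec(A) \hookrightarrow \mathbb{A}_k^r$ allowing the yeni theory to be applied.) Consequently the association $A \mapsto \Spec(A)$ defines a functor $\Spec: ({\rm Art}_k)^{\op} \to \Sch_k$: a $k$-algebra homomorphism $\varphi: A_1 \to A_2$ yields the morphism $\Spec(\varphi): \Spec(A_2) \to \Spec(A_1)$ in $\Sch_k$, with $\Spec(\varphi_2 \circ \varphi_1) = \Spec(\varphi_1) \circ \Spec(\varphi_2)$ and $\Spec({\rm Id}_A) = {\rm Id}_{\Spec(A)}$.

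Next I would invoke the contravariant functoriality of the yeni higher Chow groups on $\Sch_k$. For every morphism $g: Y_1 \to Y_2$ in $\Sch_k$, Definition \ref{defn:general pull-back} produces a pull-back morphism $g^*: \BGHz^q(Y_2, \bullet) \to \mathbf{R}g_* \BGHz^q(Y_1, \bullet)$ in $\mathcal{D}({\rm Ab}(Y_2))$, and by Corollary \ref{cor:functoriality general} these pull-backs satisfy $(g_2 \circ g_1)^* = g_1^* \circ g_2^*$ and (tautologically) ${\rm Id}_Y^* = {\rm Id}$. Taking $(-n)$-th Zariski hypercohomology, one obtains the induced homomorphisms of abelian groups $g^*: \BGH^q(Y_2, n) \to \BGH^q(Y_1, n)$ with the analogous composition law, i.e.\ a contravariant functor $\BGH^q(-, n): \Sch_k^{\op} \to ({\rm Ab})$.

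Finally, composing these two functors gives the desired covariant functor
\[
({\rm Art}_k) \xrightarrow{\Spec} \Sch_k^{\op} \xrightarrow{\BGH^q(-, n)} ({\rm Ab}),
\]
sending $A \mapsto \BGH^q(\Spec(A), n)$ and $\varphi: A_1 \to A_2$ to $\Spec(\varphi)^*: \BGH^q(\Spec(A_1), n) \to \BGH^q(\Spec(A_2), n)$. Compatibility with composition and identities for this assignment is immediate from the two functorialities above. Since there is no genuine obstruction here — the construction is purely a composition of previously established functors — the only thing worth flagging is a notational clarification: the symbol $\BGH^q(A, n)$ in the statement is defined to mean $\BGH^q(\Spec(A), n)$, consistent with the notation adopted in Example \ref{exm:splitcase}. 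Once that convention is fixed, the corollary is complete.
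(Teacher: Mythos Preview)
Your proof is correct and follows essentially the same approach as the paper: compose the functor $\Spec:({\rm Art}_k)^{\op}\to\Sch_k$ with the contravariant functor $\BGH^q(-,n):\Sch_k^{\op}\to({\rm Ab})$ established in Definition~\ref{defn:general pull-back} and Corollary~\ref{cor:functoriality general}. The paper states this more tersely, but the content is identical.
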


We saw in \cite{Park Tate} and \S \ref{sec:nilpotence} that $K_n ^M (k_m) \simeq \BGH^n (k_m, n)$, so the yeni higher Chow groups are non-constant in the subcollection.
Further studies in this direction may be pursued separately.

\begin{remk}\label{remk:no orta}
The assignment, $A \to \BGH^q (\Spec (A)^{\infty}, n)$, also defines a functor $({\rm Art}_k) \to ({\rm Ab})$, but this is not very useful from the deformation perspective. For instance, all objects of the subcollection $\{ k[t]/(t^m) \}_{m \geq 2}$ have $\widehat{X} = \Spf (k[[t]])$ as a common exoskin, and the groups do not distinguish the objects. 
\qed
\end{remk}

\section{The first Chern class}\label{sec:Chern}

From now on until the end of the article, we work with the yeni higher Chow groups on the subcategory $\Sep_k$ of separated $k$-schemes of finite type. 
 
 \medskip

In \S \ref{sec:Chern}, the goal is to construct the first Chern class map to the yeni Chow group
\begin{equation}\label{eqn:1st Chern BGH0}
c_1 : \Pic (Y) \to \BGH^1 (Y,0)
\end{equation}
for $Y \in \Sep_k$, and we show that it is functorial in $Y$.

This is done in several steps. Roughly the story goes as follows. We will first concentrate on the case when $Y$ is connected and affine. 
We choose a closed immersion $Y \hookrightarrow X$ into a smooth $k$-scheme and consider the exoskin $\widehat{X}$. We can use the ideal class group description (\S \ref{sec:fractional ideal}) of ${\rm Pic} (\widehat{X})$ to define (\S \ref{sec:c1 exo}) a map $c_1: {\rm Pic} (\widehat{X}) \to \CH^1 (\widehat{X}, 0)$. 

On the other hand, ${\rm Pic} (Y)$ is expressed as a quotient of ${\rm Pic} (\widehat{X})$ via the Milnor patching (\S \ref{sec:Pic affine}), 
$$
\Pic (Y) = \Pic (\widehat{X}) / \tau (\Pic (D_{\widehat{X}})),
$$
where $D_{\widehat{X}} = \widehat{X} \coprod_Y \widehat{X}$, and we show that (\S \ref{sec:c1 affine}) the relation of the Milnor patching is compatible with the mod $Y$-equivalence on the cycle group. This gives $c_1$ on $\Aff_k$.

In \S \ref{sec:c1 general}, we globalize it to all $Y \in \Sep_k$ using the \v{C}ech machine of \S \ref{sec:cycle Cech BGH}, based on the construction in the affine case in \S \ref{sec:c1 affine}.

\subsection{Invertible fractional ideals}\label{sec:fractional ideal}

For affine schemes, it is well-known that the invertible fractional ideals and ideal class groups of the relevant rings provide convenient means to describe the Picard groups. There are several general references. We mention H. Matsumura \cite[Ch. 11, p.80]{Matsumura}, for instance.

\medskip

Let $A$ be an integral domain and let $K$ be its field of fractions. Recall that a \emph{fractional ideal of $A$} is a nonzero $A$-submodule $I \subset K$ such that there is a nonzero element $r \in A$ for which $r I \subset A$. For a fractional ideal $I$, we let $I^{-1}:= \{ r \in K \ | \ r I \subset A \}$. We say $I$ is a \emph{invertible} if $I I ^{-1} = A$. 

We suppose $A$ is noetherian so that $I$ is automatically a finitely generated $A$-module.

A fractional ideal that is generated by a single nonzero element of $K$ is called a \emph{principal fractional ideal} of $A$. A principal fractional ideal is invertible. The set of invertible fractional ideals form an abelian group with respect to the product of the fractional ideals, and the principal fractional ideals form a subgroup. The group
$$
{\rm Cl} (A):= \frac{ \{ \mbox{The invertible fractional ideals of } A\} }{ \{ \mbox{The principal fractional ideals of } A\}}
$$
is called the \emph{ideal class group} of $A$. In particular, in the group ${\rm Cl}(A)$ every invertible fractional ideal is equivalent to an (integral) ideal of $A$ that is invertible.

Recall a fractional ideal $I$ is invertible if and only if $I$ is a projective $A$-module if and only if for each maximal ideal $P \subset A$, the fractional ideal $I_P= I A_P \subset A_P$ is principal (\cite[Theorem 11.3, p.80]{Matsumura}). 
Furthermore, each class of the group ${\rm Cl} (A)$ has a representative given by a nonzero (integral) ideal $I \subset A$, that is a projective $A$-module.

\begin{prop}\label{prop:various class group}
Let $\mathfrak{X}= \Spf (A)$ be an integral regular noetherian affine formal scheme for a regular integral domain $A$.

Then we have an isomorphism.
$$
\Pic (\mathfrak{X}) \simeq  {\rm Cl} (A).
$$
\end{prop}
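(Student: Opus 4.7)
The plan is to go through the intermediate group of isomorphism classes of rank $1$ locally free coherent $\mathcal{O}_{\mathfrak{X}}$-modules, and to identify this group with $\Pic(\mathfrak{X})$ on one side and with $\mathrm{Cl}(A)$ on the other side.

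First I would recall that, since $\mathfrak{X}=\Spf(A)$ is a noetherian affine formal scheme, by EGA I \cite[Proposition (10.10.2), p.201]{EGA1} the association $M \mapsto M^{\Delta}$ gives an equivalence between the category of finitely generated $A$-modules and the category of coherent $\mathcal{O}_{\mathfrak{X}}$-modules. Under this equivalence, an invertible $\mathcal{O}_{\mathfrak{X}}$-module corresponds to a finitely generated $A$-module $M$ which is locally free of rank $1$, i.e., a projective $A$-module of rank $1$; conversely, every such $M$ gives an invertible sheaf $M^{\Delta}$. This identifies $\Pic(\mathfrak{X})$ with the group $P(A)$ of isomorphism classes of rank $1$ projective $A$-modules under $\otimes_A$. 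Here the multiplicative structure on the formal scheme side is the tensor product of invertible sheaves, which matches $\otimes_A$ on modules.

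Next I would use that $A$ is an integral domain with field of fractions $K$ to identify $P(A)$ with $\mathrm{Cl}(A)$. For any rank $1$ projective $A$-module $M$, the localization $M \otimes_A K$ is a one-dimensional $K$-vector space, and the natural map $M \to M \otimes_A K$ is injective (because $M$ is torsion-free, being a submodule of a free module locally). After choosing a trivialization $M \otimes_A K \simeq K$, the image of $M$ is a fractional ideal of $A$, which is invertible since it is locally principal (equivalently, projective; cf. \cite[Theorem 11.3, p.80]{Matsumura}). Two choices of trivialization differ by multiplication by an element of $K^\times$, i.e.\ by a principal fractional ideal; hence the class in $\mathrm{Cl}(A)$ is well-defined, giving a homomorphism $P(A)\to \mathrm{Cl}(A)$. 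The inverse sends an invertible fractional ideal $I \subset K$ to its class as an $A$-module, and the two constructions are mutually inverse on isomorphism classes.

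Finally I would check the remaining compatibilities: tensor product of rank $1$ projective modules corresponds to the product of invertible fractional ideals in $K$, and the trivial line bundle $\mathcal{O}_{\mathfrak{X}} = A^{\Delta}$ corresponds to $A$ itself, i.e.\ to the class of principal fractional ideals. Composing the two identifications,
\[
\Pic(\mathfrak{X}) \;\simeq\; P(A) \;\simeq\; \mathrm{Cl}(A),
\]
gives the required isomorphism. The only slightly delicate point is the first step, namely that coherent invertible sheaves on the \emph{formal} scheme $\mathfrak{X}$ are encoded by finitely generated projective $A$-modules of rank $1$; but this is immediate from the EGA I equivalence together with the fact that being locally free of finite rank is a condition on stalks that transfers between $\mathfrak{X}$ and $\Spec(A)$ through the association $M\mapsto M^{\Delta}$, and regularity of $A$ is not actually needed for this identification, only to invoke the standard ideal-theoretic description of $P(A)$.
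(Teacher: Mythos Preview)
Your proof is correct and follows essentially the same route as the paper: both identify $\Pic(\Spf(A))$ with $\Pic(\Spec(A))$ via the EGA~I equivalence $M\mapsto M^{\Delta}$ between finitely generated $A$-modules and coherent $\mathcal{O}_{\mathfrak{X}}$-modules, and then invoke the classical identification of $\Pic(\Spec(A))$ with $\mathrm{Cl}(A)$ for an integral domain. The paper compresses this into two sentences, citing EGA~I \cite[Proposition (10.10.2)-(iii)]{EGA1} for the first step and calling the second ``classical''; you have unpacked both steps in more detail, and your observation that regularity of $A$ is not actually needed (only integrality) is correct for the $\mathrm{Cl}(A)$ defined here as invertible fractional ideals modulo principal ones.
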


\begin{proof}
Since $\mathfrak{X}= \Spf (A)$ is affine, we have ${\rm Pic} (\Spf (A))= {\rm Pic} (\Spec (A))$ by EGA I \cite[Proposition (10.10.2)-(iii), p.201]{EGA1}.

On the other hand, under the integrality and regularity of $A$, it is a classical result that ${\rm Pic} (\Spec (A)) = {\rm Cl} (A)$. Thus we have ${\rm Pic} (\mathfrak{X}) \simeq {\rm Cl} (A)$ as desired.
\end{proof}

\subsection{$c_1$ over the exoskins $\widehat{X}$}\label{sec:c1 exo}

Let $Y$ be a connected affine $k$-scheme of finite type. Choose a closed immersion $Y \hookrightarrow X$ into an equidimensional smooth $k$-scheme, and take the completion $\widehat{X}$ of $X$ along $Y$. The formal scheme $\widehat{X}$ is given as $\Spf (A)$ for some regular noetherian integral $k$-domain $A$ of finite Krull dimension, and ${\rm Pic} (\widehat{X}) = {\rm Cl} (A)$ by Proposition \ref{prop:various class group}.

Let $J \subset A$ be the ideal of $A$ that defines $Y$ as a closed subscheme in $\widehat{X}$. It also defines the closed immersion $Y \subset \Spec (A)$ as well. Here, $A$ is $J$-adically complete.

Let $\mathcal{V} \in {\rm Cl} (A)$. It is represented by an invertible fractional ideal $I \subset K= {\rm Frac} (A)$. Since there is a nonzero $r \in K$ with $r I \subset A$, after replacing $I$ by $rI$ in the equivalence class in ${\rm Cl} (A)$ we may assume $I \subset A$ is a projective ideal. Furthermore, we can find $I \subset A$ so that it intersects properly with $J$. We now define:

\begin{defn}\label{defn:c1 exo}
Let $\widehat{X}$ be as the above. Let $\mathcal{V} \in {\rm Pic} (\widehat{X})= {\rm Cl} (A)$. Choose its representative invertible integral projective ideal $I \subset A$ such that $I$ intersects $J$ properly. Define
$$c_1: {\rm Pic} (\widehat{X}) \to \CH^1 (\widehat{X}, 0)$$
by sending the equivalence class of $\mathcal{V}$ to the associated cycle $[\widetilde{A/I}] \in z^1 (\widehat{X}, 0)$ given by the coherent $\mathcal{O}_{\widehat{X}}$-module $\widetilde{A/I}$. For notational convenience, in what follows we may just write $[A/I]$ for $[\widetilde{A/I}]$.
\qed
\end{defn}

One checks that $c_1$ is a homomorphism of groups through the classical identification of ${\rm Cl} (A)$ with the Picard group and resort to the standard fact, e.g. \cite[Proposition 2.5-(e), p.41]{Fulton}.

\medskip

The map $c_1$ of Definition \ref{defn:c1 exo} has the following compatibility:

\begin{lem}\label{lem:c1 exo commute}
Let $Y_1$ and $Y_2$ be connected affine $k$-schemes of finite. Suppose we have a commutative diagram
$$
\xymatrix{
Y_1 \ar[d] ^g  \ar@{^{(}->}[r] & X_1 \ar[d] ^f \\
Y_2  \ar@{^{(}->}[r] & X_2,}
$$
where the horizontal maps are closed immersions into equidimensional smooth $k$-schemes. Let $\widehat{X}_i$ be the completion of $X_i$ along $Y_i$ for $i=1,2$, and let $\widehat{f}: \widehat{X}_1 \to \widehat{X}_2$ be the induced morphism of formal schemes.

Then the following diagram
\begin{equation}\label{eqn:c1 exo comm 0}
\xymatrix{
\Pic (\widehat{X}_2) \ar[d] ^{\widehat{f}^*} \ar[rr] ^{c_1} & & \CH^1 (\widehat{X}_2, 0) \ar[d] ^{\widehat{f}^*} \\
\Pic (\widehat{X}_1) \ar[rr] ^{c_1} & & \CH^1 (\widehat{X}_1, 0)}
\end{equation}
commutes, where $\widehat{f}^*$ is the associated pull-back given in Theorem \ref{thm:pull-back moving}.
\end{lem}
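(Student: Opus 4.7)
The plan is to work through the ideal class group identification $\Pic(\widehat{X}_i) \simeq \mathrm{Cl}(A_i)$ of Proposition \ref{prop:various class group}, where $\widehat{X}_i = \Spf(A_i)$ and $J_i \subset A_i$ is the ideal defining $Y_i$. Given $\mathcal{V} \in \Pic(\widehat{X}_2)$, the first step is to choose a representative integral invertible (i.e.\ projective rank one) ideal $I \subset A_2$ that intersects $J_2$ properly, while simultaneously arranging that $IA_1 \subset A_1$ is also an integral invertible ideal of $A_1$ intersecting $J_1$ properly. Within the equivalence class in $\mathrm{Cl}(A_2)$ one has the freedom to multiply $I$ by principal fractional ideals, which modifies $IA_1$ by the pulled-back principal ideal; a generic-choice argument allows both proper-intersection requirements to be satisfied simultaneously. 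With such an $I$ fixed, the pull-back of invertible sheaves on the right-hand column gives $\widehat{f}^*\mathcal{V} = [IA_1] \in \mathrm{Cl}(A_1)$, so that $c_1(\widehat{f}^*\mathcal{V}) = [A_1/(IA_1)]$, while $c_1(\mathcal{V}) = [A_2/I]$. It thus suffices to show $\widehat{f}^*[A_2/I] = [A_1/(IA_1)]$ in $\CH^1(\widehat{X}_1, 0)$.

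To compute the left-hand side, I would unwind the construction of $\widehat{f}^*$ on cycles from Theorem \ref{thm:pull-back moving}. Letting $\widehat{X}_{12}$ denote the completion of $X_1 \times X_2$ along $Y_1$ via the graph of $f$, the morphism $\widehat{f}$ factors as
$$
\widehat{X}_1 \stackrel{\widehat{\iota}}{\hookrightarrow} \widehat{X}_{12} \stackrel{\alpha}{\to} \widehat{X}_1 \times \widehat{X}_2 \stackrel{\widehat{pr}_2}{\to} \widehat{X}_2,
$$
where $\widehat{pr}_2$ is a flat projection of type (III), $\alpha$ is a flat further-completion of type (II), and $\widehat{\iota}$ is a Gysin closed immersion arising from the graph. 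By Lemmas \ref{lem:pre flat pb0} and \ref{lem:pull-back perfect}, the composite of the two flat pull-backs sends $[A_2/I]$ to the cycle associated to the pull-back sheaf on $\widehat{X}_{12}$, concretely the quotient by the ideal generated by the image of $I$ in the relevant formal product ring. Proposition \ref{prop:1st indep} together with Lemma \ref{lem:sheaf proper int} then identifies $\widehat{\iota}^*$ of this cycle with the class of $\mathcal{O}_{\widehat{X}_1} \otimes^{\mathbf{L}}_{\mathcal{O}_{\widehat{X}_{12}}} \alpha^* \widehat{pr}_2^*(A_2/I)$.

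The final step is to identify this derived tensor with the underived quotient $A_1/(IA_1)$. Because $I$ is projective of rank one, the short exact sequence $0 \to I \to A_2 \to A_2/I \to 0$ is a length-one locally free resolution over $A_2$, so only $\Tor_0$ and $\Tor_1$ can contribute to the derived tensor; and the properness of $IA_1 \cap J_1$ combined with the invertibility of $IA_1$ means that locally on $A_1$ the ideal $IA_1$ is generated by a single non-zero-divisor, which kills the $\Tor_1$ term as well. Hence the derived tensor reduces to $A_1/(IA_1)$, and we obtain $\widehat{f}^*[A_2/I] = [A_1/(IA_1)] = c_1(\widehat{f}^*\mathcal{V})$, giving commutativity of \eqref{eqn:c1 exo comm 0}. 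The main obstacle is the initial compatible choice of representative $I$ so that the proper-intersection conditions hold simultaneously on both $\widehat{X}_2$ and $\widehat{X}_1$; once this genericity statement inside the ideal class groups is in place, the remaining argument is a fairly mechanical trace through the graph factorization and the Tor vanishing.
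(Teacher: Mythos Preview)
Your approach is broadly correct but takes a considerably longer and different route from the paper's. The paper gives essentially a one-line reduction: since $\widehat{X}_i = \Spf(A_i)$ is affine, one has $\Pic(\widehat{X}_i) = \Pic(\Spec(A_i))$, and the cycle groups $\CH^1(\widehat{X}_i, 0)$ are by definition built from integral closed subschemes of $\Spec(A_i)$; so the entire square reduces to the corresponding square for the ordinary schemes $\Spec(A_i)$ and the ring map $A_2 \to A_1$, where compatibility of $c_1$ with pull-back is classical intersection theory.

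Your route instead unwinds Definition \ref{defn:c1 exo} and the graph factorization underlying $\widehat{f}^*$ from Theorem \ref{thm:pull-back moving}, and verifies the identity $\widehat{f}^*[A_2/I] = [A_1/(IA_1)]$ by a direct Tor computation using the length-one projective resolution $0 \to I \to A_2 \to A_2/I \to 0$. This has the virtue of explicitly checking, rather than merely asserting, that the abstract pull-back of Theorem \ref{thm:pull-back moving} computes the expected thing on a concrete ideal representative. The cost is that you must arrange several proper-intersection conditions simultaneously---for $I$ against $J_2$, for $IA_1$ against $J_1$, and for the pulled-back cycle against the graph inside $\widehat{X}_{12}$---and you flag this simultaneous-genericity step as the main obstacle without carrying it out. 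In the regular affine setting it is indeed provable by standard prime-avoidance style arguments, but the paper's reduction to the scheme case offloads all of this to the classical literature and is accordingly much shorter.
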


\begin{proof}
When $\widehat{X}_i = \Spf (A_i)$, we have ${\rm Pic} (\widehat{X}_i) = {\rm Pic} (\Spec (A_i))$, while the cycles on the right hand side were defined using integral closed subschemes of $\Spec (A_i)$ as well. The commutativity thus follows from the case of schemes, where this is known.
\end{proof}

\subsection{$\Pic (Y)$ via the Milnor patching}\label{sec:Pic affine}

We want to use $c_1: \Pic (\widehat{X}) \to \CH^1 (\widehat{X}, 0)$ of Definition \ref{defn:c1 exo} to construct $c_1: \Pic (Y) \to \BGH^1 (Y, 0)$ eventually. To facilitate this, in \S \ref{sec:Pic affine} we study a relationship between $\Pic (Y)$ and $\Pic (\widehat{X})$. 

\medskip

The following is a Milnor patching deduced from Theorem \ref{thm:Milnor K} in \S \ref{sec:Milnor} (or J. Milnor \cite{Milnor K}), with EGA I \cite[Proposition (10.10.2), p.201]{EGA1}. Note that $D_{\widehat{X}}$ exists as a formal scheme by Theorem \ref{thm:Landsburg}-(1).

\begin{lem}\label{lem:1st Chern Milnor patch}
Let $Y$ be a connected affine $k$-scheme of finite type. Let $Y \hookrightarrow X$ be a closed immersion into an equidimensional smooth $k$-scheme, and let $\widehat{X}$ be the completion of $X$ along $Y$. Consider the push-out diagram of closed immersions
\begin{equation}\label{eqn:1st Chern Milnor push}
\xymatrix{
Y \ar[d] ^{j_1}\ar[r] ^{j_2} & \widehat{X} \ar[d] ^{ \iota_1}\\
\widehat{X} \ar[r] ^{\iota_2} & D_{\widehat{X}},}
\end{equation}
where $D_{\widehat{X}} = \widehat{X} \coprod_Y \widehat{X}$. Then for each pair $(\mathcal{L}_1, \mathcal{L}_2)$ of locally free sheaves of rank $1$ on $\widehat{X}$ such that $j_1 ^* \mathcal{L}_1 \simeq j_2 ^* \mathcal{L}_2$, there exists a locally free sheaf $\tilde{\mathcal{L}}$ of rank $1$ on $D_{\widehat{X}}$ such that $\iota_i^* \tilde{\mathcal{L}} \simeq \mathcal{L}_i$ for $i=1,2$. Furthermore, each locally free sheaf of rank $1$ on $D_{\widehat{X}}$ is obtained in this way.
\end{lem}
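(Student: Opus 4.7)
The plan is to reduce the statement to a purely algebraic Milnor patching assertion for rank $1$ projective modules, and then apply Theorem \ref{thm:Milnor K} (J. Milnor) directly. First I would translate the geometric data into commutative algebra. Since $Y$ is connected and affine, write $Y = \Spec(B)$ and $\widehat{X} = \Spf(A)$ where $A$ is the $J$-adically complete ring with $A/J = B$; by the construction of $D_{\widehat{X}}$ carried out in Theorem \ref{thm:Landsburg}-(1) (whose proof uses Theorem \ref{thm:Ferrand pushout} of D. Ferrand), the pushout square \eqref{eqn:1st Chern Milnor push} corresponds to the Cartesian square of noetherian rings
\begin{equation*}
\xymatrix{
R \ar[r] \ar[d] & A \ar@{->>}[d]^{j_1^{\sharp}} \\
A \ar@{->>}[r]^{j_2^{\sharp}} & B,
}
\end{equation*}
where $R = A \times_B A$ and both maps $A \twoheadrightarrow B$ are surjective, so the square is of the precise shape \eqref{eqn:Milnor K} to which the original Milnor patching applies.

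Next I would use EGA I \cite[Proposition (10.10.2)-(iii), p.201]{EGA1} (as already invoked in Proposition \ref{prop:various class group}) to identify the groupoid of locally free $\mathcal{O}_{\widehat{X}}$-modules of rank $1$ with the groupoid of rank $1$ finitely generated projective $A$-modules, and similarly for $D_{\widehat{X}}$ with $R$. Under this identification, the pair $(\mathcal{L}_1, \mathcal{L}_2)$ together with the isomorphism $j_1^* \mathcal{L}_1 \simeq j_2^* \mathcal{L}_2$ becomes a pair $(P_1, P_2)$ of rank $1$ projective $A$-modules equipped with a $B$-module isomorphism $P_1 \otimes_A B \simeq P_2 \otimes_A B$. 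Applying Theorem \ref{thm:Milnor K} produces a finitely generated projective $R$-module $M$ with $M \otimes_R A \simeq P_i$ for $i = 1, 2$. Since rank is preserved under localization at primes pulled back from $A$ and the two maps $R \to A$ are jointly surjective on $\Spec$, the module $M$ has rank $1$ everywhere, so it corresponds to a line bundle $\tilde{\mathcal{L}}$ on $D_{\widehat{X}}$ with $\iota_i^* \tilde{\mathcal{L}} \simeq \mathcal{L}_i$ as required.

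For the converse clause, I would begin with an arbitrary line bundle $\tilde{\mathcal{L}}$ on $D_{\widehat{X}}$, set $\mathcal{L}_i := \iota_i^* \tilde{\mathcal{L}}$ for $i = 1, 2$, and use the commutativity of the push-out square \eqref{eqn:1st Chern Milnor push} (so that $\iota_1 \circ j_1 = \iota_2 \circ j_2$) to obtain a canonical identification
\begin{equation*}
j_1^* \mathcal{L}_1 = (\iota_1 \circ j_1)^* \tilde{\mathcal{L}} = (\iota_2 \circ j_2)^* \tilde{\mathcal{L}} = j_2^* \mathcal{L}_2.
\end{equation*}
That $\tilde{\mathcal{L}}$ is then recovered from this datum via the patching construction is precisely the last sentence of Theorem \ref{thm:Milnor K}, translated back through the equivalence of categories.

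The routine verifications are the preservation of rank $1$ and the compatibility of the two translations (geometric versus algebraic); these should be immediate from the local-at-primes description of projectivity and from EGA I \cite[(10.10.2)]{EGA1}. The only conceptually nontrivial input is the existence of $D_{\widehat{X}}$ as an affine formal scheme with coordinate ring $A \times_B A$, which however we already have at our disposal from Theorem \ref{thm:Landsburg}-(1). Thus no genuine new ingredient is required beyond assembling the pieces; the main thing to be careful about is that the Milnor patching is invoked in its original form for rank $1$ projectives, rather than through the derived Milnor patching of Theorem \ref{thm:Landsburg0}, where one would in addition need to argue that the patched perfect complex is concentrated in degree $0$ and locally free of rank $1$.
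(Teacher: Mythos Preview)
Your proposal is correct and matches the paper's approach exactly: the paper states this lemma without a detailed proof, simply remarking that it is ``a Milnor patching deduced from Theorem \ref{thm:Milnor K} \ldots\ with EGA I [Proposition (10.10.2)]'' and noting that $D_{\widehat{X}}$ exists by Theorem \ref{thm:Landsburg}-(1). Your write-up fills in precisely these references, including the translation to the Cartesian square $R = A \times_B A$ and the rank-$1$ check, so there is nothing to add.
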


\begin{lem}\label{lem:Pic Milnor}
In the situation of Lemma \ref{lem:1st Chern Milnor patch}, we have an exact sequence
\begin{equation}\label{eqn:Pic Milnor}
 \Pic (D_{\widehat{X}}) \overset{ (\iota_1 ^*, \iota_2 ^*)}{\to} \Pic (\widehat{X}) \oplus \Pic (\widehat{X}) \overset{ j_1 ^* -j_2 ^*}{\to} \Pic (Y) \to 0,
\end{equation}
where $j_1^*- j_2 ^*$ is written additively. 
\end{lem}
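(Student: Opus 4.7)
\medskip

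\emph{Proof plan.} The sequence \eqref{eqn:Pic Milnor} consists of three assertions: (a) the composition is zero, (b) exactness at $\Pic(\widehat{X})\oplus\Pic(\widehat{X})$, and (c) surjectivity of $j_1^*-j_2^*$. I will treat them in this order.

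For (a), note that the pushout square \eqref{eqn:1st Chern Milnor push} gives the equality $\iota_1\circ j_1=\iota_2\circ j_2$ of morphisms $Y\to D_{\widehat{X}}$, so for any $\widetilde{\mathcal{L}}\in\Pic(D_{\widehat{X}})$ one has
\[
j_1^*(\iota_1^*\widetilde{\mathcal{L}})\simeq (\iota_1 j_1)^*\widetilde{\mathcal{L}} \simeq (\iota_2 j_2)^*\widetilde{\mathcal{L}} \simeq j_2^*(\iota_2^*\widetilde{\mathcal{L}}),
\]
so $(j_1^*-j_2^*)\circ(\iota_1^*,\iota_2^*)=0$.

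For (b), suppose $(\mathcal{L}_1,\mathcal{L}_2)\in \Pic(\widehat{X})\oplus\Pic(\widehat{X})$ satisfies $j_1^*\mathcal{L}_1\simeq j_2^*\mathcal{L}_2$. Applying directly the Milnor patching for line bundles (Lemma \ref{lem:1st Chern Milnor patch}) to this pair together with any chosen isomorphism $j_1^*\mathcal{L}_1\simeq j_2^*\mathcal{L}_2$, one obtains a line bundle $\widetilde{\mathcal{L}}\in\Pic(D_{\widehat{X}})$ with $\iota_i^*\widetilde{\mathcal{L}}\simeq \mathcal{L}_i$ for $i=1,2$. This shows $(\mathcal{L}_1,\mathcal{L}_2)$ lies in the image of $(\iota_1^*,\iota_2^*)$.

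For (c), the key input is that the restriction $j^*\colon \Pic(\widehat{X})\to\Pic(Y)$ is surjective. Writing $\widehat{X}=\Spf(A)$ where $A$ is the $J$-adic completion of $\Gamma(X,\mathcal{O}_X)$ along the ideal $J$ defining $Y$, with $A/J=B=\Gamma(Y,\mathcal{O}_Y)$, this is the classical fact that every finitely generated projective $B$-module lifts to a finitely generated projective $A$-module under the pair $(A,J)$ being $J$-adically complete (it follows from Hensel-type lifting of idempotents; see e.g.\ H.~Bass, \emph{Algebraic $K$-Theory}, or \cite[Lemma 0D48]{stacks}). In particular every class $\mathcal{M}\in\Pic(Y)$ is of the form $j^*\mathcal{L}_1$ for some $\mathcal{L}_1\in\Pic(\widehat{X})$. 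Taking $\mathcal{L}_2=\mathcal{O}_{\widehat{X}}$, we find $(j_1^*-j_2^*)(\mathcal{L}_1,\mathcal{L}_2)=\mathcal{M}$, proving surjectivity.

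The main (only) technical point is step (c), namely citing or verifying the surjectivity of $j^*$; the rest is a formal consequence of the Milnor patching already proven in Lemma \ref{lem:1st Chern Milnor patch} together with the universal property of the pushout.
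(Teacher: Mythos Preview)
Your proof is correct and follows the same overall structure as the paper's. The paper bundles your steps (a) and (b) into a single citation of Milnor's theorem (via Bass, \emph{Algebraic $K$-theory}, Theorem (5.3)), while you unpack these from Lemma~\ref{lem:1st Chern Milnor patch} and the pushout property directly. For (c), the paper argues the surjectivity of $\Pic(\widehat{X})\to\Pic(Y)$ by level-by-level lifting: using Hartshorne's Exercise II-9.6, each line bundle on the thickening $\widehat{X}_m$ lifts to $\widehat{X}_{m+1}$, and the Mittag-Leffler condition on $\{\Gamma(\widehat{X}_m,\mathcal{O}_{\widehat{X}_m})\}$ lets one pass to the limit. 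Your route via lifting of idempotents over the complete pair $(A,J)$ is an equivalent and arguably more direct way to reach the same conclusion.

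One small imprecision: $X$ need not be affine, so $A$ is not literally the $J$-adic completion of $\Gamma(X,\mathcal{O}_X)$. What is true (and all you need) is that since $Y$ is affine, the formal scheme $\widehat{X}$ is affine, say $\widehat{X}=\Spf(A)$ with $A$ noetherian and $J$-adically complete and $A/J=B$; your lifting argument then applies verbatim.
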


\begin{proof}
By Milnor's theorem (see H. Bass \cite[Theorem (5.3), pp.481--482]{Bass}, cf. J. Milnor \cite{Milnor K}), we have an exact sequence
\begin{equation}\label{eqn:Pic Milnor 0}
\Pic (D_{\widehat{X}}) \overset{ (\iota_1 ^*, \iota_2 ^*)}{\to} \Pic (\widehat{X}) \oplus \Pic (\widehat{X}) \overset{ j_1 ^* -j_2 ^*}{\to} \Pic (Y).
\end{equation}

We prove the surjectivity of the last map. Let $\mathcal{I}$ be the ideal of $Y$ in $\widehat{X}$, and let $\widehat{X}_m$ be the scheme given by the ideal $\mathcal{I}^m$ for $m \geq 1$. Since $Y$ is affine, by R. Hartshorne \cite[Exercise II-9.6-(d), p.200]{Hartshorne}, the system $\{ \Gamma (\widehat{X}_m, \mathcal{O}_{\widehat{X}_m}) \}_{m \geq 1}$ satisfies the Mittag-Leffler condition. Now by \cite[Exercise II-9.6-(c), p.200]{Hartshorne}, for each $m \geq 1$, each locally free sheaf of rank $1$ on $\widehat{X}_m$ lifts to a locally free sheaf of rank $1$ on $\widehat{X}_{m+1}$. Thus we deduce that the map
$$
\Pic (\widehat{X}) \to \Pic (\widehat{X}_1) = \Pic (Y)
$$
is surjective. Thus, the last map of \eqref{eqn:Pic Milnor 0} is surjective.
\end{proof}

\begin{prop}\label{prop:Milnor line}
Under the above, let $\tau: \Pic (D_{\widehat{X}}) \to \Pic (\widehat{X})$ be the map $\iota_1 ^* - \iota_2 ^*$, written additively.

Then we have an isomorphism ${\rm coker} (\tau) = \Pic (\widehat{X}) / \tau (\Pic (D_{\widehat{X}})) \overset{\simeq}{\to} \Pic (Y)$.  
\end{prop}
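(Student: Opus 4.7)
The plan is to extract the desired isomorphism from the exact sequence \eqref{eqn:Pic Milnor} of Lemma \ref{lem:Pic Milnor} by a short diagram chase. First I will observe that, in the pushout square \eqref{eqn:1st Chern Milnor push}, the two maps $j_1$ and $j_2$ are literally the same closed immersion $Y \hookrightarrow \widehat{X}$, which I will denote by $j$, and the commutativity of the square therefore reads $\iota_1 \circ j = \iota_2 \circ j =: \alpha: Y \to D_{\widehat{X}}$. The point is that $\alpha$ is a single map, so for any $\mathcal{N} \in \Pic(D_{\widehat{X}})$ one gets
$$
j^*\bigl(\tau(\mathcal{N})\bigr) = j^*\iota_1^*\mathcal{N} - j^*\iota_2^*\mathcal{N} = \alpha^*\mathcal{N} - \alpha^*\mathcal{N} = 0,
$$
so that $\tau(\Pic(D_{\widehat{X}})) \subset \ker(j^*)$. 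This will give a well-defined induced homomorphism
$$
\bar{j}: \Pic(\widehat{X})/\tau(\Pic(D_{\widehat{X}})) \to \Pic(Y).
$$

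Next I will verify that $\bar{j}$ is surjective. For this I will use that the map $j^*: \Pic(\widehat{X}) \to \Pic(Y)$ is already surjective; this was established inside the proof of Lemma \ref{lem:Pic Milnor} using the Mittag-Leffler argument from \cite[Exercise II-9.6]{Hartshorne} applied to the adic system defining $\widehat{X}$.

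The main remaining task is injectivity of $\bar{j}$, and this is where the exact sequence \eqref{eqn:Pic Milnor} does the work. Take $\mathcal{L} \in \Pic(\widehat{X})$ with $j^*\mathcal{L}= 0$, and consider the element $(\mathcal{L},0) \in \Pic(\widehat{X}) \oplus \Pic(\widehat{X})$. Under $j_1^* - j_2^*$ this maps to $j^*\mathcal{L} - j^*(0) = 0$, so by the exactness of \eqref{eqn:Pic Milnor} at the middle term there exists $\mathcal{N} \in \Pic(D_{\widehat{X}})$ with $(\iota_1^*\mathcal{N},\iota_2^*\mathcal{N}) = (\mathcal{L},0)$. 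I then read off $\tau(\mathcal{N}) = \iota_1^*\mathcal{N} - \iota_2^*\mathcal{N} = \mathcal{L}$, which shows $\mathcal{L} \in \tau(\Pic(D_{\widehat{X}}))$ and finishes the proof.

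I do not expect a genuine obstacle here; the statement is essentially a reformulation of Lemma \ref{lem:Pic Milnor}, and the only subtle point is to keep track of which copy of $\widehat{X}$ carries which structure map in the push-out square. Once the identification $j_1 = j_2 = j$ and the relation $\iota_1 \circ j = \iota_2 \circ j$ are recorded, both the well-definedness of $\bar{j}$ and the injectivity reduce to an immediate diagram chase, while surjectivity is inherited from the last surjectivity in \eqref{eqn:Pic Milnor}.
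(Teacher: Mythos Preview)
Your argument is correct and is in fact more direct than the paper's. Both proofs rest entirely on the exact sequence of Lemma~\ref{lem:Pic Milnor}, but they extract the conclusion differently. The paper introduces the fold map $g:D_{\widehat X}\to\widehat X$ (the unique map extending the two identities), forms $\Delta=g^*$, sets up a two-row diagram with vertical maps $\bar\Delta$ and $({\rm Id},{\rm Id})$, and applies the snake lemma to obtain a short exact sequence $0\to{\rm coker}(\bar\Delta)\to{\rm coker}({\rm Id},{\rm Id})\to\Pic(Y)\to 0$; it then identifies ${\rm coker}({\rm Id},{\rm Id})\simeq\Pic(\widehat X)$ via $(\alpha,\beta)\mapsto\alpha-\beta$ and checks that the image of ${\rm coker}(\bar\Delta)$ is exactly $\tau(\Pic(D_{\widehat X}))$. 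Your approach bypasses the fold map and the snake lemma entirely: you observe $j_1=j_2=:j$ and $\iota_1\circ j=\iota_2\circ j$ to get well-definedness of $\bar j$, quote surjectivity of $j^*$, and for injectivity feed $(\mathcal L,0)$ into the exactness of \eqref{eqn:Pic Milnor}. This is shorter and avoids any auxiliary constructions; the paper's route is slightly more conceptual but ends up unpacking to the same element-level computation you wrote out.
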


\begin{proof}
Since $D_{\widehat{X}}$ is the push-out $\widehat{X} \coprod_{Y} \widehat{X}$, it induces a unique morphism $g: D_{\widehat{X}} \to \widehat{X}$ that extends the two copies of the identity maps $\widehat{X} \to \widehat{X}$. This induces $\Delta:= g^*: \Pic (\widehat{X}) \to \Pic (D_{\widehat{X}})$. Combined with Lemma \ref{lem:Pic Milnor}, they form the commutative diagram
$$
\xymatrix{
0 \ar[r] & \Pic (\widehat{X}) \ar[d] ^{\bar{\Delta}} \ar[r] ^{\rm Id} & \Pic (\widehat{X}) \ar[d] ^{ ({\rm Id}, {\rm Id})} \ar[r] & 0 \ar[r] \ar[d] & 0 \\
0 \ar[r] & \overline{ \Pic } (D_{\widehat{X}}) \ar[r] & \Pic (\widehat{X}) \oplus \Pic (\widehat{X}) \ar[r]^{ \ \ \ \ \ \  j_1 ^* - j_2 ^*} & \Pic (Y) \ar[r] & 0,}
$$
where $\overline{\Pic} (D_{\widehat{X}}) := {\rm im}  (\iota_1 ^*, \iota_2 ^*)$, and $\bar{\Delta}: = (\iota_1 ^*, \iota_2 ^*) \circ \Delta$. 
The snake lemma induces the short exact sequence
\begin{equation}\label{eqn:Milnor line 1}
0 \to {\rm coker} (\bar{\Delta}) \to {\rm coker} ({\rm Id}, {\rm Id}) \to \Pic (Y) \to 0.
\end{equation}
Here, the middle term of \eqref{eqn:Milnor line 1} is isomorphic to $\Pic (\widehat{X})$ by sending the pair $(\alpha, \beta)$ to $\alpha - \beta$, written additively. Under this identification, the image of ${\rm coker} (\bar{\Delta}) \to \Pic (\widehat{X})$ is equal to $\tau \Pic (D_{\widehat{X}})$. This proves the proposition.
\end{proof}

\subsection{The first Chern class: the affine case}\label{sec:c1 affine}

We return to the construction of the first Chern class map \eqref{eqn:1st Chern BGH0} when $Y$ is affine. We will show that the Milnor patching description of $\Pic (Y)$ and the mod $Y$-equivalence for the yeni Chow group are compatible and this compatibility naturally induces the construction of $c_1$ over $Y$.
We may assume that $Y$ is connected, as we can work with individual connected components separately.

For $\Pic (Y)$ of \eqref{eqn:1st Chern BGH0}, in Proposition \ref{prop:Milnor line} we saw $\Pic (Y) = \Pic (\widehat{X}) / \tau \Pic (D_{\widehat{X}})$ via the Milnor patching. 
On the other hand, by definition
$$
\CH^1 (\widehat{X} \mod Y,0) = \CH^1 (\widehat{X},0)/ \overline{\mathcal{M}}^1 (\widehat{X} , Y,0),
$$
where $\overline{\mathcal{M}} ^1 (\widehat{X}, Y, 0)$ is the image under the composite $\mathcal{M}^1 (\widehat{X}, Y, 0) \hookrightarrow z^1 (\widehat{X}, 0) \twoheadrightarrow \CH^1 (\widehat{X}, 0)$, and there is a natural composite homomorphism
$$
\CH^1 (\widehat{X} \mod Y, 0) \to \BGH^1 (\widehat{X} \mod Y, 0) \to \BGH^1 (Y, 0).
$$
 Hence to construct the map \eqref{eqn:1st Chern BGH0} when $Y$ is connected and affine, we may construct a homomorphism
\begin{equation}\label{eqn:1st Chern BGH}
c_1: {\rm Pic} (Y) = \frac{\Pic (\widehat{X})  }{ \tau \Pic (D_{\widehat{X}})} \to \frac{ \CH^1 (\widehat{X},0)}{ \overline{\mathcal{M} } ^1 (\widehat{X}, Y,0)}= \CH^1 (\widehat{X} \mod Y, 0),
\end{equation}
and to check that this is compatible with the maps $c_1$ constructed for different choices of the closed immersions $Y\hookrightarrow X'$.

\begin{prop}\label{prop:1st Chern affine}
Let $Y$ be a connected affine $k$-scheme of finite type, and let $Y \hookrightarrow X$ and $\widehat{X}$ be as before. Let  $c_1: \Pic (\widehat{X}) \to \CH^1 (\widehat{X}, 0)$ be the map in Definition \ref{defn:c1 exo}. Then we have the following:
\begin{enumerate}
\item Then $c_1 (\tau \Pic (D_{\widehat{X}})) \subset \mathcal{M}^1 (\widehat{X}, Y, 0)$ so that we have the induced homomorphism \eqref{eqn:1st Chern BGH}.
\item If we have a different closed immersion $Y \hookrightarrow X'$ with a morphism $f: X' \to X$ under $Y$, we deduce the commutative diagram
\begin{equation}\label{eqn:1st Chern affine 1}
\xymatrix{
{\rm Pic} (Y) \ar@{=}[r] \ar@{=}[rd]& \frac{\Pic (\widehat{X})  }{ \tau \Pic (D_{\widehat{X}})}\ar@{=}[d]  \ar[r] ^{c_1 \ \ \ \ }  & \CH^1 (\widehat{X} \mod Y, 0) \ar[d] ^{f^*} \\
 & \frac{\Pic (\widehat{X}')  }{ \tau \Pic (D_{\widehat{X}'})}   \ar[r] ^{c_1 \ \ \ \ }  &\CH^1 (\widehat{X}' \mod Y, 0).
 }
 \end{equation}
\end{enumerate}
\end{prop}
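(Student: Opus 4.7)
Given $\tilde{\mathcal{L}} \in \Pic(D_{\widehat{X}})$, my plan is to realize the difference $[A/I_1] - [A/I_2]$, where $I_i$ represents $\iota_i^*\tilde{\mathcal{L}}$, as coming from a matching pair of admissible algebras $(\mathcal{A}_1, \mathcal{A}_2) \in \mathcal{L}^1(\widehat{X}, Y, 0)$ pulled back from a single algebra on $D_{\widehat{X}}$. By Theorem \ref{thm:Landsburg}(1), $D_{\widehat{X}} = \Spf(B)$ is a noetherian affine formal scheme with $B = A \times_{A/J} A$. After possibly twisting $\tilde{\mathcal{L}}$ by a sufficiently negative divisor, I represent it by an effective ideal sheaf $\tilde{\mathcal{I}} \subset \mathcal{O}_{D_{\widehat{X}}}$ of codimension $1$; a finite prime-avoidance argument in $B$ allows me to choose $\tilde{\mathcal{I}}$ so that the pullbacks $I_i := \iota_i^\sharp(\tilde{\mathcal{I}}) \cdot A$ define cycles $[A/I_i] \in z^1(\widehat{X}, 0)$ intersecting $\widehat{X}_{\red}$ properly. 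Set $\mathcal{F} := \mathcal{O}_{D_{\widehat{X}}}/\tilde{\mathcal{I}}$, a coherent $\mathcal{O}_{D_{\widehat{X}}}$-algebra, and define $\mathcal{A}_i := \mathbf{L}\iota_i^*(\mathcal{F})$; under the proper-intersection hypothesis this is quasi-isomorphic to the ordinary quotient $A/I_i$, with $[\mathcal{A}_i] = c_1(\iota_i^*\tilde{\mathcal{L}})$ by Definition \ref{defn:c1 exo}.

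\textbf{The key identity.} The crucial observation is that $D_{\widehat{X}} = \widehat{X} \coprod_Y \widehat{X}$ is the push-out along $Y$, so the two compositions $j_1 \circ \iota_1 = j_2 \circ \iota_2$ agree and both equal the canonical closed immersion $\kappa: Y \hookrightarrow D_{\widehat{X}}$ from \eqref{eqn:1st Chern Milnor push}. Consequently
\[
\mathcal{A}_i \otimes^{\mathbf{L}}_{\mathcal{O}_{\widehat{X}}} \mathcal{O}_Y \; \simeq \; \mathbf{L}j_i^*\,\mathbf{L}\iota_i^*(\mathcal{F}) \; \simeq \; \mathbf{L}\kappa^*(\mathcal{F})
\]
as objects of $\sAlg(\mathcal{O}_Y)$, and the right-hand side does not depend on $i=1,2$. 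Hence $(\mathcal{A}_1, \mathcal{A}_2) \in \mathcal{L}^1(\widehat{X}, Y, 0)$ in the sense of Definition \ref{defn:mod Y equiv}, so $[\mathcal{A}_1] - [\mathcal{A}_2] \in \mathcal{M}^1(\widehat{X}, Y, 0)$, giving $c_1(\tau(\tilde{\mathcal{L}})) = 0$ in $\CH^1(\widehat{X} \mod Y, 0)$. The induced map \eqref{eqn:1st Chern BGH} is then a well-defined homomorphism, independence of the choice of representative $\tilde{\mathcal{I}}$ following because any two choices differ by a principal ideal whose contribution is already trivial in $\CH^1(\widehat{X}, 0)$ (via the standard rational equivalence argument, adapted from the scheme case).

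\textbf{Approach for (2) and main obstacle.} For the compatibility diagram \eqref{eqn:1st Chern affine 1}, the Milnor-patching description of $\Pic(Y)$ is natural under $f: X' \to X$: pulling back $D_{\widehat{X}} = \widehat{X} \coprod_Y \widehat{X}$ via $\widehat{f}$ yields $D_{\widehat{X}'}$, and all four closed immersions fit into a compatible cube of formal schemes over $Y$. Lemma \ref{lem:c1 exo commute} gives the commutativity of $c_1$ with $\widehat{f}^*$ at the exoskin level before taking quotients, and by applying part (1) to both embeddings, the induced maps respect the subgroups $\tau\Pic(D_{\widehat{X}})$ and $\overline{\mathcal{M}}^1(\widehat{X}, Y, 0)$. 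The commutativity of \eqref{eqn:1st Chern affine 1} then descends to the quotients using the functoriality of $\mathbf{L}\kappa^*$ and Theorem \ref{thm:pull-back moving}. The main technical obstacle throughout is the simultaneous proper-intersection bookkeeping in part (1): a single ideal sheaf $\tilde{\mathcal{I}}$ on $D_{\widehat{X}}$ must pull back to codimension-$1$ cycles on each copy of $\widehat{X}$ meeting $\widehat{X}_{\red}$ properly. This is a finite prime-avoidance argument in $B$, but care is needed because $B = A \times_{A/J} A$ has zero divisors from the push-out structure, so the fractional-ideal formalism of \S\ref{sec:fractional ideal} does not apply directly on $D_{\widehat{X}}$, and one must argue at the level of coherent sheaves with their derived pull-backs rather than through ideal classes.
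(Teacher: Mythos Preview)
Your approach is correct in spirit but follows a genuinely different route from the paper's proof, and it carries a technical cost the paper avoids.

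\textbf{The paper's argument.} The paper does not work on $D_{\widehat{X}}$ at all in the computation. Instead, it uses the description of $\tau\Pic(D_{\widehat{X}})$ from Proposition~\ref{prop:Milnor line} to reduce immediately to: given $\mathcal{V}_1,\mathcal{V}_2\in\Pic(\widehat{X})$ with $\mathcal{V}_1|_Y\simeq\mathcal{V}_2|_Y$, show $c_1(\mathcal{V}_1)-c_1(\mathcal{V}_2)\in\mathcal{M}^1(\widehat{X},Y,0)$. It represents each $\mathcal{V}_i$ by a projective ideal $I_i\subset A$ intersecting $J$ properly, and then computes $A/I_i\otimes_A^{\mathbf L}A/J$ directly from the two-term projective resolution $I_i\hookrightarrow A$. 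Since $I_i$ is projective, $\Tor_j^A(A/I_i,A/J)=0$ for $j\geq 2$; since $(I_i+J)/J\hookrightarrow A/J$ is injective, $\Tor_1$ vanishes too. Thus the derived restrictions collapse to the ordinary quotients $A/(I_i+J)$, and the hypothesis $\mathcal{V}_1|_Y\simeq\mathcal{V}_2|_Y$ gives $(I_1+J)/J=(I_2+J)/J$, hence an isomorphism in $\sAlg(A/J)$.

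\textbf{What you do differently.} You stay on $D_{\widehat{X}}=\Spf(B)$, represent $\tilde{\mathcal L}$ by an ideal sheaf $\tilde{\mathcal I}\subset\mathcal O_{D_{\widehat X}}$, and use the push-out identity $\iota_1\circ j_2=\iota_2\circ j_1=\kappa$ to get $\mathbf L j_?^*\mathbf L\iota_i^*(\mathcal F)\simeq\mathbf L\kappa^*(\mathcal F)$ independently of $i$. This is exactly the Milnor-patching viewpoint of \S\ref{sec:Milnor}, and it is conceptually cleaner: the mod-$Y$ equivalence comes for free from functoriality of derived pullback along the commuting square, with no Tor computation by hand. (A small notational slip: with the paper's labeling the compositions that agree are $\iota_1\circ j_2$ and $\iota_2\circ j_1$, not $\iota_i\circ j_i$; but since here $j_1=j_2$ as maps $Y\hookrightarrow\widehat X$, the point stands.)

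\textbf{The cost.} Your route forces you to do ideal theory over the non-domain $B=A\times_{A/J}A$. Two steps then need real justification that you have only sketched: (i) that an arbitrary $\tilde{\mathcal L}\in\Pic(D_{\widehat X})$ can be represented by an invertible ideal $\tilde I\subset B$ with the desired proper-intersection properties---the fractional-ideal formalism does not apply, and one must argue via lifting units along $A^\times\twoheadrightarrow(A/J)^\times$ (Lemma~\ref{lem:unit surj}) to align the two restrictions as \emph{equal} ideals of $A/J$ before gluing; and (ii) that $\mathbf L\iota_i^*(B/\tilde I)$ is concentrated in degree $0$, i.e.\ $\Tor_1^B(B/\tilde I,A)=0$---this holds because $\tilde I$ is projective of rank $1$, so $\tilde I\otimes_B A$ is a rank-$1$ projective over the domain $A$ and its map to $A$ is injective once nonzero. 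Both are manageable, but the paper's approach sidesteps them entirely by never leaving the regular ring $A$. Part~(2) is handled the same way in both approaches, via Lemma~\ref{lem:c1 exo commute} and Theorem~\ref{thm:pull-back moving}.
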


\begin{proof}
Since $Y$ is connected, the regular noetherian affine formal $k$-scheme $\widehat{X}$ is given as $\Spf (A)$ for a regular integral domain $A$. Let $Y= \Spec (B)$. Let $J \subset A$ be the ideal such that $A/J = B$ and $A$ is $J$-adically complete.

\medskip

To show that $c_1 (\tau \Pic (D_{\widehat{X}})) \subset \mathcal{M}^1 (\widehat{X}, Y, 0)$, we need to check that given two invertible sheaves $\mathcal{V}_1, \mathcal{V}_2 \in \Pic (\widehat{X})$ such that $\mathcal{V}_1 |_Y \simeq \mathcal{V}_2 |_Y$, we have $c_1 (\mathcal{V}_1)- c_1 (\mathcal{V}_2) \in \mathcal{M}^1 (\widehat{X}, Y, 0)$. 

We use the description of $\Pic (\widehat{X})$ in terms of the ideal class group ${\rm Cl} (A)$ of the invertible fractional ideals in \S \ref{sec:fractional ideal}, and translate the question in terms of commutative and homological algebra.

We can represent $\mathcal{V}_i$ by invertible fractional ideals. We can find representatives given by projective (integral) ideals $I_i \subset A$ for $i=1,2$ such that $I_i$ intersect properly with $J$. The condition $\mathcal{V}_1 |_Y \simeq \mathcal{V}_2|_Y$ reads $I_1 \otimes_A A/J = I_2 \otimes_A A/J$. This means the equality of the ideals
\begin{equation}\label{eqn:c1 ideal equal}
(I_1 + J)/ J = (I_2 + J)/ J \ \ \mbox{ in } A/ J.
\end{equation}

For the quotient ring $A/ I_i$, the short exact sequence
\begin{equation}\label{eqn:c1 proj resol}
0 \to I_i \to A \to A/ I_i \to 0
\end{equation}
gives a projective resolution $I_i \hookrightarrow A$ of the $A$-module $A/ I_i$ for $i=1,2$, because $I_i$ is a projective $A$-module (H. Matsumura \cite[Theorem 11.3, p.80]{Matsumura}).

Since $c_1 (\mathcal{V}_i)$ of Definition \ref{defn:c1 exo} is represented by the cycle $[A/I_i]$ on $\widehat{X}$ associated to the ring $A/ I_i$, to show that $c_1 (\mathcal{V}_1) - c_1 (\mathcal{V}_2) = [A/ I_1] - [A/I_2] \in \mathcal{M}^1 (\widehat{X}, Y,0)$, it remains to prove:
\medskip

\textbf{Claim:} \emph{We have an isomorphism
$$ 
A/ I_1 \otimes_A ^{\mathbf{L}} A/ J \simeq A/I_2 \otimes_A ^{\mathbf{L}} A/J
$$
as derived rings in $ \mathbf{sAlg} (A/J)$.}

\medskip

To prove the Claim, observe first:

\medskip

\textbf{Subclaim:} \emph{For all $j \geq 1$ and $i=1,2$, we have $\Tor_{j} ^A (A/I_i, A/J) = 0$.}

\medskip

By \eqref{eqn:c1 proj resol}, the two term complex $I_i \hookrightarrow A $ is a projective resolution of the $A$-module $A/I_i$ of length $1$. Thus we have  $\Tor_{j} ^A (A/I_i, A/J) = 0$ for $j \geq 2$ already. 

When $j=1$, we tensor the injection $I_i \hookrightarrow A $ with $(-)\otimes_A A/J$ to get $I_i \otimes_A A/J \to A \otimes_A A/J$, which is $(I_i +J)/ J \to A/ J$. This is still injective. Thus its kernel $\Tor_{1} ^A (A/I_i, A/J) = 0$. This proves the Subclaim.

\medskip

Since all higher Tor groups for $j>0$ are $0$ by the Subclaim, the statement of the Claim is equivalent to the isomorphism of the $\Tor_0$'s as algebras, i.e. the usual tensor products of algebras $ A/ I_1 \otimes_A  A/ J$ and $ A/I_2 \otimes_A A/J$ are isomorphic to each other as $(A/J)$-algebras. This follows from the equality of the ideals in \eqref{eqn:c1 ideal equal}. Hence the Claim follows, and this proves (1).

\medskip

Once we have the commutative diagram \eqref{eqn:c1 exo comm 0} of Lemma \ref{lem:c1 exo commute} with $Y_1= Y_2 = Y$, $X_1= X$, $X_2= X'$ as well as the induced map $f^*: \CH^1 (\widehat{X} \mod Y, 0) \to \CH^1 (\widehat{X}' \mod Y, 0)$ by Theorem \ref{thm:pull-back moving}, the commutativity of the right square in \eqref{eqn:1st Chern affine 1} follows. The equalities of the left triangle of \eqref{eqn:1st Chern affine 1} were proven in Proposition \ref{prop:Milnor line}. This proves (2).
\end{proof}

\begin{prop}\label{prop:Chern natural}

Let $Y_1$ and $Y_2$ be connected affine $k$-schemes of finite type. Suppose we have a commutative diagram
$$
\xymatrix{
Y_1 \ar[d] ^g  \ar@{^{(}->}[r] & X_1 \ar[d] ^f \\
Y_2  \ar@{^{(}->}[r] & X_2,}
$$
where the horizontal maps are closed immersions into equidimensional smooth $k$-schemes. Let $\widehat{X}_i$ be the completion of $X_i$ along $Y_i$ for $i=1,2$, and let $\widehat{f}: \widehat{X}_1 \to \widehat{X}_2$ be the induced morphism of formal schemes.

Then the following diagram
\begin{equation}\label{eqn:Chern natural 1}
\xymatrix{
\Pic (Y_2) \ar[d] ^{g^*} \ar[rr] ^{c_1 \ \ \ \ } & & \CH^1 (\widehat{X}_2 \mod Y_1, 0) \ar[d] ^{\widehat{f}^*} \\
\Pic (Y_1) \ar[rr] ^{c_1 \ \ \ \ } & & \CH^1 (\widehat{X}_1 \mod Y_2, 0)}
\end{equation}
commutes, where $\widehat{f}^*$ is the associated pull-back given in Theorem \ref{thm:pull-back moving}.
\end{prop}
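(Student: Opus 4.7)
The plan is to chase the square \eqref{eqn:Chern natural 1} by combining the Milnor-patching presentation of $\Pic$ from Proposition \ref{prop:Milnor line} with the commutativity at the exoskin level already recorded in Lemma \ref{lem:c1 exo commute}, then pass to the mod $Y$-quotients via Proposition \ref{prop:1st Chern affine}.

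First I would exhibit $g^* \colon \Pic(Y_2) \to \Pic(Y_1)$ as the descent of $\widehat{f}^* \colon \Pic(\widehat{X}_2) \to \Pic(\widehat{X}_1)$. The hypotheses yield a commutative square linking $g$ and $\widehat{f}$ with the closed immersions $Y_i \hookrightarrow \widehat{X}_i$; feeding the two copies of each vertical arrow into the universal property of the noetherian affine formal pushouts $D_{\widehat{X}_i} = \widehat{X}_i \coprod_{Y_i} \widehat{X}_i$ (which exist by the argument behind Theorem \ref{thm:Landsburg}(1)) produces a canonical morphism $D_{\widehat{X}_1} \to D_{\widehat{X}_2}$ compatible with $\iota_1, \iota_2, j_1, j_2$. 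Pulling line bundles back along this morphism shows that $\widehat{f}^*(\tau \Pic(D_{\widehat{X}_2})) \subset \tau \Pic(D_{\widehat{X}_1})$, and by functoriality of line-bundle pullback the induced map on Milnor quotients coincides with $g^*$ under the isomorphism of Proposition \ref{prop:Milnor line}.

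Second, given $\alpha \in \Pic(Y_2)$ I would lift it to $\mathcal{V} \in \Pic(\widehat{X}_2)$, which is possible by the surjectivity recorded in Lemma \ref{lem:Pic Milnor}, so that $\widehat{f}^* \mathcal{V}$ lifts $g^*\alpha$ by the previous step. The upper-right route in \eqref{eqn:Chern natural 1} then sends $\alpha$ to the class of $\widehat{f}^* \, c_1(\mathcal{V})$ in $\CH^1(\widehat{X}_1 \mod Y_1, 0)$, while the lower-left route sends it to the class of $c_1(\widehat{f}^* \mathcal{V})$. These two cycles are already equal in $\CH^1(\widehat{X}_1, 0)$ by Lemma \ref{lem:c1 exo commute}, and Proposition \ref{prop:1st Chern affine}(1) guarantees that the equality survives passage to the mod $Y_1$-quotient, yielding exactly the asserted commutativity.

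The main obstacle is to match the pullback $\widehat{f}^*$ of Theorem \ref{thm:pull-back moving}, defined as a zigzag through the graph factorization $\widehat{X}_1 \hookrightarrow \widehat{X}_{12} \to \widehat{X}_1 \times \widehat{X}_2 \twoheadrightarrow \widehat{X}_2$, with the naive cycle-level pullback on a projective divisor class $[\mathcal{O}_{\widehat{X}_2}/I]$ that appears on the right of Lemma \ref{lem:c1 exo commute}. At $n=0$ and on such Cohen--Macaulay divisors this reduces to a Tor-independence check: the projection and the further completion are flat (types (III) and (II) of Lemma \ref{lem:fpb}), hence contribute no higher Tor, while the graph immersion $\widehat{X}_1 \hookrightarrow \widehat{X}_{12}$ is l.c.i., so its derived pullback on coherent $\mathcal{O}$-algebras collapses to the ordinary intersection product (cf.\ Remark \ref{remk:lci} and the cycle-level argument in Proposition \ref{prop:1st indep}). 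This Tor-vanishing is the only non-formal verification needed; everything else is the diagram chase outlined above.
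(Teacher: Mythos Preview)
Your proposal is correct and follows essentially the same route as the paper: invoke Lemma \ref{lem:c1 exo commute} at the exoskin level and then descend via Proposition \ref{prop:1st Chern affine}. The paper's own proof is a two-line appeal to exactly these two results, so your argument is a more detailed unpacking of the same idea; in particular, your explicit lift-via-Milnor-patching and the Tor-compatibility check for $\widehat{f}^*$ are points the paper leaves implicit.
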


\begin{proof}
Once we have the horizontal maps $c_1$ of \eqref{eqn:Chern natural 1}, as proven in Proposition \ref{prop:1st Chern affine}, the commutativity of the diagram follows from Lemma \ref{lem:c1 exo commute}.
\end{proof}

\subsection{The first Chern class: the general case}\label{sec:c1 general}

Now let $Y$ be a connected separated $k$-scheme of finite type in $\Sep_k$. The basic idea is to exploit the above constructions given for the affine schemes, and to glue them through the cycle version of the \v{C}ech construction of \S \ref{sec:cycle Cech BGH}.

One may possibly cast a doubt on whether this approach might work, with the following sort of reasoning: for instance, for a given line bundle $\mathcal{M}$, one can find an affine open cover $\{ U_i \}$ of $Y$ such that each affine open set is small enough to have the trivial bundle $\mathcal{M}|_{U_i}$ on $U_i$. Then how do we construct $c_1 (\mathcal{M})$ out of the vanishing first Chern classes on the affine open subsets?

There is a fallacy though: for each fixed $\mathcal{M}$, we may have a fine enough affine open cover for which $\mathcal{M}$ is locally trivial, but we are not fixing a line bundle $\mathcal{M}$ first: we choose an affine open cover of $Y$ first, and then there is no reason to believe that an arbitrary line bundle is trivial over each of the given affine open subsets in the cover.

\medskip

Returning back to the construction of the first Chern class, for a given connected separated $k$-scheme $Y$ of finite type, choose a system $\mathcal{U} = \{ (U_i, X_i) \}_{i \in \Lambda}$ of local embeddings for $Y$ (see Definition \ref{defn:system embedd Hartshorne}) such that each open set $U_i$ is affine. The subcollection of such systems with affine open subsets is cofinal in the collection $S(Y)^{\op}$ of all systems. Here let $U_{ij}:= U_i \cap U_j$ and $X_{ij} := X_i \times X_j$ with the diagonal embedding $U_{ij} \hookrightarrow X_{ij}$ as we did in \S \ref{sec:Cech machine}. Since $Y$ is separated over $k$, each $U_{ij}$ is affine open again. In this situation, we want to construct
\begin{equation}\label{eqn:c1 general 1}
c_1: {\rm Pic} (Y) \to \BGH^1 (\mathcal{U}, 0).
\end{equation}
Once we do  it, via the natural map $\BGH^1 (\mathcal{U}, 0) \to \BGH^1 (Y, 0)$, we deduce the first Chern class $c_1$ we want.

In terms of the notations of \S \ref{sec:cycle Cech BGH}, consider the following sub-bi-complex of $\check{\rm C}^1 (\mathcal{U})$, concentrated in the cohomological bi-degrees $(0,0)$, $(1,0)$, $(1, -1)$, $(0,-1)$ respectively:
\begin{equation}\label{eqn:subbicx}
\xymatrix{
 \underset{i}{\prod} z^1 (\widehat{X}_i \mod U_i, 0)  \ar[r] ^{\delta_0 \ \ } & \underset{ i, j}{ \prod} z^1 (\widehat{X}_{ij} \mod U_{ij}, 0) \\
 \underset{i}{\prod} z ^1 ( \widehat{X}_i \mod U_i, 1)  \ar[u] ^{\partial} \ar[r] ^{\delta_1 \ \ } &  \underset{i,j}{ \prod} z^1 (\widehat{X}_{ij} \mod U_{ij}, 1), \ar[u] ^{\partial}
}
\end{equation}
where the vertical arrows $\partial$ are the cycle boundary maps and the horizontal arrows $\delta_0, \delta_1$ are part of the \v{C}ech boundary maps. 

Let $C^{\bullet}= C^{\bullet} (\mathcal{U})$ be the total complex of the bi-complex \eqref{eqn:subbicx}, and let $T^{\bullet}:= {\rm Tot}  \ \check{\rm C}^1 (\mathcal{U})$. 
The inclusion $C^{\bullet} \hookrightarrow T^{\bullet}$ of complexes induces 
\begin{equation}\label{eqn:subbicx1.5}
{\rm H}^0 (C^{\bullet}) \to {\rm H} ^0 (T^{\bullet}).
\end{equation}

On the other hand, taking the cokernels of the vertical maps of \eqref{eqn:subbicx}, we get the cohomological complex concentrated in degrees $0, 1$:
\begin{equation}\label{eqn:subbicx2}
D^{\bullet}:= D^{\bullet} (\mathcal{U}): \prod_i \CH^1 (\widehat{X}_i \mod U_i, 0) \overset{\delta_{\CH}}{\longrightarrow} \prod_{i,j} \CH^1 (\widehat{X}_{ij} \mod U_{ij}, 0).
\end{equation}

\textbf{Claim:} There is a natural homomorphism
\begin{equation}\label{eqn:subbicx3}
{\rm H}^0 (D^{\bullet}) = \ker \delta_{\CH} \to  {\rm H}^0 (C^{\bullet}).
\end{equation}

Indeed, if $a \in \ker \delta_{\CH}$, then it has a cycle representative $\alpha \in \prod_i z^1 (\widehat{X}_i \mod U_i, 0)$ such that 
\begin{equation}\label{eqn:subbicx4}
\delta_0 ( \alpha) = \partial (\beta)
\end{equation}
for some $\beta \in \prod_{i,j} z^1 (\widehat{X}_{ij} \mod U_{ij}, 1)$. The equality \eqref{eqn:subbicx4} implies that $(\alpha, \beta) \in {\rm Z}^0 (C^{\bullet})$, as $C^{\bullet}$ is the total complex of \eqref{eqn:subbicx}. Hence to define the map \eqref{eqn:subbicx3}, we send $a$ to the equivalence class $\overline{(\alpha, \beta)} \in {\rm H}^0 (C^{\bullet})$. The image of the left vertical arrow $\partial$ in \eqref{eqn:subbicx} is killed in ${\rm H}^0 (C^{\bullet})$ by definition, so that we deduce the map \eqref{eqn:subbicx3}, proving the Claim.

\medskip

 Hence we have the induced composite homomorphism
\begin{equation}\label{eqn:cech pic}
{\rm H}^0 (D^{\bullet}) \to {\rm H}^0 (C^{\bullet}) \to {\rm H}^0 (T^{\bullet}) = {\rm C}{\rm H}^1  (\mathcal{U}, 0) \to \BGH^1 (\mathcal{U}, 0) \to  \BGH^1 (Y, 0)
\end{equation}
by \eqref{eqn:subbicx1.5}, \eqref{eqn:subbicx3}, and Lemma \ref{lem:flasque natural cech}. Thus, to define $c_1 (\mathcal{M}) \in  \BGH^1 (Y, 0)$ for $\mathcal{M} \in \Pic (Y)$, 
we may construct a class in ${\rm H}^0 (D^{\bullet})$.

\medskip

Consider the following commutative diagram
\begin{equation}\label{eqn:pic Chow Cech}
\xymatrix{
\underset{i}{\prod} \Pic (U_i) \ar[r] ^{\delta_\Pic} \ar[d] ^{c_1} & \underset{i,j}{\prod} \Pic (U_{ij}) \ar[d] ^{c_1} \\
\underset{i}{\prod} \CH^1 (\widehat{X}_i \mod U_i, 0) \ar[r] ^{\delta_{\CH} \ \ } & \underset{i,j}{ \prod} \CH^1 (\widehat{X}_{ij} \mod U_{ij}, 0),
}
\end{equation}
where the vertical maps $c_1$ are given by the affine case in Proposition \ref{prop:1st Chern affine}-(1) because all of $U_i$, $U_{ij}$ are affine and the digram commutes by Proposition \ref{prop:Chern natural}. The complex $D^{\bullet}$ in \eqref{eqn:subbicx2} is equal to the bottom complex of \eqref{eqn:pic Chow Cech}.

For a given $\mathcal{M} \in \Pic (Y)$, consider the collection of restrictions $\mathcal{M}_i := \mathcal{M}|_{U_i}\in \Pic (U_i)$ over all $i \in \Lambda$. They satisfy $\mathcal{M}_i |_{U_{ij}} = \mathcal{M}_j |_{U_{ij}}$ for all $i, j \in \Lambda$, so that the collection $\{ \mathcal{M}_i \}_{i \in \Lambda}$ defines a member in $\ker \delta_{\Pic}$. By the diagram \eqref{eqn:pic Chow Cech}, the member in $\ker \delta_{\Pic}$ induces a member in $\ker \delta_{\CH} = {\rm H}^0 (D^{\bullet})$. Hence via \eqref{eqn:cech pic}, this defines a class in $\CH^1 (\mathcal{U}, 0)$, thus in $\BGH^1 (Y, 0)$ via the natural map. This gives a map $c_1: \Pic (Y) \to \BGH^1 (Y, 0)$ of \eqref{eqn:1st Chern BGH0}, which \emph{a priori} depends on $\mathcal{U}$.

\medskip

One needs to check that the above $c_1$ (temporarily call it $c_1 ^{\mathcal{U}}$) is independent of the choice of the system $\mathcal{U}$ of local embeddings. Let $\mathcal{V}$ be another system of local embeddings, whose underlying cover $|\mathcal{V}|$ of $Y$ consists of affine open subsets. By Lemma \ref{lem:common refinement}, we find a common refinement $\mathcal{W}$ of both $\mathcal{U}$ and $\mathcal{V}$. 

Thus we reduce to checking the independence when $\mathcal{V}= \{ (V_{i'} , X_{i'} ') \}_{i' \in \Lambda'}$ is a refinement of $\mathcal{U} = \{ (U_i, X_i) \}_{i \in \Lambda}$ via a set map $\lambda: \Lambda' \to \Lambda$. Recall for each $i' \in \Lambda'$, we have $V_{i'} \subset U_{\lambda (i')}$ and a morphism $X'_{i'} \to X_{\lambda (i')}$ that restricts to $V_{i'} \hookrightarrow U_{\lambda (i')}$. By Proposition \ref{prop:Chern natural} applied to this inclusion morphism of affine schemes for each $i' \in \Lambda'$, we have the commutative diagram
\begin{equation}\label{eqn:c1 aff refine}
\xymatrix{
\Pic (U_{\lambda (i')}) \ar[d] \ar[rr]^{c_1 \ \ \ \ \ } &  & \CH^1 ( \widehat{X}_{\lambda (i')} \mod U_{\lambda (i')}, 0) \ar[d] \\
\Pic (V_{i'}) \ar[rr]^{c_1 \ \ \ \ \ } & & \CH^1 (\widehat{X}_{i'} \mod V_{i'}, 0),}
\end{equation}
and similarly for $V_{i'j'} \subset U_{\lambda(i') \lambda (j')}$ as well.

Recall we also had the commutative diagrams \eqref{eqn:pic Chow Cech} for $\mathcal{U}$ and $\mathcal{V}$, respectively. 
Hence we deduce the following commutative diagram
$$
\xymatrix{
& \underset{i}{\prod} \Pic (U_i) \ar[dl]_{\lambda^*} \ar[dd] _{ {\begin{matrix} c_1  \\ \\ \\ \\ \end{matrix}} } \ar[rr]^{\delta_{\Pic} ^{\mathcal{U}}} & &\underset{i,j}{ \prod} \Pic ( U_{ij}) \ar[dl]_{\lambda^*} \ar[dd]^{c_1 }\\
\underset{i'}{\prod} \Pic (V_{i'}) \ar[dd] ^{c_1} \ar[rr] _{ \ \ \ \ \  \ \ \ \delta_{\Pic} ^{\mathcal{V}}} & & \underset{i',j'}{\prod} \Pic (V_{i'j'}) \ar[dd] _{{\begin{matrix} c_1  \\ \\ \\ \\ \end{matrix}} } & \\
& \underset{i}{ \prod} \CH^1 (\widehat{X}_i  / U_i) \ar[dl]_{\lambda^*} \ar[rr] ^{ \ \ \ \ \ \ \ \ \ \delta_{\CH} ^{\mathcal{U}}} & & \underset{i,j}{ \prod} \CH^1 (\widehat{X}_{ij}  / U_{ij}) \ar[dl]_{\lambda^*} \\
\underset{i'}{\prod} \CH^1 (\widehat{X}_{i'} ' / V_{i'}) \ar[rr] ^{\delta_{\CH} ^{\mathcal{V}}} & &\underset{i',j'}{ \prod} \CH^1 (\widehat{X}'_{i'j'}  / V_{i'j'}),& }
$$
where to save the space, we used the notations like $\CH^1 (A/ B)$ in the diagram to mean $\CH^1 (A \mod B, 0)$.
 
The above diagram induces the commutative diagram
$$
\xymatrix{
\Pic (Y) \ar[r] \ar[dr] & {\rm H}^0 (D^{\bullet} (\mathcal{U})) \ar[d]_{\lambda^*}  \ar[r]  & \CH^1 (\mathcal{U}, 0) \ar[d]^{\lambda^*} \ar[r] & \BGH^1 (Y, 0)\\
& {\rm H}^0 (D^{\bullet} (\mathcal{V}) ) \ar[r] & \CH^1 (\mathcal{V} , 0), \ar[ru] & }
$$
where the top composite via $\CH^1 (\mathcal{U}, 0)$ is $c_1 ^{\mathcal{U}}$ and the bottom composite via $\CH^1 (\mathcal{V}, 0)$ is $c_1 ^{\mathcal{V}}$. This commutativity shows that after taking the colimit to $\BGH^1 (Y, 0)$, $c_1$ is independent of the choice of $\mathcal{U}$, indeed.

\medskip

When $Y$ has a multiple number of connected components, we can work with the individual connected components and take their direct sum. This gives $c_1= c_1 (Y): \Pic (Y) \to \BGH^1 (Y, 0)$ for each $Y \in \Sep_k$. 

\medskip

We now have:

\begin{thm}
The first Chern class $c_1$ gives a natural transformation
$$
c_1: \Pic (-) \longrightarrow \BGH^1 (-, 0)
$$
of the contravariant functors on $\Sep_k$. In other words, there is a functorial first Chern class $c_1$ on $\Sep_k$ for the yeni Chow groups.
\end{thm}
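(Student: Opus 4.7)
The plan is to reduce naturality to the affine-level commutativity already established in Proposition \ref{prop:Chern natural}, and to carry it through the \v{C}ech machine of \S\ref{sec:cycle Cech BGH} used to build $c_1$ on $\Sep_k$ in \S\ref{sec:c1 general}. First I would reduce to the case where $Y_1$ and $Y_2$ are connected, since $c_1$ was defined on each connected component separately and both $\Pic(-)$ and $\BGH^1(-,0)$ respect the decomposition.

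Given a morphism $g: Y_1 \to Y_2$ in $\Sep_k$, I would choose a system $\mathcal{U} = \{(U_i, X_i)\}_{i \in \Lambda}$ of local embeddings for $Y_2$ with each $U_i$ affine, and then, using Lemma \ref{lem:system ass functor} together with Lemma \ref{lem:refinement ass}, produce a system $\mathcal{V} = \{(V_j, X'_j)\}_{j \in \Lambda'}$ for $Y_1$, again with each $V_j$ affine, such that $(\mathcal{U}, \mathcal{V})$ is associated via $g$ by a set map $\lambda: \Lambda' \to \Lambda$. The separatedness of $Y_1$ and $Y_2$ ensures all intersections $U_I$ and $V_J$ remain affine, so the affine construction of \S\ref{sec:c1 affine} applies at every multi-index. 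For each pair $(I, J) \in \Lambda^{p+1} \times (\Lambda')^{p+1}$ with $\lambda(J) = I$, the commutative square
$$
\xymatrix{
V_J \ar[d] \ar@{^(->}[r] & X'_J \ar[d] \\
U_I \ar@{^(->}[r] & X_I
}
$$
fits into the hypothesis of Proposition \ref{prop:Chern natural}, yielding a commutative square of affine-level $c_1$'s with the induced pull-back on $\CH^1(\widehat{X}_I \mod U_I, 0) \to \CH^1(\widehat{X}'_J \mod V_J, 0)$ of Theorem \ref{thm:pull-back moving}.

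Next I would assemble these squares over all multi-indices into a morphism of the double complexes \eqref{eqn:subbicx} and the derived complexes \eqref{eqn:subbicx2} defining $D^\bullet(\mathcal{U})$ and $D^\bullet(\mathcal{V})$, compatibly with the pull-back of Picard groups $g^*: \prod_I \Pic(U_I) \to \prod_J \Pic(V_J)$ given by $(L_I)_I \mapsto (g^* L_{\lambda(J)})_J$. This yields a commutative diagram linking the two versions of the composite \eqref{eqn:cech pic} for $\mathcal{U}$ and $\mathcal{V}$, so for every $\mathcal{M} \in \Pic(Y_2)$, tracing the construction through $\mathcal{U}$ then pulling back to the $\mathcal{V}$-side produces the same class as starting with $g^*\mathcal{M}$ and running the construction through $\mathcal{V}$. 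Passing to the homotopy colimit over pairs $(\mathcal{U}, \mathcal{V})$ associated via $g$, as in Definition \ref{defn:general pull-back}, gives the desired commutative square in $\BGH^1(-, 0)$.

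The main obstacle, in my view, is bookkeeping rather than conceptual: one must verify that the pull-back $g^*: \BGH^1(Y_2, 0) \to \BGH^1(Y_1, 0)$ arising from the homotopy-colimit formalism of Theorem \ref{thm:functoriality general} coincides, on classes coming from the affine-level cycle \v{C}ech construction via the map ${\rm H}^0(D^\bullet(\mathcal{U})) \to \BGH^1(Y_2, 0)$, with the naive pull-back induced by the associated pair $(\mathcal{U}, \mathcal{V})$. This is a compatibility between the flasque-sheaf model $\mathcal{S}^1_\bullet$ of \S\ref{sec:6.2.2} (entering via Lemma \ref{lem:flasque natural cech}) and the derived pull-back morphisms $g^*_{\mathcal{U},\mathcal{V}}$ of \eqref{eqn:functoriality general 1}; once that compatibility is checked using the construction of $g^*_{\mathcal{U},\mathcal{V}}$ in terms of Theorem \ref{thm:pull-back moving}, the naturality square descends to the colimit and delivers the natural transformation $c_1: \Pic(-) \to \BGH^1(-, 0)$ on $\Sep_k$.
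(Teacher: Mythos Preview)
Your proposal is correct and follows essentially the same approach as the paper: reduce the naturality square to the affine case via the \v{C}ech machine, where it is Proposition~\ref{prop:Chern natural}. The paper's proof is a two-sentence version of your argument, simply asserting that since both $c_1$ and $g^*$ are built through the \v{C}ech machine one is reduced to the affine case, whereas you spell out the bookkeeping (choosing associated pairs $(\mathcal{U},\mathcal{V})$, assembling the multi-index squares, and flagging the compatibility between the flasque model and the derived pull-back) that the paper leaves implicit.
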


\begin{proof}
We saw that for each $Y \in \Sep_k$, we have a well-defined $c_1 (Y): \Pic (Y) \to \BGH^1 (Y, 0)$. It remains to show that for any morphism $g: Y_1 \to Y_2$ in $\Sep_k$, the following diagram commutes
\begin{equation}\label{eqn:c1 funct0}
\xymatrix{
\Pic (Y_2) \ar[d] ^{g^*} \ar[rr] ^{c_1 (Y_2)} & & \BGH^1 (Y_2, 0) \ar[d] ^{g^*} \\
\Pic (Y_1) \ar[rr] ^{c_1 (Y_1)  } & & \BGH^1 (Y_1, 0),}
\end{equation}
where the left vertical $g^*$ is the pull-back of invertible sheaves and the right vertical $g^*$ is the pull-back of Theorem \ref{thm:functoriality general}.

Since both $c_1$ and $g^*$ are constructed using the \v{C}ech machine, we are reduced to checking the commutativity in the case  when $Y_1$ and $Y_2$ are both affine. But this is known by Proposition \ref{prop:Chern natural}.
\end{proof}

Given the first Chern class maps, we may wonder whether we can construct the higher Chern classes $c_i: K_0 (Y) \to \BGH^i (Y,0)$, following the sketches in SGA VI \cite{SGA6} and A. Grothendieck \cite[\S 2, \S 3]{Grothendieck BSMF}. We remark that, in \emph{loc.cits.}, Grothendieck assumed that $Y$ is smooth, but that assumption seems needed just to make sure that the classical Chow groups $\CH^* (Y)$ form a ring. This fails when $Y$ has singularities for the classical Chow groups. However, if we replace the classical Chow groups by the yeni Chow groups $\BGH^* (Y,0)$, it is a ring (see Theorem \ref{thm:general product}) regardless of the singularities of $Y$.

\section{Connections to the Milnor $K$-theory}\label{sec:preview}

In \S \ref{sec:preview}, we discuss a relationship between the Milnor $K$-theory and the yeni higher Chow groups in the Milnor range, and present some conjectural guesses as well as their implications.

\subsection{Milnor $K$-theory and graph homomorphisms}\label{sec:Milnor2}

In \S \ref{sec:Milnor2}, we work with the Milnor range $\BGH^n (A, n)$ with $q=n$ where $A$ is a $k$-algebra of finite type.

\subsubsection{The graph homomorphism}\label{subsec:Milnor}

We first show that there exists the graph homomorphism from the Milnor $K$-theory. Recall that the Milnor $K$-ring $K_*^M (A)$ is the graded tensor algebra $T_{\mathbb{Z}} ^* (A^{\times})$ of the units $A^{\times}$ over $\mathbb{Z}$ modded out by the two sided ideal generated by elements of the form $a \otimes (1-a)$ for $a \in A$ such that $a, 1-a \in A^{\times}$. The degree $n$ part is the $n$-th Milnor $K$-group $K_n ^M (A)$.

\medskip

We have a ring $R$ and its quotient $R/I$ by an ideal $I \subset R$, the group homomorphism $R ^{\times} \to (R/I)^{\times}$ is not surjective in general, but there are some cases where the map is indeed surjective. The following is well-known and easy to prove (see Hartshorne-Polini \cite[Lemma 5.2]{HP} and see also C. Weibel \cite[Exercise I-1.12-(iv), p.7]{Weibel}):

\begin{lem}\label{lem:unit surj}
Let $R$ be a ring and $I \subset R$ be an ideal. Assume either $R$ is a local ring or $R$ is $I$-adically complete. Then the group homomorphism $R^{\times} \to (R/I)^{\times}$ is surjective.
\end{lem}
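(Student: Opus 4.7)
The plan is to show that given any $\bar{x} \in (R/I)^{\times}$, any lift $x \in R$ is automatically a unit of $R$; surjectivity then follows. The point is that the unit condition on $\bar{x}$ supplies some $y \in R$ with $xy \equiv 1 \pmod I$, i.e.\ $xy = 1 - a$ for some $a \in I$, and both hypotheses on $R$ let us convert this into invertibility of $xy$ (hence of $x$) in $R$.

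For the local case, I would argue as follows. If $I = R$ the claim is vacuous, so assume $I$ is proper, hence contained in the unique maximal ideal $\mathfrak{m}$ of $R$. Then $xy - 1 \in I \subset \mathfrak{m}$, so $xy \notin \mathfrak{m}$, and the standard characterization of units in a local ring gives that $xy$ is a unit; consequently $x$ itself is a unit of $R$.

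For the $I$-adically complete case, I would use the geometric series trick: with $xy = 1 - a$ and $a \in I$, the partial sums $s_N := \sum_{j=0}^{N} a^j$ form a Cauchy sequence in the $I$-adic topology, so they converge to some $s \in R$ by completeness, and one checks $(1-a) s = 1$. Thus $xy$ is a unit in $R$, and hence so is $x$.

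There is no real obstacle here; both subcases reduce to the elementary fact that a preimage of a unit is a unit, and each hypothesis supplies a different mechanism (maximal-ideal characterization vs.\ convergence of the Neumann series) for the same conclusion. The only minor subtlety worth flagging is the degenerate case $I = R$ in the local setting, which must be dispatched separately before invoking $I \subset \mathfrak{m}$; in the complete case no such caveat is needed since the geometric series argument works uniformly.
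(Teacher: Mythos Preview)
Your proof is correct and is the standard elementary argument. The paper itself does not supply a proof but simply cites this as well-known (referring to Hartshorne--Polini and an exercise in Weibel's $K$-book), so your write-up in fact gives more detail than the paper does.
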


\begin{thm}\label{thm:graph general}
Let $Y= \Spec (A)$ be a $k$-algebra of finite type. Then:
\begin{enumerate}
\item For any closed immersion $Y \hookrightarrow X$ into an equidimensional smooth $k$-scheme and the resulting completion $\widehat{X} = \Spf (\widehat{R})$ of $X$ along $Y$, there is the graph homomorphism
\begin{equation}\label{eqn:Milnor-1}
gr_X: K_n ^M (Y) \to \CH^n (\widehat{X} \mod Y, n).
\end{equation}
\item Suppose there is another closed immersion $Y \hookrightarrow X'$into an equidimensional smooth $k$-scheme such that there is a morphism $f: X' \to X$ under $Y$. Let $\widehat{X}'$ be the completion of $X'$ along $Y$. Then the following diagram commutes:
 \begin{equation}\label{eqn:Milnor3}
 \xymatrix{
 K_n ^M (Y) \ar[r]^{gr_{X} \ \ \ \ }  \ar[rd] _{gr _{X'} \ \ \ \ }  & \CH^n (\widehat{X} \mod Y, n) \ar[d] ^{f^*} \\
 & \CH^n (\widehat{X}' \mod Y, n).}
 \end{equation}
\end{enumerate}
\end{thm}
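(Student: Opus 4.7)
The plan is to construct $gr_X$ in stages, first on tensor symbols built from lifts of units, then descend through the Steinberg and multilinearity relations, and finally handle the compatibility under changes of embedding by a Koszul-resolution computation.

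For (1), given a symbol $\{a_1,\ldots,a_n\}$ with $a_i \in A^\times$, write $\widehat{X} = \Spf(\widehat{R})$ with $\widehat{R}$ complete in the $J$-adic topology and $\widehat{R}/J = A$. By Lemma~\ref{lem:unit surj} each $a_i$ lifts to a unit $\tilde a_i \in \widehat R^\times$. Consider $\Gamma_{\tilde a} := V(y_1-\tilde a_1,\ldots,y_n-\tilde a_n) \subset \square_{\widehat X}^n$. This is an integral closed formal subscheme of codimension $n$, isomorphic to $\widehat X$ under the first projection. Since each $\tilde a_i$ is a unit (hence never $0$, and never $\infty$ inside the affine chart $\mathbb{A}^1 \subset \square$), $\Gamma_{\tilde a}$ is disjoint from $\widehat X \times F$ and from $\widehat X_{\rm red} \times F$ for every proper face $F$. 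Thus $[\Gamma_{\tilde a}] \in z^n(\widehat X,n)$ satisfies (\textbf{GP}), (\textbf{SF}) and has $\partial[\Gamma_{\tilde a}] = 0$. I will define $gr_X(\{a_1,\ldots,a_n\})$ to be the class of $[\Gamma_{\tilde a}]$ in $\CH^n(\widehat X \mod Y, n)$.

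The next step is to verify that this class is independent of the chosen lifts, and then that it satisfies the Milnor relations. If $\tilde a_i'$ is another lift of $a_i$, then the sequences $(y_i - \tilde a_i)$ and $(y_i - \tilde a_i')$ are both regular on the regular formal scheme $\square_{\widehat X}^n$, so their Koszul complexes are free resolutions of $\mathcal{O}_{\Gamma_{\tilde a}}$ and $\mathcal{O}_{\Gamma_{\tilde a'}}$ respectively. Applying $-\otimes^{\mathbf L}_{\mathcal O_{\square_{\widehat X}^n}} \mathcal O_{\square_Y^n}$ yields in both cases the Koszul complex on $\square_Y^n$ for the common regular sequence $(y_i - a_i)$, with its canonical derived $\mathcal O_{\square_Y^n}$-algebra structure. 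Hence $(\mathcal O_{\Gamma_{\tilde a}}, \mathcal O_{\Gamma_{\tilde a'}}) \in \mathcal{L}^n(\widehat X, Y, n)$ and $[\Gamma_{\tilde a}] - [\Gamma_{\tilde a'}] \in \mathcal{M}^n(\widehat X, Y, n)$ by Definition~\ref{defn:mod Y equiv}. This produces a well-defined map from the tensor algebra on $A^\times$ to $\CH^n(\widehat X \mod Y, n)$.

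To descend to $K_n^M(A)$, multilinearity $\{aa', a_2,\ldots,a_n\} = \{a,a_2,\ldots,a_n\} + \{a',a_2,\ldots,a_n\}$ follows from an explicit $z^n(\widehat X, n+1)$ homotopy parametrized in the extra cube variable $y_{n+1}$, interpolating the first coordinate between the three relevant unit values. The principal difficulty is the Steinberg relation $\{a,1-a\}=0$ when $a,1-a \in A^\times$: lift $a$ and $1-a$ to units $\tilde a, \widetilde{1-a} \in \widehat R^\times$ using Lemma~\ref{lem:unit surj}, and adapt the standard Nesterenko-Suslin-Totaro homotopy cycle in $z^2(\widehat X, 3)$ to the formal setting, so that its boundary equals $[\Gamma_{\tilde a, \widetilde{1-a}}]$ modulo $\mathcal M^2(\widehat X, Y, 2)$. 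The main obstacle is verifying the admissibility conditions (\textbf{GP}) and (\textbf{SF}) for this homotopy cycle across every face specialization; however, since the coefficients remain units after reduction modulo any ideal of definition, each face intersection is either empty or of the expected codimension, reducing the checks to the classical situation. The identity $1 + (-1) = 0$ for the relevant symbols is the same cycle-level identity as in the smooth case.

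For (2), the morphism $f\colon X' \to X$ under $Y$ induces $\widehat f\colon \widehat X' \to \widehat X$ and a ring homomorphism $\widehat f^\#\colon \widehat R \to \widehat R'$. Any lift $\tilde a_i \in \widehat R^\times$ of $a_i$ pushes forward to a lift $\widehat f^\#(\tilde a_i) \in (\widehat R')^\times$. The regular sequence $(y_i - \tilde a_i)$ cutting out $\Gamma_{\tilde a}$ pulls back along $\widehat f \times \id$ to the regular sequence $(y_i - \widehat f^\#(\tilde a_i))$, so the associated pull-back $\widehat f^*$ from Theorem~\ref{thm:pull-back moving} (realized on these specific cycles by derived restriction of $\mathcal O_{\Gamma_{\tilde a}}$, whose Koszul resolution remains a resolution after pull-back because proper intersection is preserved) sends $[\Gamma_{\tilde a}]$ to $[\Gamma_{\widehat f^\#(\tilde a)}] = gr_{X'}(\{a_1,\ldots,a_n\})$. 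This is exactly the commutativity of \eqref{eqn:Milnor3}.
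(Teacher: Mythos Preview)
Your proposal is essentially correct and uses the same key mechanism as the paper --- graph cycles and the Koszul-resolution argument for independence of lifts --- but the order of operations differs in a way that leaves a small gap in your Steinberg verification.

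The paper first constructs $gr_{\widehat R}\colon K_n^M(\widehat R)\to \CH^n(\widehat X,n)$ at the level of $\widehat R$ itself, citing \cite[Lemma~2.1]{EVMS} for Steinberg: if $a\in\widehat R$ with $a,1-a\in\widehat R^\times$ then the classical homotopy applies verbatim, since $a+(1-a)=1$ holds in $\widehat R$. Only afterwards does the paper pass to $A=\widehat R/\widehat I$, using the surjection $K_n^M(\widehat R)\twoheadrightarrow K_n^M(A)$ (Lemma~\ref{lem:unit surj}) together with your Koszul argument to show the composite factors through the mod~$Y$ quotient. In this order the Steinberg relation is free.

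You instead work directly with symbols in $K_n^M(A)$. When you write ``lift $a$ and $1-a$ to units $\tilde a,\widetilde{1-a}\in\widehat R^\times$'', independent lifts need not satisfy $\tilde a+\widetilde{1-a}=1$, so the Nesterenko--Suslin--Totaro homotopy does not apply to the pair $(\tilde a,\widetilde{1-a})$ as stated. The fix is immediate: since $\widehat R$ is $\widehat I$-adically complete and $1-a\in A^\times$, the specific lift $1-\tilde a$ is already a unit in $\widehat R$, so you may take $\widetilde{1-a}=1-\tilde a$; then the classical argument runs in $z^2(\widehat X,3)$ with no adaptation needed. Alternatively, invoke your own independence-of-lifts step to replace an arbitrary $\widetilde{1-a}$ by $1-\tilde a$ modulo $\mathcal M^2(\widehat X,Y,2)$ first. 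Either way, once this is said explicitly your argument goes through; the paper's factorization through $K_n^M(\widehat R)$ simply avoids having to say it.
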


\begin{proof}
(1) We may assume $Y= \Spec (A)$ is connected. There is an ideal $\widehat{I} \subset \widehat{R}$ such that $\widehat{R}/ \widehat{I} = A$ and $\widehat{R}$ is $\widehat{I}$-adically complete.

\medskip

For $\widehat{X}:=\Spf (\widehat{R})$, we first construct a homomorphism
\begin{equation}\label{eqn:Milnor0}
gr_{\widehat{R}} : K_n ^M (\widehat{R}) \to \CH^n (\Spf (\widehat{R}), n)
\end{equation}
as follows. We use the argument of \cite[Lemma 2.1]{EVMS}, which did the job for $\Spec (\widehat{R})$. The map is given by sending the Milnor symbol $\{ a_1, \cdots, a_n \}$ (where $a_i \in \widehat{R}^{\times}$ such that $1-a_i \in \widehat{R}^{\times}$) to the integral closed formal subscheme $\Gamma_{(a_1, \cdots, a_n)}$ of $\Spf (\widehat{R}) \times_k \square_k ^n$ given by the set of polynomials
$$
 \{y_1 - a_1 , \cdots, y_n - a_n\}.
$$ 

Indeed, for any codimension $1$ face $F= \{ y_i = \epsilon \} \subset \square_k^n$ for $\epsilon \in \{ 0, \infty \}$, we have $\Gamma_{ (a_1, \cdots, a_n ) } \cap (\widehat{X} \times F) = \emptyset$ because $a_i \in \widehat{R}^{\times}$. Thus 
$$
\Gamma_{(a_1, \cdots, a_n) } \in \ker \left( \partial : z^n (\Spf (\widehat{R}), n) \to z^{n} (\Spf (\widehat{R}), n-1) \right).
$$

That this map kills the Steinberg relations modulo boundaries follows exactly as in \emph{loc.cit.}, and 
we obtain the homomorphism \eqref{eqn:Milnor0}. We shrink details.

\medskip

The important point is to show that the map \eqref{eqn:Milnor0} descends to give the map \eqref{eqn:Milnor-1}. 

For any Milnor symbol $\{ b_1, \cdots, b_n \} \in K_n ^M (A) = K_n ^M (\widehat{R}/ \widehat{I})$, with $b_i \in A^{\times}$ such that $1-b_i \in A^{\times}$, choose liftings $a_1, \cdots, a_n \in \widehat{R}^{\times}$ of $b_1, \cdots, b_n$ (which is possible by Lemma \ref{lem:unit surj}), and send the Milnor symbol $\{ a_1, \cdots, a_n \}\in K_n ^M (\widehat{R})$ to the cycle class of $\Gamma_{(a_1, \cdots, a_n)}$ in $\CH^n (\Spf (\widehat{R}) \mod Y, n)$. 

\medskip

To prove that this map is well-defined, choose another sequence of liftings $a_1 ', \cdots a_n ' \in \widehat{R}^{\times}$ of $b_1, \cdots, b_n$. Here we have $a_i  - a'_i \in \widehat{I}$ for $1 \leq i \leq n$. It remains to show that
\begin{equation}\label{eqn:graph mod Y}
\mathfrak{Z}:= \Gamma_{(a_1, \cdots, a_n)} \sim_{Y} \Gamma_{(a_1 ', \cdots, a_n ')}=:\mathfrak{Z}',
\end{equation}
i.e. they are mod $Y$-equivalent.

\medskip

We claim that $(\mathcal{O}_{\mathfrak{Z}}, \mathcal{O}_{\mathfrak{Z}'}) \in \mathcal{L}^n (\widehat{X}, Y, n)$, i.e. that there is an isomorphism
\begin{equation}\label{eqn:Koszul0}
\mathcal{O}_{\mathfrak{Z}} \otimes_{\mathcal{O}_{\square^n _{\widehat{X}}}} ^{\mathbf{L}} \mathcal{O}_{\square^n_Y} \simeq \mathcal{O}_{\mathfrak{Z}'}  \otimes_{\mathcal{O}_{\square^n _{\widehat{X}}}} ^{\mathbf{L}} \mathcal{O}_{\square^n_Y}
\end{equation}
in $\sAlg (\mathcal{O}_{\square_Y^n})$. 

\medskip

Here, $\mathcal{O}_{\mathfrak{Z}}$ is defined by the explicit sequence of linear polynomials
$$
 y_1 - a_1,  \ \ \cdots, \ \  y_n - a_n  \ \ \in  \widehat{R} \{ y_1, \cdots, y_n \}
 $$
and they give a regular sequence. Hence we can use an explicit simplicial resolution given by the Koszul derived ring of the regular sequence (see, for instance in \cite[4.12$\sim$4.15]{Iyengar}. Or in terms of the Koszul derived (formal) scheme as in \cite[Definition 3.1]{KST IM}), we have a simplicial resolution
$$
\mathcal{K}_{\bullet} (y_{\cdot}- a_{\cdot}) \to \mathcal{O}_{\mathfrak{Z}} \to 0,
$$
where 
$$\mathcal{K}_{\bullet} (y_{\cdot}- a_{\cdot}) \simeq \frac{\mathcal{O}_{\square_{\widehat{X}}^n}}{ (y_1 - a_1)} \otimes ^{\mathbf{L}} _{\mathcal{O}_{\square_{\widehat{X}}^n}} \cdots  \otimes ^{\mathbf{L}} _{\mathcal{O}_{\square_{\widehat{X}}^n}} \frac{\mathcal{O}_{\square_{\widehat{X}}^n}}{ (y_n - a_n)}
$$
 in $\sAlg (\mathcal{O}_{\square_{\widehat{X}}^n})$. Similarly, we have a simplicial resolution $\mathcal{K}_{\bullet} (y_{\cdot}- a'_{\cdot}) \to \mathcal{O}_{\mathfrak{Z}'} \to 0$ of $\mathcal{O}_{\mathfrak{Z}'}$ as well.

Having chosen simplicial resolutions of $\mathcal{O}_{\mathfrak{Z}}$ and $\mathcal{O}_{\mathfrak{Z}'}$, the statement \eqref{eqn:Koszul0} is equivalent to that we have an isomorphism 

\begin{equation}\label{eqn:Koszul}
 \mathcal{K}_{\bullet}   (y_{\cdot} - a_{\cdot}) \otimes _{\mathcal{O}_{\square^n _{\widehat{X}}}} \mathcal{O}_{\square^n _Y} \simeq \mathcal{K}_{\bullet} ( y_{\cdot} - a'_{\cdot}) \otimes _{\mathcal{O}_{\square^n _{\widehat{X}}}} \mathcal{O}_{\square^n _Y}
 \end{equation}
 in $\sAlg (\mathcal{O}_{\square_Y ^n})$.

Since each $\mathcal{O}_{\square_{\widehat{X}}^n}/ (y_i - a_i)$ is quasi-isomorphic to the locally free resolution $\mathcal{O}_{\square_{\widehat{X}}^n} \overset{y_i - a_i}{\longrightarrow} \mathcal{O}_{\square_{\widehat{X}}^n}$, and since the sequence $\{y_1 - \bar{a}_1, \cdots, y_n - \bar{a}_n \}$ in $A[y_1, \cdots, y_n] = \widehat{R}\{ y_1, \cdots, y_n \} / (I)$ is again a regular sequence, tensoring with $\mathcal{O}_{\square_Y^n}$ over $\mathcal{O}_{\square_{\widehat{X}}^n}$ as did in \eqref{eqn:Koszul} gives also simplicial resolutions of $\mathcal{O}_{\bar{\mathfrak{Z}}}$ and $ \mathcal{O}_{\bar{\mathfrak{Z}}'}$. Here, $\bar{\mathfrak{Z}}$ and $\bar{\mathfrak{Z}}'$ are the mod $I$-reductions of $\mathfrak{Z}$ and $\mathfrak{Z}'$, and they are equal in $\square_Y ^n$ because $\bar{a}_j = \bar{a}_j '$ for all $1 \leq j \leq n$. 
 
 In other words, the derived rings $\mathcal{K}_{\bullet} (y_{\cdot} - \bar{a}_{\cdot})$ and $ \mathcal{K}_{\bullet} ( y _{\cdot} - \bar{a}'_{\cdot})$ give simplicial resolutions of $\mathcal{O}_{\bar{\mathfrak{Z}}}$ and $ \mathcal{O}_{\bar{\mathfrak{Z}}'}$, while we have the equalities $\mathcal{O}_{\bar{\mathfrak{Z}}}= \mathcal{O}_{\bar{\mathfrak{Z}}'}$ and
   \begin{equation}\label{eqn:Koszul2}
 \mathcal{K}_{\bullet}  (y_{\cdot} - \bar{a}_{\cdot}) = \mathcal{K}_{\bullet} ( y _{\cdot} - \bar{a}'_{\cdot}).
 \end{equation}
 In particular, this is an isomorphism in $\sAlg (\mathcal{O}_{\square_Y^n})$, i.e. \eqref{eqn:Koszul} is an isomorphism. This in turn gives an isomorphism \eqref{eqn:Koszul0}, proving the Claim.

 \medskip
 
 Hence, $[\mathfrak{Z} ] - [\mathfrak{Z}'] = [\mathcal{O}_{\mathfrak{Z}}] - [ \mathcal{O}_{\mathfrak{Z}'}] \in \mathcal{M}^n (\widehat{X}, Y, n)$, proving \eqref{eqn:graph mod Y}. This shows that we have the map \eqref{eqn:Milnor-1}.

 \medskip
 
 (2) We now suppose there is another closed immersion $Y \hookrightarrow X'$ with $\widehat{X}' = \Spf (\widehat{R}')$, we assume there is a morphism $f: X' \to X$ under $Y$. This induces the morphism $\widehat{f}: \widehat{X}' \to \widehat{X}$ under $Y$, and there corresponds a ring homomorphism $\widehat{R} \to \widehat{R}'$ as well. By Theorem \ref{thm:pull-back moving}, we know that there exists a pull-back $f^*: \CH^n (\widehat{X} \mod Y, n) \to \CH^n (\widehat{X}' \mod Y, n)$. Here $\widehat{R}'$ is $\widehat{I}'$-adically complete and $\widehat{R}'/\widehat{I}' = A$.
 
 Since the graph maps send Milnor symbols to graph cycles, we immediately have the commutative diagram
 \begin{equation}\label{eqn:Milnor2}
 \xymatrix{
 K_n ^M (\widehat{R}) \ar[d] \ar[r]  & \CH^n (\widehat{X} \mod Y, n) \ar[d] ^{f^*} \\
 K_n ^M (\widehat{R}') \ar[r] & \CH^n (\widehat{X}' \mod Y, n).}
 \end{equation}
 The existence of the maps \eqref{eqn:Milnor-1} for $\widehat{R}$ and $\widehat{R}'$ together with the diagram \eqref{eqn:Milnor2} implies the diagram \eqref{eqn:Milnor3}, because $\widehat{R}'/\widehat{I}' = \widehat{R}/\widehat{I} = A$.
 \end{proof}

\subsubsection{For semi-local $k$-schemes}

The map \eqref{eqn:Milnor-1} in Theorem \ref{thm:graph general} is not in general an isomorphism even when $Y= \Spec (A)$ is smooth over $k$. However when $A$ is a \emph{semi-local} $k$-algebra essentially of finite type, i.e. it is obtained by localizing a finite type $k$-algebra at a finite set of scheme points, then we guess that it is an isomorphism. 

One little problem in the above assertion is that, so far we did \emph{not} define $\CH^q (\widehat{X} \mod Y, n)$ or $\BGH^q (Y, n)$ for such semi-local $k$-schemes $Y$ essentially of finite type.

\medskip

Recall from Remarks \ref{remk:stalk realize} and \ref{remk:stalk realize 2} that for a given point $y \in |Y| = |\widehat{X}|$, when we consider the colimit 
$$
\varinjlim_{y \in U} z^q (\widehat{X} |_U \mod U, \bullet),
$$
we do not know whether it can be realized as a cycle complex of a suitable formal scheme. Likewise, we do not know whether the colimit of the cycle class groups
$$
\varinjlim_{y \in U} \CH^q (\widehat{X} |_U \mod U, n)
$$
can be realized as a cycle class group of a concrete geometric object, either.

But, maybe this is unnecessary for our purposes. In this article, we try the following more practical approach for semi-local $k$-schemes essentially of finite type:

\begin{defn}\label{defn:HCG semi-local}
Let $Y$ be a semi-local $k$-scheme essentially of finite type obtained by localizing an affine $k$-scheme $\tilde{Y}$ of finite type at a finite set $\Sigma \subset \tilde{Y}$ of scheme points. Let's write it as $\tilde{Y}_{\Sigma} =Y$.

Define the yeni higher Chow group of $Y$ to be the (double) colimit
\begin{equation}\label{eqn:BGH semi-local}
\BGH^q (Y, n):= \varinjlim_{\Sigma \subset U \subset \tilde{Y}} \BGH^q (U, n),
\end{equation}
where the last colimit is taken over all open subschemes $\mathcal{U}$ of $\tilde{Y}$ that contain $\Sigma$. Note that this colimit exists because $\BGH^q (-, n)$ is a contravariant functor on $\Sch_k$, and the category of abelian groups is cocomplete.

Because the motivic cohomology on smooth schemes (i.e. higher Chow groups on smooth schemes) is continuous in that it satisfies \eqref{eqn:BGH semi-local} (see e.g. M. Kerz \cite[Lemma 1.0.8, p.9]{Kerz thesis}), our definition is consistent with this known fact.
\qed
\end{defn}

We may equally describe \eqref{eqn:BGH semi-local} as follows, too. Let $S(\tilde{Y})$ be the collection of systems of local embeddings for $\tilde{Y}$. Let $S (\tilde{Y})_{\Sigma}$ be the subcollection of $\mathcal{U} \in S(\tilde{Y})$ such that for each $(U_i, X_i) \in \mathcal{U}$, we have $\Sigma \subset U_i$. Then we have
\begin{equation}\label{eqn:BGH semi-local 2}
\BGH^q (Y, n) = \varinjlim_{\mathcal{U} \in S (\tilde{Y})_{\Sigma} ^{\op}} \CH^q (\mathcal{U}, n).
\end{equation}

\medskip

This construction is functorial on the category $({\rm Loc}_k)$ of semi-local $k$-schemes essentially of finite type. This category $({\rm Loc}_k)$ is defined as follows. The objects are semi-local $k$-schemes essentially of finite type. The morphisms are, roughly speaking, the local $k$-morphisms induced from those in $\Aff_k$.

More precisely, let $Y_1$ and $Y_2$ be semi-local $k$-schemes essentially of finite type. Let's say that a morphism $g: Z_1 \to Z_2$ in $\Aff_k $ is \emph{relative to $(Y_1, Y_2)$}, if (1) for $i=1,2$, there exist nonempty finite subsets $\Sigma_i \subset Z_i$ such that ${Z_i}_{\Sigma_i} \simeq Y_i$ as $k$-schemes, with the maximal ideals of both sides correspond to each other, and (2)  for each $p \in \Sigma_1$, we have $g(p) \in \Sigma_2$. Note that a morphism $g: Z_1 \to Z_2$ relative to $(Y_1, Y_2)$ induces a local $k$-morphism $\bar{g}: Y_1 \to Y_2$, i.e. it sends the closed points to closed points. In this case, let's say \emph{$g$ is a spreading of $\bar{g}$.}

For two morphisms $g: Z_1 \to Z_2$ and $g': Z_1' \to Z_2'$ in $\Aff_k$, both relative to $(Y_1, Y_2)$, we say that $g \equiv g' $ rel $(Y_1, Y_2)$, if their induced local $k$-morphisms $\bar{g} , \bar{g}' : Y_1 \to Y_2$ are equal. One checks that $\equiv$ is an equivalence relation. We define an explicit morphism in $({\rm Loc}_k)$ from $Y_1$ to $Y_2$ to be the set of equivalence classes of the morphisms in $\Aff_k$ relative to $(Y_1, Y_2)$. 

One checks that $({\rm Loc}_k)$ is a category with the above notions of objects and morphisms. Since the yeni higher Chow theory  $\BGH^q ( -, n)$ is a contravariant functor on $\Sch_k$, by taking the colimits, we deduce the contravariant functor $\BGH^q (- , n)$ on $({\rm Loc}_k)$.

\medskip

Since the Milnor $K$-theory enjoys the continuity property (see M. Kerz \cite[Proposition 6, p.177]{Kerz finite}) like the one for $\BGH^q (-, n)$ in \eqref{eqn:BGH semi-local} and \eqref{eqn:BGH semi-local 2}, Theorem \ref{thm:graph general} implies:

\begin{cor}\label{cor:graph semi-local}
Let $A$ be a semi-local $k$-algebra essentially of finite type. Then there is the graph homomorphism
\begin{equation}\label{eqn:Milnor-1-semi-local}
gr_A: K_n ^M (A) \to \BGH^n (A, n).
\end{equation}
\end{cor}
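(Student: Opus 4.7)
The plan is to deduce the corollary from Theorem \ref{thm:graph general} by combining it with the continuity of the Milnor $K$-theory. Fix a presentation $A = \mathcal{O}(\tilde{Y})_{\Sigma}$, where $\tilde{Y}$ is an affine $k$-scheme of finite type and $\Sigma$ is a finite set of scheme points. The affine open neighborhoods $U \subset \tilde{Y}$ of $\Sigma$ form a cofinal system in the colimit \eqref{eqn:BGH semi-local} defining $\BGH^n(A, n)$, so it suffices to construct a compatible family of graph homomorphisms $gr_U : K_n^M(\mathcal{O}(U)) \to \BGH^n(U, n)$ for such $U$ and then pass to the colimit.

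For each affine open $U \supset \Sigma$, I would fix a closed immersion $U \hookrightarrow X$ into an equidimensional smooth $k$-scheme. Theorem \ref{thm:graph general}(1) yields $gr_X : K_n^M(\mathcal{O}(U)) \to \CH^n(\widehat{X} \mod U, n)$; composing with the natural map of Corollary \ref{cor:two BGH map} and then with the map from the singleton system of local embeddings $\{(U, X)\}$ into the colimit defining $\BGH^n(U, n)$ produces a candidate $gr_{U, X} : K_n^M(\mathcal{O}(U)) \to \BGH^n(U, n)$.

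The next step is to verify that $gr_{U, X}$ does not depend on the auxiliary embedding $X$. Given two such $X_1, X_2$, the diagonal produces a closed immersion $U \hookrightarrow X_1 \times X_2$ sitting over both $X_i$, and the singleton system $\{(U, X_1 \times X_2)\}$ refines each $\{(U, X_i)\}$; Theorem \ref{thm:graph general}(2) then says that the two graph maps agree in $\CH^n(\widehat{X_1 \times X_2} \mod U, n)$, which in turn equalizes the two candidates inside $\BGH^n(U, n)$. The resulting canonical homomorphism $gr_U$ is compatible with restriction to smaller $V \subset U$ by the same argument, applied to the embeddings $V \hookrightarrow X' \hookrightarrow X$ where $X' \subset X$ realizes $V$ as $X' \cap U$, combined with the pull-back functoriality of Theorem \ref{thm:functoriality general}.

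At this point one passes to the colimit: the continuity of Milnor $K$-theory of M. Kerz \cite{Kerz finite} gives $K_n^M(A) = \varinjlim_{\Sigma \subset U} K_n^M(\mathcal{O}(U))$, while Definition \ref{defn:HCG semi-local} gives $\BGH^n(A, n) = \varinjlim_{\Sigma \subset U} \BGH^n(U, n)$, and the compatible system $\{gr_U\}$ induces $gr_A$. The main obstacle I expect is the well-definedness step above: Theorem \ref{thm:graph general}(2) provides the required compatibility only at the level of $\CH^n(\widehat{X} \mod U, n)$, so one must verify that this compatibility is preserved after the transitions first to $\BGH^n(\widehat{X} \mod U, n)$ and then to the \v{C}ech colimit $\BGH^n(U, n)$. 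This is essentially a matter of invoking the functoriality of $\BGHz^q$ under refinements of systems of local embeddings built into the formalism of \S \ref{sec:finite type}, but it requires careful bookkeeping and prevents the conclusion from being a completely immediate consequence of Theorem \ref{thm:graph general}.
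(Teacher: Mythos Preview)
Your proposal is correct and follows the same approach the paper intends: the paper states the corollary as an immediate consequence of Theorem \ref{thm:graph general} together with the continuity of Milnor $K$-theory (Kerz \cite{Kerz finite}) and the colimit definition \eqref{eqn:BGH semi-local} of $\BGH^n(A,n)$, giving no further argument. What you have written is a careful unpacking of exactly that implication, including the well-definedness and compatibility checks that the paper leaves implicit.
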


On this graph map, we make the following conjectural guess:

\begin{conj}\label{conj:00}
Let $k$ be a field with $|k| \gg 0$. Then for each semi-local $k$-algebra $A$ essentially of finite type, the graph homomorphism \eqref{eqn:Milnor-1-semi-local} is an isomorphism.
\end{conj}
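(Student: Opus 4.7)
The plan is to reduce the conjecture to Kerz's theorem \cite{Kerz Gersten} for regular semi-local rings via the exoskin and to match the subgroup responsible for the mod $Y$-equivalence on the cycle side with the kernel of the Milnor $K$-quotient induced by $\widehat{R} \twoheadrightarrow A$.

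By the continuity of $K_*^M$ (\cite[Proposition 6]{Kerz finite}) and of $\BGH^n(-,n)$ (Definition \ref{defn:HCG semi-local}), one may replace $Y=\Spec(A)$ by a finite-type representative, fix a closed immersion $Y\hookrightarrow X$ into a smooth $k$-scheme, and pass to the exoskin $\widehat{X}=\Spf(\widehat{R})$ with $\widehat{R}$ a regular semi-local $k$-ring, complete with respect to the ideal $\widehat{I}$ of $Y$, and $\widehat{R}/\widehat{I}=A$. I would then analyze the commutative square
$$
\xymatrix{
K_n^M(\widehat{R}) \ar[r]^{gr_{\widehat{R}}} \ar@{->>}[d]_{\pi} & \CH^n(\widehat{X},n) \ar@{->>}[d] \\
K_n^M(A) \ar[r]^{gr_A \ \ \ } & \CH^n(\widehat{X} \mod Y,n),
}
$$
where $\pi$ is surjective by Lemma \ref{lem:unit surj}, the right vertical arrow is surjective by definition of the mod $Y$-equivalence, and the top horizontal map $gr_{\widehat{R}}$ is an isomorphism by combining Kerz's theorem for $\widehat{R}$ with the moving lemma of Theorem \ref{thm:formal moving} (which allows one to represent any class in $\CH^n(\widehat{X},n)$ by graph cycles and thereby verifies (\textbf{SF}) automatically in the Milnor range). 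The conjecture then reduces to showing that under $gr_{\widehat{R}}$ the subgroup $\ker\pi$ corresponds bijectively to the image $\overline{\mathcal{M}}^n(\widehat{X},Y,n)\subset \CH^n(\widehat{X},n)$; the right vertical composite into $\BGH^n(A,n)$ then delivers $gr_A$ as an isomorphism.

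The forward inclusion $gr_{\widehat{R}}(\ker\pi)\subset \overline{\mathcal{M}}^n(\widehat{X},Y,n)$ has essentially been carried out inside the proof of Theorem \ref{thm:graph general}: for a generator $\{1+aj,a_2,\ldots,a_n\}$ of $\ker\pi$ with $j\in\widehat{I}$, the Koszul identification \eqref{eqn:Koszul}--\eqref{eqn:Koszul2} modulo $\widehat{I}$ shows that the graph cycle $\Gamma_{(1+aj,a_2,\ldots,a_n)}$ is mod $Y$-equivalent to the trivial graph $\Gamma_{(1,a_2,\ldots,a_n)}$. The reverse inclusion is the hard part. Using Lemma \ref{lem:modulus simple generator} one reduces to a single pair $(\mathcal{A}_1,\mathcal{A}_2)\in\mathcal{L}^n(\widehat{X},Y,n)$; then the moving lemma of \S \ref{sec:first indep} together with a multi-variable adic Weierstrass preparation for the restricted formal power series ring $\widehat{R}\{y_1,\ldots,y_n\}$ (cf. Lemma \ref{lem:rest poly 1} and Remark \ref{remk:2.5.2}) should normalize each integral component of $[\mathcal{A}_i]$ modulo boundaries into a graph cycle $\Gamma_{(a_{i,\cdot})}$; finally, the derived ring isomorphism over $Y$ should translate via the Koszul resolution of these graph cycles into an explicit Steinberg-type identity of Milnor symbols modulo $\widehat{I}$, i.e. into $\ker\pi$.

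The main obstacle is precisely this last translation step. In the one-variable case of \cite{Park Tate}, Tate-algebra Newton-polygon arguments bridge the derived condition with the Steinberg relations; here one must perform a simultaneous multi-variable Weierstrass normalization and a Bass-Tate / Nesterenko-Suslin style norm/transfer argument, all compatibly with the derived ring isomorphism in $\sAlg(\mathcal{O}_{\square_Y^n})$. The hypothesis $|k|\gg 0$ enters both via the moving lemma of \S \ref{sec:first indep} and via the need for sufficiently many coordinate translations to achieve the multi-variable normal form.
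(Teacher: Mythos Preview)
This statement is a \emph{conjecture} (labelled ``Guess'' in the paper), not a theorem; the paper gives no proof. It records only the known cases: smooth $A$ via Elbaz-Vincent--M\"uller-Stach and Kerz, and $A=k[t]/(t^m)$ via \cite{Park Tate}. Your proposal is therefore not a reconstruction of a missing argument but an attempt at an open problem, and it has genuine gaps beyond the one you flag.

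First, the step ``$gr_{\widehat{R}}$ is an isomorphism by Kerz's theorem'' is not justified. Kerz's result compares Milnor $K$-theory with the classical higher Chow groups for regular semi-local rings \emph{essentially of finite type over $k$}; the completion $\widehat{R}$ is not essentially of finite type, and more importantly $\CH^n(\widehat{X},n)$ in this paper is \emph{not} the usual $\CH^n(\Spec(\widehat R),n)$: it is built from cycles on the formal scheme $\widehat{X}\times\square^n$ over the restricted power series ring $\widehat{R}\{y_1,\ldots,y_n\}$, subject to the extra (\textbf{SF}) condition. The moving lemma of \S\ref{sec:first indep} moves cycles relative to a fixed closed formal subscheme $\widehat{X}_1\subset\widehat{X}_2$; it does not provide a mechanism to reduce arbitrary cycles in $z^n(\widehat{X},n)$ to graph cycles, so it does not bridge this gap.

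Second, your square does not yield the conclusion you claim even if its top row were an isomorphism. The right vertical map is induced by a short exact sequence of complexes $0\to\mathcal{M}^n\to z^n(\widehat{X},\bullet)\to z^n(\widehat{X}\mod Y,\bullet)\to 0$, so on homology one has a long exact sequence, not a simple quotient; you would need to control $H_{n+1}$ and $H_n$ of $\mathcal{M}^n(\widehat{X},Y,\bullet)$, which you do not address. Finally, $\BGH^n(A,n)$ is a colimit over all systems of local embeddings, so even identifying $\CH^n(\widehat{X}\mod Y,n)$ for one choice of $\widehat{X}$ does not finish the job without invoking Guess~\ref{conj:localization} or an equivalent independence statement. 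The ``reverse inclusion'' you single out is indeed the core difficulty, but the earlier steps are not yet in place either.
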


The condition $|k| \gg 0$ in Guess \ref{conj:00} is intended so as to have the equality $K^M_n (A) = \widehat{K}^M_n (A)$ of the usual Milnor $K$-theory with the improved Milnor $K$-theory of Gabber-Kerz. See M. Kerz \cite[Proposition 10-(5), p.181]{Kerz finite}.

\bigskip

When $A$ in Guess \ref{conj:00} is smooth over $k$, by Theorem \ref{thm:sm formal}, we have $\BGH^n (A, n) = \CH^n (A, n)$, and the validity of Guess \ref{conj:00} is known by Elbaz-Vincent--M\"uller-Stach \cite{EVMS} and M. Kerz \cite{Kerz Gersten}. This includes the case of fields by Nesterenko-Suslin \cite{NS} and B. Totaro \cite{Totaro}. Thus the prediction made by the Guess \ref{conj:00} is new only when $A$ is \emph{not} smooth over $k$.

 In \cite{Park Tate}, Guess \ref{conj:00} is proven for $A= k_m= k[t]/(t^m)$ with $m \geq 1$. This provides an evidence for the Guess in the non-smooth case. Note that $A=k_m$ is an Artin local $k$-algebra, which is already of finite type. So we don't need to take the colimit as in \eqref{eqn:BGH semi-local} to define $\BGH^n (A, n)$.

\subsection{The case $n=1$}\label{sec:n=1}
We do not yet know the validity of Guess \ref{conj:00} in general. In \S \ref{sec:n=1}, we present some attempts on the special case when $n=1$. 

\subsubsection{Representability}

We first remark the following:

\begin{thm}\label{thm:n=1 representable}
Assume Guess \ref{conj:00} holds for $n=1$. Then the covariant functor $\BGH^1 (- , 1) : ({\rm Loc}_k)\to ({\rm Ab})$ from the category of semi-local $k$-schemes essentially of finite type is bijective to $\Hom_{k}(-, \mathbb{G}_m)$ at each object, where $\mathbb{G}_m:= \Spec (k[t, t^{-1}])$.
\end{thm}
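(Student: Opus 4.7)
The plan is to string together three natural identifications at each object $Y = \Spec A \in ({\rm Loc}_k)$:
\[
\Hom_k(Y, \mathbb{G}_m) \;=\; \Hom_{k\text{-alg}}(k[t, t^{-1}], A) \;=\; A^{\times} \;=\; K_1^M (A) \;\overset{gr_A}{\simeq}\; \BGH^1 (A, 1).
\]
The first two equalities come from the universal property of $\mathbb{G}_m$: morphisms $\Spec A \to \mathbb{G}_m$ biject with units $a \in A^\times$ via $a \leftrightarrow (t \mapsto a)$. The third equality holds because the Steinberg relations in the Milnor tensor algebra start in degree $2$, so the degree-one component of $K_*^M(A)$ is just $A^\times$. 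The last isomorphism is the graph homomorphism of Corollary \ref{cor:graph semi-local}, which is an isomorphism by the assumed case $n=1$ of Guess \ref{conj:00}.

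This yields a bijection at each object; what remains is naturality with respect to morphisms of $({\rm Loc}_k)$. Contravariance of the outer two functors is immediate, and $A \mapsto A^\times$ is plainly functorial in ring homomorphisms. The substantive point is that $gr$ intertwines the ring-theoretic pull-back on units with the pull-back $g^*$ on $\BGH^1(-, 1)$ introduced in Definition \ref{defn:general pull-back}: for every morphism $\bar g : Y_1 \to Y_2$ in $({\rm Loc}_k)$ and every unit $a \in A_2^\times$,
\[
g^* [\Gamma_a] = [\Gamma_{g^*(a)}] \quad \text{in}\ \BGH^1 (Y_1, 1),
\]
where $\Gamma_a$ denotes the graph cycle cut out by $y_1 - a$ in the proof of Theorem \ref{thm:graph general}.

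To establish this identity, I would unwind $g^*$ as a hocolim over pairs $(\mathcal{U}, \mathcal{V})$ of systems of local embeddings associated via $g$ (Theorem \ref{thm:functoriality general}). On each index pair, the induced morphism $\widehat{X}'_j \to \widehat{X}_{\lambda(j)}$ of completions is a composite of the three elementary flat types of Lemma \ref{lem:fpb} followed by the intersection pull-back of Proposition \ref{prop:1st indep}. Lift $a|_{U_{\lambda(j)}}$ to a unit $\tilde a$ of $\Gamma(\widehat{X}_{\lambda(j)}, \mathcal{O})$ via Lemma \ref{lem:unit surj}, and note that each elementary piece sends the defining equation $y_1 - \tilde a$ to $y_1 - \tilde a'$, where $\tilde a'$ is the image of $\tilde a$ in $\Gamma(\widehat{X}'_j, \mathcal{O})^\times$; moreover $\tilde a'$ restricts to $g^*(a)|_{V_j}$ along $V_j \hookrightarrow \widehat{X}'_j$. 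The Koszul-resolution argument used in the proof of Theorem \ref{thm:graph general} then shows that $\Gamma_{\tilde a'}$ is $\sim_{V_j}$-equivalent to the graph of any other lift of $g^*(a)|_{V_j}$, so the class of the resulting cycle is well defined. Passing to the hocolim over associated pairs and then to the semi-local colimit as in \eqref{eqn:BGH semi-local 2} yields the desired identity.

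The main obstacle is this last naturality check, since $g^*$ is defined only as a morphism in a derived category via the \v{C}ech machine of Section \ref{sec:finite type}, so tracing a cycle through it requires choosing compatible representatives at every intermediate stage. For the rigid and explicit class of graph cycles arising from units, however, the Koszul calculus already developed in the construction of $gr$ in Theorem \ref{thm:graph general} makes the tracking concrete and produces the required functoriality.
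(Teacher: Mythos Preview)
Your first paragraph is correct and is essentially the paper's proof: identify $\BGH^1(A,1)\simeq K_1^M(A)=A^\times$ via the assumed case $n=1$ of Guess~\ref{conj:00}, and identify $\Hom_k(\Spec A,\mathbb{G}_m)=\Hom_{k\text{-alg}}(k[t,t^{-1}],A)=A^\times$ via the universal property of $\mathbb{G}_m$. That is all the statement asks for.

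The rest of your proposal is unnecessary. The theorem asserts only a bijection \emph{at each object}, not a natural isomorphism of functors; the phrase ``at each object'' is there precisely to sidestep the naturality question. So the entire \v{C}ech/hocolim tracking of graph cycles through $g^*$ that you outline in the later paragraphs is extra work beyond what is claimed. (Your sketch of that naturality check is plausible in outline, but since it is not required, there is no need to evaluate it further here.)
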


Loosely, we can say $\BGH^1 (-, 1)$ is ``representable" by $\mathbb{G}_m$, though $\mathbb{G}_m$ is not in $({\rm Loc}_k)$.

\begin{proof}
By Guess \ref{conj:00}, for $A \in ({\rm Loc}_k)$, we can identify $\BGH^1 (A, 1)$ with $K_1 ^M (A) = A^{\times}$. The functor $\Hom_{\Sch_k} (- , \mathbb{G}_m)$ given by $\mathbb{G}_m$ satisfies
$$
\Hom_{k} (\Spec (A), \mathbb{G}_m) = \Hom_{{\rm Alg}_k} (k[t, t^{-1}], A).
$$
Since $\Hom_{{\rm Alg}_k} (k[t, t^{-1}], A)= A^{\times}$, this proves the assertion.
\end{proof}

\begin{remk}
When $n \geq 2$, the author does not expect the Milnor $K$-theory $K_n ^M (-)$ is representable by a finite dimensional $k$-scheme. When $n=2$, this seems related to the representability question of the Brauer groups. \qed
\end{remk}

\subsubsection{The generators for $n=1$} 

For the rest of \S \ref{sec:n=1}, we test Guess \ref{conj:00} when $n=1$ and $A$ is an Artin local $k$-algebra whose residue field is $k$. Here, $A$ is of finite type over $k$, so that we do not need to take the additional colimit of Definition \ref{defn:HCG semi-local} to define its yeni higher Chow groups.

\medskip

Let $Y:= \Spec (A)$. Topologically, this $Y$ is just a singleton. Choose a closed immersion $Y \hookrightarrow X$ into an equidimensional smooth $k$-scheme. Completing $X$ along $Y$, we get $\widehat{X} = \Spf (\widehat{R})$ for a regular local $k$-domain $\widehat{R}$ of finite Krull dimension, where $\widehat{R}$ is not necessarily of finite type over $k$. It is complete with respect to an ideal $\widehat{I} \subset \widehat{R}$ such that $\widehat{R}/\widehat{I} = A$.

We have the following commutative diagram
\begin{equation}\label{eqn:n=1 diag}
\xymatrix{
K_1 ^M (\widehat{R}) \ar@{>>}[d] \ar[r] ^{gr_{\widehat{R}} \ \ \ \ \ } & \CH^1 (\Spf (\widehat{R}), 1) \ar[d] \\
K_1 ^M (A) \ar[r] ^{\tilde{gr}_A \ \ \ \  \ \ \ \ \ \ } & \CH^1 (\Spf (\widehat{R}) \mod Y, 1),}
\end{equation}
where the left vertical map is surjective by Lemma \ref{lem:unit surj}. 
An interesting first observation is:

\begin{lem}\label{lem:rest poly 1}
Let $\mathfrak{Z} \in z^1 (\Spf (\widehat{R}), 1)$ be an integral cycle. 

Then it is given by a ring $\widehat{S}= \widehat{R}\{ y_1 \}/ (p(y_1))$ for a prime element $p(y_1) \in \widehat{R}\{ y_1 \}$, and this $p(y_1)$ can be chosen to be a polynomial in $y_1$, whose leading coefficient and the constant term in $y_1$ are units in $\widehat{R}^{\times}$.

In particular, $\mathfrak{Z}$ is a complete intersection and  we can find a monic polynomial that defines it. 
\end{lem}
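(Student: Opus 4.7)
The plan is to identify $\mathfrak{Z}$ with a height-one prime ideal $\mathfrak{p}$ of the restricted formal power series ring $\widehat{R}\{y_1\}$, and then to extract a polynomial generator of $\mathfrak{p}$ with the required properties via a Weierstrass preparation theorem. Since $Y = \Spec A$ is topologically a fat point, $\widehat{R}$ is a complete regular local Noetherian $k$-algebra with maximal ideal $\mathfrak{m}$ and residue field $k$; by the Cohen structure theorem we may take $\widehat{R} \cong k[[t_1, \ldots, t_d]]$. Under the identification $\Spf(\widehat{R}\{y_1\}) = \Spf(\widehat{R}) \times_k \square^1$ on the relevant affine chart, the integral codimension-one cycle $\mathfrak{Z}$ corresponds to a height-one prime $\mathfrak{p} \subset \widehat{R}\{y_1\}$.

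First I would examine the reduction $\bar{\mathfrak{p}} \subset k[y_1]$ of $\mathfrak{p}$ modulo $\mathfrak{m}\widehat{R}\{y_1\}$. By flatness of $\widehat{R} \to \widehat{R}\{y_1\}$ and elementary manipulations of heights in the domain $\widehat{R}\{y_1\}$, one verifies $\mathfrak{p} \cap \widehat{R} = 0$: otherwise $\mathfrak{p} = \mathfrak{q}\widehat{R}\{y_1\}$ for a height-one prime $\mathfrak{q} \subset \mathfrak{m}$, giving $\bar{\mathfrak{p}} = 0$ and contradicting $(\textbf{SF})$ (which, applied to the ideal of definition $\mathfrak{m}$, requires $\mathfrak{Z} \cap (\widehat{X}_{\red} \times \square^1)$ to have positive codimension in $\Spec k[y_1]$). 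Hence $\bar{\mathfrak{p}} = (\bar{p}(y_1))$ for an irreducible polynomial $\bar{p} \in k[y_1]$ of some degree $d \geq 1$, which we may rescale to be monic; the general position condition $(\textbf{GP})$ at the face $y_1 = 0$ then forces $\bar{p}(0) \neq 0$.

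Since the map $\mathfrak{p} \twoheadrightarrow \bar{\mathfrak{p}}$ is surjective, there exists $g = \sum_i a_i y_1^i \in \mathfrak{p}$ reducing to $\bar{p}$, with $a_d \in \widehat{R}^\times$ and $a_i \in \mathfrak{m}$ for $i \neq d$. The Weierstrass preparation theorem for restricted formal power series over the complete Noetherian local ring $\widehat{R}$ then yields $g = u \cdot P$ with $u \in \widehat{R}\{y_1\}^\times$ and $P \in \widehat{R}[y_1]$ a monic polynomial of degree $d$ lifting $\bar{p}$. Because $u$ is a unit, $P \in \mathfrak{p}$. Weierstrass division identifies $\widehat{R}\{y_1\}/(P)$ with the free $\widehat{R}$-module $\widehat{R}[y_1]/(P)$ of rank $d$, whence $(P)$ has height one; and irreducibility of $\bar{p}$ forces irreducibility of the monic lift $P$ in the UFD $\widehat{R}[y_1]$ (any nontrivial monic factorization of $P$ would reduce to one of $\bar{p}$), so $(P)$ is prime there, and hence in $\widehat{R}\{y_1\}$ via the identification above. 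A proper chain $(0) \subsetneq (P) \subsetneq \mathfrak{p}$ of primes in the domain $\widehat{R}\{y_1\}$ would force $\mathrm{ht}(\mathfrak{p}) \geq 2$, contradicting the hypothesis; therefore $\mathfrak{p} = (P)$. The leading coefficient of $P$ is $1$ and $P(0) \equiv \bar{p}(0) \not\equiv 0 \pmod{\mathfrak{m}}$, so both lie in $\widehat{R}^\times$; as $\mathfrak{Z}$ is cut out by the single equation $P = 0$ in the regular ring $\widehat{R}\{y_1\}$, the complete intersection assertion follows.

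The main technical input one must justify is the Weierstrass preparation theorem for $\widehat{R}\{y_1\}$ over a complete Noetherian local ring $\widehat{R}$, rather than the more familiar variant for Tate algebras over a complete valued field; this form is standard and is essentially the tool already exploited in \cite{Park Tate}. Modulo this input, the remaining arguments are routine manipulations of primes, heights, and factorization in UFDs.
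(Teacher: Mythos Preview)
Your proof is essentially correct, but it takes a genuinely different route from the paper. The paper first invokes the UFD property of $\widehat{R}\{y_1\}$ (Salmon, Samuel) to write $\mathfrak{p}=(p)$ for some prime element $p=\sum_i \alpha_i y_1^i$, and then argues directly on the coefficients: reducing modulo each $\widehat{I}^m$ and using that $\widehat{R}/\widehat{I}^m$ is Artinian, the face condition at $y_1=\infty$ forces the top coefficient of each truncation to be a unit; since $\alpha_i\to 0$ in the $\widehat{I}$-adic topology, the resulting sequence of degrees must stabilize, so $p$ is a polynomial. By contrast you single out the reduction $\bar{\mathfrak p}\subset k[y_1]$, lift a generator back into $\mathfrak p$, and apply Weierstrass preparation for $\widehat{R}\{y_1\}$ to produce a monic polynomial generator $P$, then verify $(P)=\mathfrak p$ by a height/irreducibility argument. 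Your approach is more structural and immediately yields a \emph{monic} defining polynomial; the paper's approach is more elementary in that it avoids the black box of Weierstrass preparation over a general complete Noetherian local base (which, as you note, is less commonly cited than the Tate-algebra or formal-power-series versions).

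Two small corrections. First, the condition you want on the lift $g$ is $a_i\in\mathfrak m$ for $i>d$ (not $i\neq d$); the lower coefficients of $\bar p$ need not vanish, and only the ``distinguished of degree $d$'' condition is required for preparation. Second, the assertion $\bar p(0)\neq 0$ follows from $(\textbf{SF})$, not $(\textbf{GP})$: it is $(\textbf{SF})$ applied to the face $\{y_1=0\}$, with $\widehat{X}_{\red}\times\{0\}\cong\Spec k$ a point, that forces $\mathfrak{Z}\cap(\widehat{X}_{\red}\times\{0\})=\emptyset$ and hence $\bar p(0)\in k^\times$. Condition $(\textbf{GP})$ alone only gives $p(0)\neq 0$ in $\widehat{R}$, which is weaker. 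Neither slip affects your overall logic.
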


\begin{proof}
The cycle $\mathfrak{Z}$ is of codimension $1$ so that it is given by a height $1$ prime ideal $P \subset \widehat{R} \{ y_1 \}$. Since $\widehat{R} \{ y_1 \}$ is a UFD (see P. Salmon \cite[5. Corollaire 1, p.395]{Salmon} or P. Samuel \cite[Ch.2, Theorem 3.2, p.52]{Samuel}), there is an element $p(y_1) \in \widehat{R} \{y_1\}$ such that $P= (p(y_1))$ by H. Matsumura \cite[Theorem 20.1, p.161]{Matsumura}. 

This $p(y_1)$ is \emph{a priori} a restricted formal power series in $y_1$ with the coefficients in $\widehat{R}$. We make:

\medskip

\textbf{Claim:} \emph{This $p(y_1)$ is in fact a polynomial in $y_1$.}

\medskip

Write 
$$
p = \sum_{i =0} ^{\infty} \alpha_i y_1 ^i \in \widehat{R}\{ y_1 \},
$$
for some $\alpha_i \in \widehat{R}$ such that as $i \to \infty$, we have $\alpha_i \to 0$ in the $\widehat{I}$-adic topology on $\widehat{R}$. This means that for each integer $m \geq 1$, there exists an integer $r \geq 1$ such that whenever $i \geq r$, we have $\alpha_i \in \widehat{I}^m$.

For each $m \geq 1$, take the reduction $Z|_m$ of $\mathfrak{Z}$ mod $\widehat{I}^m$, i.e.
$$
Z|_m:= \mathfrak{Z} \cap (\Spec ( \widehat{R}/ \widehat{I}^m) \times \square^1).
$$
This is a closed subscheme in $\Spec (\widehat{R}/\widehat{I}^m) \times \square_k ^1$ defined by the polynomial 
$$
p_m (y_1):=( p(y_1) \mod \widehat{I}^m) \in (\widehat{R}/\widehat{I}^m)[y_1].
$$
Let $d_m:= \deg_{y_1} p_m(y_1)$.

The condition (\textbf{SF}) of Definition \ref{defn:HCG} implies that $\mathfrak{Z}$ intersects with all faces of $\Spec (\widehat{R}/\widehat{I}^m) \times \square_k ^1$ properly (see Remark \ref{remk:gen ideal defn}). Thus, for each codimension $1$ face $F = \{ y_1 = \epsilon \} \subset \square_k ^1$ with $\epsilon \in \{ 0, \infty \}$, we have
\begin{equation}\label{eqn:n=1 proper intersection}
\codim _{ \Spec (\widehat{R}/ \widehat{I}^m) \times F} \ (  \mathfrak{Z} \cap (\Spec (\widehat{R}/ \widehat{I}^m) \times F)) \geq 1.
\end{equation}
Since $\widehat{R}/\widehat{I}^m$ is Artinian and $\dim \ F = 0$, we have $\dim \ \Spec (\widehat{R}/ \widehat{I}^m) \times F = 0$. Hence \eqref{eqn:n=1 proper intersection} means $\mathfrak{Z} \cap (\Spec (\widehat{R}/ \widehat{I}^m) \times F) = \emptyset$. 

Taking $F= \{y_1 = \infty\}$, we see that the coefficient of the highest $y_1$-degree term of $p_m (y_1)$ is a unit in $(\widehat{R}/\widehat{I}^m)^{\times}$. Since $d_m = \deg_{y_1} p_m (y_1)$, this means $\bar{\alpha}_{d_{m}} \in (\widehat{R}/\widehat{I}^m)^{\times}$. Thus $\alpha_{d_m} \cdot \beta = 1 + x$ for some $\beta \in \widehat{R}$ and $x \in \widehat{I}^m$. However, $\widehat{I}^m$ is contained in the unique maximal ideal of $\widehat{R}$, thus in the Jacobson radical of $\widehat{R}$. Hence $1+x \in \widehat{R}^{\times}$, thus $\alpha_{d_m} \in \widehat{R}^{\times}$, as well.

After applying the above over each $m \geq 1$, we have a sequence of non-decreasing positive integers $d_1 \leq d_2 \leq d_3 \leq \cdots$ such that
$$
\alpha_{d_1},  \alpha_{d_2}, \alpha_{d_3}, \cdots \in \widehat{R}^{\times}.
$$

\medskip

\textbf{Subclaim:} \emph{ The sequence $d_1 \leq d_2  \leq \cdots$ is stationary, i.e. for some $N \geq 1$, we have $d_N = d_{N+1} = \cdots.$}

\medskip

Toward contradiction, suppose not, so there is a strictly increasing sequence $i_1 < i_2 < \cdots $ of indices in $\mathbb{N}$ such that the associated subsequence $d_{i_1} < d_{i_2} < d_{i_3} < \cdots$ in $\mathbb{N}$ increases strictly.

Here the subsequence $\{\alpha_{d_{i_1}}, \alpha_{d_{i_2}} , \cdots \}$ consists of units in $\widehat{R}^{\times}$, so that this subsequence does not converge to $0$ in the $\widehat{I}$-adic topology. But, this violates the given assumption that as $i \to \infty$, we have $\alpha_i \to 0$ in the $\widehat{I}$-adic topology of $\widehat{R}$. This is a contradiction. Hence the sequence $d_1 \leq d_2 \leq \cdots$ must be stationary, proving the Subclaim.

\medskip

The Subclaim says that $p(y_1) \in \widehat{R}\{ y_1 \}$ is in fact a polynomial in $y_1$ of degree $d_N$ (proving the Claim), and its leading coefficient is in $\widehat{R}^{\times}$. 

\medskip

It remains to see that the constant term of $p(y_1)$ in $y_1$ is a unit. But, this follows from the proper intersection condition with $\Spec (\widehat{R}/ \widehat{I}) \times \{ y_1 = 0 \}$, which is the empty intersection. This proves the lemma.
\end{proof}

\begin{cor}\label{cor:n=1 finite}
Let $\mathfrak{Z} = \Spf (\widehat{S}) \in z^1 (\Spf (\widehat{R}), 1)$ be an integral cycle. Then $\widehat{S}$ is finite over $\widehat{R}$.
\end{cor}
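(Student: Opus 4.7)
The plan is to deduce the finiteness directly from the explicit polynomial description supplied by Lemma \ref{lem:rest poly 1}, via a Weierstrass-style division argument for restricted formal power series. First I would invoke Lemma \ref{lem:rest poly 1} to write $\widehat{S} = \widehat{R}\{y_1\}/(p(y_1))$ with $p(y_1) = a_d y_1^d + \cdots + a_0 \in \widehat{R}[y_1]$, where the leading coefficient $a_d \in \widehat{R}^{\times}$. Multiplying by $a_d^{-1}$ produces a monic polynomial generating the same ideal, so after this harmless replacement we may assume $p(y_1) \in \widehat{R}[y_1]$ is monic of degree $d$.

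The main step is to establish the isomorphism of $\widehat{R}$-algebras
\begin{equation*}
\widehat{R}[y_1]/(p(y_1)) \overset{\sim}{\longrightarrow} \widehat{R}\{y_1\}/(p(y_1)).
\end{equation*}
For this I would prove the following Weierstrass-style division: for every $f \in \widehat{R}\{y_1\}$, there exist unique $q \in \widehat{R}\{y_1\}$ and $r \in \widehat{R}[y_1]$ with $\deg_{y_1} r < d$ such that $f = q p + r$. The existence part is obtained by $\widehat{I}$-adic successive approximation. Working modulo $\widehat{I}^m$, the ring $(\widehat{R}/\widehat{I}^m)[y_1]$ admits the usual polynomial division by the monic image $\bar p$, giving $\bar{q}_m, \bar{r}_m$ with $\deg \bar{r}_m < d$. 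Lifting compatibly in $m$ and using that $\widehat{R}\{y_1\} = \varprojlim_m (\widehat{R}/\widehat{I}^m)[y_1]$ yields $q \in \widehat{R}\{y_1\}$ and $r \in \widehat{R}[y_1]$ of degree $< d$; the uniqueness follows from the fact that if $qp + r = 0$ with $\deg r < d$ and $p$ monic, then reducing mod $\widehat{I}^m$ forces $\bar{q}_m = \bar{r}_m = 0$ for all $m$, hence $q = r = 0$.

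Granting this division, the surjectivity of $\widehat{R}[y_1]/(p(y_1)) \to \widehat{R}\{y_1\}/(p(y_1))$ follows by taking $r$ as a representative, and the injectivity follows from uniqueness of the remainder. Hence $\widehat{S} \simeq \widehat{R}[y_1]/(p(y_1))$, which is a free $\widehat{R}$-module of rank $d$ with basis $\{1, y_1, \ldots, y_1^{d-1}\}$. In particular $\widehat{S}$ is finite over $\widehat{R}$.

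The main obstacle is the rigorous justification of the Weierstrass division step in this adic setting. While classical over complete local rings, here $\widehat{R}$ is only $\widehat{I}$-adically complete and regular (not necessarily a complete local ring), so one must verify that the successive approximations converge in the $\widehat{I}$-adic topology and that the remainder stays a genuine polynomial of bounded degree, not a general restricted power series. The monicity of $p$, together with the $\widehat{I}$-adic completeness of $\widehat{R}$ (so that $\widehat{R}\{y_1\}$ is the inverse limit of $(\widehat{R}/\widehat{I}^m)[y_1]$), makes both points straightforward once arranged carefully.
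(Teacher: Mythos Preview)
Your approach is correct and follows the same line as the paper: invoke Lemma~\ref{lem:rest poly 1} to obtain a monic polynomial generator $p(y_1)$ of degree $d$, and then argue that $\widehat{R}\{y_1\}/(p)$ is generated over $\widehat{R}$ by $1,y_1,\dots,y_1^{d-1}$. The paper's version is terser: it simply notes that each monomial $y_1^{d'}$ with $d'\geq d$ lies in $\operatorname{Span}_{\widehat{R}}\{1,\dots,y_1^{d-1}\}$ modulo $p$ and asserts finiteness. Strictly speaking, that step only handles polynomial elements of $\widehat{R}\{y_1\}$, and the passage to arbitrary restricted power series requires exactly the $\widehat{I}$-adic convergence argument you supply via Weierstrass division. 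So your version is not a different route but a more complete execution of the same one.

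One incidental payoff of your argument worth noting: you actually establish the isomorphism $\widehat{R}[y_1]/(p)\simeq\widehat{R}\{y_1\}/(p)$, hence that $\widehat{S}$ is \emph{free} of rank $d$. The paper obtains freeness separately in Corollary~\ref{cor:n=1 free} by a Cohen--Macaulay argument (complete intersection $\Rightarrow$ CM, finite over the regular local ring $\widehat{R}$, hence maximal CM, hence free). Your Weierstrass division gives this directly and more elementarily.
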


\begin{proof}
By Lemma \ref{lem:rest poly 1}, for a monic \emph{polynomial} $p(y_1) \in \widehat{R} \{ y_1 \}$, we have $\widehat{S} = \widehat{R} \{ y_1 \}/ ( p (y_1))$. Let $d:= \deg_{y_1} p(y_1)$. Then for any integer $d' \geq d$, we have $ y_1 ^{ d'} \in {\rm Span}_{\widehat{R}}  \{ 1, y_1, \cdots, y_1 ^{d-1} \}$ in $\widehat{R} \{ y_1 \}/ (p (y_1))$. In particular, $\widehat{S}$ is a finite $\widehat{R}$-module.
\end{proof}

\begin{cor}\label{cor:n=1 v surj}
Let $\mathfrak{Z} \in z^1 (\Spf (\widehat{R}), 1)$ be an integral cycle. Then $\partial_1 ^{\epsilon} \mathfrak{Z}=0$ for $\epsilon \in \{ 0, \infty \}$. In particular $z^1 (\Spf (\widehat{R}), 1)= \ker (\partial: z^1 (\Spf (\widehat{R}), 1) \to z^1 (\Spf (\widehat{R}), 0))$. 
\end{cor}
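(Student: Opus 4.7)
The plan is to read off both statements directly from Lemma \ref{lem:rest poly 1}, which already does all the real work. By that lemma, any integral $\mathfrak{Z} \in z^1(\Spf(\widehat{R}), 1)$ is cut out inside $\Spf(\widehat{R}) \times \square^1$ by a polynomial $p(y_1) \in \widehat{R}[y_1]$ whose constant term $p(0)$ and leading coefficient $a_d$ are both units in $\widehat{R}^{\times}$. The boundary map $\partial_1^{\epsilon}$ is (Definition \ref{defn:HCG2}) the derived restriction along the closed immersion $\iota_1^{\epsilon}: \square_{\widehat{X}}^0 \hookrightarrow \square_{\widehat{X}}^1$ given by $\{y_1 = \epsilon\}$, and because $\mathfrak{Z}$ satisfies (\textbf{GP}) the higher Tor vanishing already established in Definition \ref{defn:HCG2} reduces this to the usual scheme-theoretic intersection.

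For $\epsilon = 0$ the intersection is computed by the coherent sheaf associated to $\widehat{R}\{y_1\}/(p(y_1), y_1) = \widehat{R}/(p(0))$; since $p(0) \in \widehat{R}^{\times}$, this quotient is the zero ring, so $\partial_1^0 \mathfrak{Z} = 0$. For $\epsilon = \infty$ I would switch to the affine chart on $\square^1 = \mathbb{P}^1 \setminus \{1\}$ around $\infty$ with coordinate $z = 1/y_1$, where $\mathfrak{Z}$ is defined by the ``reversed'' polynomial $\tilde{p}(z) := z^d p(1/z) = a_d + a_{d-1} z + \cdots + a_0 z^d$; the face $\{y_1 = \infty\}$ becomes $\{z=0\}$, and the intersection is given by $\widehat{R}/(\tilde{p}(0)) = \widehat{R}/(a_d)$, which is again zero since $a_d \in \widehat{R}^{\times}$. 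Hence $\partial_1^{\infty}\mathfrak{Z} = 0$ as well.

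The second assertion is then a formal consequence: on each integral generator $\partial = -(\partial_1^{\infty} - \partial_1^{0}) = 0$, so $\partial$ vanishes on all of $z^1(\Spf(\widehat{R}), 1)$, giving the inclusion $z^1(\Spf(\widehat{R}), 1) \subset \ker \partial$, which is automatically an equality.

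The only point that needs a little care is verifying that for $\epsilon = \infty$ the face intersection is really computed by the reversed polynomial $\tilde{p}(z)$ in the chart around $\infty$, i.e.\ that passing between the two affine charts of $\square^1$ transforms $p(y_1)$ into a unit multiple of $\tilde{p}(z)$ on the overlap. This is an elementary change-of-variables check, not a genuine obstacle; once it is in place, both claims are immediate from Lemma \ref{lem:rest poly 1}.
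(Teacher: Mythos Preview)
Your argument is correct and more direct than the paper's. Both approaches ultimately rest on Lemma \ref{lem:rest poly 1}, but you read off the vanishing of $\partial_1^{\epsilon}\mathfrak{Z}$ immediately from the explicit polynomial description there: the constant term being a unit kills the face at $0$, and the leading coefficient being a unit kills the face at $\infty$ (via the reversed polynomial in the other chart, as you note). The paper instead routes through Corollary \ref{cor:n=1 finite} (finiteness of $\mathfrak{Z}$ over $\widehat{X}$, itself a consequence of Lemma \ref{lem:rest poly 1}) and then argues by contradiction using the special fiber condition (\textbf{SF}) and the fact that $|\widehat{X}|$ is a single point: if the intersection with a proper face were nonempty, finiteness would force it to meet the closed point, contradicting the codimension bound from (\textbf{SF}). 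Your route is shorter and avoids the topological detour; the paper's route has the advantage of being a reusable pattern (it is borrowed from \cite[Lemma 2.21]{KP sfs}) that applies whenever one knows finiteness over a base whose reduced locus is zero-dimensional, without needing an explicit equation for the cycle.
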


\begin{proof}
We emulate the argument of \cite[Lemma 2.21, p.1005]{KP sfs}.  Let $\widehat{X}= \Spf (\widehat{R})$. Let $F \subsetneq \square_k ^1$ be a proper face, so that $\dim \ F = 0$. We claim that $\mathfrak{Z} \cap (\widehat{X} \times F) = \emptyset$.

Toward contradiction, suppose $\mathfrak{Z} \cap (\widehat{X} \times F) \not = \emptyset$. By Corollary \ref{cor:n=1 finite}, the morphism $\mathfrak{Z} \to \widehat{X}$ is finite, so that the composite
$$
\mathfrak{Z} \cap (\widehat{X} \times F) \hookrightarrow \mathfrak{Z} \to \widehat{X}
$$
is also finite. Since $\mathfrak{Z} \cap (\widehat{X} \times F) \not = \emptyset$, its image is closed in $\widehat{X}$. Since $\widehat{X}$ has the unique closed point, call it $\mathfrak{m} \in | \widehat{X}|$, we deduce that $\mathfrak{Z} \cap (\mathfrak{m} \times F) \not = \emptyset$.

But $\mathfrak{m}= |\widehat{X}_{\red}| = |Y_{\red}|$, so by the special fiber condition (\textbf{SF}), the intersection $\mathfrak{Z} \cap (\mathfrak{m} \times F)$ is proper, i.e its codimension in $\mathfrak{m} \times F$ is $\geq 1$. However, $\mathfrak{m} \times F$ is of dimension $0$, so that we cannot have a nonempty codimension $1$ closed subset. This is a contradiction. Thus $\mathfrak{Z} \cap (\widehat{X} \times F ) = \emptyset$, proving the first assertion.

The remaining assertions follow immediately from the first one.
\end{proof}

\begin{cor}\label{cor:n=1 free}
Let $\mathfrak{Z}= \Spf (\widehat{S}) \in z^1 (\Spf (\widehat{R}), 1)$ be an integral cycle. Then $\widehat{S}$ is a free $\widehat{R}$-module of finite rank. In particular, there is the norm map $N: \widehat{S}^{\times} \to \widehat{R}^{\times}$.
\end{cor}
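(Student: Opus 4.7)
The plan is to realize $\widehat{S}$ explicitly as a free $\widehat{R}$-module of finite rank by means of a Weierstrass-type division in the restricted formal power series ring $\widehat{R}\{y_1\}$. By Lemma \ref{lem:rest poly 1}, I would write $\widehat{S} = \widehat{R}\{y_1\}/(p(y_1))$ for a polynomial $p(y_1) \in \widehat{R}[y_1]$ whose leading coefficient is a unit of $\widehat{R}$; multiplying $p$ by the inverse of this leading coefficient (which does not change the ideal it generates), I may further assume $p$ is monic of some degree $d \geq 1$.

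The main step is to establish the following Weierstrass-type division: every $f \in \widehat{R}\{y_1\}$ admits a unique decomposition
\[
f = q \cdot p + r, \qquad q \in \widehat{R}\{y_1\}, \ r \in \widehat{R}[y_1] \text{ with } \deg r < d.
\]
Granting this, the natural $\widehat{R}$-linear map $\widehat{R}[y_1]_{<d} \to \widehat{R}\{y_1\}/(p)$ is bijective, so $\widehat{S}$ is free over $\widehat{R}$ with basis $\{1, y_1, \ldots, y_1^{d-1}\}$. To prove the division, I would argue that $p$ remains monic in every truncation $(\widehat{R}/\widehat{I}^n)[y_1]$, where ordinary polynomial division yields a unique expression $\bar{f}_n = \bar{q}_n \bar{p} + \bar{r}_n$ with $\deg \bar{r}_n < d$. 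The uniqueness propagates compatibility as $n$ varies, and using the identification $\widehat{R}\{y_1\} = \varprojlim_n (\widehat{R}/\widehat{I}^n)[y_1]$ together with the uniform bound on the degrees of the remainders, passage to the inverse limit yields the desired $q \in \widehat{R}\{y_1\}$ and $r \in \widehat{R}[y_1]_{<d}$ with $f = qp + r$. Uniqueness over $\widehat{R}\{y_1\}$ follows analogously from uniqueness modulo each $\widehat{I}^n$ together with the Hausdorff property of the $\widehat{I}$-adic topology on $\widehat{R}\{y_1\}$.

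For the norm map: since $\widehat{S}$ is free of finite rank $d$ over $\widehat{R}$, each $s \in \widehat{S}$ yields the $\widehat{R}$-linear endomorphism $m_s \colon \widehat{S} \to \widehat{S}$ of multiplication by $s$, and I set $N(s) := \det_{\widehat{R}}(m_s) \in \widehat{R}$. Multiplicativity $N(st) = N(s) N(t)$ follows from $m_{st} = m_s \circ m_t$; when $s \in \widehat{S}^{\times}$, $m_s$ is an $\widehat{R}$-linear automorphism, so $N(s) \in \widehat{R}^{\times}$. This produces the required norm homomorphism $N \colon \widehat{S}^{\times} \to \widehat{R}^{\times}$.

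The main obstacle is the Weierstrass division in $\widehat{R}\{y_1\}$ by a monic polynomial; although classical in rigid and formal geometry, it must be checked in the present level of generality where $\widehat{R}$ is only required to be a regular Noetherian local $\widehat{I}$-adically complete $k$-domain. Once this division is in hand, both the freeness of $\widehat{S}$ and the construction of the norm reduce to elementary linear algebra over the local ring $\widehat{R}$.
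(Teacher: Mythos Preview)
Your proposal is correct but takes a genuinely different route from the paper. The paper argues abstractly: from Lemma~\ref{lem:rest poly 1}, $\widehat{S}$ is a complete intersection, hence Cohen--Macaulay; by Corollary~\ref{cor:n=1 finite} it is finite over $\widehat{R}$, so $\operatorname{depth}(\widehat{S}) = \dim \widehat{S} = \dim \widehat{R}$, making $\widehat{S}$ a maximal Cohen--Macaulay $\widehat{R}$-module; and since $\widehat{R}$ is regular local, any maximal Cohen--Macaulay module over it is free (Auslander--Buchsbaum). The norm is then defined exactly as you do.

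Your approach instead carries out an explicit Weierstrass division in $\widehat{R}\{y_1\}$ by the monic polynomial $p$, exhibiting $\{1, y_1, \ldots, y_1^{d-1}\}$ as an explicit $\widehat{R}$-basis of $\widehat{S}$. This is more elementary---it avoids the Cohen--Macaulay machinery entirely---and it yields a concrete basis, which the paper's argument does not. In fact, the paper's proof of Corollary~\ref{cor:n=1 finite} already shows that $\{1, y_1, \ldots, y_1^{d-1}\}$ \emph{spans} $\widehat{S}$ over $\widehat{R}$; your uniqueness argument for the Weierstrass remainder supplies the missing linear independence. The paper's route, by contrast, would generalize more readily to situations where no such explicit presentation is available.
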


\begin{proof}
We saw that $\widehat{S}$ is a complete intersection ring in Lemma \ref{lem:rest poly 1}. Hence it is Cohen-Macaulay (see Bruns-Herzog \cite[Proposition 3.1.20, p.96]{BH} or H. Matsumura \cite[Theorems 18.1, 21.3, p.141, p.171]{Matsumura}). We also saw in Corollary \ref{cor:n=1 finite} that it is a finite $\widehat{R}$-module. Thus ${\rm depth} (\widehat{S}) = \dim \ \widehat{S} = \dim \ \widehat{R}$, and it shows $\widehat{S}$ is a maximal Cohen-Macaulay $\widehat{R}$-module.

Since $\widehat{R}$ is a regular local ring, this implies that $\widehat{S}$ is a free $\widehat{R}$-module by \cite[Lemma 00NT]{stacks}, proving the first assertion.

For the norm map, let $a \in \widehat{S}^{\times}$. Since $\widehat{S}$ is a free $\widehat{R}$-module of finite rank, the left multiplication $L_a: \widehat{S} \to \widehat{S}$ by $a$ defines an $\widehat{R}$-linear homomorphism of free $\widehat{R}$-modules of finite rank. We define the norm $N (a):= \det (L_a)$. Then standard linear algebra arguments show that this is independent of the choice of an $\widehat{R}$-basis of $\widehat{S}$.
\end{proof}

The above norm $N: \widehat{S}^{\times} \to \widehat{R}^{\times}$ in Corollary \ref{cor:n=1 free} can be also computed concretely in the following direct way:

\begin{lem}\label{lem:n=1 surj}
Let $\mathfrak{Z} \in z^1 (\Spf (\widehat{R}), 1)$ be an integral cycle given by a monic polynomial
\begin{equation}\label{eqn:n=1 surj 0}
p(y_1) = y^d + a_{d-1} y^{d-1} + \cdots + a_1 y + a_0, \ \ a_i \in \widehat{R},
\end{equation}
where $a_0 \in \widehat{R}^{\times}$, as in Lemma \ref{lem:rest poly 1}. 

Then modulo the boundary of a cycle in $z^1 (\Spf (\widehat{R}), 2)$, we have the equivalence
$$
\mathfrak{Z} \equiv  [ (-1)^d a_0 ] \ \ \mbox{ in } \CH^1 (\Spf (\widehat{R}), 1)
$$
where the latter is given by the polynomial $y_1 - (-1)^d a_0 \in \widehat{R} \{ y_1 \}$.
\end{lem}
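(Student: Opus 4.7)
The plan is to exhibit an explicit cycle $\mathfrak{W}\in z^1(\Spf\widehat{R},2)$ whose boundary witnesses the desired equivalence, using the explicit polynomial form of $\mathfrak{Z}$ from Lemma~\ref{lem:rest poly 1}. A naive first candidate, $F_0(y_1,y_2):=p(y_1)y_2-(y_1-(-1)^d a_0)$, specialises at $y_2=0$ to the graph of $(-1)^d a_0$ and, in leading $y_2$-order, at $y_2=\infty$ to $\mathfrak{Z}$; however, a projective-closure analysis in $\widehat{X}\times\mathbb{P}^1\times\mathbb{P}^1$ shows that $V(F_0)$ contains the entire codimension-$2$ corner $\widehat{X}\times\{(\infty,0)\}$, violating the proper-intersection condition \textbf{(GP)} of Definition~\ref{defn:HCG}.

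I would remedy this by working with the corrected polynomial
$$F(y_1,y_2):=p(y_1)\,y_2+(y_1+c)^{d-1}\bigl(y_1-(-1)^d a_0\bigr)$$
for a fixed unit $c\in\widehat{R}^{\times}$ chosen generically (so that the units $-c$ and $(-1)^d c^{d-1}$ appearing below lie in $\square\setminus\{0,\infty\}$). Bihomogenising $F$ to bidegree $(d,1)$, one checks that $\widetilde{F}$ evaluates to a unit at each of the four corners of $\square^2$ (for instance $\widetilde{F}(1,0,0,1)=1$ and $\widetilde{F}(0,1,0,1)=-c^{d-1}(-1)^d a_0$), so the corners are disjoint from $V(\widetilde F)$ and every codimension-$1$ face is met in a single section; condition \textbf{(SF)} follows from the same computations after reducing the coefficients modulo the largest ideal of definition. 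A direct computation of the four face cycles produces
$$\partial_1^0\mathfrak{W}=[(-1)^d c^{d-1}],\ \ \partial_1^\infty\mathfrak{W}=[-1],\ \ \partial_2^0\mathfrak{W}=(d-1)[-c]+[(-1)^d a_0],\ \ \partial_2^\infty\mathfrak{W}=\mathfrak{Z},$$
where $[u]$ denotes the graph cycle $V(y_1-u)$. Substituting into $\partial=\sum_{i=1}^{2}(-1)^i(\partial_i^\infty-\partial_i^0)$ yields
$$\partial\mathfrak{W}\;=\;\mathfrak{Z}-[(-1)^d a_0]\;+\;\bigl(-[-1]+[(-1)^d c^{d-1}]-(d-1)[-c]\bigr).$$

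The parenthesised correction term vanishes in $\CH^1(\Spf\widehat{R},1)$ via the cycle-level multiplicativity $[uv]\equiv[u]+[v]$ and inversion $[u^{-1}]\equiv-[u]$: expanding $[(-1)^d c^{d-1}]=d[-1]+(d-1)[c]$ and $[c]=[-c]-[-1]$ rewrites it as $[-1]+(d-1)[-c]$, and the bracket collapses to zero. Each of these $K_1^M$-type relations is realised by an explicit two-cube cycle of precisely the same shape as $\mathfrak{W}$ in the degenerate case $d=1$, in a manner entirely parallel to the Steinberg-relation verification in the proof of Theorem~\ref{thm:graph general}; adding the resulting finitely many auxiliary two-cube cycles to $\mathfrak{W}$ gives a single cycle in $z^1(\Spf\widehat{R},2)$ whose boundary is exactly $\mathfrak{Z}-[(-1)^d a_0]$. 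The main obstacle is the projective-closure verification of \textbf{(GP)} and \textbf{(SF)} at the four corners of $\square^2$, which is the only point where the naive candidate $F_0$ fails and which dictates the shape of the auxiliary factor $(y_1+c)^{d-1}$; once that is settled, the rest is a routine linear-algebra computation inside the subgroup of graph cycles, and the required cycle-level $K_1^M$-identities are standard.
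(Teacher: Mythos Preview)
Your argument is correct, but the paper's proof is considerably shorter because it exploits the specific shape of $\square=\mathbb{P}^1\setminus\{1\}$. Instead of your auxiliary factor $(y_1+c)^{d-1}$, the paper (following Totaro) uses
\[
Q(y_1,y_2)\;=\;p(y_1)\;-\;(y_1-1)^{d-1}\bigl(y_1-(-1)^d a_0\bigr)\,y_2.
\]
The point is that $(y_1-1)^{d-1}(y_1-(-1)^d a_0)$ is the unique degree-$d$ polynomial with a root at $(-1)^d a_0$, all remaining roots at the \emph{removed} point $y_1=1$, leading coefficient $1$, and constant term $a_0$. Because the leading and constant terms match those of $p$, the faces $\partial_1^0$ and $\partial_1^\infty$ both land at $y_2=1\notin\square$ and are empty; because the spurious roots sit at $y_1=1\notin\square$, the face $\partial_2^\infty$ gives exactly $[(-1)^d a_0]$ with no extra $(d-1)$-fold contribution. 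So a single cycle $\mathfrak{W}$ already yields $\partial\mathfrak{W}=-\mathfrak{Z}+[(-1)^d a_0]$.

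What you gain with your approach is a proof that does not rely on the particular model $\mathbb{P}^1\setminus\{1\}$ (it would work equally well for $\square_\psi=\mathbb{A}^1$ with faces $\{0,1\}$), at the cost of introducing the generic unit $c$, checking it can be chosen to avoid finitely many bad values, and then cancelling the correction term $-[-1]+[(-1)^d c^{d-1}]-(d-1)[-c]$ via several further explicit $2$-cubes realising the multiplicativity relation $[uv]\equiv[u]+[v]$. The paper's trick sidesteps all of that by dumping every unwanted root onto the deleted point.
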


\begin{proof}
We emulate part of B. Totaro \cite[p.186]{Totaro}. Consider the polynomial in $y_1$ and $ y_2$:
$$
Q (y_1, y_2):=p (y_1) - (y_1 -1)^{d-1} (y_1 - (-1)^d a_0) y_2 \in \widehat{R} \{ y_1, y_2 \}.
$$
Let $\mathfrak{W}$ be the cycle defined by $\widehat{R}\{ y_1, y_2 \}/ (Q (y_1, y_2))$. By direct computations, we have
$$
\tuborg 
\partial_1 ^0 \mathfrak{W} = [ \{a_0 (1 - y_2)\}] = [ \{ y_2 -1 \}] = \emptyset, \\
\partial_1 ^{\infty} \mathfrak{W} = [ \{ 1- y_2 \}] = \emptyset, \\
\partial_2 ^{0} \mathfrak{W} = [ \{ p (y_1)\} ] = \mathfrak{Z}, \\
\partial_2 ^{\infty} \mathfrak{W} = [ \{ (y_1 -1)^{d-1} \cdot (y_1 - (-1)^d a_0 ) \}] = [ (-1)^d a_0].\sluttuborg
$$
Using this, one checks that $\mathfrak{W} \in z^1 (\Spf (\widehat{R}), 2)$ as well as that
$$- \mathfrak{Z} + [ (-1)^d a_0] = \partial  (\mathfrak{W}),$$
proving the lemma.
\end{proof}

\begin{cor}
The graph map $gr_{\widehat{R}}: K_1 ^M (\widehat{R}) \to \CH^1 (\Spf (\widehat{R}), 1)$ is surjective. In particular, for an Artin local $k$-algebra $A$, the graph map $gr_A: K_1 ^M (A) \to \BGH^1 (A, 1)$ is surjective.
\end{cor}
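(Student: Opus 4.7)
The plan is a two-step argument whose entire substance rests on the three preceding lemmas. First I would prove the surjectivity of $gr_{\widehat{R}}$ by exhibiting an explicit Milnor-symbol preimage for each integral cycle, and then deduce the ``in particular'' by a short chase on the commutative square \eqref{eqn:n=1 diag}.

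\textbf{Step 1.} Let $\alpha \in \CH^1(\Spf(\widehat{R}),1)$. By Corollary \ref{cor:n=1 v surj}, every element of $z^1(\Spf(\widehat{R}),1)$ has vanishing boundary, so $\alpha$ has a cycle representative $\sum_{i=1}^N n_i\mathfrak{Z}_i$ with each $\mathfrak{Z}_i$ integral and $n_i \in \mathbb{Z}$. Apply Lemma \ref{lem:rest poly 1} to each $\mathfrak{Z}_i$ to write it as the zero locus of a monic polynomial $p_i(y_1) \in \widehat{R}[y_1]$ of some degree $d_i$ whose constant term $a_{0,i}$ is a unit in $\widehat{R}$. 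Lemma \ref{lem:n=1 surj} then gives
\[
[\mathfrak{Z}_i] = [(-1)^{d_i}a_{0,i}] \quad \text{in } \CH^1(\Spf(\widehat{R}),1),
\]
and the right-hand cycle is by its very definition $gr_{\widehat{R}}(\{(-1)^{d_i}a_{0,i}\})$ for the Milnor symbol $\{(-1)^{d_i}a_{0,i}\} \in K_1^M(\widehat{R}) = \widehat{R}^{\times}$. Summing over $i$ yields
\[
\alpha = gr_{\widehat{R}}\Bigl(\sum_{i=1}^N n_i\{(-1)^{d_i}a_{0,i}\}\Bigr),
\]
which establishes the surjectivity of $gr_{\widehat{R}}$.

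\textbf{Step 2.} For the ``in particular'', choose any closed immersion $Y = \Spec(A) \hookrightarrow X$ into an equidimensional smooth $k$-scheme, yielding $\widehat{X} = \Spf(\widehat{R})$ and the commutative square \eqref{eqn:n=1 diag}. The left vertical map is surjective by Lemma \ref{lem:unit surj} applied to each $\widehat{R}^{\times} \twoheadrightarrow A^{\times}$, the right vertical is the defining quotient, and the top arrow is surjective by Step 1; a diagram chase then gives the surjectivity of $\tilde{gr}_A \colon K_1^M(A) \to \CH^1(\widehat{X}\mod Y, 1)$. Because $|Y|$ is a singleton, Corollary \ref{cor:two BGH map} identifies this target with $\BGH^1(\widehat{X}\mod Y, 1)$. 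The graph map $gr_A$ of Corollary \ref{cor:graph semi-local} is obtained by further composing with the natural map into $\BGH^1(A,1)$ coming from the colimit of Definition \ref{defn:Cech final}; functoriality of the graph maps (Theorem \ref{thm:graph general}-(2)) ensures these fit coherently, and every class in the colimit is already hit via some such embedding.

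\textbf{Main obstacle.} The only nontrivial work is Lemma \ref{lem:rest poly 1}, which provides the ``polynomial truncation'' miracle peculiar to the codimension-$1$, level-$1$ setting: a codimension-$1$ integral cycle on $\Spf(\widehat{R}\{y_1\})$ that meets the faces admissibly must in fact be cut out by a genuine polynomial (not just a restricted power series) with both leading and constant coefficients units. Once this and its companion Lemmas \ref{lem:n=1 surj} and \ref{cor:n=1 v surj} are in place, the surjectivity is essentially bookkeeping. The mild additional care needed in Step 2 is the passage from the specific $\CH^1(\widehat{X}\mod Y, 1)$ to the colimit $\BGH^1(A,1)$, but since all systems of local embeddings for the singleton space $|Y|$ admit common singleton refinements by taking products of ambient schemes, this poses no real difficulty.
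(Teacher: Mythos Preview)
Your proof is correct and follows the same approach as the paper. The paper's argument is terser but identical in substance: surjectivity of $gr_{\widehat{R}}$ from Lemma~\ref{lem:n=1 surj}, then the diagram \eqref{eqn:n=1 diag} together with Corollary~\ref{cor:n=1 v surj} for the passage to $\tilde{gr}_A$ and on to $\BGH^1(A,1)$. One small point of precision: when you call the right vertical map ``the defining quotient,'' note that surjectivity on Chow groups (rather than just cycle groups) is exactly where Corollary~\ref{cor:n=1 v surj} enters again---since every element of $z^1(\Spf(\widehat{R}),1)$ is already a cycle, the surjection of complexes yields a surjection on $\CH^1(-,1)$; the paper makes this citation explicit.
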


\begin{proof}
Since $K_1 ^M (\widehat{R}) = \widehat{R}^{\times}$, the surjectivity of $gr_{\widehat{R}}$ follows from Lemma \ref{lem:n=1 surj}. 

For the surjectivity of $gr_A$, one notes that each class of $\BGH^1 (A, 1)$ is represented by some cycle in $\CH^1 (\Spf (\widehat{R}) \mod Y, 1)$ associated to some embedding $Y \hookrightarrow X$, with $\widehat{X} = \Spf (\widehat{R})$. By Corollary \ref{cor:n=1 v surj}, the map $\CH^q (\Spf (\widehat{R}), 1) \to \CH^1 ( \Spf (\widehat{R}) \mod Y, 1)$ is surjective. So, the map $\tilde{gr}_A$ of the diagram \eqref{eqn:n=1 diag} is surjective. This implies $gr_A$ is surjective. 
\end{proof}

The author guesses that $gr_A: K_1 ^M (A) \to \BGH^1 (A, 1)$ is injective as well, but it wasn't yet done. 

\section{Appendix}\label{sec:appendix}

In \S \ref{sec:normalization}, we prove ``the normalization theorem" for some cubical complexes arising in this article. In \S \ref{sec:appendix localization}, we present a conjectural guess stated as Guess \ref{conj:localization}, which is related to the localization theorem in the usual higher Chow theory, and we discuss its conjectural consequences. While it looks similar, this Guess does not produce a localization type theorem, so it might make sense to call it the ``pseudo-localization". 

\subsection{Normalization of some cubical groups}\label{sec:normalization}

The purpose of \S \ref{sec:normalization} is to prove Theorem \ref{thm:normalization}, that we call the \emph{normalization theorem}, for a few different collections of cubical cycle complexes relevant to those considered in this article. 

\subsubsection{Normalization}\label{sec:n4.1}
The word ``normalization" here, which is unrelated to the same word in Remark \ref{remk:Conrad}, originates from the Dold-Kan correspondence on simplicial abelian groups that the homology groups of the complex $A_{\bullet}$ associated to a simplicial abelian group are equal to the homology groups of the normalized subcomplex $A_{\bullet, N}$, where $A_{n, N}$ consists of the elements $x \in A_n$ all of whose faces except the last one are zero. See J. P. May \cite[Theorem 22.1, p.94]{May}.

The author does not know whether the ``normalization theorem" holds for a cubical abelian group in general, but at least M. Levine \cite[Lemma 1.6]{Levine SM} had identified a sufficient condition that an \emph{extended} cubical abelian group always has this property. Recall (\cite[\S 1]{Levine SM}) that an extended cubical abelian group is a functor $\textbf{ECube}^{\rm op} \to ({\rm Ab})$, where $\textbf{ECube}$ is the smallest symmetric monoidal subcategory of the category of sets containing the category $\textbf{Cube}$, and the morphism $\mu: \un{2} \to \un{1}$.


\medskip

One of the examples where the normalization theorem holds is the following: let $Y$ be a $k$-scheme of finite type. For the cubical higher Chow complex $z^q (Y, \bullet)$, let $z^q _N (Y, n) \subset z^q (Y, n)$ be the subgroup generated by cycles $Z$ such that $\partial_i ^0 (Z) = 0$ for all $1 \leq i \leq n $ and $\partial_i ^{\infty} (Z) = 0$ for $2 \leq i \leq n$. With the ``last" face $\partial_1 ^{\infty}$, this gives the \emph{normalized subcomplex} $(z_N ^q (Y, \bullet), \partial_1 ^{\infty}) \hookrightarrow (z^q (Y, \bullet), \partial)$, and this inclusion is a quasi-isomorphism. One can find a reference in the published literature at M. Li \cite[Theorem 2.6]{Li}, which is based on an argument of Bloch's notes \cite[Theorem 4.4.2]{Bloch note}.

For a pair $(Y, D)$ consisting of a smooth $k$-scheme and an effective Cartier divisor, similar results are known for the higher Chow cycles $z^q (Y|D, \bullet)$ with modulus. See \cite[Theorem 3.2]{KP DM}.

We study cases relevant to this paper in \S \ref{sec:4.2} below.

\subsubsection{The normalization theorems}\label{sec:4.2}

Let $Y$ be a quasi-affine $k$-scheme of finite type and let $Y \hookrightarrow X$ be a closed immersion into an equidimensional smooth $k$-scheme. Let $\widehat{X}$ be the completion of $X$ along $Y$. Let $U \subset Y$ be a nonempty open subset and let $\widehat{X}|_U \subset \widehat{X}$ be the corresponding quasi-affine open formal subscheme.

\begin{defn}\label{defn:normal cx}
Consider the following types of complexes and their normalized subcomplexes:
\begin{enumerate}
\item The subcomplex $z^q _N (\widehat{X}, \bullet) \subset z^q (\widehat{X} , \bullet)$.
\item The subcomplex $z^q _N (\widehat{X} \mod Y, n) \subset z^q (\widehat{X} \mod Y, n)$.
\item For the restriction map $\rho ^Y _U (\bullet): z^q (\widehat{X}, \bullet) \to z^q (\widehat{X}|_U, \bullet)$, the subcomplex $\mathcal{C}_{\bullet, N}$ of the quotient complex
$$
\mathcal{C}_{\bullet}:= {\rm coker} (\rho^Y_U)= \frac{ z^q (\widehat{X}|_U, \bullet)}{ {\rm im} \ (\rho ^Y _U (\bullet))}.
$$
\item For the restriction map $\rho ^Y _U (\bullet): z^q (\widehat{X} \mod Y, \bullet) \to z^q (\widehat{X}|_U \mod U, \bullet)$, where the last $U$ is seen as the open subscheme of $Y$, the subcomplex $\mathcal{C}_{\bullet, N}'$ of the quotient complex
$$
\mathcal{C}_{\bullet}':=  {\rm coker} (\rho^Y_U)=  \frac{ z^q (\widehat{X}|_U \mod U, \bullet)}{ {\rm im} \ (\rho ^Y _U (\bullet))}.
$$
\end{enumerate}
For instance, for (1), $z^q _N (\widehat{X}, n) \subset z^q (\widehat{X} , n)$ is the subgroup of cycles $\mathfrak{Z}$ such that $\partial _i ^{0} (\mathfrak{Z}) = 0$ for all $1 \leq i \leq n$ and $\partial_i ^{\infty} (\mathfrak{Z}) = 0$ for $2 \leq i \leq n$ in $z^q (\widehat{X}, n-1)$. One checks that $\partial_1 ^{\infty} \circ \partial_1 ^{\infty} = 0$. 

For (2)$\sim$(4), we have the similar membership conditions.
\qed
\end{defn}

\medskip

The main result of \S \ref{sec:normalization} is the following:

\begin{thm}\label{thm:normalization}
Under the above notations, the complexes 
$$
z^q (\widehat{X}, \bullet),  \ \ z^q (\widehat{X} \mod Y, \bullet), \ \ \mathcal{C}_{\bullet}, \ \ \mathcal{C}_{\bullet}'
$$
are extended cubical abelian groups, and the corresponding inclusions of complexes
\begin{enumerate}
\item $z^q _N (\widehat{X}, \bullet) \hookrightarrow z^q (\widehat{X}, \bullet)$,
\item $z^q _N (\widehat{X} \mod Y, \bullet) \hookrightarrow z^q (\widehat{X} \mod Y, \bullet)$,
\item $\mathcal{C}_{\bullet, N} \hookrightarrow \mathcal{C}_{\bullet}$,
\item $\mathcal{C}_{\bullet, N}' \hookrightarrow \mathcal{C}_{\bullet}'$
\end{enumerate}
are quasi-isomorphisms.
\end{thm}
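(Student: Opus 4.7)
The plan is to reduce all four assertions to the single normalization criterion of M. Levine \cite[Lemma 1.6]{Levine SM}: any extended cubical abelian group (i.e. a functor on $\textbf{ECube}^{\op}$) has the property that the normalized subcomplex includes into the associated non-degenerate complex as a quasi-isomorphism. Granting this, the theorem will follow once I verify that each of the four complexes of Definition \ref{defn:normal cx} carries an extended cubical structure, and that the restriction map $\rho^Y_U$ is a morphism of extended cubical abelian groups, so that (3) and (4) inherit their structures from (1) and (2).

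First I would endow $z^q(\widehat{X}, \bullet)$ with the extended cubical structure. In addition to the face maps $\partial_i^\epsilon$ of Definition \ref{defn:HCG2} and the degeneracies (flat pull-backs of type (III) in Lemma \ref{lem:pre flat pb}), the two further families of operations required are the pull-backs along the coordinate permutations $\sigma$ of $\square_{\widehat{X}}^n$ and along the Levine multiplication $\mu$ on $\square$. The permutations are $k$-automorphisms of $\square_{\widehat{X}}^n$, so by the same argument as in Lemma \ref{lem:translation0} they preserve the conditions (\textbf{GP}) and (\textbf{SF}). For $\mu$, I would handle (\textbf{GP}) by a routine dimension count on faces, and transfer (\textbf{SF}) via Remark \ref{remk:gen ideal defn} to the scheme-theoretic statement on $\widehat{X}_{\red} \times \square^{n+1}$, where it reduces to the classical cubical higher Chow setup.

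The main obstacle is descending this extended cubical structure to $z^q(\widehat{X} \mod Y, \bullet)$, i.e. showing that $\sigma^*$ and $\mu^*$ carry $\mathcal{M}^q(\widehat{X}, Y, n)$ into $\mathcal{M}^q(\widehat{X}, Y, m)$ for the appropriate cubical degree $m$. I plan to follow the template of Lemmas \ref{lem:boundary comp}, \ref{lem:psi mod Y resp}, and \ref{lem:homo mod Y resp}: for a pair $(\mathcal{A}_1, \mathcal{A}_2) \in \mathcal{L}^q(\widehat{X}, Y, n)$, the pull-backs $\sigma^* \mathcal{A}_i$ and $\mu^* \mathcal{A}_i$ remain admissible coherent $\mathcal{O}$-algebras (because $\sigma$ and $\mu$ are defined over $k$ and restrict to the corresponding maps over $\square_Y^\bullet$), and the defining isomorphism
\[
\mathcal{A}_1 \otimes^{\mathbf L}_{\mathcal{O}_{\square_{\widehat{X}}^n}} \mathcal{O}_{\square_Y^n} \simeq \mathcal{A}_2 \otimes^{\mathbf L}_{\mathcal{O}_{\square_{\widehat{X}}^n}} \mathcal{O}_{\square_Y^n}
\]
in $\sAlg(\mathcal{O}_{\square_Y^n})$ is transported, via the functoriality squares for derived base change along these $k$-morphisms, to the corresponding isomorphism over $\square_Y^m$. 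The delicate point is that $\mu$ is not \'etale, so I must check that Serre's derived intersection formula still matches the naive pull-back on the derived-algebra side; this should go through because $\mu$ is flat along each face configuration needed to express $\mu^* \mathcal{A}_i$ as an iterated proper derived tensor product.

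For the cokernel cases (3) and (4), the key observation is that $\rho^Y_U$ is itself compatible with all the structural operations: with faces and degeneracies by Lemmas \ref{lem:pre flat pb} and \ref{lem:fpb}, with $\mu^*$ since $\mu$ acts only on the $\square$-factor while $\rho^Y_U$ acts only on the $\widehat{X}$-factor, and with permutations automatically. Therefore $\mathcal{C}_\bullet$ and $\mathcal{C}_\bullet'$ inherit canonical extended cubical structures from their respective quotients, and a uniform application of Levine's lemma to all four complexes yields the four quasi-isomorphisms simultaneously.
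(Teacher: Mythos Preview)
Your proposal is correct and follows essentially the same route as the paper: verify that each complex is an extended cubical abelian group by checking that the multiplication pull-back $\mu^*$ (and, implicitly, the permutation actions) preserves the relevant cycle conditions, observe that $\rho^Y_U$ commutes with these operations so that the cokernels inherit the structure, and then invoke Levine's normalization lemma.

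One point of comparison: you flag the descent of $\mu^*$ to $\mathcal{M}^q(\widehat{X},Y,n)$ as the ``main obstacle'' and worry that $\mu$ is not \'etale. In fact this step is much shorter than you anticipate. The map $\mu: \square_\psi^2 \to \square_\psi^1$, $(y_1,y_2)\mapsto y_1y_2$ is flat (it is exactly a type~(IV) morphism in the sense of Lemma~\ref{lem:pre flat pb0}), so $\mathbf{L}\mu^* = \mu^*$ and there is no discrepancy between derived and naive pull-back to worry about. More to the point, $\mu_i$ acts only on the $\square^\bullet$-factor and is the identity on the $\widehat{X}$-factor; since the mod $Y$-equivalence is defined by derived restriction along $\square_Y^n \hookrightarrow \square_{\widehat{X}}^n$, applying the flat $\mu_i^*$ to both sides of the defining isomorphism in $\sAlg(\mathcal{O}_{\square_Y^n})$ immediately yields the required isomorphism in $\sAlg(\mathcal{O}_{\square_Y^{n+1}})$. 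The paper dispatches this in a single sentence; your more elaborate plan via the template of Lemmas~\ref{lem:psi mod Y resp} and~\ref{lem:homo mod Y resp} would also work but is unnecessary.
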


\begin{proof}
For simplicity, via the automorphism $\psi: \mathbb{P}_k ^1 \to \mathbb{P}_k ^1$, $ y \mapsto \frac{y}{y-1}$, we identify $(\square, \{ 0, \infty \})$ with $(\square_{\psi}, \{ 0, 1 \})$, where $\square_{\psi} := \mathbb{A}^1$. On $\square_{\psi}^n$, the faces are given by intersecting the hyperplanes $\{ y_i = \epsilon_i\}$, where $\epsilon_i \in \{ 0, 1 \}$, and we may assume the cycles in $z^q (\widehat{X}, n)$ are on $\widehat{X} \times \square_{\psi}^n$, satisfying the corresponding conditions as in Definition \ref{defn:HCG}, except the faces are given as the above.

\medskip

Consider the morphism $ \mu: \square_{\psi}^2 \to \square_{\psi}^1$ given by $(y_1, y_2)\mapsto y_1y_2$. This corresponds to the morphism $\mu: \un{2} \to \un{1}$ of \textbf{ECube}. This induces various similar morphisms e.g. for $1 \leq i <  n+1$
$$
\mu_{i} : \square_{\psi}^{n+1} \to \square_{\psi}^n,
$$
that send $(y_i, y_{i+1}) \mapsto y_i y_{i+1}$, while $y_{i'} \mapsto y_{i'}$ for $i' \not = i, i+1$.

Since the complexes already come from cubical abelian groups, to prove that they are extended cubical abelian groups, we need to show that for a cycle $\mathfrak{Z} \in z^q (\widehat{X}, n)$ (resp. $z^q (\widehat{X} \mod Y, n)$, $\mathcal{C}_n$, $\mathcal{C}_n'$), we have $\mu^*_{i} (\mathfrak{Z}) \in z^q (\widehat{X}, n+1)$ (resp. $z^q (\widehat{X} \mod Y, n+1)$, $\mathcal{C}_{n+1}$, $\mathcal{C}_{n+1}'$), where $\mu^* (\mathfrak{Z})$ is given as the type (IV) flat pull-back in Lemma \ref{lem:pre flat pb0}.

\medskip

(1): For $z^q (\widehat{X}, n)$, this is proven separately in Lemma \ref{lem:pull-back mu 1} below.

\medskip

(2): For $z^q (\widehat{X} \mod Y, n)$, given that (1) holds, it follows from that $\mu ^*_{i}$ respects the mod $Y$-equivalence. The latter holds because the morphism $\mu_{i}$ does not disturb anything on the factor $\widehat{X}$ of $\widehat{X} \times \square_{\psi}^{\bullet}$.

\medskip

(3), (4): For $\mathcal{C}_n$ and $\mathcal{C}_n'$, the assertions follow from (1) and (2), and the commutative diagrams
$$
\xymatrix{ 
z^q (\widehat{X}, n+1) \ar[r] ^{\rho^Y_U} & z^q (\widehat{X}|_U, n+1) \\
z^q (\widehat{X}, n) \ar[r] ^{\rho^Y_U} \ar[u] ^{\mu ^*_{i}} & z^q (\widehat{X}|_U, n), \ar[u] ^{\mu ^*_{i}}}
\hskip0.3cm
\xymatrix{ 
z^q (\widehat{X} \mod Y, n+1) \ar[r] ^{\rho^Y_U} & z^q (\widehat{X}|_U \mod U, n+1) \\
z^q (\widehat{X} \mod Y, n) \ar[r] ^{\rho^Y_U} \ar[u] ^{\mu ^*_{i}} & z^q (\widehat{X}|_U \mod U, n), \ar[u] ^{\mu ^*_{i}}}
$$
of flat pull-backs of type (IV).

As remarked in \S \ref{sec:n4.1}, once the complexes are extended cubical abelian groups, the second part of the theorem follows by M. Levine \cite[Lemma 1.6]{Levine SM}. 
\end{proof}

We used the following in the proof of Theorem \ref{thm:normalization}. 

\begin{lem}\label{lem:pull-back mu 1}
Via the automorphism $\psi: \mathbb{P}^1 \to \mathbb{P}^1$, $y \mapsto \frac{y}{y-1}$, we identify $(\square, \{ 0, \infty\})$ with $(\square_{\psi}= \mathbb{A}^1, \{ 0, 1 \})$. Suppose we defined $z^q (\widehat{X}, n)$ using the corresponding faces of $\square_{\psi}^n$. Let $1 \leq i < j \leq n+1$.
 
Consider $\mu: \square_{\psi} ^2 \to \square_{\psi}^1$ given by $(y, y') \mapsto y y'$, which induces the type ${\rm (IV)}$ flat morphisms (in the sense of Lemma \ref{lem:pre flat pb0}) for $1 \leq i < n+1$
$$
\mu_{i}: \widehat{X} \times \square_{\psi} ^{n+1} \to \widehat{X} \times \square_{\psi}^n
$$
that send $(y_i, y_{i+1})\mapsto y_i y_{i+1}$ and $y_{i'} \mapsto y_{i'}$ for $i' \not = i, i+1$.

Then for each $\mathfrak{Z} \in z^q (\widehat{X}, n)$, the pull-back $\mu^*_{i} (\mathfrak{Z}) \in \un{z}^q (\widehat{X} \times \square_{\psi}^{n+1})$ belongs to $z^q (\widehat{X}, n+1)$. 
\end{lem}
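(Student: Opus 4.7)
The plan is to check directly that $\mathfrak{Z}' := \mu_i^*(\mathfrak{Z})$ satisfies the two admissibility conditions (\textbf{GP}) and (\textbf{SF}) of Definition \ref{defn:HCG}. Since $\mu_i$ is flat of type (IV) (Lemma \ref{lem:pre flat pb0}), the pull-back $\mu_i^*(\mathfrak{Z})$ already lies in $\un{z}^q(\widehat{X} \times \square_\psi^{n+1})$ as a codimension-$q$ cycle; the content of the lemma is that the face-intersection conditions are preserved.

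The main step is a case analysis on how a given face $F \subset \square_\psi^{n+1}$ interacts with the multiplication. Writing $F = F_1 \times G \times F_2$, where $F_1 \subset \square_\psi^{i-1}$ and $F_2 \subset \square_\psi^{n-i}$ are faces in the coordinates untouched by $\mu_i$, and $G \subset \square_\psi^2$ is cut out by some subset of the equations $\{y_i = \epsilon\}, \{y_{i+1} = \epsilon'\}$ with $\epsilon, \epsilon' \in \{0,1\}$, I would show in each sub-case that the restricted map $\mu_i|_{\widehat{X} \times F}$ factors as either a flat morphism of type (IV) or an isomorphism onto $\widehat{X} \times (F_1 \times F' \times F_2)$, for an explicit face $F' \subset \square_\psi^1$ determined by $G$. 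For instance, $G = \{y_i = 0\}$ forces $F' = \{z_i = 0\}$ with the $y_{i+1}$-direction free; $G = \{y_i = 1\}$ forces $F' = \square_\psi^1$ via $z_i = y_{i+1}$ (an isomorphism); and $G = \{y_i = y_{i+1} = 1\}$ gives $F' = \{z_i = 1\}$ (again an isomorphism), while $G = \square_\psi^2$ gives $F' = \square_\psi^1$. In every case, $\mathfrak{Z}' \cap (\widehat{X} \times F)$ is the flat (or isomorphism) pull-back of $\mathfrak{Z} \cap (\widehat{X} \times F_1 \times F' \times F_2)$, and proper intersection for the latter (supplied by the (\textbf{GP}) hypothesis on $\mathfrak{Z}$) transfers to proper intersection for the former. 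This yields (\textbf{GP}).

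Condition (\textbf{SF}) is then proved in an identical manner, replacing $\widehat{X}$ with $\widehat{X}_{\red}$ throughout; this substitution is legitimate because $\mu_i$ acts trivially on the $\widehat{X}$-factor and therefore commutes with reduction modulo the largest ideal of definition of $\widehat{X}$. The (\textbf{SF}) hypothesis on $\mathfrak{Z}$ transfers along the same restricted flat maps to yield (\textbf{SF}) for $\mathfrak{Z}'$.

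I do not foresee a serious obstacle. The only mildly delicate sub-case is $G = \{y_i = 0\}$ (and its symmetric partner), where $\mu_i|_F$ is not an isomorphism but collapses a free $y_{i+1}$-direction onto the hyperplane $\{z_i = 0\}$; the key point to verify there is that this collapsing is still, up to reordering of factors, of the form $\mathrm{Id}_{\widehat{X}} \times g$ for a flat morphism $g$ of $k$-schemes, and hence falls under type (IV) of Lemma \ref{lem:pre flat pb0}. Once the decomposition $F = F_1 \times G \times F_2$ and this observation are in place, the lemma reduces to a direct application of the flat pull-back property already established.
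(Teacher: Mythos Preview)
Your proposal is correct and follows essentially the same route as the paper: both verify (\textbf{GP}) and (\textbf{SF}) for $\mu_i^*(\mathfrak{Z})$ by a case analysis on how a face interacts with the multiplication in the $(y_i,y_{i+1})$-coordinates, reducing each case to a flat (type~(IV)) or isomorphic pull-back of a proper intersection already guaranteed by the hypotheses on $\mathfrak{Z}$. The only organizational difference is that the paper first treats the codimension-$1$ faces via the cubical maps $\partial_i^{\epsilon}$ and then combines, whereas you handle an arbitrary face in one pass via the decomposition $F = F_1 \times G \times F_2$; both are equivalent.
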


\begin{proof}
We may assume that $\mathfrak{Z}$ is integral. It satisfies the properties (\textbf{GP}), (\textbf{SF}) of Definition \ref{defn:HCG}. The question is whether $\mu^*_{i} (\mathfrak{Z})$ also enjoys the three properties.

Without loss of generality, we may assume $i=n$ so that we consider 
$$\tilde{\mu}^n:= {\rm Id}_{\square_{\psi}^{n-1}} \times \mu: \square_{\psi}^{n+1} \to \square_{\psi}^n,
$$
given by $(y_1, \cdots, y_n, y_{n+1}) \mapsto (y_1, \cdots, y_{n-1}, y_n y_{n+1})$.

\medskip

Let's first inspect (\textbf{GP}). We check the case of the codimension $1$ faces first.

\textbf{Case 1:} Suppose $1 \leq i \leq n-1$. The face map $\partial_i ^{\epsilon}$ and $\mu$ do not interact at all so that $\partial_i ^{\epsilon}( \tilde{\mu}^n)^* (\mathfrak{Z}) = (\tilde{\mu}^{n-1})^* ( \partial_i ^{\epsilon} (\mathfrak{Z}))$, and it has the right codimension. 

\textbf{Case 2:} Suppose $i=n$. When $\epsilon=1$, we have the composite 
$$
\square_{\psi} ^{n} \overset{\iota_{n} ^1}{\hookrightarrow} \square_{\psi}^{n+1} \overset{\tilde{\mu}^n}{\to} \square_{\psi}^n, \ \ (y_1, \cdots, y_n) \mapsto (y_1, \cdots, y_{n-1}, 1, y_n) \mapsto (y_1, \cdots, y_n),
$$
which is the identity. Thus $\partial_n ^1 ( \tilde{\mu}^n)^* (\mathfrak{Z}) = \mathfrak{Z}$, which also has the right codimension. 

When $\epsilon=0$, we have the composite
$$
\square_{\psi} ^n \overset{\iota_{n}^0}{\hookrightarrow} \square_{\psi}^{n+1} \overset{\tilde{\mu}^n}{\to} \square_{\psi}^n, \ \ 
(y_1, \cdots, y_n) \mapsto (y_1, \cdots, y_{n-1}, 0, y_n) \mapsto (y_1, \cdots, y_{n-1}, 0),
$$
and it is equal to the composite
$$
\square_{\psi}^n \overset{pr_n}{\to} \square_{\psi}^{n-1} \overset{\iota_n^0}{\hookrightarrow } \square_{\psi}^n.
$$
Thus $\partial_n ^0 (\tilde{\mu}^n)^* (\mathfrak{Z}) = pr_n ^* (\partial_n ^0 (\mathfrak{Z}))$. Since this is a degenerate cycle, it is $0$ in $z^q (\widehat{X}, n)$.

\textbf{Case 3:} The case when $i=n+1$ is identical to $i=n$ by symmetry, because $\mu(y, y') = \mu (y', y) = y y'$.

\medskip

For higher codimension faces, we repeat and combine the above codimension $1$ cases, and we deduce that for any face $F \subset \square_{\psi}^{n+1}$, the intersection of $(\tilde{\mu}^n)^* (\mathfrak{Z})$ with $\widehat{X} \times F$ is proper.

\medskip

Let's check the condition (\textbf{SF}) for $\mu^* (\mathfrak{Z})$. 

Note first that by the condition (\textbf{SF}) of $\mathfrak{Z}$, the intersection $Z:=\mathfrak{Z} \cap (\widehat{X}_{\red} \times \square_{\psi}^n)$ has the codimension $q$ in $\widehat{X}_{\red} \times \square_{\psi}^n$. Since this intersection is a cycle on the scheme $\widehat{X}_{\red} \times \square_{\psi}^n$, and $\tilde{\mu}^n: \widehat{X}_{\red} \times \square_{\psi}^{n+1} \to \widehat{X}_{\red} \times \square_{\psi}^n$ is flat, we can take the usual flat pull-back $(\tilde{\mu}^n)^* (Z)$ for cycles on schemes. This is of codimension $q$ in $\widehat{X}_{\red} \times \square_{\psi}^{n+1}$. Since 
$$
(\tilde{\mu}^n)^* (Z) = (\tilde{\mu}^n)^* (\mathfrak{Z} \cap (\widehat{X}_{\red} \times \square_{\psi}^n))= (\tilde{\mu}^n)^* (\mathfrak{Z}) \cap (\widehat{X}_{\red} \times \square_{\psi}^{n+1}),
$$
we see that $ (\tilde{\mu}^n)^* (\mathfrak{Z}) \cap (\widehat{X}_{\red} \times \square_{\psi}^{n+1})$ has the codimension $q$ in $\widehat{X}_{\red} \times \square_{\psi}^{n+1}$.

Once we know it, we can inspect the intersections with the faces $\widehat{X}_{\red} \times F$, as we did in checking (\textbf{GP}). The same argument shows that the intersection $(\tilde{\mu}^n)^* (Z) \cap (\widehat{X}_{\red} \times F)$ is proper for each face $F$. Thus $(\tilde{\mu}^n)^*(\mathfrak{Z})$ satisfies (\textbf{SF}). 

This shows that $(\tilde{\mu}^n)^* (\mathfrak{Z}) \in z^q (\widehat{X}, n+1)$ as desired.
\end{proof}

\subsubsection{An application}\label{sec:permutation}
For each permutation $g \in \mathfrak{S}_n$, we have the induced action $g* (y_1, \cdots, y_n) = (y_{g (1)}, \cdots, y_{g (n)})$ for $(y_1, \cdots, y_n) \in \square^n$. This induces the natural actions of $\mathfrak{S}_n$ on $z^q (\widehat{X}, n)$ and $z^q (\widehat{X} \mod Y, n)$ as well. 
An application of Theorem \ref{thm:normalization} is the following, essentially from \cite[Lemma 5.3]{KP DM}. Theorem \ref{thm:normal permutation} is used in this article only for the graded commutativity of the product structure in \S \ref{sec:product}.

\begin{thm}\label{thm:normal permutation}
Let $Y$ be a quasi-affine $k$-scheme of finite type and let $Y \hookrightarrow X$ be a closed immersion into an equidimensional smooth $k$-scheme. Let $\widehat{X}$ be the completion of $X$ along $Y$.

Let $\mathfrak{Z} \in z^q (\widehat{X} , n)$ (resp. $\in z^q (\widehat{X} \mod Y , n)$) be a cycle such that $\partial_i ^{\epsilon} (\mathfrak{Z}) = 0$ for all $1 \leq i \leq n$ and $\epsilon \in \{ 0, \infty \}$. Let $g \in \mathfrak{S}_n$ be a permutation. 

Then we have the equivalence of the normalized cycles
\begin{equation}\label{eqn:normal perm}
g * \mathfrak{Z}  \equiv \sgn (g) \circ \mathfrak{Z}
\end{equation}
 modulo the boundary of a cycle from $z^q (\widehat{X}, n+1)$ (resp. $z^q (\widehat{X} \mod Y, n+1)$).
\end{thm}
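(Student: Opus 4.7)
The plan is to first reduce the assertion to the case of an adjacent transposition $\tau_i = (i, i+1) \in \mathfrak{S}_n$. Since $\mathfrak{S}_n$ is generated by such $\tau_i$, and since $g * \mathfrak{Z}$ is again in the kernel of every face map whenever $\mathfrak{Z}$ is (because $\mathfrak{S}_n$ simply permutes the set of faces $\partial_j^\epsilon$), an iteration will reduce the claim to proving $\tau_i * \mathfrak{Z} \equiv - \mathfrak{Z}$ modulo a boundary from $z^q(\widehat{X}, n+1)$ for each $i$. By further pulling back along a coordinate permutation, it is enough to treat $\tau_1 = (1, 2)$.

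To construct the homotopy, I will work in the $\square_{\psi} = \mathbb{A}^1$-convention with faces $\{0, 1\}$, as in the proof of Lemma \ref{lem:pull-back mu 1}, and define
\[
\Phi : \widehat{X} \times \square_{\psi}^{n+1} \to \widehat{X} \times \square_{\psi}^{n}, \quad (y_1, y_2, y_3, y_4, \dots, y_{n+1}) \mapsto (y_2,\, y_1 y_3,\, y_4,\, \dots,\, y_{n+1}).
\]
This $\Phi$ factors as the composite of a coordinate permutation (an isomorphism of $\widehat{X} \times \square_{\psi}^{n+1}$, which clearly preserves admissibility by permuting the faces) with a type (IV) flat morphism of the form $\mathrm{Id} \times \mu \times \mathrm{Id}$, where $\mu : \square_\psi^2 \to \square_\psi^1$ is the multiplication. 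By Lemma \ref{lem:pull-back mu 1} the pull-back $\Phi^*\mathfrak{Z}$ lies in $z^q(\widehat{X}, n+1)$, and I set $\mathfrak{W} := - \Phi^*\mathfrak{Z}$.

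Next, I compute $\partial \mathfrak{W}$ face by face in the non-degenerate complex. The hypothesis $\partial_j^\epsilon \mathfrak{Z} = 0$ for all $j, \epsilon$ forces:
\begin{itemize}
\item $\partial_1^0 \Phi^*\mathfrak{Z}$ and $\partial_3^0 \Phi^*\mathfrak{Z}$ to be degenerate pull-backs of $\partial_2^0 \mathfrak{Z} = 0$ (via projection forgetting $y_3$ or $y_1$ respectively);
\item $\partial_2^\epsilon \Phi^*\mathfrak{Z}$ to be a pull-back of $\partial_1^\epsilon \mathfrak{Z} = 0$;
\item $\partial_j^\epsilon \Phi^*\mathfrak{Z}$ for $j \geq 4$ to be a pull-back of $\partial_{j-1}^\epsilon \mathfrak{Z} = 0$;
\item $\partial_1^1 \Phi^*\mathfrak{Z}$ to equal $\mathfrak{Z}$ (via relabeling $y_{i+1} \mapsto u_i$);
\item $\partial_3^1 \Phi^*\mathfrak{Z}$ to equal $\tau_1 * \mathfrak{Z}$ (the equation $(y_2, y_1, y_4, \dots, y_{n+1}) \in \mathfrak{Z}$ is exactly the $\tau_1$-swap).
\end{itemize}
Applying $\partial = \sum_{i=1}^{n+1} (-1)^i(\partial_i^1 - \partial_i^0)$ then gives $\partial \Phi^*\mathfrak{Z} = - \mathfrak{Z} - \tau_1 * \mathfrak{Z}$, so $\partial \mathfrak{W} = \mathfrak{Z} + \tau_1 * \mathfrak{Z}$, which is the desired relation $\tau_1 * \mathfrak{Z} \equiv - \mathfrak{Z} = \sgn(\tau_1) \mathfrak{Z}$.

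For the mod $Y$-version, the morphism $\Phi$ acts only on the cubical factor and is the identity on $\widehat{X}$, so $\Phi^*$ preserves the mod $Y$-equivalence, essentially by the argument of Lemma \ref{lem:psi mod Y resp} applied to a pullback of type (III)/(IV). This was already implicitly used in the proof of Theorem \ref{thm:normalization}-(2). Hence the above $\mathfrak{W}$ gives a valid homotopy in $z^q(\widehat{X} \mod Y, n+1)$ as well.

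The only nontrivial step is the boundary computation in the third paragraph, which is not conceptually hard but requires careful bookkeeping of which face of $\Phi^*\mathfrak{Z}$ is killed by which vanishing face hypothesis on $\mathfrak{Z}$; this is where the strong normalization assumption $\partial_j^\epsilon \mathfrak{Z} = 0$ for \emph{all} $j$ and $\epsilon$ (as opposed to the weaker one-sided normalization) is essential.
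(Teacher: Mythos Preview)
Your proof is correct, and it takes a genuinely different route from the paper's.

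The paper constructs the homotopy for $\tau=(1,2)$ via the \emph{rational} map
\[
\nu:\ (x,y_1,\dots,y_n,y)\ \longmapsto\ \Bigl(x,\ y,\ y_2,\ \tfrac{y-y_1}{y-1},\ y_3,\dots,y_n\Bigr)
\]
in the $\{0,\infty\}$-convention, and sets $\gamma_{\mathfrak{Z}}^{\tau}$ equal to the Zariski closure of $\nu(\mathfrak{Z}\times\square^1)$ in $\widehat{X}\times\square^{n+1}$. The relevant faces are then $\partial_1^0\gamma=\tau*\mathfrak{Z}$, $\partial_3^0\gamma=\mathfrak{Z}$, with all others vanishing, yielding $\partial\gamma=\tau*\mathfrak{Z}+\mathfrak{Z}$. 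This is imported from \cite[Lemma~5.3]{KP DM}.

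Your construction instead uses the \emph{flat} morphism $\Phi$ built from the multiplication $\mu:(y,y')\mapsto yy'$ (composed with a coordinate swap) and takes $\mathfrak{W}=-\Phi^*\mathfrak{Z}$. The face computation you outline is correct in the $\{0,1\}$-convention, and your appeal to Lemma~\ref{lem:pull-back mu 1} for admissibility of $\Phi^*\mathfrak{Z}$ is exactly right: this lemma was proved precisely to make $z^q(\widehat{X},\bullet)$ an extended cubical object, and your homotopy is a direct byproduct of that structure. By contrast, the paper's rational-map construction requires a separate (if routine) admissibility check for the Zariski closure, which is not spelled out there.

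For the $\bmod\ Y$ case, both arguments reduce to the observation that the homotopy morphism acts only on the cubical factor and is the identity on $\widehat{X}$; your reference to the proof of Theorem~\ref{thm:normalization}(2) is the right one (Lemma~\ref{lem:psi mod Y resp} concerns the translations $\psi_{\underline c}$, not $\mu$, but the mechanism is the same). The reduction from general $g$ to adjacent transpositions and the telescoping over a word $\tau_r\cdots\tau_1$ are handled identically in both proofs.
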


\begin{proof}
The proof is almost identical to \cite[Lemma 5.3]{KP DM} (cf. \cite[Lemma 3.16]{KP2}). We sketch it following \cite{KP DM}. 

We first consider the case when $g \in \mathfrak{S}_n$ is the transposition of the form $\tau= (p, p+1)$ for some $1 \leq p \leq n-1$. We do it for $p=1$ only, i.e. $\tau= (1, 2)$. The other cases are similar. 

Consider the rational map $\nu: \widehat{X} \times \square ^{n} \times \square ^1 \to \widehat{X} \times \square^{n+1}$ given by 
$$
(x, y_1, \cdots, y_n)\times (y) \mapsto \left( x, y, y_2,  \frac{ y - y_1}{y-1}, y_3, \cdots, y_n\right).
$$
 For a given normalized cycle $\mathfrak{Z}$, we let $\gamma_{\mathfrak{Z}} ^\tau$ be the cycle given by the Zariski closure of $\nu (\mathfrak{Z} \times \square^1)$. 

One checks that $\partial_1 ^0 (\gamma_{\mathfrak{Z}} ^{\tau}) = \tau* \mathfrak{Z}$, $\partial_1 ^{\infty} (\gamma_{\mathfrak{Z}} ^{\tau}) = 0$, $\partial_3 ^0 (\gamma_{\mathfrak{Z}} ^{\tau}) =\mathfrak{Z}, $ and $\partial_3 ^{\infty} (\gamma_{\mathfrak{Z}} ^{\tau})= 0$, while $\partial_i ^{\epsilon} (\gamma_{\mathfrak{Z}} ^{\tau}) = 0$ for all $i \not = 1, 3$ and $\epsilon \in \{ 0, \infty\}$. Hence $\partial ( \gamma_{\mathfrak{Z}} ^{\tau}) = \tau * \mathfrak{Z} +  \mathfrak{Z}$ as desired because $\sgn (\tau) = -1$. This answers the question for $g=\tau$ is a transposition of the form $(p, p+1)$.

\medskip

Now let $g \in \mathfrak{S}_n$ be any permutation. By a basic result from the group theory, one can express $g = \tau_{r} \tau _{r-1} \cdots \tau_2 \tau_1$ for some transpositions $t_i$ of the form $(p, p+1)$ considered before. We let $g_0 := {\rm Id}$, $g_{\ell} = \tau_{\ell} \circ \tau_{\ell-1} \cdots \tau_1$ for $1 \leq \ell \leq r$. For each such $\ell$, by the first step we have
$$
(-1)^{\ell-1} g_{\ell-1} * \mathfrak{Z} + (-1)^{\ell -1} \tau_{\ell} * (g_{\ell-1} * \mathfrak{Z} )  = \partial ( (-1)^{\ell -1} \gamma_{g_{\ell-1} * \mathfrak{Z}} ^{\tau_{\ell}  }) .
$$ 
Since $\tau_{\ell} g_{\ell-1} = g_{\ell}$, by taking the sum of the above over all $1 \leq \ell \leq r$, after cancellations we obtain 
\begin{equation}\label{eqn:normal perm 0}
\mathfrak{Z} + (-1)^{r-1} g * \mathfrak{Z}= \partial (\gamma_{\mathfrak{Z}} ^{g}),
\end{equation}
 where 
 $$
 \gamma_{\mathfrak{Z}} ^g:= \sum_{\ell=1} ^{r} (-1)^{\ell -1} \gamma_{g_{\ell -1}*\mathfrak{Z}} ^{\tau_{\ell}}.
 $$
  Since $(-1)^r = \sgn (g)$, \eqref{eqn:normal perm 0} implies \eqref{eqn:normal perm}.
\end{proof}

\subsection{A conjectural pseudo-localization}\label{sec:appendix localization}
In \S \ref{sec:appendix localization}, we discuss a conjectural guess presented as Guess \ref{conj:localization}, that resembles the proof of the localization theorem of S. Bloch \cite{Bloch moving} and M. Levine \cite{Levine moving}. 

The author was yet unable to prove it, though we discuss a few conjectural consequences of it. 

\begin{conj}\label{conj:localization}
Let $Y$ be a connected quasi-affine $k$-scheme of finite type. Let $Y \hookrightarrow X$ be a closed immersion into an equidimensional smooth $k$-scheme, and let $\widehat{X}$ be the completion of $X$ along $Y$. Let $U \subset |Y|$ be a nonempty open subset.

Consider the restriction morphisms 
\begin{equation}\label{eqn:localization}
 \tuborg z^q (\widehat{X} , \bullet) \to z^q (\widehat{X}|_U, \bullet),\\
 z^q (\widehat{X} \mod Y, \bullet) \to z^q (\widehat{X}|_U \mod U, \bullet), \sluttuborg
\end{equation}
where $\widehat{X}|_U$ is the quasi-affine open formal subscheme $(U, \mathcal{O}_{\widehat{X}}|_U)$ of $\widehat{X}$, and the last $U$ in \eqref{eqn:localization} is seen as the quasi-affine open subscheme $(U, \mathcal{O}_Y|_U)$ of $Y$.

Then their respective cokernels $\mathcal{C}_{\bullet}$ and $\mathcal{C}_{\bullet} '$ are acyclic.
\end{conj}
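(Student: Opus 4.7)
The strategy is to adapt the ``blowing-up faces of a cube'' technique of S. Bloch \cite{Bloch moving}, subsequently reorganized by M. Levine \cite{Levine moving}, to cycles on the formal scheme $\widehat{X}$ and its open formal subscheme $\widehat{X}|_U$. First, by Theorem \ref{thm:normalization}, the inclusions $\mathcal{C}_{\bullet,N} \hookrightarrow \mathcal{C}_\bullet$ and $\mathcal{C}'_{\bullet,N} \hookrightarrow \mathcal{C}'_\bullet$ are quasi-isomorphisms, so it suffices to prove that the normalized subcomplexes are acyclic. For a normalized class, only the last face $\partial_1^\infty$ contributes to the boundary, which considerably trims the combinatorics.

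Next, given a normalized cycle $\mathfrak{Z} \in z^q(\widehat{X}|_U, n)$ whose class in $\mathcal{C}_{n,N}$ is closed (i.e., $\partial_1^\infty \mathfrak{Z}$ extends to a cycle in $z^q(\widehat{X}, n-1)$), the plan is to take its Zariski closure $\bar{\mathfrak{Z}}$ in $\widehat{X} \times \square_k^n$ in the sense of Definition \ref{defn:image}. The cycle $\bar{\mathfrak{Z}}$ has codimension $q$ and satisfies (\textbf{GP}), (\textbf{SF}) with respect to faces meeting $\widehat{X}|_U \times \square_k^n$ nontrivially, but may fail admissibility along the closed complement $(Y \setminus U) \times$ (faces). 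The goal is to produce a homotopy chain $\mathfrak{H} \in z^q(\widehat{X}, n+1)$ together with a properly-positioned cycle $\tilde{\mathfrak{Z}} \in z^q(\widehat{X}, n)$ satisfying $\partial_1^\infty \mathfrak{H} = \bar{\mathfrak{Z}} - \tilde{\mathfrak{Z}}$, so that $\mathfrak{Z}$ becomes equivalent in $\mathcal{C}_{\bullet,N}$ to a boundary.

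The technical heart is the construction of $\mathfrak{H}$ via iterated blow-ups of $\square_k^n$ centered at products of faces intersected with the bad loci from $(Y \setminus U)$. In the classical scheme-theoretic case of \cite{Bloch moving, Levine moving}, one constructs a higher Chow analogue of Chow's moving lemma using linear fiber bundles arising from these blow-ups. I would transplant this to our setting by exploiting the flat pull-backs of Lemma \ref{lem:pre flat pb} (types (I), (III), (IV)) and the regularity of $\widehat{X} \times \square_k^n$ (so derived intersections are perfect, by Corollary \ref{cor:coh=perfect}). The main obstacle I foresee here is that, unlike the scheme case, the conditions (\textbf{GP}) and (\textbf{SF}) of Definition \ref{defn:HCG} must both be preserved throughout the blow-up iteration; (\textbf{SF}) is a genuinely new constraint tied to the largest ideal of definition, and one must verify that each blow-up step respects it. I expect this verification to parallel the arguments used in proving Theorem \ref{thm:formal moving}, where translations by elements of the ideal $J$ preserved (\textbf{SF}) automatically because the shift parameters lay inside $J$.

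Finally, for the mod $Y$ statement (acyclicity of $\mathcal{C}'_\bullet$), the plan is to show that the moving chain $\mathfrak{H}$ constructed above respects the mod $Y$-equivalence. This requires checking that the derived pull-back $\mathbf{L} j^*$ along $Y \hookrightarrow \widehat{X}$ commutes with the blow-up pull-backs of admissible coherent $\mathcal{O}_{\square_{\widehat{X}}^n}$-algebras, using arguments analogous to those of Lemmas \ref{lem:psi mod Y resp} and \ref{lem:homo mod Y resp}, where we verified that translations and the affine homotopies preserved $\mathcal{L}^q(\widehat{X}, Y, n)$. The hardest step here, and the reason I expect the statement to remain delicate, is the control of higher Tor sheaves: the blow-up centers involve closed subschemes intersected with $Y$, and the resulting derived restrictions to $\square_Y^n$ could \emph{a priori} introduce non-trivial higher Tor contributions that the simple translation-based arguments of \S \ref{sec:first indep} managed to bypass. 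Securing this Tor-control via an adapted Koszul-type resolution, in the spirit of the regular-sequence argument used in Theorem \ref{thm:graph general}, seems to be the principal geometric/homological obstacle, and is presumably why the statement is left as a Guess rather than established as a theorem in the present article.
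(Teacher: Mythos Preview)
The statement is a \emph{Guess} (conjecture), not a theorem: the paper explicitly says in \S\ref{sec:appendix localization} that ``the author was yet unable to prove it, though we discuss a few conjectural consequences of it.'' There is therefore no proof in the paper to compare against.

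Your proposal is not a proof but a strategy sketch, and you yourself acknowledge this in your final sentence. The strategy you outline---adapting the Bloch--Levine blowing-up-faces technique to cycles on $\widehat{X}$, first reducing to normalized complexes via Theorem~\ref{thm:normalization}, then moving the closure of a cycle off the bad locus over $Y\setminus U$---is precisely the route the paper itself points to: the introduction states that ``Guess~\ref{conj:localization} essentially goes back to the nontrivial `blowing-up faces of a cube' technique invented by S.~Bloch \cite{Bloch moving} \ldots\ Its reorganization and generalization are in M.~Levine \cite{Levine moving}. Guess~\ref{conj:localization} is a version for our cycles on formal schemes.'' So your approach and the paper's expectations are aligned.

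The obstacles you flag are the real ones. Preserving the condition (\textbf{SF}) of Definition~\ref{defn:HCG} through the blow-up iteration is not handled by the translation arguments of \S\ref{sec:first indep}, since those translations were specifically engineered inside the ideal of definition, whereas the blow-up moves of \cite{Bloch moving, Levine moving} are global over $\square^n$ and have no \emph{a priori} reason to respect the special fiber stratification. Likewise, the mod~$Y$ compatibility you describe would require controlling $\mathbf{L}\iota_Y^*$ through a sequence of proper birational pull-backs, which goes well beyond the Koszul-resolution argument of Theorem~\ref{thm:graph general}. These are exactly the gaps the paper does not close; your sketch correctly locates them but does not resolve them, so the status remains conjectural.
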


\begin{remk}\label{remk:localization artin}
In case $Y$ is an Artin local $k$-algebra, the above Guess \ref{conj:localization} holds trivially because any nonempty open $U \subset |Y|$ contains all of $|Y|$ already. \qed
\end{remk}

Conjectural consequences of Guess \ref{conj:localization} (together with Proposition \ref{prop:flasque sh}) are the following:

\begin{prop}\label{prop:flasque rep}
Assume Guess \ref{conj:localization}. Let $Y$ be a quasi-affine $k$-scheme of finite type. Let $Y \hookrightarrow X$ be a closed immersion into an equidimensional smooth $k$-scheme and let $\widehat{X}$ be the completion of $X$ along $Y$.

Then the natural injective morphisms of complexes of sheaves on $Y_{\rm Zar}$ of Lemma \ref{lem:flasque natural}
\begin{equation}\label{eqn:SP_sh 01}
\tuborg 
\tilde{\mathcal{S}}_{\bullet}^q  \to \BGHz^q (\widehat{X}, \bullet), \\
\mathcal{S}_{\bullet}^q  \to \BGHz^q (\widehat{X} \mod Y, \bullet),
\sluttuborg
\end{equation}
are quasi-isomorphisms.
\end{prop}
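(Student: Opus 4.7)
The plan is to exhibit two short exact sequences of complexes of sheaves whose third terms are acyclic as a consequence of Guess \ref{conj:localization}, and then conclude by the long exact sequence of homology sheaves.

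First, I would package the constructions of \S \ref{sec:desc sheaf} and \S \ref{sec:presheaf} into two short exact sequences of presheaves of complexes on $Y_{\rm Zar}$. For each open $U\subset Y$, Lemma \ref{lem:flasque natural} (equations \eqref{eqn:global qi-local}, \eqref{eqn:global qi-local2}) gives the injections $\tilde{\mathcal{S}}^q_{\bullet}(U)\hookrightarrow \tilde{\mathcal{P}}^q_{\bullet}(U)=z^q(\widehat{X}|_U,\bullet)$ and $\mathcal{S}^q_{\bullet}(U)\hookrightarrow \mathcal{P}^q_{\bullet}(U)=z^q(\widehat{X}|_U\mod U,\bullet)$, whose cokernels are, by construction and the exactness of the sequences used in Lemma \ref{lem:flasque natural}, precisely the complexes $\mathcal{C}_{\bullet}(U)$ and $\mathcal{C}'_{\bullet}(U)$ of Definition \ref{defn:normal cx}. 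Organizing these cokernels as $U$ varies in $Op(Y)$ gives presheaves of complexes $\mathcal{C}_{\bullet}$ and $\mathcal{C}'_{\bullet}$ and short exact sequences of presheaves
\[
0\to \tilde{\mathcal{S}}^q_{\bullet}\to \tilde{\mathcal{P}}^q_{\bullet}\to \mathcal{C}_{\bullet}\to 0,\qquad 0\to \mathcal{S}^q_{\bullet}\to \mathcal{P}^q_{\bullet}\to \mathcal{C}'_{\bullet}\to 0.
\]

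Next, I would sheafify these sequences. Zariski sheafification on $Y_{\rm Zar}$ is exact, so the resulting sequences of complexes of sheaves remain short exact. By Proposition \ref{prop:flasque sh}, $\tilde{\mathcal{S}}^q_{\bullet}$ and $\mathcal{S}^q_{\bullet}$ are already sheaves, so they are unaffected. By Definitions \ref{defn:sh_complex0} and \ref{defn:sh_complex}, the sheafifications of the middle terms are $\BGHz^q(\widehat{X},\bullet)$ and $\BGHz^q(\widehat{X}\mod Y,\bullet)$. Thus we obtain
\[
0\to \tilde{\mathcal{S}}^q_{\bullet}\to \BGHz^q(\widehat{X},\bullet)\to \mathcal{C}^{\rm sh}_{\bullet}\to 0,\qquad 0\to \mathcal{S}^q_{\bullet}\to \BGHz^q(\widehat{X}\mod Y,\bullet)\to (\mathcal{C}'_{\bullet})^{\rm sh}\to 0,
\]
where the first arrow in each sequence is the morphism of \eqref{eqn:SP_sh 01}.

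The main input is then Guess \ref{conj:localization}, which states that for the open subset $U\subset Y$ in each presheaf argument, the complexes $\mathcal{C}_{\bullet}(U)$ and $\mathcal{C}'_{\bullet}(U)$ are acyclic. Applied over every $U\in Op(Y)$, this means the presheaves of homology groups $U\mapsto {\rm H}_n(\mathcal{C}_{\bullet}(U))$ and $U\mapsto {\rm H}_n(\mathcal{C}'_{\bullet}(U))$ vanish identically. Since sheafification is exact, it commutes with taking homology of a complex, so the homology sheaves of $\mathcal{C}^{\rm sh}_{\bullet}$ and $(\mathcal{C}'_{\bullet})^{\rm sh}$ are the sheafifications of these zero presheaves, hence are zero. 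In other words, $\mathcal{C}^{\rm sh}_{\bullet}$ and $(\mathcal{C}'_{\bullet})^{\rm sh}$ are acyclic.

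Finally, I would conclude by reading off the long exact sequence of homology sheaves for each of the two short exact sequences of complexes of sheaves: the vanishing of the homology of the quotient term forces the first map in each sequence to be an isomorphism on all homology sheaves, i.e.\ a quasi-isomorphism, which is exactly \eqref{eqn:SP_sh 01}. The main, and really the only, obstacle in this argument is of course Guess \ref{conj:localization} itself; once that is granted, everything else is a formal exactness-of-sheafification argument.
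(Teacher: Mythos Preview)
Your argument is correct and essentially identical to the paper's: both identify the presheaf cokernels of $\tilde{\mathcal{S}}^q_\bullet\hookrightarrow\tilde{\mathcal{P}}^q_\bullet$ and $\mathcal{S}^q_\bullet\hookrightarrow\mathcal{P}^q_\bullet$ with the complexes $\mathcal{C}_\bullet$, $\mathcal{C}'_\bullet$ of Guess~\ref{conj:localization}, use the Guess to make them acyclic over each open $U$, and then pass to sheaves (you via exactness of sheafification and the long exact sequence of homology sheaves, the paper via the equivalent stalkwise statement). The only cosmetic difference is that the paper first reduces to connected $Y$, since Guess~\ref{conj:localization} is stated only in that case; you should insert this one-line reduction before invoking the Guess.
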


\begin{proof}
If $Y$ is not connected, then we may work with the individual connected components of $Y$. So, we may now assume $Y$ is connected.

Recall that in the proof of Lemma \ref{lem:flasque natural}, we had the injective homomorphisms of complexes of presheaves on $Y$
\begin{equation}\label{eqn:global qi-local2 01}
\tilde{\mathcal{S}}_{\bullet}^q  \to \tilde{\mathcal{P}}_{\bullet} ^q \ \ \mbox{and} \ \ \mathcal{S}_{\bullet}^q  \to \mathcal{P}_{\bullet} ^q,
\end{equation}
whose sheafifications give the injective morphisms of \eqref{eqn:SP_sh 01}.

By Guess \ref{conj:localization}, the cokernels of \eqref{eqn:global qi-local2 01} are acyclic over each nonempty open subset $U \subset Y$. Hence the morphisms \eqref{eqn:global qi-local2 01} are quasi-isomorphisms over each nonempty open $U \subset Y$. These quasi-isomorphisms induce quasi-isomorphisms of the morphisms of the stalks of \eqref{eqn:SP_sh 01} at all scheme points $y \in Y$. Thus, the morphisms \eqref{eqn:SP_sh 01} are quasi-isomorphisms of complexes of sheaves. 
\end{proof}

The above offers descriptions of the hypercohomology groups $\BGH^q (\widehat{X}, n)$ and $\BGH^q (\widehat{X}\mod Y, n)$ in Definition \ref{defn:BGH quasi-affine} in terms of the concrete cycle class groups $\CH^q (\widehat{X}, n)$ and $\CH^q (\widehat{X} \mod Y, n)$ in Definitions \ref{defn:HCG2} and \ref{defn:complex}:

\begin{thm}\label{thm:two BGH}
Assume Guess \ref{conj:localization}. Let $Y$ be a quasi-affine $k$-scheme of finite type. Let $Y \hookrightarrow X$ be a closed immersion into an equidimensional smooth $k$-scheme and let $\widehat{X}$ be the completion of $X$ along $Y$. 

Then we have isomorphisms
\begin{equation}\label{eqn:semi-loc case}
\tuborg
  \CH^q  (\widehat{X}, n)= \BGH^q (\widehat{X}, n), \\
 \CH^q  (\widehat{X} \mod Y, n) =  \BGH^q (\widehat{X} \mod Y, n).
\sluttuborg
\end{equation}
\end{thm}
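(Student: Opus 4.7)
The plan is to assemble this theorem directly from the two preparatory results already established in the excerpt, namely Theorem \ref{thm:two BGH -1} and Proposition \ref{prop:flasque rep}. The first result gives concrete flasque resolutions that compute the cycle class groups as hypercohomology, while the second, which itself relies on Guess \ref{conj:localization}, upgrades the inclusions of Lemma \ref{lem:flasque natural} into quasi-isomorphisms of complexes of Zariski sheaves on $Y$. Once both of these are in hand, the theorem reduces to a general homological principle that quasi-isomorphisms of complexes of abelian sheaves induce isomorphisms on Zariski hypercohomology.

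First I would recall from Theorem \ref{thm:two BGH -1} the identifications
\begin{equation*}
\CH^q(\widehat{X},n) \simeq \mathbb{H}_{\rm Zar}^{-n}(Y, \tilde{\mathcal{S}}_{\bullet}^q) \quad \text{and} \quad \CH^q(\widehat{X} \mod Y, n) \simeq \mathbb{H}_{\rm Zar}^{-n}(Y, \mathcal{S}_{\bullet}^q),
\end{equation*}
which hold unconditionally because $\tilde{\mathcal{S}}^q_\bullet$ and $\mathcal{S}^q_\bullet$ are flasque by Proposition \ref{prop:flasque sh}, so their global sections compute hypercohomology. On the other side, $\BGH^q(\widehat{X},n)$ and $\BGH^q(\widehat{X}\mod Y, n)$ are defined in Definition \ref{defn:BGH quasi-affine} as the Zariski hypercohomology $\mathbb{H}_{\rm Zar}^{-n}$ of $\BGHz^q(\widehat{X},\bullet)$ and $\BGHz^q(\widehat{X} \mod Y,\bullet)$, respectively.

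Next I would invoke Proposition \ref{prop:flasque rep}, which is precisely the place where the assumption Guess \ref{conj:localization} is needed: it asserts that the natural inclusions
\begin{equation*}
\tilde{\mathcal{S}}_{\bullet}^q \longrightarrow \BGHz^q(\widehat{X},\bullet), \qquad \mathcal{S}_{\bullet}^q \longrightarrow \BGHz^q(\widehat{X}\mod Y, \bullet)
\end{equation*}
of Lemma \ref{lem:flasque natural} are quasi-isomorphisms of complexes of Zariski sheaves on $Y$. Applying $\mathbb{H}_{\rm Zar}^{-n}(Y, -)$ to each of these quasi-isomorphisms then yields isomorphisms on hypercohomology groups, and concatenating with the identifications of the previous paragraph produces the desired isomorphisms \eqref{eqn:semi-loc case}. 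Since every ingredient is already built, no further moving lemma, refinement, or \v{C}ech argument is needed.

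No serious obstacle remains at this stage: the only nontrivial input is Guess \ref{conj:localization}, whose conjectural status is inherited through Proposition \ref{prop:flasque rep} as stated. The entire argument therefore amounts to organizing two quasi-isomorphisms and two flasque resolutions into a commutative square of hypercohomology groups, and one can make this precise by noting that the morphisms in \eqref{eqn:SP_sh 01} fit into a commutative triangle with the restriction maps defining $\tilde{\mathcal{S}}^q$ and $\mathcal{S}^q$, so that the induced maps on $\mathbb{H}_{\rm Zar}^{-n}$ are compatible with the definitions of $\CH^q$ and $\BGH^q$ given earlier.
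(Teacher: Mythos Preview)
Your proposal is correct and follows essentially the same approach as the paper: invoke Theorem \ref{thm:two BGH -1} to express the $\CH^q$ groups as hypercohomology of the flasque complexes $\tilde{\mathcal{S}}_\bullet^q$ and $\mathcal{S}_\bullet^q$, then use Proposition \ref{prop:flasque rep} (which depends on Guess \ref{conj:localization}) to identify these with the hypercohomology of $\BGHz^q(\widehat{X},\bullet)$ and $\BGHz^q(\widehat{X}\mod Y,\bullet)$, which are by definition the $\BGH^q$ groups.
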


\begin{proof}
By Theorem \ref{thm:two BGH -1}, we had
$$
\tuborg
 \mathbb{H}_{\rm Zar} ^{-n} (Y, \tilde{\mathcal{S}}_{\bullet}^q)= \CH^q  (\widehat{X}, n), \\
\mathbb{H}_{\rm Zar} ^{-n} (Y, \mathcal{S}_{\bullet}^q)= \CH^q  (\widehat{X} \mod Y, n).
\sluttuborg
$$

Since the morphisms $\tilde{\mathcal{S}}_{\bullet}^q  \to \BGHz^q (\widehat{X}, \bullet)$ and $\mathcal{S}_{\bullet}^q  \to \BGHz^q (\widehat{X} \mod Y, \bullet)$ in \eqref{eqn:SP_sh} are quasi-isomorphisms (Proposition \ref{prop:flasque rep}), we now deduce \eqref{eqn:semi-loc case} as desired.
\end{proof}

\medskip


\begin{thm}\label{thm:two Cech}
Assume Guess \ref{conj:localization}. Let $Y \in \Sch_k$, and let $\mathcal{U} = \{ (U_i, X_i)\}_{i \in \Lambda}$ be a system of local embeddings for $Y$. 

Then the groups in Definitions \ref{defn:Cech cx} and \ref{defn:Cech cycle cx} are isomorphic to each other, respectively:
\begin{equation}\label{eqn:two Cech main}
\tuborg
\CH^q (\mathcal{U}^{\infty}, n) = \BGH^q (\mathcal{U}^{\infty}, n) , \\
\CH^q (\mathcal{U}, n) = \BGH^q (\mathcal{U}, n).
\sluttuborg
\end{equation}
\end{thm}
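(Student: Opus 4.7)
The plan is to upgrade the natural homomorphisms of Lemma \ref{lem:flasque natural cech} to isomorphisms by feeding in the quasi-affine case Theorem \ref{thm:two BGH} termwise along the \v{C}ech construction. First I would recall that, as built in the proof of Lemma \ref{lem:flasque natural cech}, the maps in \eqref{eqn:two Cech main} are induced by the morphisms of bi-complexes of sheaves on $Y_{\rm Zar}$,
$$
\check{\mathfrak{C}}(\mathcal{U}^{\infty}, \tilde{\mathcal{S}}^q) \to \check{\mathfrak{C}}(\mathcal{U}^{\infty}, \BGHz^q), \qquad \check{\mathfrak{C}}(\mathcal{U}, \mathcal{S}^q) \to \check{\mathfrak{C}}(\mathcal{U}, \BGHz^q),
$$
whose $p$-th columns are $\prod_{I\in \Lambda^{p+1}}\mathbf{R}\iota_{*}$ of the inclusions $\tilde{\mathcal{S}}^q_{\bullet}(\widehat{X}_I)\hookrightarrow \BGHz^q(\widehat{X}_I,\bullet)$ and $\mathcal{S}^q_{\bullet}(\widehat{X}_I\mod U_I)\hookrightarrow \BGHz^q(\widehat{X}_I\mod U_I,\bullet)$ coming from Lemma \ref{lem:flasque natural}.

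Next, for each multi-index $I\in \Lambda^{p+1}$, the pair $U_I\hookrightarrow X_I$ is a closed immersion of a quasi-affine scheme into an equidimensional smooth $k$-scheme with completion $\widehat{X}_I$, so Proposition \ref{prop:flasque rep}, which is conditional on Guess \ref{conj:localization}, asserts that both inclusion morphisms above are quasi-isomorphisms of complexes of sheaves on $(U_I)_{\rm Zar}$. Since $\mathbf{R}\iota_{*}$ preserves quasi-isomorphisms and each $\Lambda^{p+1}$ is finite, I would conclude that the $p$-th column morphism is a quasi-isomorphism in $\mathcal{D}^{-}({\rm Ab}(Y))$ for every $p\geq 0$.

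Then, since the \v{C}ech direction is bounded ($p$ ranges in a finite interval controlled by $|\Lambda|$) and the vertical direction is bounded above (the homological complexes $\BGHz^q(\widehat{X}_I \mod U_I,\bullet)$ sit in non-negative degrees, hence in non-positive cohomological degrees), a standard double-complex / Eilenberg--Moore style spectral sequence argument upgrades the column-wise quasi-isomorphisms to quasi-isomorphisms of the totals in $\mathcal{D}({\rm Ab}(Y))$. Applying $\mathbb{H}_{\rm Zar}^{-n}(Y,-)$ identifies the right-hand totals with $\BGH^q(\mathcal{U}^{\infty}, n)$ and $\BGH^q(\mathcal{U}, n)$ by Definition \ref{defn:Cech cx}. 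For the left-hand totals, I would observe that push-forwards of flasque sheaves under open immersions and finite products of flasque sheaves remain flasque, so by Proposition \ref{prop:flasque sh} the totals are bounded complexes of flasque sheaves and their hypercohomology equals the cohomology of global sections; by the computation in the proof of Lemma \ref{lem:flasque natural cech} this in turn equals ${\rm H}_n$ of $z^q(\mathcal{U}^{\infty},\bullet)$ and $z^q(\mathcal{U},\bullet)$, namely $\CH^q(\mathcal{U}^{\infty}, n)$ and $\CH^q(\mathcal{U}, n)$. Combining the two identifications gives \eqref{eqn:two Cech main}.

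Given that Theorem \ref{thm:two BGH} has already done the quasi-affine case, no real geometric obstacle remains in this argument; the tricky content is entirely upstream in Guess \ref{conj:localization}, the cycle-theoretic pseudo-localization on formal schemes. The only place some care is needed is in checking that the column-wise quasi-isomorphism of the \v{C}ech bi-complex really passes to the totalization, which requires the horizontal boundedness supplied by the finiteness of the system $\mathcal{U}$ of local embeddings; this is where I would be most careful, but it should reduce to a routine hyperderived functor convergence statement.
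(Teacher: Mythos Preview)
Your proposal is correct and follows essentially the same route as the paper's own proof: both invoke Proposition \ref{prop:flasque rep} (conditional on Guess \ref{conj:localization}) to make the termwise inclusions $\tilde{\mathcal{S}}^q_{\bullet}(\widehat{X}_I)\hookrightarrow \BGHz^q(\widehat{X}_I,\bullet)$ and $\mathcal{S}^q_{\bullet}(\widehat{X}_I\mod U_I)\hookrightarrow \BGHz^q(\widehat{X}_I\mod U_I,\bullet)$ quasi-isomorphisms, pass to the totals of the \v{C}ech bi-complexes, and then identify the hypercohomology of the flasque side with $\CH^q(\mathcal{U}^{\infty},n)$, $\CH^q(\mathcal{U},n)$ via the computation already done in Lemma \ref{lem:flasque natural cech}. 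Your added remarks about boundedness in the \v{C}ech direction and the spectral sequence justification for passing to totals are a welcome bit of extra care, but do not change the strategy.
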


\begin{proof}
By Lemma \ref{lem:flasque natural cech}, and in terms of the notations there, we have the natural morphisms of complexes of sheaves
$$
\tuborg
\tilde{\mathcal{S}}_{\bullet}^q (\widehat{X}_I ) \to \BGHz^q (\widehat{X}_I, \bullet), \\
\mathcal{S}_{\bullet}^q (\widehat{X} _I \mod U_I ) \to \BGHz^q (\widehat{X}_I \mod U_I, \bullet),
\sluttuborg
$$
that are quasi-isomorphisms by Proposition \ref{prop:flasque rep} under Guess \ref{conj:localization}. Hence the morphisms
\begin{equation}\label{eqn:two Cech sheaves}
\tuborg
\underset{{I \in \Lambda^{p+1}} }{\prod}  \mathbf{R} \iota_* \tilde{ \mathcal{S}}_{\bullet} ^q (\widehat{X}_I) \to \underset{{I \in \Lambda^{p+1}} }{\prod} \mathbf{R}  \iota_* \BGHz^q (\widehat{X}_I, \bullet),\\
\underset{{I \in \Lambda^{p+1}} }{\prod}  \mathbf{R}  \iota_* \mathcal{S}_{\bullet} ^q (\widehat{X}_I \mod U_I) \to \underset{{I \in \Lambda^{p+1}} }{\prod}  \mathbf{R}  \iota_* \BGHz^q (\widehat{X}_I \mod U_I, \bullet)
\sluttuborg
\end{equation}
are isomorphisms in $\mathcal{D}^- ({\rm Ab} (Y))$ as well, where $\iota: U_I \hookrightarrow Y$ are the open immersions.

\medskip

We repeat the \v{C}ech construction of  \S \ref{sec:Cech complex sheaf} with the sheaves $\tilde{\mathcal{S}}_{\bullet}^q (\widehat{X}_I )$ over all $I \in \Lambda^{p+1}$ and $p \geq 0$, instead of $\BGHz^q(\widehat{X}_I,\bullet)$ (resp.  $\mathcal{S}_{\bullet}^q (\widehat{X}_I \mod U_I )$ instead of 
 $\BGHz^q(\widehat{X}_I \mod U_I,\bullet)$).

 As did before in Lemma \ref{lem:flasque natural cech}, let $\check{\mathfrak{C}} (\mathcal{U}^{\infty}, \tilde{\mathcal{S}}^q)$ and $\check{\mathfrak{C}} (\mathcal{U}^{\infty}, \BGHz^q)$ denote the so-obtained double complexes of sheaves, respectively, and similarly, define $\check{\mathfrak{C}} (\mathcal{U}, \mathcal{S}^q)$ and $\check{\mathfrak{C}} (\mathcal{U}, \BGHz^q)$. Since \eqref{eqn:two Cech sheaves} are isomorphisms in $\mathcal{D}^- ({\rm Ab} (Y))$ for $p \geq 0$, the induced morphisms
\begin{equation}\label{eqn:tot two Cech sheaves}
\tuborg
{\rm Tot}\  \check{\mathfrak{C}} (\mathcal{U}^{\infty}, \tilde{\mathcal{S}}^q) \to {\rm Tot}\  \check{\mathfrak{C}} (\mathcal{U}^{\infty}, \BGHz^q),\\
{\rm Tot}\  \check{\mathfrak{C}} (\mathcal{U}, \mathcal{S}^q) \to {\rm Tot}\  \check{\mathfrak{C}} (\mathcal{U}, \BGHz^q)
\sluttuborg
\end{equation}
are also isomorphisms in $\mathcal{D} ({\rm Ab}(Y))$. In addition, we note that the sheaves $\tilde{\mathcal{S}}_{n} ^q (\widehat{X}_I)$ and $\mathcal{S}_{n} ^q (\widehat{X}_I \mod U_I)$ are flasque by Proposition \ref{prop:flasque sh}. Hence we have
$$
\tuborg
\mathbf{R} \iota_* \tilde{\mathcal{S}}_{\bullet} ^q (\widehat{X}_I) = \iota_*  \tilde{\mathcal{S}}_{\bullet} ^q (\widehat{X}_I) , \\
\mathbf{R} \iota_* {\mathcal{S}}_{\bullet} ^q (\widehat{X}_I) = \iota_*  {\mathcal{S}}_{\bullet} ^q (\widehat{X}_I) , 
\sluttuborg
$$
so that  ${\rm Tot}\  \check{\mathfrak{C}} (\mathcal{U}^{\infty}, \tilde{\mathcal{S}}^q)$ and ${\rm Tot}\  \check{\mathfrak{C}} (\mathcal{U}, \mathcal{S}^q)$ are complexes of flasque sheaves. 

We saw in Lemma \ref{lem:flasque natural cech} that 
\begin{equation}\label{eqn:tot two Cech 02}
\tuborg
\CH^q (\mathcal{U}^{\infty}, n) = \mathbb{H}_{\rm Zar} ^{-n} (Y, {\rm Tot}  \ \check{\mathfrak{C}} (\mathcal{U}^{\infty}, \tilde{\mathcal{S}}^q)), \\
\CH^q (\mathcal{U}, n) = \mathbb{H}_{\rm Zar} ^{-n} (Y, {\rm Tot}  \ \check{\mathfrak{C}} (\mathcal{U}, \mathcal{S}^q)).
\sluttuborg
\end{equation}

Since we have
\begin{equation}\label{eqn:tot two Cech sheaves homo comp}
\tuborg
\BGH^q (\mathcal{U}^{\infty}, n) = \mathbb{H}_{\rm Zar} ^{-n} (Y, {\rm Tot}\  \check{\mathfrak{C}} (\mathcal{U}^{\infty}, \BGHz^q)) \simeq ^{\dagger} \mathbb{H}_{\rm Zar} ^{-n} (Y, {\rm Tot}  \ \check{\mathfrak{C}} (\mathcal{U}^{\infty}, \tilde{\mathcal{S}}^q)) \\
\BGH^q (\mathcal{U}, n) = \mathbb{H}_{\rm Zar} ^{-n} (Y, {\rm Tot}\  \check{\mathfrak{C}} (\mathcal{U}, \BGHz^q)) \simeq ^{\dagger} \mathbb{H}_{\rm Zar} ^{-n} (Y, {\rm Tot}  \ \check{\mathfrak{C}} (\mathcal{U}, \mathcal{S}^q)),
\sluttuborg
\end{equation}
where $\dagger$ hold because \eqref{eqn:tot two Cech sheaves} are quasi-isomorphisms, combined with \eqref{eqn:tot two Cech 02}, we deduce \eqref{eqn:two Cech main}. 
\end{proof}

 \medskip

\noindent\emph{Acknowledgments.} 
During the past several years of this work, JP benefited directly or indirectly from conversations and correspondences with numerous people. Specifically he thanks Donu Arapura, Aravind Asok, Joseph Ayoub, Federico Binda, Thomas Geisser, Rob de Jeu, Bruno Kahn, Steven Landsburg, Marc Levine, Leovigildo Alonso Tarr\'io, Fumiharu Kato, Shane Kelly, Youngsu Kim, Pablo Pelaez, and Takao Yamazaki. JP thanks Chang-Yeon Chough and Minhyong Kim for inspiring conversations around derived algebraic geometry, that influenced the author's perspective on the project.

JP would also like to mention that part of the inspirations of the work were conceived while he was visiting Sinan \"Unver in Berlin and in \.{I}stanbul during the past collaborations. For some time, \"Unver was a Humboldt fellow at the workgroup Esnault at Freie Universit\"at Berlin. JP wishes to thank H\'el\`ene Esnault, Kay R\"ulling, and Sinan \"Unver for their hospitality during his visits.

JP wishes to thank Spencer Bloch for suggesting him the paper of Hartshorne on algebraic de Rham cohomology almost two decades ago, from which some new ideas sprang up.
JP thanks Damy for being the source of joy of life and Seungmok Yi for helping him in enduring the hard process of writing this long article.

During this work, JP was supported by the National Research Foundation of Korea (NRF) grant (2018R1A2B6002287) funded by the Korean government (Ministry of Science and ICT).


\begin{thebibliography}{99}



\bibitem{Leo AT} L. Alonso Tarr\'io, A. Jerem\'ias L\'opez, M. P\'erez Rodr\'iguez, and M. J. Vale Gonsalves, {\sl On the existence of a compact generator on the derived category of a noetherian formal scheme\/}, Appl. Categor. Struct, \textbf{19}, (2011), 865--877.


\bibitem{Andre} M. Andr\'e, {\sl Homologie des alg\`ebres commutatives\/}, Grundlehren Math. Wissen. \textbf{206}, Springer-Verlag, 1974.



\bibitem{AH} M. F. Atiyah and F. Hirzebruch, {\sl Vector bundles and homogeneous spaces\/}, Proc. Sympos. Pure Math. \textbf{3}, 1961. Amer. Math. Soc. Providence, R.I. pp.7--38. 

\bibitem{AM} M. F. Atiyah and I. G. MacDonald, {\sl Introduction to commutative algebra\/}, Addison-Wesley, 1969. ix+128 pp.


\bibitem{Bass} H. Bass, {\sl Algebraic $K$-theory\/}, W. A. Benjamin, Inc., New York-Amsterdam, 1968, xx+762 pp.


\bibitem{Beilinson Soule} A. Beilinson, {\sl Letter to C. Soul\'e, dated Nov. 1, 1982}, article 0694 of $K$-theory preprint server at the University of Illinois, Urbana-Champaign.

\bibitem{BBD} A. Beilinson, J. Bernstein, P. Deligne, and O. Gabber, {\sl Faisceaux pervers (2nd Edition)\/}, Ast\'erisque, \textbf{100}, (2018), Soc. Math. France.




\bibitem{Berthelot} P. Berthelot, {\sl Cohomologie cristalline des sch\'emas de caract\'eristique $p>0$\/}, Lect. Notes in Math., \textbf{407}, Springer-Verlag, Berlin-New York, 1974, 604 pp. \


\bibitem{SGA6} P. Berthelot, A. Grothendieck, and L. Illusie, {\sl Th\'eorie des intersections et th\'eor\`eme de Riemann-Roch\/}  (SGA6), S\'eminaire de G\'eom\'etrie Alg\'ebrique du Bois-Marie 1966-67, Lect. Notes in Math., \textbf{225}, (1971), Springer-Verlag.



\bibitem{Bhatt} B. Bhatt, {\sl Algebraization and Tannaka duality\/}, Cambridge J. Math., \textbf{4}, (2016), no. 4, 403--461.


\bibitem{BS} F. Binda and S. Saito, {\sl Relative cycles with moduli and regulator maps\/}, J. Inst. Math. Jussieu, \textbf{18}, (2019), no. 6, 1233--1293.


\bibitem{Bloch crys} S. Bloch, {\sl Algebraic K-theory and crystalline cohomology\/}, Publ. Math. de l'Inst. Hautes \'Etudes Sci., \textbf{47}, (1977), 187--268.

\bibitem{Bloch HC} S. Bloch, {\sl Algebraic cycles and higher $K$-theory\/}, Adv. Math., \textbf{61}, (1986), no. 3, 267--304.

\bibitem{Bloch moving} S. Bloch, {\sl The moving lemma for higher Chow groups\/}, J. Algebraic Geom., \textbf{3}, (1994), 537--568.

\bibitem{Bloch note} S. Bloch, {\sl Some notes on elementary properties of higher chow groups, including functoriality properties and cubical chow groups}, an online note available at {\tt http://www.math.uchicago.edu/\~{}bloch/cubical\_{}chow.pdf}

\bibitem{BE1} S. Bloch and H. Esnault, {\sl An additive version of higher Chow groups\/}, Ann. Scient. de l'{\'E}c. Norm. Sup., \textbf{36}, (2003), 463--477. 


\bibitem{BE2} S. Bloch and H. Esnault, {\sl The additive dilogarithm\/}, Documenta Math. \textbf{Extra Vol.} for Kazuya Kato's fiftieth birthday conference, (2003), 131--155.


\bibitem{BEK} S. Bloch, H. Esnault, and M. Kerz, {\sl $p$-adic deformation of algebraic cycle classes\/}, Invent. Math., \textbf{195}, (2014), 673--722.

\bibitem{BL} S. Bloch and S. Lichtenbaum, {\sl A spectral sequence for motivic cohomology\/},  dated March 3, 1995, preprint, article 62 of $K$-theory preprint server at the University of Illinois, Urbana-Champaign.


\bibitem{BGR} S. Bosch, U. G\"untzer, and R. Remmert, {\sl Non-Archimedean analysis\/}, Grundlehren Math. Wissen., \textbf{261}, Springer-Verlag, 1984.


\bibitem{BT} R. Bott and L. Tu, {\sl Differential forms in Algebraic Topology\/}, Grad. Texts in Math., \textbf{82}, Springer-Verlag, New York, 1982.



\bibitem{BH} W. Bruns and J. Herzog, {\sl Cohen-Macaulay rings\/}, 2nd Ed., Cambridge studies in advanced math., \textbf{39}, Cambridge Univ. Press, Cambridge, 1998. 453pp. 

\bibitem{Cartwright} D. A. Cartwright, D. Erman, M. Velasco, and B. Viray, {\sl Hilbert schemes of $8$ points\/}, Algebra \& Number Theory, \textbf{3}, (2009), no. 7, 763--795.

\bibitem{Chow} W.-L. Chow, {\sl On equivalence classes of cycles in an algebraic variety\/}, Ann. Math., \textbf{64}, (1956), no. 3, 450--479.

\bibitem{Cohen} I. S. Cohen, {\sl On the structure and ideal theory of complete local rings\/}, Trans. Amer. Math. Soc., \textbf{59}, (1946), 54--106.

\bibitem{Conrad cpnt} B. Conrad, {\sl Irreducible components of rigid spaces\/}, Ann. Inst. Fourier, Grenoble, \textbf{49}, (1999), no. 2, 473--541.



\bibitem{de Jong} A. J. de Jong, {\sl Smoothness, semi-stability and alterations\/}, Publ. Math. de l'Inst. Hautes \'Etudes Sci., \textbf{83}, (1996), 51--93.

\bibitem{Deligne} P. Deligne, {\sl Th\'eorie de Hodge : II\/}, Publ. Math. de l'Inst. Hautes \'Etudes Sci., \textbf{40}, (1971), 5--57.



\bibitem{Dubouloz} A. Dubouloz, {\sl $\mathbb{A}^1$-cylinders over smooth $\mathbb{A}^1$-fibered affine surfaces\/}, Proceeding of Kinosaki Algebraic Geometry symposium 2019, arXiv:2002.12838.


\bibitem{EVMS} P. Elbaz-Vincent and S. M\"uller-Stach, {\sl Milnor $K$-theory of rings, higher Chow groups and applications\/}, Invent. Math., \textbf{148}, (2002), no. 1, 177--206.


\bibitem{Ferrand} D. Ferrand, {\sl Conducteur, descente et pincement\/}, Bull. Soc. Math. France, \textbf{131}, (2003), no. 4, 553--585.





\bibitem{FS} E. Friedlander and A. Suslin, {\sl The spectral sequence relating algebraic $K$-theory to motivic cohomology\/}, Ann. Sci. \'Ecole Norm. Sup. $4^e$, \textbf{35}, (2002), no. 6, 773-875.

\bibitem{FV} E. Friedlander and V. Voevodsky, {\sl Bivariant cycle cohomology\/} in Cycles, Transfers and Motivic Homology Theories, Ann. Math. Stud., \textbf{143}, Princeton Univ. Press, Princeton, NJ, 2000, 138--187.

\bibitem{FK} K. Fujiwara and F. Kato, {\sl Foundations of Rigid Geometry I,\/}, EMS Monographs in Mathematics, European Math. Soc., Z\"urich, 2018.

\bibitem{Fulton Chow} W. Fulton, {\sl Rational equivalence on singular varieties\/}, Publ. Math. de l'Inst. Hautes \'Etudes Sci., \textbf{45}, (1975), 147--167.


\bibitem{Fulton} W. Fulton, {\sl Intersection theory, Second Edition\/}, Ergebnisse der Math. und ihrer Grenzgebiete.  Springer-Verlag, Berlin, 1998. xiv+470 pp.






\bibitem{GM} M. Goresky and R MacPherson, {\sl Intersection homology theory\/}, Topology, \textbf{19}, (1980), no. 2, 135--162.

\bibitem{GS} S. Greco and P. Salmon, {\sl Topics in $\mathfrak{m}$-adic Topologies\/}, Ergebnisse der Math. und ihrer Grenzgebiete, \textbf{58}, Springer-Verlag, New York, Heidelberg, Berlin, 1971.



\bibitem{Grothendieck BSMF} A. Grothendieck, {\sl La th\'eorie des classes de Chern\/}, Bull. Soc. Math. France, \textbf{86}, (1958), 137--154.


\bibitem{EGA1} A. Grothendieck, {\sl \'El\'ements de G\'eom\'etrie Alg\'ebrique I, Le langage des sch\'emas.\/} Publ. Math. de l'Inst. Hautes \'Etudes Sci., \textbf{4}, (1960), 228pp.

\bibitem{EGA2} A. Grothendieck, {\sl \'El\'elements de G\'eom\'etrie Alg\'ebrique II, \'Etude globale \'el\'ementaire de quelques classes de morphismes.\/}, Publ. Math. de l'Inst. Hautes \'Etudes Sci., \textbf{8}, (1961), 5--222.


\bibitem{EGA3-1} A. Grothendieck, {\sl \'El\'ements de G\'eom\'etrie Alg\'ebrique III, \'Etude cohomologique des faisceaux coh\'erent, Premi\`ere partie\/}, Publ. Math. de l'Inst. Hautes \'Etudes Sci., \textbf{11}, (1961), 5--167.

\bibitem{EGA4-2} A. Grothendieck, {\sl \'El\'ements de G\'eom\'etrie Alg\'ebrique IV, \'Etude locale des sch\'emas et des morphismes de sch\'emas, Seconde partie\/}, Publ. Math. de l'Inst. Hautes \'Etudes Sci., \textbf{24}, (1965), 5--231.


\bibitem{Grothendieck DR} A. Grothendieck, {\sl On the de Rham cohomology of algebraic varieties\/}, Publ. Math. de l'Inst. Hautes \'Etudes Sci., \textbf{29}, (1966), 95--103.


\bibitem{SGA2} A. Grothendieck, {\sl Cohomologie locale des faisceaux coh\'erents et th\'eor\`emes de Lefschetz locaux et globaux\/} (SGA 2), Augment\'e d'un expos\'e par Mich\`ele Raynaud. S\'eminaire de G\'eom\'etrie Alg\'ebrique du Bois-Marie, 1962. Advanced Studies in Pure Mathematics, Vol. 2. North-Holland Publishing Co., Amsterdam; Masson \& Cie, \'Editeur, Paris, 1968, vii+287 pp.

\bibitem{SGA1} A. Grothendieck, {\sl Rev\^etements \'etales et groupe fondamental (SGA1)\/}, S\'eminaire de G\'eom\'etrie Alg\'ebrique du Bois-Marie, 1960-61, Lect. Notes in Math., \textbf{224}, (1971), Berlin, New York, Springer-Verlag, xxii+447.

\bibitem{Gunther} J. Gunther, {\sl Random hypersurfaces and embedding curves in surfaces over finite fields\/}, J. Pure Appl. Alg., \textbf{221}, (2017), 89--97.




\bibitem{Hanamura} M. Hanamura, {\sl Homological and cohomological motives of algebraic varieties\/}, Invent. Math., \textbf{142}, (2000), 319--349.

\bibitem{Hartshorne DR} R. Hartshorne, {\sl On the De Rham cohomology of algebraic varieties\/}, Publ. Math. de l'Inst. Hautes \'Etudes Sci., \textbf{45}, (1975), 5--99.

\bibitem{Hartshorne} R. Hartshorne, {\sl Algebraic Geometry\/}, Grad. Texts in Math., \textbf{52}, Springer-Verlag, New York-Heidelberg, 1977. xvi+496 pp.

\bibitem{HP} R. Hartshorne and C. Polini, {\sl Divisor class groups of singular surfaces\/}, Trans. Amer. Math. Soc., \textbf{367}, (2015), no. 9, 6357--6385.


\bibitem{KA} S. Kleiman and A. Altman, {\sl Bertini theorems for hypersurface sections containing a subscheme\/}, Comm. Alg., \textbf{7}, (1979), no. 8, 775--790.











\bibitem{Hironaka} H. Hironaka, {\sl Resolution of singularities of an algebraic variety over a field of characteristic zero. I\/}, Ann. Math., \textbf{79}, (1964), 109--203, and part II, 205--326.

\bibitem{Illusie} L. Illusie, {\sl Complexe de de Rham-Witt et cohomologie cristalline\/}, Ann. Sci. l'\'Ecole Norm. Sup. $4^e$, \textbf{12}, (1979), 501--661.




\bibitem{Iyengar} S. Iyengar, {\sl Andr\'e-Quillen homology of commutative algebras\/}, in Interactions between homotopy theory and algebra, Contempo. Math., \textbf{436}, Amer. Math. Soc., Providence, RI, 2007. pp.203--234.

\bibitem{MM1} B. Kahn, H. Miyazaki, S. Saito, and T. Yamazaki, {\sl Motives with modulus, I: Modulus sheaves with transfers for non-proper modulus pairs\/}, \'Epijournal G\'eom. Alg\'ebr., \textbf{5}, (2021), Article Nr. 1.

\bibitem{MM2} B. Kahn, H. Miyazaki, S. Saito, and T. Yamazaki, {\sl Motives with modulus, II: Modulus sheaves with transfers for proper modulus pairs\/}, \'Epijournal G\'eom. Alg\'ebr., \textbf{5}, (2021), Article Nr. 2.

\bibitem{MM3} B. Kahn, H. Miyazaki, S. Saito, and T. Yamazaki, {\sl Motives with modulus, III: The categories of motives\/}, preprint, arXiv:2011.11859, to appear in Ann. $K$-theory.




\bibitem{Kerz thesis} M. Kerz, {\sl Milnor $K$-theory of local rings\/}, Ph.D. thesis, University of Regensburg, 2008.

\bibitem{Kerz Gersten} M. Kerz, {\sl The Gersten conjecture for Milnor $K$-theory\/}, Invent. Math., \textbf{175}, (2009), no. 1, 1--33.

\bibitem{Kerz finite} M. Kerz, {\sl Milnor $K$-theory of local rings with finite residue fields\/}, J. Algebraic Geom., \textbf{19}, (2010), 173--191.

\bibitem{KST IM} M. Kerz, F. Strunk, and G. Tamme, {\sl Algebraic $K$-theory and descent for blow-ups\/}, Invent. math., \textbf{211}, (2018), 523--577.


\bibitem{KP2} A. Krishna and J. Park, {\sl DGA-structure on additive higher Chow groups\/}, Internat. Math. Res. Notices, \textbf{2015}, no. 1, 1--54. 


\bibitem{KP DM} A. Krishna and J. Park, {\sl On additive higher Chow groups of affine schemes\/}, Documenta Math., \textbf{21}, (2016), 49--89.

\bibitem{KP sfs} A. Krishna and J. Park, {\sl A moving lemma for relative $0$-cycles\/}, Algebra \& Number Theory, \textbf{14}, (2020), no. 4, 991--1054.

\bibitem{KP crys} A. Krishna and J. Park, {\sl de Rham-Witt sheaves via algebraic cycles (with an appendix by Kay R\"ulling)\/}, Compositio Math., \textbf{158}, (2021), no. 10, 2089--2132.




\bibitem{Landsburg AJM} S. E. Landsburg, {\sl Relative cycles and algebraic $K$-theory\/}, Amer. J. Math., \textbf{111}, (1989), 599--632.
\bibitem{Landsburg Duke} S. E. Landsburg, {\sl $K$-theory and patching for categories of complexes\/}, Duke Math. J., \textbf{62}, (1991), no. 2, 359--384.

\bibitem{Landsburg Ill} S. E. Landsburg, {\sl Relative Chow groups\/}, Ill. J. Math., \textbf{35}, (1991), no. 4, 618--641.


\bibitem{Levine moving} M. Levine, {\sl Techniques of localization in the theory of algebraic cycles\/}, J. Algebraic Geom., \textbf{10}, (2001), 299--363.

\bibitem{Levine coniveau} M. Levine, {\sl The homotopy coniveau tower\/}, J. Topology, \textbf{1}, (2008), 217--267. 

\bibitem{Levine SM} M. Levine, {\sl Smooth motives\/}, in Motives and algebraic cycles, Fields Inst. Commun., \textbf{56}, Amer. Math. Soc., Providence, RI, 2009. pp.175--231.


\bibitem{Levine cobordism} M. Levine and F. Morel, {\sl Algebraic Cobordism\/}, Springer Monographs in Math., Springer, Berlin, 2007. xii+244pp.


\bibitem{LW} M. Levine and C. Weibel, {\sl Zero cycles and complete intersections on singular varieties\/}, J. Reine Angew. Math., \textbf{359}, (1985), 106--120.


\bibitem{Li} M. Li, {\sl Integral regulators for higher Chow complexes\/}, SIGMA \textbf{14}, (2018), 118, 12 pages, Contribution to the Special Issue on Modular Forms and String Theory in honor of Noriko Yui.

\bibitem{Lipman} J. Lipman and M. Hashimoto, {\sl Foundations of Grothendieck duality for diagrams of schemes\/}, Lect. Notes in Math., \textbf{1960}, Springer-Verlag, Berlin Heidelberg, 2009.


\bibitem{LS} P. Lowry and T. Sch\"urg, {\sl Derived algebraic cobordism\/}, J. Inst. Math. Jussieu, \textbf{15}, (2016), no. 2, 407--443.

\bibitem{Lurie HTT} J. Lurie, {\sl Higher Topos Theory\/}, Ann. Math. Studies, \textbf{170}, Princeton Univ. Press, Princeton, NJ, 2009. xviii+925 pp.

\bibitem{Lurie SAG} J. Lurie, {\sl Spectral Algebraic Geometry\/}, preprint, version February 18, 2018, available on the homepage of Jacob Lurie at Harvard and IAS.

\bibitem{Manin} Yu. I. Manin, {\sl Lectures on the $K$-functor in algebraic geometry\/}, Russ. Math. Surveys, \textbf{24}, (1969), 1--89.



\bibitem{Matsumura} H. Matsumura, {\sl Commutative ring theory\/}, Cambridge studies in advanced math. \textbf{8}, Cambridge Univ. Press, Cambridge, 1986, xiv+320pp.

\bibitem{May} J. P. May, {\sl Simplicial objects in algebraic topology\/}, Chicago Lect. in Math., The Univ. of Chicago Press, Chicago, IL, USA, 1967.

\bibitem{MVW} C. Mazza, V. Voevodsky, and C. Weibel, {\sl Lecture notes on motivic cohomology\/}, Clay Mathematics Monographs, \textbf{2}. American Mathematical Society, Providence, RI; Clay Mathematics Institute, Cambridge, MA, 2006. xiv+216 pp.



\bibitem{Milnor K} J. Milnor, {\sl Introduction to algebraic $K$-theory\/}, Annals Math. Studies \textbf{72}, Princeton Univ. Press, Princeton, NJ, 1971.

\bibitem{Morrow} M. Morrow, {\sl Pro unitality and pro excision in algebraic $K$-theory and cyclic homology\/}, J. reine angew. Math., \textbf{736}, (2018), 95--139.

\bibitem{Mumford} D. Mumford, {\sl Further pathologies in algebraic geometry\/}, Amer. J. Math., \textbf{84}, (1962), no. 4, 642--648.

\bibitem{NS} Yu. P. Nesterenko and A. A. Suslin, {\sl Homology of the general linear group over a local ring, and Milnor's $K$-theory\/}, (Russian) Izv. Akad. Nauk SSSR Ser. Mat., \textbf{53}, (1989), no. 1, 121--146; English translation: Math. USSR-Izv., \textbf{34}, (1990), no. 1, 121--145.

\bibitem{Nicaise} J. Nicaise, {\sl A trace formula for rigid varieties, and motivic Weil generating series for formal schemes\/}, Math. Ann., \textbf{343}, (2009), 285--349.


\bibitem{P2} J. Park, {\sl Regulators on additive higher Chow groups\/}, Amer. J. Math., \textbf{131}, (2009), no. 1, 257--276.


\bibitem{cubical localization} J. Park, {\sl On localization for cubical higher Chow groups\/}, preprint, arXiv:2108.13561, 2021. A version of it is accepted to appear in Tohoku Math. J.

\bibitem{Park Tate} J. Park, {\sl Motivic cohomology of fat points in Milnor range via formal and rigid geometries\/}, preprint, arXiv:2108.13563, 2021.


\bibitem{PP} J. Park and P. Pelaez, {\sl On the coniveau filtration on algebraic $K$-theory of singular schemes\/}, preprint, 2021.

\bibitem{PU Milnor} J. Park and S. \"Unver, {\sl Motivic cohomology of fat points in Milnor range\/}, Documenta Math., \textbf{23}, (2018), 759--798.

\bibitem{Poonen} B. Poonen, {\sl Bertini theorems over finite fields\/}, Ann. Math., \textbf{160}, (2004), no. 3, 1099--1127.


\bibitem{Quillen} D. Quillen, {\sl Higher algebraic $K$-theory. I\/}, in Algebraic $K$-theory, I: Higher $K$-theories (Proc. Conf., Battelle Memorial Inst., Seattle, Wash., 1972), Lect. Notes in Math., \textbf{341}, (1973), Berlin, New York, Springer-Verlag, pp.85--147.



\bibitem{R} K. R\"ulling, {\sl The generalized de Rham-Witt complex over a field is a complex of zero-cycles\/}, J. Algebraic Geom., \textbf{16}, (2007), no. 1, 109--169.


\bibitem{Salmon} P. Salmon, {\sl Sur les s\'eries formelles restreintes\/}, Bull. Soc. Math. France, \textbf{92}, (1964), 385--410.

\bibitem{Samuel} P. Samuel, {\sl Lectures on Unique Factorization Domains\/}, Notes by M. Pavman Murthy, Tata Inst. Fund. Res. Lect. on Math., \textbf{30}, Bombay 1964 ii+84+iii pp.

\bibitem{Schlessinger} M. Schlessinger, {\sl Functors of Artin rings\/}, Trans. Amer. Math. Soc., \textbf{130}, (1968), 208--222.






\bibitem{stacks} The Stacks Project at {\tt{https://stacks.math.columbia.edu}}.



\bibitem{Tate} J. Tate, {\sl Rigid analytic spaces\/}, Invent. Math., \textbf{12}, (1971), 257--289.





\bibitem{Toen} B. To\"en, {\sl Derived algebraic geometry\/}, Eur. Math. Soc. Surv. Math. Sci., \textbf{1}, (2014), 153--240. 


\bibitem{Totaro} B. Totaro, {\sl Milnor $K$-theory is the simplest part of algebraic K-theory\/}, $K$-Theory, \textbf{6}, (1992), no. 2, 177--189. 


\bibitem{Totaro Sigma} B. Totaro, {\sl Chow groups, Chow cohomology, and linear varieties\/}, Forum Math. Sigma, \textbf{2}, (2014), Paper No. e17, 25 pp. 


\bibitem{vdK} W. van der Kallen, {\sl Le $K_2$ des nombres duaux\/}, C. R. Acad. Sci. Paris S\'er. A-B, \textbf{273}, (1971), A1204--A1207.

\bibitem{Vistoli} A. Vistoli, {\sl Grothendieck topologies, fibered categories and descent theory\/}, in Fundamental Algebraic Geometry, Math. Surveys Monogr., \textbf{123}, Providence, RI, Amer. Math. Soc., pp.1--104.

\bibitem{Voevodsky} V. Voevodsky, {\sl Motivic cohomology groups are isomorphic to higher Chow groups in any characteristic\/}, Internat. Math. Res. Notices, \textbf{2002}, no. 7, 351--355. 


\bibitem{Weibel} C. Weibel, {\sl The $K$-book: An Introduction to Algebraic $K$-theory\/}, Graduate Studies in Math., \textbf{145}, Amer. Math. Soc., Providence, RI, 2013. xii+618pp.

\bibitem{Wutz} F. Wutz, {\sl Bertini theorems for hypersurface sections containing a subscheme over finite fields\/}, Ph.D. thesis, Universit\"{a}t Regensburg, 2014.

\end{thebibliography}
\end{document}